\documentclass[11pt,reqno]{amsart}

\usepackage[a4paper,margin=1.08in]{geometry}
\usepackage{amsmath,amssymb,mathtools,mathrsfs}
\usepackage{amsthm}
\usepackage{bm}
\usepackage{enumitem}
\usepackage{array}
\usepackage{booktabs,tabularx}
\usepackage{microtype}
\usepackage{lmodern}
\usepackage{hyperref}
\usepackage{aliascnt}
\usepackage[nameinlink,capitalise,noabbrev]{cleveref}

\hypersetup{
  hidelinks,
  pdftitle={A Singular Stochastic Wave--Klein--Gordon System with Rough Gaussian Forcing and Distinct Propagation Speeds},
  pdfauthor={Guangqian Zhao},
  pdfsubject={Singular stochastic dispersive equations on the full space},
  pdfkeywords={stochastic wave equation; Klein--Gordon equation; white-in-time Gaussian forcing; Fourier-multiplier covariance; distinct speeds; paracontrolled distributions; Gaussian random operators; compact localization; spectral approximation convergence}
}

\numberwithin{equation}{section}

\newtheorem{theorem}{Theorem}[section]

\newaliascnt{proposition}{theorem}
\newtheorem{proposition}[proposition]{Proposition}
\aliascntresetthe{proposition}

\newaliascnt{lemma}{theorem}
\newtheorem{lemma}[lemma]{Lemma}
\aliascntresetthe{lemma}

\newaliascnt{corollary}{theorem}
\newtheorem{corollary}[corollary]{Corollary}
\aliascntresetthe{corollary}

\theoremstyle{definition}
\newaliascnt{assumption}{theorem}
\newtheorem{assumption}[assumption]{Assumption}
\aliascntresetthe{assumption}

\newaliascnt{definition}{theorem}
\newtheorem{definition}[definition]{Definition}
\aliascntresetthe{definition}

\theoremstyle{remark}
\newaliascnt{remark}{theorem}
\newtheorem{remark}[remark]{Remark}
\aliascntresetthe{remark}

\newaliascnt{example}{theorem}

\aliascntresetthe{example}

\crefname{theorem}{Theorem}{Theorems}
\crefname{proposition}{Proposition}{Propositions}
\crefname{lemma}{Lemma}{Lemmas}
\crefname{corollary}{Corollary}{Corollaries}
\crefname{assumption}{Assumption}{Assumptions}
\crefname{definition}{Definition}{Definitions}
\crefname{remark}{Remark}{Remarks}
\crefname{example}{Example}{Examples}
\Crefname{theorem}{Theorem}{Theorems}
\Crefname{proposition}{Proposition}{Propositions}
\Crefname{lemma}{Lemma}{Lemmas}
\Crefname{corollary}{Corollary}{Corollaries}
\Crefname{assumption}{Assumption}{Assumptions}
\Crefname{definition}{Definition}{Definitions}
\Crefname{remark}{Remark}{Remarks}
\Crefname{example}{Example}{Examples}

\newcommand{\R}{\mathbb R}
\newcommand{\N}{\mathbb N}
\newcommand{\E}{\mathbb E}
\newcommand{\Pp}{\mathbb P}
\newcommand{\cC}{\mathcal C}
\newcommand{\cD}{\mathcal D}
\newcommand{\cB}{\mathcal B}

\newcommand{\cZ}{\mathcal Z}

\newcommand{\Dyd}{2^{\mathbb N_0}}
\newcommand{\la}{\langle}
\newcommand{\ra}{\rangle}
\newcommand{\eps}{\varepsilon}
\newcommand{\dd}{\,\mathrm d}
\newcommand{\ii}{\mathrm i}
\newcommand{\wh}{\widehat}
\newcommand{\wt}{\widetilde}
\newcommand{\supp}{\operatorname{supp}}
\newcommand{\dist}{\operatorname{dist}}

\newcommand{\loc}{\mathrm{loc}}
\newcommand{\W}{\mathrm W}
\newcommand{\K}{\mathrm K}
\newcommand{\one}{\mathbf 1}
\newcommand{\norm}[1]{\left\lVert #1\right\rVert}
\newcommand{\abs}[1]{\left\lvert #1\right\rvert}
\newcommand{\set}[1]{\left\{#1\right\}}
\newcommand{\paren}[1]{\left(#1\right)}

\newcommand{\Btwo}[1]{B^{#1}_{2,\infty}}

\newcommand{\LW}{L_{\W}}
\newcommand{\LK}{L_{\K}}
\newcommand{\wW}{\omega_{\W}}
\newcommand{\wK}{\omega_{\K}}

\newcommand{\speedW}{c_{\W}}
\newcommand{\speedK}{c_{\K}}
\newcommand{\massK}{m}
\newcommand{\speedmax}{c_*}
\newcommand{\speedgap}{\delta_{\mathrm{sp}}}
\newcommand{\parvec}{\mathbf p}
\newcommand{\betaW}{\beta_{\W}}
\newcommand{\betaK}{\beta_{\K}}
\newcommand{\betastar}{\beta_*}
\newcommand{\betasum}{\beta_{\Sigma}}
\newcommand{\betagamma}{\beta_{\Gamma}}
\newcommand{\Sch}{\mathfrak S}
\newcommand{\Sym}{\operatorname{Sym}}

\newcolumntype{Y}{>{\raggedright\arraybackslash}X}
\newcolumntype{P}[1]{>{\raggedright\arraybackslash}p{#1}}

\title[A stochastic wave--Klein--Gordon system with rough forcing]
{A Singular Stochastic Wave--Klein--Gordon System with Rough Gaussian Forcing and Distinct Propagation Speeds}

\author{Guangqian Zhao}
\address{School of Mathematical Sciences, University of Science and Technology of China, Hefei, Anhui 230026, China}
\email{zhaoguangqian@mail.ustc.edu.cn}
\date{}
\subjclass[2020]{60H15, 35L71, 35R60, 60H30, 35Q40}
\keywords{Stochastic wave equation, Klein--Gordon equation, white-in-time Gaussian forcing, Fourier-multiplier covariance, distinct propagation speeds, paracontrolled distributions, centered Gaussian operators, spectral approximation}

\allowdisplaybreaks[2]
\begin{document}
\begin{abstract}
We construct spatially local paracontrolled solutions on $\mathbb{R}^3$ for the
stochastic wave--Klein--Gordon system
\[
(\partial_t^2-c_{\mathrm{W}}^2\Delta)u=uv+\xi_{\mathrm{W}},\qquad
(\partial_t^2-c_{\mathrm{K}}^2\Delta+m^2)v=uv+\xi_{\mathrm{K}},
\]
with $c_{\mathrm{W}}\ne c_{\mathrm{K}}$.  The independent forcings are white in time and
stationary in space, generated by Fourier multipliers of respective orders
$\beta_{\mathrm{W}},\beta_{\mathrm{K}}$, and we assume
$\beta_*:=\max\{\beta_{\mathrm{W}},\beta_{\mathrm{K}}\}<1/8$.

The main analytic step is the construction of the mixed random operators
$I_a(w\prec\Psi_b)\circ\Psi_c$.  At finite cutoff they decompose into a
centered second Gaussian chaos and, when $b=c$, a deterministic covariance
contraction.  In the latter case the two propagation channels are different;
the resulting low--high phase gap gives the diagonal shell bound
$N^{-1+2\beta_b}$.  The centered terms are controlled through four Schatten
flattenings of a continuous-frequency kernel with two physical localizers.
Weighted phase-layer estimates also construct the first Picard and cubic
stochastic terms.

A localized auxiliary system, a finite-cutoff Sobolev bootstrap, and finite
propagation yield a compatible family of local solutions with
compact-dependent measurable lifetimes.  We prove uniqueness in the stated
paracontrolled class, local Lipschitz dependence on the enhanced data and
Cauchy data, independence of the localization pair, and convergence of the
spectral approximations and nonlinear sources.  Fixed cutoff profiles converge
almost surely along the full sequence; arbitrary admissible cofinal cutoffs
converge in probability up to the reference lifetime.
\end{abstract}

\maketitle

\section{Introduction}

Consider the coupled system
\begin{equation}\label{eq:intro-system}
\begin{aligned}
  (\partial_t^2-c_{\W}^2\Delta)u&=uv+\xi_{\W},\\
  (\partial_t^2-c_{\K}^2\Delta+m^2)v&=uv+\xi_{\K},
\end{aligned}
\qquad (t,x)\in[0,T]\times\R^3,
\end{equation}
where $c_{\W},c_{\K},m>0$ and $c_{\W}\ne c_{\K}$.

\begin{definition}[Channel and index convention]\label{def:channel-convention}
Set
\begin{equation}\label{eq:channel-set}
  \mathfrak C:=\{\W,\K\},
  \qquad \W^\perp:=\K,
  \qquad \K^\perp:=\W.
\end{equation}
The letters $a,b,c$ are reserved for elements of $\mathfrak C$; the symbols
$\W$ and $\K$ are the two values of a channel index.  Repeated channel
indices are not summed unless a summation sign is displayed, and
$\delta_{ab}$ denotes the Kronecker symbol on $\mathfrak C$.

The four index triples used below are
\begin{equation}\label{eq:operator-index-set}
\begin{aligned}
 \mathfrak L_{\mathrm{diag}}
 &:=\{(\W;\K,\K),(\K;\W,\W)\},\\
 \mathfrak L_{\mathrm{off}}
 &:=\{(\W;\W,\K),(\K;\K,\W)\},\\
 \mathfrak L
 &:=\mathfrak L_{\mathrm{diag}}\,\dot\cup\,
    \mathfrak L_{\mathrm{off}}.
\end{aligned}
\end{equation}
For a label $(a;b,c)\in\mathfrak L$, the first entry $a$ is the Duhamel
channel, $b$ is the color of the stochastic factor in the inner
low--high paraproduct, and $c$ is the color of the stochastic factor in the
outer resonant product.  Thus $b=c$ on $\mathfrak L_{\mathrm{diag}}$, whereas
$b\ne c$ on $\mathfrak L_{\mathrm{off}}$.

The variables $q,\ell,r,n$ denote spatial frequencies and
$N,Q,R,L,M$ denote dyadic scales.  Within the abstract Gaussian operator argument, $\mu,\nu$ index orthonormal coordinates and
$\varkappa\in\{1,2,3,4\}$ indexes the four flattenings.
\end{definition}

The four labels and the contraction mechanism are summarized by
\begin{equation}\label{eq:four-label-table}
\begin{array}{c|c|c|c}
 (a;b,c)&\text{Duhamel channel}&\text{Gaussian colors}&\text{zeroth-chaos contraction}\\
\hline
 (\W;\K,\K)&\W&(\K,\K)&\mathcal D^{\W;\K}\\
 (\K;\W,\W)&\K&(\W,\W)&\mathcal D^{\K;\W}\\
 (\W;\W,\K)&\W&(\W,\K)&0\\
 (\K;\K,\W)&\K&(\K,\W)&0
\end{array}
\end{equation}
The first two rows have equal Gaussian colors and different propagation
channels; the last two rows have equal inner and Duhamel channels but distinct
Gaussian colors.  Thus no row is simultaneously same-color and same-channel.

The symbols $\mathbf W_{\W},\mathbf W_{\K}$ denote the two independent
real isonormal Gaussian processes over $L^2(\R_+\times\R^3)$.  For
$a\in\mathfrak C$, choose a real even symbol
$\mathfrak h_a\in S^{\beta_a}_{1,0}(\R^3)$ and define
\[
 \xi_a(\varphi):=\mathbf W_a(\mathfrak h_a(D_x)\varphi).
\]
We assume
\[
 \betastar:=\max\{\betaW,\betaK\}<\frac18,
 \qquad
 \betasum:=\betaW+\betaK,
 \qquad
 \betagamma:=\betasum+\betastar.
\]
The covariance is white in time and Fourier diagonal in space:
\[
 \E[\widehat\xi_a(\dd\xi,\dd t)\widehat\xi_b(\dd\eta,\dd s)]
 =\delta_{ab}(2\pi)^3\delta(t-s)\delta(\xi+\eta)
 |\mathfrak h_a(\xi)|^2\dd\xi\dd\eta\dd t\dd s.
\]
This class contains space--time white noise, unequal nonradial colorings, and
lower-order profiles.  The Fourier diagonal and the independence of the
two colors determine the Wick contractions.  The symbols
$\mathfrak h_a$ enter the stochastic estimates through amplitudes; the
dispersive phases depend only on the wave and Klein--Gordon propagators.
Since the color-$a$ stochastic convolution has local regularity
$-1/2-\beta_a-$, the product in \eqref{eq:intro-system} requires a singular
construction.

Write
\[
  \omega_{\W}(\xi)=c_{\W}|\xi|,
  \qquad
  \omega_{\K}(\xi)=\sqrt{m^2+c_{\K}^2|\xi|^2},
\]
and denote by $I_a$ the retarded propagator in channel $a\in\mathfrak C$.
With the convention of Definition~\ref{def:channel-convention}, the four operators are
\begin{equation}\label{eq:intro-four-operators}
\begin{aligned}
 \{T^{a;b,c}:(a;b,c)\in\mathfrak L\}
 &=\{T^{\W;\K,\K},T^{\K;\W,\W},
       T^{\W;\W,\K},T^{\K;\K,\W}\},\\
 T^{a;b,c}(w)&=I_a(w\prec\Psi_b)\circ\Psi_c.
\end{aligned}
\end{equation}
At finite cutoff, for $(a;b,c)\in\mathfrak L$, Wick's formula gives
\begin{equation}\label{eq:intro-wick-split}
 T^{a;b,c}_{\Lambda}=
 \begin{cases}
  \mathcal D^{a;b}_{\Lambda}+\mathcal B^{a;b,b}_{\Lambda},
    &(a;b,c)\in\mathfrak L_{\mathrm{diag}},\\
  \mathcal B^{a;b,c}_{\Lambda},
    &(a;b,c)\in\mathfrak L_{\mathrm{off}}
 \end{cases}
\end{equation}
where every $\mathcal B^{a;b,c}_{\Lambda}$ is centered.  On
$\mathfrak L_{\mathrm{diag}}$ one has $b=c$ and $a=b^\perp$.  On the
low--high support of the paraproduct, and
above a finite frequency threshold,
\begin{equation}\label{eq:intro-phase-gap}
  |\omega_a(\ell+q)-\omega_b(\ell)|\gtrsim |\ell|,
  \qquad a\ne b,
  \qquad |q|\ll|\ell|.
\end{equation}
A Volterra integration by parts supplies one inverse power of the high
frequency.  Since the color-$b$ covariance contributes
$|\mathfrak h_b(\ell)|^2\lesssim N^{2\beta_b}$ on $|\ell|\sim N$, the
resulting diagonal shell has size $N^{-1+2\beta_b}$.  If $b\ne c$, independence
of the colors eliminates the contraction and leaves a centered second chaos.
Multiplication by the inner physical localizer shifts the Duhamel frequency
from $q+\ell$ to $q+\ell+z$; retaining its output shell separates the
principal transfer region from a Schwartz-decaying remainder.

The remaining stochastic objects are the cross quadratic field
\[
  \Theta=\Psi_{\W}\Psi_{\K},
\]
the first Picard terms $V_a=I_a\Theta$, $a\in\mathfrak C$, and the terms
$V_{a^\perp}\circ\Psi_a$.  Continuous weighted phase-layer estimates yield
\[
  V_a\in C_T\mathcal C_{\mathrm{loc}}^{1/2-\betasum-},
  \qquad
  \partial_tV_a\in C_T\mathcal C_{\mathrm{loc}}^{-1/2-\betasum-}.
\]
For $\Gamma_a=V_{a^\perp}\circ\Psi_a$, the centered third-chaos part is
controlled in $\mathcal C^{-\betasum-\beta_a-}$, while the retained
first-chaos contraction is controlled in
$\mathcal C^{1/2-\betasum-\beta_a-}$.  The corresponding regularity exponents are
\begin{equation}\label{eq:intro-regularity-exponents}
\begin{array}{c|cccccc}
 Z&\Psi_a&\Theta&V_a&\partial_tV_a&\Gamma_a^{(3)}&\Gamma_a^{(1)}\\
\hline
s_Z
 &-\frac12-\beta_a-
 &-1-\betasum-
 &\frac12-\betasum-
 &-\frac12-\betasum-
 &-\betasum-\beta_a-
 &\frac12-\betasum-\beta_a-
\end{array}
\end{equation}
Both cubic components are selected by the finite-cutoff Wick algebra.  For a triple $(a;b,c)$, the centered operator carries the order
$\beta_b+\beta_c\le2\betastar$.  The restriction $\betastar<1/8$ leaves a
nonempty range of deterministic Sobolev, Strichartz, and Besov exponents for
all stochastic distributions and operators.

To pass from stationary full-space fields to a local problem, fix a compact set
$K$ and choose $\chi\triangleleft\rho$ equal to one on a neighborhood of the
maximal-speed backward cone.  Set
\[
  B_a^\rho=\rho(\Psi_a+V_a),
  \qquad
  \zeta_a^{\chi,\rho}=L_aB_a^\rho-\chi\Theta.
\]
The auxiliary system
\[
  L_{\W}u=\chi uv+\zeta_{\W}^{\chi,\rho},
  \qquad
  L_{\K}v=\chi uv+\zeta_{\K}^{\chi,\rho}
\]
has a spatially compact nonlinear term and spatially compact forcing.  On the
inner cone, $\chi=\rho=1$ and $\zeta_a^{\chi,\rho}=\xi_a$.  Finite-cutoff
reconstruction and finite propagation identify the solutions obtained from
different localization pairs; their common restriction defines the local
solution on the cone.

The main results are stated in \cref{thm:local-wp,thm:spectral-approximation}.
The first gives local existence, uniqueness among the solutions of
Definition~\ref{def:local-solution} for fixed $(\Theta,\Xi)$, independence of
the localization, and local Lipschitz stability.  For each fixed cutoff
profile, the second gives almost-sure full-sequence dyadic convergence on a
profile-dependent probability-one event.  For an arbitrary admissible cofinal
sequence, the finite-cutoff solutions are defined only on Borel existence
events, and convergence in probability includes convergence of those events
below the reference lifetime.

\subsection*{Scope of the result}
The construction is spatially local: every compact set $K$ has an almost
surely positive measurable lifetime $\tau_K$, and the argument does not require
a common positive lifetime over all of $\R^3$.  Uniqueness is formulated for
fixed enhanced data $(\Theta,\Xi)$ in the class of
Definition~\ref{def:local-solution}.  For a fixed cutoff profile the full
dyadic sequence converges almost surely on a profile-dependent event, whereas
general admissible cofinal cutoffs converge in probability in the sense of
Definition~\ref{def:partial-probability-convergence}.  The threshold
$\betastar<1/8$ is sufficient for the estimates below; optimality and a
stopping-time formulation of the lifetimes are not considered here.

\subsection*{Relation to earlier work}
Hairer's theory of regularity structures \cite{HairerRS} provides a general
local-expansion framework for singular SPDEs.  For the three-dimensional
quadratic stochastic wave equation, the paracontrolled approach was developed
by Gubinelli, Koch, and Oh \cite{GKO}.  The operator-valued Gaussian estimates
used below are related to the random tensor estimates of Deng, Nahmod, and Yue
\cite{DNY}; see also Kaneshiro \cite{Kaneshiro} for a new proof of the abstract
random tensor estimate.

Related periodic and full-space constructions appear in the author's preprints
\cite{ZhaoColorPhase,ZhaoCentered}.  The present paper treats a massless--massive
wave--Klein--Gordon system on $\R^3$ and combines continuous-frequency
stochastic estimates with physical localization, finite propagation, and a
local paracontrolled well-posedness theory.

The deterministic background for quadratic wave--Klein--Gordon systems
includes \cite{Katayama,IonescuPausaderWG,DongWyatt}; different-speed
Klein--Gordon interactions are treated in \cite{Germain,DengMulti}, and the
quadratic normal-form method originates in \cite{Shatah}.  Littlewood--Paley
conventions follow \cite{BCD}, while the paracontrolled notation follows
\cite{GIP}.  The proof has the
following dependency structure:
\[
 (\Psi,\Theta)
 \longrightarrow (V,\partial_tV,\Gamma)
 \longrightarrow (\mathcal D,\mathcal B)
 \longrightarrow \Xi
 \longrightarrow Z
 \longrightarrow (u,v,\mathcal N).
\]
The principal analytic input--output relations are collected below.  Each
regularity exponent is understood with an arbitrarily small strict loss.
\begin{center}
\small
\renewcommand{\arraystretch}{1.16}
\begin{tabularx}{\textwidth}{@{}P{0.18\textwidth}P{0.20\textwidth}P{0.36\textwidth}Y@{}}
\toprule
object or operator & input & output used below & decisive condition\\
\midrule
$\Psi_a$ & $\xi_a$ & $C_T\mathcal C_{\loc}^{-1/2-\beta_a-}$ & single-channel covariance bound\\
$\Theta$ & $(\Psi_{\W},\Psi_{\K})$ & $C_T\mathcal C_{\loc}^{-1-\betasum-}$ & independence of the two colors\\
$V_a=I_a\Theta$ & $\Theta$ & $C_T\mathcal C_{\loc}^{1/2-\betasum-}$ & weighted three-frequency phase estimate\\
$\Gamma_a^{\chi,\rho}$ & $(V_{a^\perp},\Psi_a)$ & $C_T\mathcal C^{-\beta_{\Gamma,a}-\kappa}\cap L_T^\infty B_{2,\infty}^{-\beta_{\Gamma,a}-\kappa}$ & third-chaos convolution and first-chaos phase estimate\\
$\mathcal D^{a;b}$ & $E_T^{2,\sigma}$ & $C_TH^{s-1}\cap L_T^\infty B_{2,\infty}^{\sigma-1}$ & $s<1-2\beta_b$\\
$\mathcal B^{a;b,c}$ & $E_T^{2,\sigma}$ & $C_TH^{s-1}\cap L_T^1B_{2,\infty}^{\sigma-1}$ & $s<1-\beta_{b,c}$ and $\beta_{b,c}<\sigma<1-\beta_{b,c}$\\
fixed-point sources & $Z\in\mathcal Z_T$ & $\mathfrak R_T^{s_1,\sigma}\times\mathfrak S_T^{s_2,\sigma}$ & parameter window \eqref{eq:parameter-window}\\
\bottomrule
\end{tabularx}
\end{center}

Section~\ref{sec:model} introduces the localization, approximation classes,
function spaces, and main theorems.  Section~\ref{sec:paracontrolled} derives
the paracontrolled system and constructs the stochastic distributions and
operators entering it.  The operators $T^{a;b,c}$ are constructed in
Section~\ref{sec:resonant-operators}.  Section~\ref{sec:deterministic-closure}
contains the deterministic fixed point, measurable lifetimes, spectral
approximation, and localization argument.  The appendices prove the first
Picard estimates and the estimates for $\Gamma_a$.

\section{Analytic setting and main theorems}\label{sec:model}
\subsection{Linear operators and paraproducts}

Fix the dispersive parameter vector and the two forcing-order envelopes
\begin{equation}\label{eq:parameter-vector}
\begin{gathered}
  \parvec=(\speedW,\speedK,\massK),
  \qquad
  \speedW,\speedK,\massK>0,
  \qquad
  \speedgap:=|\speedK-\speedW|>0,\\
  \betaW,\betaK\ge0,
  \qquad
  \betastar:=\max\{\betaW,\betaK\}<\frac18,
  \qquad
  \betasum:=\betaW+\betaK,
  \qquad
  \betagamma:=\betasum+\betastar.
\end{gathered}
\end{equation}
For $a\in\mathfrak C$, $\beta_a$ denotes the corresponding channel
order; $\betasum$ counts the two Gaussian factors in $\Theta$ and $V_a$, and
$\beta_{\Gamma,a}=\betasum+\beta_a\le\betagamma$ counts those in
$\Gamma_a$.  Define
\begin{equation}\label{eq:channel-linear-family}
\begin{aligned}
 c_a&:=
 \begin{cases}c_{\W},&a=\W,\\ c_{\K},&a=\K,\end{cases}
 &
 L_a&:=
 \begin{cases}
  \partial_t^2-c_{\W}^2\Delta,&a=\W,\\
  \partial_t^2-c_{\K}^2\Delta+m^2,&a=\K,
 \end{cases}\\
 \omega_a(\xi)&:=
 \begin{cases}
  c_{\W}|\xi|,&a=\W,\\
  \sqrt{m^2+c_{\K}^2|\xi|^2},&a=\K.
 \end{cases}
\end{aligned}
\end{equation}
Then
\begin{equation}\label{eq:main-system}
  L_{\W}u=uv+\xi_{\W},
  \qquad
  L_{\K}v=uv+\xi_{\K},
  \qquad (t,x)\in[0,T]\times\R^3,
\end{equation}
and $\speedmax:=\max_{a\in\mathfrak C}c_a$.  For fixed positive
numbers $\underline c<\overline c$, $\underline m<\overline m$, and
$\delta_*>0$, define the separated parameter class
\begin{equation}\label{eq:separated-parameter-class}
\begin{aligned}
  \mathfrak P(\underline c,\overline c,
  \underline m,\overline m,\delta_*)
  :=\bigl\{(\widetilde c_{\W},\widetilde c_{\K},\widetilde m):\ &
  \underline c\le \widetilde c_{\W},\widetilde c_{\K}\le\overline c,\\
  &\underline m\le\widetilde m\le\overline m,\quad
  |\widetilde c_{\W}-\widetilde c_{\K}|\ge\delta_*\bigr\}.
\end{aligned}
\end{equation}
Scalar linear estimates are uniform when the speed--mass parameters range
over a fixed class in \eqref{eq:separated-parameter-class}.  Stochastic
estimates are uniform when $(\betaW,\betaK)$ ranges over a compact subset of
\[
 \bigl\{(\gamma_{\W},\gamma_{\K})\in[0,\infty)^2:
          \max\{\gamma_{\W},\gamma_{\K}\}<\tfrac18\bigr\}
\]
and $\mathfrak H_{\mathrm{prof}}$ is bounded.  The
phase constants depend only on the dispersive parameters and the fixed
Littlewood--Paley supports.

\begin{proposition}[Uniform polynomial dependence on the speed gap]
\label{prop:speed-gap-dependence}
Fix $0<\underline c<\overline c$, $0<\underline m<\overline m$, fixed
physical localizers, and $\delta_*>0$ such that
$\mathfrak P(\underline c,\overline c,
\underline m,\overline m,\delta_*)$ is nonempty.  Set
$d_*:=\min\{1,\delta_*\}$ and choose the low--high aperture by the fixed rule
\eqref{eq:aperture}, using the class-uniform value of $\delta_0$ from
\cref{prop:uniform-phase-class}.  Write $\mathbf e$ for the finite vector formed by
$\betaW,\betaK$ and the strict regularity exponents occurring in the cited
estimates.  Let $\mathscr C_{\mathrm{ph}}$ be the finite collection of
constants arising from phase division in
\cref{lem:explicit-phase-gap,lem:dyadic-volterra,prop:localized-diagonal,lem:first-picard-low-output-gap,lem:cubic-one-frequency-layer}
and from the associated dyadic phase sums.  If $\mathbf e$ ranges in a compact
subset $\mathfrak E$ of its open admissible region, then there exist
$C<\infty$ and $J<\infty$ such that, uniformly for
$\parvec\in\mathfrak P(\underline c,\overline c,
\underline m,\overline m,\delta_*)$ and $\mathbf e\in\mathfrak E$,
\begin{equation}\label{eq:phase-constant-polynomial-gap}
  \max_{A\in\mathscr C_{\mathrm{ph}}} A
  \le C d_*^{-J}.
\end{equation}
Here $C$ and $J$ depend only on the compact speed--mass box, the fixed
Littlewood--Paley templates, finitely many Schwartz seminorms of the physical
localizers, and $\mathfrak E$; in particular, they are independent of
$\delta_*$, of the particular parameter vector, and of the forcing profiles.
Before exponent-dependent dyadic summation, no compactness assumption on the
forcing orders is needed: the raw phase constants depend only on the
dispersive parameters.  After stochastic amplitudes are inserted, the bounds
also depend on $\mathfrak H_{\mathrm{prof}}$ from
\eqref{eq:forcing-profile-amplitude-norm}.  The constants in
\cref{prop:parameter-uniform-strichartz,lem:covariance-synthesis,thm:centered-gaussian-operator}
may be chosen independently of $\delta_*$.
\end{proposition}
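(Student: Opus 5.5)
The plan is a bookkeeping argument: trace every constant in $\mathscr C_{\mathrm{ph}}$ back to finitely many elementary operations on the phases $\omega_a$, and check that each operation loses at most a fixed power of $d_*^{-1}$. Everything else is controlled by the compact box $\underline c\le c_a\le\overline c$, $\underline m\le m\le\overline m$, by the fixed Littlewood--Paley templates, and by finitely many Schwartz seminorms of the localizers.

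\textbf{Step 1: quantitative phase gap.} I would make \eqref{eq:intro-phase-gap} explicit, aiming for
\[
|\omega_a(\ell+q)-\omega_b(\ell)|\ge c_0\,d_*\,|\ell|, \qquad a\ne b,
\]
on the low--high region $|q|\le\delta_0 d_*|\ell|$, $|\ell|\ge R_0$. Here $c_0$ depends only on the box, and the threshold satisfies $R_0\lesssim d_*^{-1}$. This should follow directly from
\[
\bigl|\sqrt{m^2+c_{\K}^2|\ell|^2}-c_{\W}|\ell|\bigr|\ \ge\ |c_{\K}-c_{\W}|\,|\ell| - m
\]
and the Lipschitz bound $|\omega_a(\ell+q)-\omega_a(\ell)|\le\overline c\,|q|$. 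This is the step that fixes the aperture rule \eqref{eq:aperture} and the class-uniform $\delta_0$ of \cref{prop:uniform-phase-class}. The low-frequency part below $R_0$ contributes only a factor polynomial in $R_0$, hence polynomial in $d_*^{-1}$.

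\textbf{Step 2: propagation through each cited lemma.} For \cref{lem:explicit-phase-gap,lem:dyadic-volterra,prop:localized-diagonal,lem:first-picard-low-output-gap,lem:cubic-one-frequency-layer}, each Volterra integration by parts divides by a phase. Each division therefore costs at most one factor $(c_0d_*)^{-1}$ per integration by parts, plus derivatives of $1/\Phi$. The derivatives are bounded by $|\nabla\Phi|/|\Phi|^2$, and since $|\nabla\Phi|\le 2\overline c$, they cost $d_*^{-2}$ and so on. Only finitely many integrations by parts and derivatives occur, so $J$ is the sum of these finitely many exponents. For the localized estimate, the shift $q+\ell\mapsto q+\ell+z$ is absorbed by shrinking the aperture by a fixed factor. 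The remainder is handled by Schwartz decay of $\hat\rho$, which gives fixed seminorms times powers of $R_0$.

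\textbf{Step 3: dyadic phase sums.} The dyadic sums are geometric with ratio $2^{-\varepsilon}$, where $\varepsilon$ is a strict gap in $\mathbf e$. Compactness of $\mathfrak E$ gives $\varepsilon\ge\varepsilon_0>0$, so the geometric factor $(1-2^{-\varepsilon_0})^{-1}$ is uniform. The sums start from the threshold $R_0$, and the initial terms up to $R_0$ contribute $R_0^{O(1)}\lesssim d_*^{-O(1)}$.

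\textbf{Step 4: the remaining claims.} Before this summation, no $\beta$ enters, so the raw phase constants depend only on the dispersive parameters. Inserting the amplitudes $|\mathfrak h_b|^2$ multiplies the bounds by $\mathfrak H_{\mathrm{prof}}$. For \cref{prop:parameter-uniform-strichartz,lem:covariance-synthesis,thm:centered-gaussian-operator}, the constants involve only single-channel dispersion or abstract Gaussian operator bounds, never a cross-channel phase division. Hence they are independent of $\delta_*$.

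\textbf{Main obstacle.} The hardest part is the localized diagonal estimate together with the first-Picard low-output case. There, three frequencies interact, and the gap must be extracted in weighted phase layers rather than in a clean low--high region. I would first split into layers where $|\Phi|\sim 2^j d_*|\ell|$ versus $|\Phi|\lesssim d_*|\ell|$. On the second set I would show that it lies within distance $O(d_*^{-1})$ of the threshold region, where crude bounds suffice at polynomial cost.
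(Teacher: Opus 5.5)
Your Steps 1--4 follow the paper's own ledger. The common ingredients are:
\begin{itemize}
\item a two-frequency gap $\gtrsim d_*N$ above a threshold polynomial in $d_*^{-1}$ (your bound $R_0\lesssim d_*^{-1}$ versus the paper's $N_{\mathrm{sp}}\lesssim d_*^{-1/2}$ is immaterial);
\item a bounded number of phase divisions;
\item direct sums below the threshold;
\item compactness of $\mathfrak E$ for the geometric sums;
\item $\delta_*$-independence of the single-channel and abstract Hilbert-space constants.
\end{itemize}
Two minor points. The Volterra integrations by parts are in the time variable, so $1/\Phi$ is never differentiated in frequency; your derivative bounds are harmless but unnecessary. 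For the localized shift, the paper splits the inner defect at radius $\eta N$ with $\eta\simeq\delta_0\simeq d_*$. The far part then costs $\eta^{-L}\lesssim d_*^{-L}$ through the Schwartz tail of $\widehat\chi_1$; this is presumably what your ``shrink the aperture'' step means.

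The genuine gap is your plan for the three-frequency estimates in \cref{lem:first-picard-low-output-gap,lem:cubic-one-frequency-layer}. You propose to separate $\{|\Phi|\sim 2^jd_*|\ell|\}$ from $\{|\Phi|\lesssim d_*|\ell|\}$ and to show that the latter lies within $O(d_*^{-1})$ of a threshold region where crude bounds suffice. That set is not confined anywhere. In the balanced and high--low configurations the resonant sets are hypersurfaces present at every dyadic scale. For example, if $c_{\W}<c_{\K}$ and $|\eta|\ll|\xi|$, then $\omega_{\K}(\xi)-\omega_{\K}(\xi-\eta)-c_{\W}|\eta|\approx|\eta|\,(c_{\K}\cos\angle(\xi,\eta)-c_{\W})$, which vanishes on a cone. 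Bounding such regions without oscillation loses a full power of frequency, not a power of $d_*^{-1}$: you get $M^3$ instead of $M^{2+\delta+}$ in the cubic layer estimate, and $\langle\xi\rangle^{-3+2\nu+\cdots}$ instead of $\langle\xi\rangle^{-4+2\nu+\cdots}$ in the first Picard phase integral.

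You have also misplaced where the speed gap acts. In the balanced and high--low regions, the sublevel-volume bounds come from a nondegenerate radial derivative, $|\partial_\lambda\omega_{\K}|\gtrsim1$ or $\partial_r\omega_{\W}=c_{\W}$. These are uniform on the speed--mass box and independent of $\delta_*$. The gap enters only pointwise, in the low-output high--high region. There, the same reverse-triangle argument as in your Step 1 gives $|\Phi|\ge\kappa_{\mathrm{ann}}\delta_*M-C\langle\xi\rangle-CM^{-1}\ge c_3d_*M$ once $M\ge C_{\mathrm{lo}}\langle\xi\rangle$ with $C_{\mathrm{lo}}\simeq d_*^{-1}$. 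The intermediate range $\langle\xi\rangle\lesssim M\le C_{\mathrm{lo}}\langle\xi\rangle$ must then be handled by the balanced phase-layer estimate with comparability constant $C_{\mathrm{lo}}$, not by crude bounds. For instance, the two-center density satisfies $\rho\lambda/d\lesssim C_{\mathrm{lo}}M$. This costs only powers of $C_{\mathrm{lo}}\lesssim d_*^{-1}$, which is the bound the proposition needs.
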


\begin{proof}
All dependence on the separation parameter can be reduced to a finite list of
geometric quantities.  By \cref{prop:uniform-phase-class}, uniformly over the
compact speed--mass box,
\begin{equation}\label{eq:phase-parameter-ledger}
 \delta_0\ge c_1d_*,\qquad
 c_{\mathrm{sp}}\ge c_2d_*,\qquad
 N_{\mathrm{sp}}\le C_1d_*^{-1/2},
\end{equation}
where $c_1,c_2,C_1>0$ are independent of $\delta_*$.  With the fixed choice
\eqref{eq:aperture}, this also gives
$c_{\mathrm{ap}}^{-1}\le C d_*^{-1}$.

Two further frequency partitions depend on the speed gap.  The proofs of
\cref{lem:first-picard-low-output-gap,lem:cubic-one-frequency-layer} use, in
the low-output high--high region, the inequality (up to interchanging the two
channels)
\[
 \bigl|\omega_{\K}(\xi-\eta)-\omega_{\W}(\eta)\bigr|
 \ge \delta_*|\eta|-\overline c|\xi|-C|\eta|^{-1},
\]
with $C$ uniform on the speed--mass box.  If $|\eta|\sim M$, with
$\kappa_{\mathrm{ann}}M\le|\eta|$ for the fixed lower annular support
constant $\kappa_{\mathrm{ann}}>0$, and
$M\ge C_{\mathrm{lo}}\langle\xi\rangle$, then after subtracting the outer
phase the same lower bound is at least
\[
 \kappa_{\mathrm{ann}}\delta_*M
 -C\langle\xi\rangle-CM^{-1}.
\]
Taking $C_{\mathrm{lo}}=C_2d_*^{-1}$ with $C_2$ sufficiently large absorbs
both error terms, since $\langle\xi\rangle\ge1$, and leaves a lower bound
$c_3d_*M$.  Thus the region threshold and normalized phase gap may be chosen
so that
\begin{equation}\label{eq:low-output-parameter-ledger}
 C_{\mathrm{lo}}\le C_2d_*^{-1},\qquad
 c_{\mathrm{lo}}\ge c_3d_*.
\end{equation}
In the localized contraction proof, choose the near-localization radius as
$\eta=\eta_{\mathrm{LP}}\delta_0$, where
$\eta_{\mathrm{LP}}>0$ depends only on the fixed Fourier supports.  Then
$\eta^{-1}\le C d_*^{-1}$, and the far-localization estimate
\eqref{eq:localized-diagonal-far-tail} loses only a fixed power
$\eta^{-L}\lesssim_L d_*^{-L}$.

Consider first a high-frequency phase branch.  Every division appearing in
the cited formulas is by a phase bounded below by either
$c_{\mathrm{sp}}N$ or $c_{\mathrm{lo}}M$.  The number of such divisions is
bounded by a fixed integer determined by the displayed Volterra and endpoint
identities.  Equations \eqref{eq:phase-parameter-ledger} and
\eqref{eq:low-output-parameter-ledger} therefore bound every high-frequency
phase factor by a fixed power of $d_*^{-1}$.  The derivative lower bounds in
the balanced phase-layer regions do not use speed separation: on annular
shells they follow from
\[
 \omega_{\W}'(r)=c_{\W},\qquad
 \omega_{\K}'(r)=
 \frac{c_{\K}^2r}{\sqrt{m^2+c_{\K}^2r^2}},
\]
and are uniform on the compact positive parameter box.  All higher rescaled
phase derivatives are uniformly bounded there as well.

It remains to check the finitely many regions where no phase division is
made.  A direct sum over shells below $N_{\mathrm{sp}}$ has the form
\[
 \sum_{N<N_{\mathrm{sp}}}N^{\rho}
 \le C_{\mathfrak E}
 N_{\mathrm{sp}}^{\max\{\rho,0\}}
 \bigl(1+\log N_{\mathrm{sp}}\bigr),
\]
with $\rho$ ranging in a compact set determined by $\mathfrak E$.  By
\eqref{eq:phase-parameter-ledger} this is bounded by a fixed power of
$d_*^{-1}$; logarithmic factors are absorbed using
$1+|\log d_*|\lesssim_\varepsilon d_*^{-\varepsilon}$.  The finite-overlap
constants caused by the aperture and by the threshold $C_{\mathrm{lo}}$ are
controlled in the same way.  Modulation sums are uniform when their strict
summability margins range in the compact set $\mathfrak E$.

Thus every constant in $\mathscr C_{\mathrm{ph}}$ is obtained from finitely
many sums and products of quantities in
\eqref{eq:phase-parameter-ledger}--\eqref{eq:low-output-parameter-ledger},
fixed symbol seminorms, and compact exponent-margin constants.  This proves
\eqref{eq:phase-constant-polynomial-gap}.  The phase functions contain neither
$\betaW,\betaK$ nor the forcing profiles; these enter only through dyadic
weights and the covariance amplitudes.  Finally,
\cref{prop:parameter-uniform-strichartz,lem:covariance-synthesis} use only one
channel at a time, while \cref{thm:centered-gaussian-operator} is an abstract
Hilbert-space inequality.  Their constants are consequently uniform on the
compact speed--mass box without any lower bound on
$|c_{\W}-c_{\K}|$.
\end{proof}
For $a\in\mathfrak C$ define
\begin{align}
  S_a(t)(f,g)
  &:=\cos(t\omega_a(D))f
   +\frac{\sin(t\omega_a(D))}{\omega_a(D)}g,
  \label{eq:homogeneous-flow}\\
  I_aF(t)
  &:=\int_0^t\frac{\sin((t-s)\omega_a(D))}{\omega_a(D)}F(s)\dd s.
  \label{eq:duhamel-flow}
\end{align}
For the wave channel, the multiplier $\sin(t\speedW|\xi|)/(\speedW|\xi|)$ is defined at $\xi=0$ by its continuous value $t$.  The two principal propagation speeds are the parameters $\speedW$ and $\speedK$.

We use the Fourier convention
\[
  \wh f(\xi)=\int_{\R^3}e^{-\ii x\cdot\xi}f(x)\dd x,
  \qquad
  f(x)=\frac1{(2\pi)^3}\int_{\R^3}e^{\ii x\cdot\xi}
  \wh f(\xi)\dd\xi.
\]
Accordingly, the Fourier product constant used below is fixed once and for all
as
\begin{equation}\label{eq:fourier-product-constant}
 c_{\mathrm F}:=(2\pi)^{-3},
 \qquad
 \widehat{fg}(\xi)=c_{\mathrm F}\int_{\R^3}\widehat f(\eta)
 \widehat g(\xi-\eta)\dd\eta.
\end{equation}
Let $(P_N)_{N\in\Dyd}$ be a smooth radial inhomogeneous Littlewood--Paley decomposition, with $P_1$ containing the unit ball and $P_N$ supported in a fixed annulus for $N\ge2$.  Let $\delta_0=\delta_0(\parvec)>0$ be supplied by
Lemma~\ref{lem:explicit-phase-gap}.  Fix the low--high aperture by the rule
\begin{equation}\label{eq:aperture}
  c_{\mathrm{ap}}:=\frac{\delta_0}{64}.
\end{equation}
When the dispersive parameters range over a separated class, $\delta_0$ and
$c_{\mathrm{ap}}$ always denote the uniform choices supplied by
\cref{prop:uniform-phase-class}.
We write
\begin{equation}\label{eq:bony}
  f\prec g=\sum_N S_{<c_{\mathrm{ap}}N}f\,P_Ng,
  \qquad
  f\succ g=g\prec f,
  \qquad
  f\circ g=\sum_{N\sim_{\mathrm{ap}} M}P_Nf\,P_Mg.
\end{equation}
Here $N\sim_{\mathrm{ap}}M$ denotes the complementary finite-ratio region,
with constants depending on $c_{\mathrm{ap}}$, so that
$fg=f\prec g+f\succ g+f\circ g$.  We use the
standard paraproduct and commutator estimates from \cite{BCD,GIP}.  The aperture in
\eqref{eq:aperture} is chosen so that the actual smooth paraproduct supports lie
inside the proportional phase-gap window $|q|\le\delta_0|\ell|$.  On every
separated parameter class in \eqref{eq:separated-parameter-class}, the aperture
may be chosen uniformly.

\begin{lemma}[Almost-diagonal action of physical cutoffs]
\label{lem:physical-cutoff-almost-diagonal}
Let $\varphi\in C_c^\infty(\R^3)$.  For every $L>0$, every
$1\le q\le\infty$, and every two non-comparable dyadic scales $M,N$,
\begin{equation}\label{eq:physical-cutoff-almost-diagonal}
 \|P_M\varphi P_N\|_{\mathcal L(L^q,L^q)}
 \lesssim_{L,q,\varphi}
 \left(\frac{\min\{M,N\}}{\max\{M,N\}}\right)^L.
\end{equation}
Consequently, for every $r\in\R$,
\begin{align}
 \|\varphi f\|_{H^r}
 &\lesssim_{r,\varphi}\|f\|_{H^r},
 \label{eq:physical-cutoff-Sobolev}\\
 \|\varphi f\|_{B^r_{2,\infty}}
 &\lesssim_{r,\varphi}\|f\|_{B^r_{2,\infty}}.
 \label{eq:physical-cutoff-Besov}
\end{align}
In particular, these estimates hold for negative $r$.  Multiplication by
$\varphi$ preserves the little-Besov space $b^r_{2,\infty}$ and, for path
spaces, the uniform little-Besov tail condition
\eqref{eq:uniform-little-besov-tail} introduced below.  The same conclusions
hold for differences of two fixed physical cutoffs.
\end{lemma}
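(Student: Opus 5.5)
The proof of \eqref{eq:physical-cutoff-almost-diagonal} works on the Fourier side. For $f\in L^q$, the operator $P_M\varphi P_N$ has Schwartz kernel
\[
 K(x,y)=\int\!\!\int e^{\ii(x\cdot\xi-y\cdot\eta)}\psi_M(\xi)\,\widehat\varphi(\xi-\eta)\,\psi_N(\eta)\,\frac{\dd\xi\dd\eta}{(2\pi)^{6}},
\]
where $\psi_M,\psi_N$ are the Littlewood--Paley symbols. When $M,N$ are non-comparable (say $N\ge 2^{k_0}M$ with $k_0$ fixed by the annular supports), the two supports force $|\xi-\eta|\gtrsim\max\{M,N\}$. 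The input is the Schwartz decay $|\partial^\alpha\widehat\varphi(\zeta)|\lesssim_{\alpha,L'}\langle\zeta\rangle^{-L'}$.

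Rather than estimate the kernel directly, I would use Young's inequality in physical space. Write $P_N f=\check\psi_N*f$ and $\varphi=\sum_R P_R\varphi$. Since $P_M(P_R\varphi\,P_Nf)$ vanishes unless $R\gtrsim\max\{M,N\}$ (Fourier support of a product), only high-frequency pieces of $\varphi$ contribute. Then
\[
 \|P_M(P_R\varphi\,P_Nf)\|_{L^q}\lesssim\|P_R\varphi\|_{L^\infty}\|f\|_{L^q}\lesssim_L R^{-L}\|f\|_{L^q},
\]
using the uniform $L^1$ bounds on $\check\psi_M,\check\psi_N$. Summing over $R\gtrsim\max\{M,N\}$ gives $\max\{M,N\}^{-L}\le(\min/\max)^L$, valid for every $1\le q\le\infty$. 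The case $M=1$ or $N=1$ is covered by the same support argument with $P_1$ a ball multiplier.

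For \eqref{eq:physical-cutoff-Sobolev}--\eqref{eq:physical-cutoff-Besov}, write
\[
 P_M(\varphi f)=\sum_{N\sim M}P_M\varphi P_N f+\sum_{N\not\sim M}P_M\varphi P_Nf.
\]
The comparable block is bounded by $\|\varphi\|_{L^\infty}\sum_{N\sim M}\|P_Nf\|_{L^2}$. The non-comparable block is bounded via the $q=2$ case of \eqref{eq:physical-cutoff-almost-diagonal} by $\sum_N (\min/\max)^L\|P_Nf\|_{L^2}$. Multiplying by $M^r$ and using $M^r N^{-r}\le(\max/\min)^{|r|}$, the choice $L>|r|+1$ turns this into a discrete Schur-type kernel $(\min/\max)^{L-|r|}$. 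That kernel is bounded on $\ell^2$ and on $\ell^\infty$ over the dyadic index, which gives both the $H^r$ and $B^r_{2,\infty}$ bounds. This holds for all real $r$, including negative $r$; no duality argument is needed.

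For the little-Besov space, the same Schur bound shows that $\sup_{M\ge M_0}M^r\|P_M(\varphi f)\|_{L^2}$ is controlled by two terms. The first is $\sup_{N\ge M_0/2^{k}}N^r\|P_Nf\|_{L^2}$. The second is $2^{-k(L-|r|)}\|f\|_{B^r_{2,\infty}}$, coming from the low $N$. Letting $M_0\to\infty$ and then $k\to\infty$ shows that the tail vanishes. The bound is uniform over any family of $f$ with a uniform tail, so it transfers to the path-space uniform tail condition.

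Differences of two fixed cutoffs are again $C_c^\infty$, so all the conclusions apply to them as well. The only delicate point is the bookkeeping of the dyadic index ratio $k_0$ in the support argument; everything else is routine.
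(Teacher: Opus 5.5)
Your proposal is correct. For the main operator estimate \eqref{eq:physical-cutoff-almost-diagonal} it takes a different route from the paper; the remaining steps are the same in substance.

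\textbf{Operator estimate.} The paper estimates the kernel $K_{M,N}(x,y)$ of $P_M\varphi P_N$ directly. On non-comparable shells $|\xi-\eta|\gtrsim\max\{M,N\}$, so repeated integration by parts (equivalently, the vanishing moments of the larger-scale kernel) bounds both Schur integrals by $(\min\{M,N\}/\max\{M,N\})^L$. Schur's test then gives the bound.

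You instead Littlewood--Paley decompose the multiplier $\varphi$ itself. The Fourier support of $P_R\varphi\,P_Nf$ shows that only pieces with $R\gtrsim\max\{M,N\}$ survive $P_M$. You then combine Young's inequality with $\|P_R\varphi\|_{L^\infty}\lesssim_L R^{-L}$.

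Your route needs no kernel computation and works verbatim for every $1\le q\le\infty$. It even gives the slightly stronger bound $\max\{M,N\}^{-L}$. The paper's kernel formulation has a different advantage: the same pointwise off-diagonal kernel bounds are reused later, for example in the kernel-to-shell Schur lemma and the localized spectral block estimates.

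\textbf{Sobolev and Besov bounds.} Your weighted dyadic Schur summation with kernel $(\min/\max)^{L-|r|}$ is the same argument as the paper's. It gives \eqref{eq:physical-cutoff-Sobolev}--\eqref{eq:physical-cutoff-Besov} for all real $r$.

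\textbf{Little-Besov statements.} The paper obtains preservation of $b^r_{2,\infty}$ by density and uses a low/high split only for the uniform path tail. Your direct split of the Schur sum into $N\ge M_0 2^{-k}$ and $N<M_0 2^{-k}$ handles both at once, uniformly over families with a common tail. Either approach works.
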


\begin{proof}
The Fourier kernel is
$\rho_M(\xi)\widehat\varphi(\xi-\eta)\rho_N(\eta)$.  On non-comparable
shells, $|\xi-\eta|\gtrsim\max\{M,N\}$.  Repeated integration by parts in
the Fourier inversion formula, or equivalently the usual
almost-orthogonality argument with the vanishing moments of the larger-scale
kernel, gives for every $L>0$
\[
 \sup_x\int|K_{M,N}(x,y)|\dd y+
 \sup_y\int|K_{M,N}(x,y)|\dd x
 \lesssim_{L,\varphi}
 \left(\frac{\min\{M,N\}}{\max\{M,N\}}\right)^L.
\]
Schur's test proves \eqref{eq:physical-cutoff-almost-diagonal}; weighted
dyadic Schur summation gives \eqref{eq:physical-cutoff-Sobolev} and taking a
dyadic supremum gives \eqref{eq:physical-cutoff-Besov}.  Preservation of the
little-Besov closure follows by density.  For the uniform path tail, write
$f=P_{\le R}f+P_{>R}f$: boundedness controls the second term uniformly, while
the displayed off-diagonal estimate makes the high output of the first term
uniformly small.  Let first the output threshold and then $R$ tend to
infinity.
\end{proof}

\begin{lemma}[Compact-support continuity upgrade]
\label{lem:compact-support-continuity}
Let $K\Subset\R^3$, let $\alpha,\beta,r\in\R$ with $r<\beta$, and suppose
$f:[0,T]\to\mathcal S'(\R^3)$ satisfies
\begin{equation}\label{eq:compact-support-continuity-hypotheses}
 \supp f(t)\subset K,\qquad
 f\in C_T\mathcal C^\alpha\cap L_T^\infty B_{2,\infty}^\beta.
\end{equation}
Then $f\in C_TH^r$, and
\begin{equation}\label{eq:compact-support-continuity-bound}
 \|f\|_{C_TH^r}
 \lesssim_{K,\alpha,\beta,r}
 \|f\|_{C_T\mathcal C^\alpha}
 +\|f\|_{L_T^\infty B_{2,\infty}^\beta}.
\end{equation}
The same assertion holds for differences of two such paths with the same
compact support.
\end{lemma}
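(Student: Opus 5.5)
We would prove the estimate at each fixed time and then obtain continuity by applying the same bound to differences. The key observation is that compact support trades the weak integrability of $\mathcal C^\alpha=B^\alpha_{\infty,\infty}$ for $L^2$-integrability. Fix $\varphi\in C_c^\infty(\R^3)$ with $\varphi=1$ on a neighborhood of $K$, so that $f(t)=\varphi f(t)$ for every $t$. For each dyadic $N$ write
\[
 P_Nf(t)=P_N(\varphi f(t))=\sum_{M}P_N\varphi P_Mf(t).
\]
For $M\sim N$ we have $\|P_N\varphi P_Mf\|_{L^2}\le\|P_N\|\,\|\varphi\|_{L^2}\|P_Mf\|_{L^\infty}\lesssim N^{-\alpha}\|f\|_{\mathcal C^\alpha}$. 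For non-comparable $M,N$, Lemma~\ref{lem:physical-cutoff-almost-diagonal} in $L^2$ gives $\|P_N\varphi P_M\|_{\mathcal L(L^2,L^2)}\lesssim(\min/\max)^L$. Here we instead bound with the $B^\beta_{2,\infty}$ norm: $\|P_Mf\|_{L^2}\le M^{-\beta}\|f\|_{B^\beta_{2,\infty}}$. With $L$ large, the sum over non-comparable $M$ is $\lesssim N^{-\beta}\|f\|_{B^\beta_{2,\infty}}$. In fact we simply get $\|f(t)\|_{B^\beta_{2,\infty}}$ controlling everything, and hence the uniform bound
\[
 \sup_t\|f(t)\|_{H^r}\lesssim\sup_t\|f(t)\|_{B^\beta_{2,\infty}}
\]
for $r<\beta$, by summing $N^{2(r-\beta)}$. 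That gives the quantitative part of \eqref{eq:compact-support-continuity-bound}.

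The real issue is continuity in time with values in $H^r$, since $L^\infty_TB^\beta_{2,\infty}$ carries no time continuity. Here we use the $\mathcal C^\alpha$ continuity. For $s,t\in[0,T]$ set $g=f(t)-f(s)$, which is supported in $K$. Split $g=P_{\le R}g+P_{>R}g$. The high part satisfies
\[
 \|P_{>R}g\|_{H^r}\lesssim R^{r-\beta}\|f\|_{L^\infty_TB^\beta_{2,\infty}},
\]
uniformly in $s,t$. For the low part we use $g=\varphi g$ and the decomposition above. Then
\[
 \|P_Ng\|_{L^2}\lesssim\sum_M\|P_N\varphi P_M\|_{\mathcal L(L^\infty,L^2)}\|P_Mg\|_{L^\infty}.
\]
We need an $L^\infty\to L^2$ off-diagonal bound. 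This follows from the kernel of $P_N\varphi P_M$ being localized near $\supp\varphi$ up to Schwartz tails, so the $L^\infty\to L^2$ norm is $\lesssim(\min/\max)^L$ times a constant depending on $\varphi$. Equivalently, write $P_N\varphi P_M=P_N\varphi\tilde\varphi P_M+P_N\varphi(1-\tilde\varphi)P_M$ and treat the second piece by kernel decay. This yields $\|P_Ng\|_{L^2}\lesssim N^{L'}\|g\|_{\mathcal C^\alpha}$ for $N\le R$, with the high $M$ contributions summable thanks to the factor $(N/M)^L M^{-\alpha}$. Hence $\|P_{\le R}g\|_{H^r}\le C_R\|f(t)-f(s)\|_{\mathcal C^\alpha}\to0$ as $s\to t$. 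Choosing $R$ first, then $|s-t|$ small, gives $f\in C_TH^r$.

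The main obstacle is the $L^\infty\to L^2$ almost-diagonal estimate, since Lemma~\ref{lem:physical-cutoff-almost-diagonal} is stated only $L^q\to L^q$. We would handle it by the same kernel argument used for that lemma: the kernel $K_{N,M}(x,y)=\int\check\rho_N(x-z)\varphi(z)\check\rho_M(z-y)\dd z$ satisfies weighted bounds $\langle x\rangle^{2}|K_{N,M}(x,y)|$ integrable in $y$, uniformly up to the factor $(\min/\max)^L\max^{C}$, and polynomial losses are harmless at fixed $R$. The difference statement is immediate because the whole argument is linear in $f$.
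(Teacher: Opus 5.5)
Your proposal is correct and follows essentially the paper's argument. For each fixed shell, compact support makes $P_N\vartheta:\mathcal C^\alpha\to L^2$ bounded, so $t\mapsto P_Nf(t)$ is $L^2$-continuous; the $L_T^\infty B^\beta_{2,\infty}$ bound makes the tail $P_{>R}f$ uniformly small in $H^r$ because $r<\beta$; and a finite-shell/high-frequency split concludes. Your explicit $L^\infty\to L^2$ kernel estimate simply supplies the detail behind the paper's one-line boundedness claim. The almost-diagonal detour in your first paragraph is unnecessary, since $B^\beta_{2,\infty}\hookrightarrow H^r$ directly for $r<\beta$. Note also that the blockwise continuity is what upgrades the essential-supremum Besov bound to every $t$.
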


\begin{proof}
Choose $\vartheta\in C_c^\infty$ equal to one near $K$.  For each fixed
$N$, the smoothing operator $P_N\vartheta:\mathcal C^\alpha\to L^2$ is
bounded, hence $t\mapsto P_Nf(t)$ is $L^2$-continuous.  On the other hand,
for dyadic $L\ge2$,
\[
 \sup_{t\le T}\|P_{>L}f(t)\|_{H^r}^2
 \lesssim \|f\|_{L_T^\infty B_{2,\infty}^\beta}^2
 \sum_{N>L}N^{2(r-\beta)}
 \lesssim L^{2(r-\beta)}
 \|f\|_{L_T^\infty B_{2,\infty}^\beta}^2.
\]
The same bound applies to time differences.  A finite-shell/high-frequency
split therefore proves continuity in $H^r$ and
\eqref{eq:compact-support-continuity-bound}.  The difference statement is
identical.
\end{proof}

\begin{lemma}[Compactly supported lowering of the Lebesgue exponent]
\label{lem:compact-p-lowering}
Let $\gamma\ge0$, $1<p_0\le p_1<\infty$, and
$\chi_0\in C_c^\infty(\R^3)$.  Then
\begin{equation}\label{eq:compact-p-lowering}
 \|\chi_0 f\|_{W^{\gamma,p_0}}
 \lesssim_{\chi_0,\gamma,p_0,p_1}
 \|f\|_{W^{\gamma,p_1}}.
\end{equation}
The same estimate holds for time-dependent functions after taking any
$L_t^q$ norm on both sides.
\end{lemma}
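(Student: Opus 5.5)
The plan is to first handle integer $\gamma$ by Hölder and the Leibniz rule, then pass to general $\gamma\ge0$ by complex interpolation. This is possible because $f\mapsto\chi_0 f$ is linear, and Bessel potential spaces $W^{\gamma,p}=H^{\gamma,p}$ form a complex interpolation scale for $1<p<\infty$.

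\textbf{Integer case.} For $\gamma=k\in\N_0$, the Leibniz rule gives
\[
\partial^\alpha(\chi_0f)=\sum_{\beta\le\alpha}\binom{\alpha}{\beta}\,\partial^{\alpha-\beta}\chi_0\,\partial^\beta f .
\]
Each coefficient $\partial^{\alpha-\beta}\chi_0$ is supported in the fixed compact set $\supp\chi_0$. Hölder's inequality with $1/p_0=1/p_1+1/r$ then yields
\[
\|\partial^{\alpha-\beta}\chi_0\,\partial^\beta f\|_{L^{p_0}}\le\|\partial^{\alpha-\beta}\chi_0\|_{L^r}\|\partial^\beta f\|_{L^{p_1}},
\]
and the constants $\|\partial^{\alpha-\beta}\chi_0\|_{L^r}$ are finite. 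Summing over $|\alpha|\le k$ proves $\|\chi_0 f\|_{W^{k,p_0}}\lesssim\|f\|_{W^{k,p_1}}$.

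\textbf{Fractional case.} For $k<\gamma<k+1$, fix $p_0,p_1$ and write $\gamma=(1-\theta)k+\theta(k+1)$ with $\theta\in(0,1)$. Since the same multiplication operator is bounded $H^{k,p_1}\to H^{k,p_0}$ and $H^{k+1,p_1}\to H^{k+1,p_0}$, complex interpolation (both endpoint pairs have $p_0,p_1\in(1,\infty)$) gives boundedness $[H^{k,p_1},H^{k+1,p_1}]_\theta=H^{\gamma,p_1}\to H^{\gamma,p_0}$.

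If the paper's $W^{\gamma,p}$ for non-integer $\gamma$ is instead meant as the Slobodeckij space $B^\gamma_{p,p}$, the same argument works with real interpolation $(\cdot,\cdot)_{\theta,p}$. The mismatch of secondary indices $p_1$ versus $p_0$ is harmless: I would use the embedding $B^{\gamma}_{p_1,p_1}\hookrightarrow B^{\gamma}_{p_1,p_0}$ when $p_0\ge p_1$ is not available, and handle it directly instead. Concretely, I would use the Littlewood--Paley characterization: split $P_N(\chi_0 f)=\sum_M P_N\chi_0P_Mf$, apply the almost-diagonal bound of \cref{lem:physical-cutoff-almost-diagonal} for non-comparable $M,N$, and use Hölder on a fixed compact enlargement of $\supp\chi_0$ for comparable scales. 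The Schwartz tails of $P_N\chi_0$ are what allow the Hölder step, since $\chi_0$ is compactly supported.

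\textbf{Time-dependent statement.} The constant is independent of $t$, so the estimate holds pointwise in $t$. Taking the $L^q_t$ norm of both sides then preserves it.

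\textbf{Main obstacle.} The only delicate point is the fractional case with its interpolation identification. I would cite the standard identity $[H^{k,p},H^{k+1,p}]_\theta=H^{\gamma,p}$ and not reprove it.
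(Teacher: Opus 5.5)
Your main argument is correct for the Bessel-potential spaces $W^{\gamma,p}=H^{\gamma,p}$ that the paper uses, but it takes a different route from the paper's.

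\emph{Your route.} You prove the integer orders by Leibniz and H\"older. You then interpolate the single operator $f\mapsto\chi_0f$ between the $p_1$-scale and the $p_0$-scale via $[H^{k,p},H^{k+1,p}]_\theta=H^{\gamma,p}$.

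\emph{The paper's route.} The paper works directly with $\langle D\rangle^\gamma$. It chooses $\chi_1\in C_c^\infty$ equal to one near $\supp\chi_0$. The near piece $\chi_1\langle D\rangle^\gamma(\chi_0f)$ is bounded in $L^{p_0}$ by H\"older on $\supp\chi_1$ together with boundedness of smooth multiplication on $W^{\gamma,p_1}$. The far piece $(1-\chi_1)\langle D\rangle^\gamma\chi_0$ maps $L^{p_1}$ to $L^{p_0}$ because the supports are separated, so its kernel is smooth and rapidly decaying.

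\emph{Comparison.} The paper's argument handles every real $\gamma\ge0$ at once. It presupposes, however, the fractional pointwise-multiplier theorem and an off-diagonal kernel bound for $\langle D\rangle^\gamma$. Yours needs only integer-order estimates plus the standard complex-interpolation identity, which in effect also yields the multiplier theorem. In both arguments the time-dependent version follows pointwise in $t$.

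You should delete the paragraph on the Slobodeckij reading, because the claim made there is false. Fix non-integer $\gamma$ and $p_0<p_1$. Take $\psi\in C_c^\infty$ with $\chi_0=1$ near $\supp\psi$, and set $f=\sum_j a_j2^{-j\gamma}\cos(2^jx_1)\psi(x)$. Up to rapidly decaying interactions, the $2^j$ Littlewood--Paley block of $f$ is the $j$-th summand, so $\|f\|_{B^\gamma_{p,p}}\approx\|a\|_{\ell^p}$ for every $p$. Any $a\in\ell^{p_1}\setminus\ell^{p_0}$, for instance $a_j=j^{-1/p_0}$, then gives $\chi_0f=f\in B^\gamma_{p_1,p_1}\setminus B^\gamma_{p_0,p_0}$. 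This is the known failure of the local embedding $B^s_{p_1,p_1}\hookrightarrow B^s_{p_0,p_0}$, and no Littlewood--Paley or almost-diagonal argument can rescue it.

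Your interpolation proof survives precisely because $H^{\gamma,p}=F^\gamma_{p,2}$ has fine index $2$ independent of $p$. The paper's own proof measures $\langle D\rangle^\gamma(\chi_0f)$ in $L^{p_0}$, so the Bessel-potential reading is the intended one. The false aside therefore does not affect your main line.
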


\begin{proof}
Choose $\chi_1\in C_c^\infty$ equal to one near $\supp\chi_0$.  On
$\supp\chi_1$, finite-measure embedding and boundedness of smooth
multiplication on $W^{\gamma,p_1}$ give
\[
 \|\chi_1\la D\ra^\gamma(\chi_0f)\|_{p_0}
 \lesssim\|f\|_{W^{\gamma,p_1}}.
\]
The operator $(1-\chi_1)\la D\ra^\gamma\chi_0$ has a smooth rapidly
decaying kernel because the two spatial supports are separated, and is
therefore bounded from $L^{p_1}$ to $L^{p_0}$.  Adding the two pieces proves
the claim, pointwise in time.
\end{proof}

Fix once and for all a nonnegative
$\vartheta\in C_c^\infty(\R^3)$ which equals one on $[-1,1]^3$, and an integer
\begin{equation}\label{eq:pseudolocal-J}
 J_{\mathrm{ps}}\ge8.
\end{equation}
For $z\in\mathbb Z^3$ write $\vartheta_z(x)=\vartheta(x-z)$.

\begin{definition}[Weighted local tempered norm]
\label{def:weighted-tempered-norm}
For $f\in\mathcal S'(\R^3)$ set
\begin{equation}\label{eq:weighted-tempered-norm}
 \|f\|_{\mathfrak T_{J_{\mathrm{ps}}}}
 :=\sup_{z\in\mathbb Z^3}\la z\ra^{-J_{\mathrm{ps}}}
 \|\vartheta_z f\|_{H^{-J_{\mathrm{ps}}}}.
\end{equation}
The path space $C_T\mathfrak T_{J_{\mathrm{ps}}}$ is defined using the
corresponding supremum norm and strong continuity in
$\mathfrak T_{J_{\mathrm{ps}}}$.
\end{definition}

\begin{lemma}[Weighted local control and stationary lifting]
\label{lem:weighted-stationary-lifting}
The following assertions hold.
\begin{enumerate}[label=\textup{(\roman*)},leftmargin=2.3em]
\item The space $\mathfrak T_{J_{\mathrm{ps}}}$ is Banach and embeds
continuously into $\mathcal S'(\R^3)$.  Multiplication by a function in
$C_b^\infty$ is bounded on $\mathfrak T_{J_{\mathrm{ps}}}$.  Moreover, if a
convolution kernel $K$ satisfies
\[
 \int_{\R^3}\la x\ra^{2J_{\mathrm{ps}}+6}|K(x)|\dd x<\infty,
\]
then $f\mapsto K*f$ is bounded on $\mathfrak T_{J_{\mathrm{ps}}}$.  In
particular, the admissible multipliers $\pi_\Lambda$ are uniformly bounded on
this space.
\item If $r\ge-J_{\mathrm{ps}}$, then
\begin{equation}\label{eq:H-to-weighted-tempered}
 \|f\|_{\mathfrak T_{J_{\mathrm{ps}}}}
 \lesssim_r\|f\|_{H^r}.
\end{equation}
\item Let $Z$ be a spatially stationary random path.  If, for some
$p\ge2$ with $pJ_{\mathrm{ps}}>3$,
\[
 \sup_{z\in\mathbb Z^3}
 \|\vartheta_zZ\|_{L^p(\Omega;C_TH^{-J_{\mathrm{ps}}})}<\infty,
\]
then $Z\in L^p(\Omega;C_T\mathfrak T_{J_{\mathrm{ps}}})$.
\item If $Z_n-Z$ is spatially stationary and the preceding local norm tends to
zero uniformly in $z$, then
$Z_n\to Z$ in $L^p(\Omega;C_T\mathfrak T_{J_{\mathrm{ps}}})$.
The same convergence holds almost surely whenever
\[
 \sum_n\sup_{z\in\mathbb Z^3}
 \|\vartheta_z(Z_n-Z)\|_{L^p(\Omega;C_TH^{-J_{\mathrm{ps}}})}^{p}<\infty.
\]
\end{enumerate}
\end{lemma}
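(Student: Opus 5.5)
We prove the four items in order; the only substantive inputs are a bounded overlap of the translated cutoffs $\vartheta_z$ and a Kolmogorov/union-bound argument over the lattice $\mathbb Z^3$, which uses $pJ_{\mathrm{ps}}>3$.

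For (i), completeness goes as follows. If $(f_n)$ is Cauchy in $\mathfrak T_{J_{\mathrm{ps}}}$, then each $\vartheta_z f_n$ is Cauchy in $H^{-J_{\mathrm{ps}}}$ and converges to some $g_z$. Since $\sum_z\vartheta_z\ge1$ with bounded overlap, we can fix a partition of unity $\psi_z=\vartheta_z\tilde\psi_z$ with $\tilde\psi_z$ uniformly bounded in $C_b^\infty$. The pairing $\langle f,\phi\rangle=\sum_z\langle\vartheta_z f,\tilde\psi_z\phi\rangle$ then shows two things. First, $|\langle f,\phi\rangle|\lesssim\|f\|_{\mathfrak T}\sum_z\langle z\rangle^{J_{\mathrm{ps}}}\|\tilde\psi_z\phi\|_{H^{J_{\mathrm{ps}}}}$, and the sum is a Schwartz seminorm of $\phi$; this gives the embedding into $\mathcal S'$. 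Second, the limits $g_z$ patch together to a tempered $f$ with $\vartheta_z f=g_z$, which gives completeness.

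Still in (i), multiplication by $a\in C_b^\infty$ is bounded because $\vartheta_z a f=(\vartheta_z a)\,\tilde\vartheta_z f$, where $\tilde\vartheta_z$ equals one near $\supp\vartheta_z$. Since $\tilde\vartheta_z$ is covered by finitely many $\vartheta_{z'}$ with $|z'-z|\lesssim1$, and multiplication by smooth bounded functions is bounded on $H^{-J_{\mathrm{ps}}}$ uniformly in $z$, the claim follows.

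For convolution in (i), decompose $K=\sum_{k\in\mathbb Z^3}K\one_{Q_k}$ over unit cubes and write $\vartheta_z(K*f)=\sum_k\vartheta_z\bigl((K\one_{Q_k})*f\bigr)$. Only $f$ near $z-k$ contributes, up to the cutoff $\tilde\vartheta_{z-k}$. Convolution with an $L^1$ kernel is bounded on $H^{-J_{\mathrm{ps}}}$ with norm $\|K\one_{Q_k}\|_{L^1}$. Hence
\[
 \|\vartheta_z(K*f)\|_{H^{-J_{\mathrm{ps}}}}
 \lesssim\sum_k\|K\one_{Q_k}\|_{L^1}\,\langle z-k\rangle^{J_{\mathrm{ps}}}\|f\|_{\mathfrak T}.
\]
Peetre's inequality gives $\langle z-k\rangle^{J_{\mathrm{ps}}}\le C\langle z\rangle^{J_{\mathrm{ps}}}\langle k\rangle^{J_{\mathrm{ps}}}$, and the resulting sum $\sum_k\langle k\rangle^{J_{\mathrm{ps}}}\|K\one_{Q_k}\|_{L^1}$ is finite under the stated moment condition, which has room to spare. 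Admissible multipliers $\pi_\Lambda$ have kernels that are uniformly Schwartz, or at least satisfy this moment bound uniformly, so they are covered.

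Item (ii) is immediate: $\|\vartheta_z f\|_{H^{-J_{\mathrm{ps}}}}\le\|\vartheta_z f\|_{H^r}\lesssim\|f\|_{H^r}$ uniformly in $z$, using $r\ge-J_{\mathrm{ps}}$ and a translation-invariant multiplier bound, and the weight $\langle z\rangle^{-J_{\mathrm{ps}}}\le1$.

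For (iii), bound the supremum by the $\ell^p$ sum:
\[
 \E\Bigl[\sup_z\langle z\rangle^{-pJ_{\mathrm{ps}}}\|\vartheta_zZ\|_{C_TH^{-J_{\mathrm{ps}}}}^p\Bigr]
 \le\sum_z\langle z\rangle^{-pJ_{\mathrm{ps}}}\sup_{z'}\E\|\vartheta_{z'}Z\|^p_{C_TH^{-J_{\mathrm{ps}}}}.
\]
The lattice sum is finite exactly when $pJ_{\mathrm{ps}}>3$, and the uniform-in-$z$ bound is the hypothesis; stationarity makes it automatic once it holds at $z=0$.

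The step needing care in (iii) is strong continuity in $\mathfrak T$, not merely weak-$*$ continuity. Almost surely each $t\mapsto\vartheta_z Z(t)$ is continuous into $H^{-J_{\mathrm{ps}}}$, because it lies in $C_TH^{-J_{\mathrm{ps}}}$, and this holds for all $z$ simultaneously since there are countably many. It remains to make the tails uniformly small. I would first try applying the same moment sum to $\sup_{|z|>R}$ with a slightly weaker weight $\langle z\rangle^{-J'}$, $3/p<J'<J_{\mathrm{ps}}$. This gives a random constant $C(\omega)$ with $\|\vartheta_zZ\|_{C_TH^{-J_{\mathrm{ps}}}}\le C(\omega)\langle z\rangle^{J'}$. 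Then $\langle z\rangle^{-J_{\mathrm{ps}}}\|\vartheta_zZ\|_{C_TH^{-J_{\mathrm{ps}}}}\to0$ uniformly as $|z|\to\infty$, which reduces continuity to the finitely many remaining $z$.

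Item (iv) repeats the moment sum with $Z_n-Z$ in place of $Z$, which gives convergence in $L^p(\Omega;C_T\mathfrak T_{J_{\mathrm{ps}}})$. Under the summability hypothesis, Borel--Cantelli (via $\sum_n\E\|Z_n-Z\|^p<\infty$) gives almost sure convergence.
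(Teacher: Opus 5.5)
Your proof is correct and follows essentially the paper's argument: a lattice partition of unity gives the $\mathcal S'$ embedding and completeness; the cell decomposition with Peetre's inequality gives the multiplication and convolution bounds; for (iii), $\mathbb E\|Z\|_{C_T\mathfrak T_{J_{\mathrm{ps}}}}^p$ is bounded by the lattice sum $\sum_z\langle z\rangle^{-pJ_{\mathrm{ps}}}\mathbb E\|\vartheta_zZ\|_{C_TH^{-J_{\mathrm{ps}}}}^p$, which is finite because $pJ_{\mathrm{ps}}>3$; and (iv) follows by Borel--Cantelli. Your one deviation, the weakened weight $J'$ in the strong-continuity step of (iii), is harmless but unnecessary: almost-sure finiteness of that same weighted lattice sum already gives $\sup_{|z|>R}\langle z\rangle^{-J_{\mathrm{ps}}}\|\vartheta_zZ\|_{C_TH^{-J_{\mathrm{ps}}}}\to0$ as $R\to\infty$, which is exactly the uniform tail smallness the paper uses.
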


\begin{proof}
Let $(\psi_z)_{z\in\mathbb Z^3}$ be a smooth lattice partition with
$\supp\psi_z\subset\{\vartheta_z=1\}$.  For $\phi\in\mathcal S$,
\[
 |\langle f,\phi\rangle|
 \le\sum_z\|\vartheta_zf\|_{H^{-J_{\mathrm{ps}}}}
          \|\psi_z\phi\|_{H^{J_{\mathrm{ps}}}}
 \lesssim \|f\|_{\mathfrak T_{J_{\mathrm{ps}}}}p_A(\phi),
\]
which yields the embedding into $\mathcal S'$; local completeness and
compatibility of the cellwise limits give completeness.  The same cell
decomposition, together with
$\la w\ra^{J_{\mathrm{ps}}}\lesssim
 \la z\ra^{J_{\mathrm{ps}}}\la z-w\ra^{J_{\mathrm{ps}}}$, proves the
standard weighted uniformly-local bounds for multiplication and convolution.
The rescaled kernels of admissible multipliers have uniformly bounded
weighted $L^1$ moments, so they are uniformly bounded on
$\mathfrak T_{J_{\mathrm{ps}}}$.  This proves (i), and (ii) follows from the
translation-uniform bound $H^r\to H^{-J_{\mathrm{ps}}}$ after localization.

For (iii), put $A_z=\|\vartheta_zZ\|_{C_TH^{-J_{\mathrm{ps}}}}$.  Since
$pJ_{\mathrm{ps}}>3$,
\[
 \E\|Z\|_{C_T\mathfrak T_{J_{\mathrm{ps}}}}^p
 \le\sum_z\la z\ra^{-pJ_{\mathrm{ps}}}\E A_z^p<\infty.
\]
The weighted $\ell^p$ tail is uniformly small in time, while finitely many
cell coordinates are continuous; hence the path is strongly continuous in
$\mathfrak T_{J_{\mathrm{ps}}}$.  Applying the same estimate to $Z_n-Z$
proves (iv).  Under the stated summability hypothesis, Markov's inequality
and Borel--Cantelli give almost-sure convergence.
\end{proof}

\begin{remark}[Norm localization versus transfer frequency]
\label{rem:norm-versus-transfer}
The preceding lemmas concern boundedness of fixed physical multipliers in
function-space norms.  For the Duhamel term, the propagated frequency in
Lemma~\ref{lem:inner-localization-transfer} is $p=q+\ell+z$, where $z$ is
the Fourier defect of the inner multiplier.  The
principal sector extracts the inverse power of the actual output shell, and
the complementary sector is summable by Schwartz decay in $z$.
\end{remark}

For a Banach space $X$ and $1\le q\le\infty$, write
\[
  C_TX:=C([0,T];X),
  \qquad
  C_T^1X:=C^1([0,T];X),
  \qquad
  L_T^qX:=L^q([0,T];X).
\]
We use
\[
  \cC^s=B^s_{\infty,\infty},
  \qquad
  \Btwo{s}=B^s_{2,\infty}.
\]
All Sobolev and Besov spaces are over $\R^3$ unless a local subscript is
displayed.  An exponent $r-$ means $r-\varepsilon$ for every sufficiently
small $\varepsilon>0$; all theorem statements below use a fixed loss
$\kappa>0$.

\subsection{Localization and spectral approximation}
\label{subsec:localization}

Fix a deterministic reference horizon $0<T_0\le1$.  For $r>0$ and
$K\Subset\R^3$, write
\begin{equation}\label{eq:expanded-compact}
  K^{[r]}:=\set{x\in\R^3:\dist(x,K)\le r}.
\end{equation}
For $s\in\R$, the space $\mathcal C^s_{\mathrm{loc}}(\R^3)$ consists of
distributions $f$ such that $\varphi f\in\mathcal C^s$ for every
$\varphi\in C_c^\infty(\R^3)$.  Local Sobolev and Besov spaces are defined
analogously.

\begin{definition}[Localization pairs]\label{def:adapted-localization}
A pair $(\chi,\rho)\in C_c^\infty(\R^3)^2$ is adapted to $(K,T_0)$ if
\begin{equation}\label{eq:localization-geometry}
  0\le\chi,\rho\le1,
  \qquad
  \chi\equiv1\ \text{on }K^{[\speedmax T_0+3]},
  \qquad
  \rho\equiv1\ \text{on }(\supp\chi)^{[2]}.
\end{equation}
We call $(\chi,\rho)$ an adapted localization pair and write
$\chi\triangleleft\rho$.  Constants may depend on finitely many seminorms of
this fixed pair.
\end{definition}

Let the spatially stationary stochastic objects $\Psi_a,\Theta,V_a$ be defined in
Sections~\ref{sec:baseline-lift} and~\ref{app:first-picard}.  For an adapted localization
pair set
\begin{equation}\label{eq:localized-baseline}
  B_a^\rho:=\rho(\Psi_a+V_a),
  \qquad a\in\mathfrak C,
\end{equation}
and define the auxiliary forcing
\begin{equation}\label{eq:auxiliary-forcing}
  \zeta_a^{\chi,\rho}:=L_aB_a^\rho-\chi\Theta.
\end{equation}
The expression is interpreted distributionally.  Equivalently,
\begin{equation}\label{eq:auxiliary-forcing-expanded}
  \zeta_a^{\chi,\rho}
  =\rho\xi_a+[L_a,\rho]\Psi_a
   +(\rho-\chi)\Theta+[L_a,\rho]V_a,
\end{equation}
where $[L_a,\rho]$ contains only spatial derivatives of the fixed smooth
multiplier.  Hence $\zeta_a^{\chi,\rho}$ is spatially compact.

\begin{lemma}[Local identity of the auxiliary forcing]
\label{lem:auxiliary-forcing-local}
If $U\subset\R^3$ is open and $\chi=\rho=1$ on a neighborhood of $U$, then
\begin{equation}\label{eq:auxiliary-forcing-equals-noise}
  \zeta_a^{\chi,\rho}=\xi_a
  \quad\text{in }\mathcal D'((0,T_0)\times U).
\end{equation}
The same identity holds at every finite spectral cutoff.
\end{lemma}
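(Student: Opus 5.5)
The plan is to use the expanded form \eqref{eq:auxiliary-forcing-expanded}, or equivalently the definition \eqref{eq:auxiliary-forcing} together with the defining equations of $\Psi_a$ and $V_a$, and to test against an arbitrary $\phi\in C_c^\infty((0,T_0)\times U)$. The argument rests on two facts. First, every term other than $\rho\xi_a$ in \eqref{eq:auxiliary-forcing-expanded} carries either a derivative of $\rho$ or the factor $\rho-\chi$. Second, $L_a$ is a differential operator and therefore local.

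The first step is to fix an open $U'\supset\overline{\supp\phi}\cap(\{t\}\times\R^3)$ projections, i.e.\ a neighborhood $U_1$ of the spatial projection of $\supp\phi$, with $U_1\subset U$ and $\chi=\rho=1$ on $U_1$. On $(0,T_0)\times U_1$ I then check each term separately.
\begin{itemize}
\item Since $[L_a,\rho]=-c_a^2(2\nabla\rho\cdot\nabla+\Delta\rho)$ involves only derivatives of $\rho$, and these vanish on $U_1$, the distributions $[L_a,\rho]\Psi_a$ and $[L_a,\rho]V_a$ pair to zero with $\phi$.
\item Because $\rho-\chi\equiv0$ on $U_1$, the term $(\rho-\chi)\Theta$ vanishes there.
\item We have $\rho\xi_a=\xi_a$ on $U_1$, since multiplying the noise by a smooth function equal to one near the support of the test function leaves the pairing unchanged: $\langle\rho\xi_a,\phi\rangle=\xi_a(\rho\phi)=\xi_a(\phi)$.
\end{itemize}

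The remaining issue is to justify the expansion \eqref{eq:auxiliary-forcing-expanded} itself. This amounts to checking that $L_a\Psi_a=\xi_a$ and $L_aV_a=\Theta$ hold distributionally on $(0,T_0)\times\R^3$. Both follow from the Duhamel formula \eqref{eq:duhamel-flow}, because $I_a$ is a fundamental solution of $L_a$ with zero data: $\Psi_a$ (up to its homogeneous part, which is annihilated by $L_a$) and $V_a=I_a\Theta$ satisfy these identities. Given them, the Leibniz rule gives
\[
L_a(\rho X)=\rho L_aX+[L_a,\rho]X .
\]
Applying this to $X=\Psi_a+V_a$ yields \eqref{eq:auxiliary-forcing-expanded}. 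For the limiting objects, the distributional identities are obtained as limits of the finite-cutoff ones, since the convergences take place in spaces embedding into $\mathcal D'$.

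At finite spectral cutoff $\Lambda$, the same computation applies verbatim with $\Psi_{a,\Lambda}$, $V_{a,\Lambda}=I_a\Theta_\Lambda$, and $\xi_{a,\Lambda}$. Here everything is smooth in space, so the identities $L_a\Psi_{a,\Lambda}=\xi_{a,\Lambda}$ and $L_aV_{a,\Lambda}=\Theta_\Lambda$ hold classically in space and distributionally in time. The only point needing care, which I expect to be the main (though mild) obstacle, is to make sure that the spectral cutoff is applied to the noise before localization, i.e.\ that $\zeta^{\chi,\rho}_{a,\Lambda}$ is defined as $L_aB^\rho_{a,\Lambda}-\chi\Theta_\Lambda$ with the same physical multipliers. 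If it is, no commutation between $\pi_\Lambda$ and $\rho$ is needed.
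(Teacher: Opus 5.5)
Your proposal is correct and is essentially the paper's argument. On $U$ the multipliers $\chi,\rho$ are identically one, so their commutators with the local operator $L_a$ vanish there; combining this with $L_a\Psi_a=\xi_a$, $L_aV_a=\Theta$ and testing against $\phi\in C_c^\infty((0,T_0)\times U)$ gives the identity, and the cutoff case is verbatim because \eqref{eq:finite-auxiliary-forcing} uses the same physical multipliers. (The auxiliary neighbourhood $U_1$ in your first step is harmless but unnecessary, since $\rho\equiv1$ on the open set $U$ already forces all derivatives of $\rho$ to vanish there.)
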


\begin{proof}
On $U$, the multipliers are constant and their commutators with $L_a$ vanish.
Since $L_a\Psi_a=\xi_a$ and $L_aV_a=\Theta$, one has
$L_a(\Psi_a+V_a)-\Theta=\xi_a$.  Testing against functions supported in
$(0,T_0)\times U$ proves the claim.  The cutoff statement is identical.
\end{proof}

The global auxiliary equation associated with $(\chi,\rho)$ is
\begin{equation}\label{eq:auxiliary-system}
\begin{aligned}
  \LW u^{\chi,\rho}
  &=\chi u^{\chi,\rho}v^{\chi,\rho}
    +\zeta_{\W}^{\chi,\rho},\\
  \LK v^{\chi,\rho}
  &=\chi u^{\chi,\rho}v^{\chi,\rho}
    +\zeta_{\K}^{\chi,\rho}.
\end{aligned}
\end{equation}
It is endowed with the prescribed deterministic Cauchy data.  Since the stochastic convolutions and the first Picard objects have zero Cauchy data, the entire deterministic datum is assigned to the $Y$ coordinates in the decomposition below.
After subtracting the compact baseline \eqref{eq:localized-baseline}, every
nonlinear source in \eqref{eq:auxiliary-system} is multiplied by $\chi$.
Thus the deterministic fixed point is posed in global, unweighted
Sobolev--Besov spaces even though the original spatially stationary convolutions are only
locally controlled.

\begin{lemma}[Finite-cutoff auxiliary solvability]
\label{lem:finite-cutoff-auxiliary-solvability}
Fix $r>3/2$, an admissible cutoff $\pi_\Lambda$, an adapted localization pair
$(\chi,\rho)$, and the smoothed Cauchy data $y_\Lambda$.  For almost every
$\omega\in\Omega$ there exists $T_\Lambda(\omega)>0$ such that
\eqref{eq:cutoff-system} has a unique solution of the form
\[
  u_\Lambda^{\chi,\rho}=B_{\W,\Lambda}^\rho+R_{\W,\Lambda},
  \qquad
  v_\Lambda^{\chi,\rho}=B_{\K,\Lambda}^\rho+R_{\K,\Lambda},
\]
with
\[
  R_{a,\Lambda}\in C([0,T_\Lambda];H^r)
  \cap C^1([0,T_\Lambda];H^{r-1}),
  \qquad a\in\mathfrak C.
\]
If a finite-cutoff fixed point exists on $[0,T]$, then its
reconstruction agrees with this solution on $[0,T\wedge T_\Lambda]$.
\end{lemma}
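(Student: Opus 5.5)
The plan is to treat the cutoff system as a classical semilinear wave--Klein--Gordon system with smooth, spatially compact forcing. Since $\pi_\Lambda$ is an admissible cutoff, for almost every $\omega$ the regularized fields $\Psi_{a,\Lambda}$, $\Theta_\Lambda$, $V_{a,\Lambda}$ are smooth in space, continuous in time, and have all spatial derivatives locally bounded. The multipliers $\rho,\chi$ and the localized baseline $B^\rho_{a,\Lambda}=\rho(\Psi_{a,\Lambda}+V_{a,\Lambda})$ are compactly supported. Hence $B^\rho_{a,\Lambda}\in C([0,T_0];H^{r'})\cap C^1([0,T_0];H^{r'-1})$ for every $r'$.

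Substituting $u=B^\rho_{\W,\Lambda}+R_\W$ and $v=B^\rho_{\K,\Lambda}+R_\K$ into \eqref{eq:cutoff-system} and using $L_aB^\rho_{a,\Lambda}=\zeta^{\chi,\rho}_{a,\Lambda}+\chi\Theta_\Lambda$ gives the remainder system
\[
 L_aR_a=\chi\bigl(R_\W R_\K+B^\rho_{\W,\Lambda}R_\K+R_\W B^\rho_{\K,\Lambda}\bigr)
 +\chi\bigl(B^\rho_{\W,\Lambda}B^\rho_{\K,\Lambda}-\Theta_\Lambda\bigr).
\]
The Cauchy data for this system are $y_\Lambda$, since the baseline has zero data. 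On $\supp\chi$ we have $\rho=1$, so the last source equals $\chi(\Psi_\W V_\K+V_\W\Psi_\K+V_\W V_\K)_\Lambda$. This expression is smooth and compactly supported, and it lies in $C_TH^{r-1}$.

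Next, write the remainder system in Duhamel form $R_a=S_a(t)y_{a,\Lambda}+I_a(\cdots)$ and run a contraction in $X_T=C([0,T];H^r)\cap C^1([0,T];H^{r-1})$. Three ingredients are needed:
\begin{itemize}
\item the energy estimate $\|I_aF\|_{X_T}\lesssim T\|F\|_{C_TH^{r-1}}$, valid for both channels, since $\omega_a(D)^{-1}\sin$ maps $H^{r-1}\to H^r$ with $\langle\xi\rangle$-weights (at $\xi=0$ the wave symbol is bounded by $t$);
\item the algebra property of $H^r$ for $r>3/2$, which gives $\|R_\W R_\K\|_{H^{r-1}}\le\|R_\W R_\K\|_{H^r}\lesssim\|R_\W\|_{H^r}\|R_\K\|_{H^r}$;
\item the multiplier bound $\|\chi B^\rho_{b,\Lambda}R\|_{H^{r-1}}\lesssim\|B^\rho_{b,\Lambda}\|_{C^{k}}\|R\|_{H^r}$ for $k>r$.
\end{itemize}
Together these make the Duhamel map a contraction on a ball of radius $2\|y_\Lambda\|+1$ once $T_\Lambda(\omega)$ is small. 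Here $T_\Lambda(\omega)$ depends on $\|y_\Lambda\|_{H^r\times H^{r-1}}$ and on $\sup_{t\le T_0}\|\rho(\Psi_{a,\Lambda}+V_{a,\Lambda})\|_{C^k}$, which is finite almost surely. Uniqueness in $X_T$ follows from the same difference estimate combined with Gronwall's lemma.

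The main point to verify is the almost-sure finiteness of these smooth norms of the regularized stochastic objects, and measurability of $T_\Lambda$. I would derive this from the fact that for a fixed admissible cutoff, $\Psi_{a,\Lambda}$ is a Gaussian process with smooth covariance. Kolmogorov's criterion applied to derivatives on $\supp\rho\times[0,T_0]$ gives continuity, and $V_{a,\Lambda}=I_a\Theta_\Lambda$ inherits smoothness on compacts; by finite propagation only a compact neighbourhood of $\supp\rho$ is needed. I would define $T_\Lambda$ by an explicit formula in these norms so that it is measurable.

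For the last assertion, let a finite-cutoff fixed point exist on $[0,T]$. Its reconstruction $u=B^\rho_{\W,\Lambda}+R'_\W$ solves the same remainder system with the same data. At finite cutoff the paracontrolled reconstruction is an algebraic identity, because the resonant products are classical. The reconstructed remainder then lies in $X_{T\wedge T_\Lambda}$. Uniqueness from the Gronwall step then gives agreement on $[0,T\wedge T_\Lambda]$.
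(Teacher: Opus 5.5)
Your construction of the local solution is the same as the paper's. You substitute the compact baseline $B^\rho_{a,\Lambda}$ and reduce \eqref{eq:cutoff-system} to a semilinear wave--Klein--Gordon system for $R_\Lambda$ with smooth, compactly supported random coefficients and source. You then close a contraction in $C_TH^r\cap C_T^1H^{r-1}$ using the energy estimate and the algebra property of $H^r$, $r>3/2$. That part, including the Gronwall uniqueness in this class, is fine.

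The gap is in the last assertion. A finite-cutoff fixed point $Z_\Lambda\in\mathcal Z_T$ only gives $X_{a,\Lambda}\in\widetilde X_T^{s_1,\sigma}$ and $Y_{a,\Lambda}\in\widetilde Y_T^{s_2,\sigma}$. Hence the reconstructed remainder lies a priori only in $C_TH^{s_1}$ with $s_1<1/2$. This is far below the class $C_TH^r\cap C_T^1H^{r-1}$, $r>3/2$, where your uniqueness holds. The reconstruction identity of Proposition~\ref{prop:finite-cutoff-reconstruction} shows that this remainder solves the same remainder equation, but it does not upgrade its regularity. So ``the reconstructed remainder then lies in $X_{T\wedge T_\Lambda}$'' does not follow. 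Nor can your energy/Gronwall argument be run at regularity $s_1$: the difference equation contains a product of the rough remainder with the difference, and a product of two $H^{s_1}$ functions is not controlled in $H^{s_1-1}$ when $s_1<1/2$.

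What is missing is the finite-cutoff regularity bootstrap, Proposition~\ref{prop:finite-cutoff-regularity}. The paper uses it for exactly this identification in Lemma~\ref{lem:measurable-partial-solution-map}. It runs in three steps.
\begin{enumerate}
\item Since $\widehat\Psi_{b,\Lambda}$ is supported in $|\xi|\le C_0\Lambda$, the low--high source $\chi\sum_b\mathscr U^\rho_{b^\perp,\Lambda}\prec\Psi_{b,\Lambda}$ involves only finitely many shells and lies in $C_TH^m$ for every $m$. Hence $X_{a,\Lambda}$ is smooth, and the remainder lies in $C_TH^{s_2}$ with $s_2>1/2$.
\item For $1/2<\varrho<3/2$, the remainder equation, the product bound $H^\varrho\cdot H^\varrho\hookrightarrow H^{2\varrho-3/2-\varepsilon}$ and the energy estimate multiply $\varrho-1/2$ by $3/2$ at each step.
\item After finitely many steps $\varrho>3/2$, and only then does your uniqueness apply on $[0,T\wedge T_\Lambda]$.
\end{enumerate}
Alternatively, decompose your high-regularity solution into $(X_\Lambda,Y_\Lambda)$ via the converse part of Proposition~\ref{prop:finite-cutoff-reconstruction}. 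You can then invoke full-class uniqueness of fixed points in $\mathcal Z_T$ from Proposition~\ref{prop:interval-local-uniqueness}.
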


\begin{proof}
For fixed $\Lambda$, multiplication by $\rho$ makes
$B_{a,\Lambda}^\rho$ almost surely smooth in space with
$C_TH^m\cap C_T^1H^{m-1}$ paths for every $m$.  Substitution
$u_\Lambda=B_{\W,\Lambda}^\rho+R_{\W,\Lambda}$,
$v_\Lambda=B_{\K,\Lambda}^\rho+R_{\K,\Lambda}$ reduces
\eqref{eq:cutoff-system} to a semilinear wave--Klein--Gordon system for
$R_\Lambda$ with smooth compactly supported coefficients.  Since $H^r$ is an
algebra for $r>3/2$, the energy estimate and the contraction principle give a
unique local solution in $C_TH^r\cap C_T^1H^{r-1}$.  A finite-cutoff fixed
point reconstructs a solution of the same mild system by
Proposition~\ref{prop:finite-cutoff-reconstruction}; uniqueness gives the
last assertion.
\end{proof}

\begin{proposition}[Finite-cutoff regularity bootstrap]
\label{prop:finite-cutoff-regularity}
Let $\Lambda<\infty$ and suppose that a finite-cutoff fixed point
$Z_\Lambda$ exists on $[0,T]$.  Put
\[
 R_{a,\Lambda}:=X_{a,\Lambda}+Y_{a,\Lambda},
 \qquad
 u_\Lambda=B_{\W,\Lambda}^\rho+R_{\W,\Lambda},
 \qquad
 v_\Lambda=B_{\K,\Lambda}^\rho+R_{\K,\Lambda}.
\]
Then, almost surely, for every $m\ge0$,
\begin{equation}\label{eq:finite-cutoff-posterior-regularity}
 R_{a,\Lambda}\in C([0,T];H^m)
 \cap C^1([0,T];H^{m-1}),
 \qquad a\in\mathfrak C.
\end{equation}
The reconstructed fields satisfy the same conclusion.  In particular,
\[
 (u_\Lambda,v_\Lambda)\in
 \bigl(C_TH^1\cap C_T^1L^2\cap C_TL^\infty\bigr)^2.
\]
\end{proposition}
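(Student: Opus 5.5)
The plan is to identify the reconstructed finite-cutoff fixed point with the smooth classical solution of Lemma~\ref{lem:finite-cutoff-auxiliary-solvability}, and then to propagate that solution's regularity up to the whole interval $[0,T]$ using an a priori bound supplied by the fixed point itself.

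\emph{Step 1: smooth data.} Fix $\Lambda<\infty$ and work on the probability-one event where every finite-cutoff stochastic object is smooth. On that event, $B^\rho_{a,\Lambda}$ lies in $C_TH^m\cap C^1_TH^{m-1}$ for every $m$, and so does the compactly supported forcing $\zeta^{\chi,\rho}_{a,\Lambda}$. By Proposition~\ref{prop:finite-cutoff-reconstruction}, the sum $R_{a,\Lambda}=X_{a,\Lambda}+Y_{a,\Lambda}$ solves the mild semilinear system
\[
R_{a,\Lambda}=S_a(t)y_{a,\Lambda}+I_a\bigl(\chi(B^\rho_{\W,\Lambda}+R_{\W,\Lambda})(B^\rho_{\K,\Lambda}+R_{\K,\Lambda})-\chi\Theta_\Lambda+\text{smooth}\bigr),
\]
whose coefficients are smooth and compactly supported. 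A priori, however, $R_{a,\Lambda}$ is known only in the low-regularity fixed-point space; the task is to upgrade it.

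\emph{Step 2: local uniqueness.} Take $r>3/2$. Lemma~\ref{lem:finite-cutoff-auxiliary-solvability} gives a classical solution on $[0,T_\Lambda]$ that coincides with $R_{a,\Lambda}$ there. The same lemma can be applied at any starting time $t_0<T$ at which $R_\Lambda(t_0)\in H^r\times H^{r-1}$. Its existence time depends only on $\|R_\Lambda(t_0)\|_{H^r\times H^{r-1}}$ and on the smooth coefficients.

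\emph{Step 3: continuation (the main obstacle).} Set $T^*=\sup\{t\le T: R_\Lambda\in C([0,t];H^r)\}$. To continue past $T^*$ I need a bound on $\|R_\Lambda\|_{H^r}$ on $[0,T^*)$ that does not blow up. The idea is to use the low-regularity control coming from the fixed point on all of $[0,T]$. The fixed-point norms include Strichartz and $L^\infty_TB^\sigma_{2,\infty}$ components with $\sigma>0$. First I would show that $R_\Lambda\in C_TH^{s}\cap L^q_TL^\infty$ for some $s>0$; for this I would use the Strichartz part of $\mathfrak R_T\times\mathfrak S_T$, possibly after one Duhamel step. The key input is that at fixed $\Lambda$ the spectral cutoff makes every product smooth up to a $\Lambda$-dependent constant.

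Next I would run the energy estimate for $\partial^\alpha R$. The nonlinearity is quadratic,
\[
\|\chi R_{\W}R_{\K}\|_{H^{r-1}}\lesssim\|R\|_{L^\infty}\|R\|_{H^{r-1}},
\]
and the terms involving $B^\rho$ are linear in $R$ with smooth coefficients. Gronwall's inequality, with $\int_0^T\|R\|_{L^\infty}\,dt<\infty$ provided by the fixed-point (Strichartz) bound, then keeps $\|R(t)\|_{H^r}$ bounded on $[0,T^*)$. The gap between the fixed-point space and $H^r$ is bridged by induction: each Duhamel step gains one derivative, because the source $\chi uv$ at level $H^{k-1}$ is controlled by $R\in H^k\cap L^\infty$, and this moves $R$ from $H^k$ to $H^{k+1}$. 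If an $L^\infty$ bound is not directly available, I would instead first bootstrap through fractional Sobolev product estimates with a gain of less than $1$ per step. This is possible because the Duhamel operator gains one derivative while the products lose less. Once $\|R\|_{H^r}$ is bounded, Step 2 extends the classical solution beyond $T^*$, so $T^*=T$.

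\emph{Step 4: all $m$.} For higher regularity I would differentiate the equation. Commuting $\partial^\alpha$ through the system, the top-order terms are linear in $\partial^\alpha R$ with coefficients bounded in $L^\infty$, by $H^r\subset L^\infty$ for $r>3/2$. Gronwall's inequality then gives $R_\Lambda\in C_TH^m\cap C^1_TH^{m-1}$ for every $m$. Adding $B^\rho_{a,\Lambda}$ shows the reconstructed fields have the same regularity. The final inclusion in $(C_TH^1\cap C^1_TL^2\cap C_TL^\infty)^2$ follows from the case $m=2$ together with Sobolev embedding.
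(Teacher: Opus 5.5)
Step~3 carries the whole argument, and as written it does not go through. The Gronwall/continuation line needs $\int_0^T\|R_{a,\Lambda}\|_{L^\infty}\,dt<\infty$ from the fixed-point norms, but none of them controls $L^\infty(\R^3)$.
\begin{itemize}
\item $Y_{a,\Lambda}\in C_TH^{s_2}\cap L_T^4W^{s_2-1/2,4}$ with $s_2<1$ has fewer than the $3/2$, respectively $3/4$, derivatives an $L^\infty$ embedding requires.
\item $X_{a,\Lambda}\in C_TH^{s_1}\cap L_T^8W^{s_1-1/4,8/3}$ is rougher still.
\end{itemize}
So $L^\infty$ control is an output of the bootstrap, not an input to it.

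Your fallback, a fractional Sobolev bootstrap, is the right mechanism, but ``the products lose less'' than the Duhamel gain only above regularity $1/2$. Indeed $H^r\cdot H^r\hookrightarrow H^{2r-3/2-\varepsilon}$, and the energy estimate returns $H^{2r-1/2-\varepsilon}$, which improves on $H^r$ only if $r>\frac12+\varepsilon$. A priori $R_{a,\Lambda}=X_{a,\Lambda}+Y_{a,\Lambda}$ lies only in $C_TH^{s_1}$ with $s_1<\frac12-\beta_*$. An iteration started from the fixed-point space therefore yields no improvement.

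The missing idea, which is the paper's first step, is to smooth $X_{a,\Lambda}$ separately. Your claim that the cutoff makes every product smooth is false for $\chi R_{\W,\Lambda}R_{\K,\Lambda}$, because the nonlinearity is not projected. It is exactly true for the $X$-source, however. Since $\widehat\Psi_{b,\Lambda}$ is supported in a ball of radius $C\Lambda$, $\chi(\mathscr U^\rho_{b^\perp,\Lambda}\prec\Psi_{b,\Lambda})$ contains only the shells $N\le C\Lambda$. Bernstein plus Leibniz then place it in $C_TH^m$ for every $m$, using only $\mathscr U^\rho_{b^\perp,\Lambda}\in C_TL^2$. Hence $X_{a,\Lambda}$ is smooth and $R_{a,\Lambda}\in C_TH^{s_2}\cap C_T^1H^{s_2-1}$ with $s_2>\frac12$. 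Taking $\varepsilon_n=\frac12(r_n-\frac12)$ gives the recursion $r_{n+1}-\frac12=\frac32(r_n-\frac12)$, which passes $3/2$ after finitely many steps. From there the algebra property of $H^r$ gives a full derivative per step.

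Alternatively, \cref{lem:deterministic-quadratic} with \eqref{eq:Y-linear-dual} lifts $R_{a,\Lambda}$ to $C_TH^{s_2}$ in one Duhamel step. That is what your ``Strichartz, possibly after one Duhamel step'' should have aimed at, with target $s>\frac12$ rather than $L^\infty$.

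Because the fixed point already exists on all of $[0,T]$, each upgrade is a linear Duhamel estimate on the whole interval, with the product built from the regularity already established. The continuation time $T^*$, the restart of \cref{lem:finite-cutoff-auxiliary-solvability} at $t_0$, and the Gronwall argument of Step~4 are therefore unnecessary.

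A minor point: $\zeta^{\chi,\rho}_{a,\Lambda}$ contains $\rho\xi_{a,\Lambda}$, which is white in time and not in $C_TH^m$. This is harmless only because the equation for $R_{a,\Lambda}$ does not involve it.
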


\begin{proof}
We first improve the $X$ terms.  Since
$\widehat\Psi_{a,\Lambda}$ is supported in a ball of radius $C\Lambda$, the
paraproduct $f\prec\Psi_{a,\Lambda}$ contains only shells $N\le C\Lambda$.
Let $K_\chi$ be a compact neighborhood of $\supp\chi$.  For every integer
$m\ge0$, Bernstein's inequality and the Leibniz rule give
\begin{align}
 &\sup_{t\le T}
 \|\chi(f\prec\Psi_{a,\Lambda})(t)\|_{H^m}\notag\\
 &\qquad\lesssim_{m,\Lambda,\chi}
 \|f\|_{C_TL^2}
 \sum_{N\le C\Lambda}
 \|P_N\Psi_{a,\Lambda}\|_{C_T C^m(K_\chi)}.
 \label{eq:finite-cutoff-X-smooth-source}
\end{align}
The sum is finite almost surely, and each summand is continuous in time.
Thus the source in the $X$ equation belongs to $C_TH^m$ for every $m$.
The linear energy estimate yields
\begin{equation}\label{eq:finite-cutoff-X-smoothness}
 X_{a,\Lambda}\in C_TH^{m+1}\cap C_T^1H^m
 \qquad(m\ge0).
\end{equation}

Since $y_\Lambda=\pi_\Lambda y$ has compact Fourier support, its two
Cauchy-data components belong to $H^m\times H^{m-1}$ for every $m$.
The fixed-point space and \eqref{eq:finite-cutoff-X-smoothness} therefore give
\begin{equation}\label{eq:finite-cutoff-bootstrap-start}
 R_{a,\Lambda}\in C_TH^{s_2}\cap C_T^1H^{s_2-1},
 \qquad s_2>\frac12.
\end{equation}
By Proposition~\ref{prop:finite-cutoff-reconstruction}, on $[0,T]$ the
remainder satisfies
\begin{equation}\label{eq:finite-cutoff-remainder-equation}
 L_aR_{a,\Lambda}
 =\chi\Bigl[(B_{\W,\Lambda}^\rho+R_{\W,\Lambda})
 (B_{\K,\Lambda}^\rho+R_{\K,\Lambda})-\Theta_\Lambda\Bigr].
\end{equation}
For fixed $\Lambda$, the functions $B_{a,\Lambda}^\rho$ and
$\chi\Theta_\Lambda$ belong to $C_TH^m$ for every $m$.  Suppose that both
remainder components belong to $C_TH^r$ for some $1/2<r<3/2$.  For every
$0<\varepsilon<r-1/2$, the product theorem on $\R^3$ gives
\[
 H^r\cdot H^r\hookrightarrow H^{2r-3/2-\varepsilon}.
\]
Because the finite-cutoff Cauchy data are smooth, the linear energy estimate
applied to \eqref{eq:finite-cutoff-remainder-equation} therefore yields
\begin{equation}\label{eq:finite-cutoff-bootstrap-step}
 R_{a,\Lambda}\in
 C_TH^{2r-1/2-\varepsilon}
 \cap C_T^1H^{2r-3/2-\varepsilon}.
\end{equation}
With
\[
 \varepsilon_n=\frac12\left(r_n-\frac12\right),
 \qquad
 r_{n+1}=2r_n-\frac12-\varepsilon_n,
\]
one has
$r_{n+1}-1/2=\tfrac32(r_n-1/2)$.  Starting from $r_0=s_2$, finitely many
steps give an exponent larger than $3/2$.  For $r>3/2$, $H^r(\R^3)$ is an
algebra; hence \eqref{eq:finite-cutoff-remainder-equation} and the energy
estimate improve $H^r$ to $H^{r+1}$.  Iteration first proves
\eqref{eq:finite-cutoff-posterior-regularity} for every integer $m\ge0$; the
statement for arbitrary real $m\ge0$ follows by Sobolev embedding between
successive integer orders.

Finally, multiplication by the compactly supported function $\rho$ places
$B_{a,\Lambda}^\rho$ in $C_TH^m\cap C_T^1H^{m-1}$ for every $m$.  Adding the
baseline to the remainder proves the assertion for $u_\Lambda$ and
$v_\Lambda$; Sobolev embedding with $m>3/2$ gives the stated $L^\infty$
bound.
\end{proof}

\begin{lemma}[Finite propagation for the cutoff equations]
\label{lem:finite-propagation}
For $\jmath=1,2$, let
\[
 (u_\jmath,v_\jmath)\in
 \bigl(C_TH^1_{\mathrm{loc}}\cap C_T^1L^2_{\mathrm{loc}}\bigr)^2
\]
be energy solutions of
\[
  L_{\W}u_\jmath=\mathfrak b_\jmath u_\jmath v_\jmath+f_{\W,\jmath},
  \qquad
  L_{\K}v_\jmath=\mathfrak b_\jmath u_\jmath v_\jmath+f_{\K,\jmath}.
\]
Assume on the backward cone $\mathcal Q_{K,T}$ that
$\mathfrak b_1=\mathfrak b_2=:\mathfrak b$ and
$f_{a,1}=f_{a,2}$ for every $a\in\mathfrak C$, and
\[
 \mathfrak b v_1,\;\mathfrak b u_2\in L^1([0,T];L^\infty_{\mathrm{loc}}),
\]
and assume that the Cauchy data agree on $K^{[\speedmax T]}$.  Then
\[
  (u_1,v_1)=(u_2,v_2)
  \qquad\text{on }[0,T]\times K.
\]
For reconstructions of finite-cutoff fixed points, the hypotheses
hold on the entire existence interval by
Proposition~\ref{prop:finite-cutoff-regularity}.
\end{lemma}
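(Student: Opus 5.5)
The plan is the classical energy identity on truncated backward cones, applied to the difference of the two solutions and closed by Gronwall. Put $U=u_1-u_2$ and $V=v_1-v_2$. On $\mathcal Q_{K,T}$ the forcings and coefficients agree, so
\[
 L_{\W}U=\mathfrak b\,(u_1v_1-u_2v_2)=\mathfrak b\,(Uv_1+u_2V),
 \qquad
 L_{\K}V=\mathfrak b\,(Uv_1+u_2V).
\]
Take the cone to be $\mathcal Q_{K,T}=\{(t,x):0\le t\le T,\ x\in K^{[\speedmax(T-t)]}\}$, so that its time-$t$ slice is $\Omega_t:=K^{[\speedmax(T-t)]}$. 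Define the combined local energy
\[
 E(t):=\tfrac12\int_{\Omega_t}\bigl(|\partial_tU|^2+c_{\W}^2|\nabla U|^2+|U|^2
 +|\partial_tV|^2+c_{\K}^2|\nabla V|^2+m^2|V|^2+|V|^2\bigr)\dd x .
\]
The zeroth-order terms $|U|^2,|V|^2$ are included so that $E$ controls $\|U\|_{L^2(\Omega_t)}$ and $\|V\|_{L^2(\Omega_t)}$. Since the Cauchy data agree on $K^{[\speedmax T]}=\Omega_0$, we have $E(0)=0$.

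First I would work at regularity where the energy identity is justified, namely smooth solutions, and pass to energy solutions by mollification. Differentiating $E$ gives three contributions.

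\emph{Boundary flux.} The lateral boundary moves inward with normal speed $\speedmax\ge\max\{c_{\W},c_{\K}\}$. For each channel the boundary term is
\[
 -\tfrac{\speedmax}2\bigl(|\partial_t U|^2+c_a^2|\nabla U|^2\bigr)+c_a^2\,\partial_tU\,\partial_\nu U ,
\]
and this is $\le0$ by Cauchy--Schwarz because $c_a\le\speedmax$. The added $L^2$ terms produce only nonpositive boundary contributions.

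\emph{Interior terms.} These are $\int_{\Omega_t}\partial_tU\,\mathfrak b(Uv_1+u_2V)$ and the analogous term for $V$, plus $\int U\partial_tU+V\partial_tV\le 2E$.

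\emph{Nonlinear terms.} Using $\mathfrak b v_1,\ \mathfrak b u_2\in L^\infty(\Omega_t)$, the nonlinear contributions are bounded by $a(t)E(t)$, where
\[
 a(t):=C\bigl(1+\|\mathfrak b v_1(t)\|_{L^\infty(\Omega_0)}+\|\mathfrak b u_2(t)\|_{L^\infty(\Omega_0)}\bigr)\in L^1(0,T).
\]

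Combining these, $E'\le a E$ with $E(0)=0$, and Gronwall gives $E\equiv0$. Therefore $U=V=0$ on $\Omega_t\supset K$ for all $t\le T$.

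The main obstacle is justifying the identity for solutions that are only in $C_TH^1_{\loc}\cap C^1_TL^2_{\loc}$. I would handle it in the standard way. Mollify $U,V$ in space (or in space-time) by $\varepsilon$. Apply the integrated form of the identity on slightly shrunk cones with speed $\speedmax+\varepsilon'$. The commutators of the mollifier with $\mathfrak b v_1\cdot$ and $\mathfrak b u_2\cdot$ tend to zero in $L^1_tL^2$, because the products $Uv_1$ and $u_2V$ lie in $L^1_tL^2_{\loc}$ by the $L^1L^\infty$ hypothesis. The identity is then used in integrated form, so only $L^1$-in-time control of $a$ is needed. Alternatively, I would first use the Duhamel/Kirchhoff–type domain-of-dependence property of $L_a$: the solution at $(t,x)$ depends only on data and forcing in the cone of speed $c_a\le\speedmax$. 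This reduces the claim to a linear uniqueness problem with source in $L^1_tL^2$ supported in the cone, for which the energy inequality is classical.

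For the final sentence of the lemma, \cref{prop:finite-cutoff-regularity} gives $(u_\Lambda,v_\Lambda)\in C_TH^1\cap C^1_TL^2\cap C_TL^\infty$ on the whole existence interval. The coefficient $\mathfrak b=\chi$ is bounded, so $\mathfrak b v,\ \mathfrak b u\in C_TL^\infty\subset L^1_TL^\infty_{\loc}$, and the hypotheses of the lemma hold there.
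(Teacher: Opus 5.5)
Your proposal is correct and follows essentially the paper's route. Both arguments use the summed wave and Klein--Gordon energies with the added $|U|^2,|V|^2$ terms on the backward cone, a nonpositive boundary flux coming from $c_a\le\speedmax$, an $L^1_t$ Gronwall coefficient coming from $\mathfrak b v_1,\mathfrak b u_2\in L^1_tL^\infty$, and a regularization step to justify the identity for energy solutions. The differences are only technical: the paper replaces your sharp cone by a smooth cutoff $\vartheta$ with $\partial_\tau\vartheta+\speedmax|\nabla\vartheta|\le0$, which sidesteps boundary regularity of $K^{[r]}$, and it uses Steklov averaging in time where you use spatial mollification.
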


\begin{proof}
Set $U=u_1-u_2$, $V=v_1-v_2$, and
$G=\mathfrak b(v_1U+u_2V)$ on the backward cone.  For
$\Omega_\tau=K^{[\speedmax(T-\tau)]}$, use a smooth approximation of its
indicator whose space--time gradient satisfies
$\partial_\tau\vartheta+\speedmax|\nabla\vartheta|\le0$.  With
$F_{\W}=U$, $F_{\K}=V$, $\mu_{\W}=0$, and $\mu_{\K}=m$, define
\[
 E_a(\tau)=\frac12\int\vartheta^2
 \bigl(|\partial_tF_a|^2+c_a^2|\nabla F_a|^2+(\mu_a^2+1)|F_a|^2\bigr)\dd x.
\]
The standard localized energy identity for $L_aF_a=G$ contains an interior
term $\int\vartheta^2\partial_tF_a(G+F_a)$ and a boundary flux.  Because
$c_a\le\speedmax$, the transport inequality for $\vartheta$ makes the latter
nonpositive.  Summing the two channel energies and using
$G=\mathfrak b(v_1U+u_2V)$ gives, for almost every $\tau$,
\[
 E'(\tau)\le C\Bigl(1+
 \|\mathfrak b v_1(\tau)\|_{L^\infty(\Omega_\tau)}+
 \|\mathfrak b u_2(\tau)\|_{L^\infty(\Omega_\tau)}\Bigr)E(\tau),
 \qquad E=E_{\W}+E_{\K}.
\]
The initial localized energy vanishes because the Cauchy data agree on
$K^{[\speedmax T]}$.  Gronwall's lemma yields $E\equiv0$.  For energy
solutions the same identity follows by Steklov averaging in time; the assumed
$L_t^1L_x^\infty$ coefficients make the source locally integrable in the
energy dual space.  Letting the smooth cone cutoff tend to the indicator and
then varying the terminal time proves $U=V=0$ on $[0,T]\times K$.  The last
sentence of the statement follows from
Proposition~\ref{prop:finite-cutoff-regularity} and Sobolev embedding.
\end{proof}

Let $(\Omega,\mathcal F,\Pp)$ be a complete probability space.  Put
\[
 \mathfrak H:=L^2(\R_+\times\R^3;\R)
\]
and let $\mathbf W_{\W},\mathbf W_{\K}$ be independent real isonormal Gaussian processes over
$\mathfrak H$; thus $\mathbf W_a(h)$ is centered Gaussian and
$\E[\mathbf W_a(h)\mathbf W_b(g)]=\delta_{ab}\langle h,g\rangle_{\mathfrak H}$.

\begin{definition}[Admissible Fourier-multiplier Gaussian forcing]
\label{def:gaussian-forcing-profile}
Let the channel-dependent upper orders $\betaW,\betaK$ satisfy
\eqref{eq:parameter-vector}.  For $a\in\mathfrak C$, let
\[
 \mathfrak h_a\in S^{\beta_a}_{1,0}(\R^3;\R),
 \qquad
 \mathfrak h_a(-\xi)=\mathfrak h_a(\xi).
\]
For $L\in\N_0$, set
\begin{equation}\label{eq:gaussian-profile-symbol-bound}
 [\mathfrak h_a]_{\beta_a,L}
 :=\max_{|\alpha|\le L}\sup_{\xi\in\R^3}
 \langle\xi\rangle^{-\beta_a+|\alpha|}
 |\partial_\xi^\alpha\mathfrak h_a(\xi)|<\infty.
\end{equation}
Set
\begin{equation}\label{eq:forcing-profile-amplitude-norm}
 \mathfrak H_{\mathrm{prof}}
 :=\max_{a\in\mathfrak C}[\mathfrak h_a]_{\beta_a,0}
 =\max_{a\in\mathfrak C}\sup_{\xi\in\R^3}
   \langle\xi\rangle^{-\beta_a}|\mathfrak h_a(\xi)|.
\end{equation}
For a real test function
$\varphi\in C_c^\infty((0,\infty)\times\R^3)$, the multiplier
$\mathfrak h_a(D_x)$ acts only in the spatial variable.  The map
$\varphi\mapsto\mathfrak h_a(D_x)\varphi$ is continuous from
$C_c^\infty((0,\infty)\times\R^3)$ to the real Gaussian Hilbert space
$\mathfrak H$.  Hence the formula
\begin{equation}\label{eq:isonormal-forcing-definition}
 \xi_a(\varphi):=\mathbf W_a(\mathfrak h_a(D_x)\varphi),
 \qquad a\in\mathfrak C,
\end{equation}
defines a real centered Gaussian random distribution.  Set
\[
 \mathfrak q_a(\xi):=|\mathfrak h_a(\xi)|^2,
 \qquad
 Q_a:=\mathfrak h_a(D_x)^*\mathfrak h_a(D_x)
      =\mathfrak h_a(D_x)^2.
\]
Then $\mathfrak q_a(\xi)\lesssim\langle\xi\rangle^{2\beta_a}$ and
\begin{equation}\label{eq:gaussian-forcing-covariance}
\begin{aligned}
 \E[\xi_a(\varphi)\xi_b(\psi)]
 &=\delta_{ab}
   \langle\mathfrak h_a(D_x)\varphi,
          \mathfrak h_a(D_x)\psi\rangle_{\mathfrak H}\\
 &=\delta_{ab}\int_0^\infty
   \langle Q_a\varphi(t),\psi(t)\rangle_{L_x^2}\dd t.
\end{aligned}
\end{equation}
Equivalently, in the non-conjugated Fourier random-measure convention,
\begin{equation}\label{eq:fourier-gaussian-forcing}
 \E[\widehat\xi_a(\dd\xi,\dd t)
       \widehat\xi_b(\dd\eta,\dd s)]
 =\delta_{ab}(2\pi)^3\delta(t-s)\delta(\xi+\eta)
  \mathfrak q_a(\xi)\dd\xi\dd\eta\dd t\dd s.
\end{equation}
Thus the forcing is white in time, stationary in space, and independent
between the two colors.  Its law depends on the profile only through the
spectral density $\mathfrak q_a$.
\end{definition}

\begin{remark}[Profiles and uniform constants]
\label{rem:constant-dependence}
The orders $\beta_a$ are upper symbol orders; unequal and nonradial examples
such as $\mathfrak h_a(\xi)=\la\xi\ra^{\gamma_a}s_a(\xi)$ with
$\gamma_a\le\beta_a$ and even $s_a\in S^0_{1,0}$ are allowed.  The stochastic
covariance, phase-layer, and Schatten estimates use only
$\mathfrak H_{\mathrm{prof}}$ in
\eqref{eq:forcing-profile-amplitude-norm}; higher symbol seminorms enter only
non-uniform finite-cutoff smoothness bounds.  Uniform estimates are understood
with the speed--mass vector in a fixed separated class, the orders in a
compact subset of $\{\betastar<1/8\}$, bounded
$\mathfrak H_{\mathrm{prof}}$, fixed strict exponent margins, and fixed
localizers.  Only mixed-phase divisions depend on the lower speed gap, as
quantified in Proposition~\ref{prop:speed-gap-dependence}.
\end{remark}

\begin{definition}[Admissible spectral cutoffs]\label{def:admissible-cutoff}
A family $(\pi_\Lambda)_{\Lambda\ge1}$ is called admissible if
$\pi_\Lambda=m_\Lambda(D)$, where $m_\Lambda\in C_c^\infty(\R^3)$ is real and
even, and there exist constants $0<c_0<C_0<\infty$ such that
\begin{equation}\label{eq:cutoff-supports}
  m_\Lambda(\xi)=1\quad\text{for }|\xi|\le c_0\Lambda,
  \qquad
  \operatorname{supp}m_\Lambda\subset\{|\xi|\le C_0\Lambda\}.
\end{equation}
Moreover,
\begin{equation}\label{eq:cutoff-symbol-bounds}
  \sup_{\Lambda\ge1}\Lambda^{|\alpha|}
  \|\partial^\alpha m_\Lambda\|_{L^\infty}<\infty
  \quad\text{for every multi-index }\alpha,
  \qquad
  m_\Lambda(\xi)\longrightarrow1
  \quad(\xi\in\R^3).
\end{equation}
No radiality, monotonicity, or nesting is imposed.  A fixed-profile family is
given by $m_\Lambda(\xi)=m(\xi/\Lambda)$ for a real even
$m\in C_c^\infty(\R^3)$ that equals one near the origin.  A dyadic
fixed-profile family is indexed by $\Lambda=2^n$, $n\in\N$.  An \emph{admissible cofinal cutoff sequence} is a sequence
$(\pi_{\Lambda_n})_{n\ge1}$ drawn from an admissible family, with
$\Lambda_n\to\infty$.  Statements for a continuous cutoff parameter are
understood sequentially along every admissible cofinal sequence.
\end{definition}

\begin{lemma}[Finite-product cutoff reduction]
\label{lem:finite-product-cutoff}
Let $(E,\mu)$ be a measure space, let $J\in\N$, and let
$\zeta_1,\ldots,\zeta_J:E\to\R^3$ be measurable.  For
$\boldsymbol\epsilon=(\epsilon_1,\ldots,\epsilon_J)\in\{0,1\}^J$, set
\[
 \mathcal I_{\boldsymbol\epsilon}:=
 \{\nu: \epsilon_\nu=1\},
 \qquad
 M_{\Lambda}^{\boldsymbol\epsilon}(z)
 :=\prod_{\nu\in\mathcal I_{\boldsymbol\epsilon}}
 m_\Lambda(\zeta_\nu(z)),
\]
with the empty product equal to one.  If
$\mathcal I_{\boldsymbol\epsilon}=\varnothing$, the maxima and indicators below
are understood as zero.  There is a constant $C_J$, depending only on $J$ and
the uniform zeroth-order cutoff bound, such that
\begin{align}
 |M_{\Lambda}^{\boldsymbol\epsilon}(z)-1|^2
 &\le C_J\,
 \one_{\{\max_{\nu\in\mathcal I_{\boldsymbol\epsilon}}
              |\zeta_\nu(z)|\ge c_0\Lambda\}},
 \label{eq:finite-product-cutoff-tail}\\
 |M_{\Lambda}^{\boldsymbol\epsilon}(z)
   -M_{\Lambda'}^{\boldsymbol\epsilon}(z)|^2
 &\le C_J\left(
 \one_{\{\max_{\nu\in\mathcal I_{\boldsymbol\epsilon}}
              |\zeta_\nu(z)|\ge c_0\Lambda\}}
 +
 \one_{\{\max_{\nu\in\mathcal I_{\boldsymbol\epsilon}}
              |\zeta_\nu(z)|\ge c_0\Lambda'\}}
 \right).
 \label{eq:finite-product-two-cutoffs}
\end{align}
Consequently, for every nonnegative measurable $F$,
\begin{equation}\label{eq:finite-product-integral-tail}
 \int_E |M_{\Lambda}^{\boldsymbol\epsilon}-1|^2F\,\dd\mu
 \le C_J
 \int_{\{\max_{\nu\in\mathcal I_{\boldsymbol\epsilon}}
              |\zeta_\nu|\ge c_0\Lambda\}}F\,\dd\mu,
\end{equation}
and the analogous two-tail bound follows from
\eqref{eq:finite-product-two-cutoffs}.  The same statement compares products
formed from two different admissible cutoff families, after replacing $c_0$
by the smaller plateau constant and using a common zeroth-order bound.  The
conclusion applies with $F$ equal to a static squared kernel or to a normalized
time-increment majorant.  Repeated frequency legs are included by listing them
with multiplicity.
\end{lemma}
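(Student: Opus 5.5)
The plan is a pointwise argument in $z$, followed by integration.

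\emph{Uniform bound.} From \eqref{eq:cutoff-symbol-bounds} with $\alpha=0$ we have $A:=\sup_\Lambda\|m_\Lambda\|_{L^\infty}<\infty$. Hence $|M_\Lambda^{\boldsymbol\epsilon}(z)|\le A^{J}$ whenever $A\ge1$; we enlarge $A$ to be at least one. Consequently
\[
|M_\Lambda^{\boldsymbol\epsilon}(z)-1|^2\le(1+A^J)^2 .
\]

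\emph{Pointwise tail bound \eqref{eq:finite-product-cutoff-tail}.} Fix $z$. If $\mathcal I_{\boldsymbol\epsilon}=\varnothing$, then $M\equiv1$ and both sides vanish, consistent with the stated convention. Otherwise there are two cases.
\begin{itemize}
\item If $\max_{\nu\in\mathcal I_{\boldsymbol\epsilon}}|\zeta_\nu(z)|<c_0\Lambda$, then every factor $m_\Lambda(\zeta_\nu(z))$ equals one by the plateau condition \eqref{eq:cutoff-supports}. So $M_\Lambda^{\boldsymbol\epsilon}(z)=1$ and the left side is zero.
\item If the maximum is $\ge c_0\Lambda$, the indicator equals one and the uniform bound applies.
\end{itemize}
This gives \eqref{eq:finite-product-cutoff-tail} with $C_J=(1+A^J)^2$.

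\emph{Two-cutoff bound \eqref{eq:finite-product-two-cutoffs}.} Write $M_\Lambda-M_{\Lambda'}=(M_\Lambda-1)-(M_{\Lambda'}-1)$ and use $|x-y|^2\le2|x|^2+2|y|^2$. Applying \eqref{eq:finite-product-cutoff-tail} to each term gives the bound with constant $2C_J$, which we rename $C_J$.

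\emph{Two different admissible families.} Let $m_\Lambda$ and $\widetilde m_{\Lambda'}$ come from different families. We use $\min\{c_0,\widetilde c_0\}$ as the plateau constant and $\max\{A,\widetilde A\}$ as the zeroth-order bound, and repeat the two steps above verbatim.

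\emph{Repeated legs.} Repeated frequency legs need nothing new. The argument never uses distinctness of the $\zeta_\nu$, only that each factor is bounded and equals one on the plateau.

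\emph{Integral version \eqref{eq:finite-product-integral-tail}.} We multiply the pointwise bound by $F\ge0$ and integrate over $E$. Measurability of the set $\{\max_\nu|\zeta_\nu|\ge c_0\Lambda\}$ follows from measurability of the $\zeta_\nu$ and continuity of $m_\Lambda$. This also makes $M_\Lambda^{\boldsymbol\epsilon}$ measurable. The analogous two-tail integral bound follows the same way from \eqref{eq:finite-product-two-cutoffs}.

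The final remark concerns the choice of $F$, either a static squared kernel or a time-increment majorant. It requires only that $F$ be nonnegative and measurable, so it is immediate.

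There is no real obstacle. The only points needing care are the empty-product convention and the uniform zeroth-order bound on $m_\Lambda$ supplied by \eqref{eq:cutoff-symbol-bounds}.
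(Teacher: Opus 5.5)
Your proof is correct and is essentially the paper's argument. Both are pointwise, use only the plateau condition in \eqref{eq:cutoff-supports} and the uniform zeroth-order bound, and then finish with $|x-y|^2\le2|x|^2+2|y|^2$ and integration against $F\ge0$. The only difference is that the paper telescopes the product, $\prod_j m_\Lambda(\zeta_{\nu_j})-1=\sum_j\bigl(\prod_{k<j}m_\Lambda(\zeta_{\nu_k})\bigr)(m_\Lambda(\zeta_{\nu_j})-1)$, instead of splitting into cases; this also records that each term carries an explicit factor $m_\Lambda-1$ on a single leg, which is the form used later in the paper, but for the stated bounds your case split is equivalent.
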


\begin{proof}
Telescope the active factors:
\[
 \prod_{j=1}^I m_\Lambda(\zeta_{\nu_j})-1
 =\sum_{j=1}^I\left(\prod_{k<j}m_\Lambda(\zeta_{\nu_k})\right)
 \bigl(m_\Lambda(\zeta_{\nu_j})-1\bigr).
\]
Every summand vanishes unless
$|\zeta_{\nu_j}|\ge c_0\Lambda$; the uniform $L^\infty$ cutoff bound gives
\eqref{eq:finite-product-cutoff-tail}.  The two-cutoff estimate follows from
$|M_\Lambda-M_{\Lambda'}|^2\le2|M_\Lambda-1|^2+2|M_{\Lambda'}-1|^2$.
Integration proves \eqref{eq:finite-product-integral-tail}, and the same
argument applies to two cutoff families and to repeated legs.
\end{proof}

For a real test function $\varphi$, define the cutoff forcing directly by
\begin{equation}\label{eq:cutoff-forcing-isonormal}
 \xi_{a,\Lambda}(\varphi)
 :=\mathbf W_a\bigl(\mathfrak h_a(D_x)m_\Lambda(D_x)\varphi\bigr).
\end{equation}
Since both multipliers are real and even, this agrees with the distributional
notation $\xi_{a,\Lambda}=\pi_\Lambda\xi_a$ and has spectral density
$|m_\Lambda\mathfrak h_a|^2$.  Let $y_\Lambda=\pi_\Lambda y$ and define
\[
  \Psi_{a,\Lambda}=I_a\xi_{a,\Lambda},\qquad
  \Theta_\Lambda=\Psi_{\W,\Lambda}\Psi_{\K,\Lambda},\qquad
  V_{a,\Lambda}=I_a\Theta_\Lambda.
\]
For a fixed adapted localization pair,
\begin{equation}\label{eq:finite-auxiliary-forcing}
  \zeta_{a,\Lambda}^{\chi,\rho}
  :=L_a\bigl[\rho(\Psi_{a,\Lambda}+V_{a,\Lambda})\bigr]
    -\chi\Theta_\Lambda.
\end{equation}
The finite-cutoff auxiliary equation is
\begin{equation}\label{eq:cutoff-system}
\begin{aligned}
  \LW u_\Lambda^{\chi,\rho}
  &=\chi u_\Lambda^{\chi,\rho}v_\Lambda^{\chi,\rho}
    +\zeta_{\W,\Lambda}^{\chi,\rho},\\
  \LK v_\Lambda^{\chi,\rho}
  &=\chi u_\Lambda^{\chi,\rho}v_\Lambda^{\chi,\rho}
    +\zeta_{\K,\Lambda}^{\chi,\rho}.
\end{aligned}
\end{equation}
The baseline in \eqref{eq:finite-auxiliary-forcing} is spatially compact,
and every remainder nonlinearity is multiplied by $\chi$.  After subtracting
the baseline, \eqref{eq:cutoff-system} is therefore a global unweighted
Sobolev mild problem with spatially smooth random coefficients.  On the part
of the backward cone where $\chi=\rho=1$, it agrees with the spectrally
regularized form of \eqref{eq:main-system}.

\begin{remark}[Finite-cutoff equation]
The cutoff acts on the Gaussian forcings and on the deterministic Cauchy data.
The nonlinear term in \eqref{eq:cutoff-system} remains the local product
$\chi u_\Lambda v_\Lambda$; hence the cutoff equation retains finite
propagation.  Projected variants introduced later concern individual stochastic
terms only.
\end{remark}

\subsection{Function spaces}

\begin{assumption}[Admissible exponents]\label{ass:exponents}
Let $\betaW,\betaK$ and the derived quantities
$\betastar,\betasum,\betagamma$ be as in \eqref{eq:parameter-vector}.  The
parameters $s_1,s_2,\kappa,\sigma$ satisfy
\begin{equation}\label{eq:parameter-window}
\begin{gathered}
  \frac14+\betastar<s_1<\frac12-\betastar,
  \qquad
  \frac12+\betastar<s_2<
  \min\left\{1-\betagamma,s_1+\frac14\right\},\\
  0<\kappa<\min\left\{
  \frac12-\betastar-s_1,
  \ s_2-\frac12-\betastar,
  \ \frac{1-s_2-\betagamma}{2},
  \ \frac14-\betastar,
  \ \frac12-\betasum-2\betastar
  \right\},\\
  2\betastar<\sigma<\min\left\{
  s_1,s_2,\frac14,\frac12-\betasum-\kappa
  \right\}.
\end{gathered}
\end{equation}
\end{assumption}

\begin{lemma}[Non-emptiness of the exponent range]
\label{lem:exponent-window-nonempty}
If $\betastar<1/8$, then the set of quadruples
$(s_1,s_2,\kappa,\sigma)$ satisfying \eqref{eq:parameter-window} is nonempty.
\end{lemma}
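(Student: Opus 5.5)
The plan is to exhibit one explicit quadruple, choosing the parameters in the order $s_1\to s_2\to\kappa\to\sigma$, each inside an open interval that is nonempty exactly because $\betastar<1/8$. Throughout I write $b:=\betastar$ and use only the elementary consequences
\[
 \betasum\le 2b,\qquad \betagamma=\betasum+\betastar\le 3b
\]
of \eqref{eq:parameter-vector}. No other structure of the forcing orders is needed, so the argument is pure arithmetic.

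First take $s_1:=3/8$. The constraint $1/4+b<3/8<1/2-b$ is equivalent to $b<1/8$. Next, the upper bound for $s_2$ is $\min\{1-\betagamma,\,s_1+1/4\}$, and $s_1+1/4=5/8$. Since $1-\betagamma\ge 1-3b>5/8$ when $b<1/8$, this minimum is $5/8$, and the interval for $s_2$ is $(1/2+b,\,5/8)$. It is nonempty because $1/2+b<5/8$ reduces to $b<1/8$. I choose its midpoint,
\[
 s_2:=\frac{9}{16}+\frac b2.
\]
It satisfies $s_2-\tfrac12-b=\tfrac1{16}-\tfrac b2>0$ and $1-s_2-\betagamma\ge\tfrac7{16}-\tfrac{7b}2>0$.

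Then I check that each quantity inside the minimum bounding $\kappa$ is strictly positive:
\begin{itemize}
\item $\tfrac12-b-s_1=\tfrac18-b>0$;
\item $s_2-\tfrac12-b>0$, as just shown;
\item $\tfrac12(1-s_2-\betagamma)>0$, as just shown;
\item $\tfrac14-b>0$;
\item $\tfrac12-\betasum-2b\ge\tfrac12-4b>0$.
\end{itemize}
Hence any sufficiently small $\kappa>0$ is admissible; fix such a $\kappa$.

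Finally, for $\sigma$ I need $2b<\min\{s_1,s_2,1/4,1/2-\betasum-\kappa\}$. Here $s_1=3/8>2b$, $s_2>1/2>2b$, and $1/4>2b$, each because $b<1/8$. The last entry exceeds $2b$ precisely because $\kappa<1/2-\betasum-2b$, which was imposed in the $\kappa$-constraint. Thus the open interval for $\sigma$ is nonempty, and any $\sigma$ in it completes an admissible quadruple.

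There is no real obstacle. The only point needing care is the coupling between $\kappa$ and $\sigma$: the $\sigma$-window stays open only because the final term $\tfrac12-\betasum-2\betastar$ in the $\kappa$-constraint was built in for that purpose. The check must therefore be done in the stated order, $\kappa$ before $\sigma$, rather than treating $\sigma$ independently.
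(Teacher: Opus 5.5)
Your proof is correct and follows essentially the same route as the paper's: explicit choices of $s_1,s_2$, then any sufficiently small $\kappa$, then $\sigma$ in the resulting open window. The paper takes $s_1,s_2$ just above their lower endpoints via a small $\varepsilon$, whereas you take $s_1=3/8$ and $s_2$ at the midpoint of its window. Your observation that the constraint $\kappa<\tfrac12-\betasum-2\betastar$ already keeps the $\sigma$-window open shows that the paper's phrase ``after decreasing $\kappa$ if necessary'' is not actually needed.
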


\begin{proof}
Since $\betagamma\le3\betastar$, both
$1/4-2\betastar$ and $1/2-4\betastar$ are positive.  Choose
\[
 0<\varepsilon<\frac14\min\left\{
 \frac14-2\betastar,\frac12-4\betastar\right\},
 \qquad
 s_1=\frac14+\betastar+2\varepsilon,
 \qquad
 s_2=\frac12+\betastar+\varepsilon.
\]
Then the first line of \eqref{eq:parameter-window} holds.  All upper bounds
for $\kappa$ are strictly positive, so $\kappa>0$ may be chosen below their
minimum.  After decreasing $\kappa$ if necessary,
\[
 2\betastar<
 \min\left\{s_1,s_2,\frac14,
 \frac12-\betasum-\kappa\right\},
\]
and one may choose $\sigma$ between these two quantities.
\end{proof}

\begin{remark}[Choice of exponents]
\label{rem:exponent-restrictions}
The threshold $\betastar<1/8$ is imposed only to make the complete stochastic
lift and deterministic fixed point share one strict exponent window; several
individual stochastic objects exist beyond it.  The upper bound on $s_1$
controls $w\prec\Psi_a$, while $s_2>1/2+\betastar$ makes
$Y\circ\Psi_a$ classical.  The inequalities
$s_2<1-\betagamma$ and $s_2<s_1+1/4$ place the cubic symbols and the endpoint
quadratic remainder in the $Y$-source space.  For an operator label
$(a;b,c)$, the stochastic order is
$\beta_{b,c}=\beta_b+\beta_c\le2\betastar$; its independent construction uses
\[
 0<s<1-\beta_{b,c},\qquad
 \beta_{b,c}<\sigma<1-\beta_{b,c}.
\]
The common fixed-point choice strengthens this to the bounds in
\eqref{eq:parameter-window}, in particular
$2\betastar<\sigma<1/2-\betasum-\kappa$.
\end{remark}

\begin{definition}[Choice of auxiliary exponents]\label{def:loss-hierarchy}
Set
\begin{equation}\label{eq:strict-margin}
\begin{aligned}
 \mathfrak m_*:=\min\Bigl\{&
 s_1-\tfrac14-\betastar,
 \tfrac12-\betastar-s_1-\kappa,
 s_2-\tfrac12-\betastar-\kappa,\\
 &1-s_2-\betagamma-2\kappa,
 s_1+\tfrac14-s_2,
 \kappa,
 \tfrac14-\betastar-\kappa,\\
 &\tfrac12-\betasum-2\betastar-\kappa,
 \sigma-2\betastar,
 s_1-\sigma,
 s_2-\sigma,\\
 &\tfrac14-\sigma,
 \tfrac12-\betasum-\kappa-\sigma
 \Bigr\}>0.
\end{aligned}
\end{equation}
Choose a master loss $0<\varepsilon_0<\mathfrak m_*/100$.
Every disposable dyadic loss $\varepsilon$, $\delta$, or $\eta$ is chosen
below $\varepsilon_0/10$.  Let $C_{\mathrm{inc}}\ge1$ be the maximum of $1$ and the finitely many
coefficients multiplying $\theta$ in the frequency exponents of the increment
estimates.  After the dyadic losses are fixed, choose
\[
 0<\theta<\frac{\varepsilon_0}{10C_{\mathrm{inc}}}.
\]
Since $\betasum<1/4$ and $\beta_a<1/8$, this choice also gives
\[
 \theta<\frac12-\betasum,
 \qquad
 2\beta_a+\theta+\varepsilon<1
 \quad(a\in\mathfrak C)
\]
for every disposable loss $\varepsilon<\varepsilon_0/10$.  Finally choose the
Gaussian moment exponent $p_0$ so that $p_0\theta>4$.  These choices are kept
fixed throughout the proof.
\end{definition}

Recall the partition in \eqref{eq:operator-index-set}.  Its algebraic
relations are
\begin{equation}\label{eq:operator-index-relations}
 \begin{cases}
  b=c,\quad a=b^\perp,&(a;b,c)\in\mathfrak L_{\mathrm{diag}},\\
  b\ne c,\quad a=b,&(a;b,c)\in\mathfrak L_{\mathrm{off}}.
 \end{cases}
\end{equation}
For a label $(a;b,c)\in\mathfrak L$, set
\begin{equation}\label{eq:operator-pair-order}
 \beta_{b,c}:=\beta_b+\beta_c,
 \qquad 0\le\beta_{b,c}\le2\betastar.
\end{equation}
For a cubic label $a\in\mathfrak C$, set
\begin{equation}\label{eq:cubic-order}
 \beta_{\Gamma,a}:=\betasum+\beta_a,
 \qquad 0\le\beta_{\Gamma,a}\le\betagamma.
\end{equation}
The common input space for the operators $T^{a;b,c}$ is
\begin{equation}\label{eq:E-space}
\begin{aligned}
  \|w\|_{E_T^{2,\sigma}}
  :={}&\|w\|_{C_TL^2}+\|\partial_tw\|_{C_TH^{-1}}\\
  &+\|w\|_{L_T^\infty B_{2,\infty}^{\sigma}}
   +\|\partial_tw\|_{L_T^\infty B_{2,\infty}^{\sigma-1}}.
\end{aligned}
\end{equation}
\begin{remark}[Energy component of the operator domain]
The exponent $-1$ is matched to the deterministic contraction.  In a
same-color block of color $b$, summation of the high-frequency loop leaves
$Q^{-1+2\beta_b}$ on the differentiated input; hence
\[
 H^{-1}\longrightarrow H^{s-1}
 \qquad\text{for every }0<s<1-2\beta_b.
\]
For all four index triples, one may use
$0<s<1-2\betastar$.
The centered branch uses only the $C_TL^2$ and
$L_T^\infty B_{2,\infty}^{\sigma}$ components.  The localized first Picard
paths belong to $C_TH^{-1}$ by
\cref{lem:compact-support-continuity,lem:complete-and-E}.
\end{remark}
Every spatial occurrence of $B_{2,\infty}^r$ in a stochastic, source, or
operator space denotes the little-Besov closure $b_{2,\infty}^r$, unless the
large space is explicitly named.  Compactly localized $C_T\mathcal C^r$
random variables are likewise taken in the separable closure of smooth
compactly supported paths.  For an $L_T^\infty B_{2,\infty}^r$ component the
closure is characterized by the uniform tail condition
\begin{equation}\label{eq:uniform-little-besov-tail}
 \lim_{L\to\infty}\;
 \sup_{0\le t\le T}\sup_{\substack{N\in\Dyd\\N>L}}
 N^r\|P_Nf(t)\|_{L^2}=0.
\end{equation}
Thus $E_T^{2,\sigma}$, $\widetilde X_T^{s_1,\sigma}$, and
$\widetilde Y_T^{s_2,\sigma}$ are the closures of smooth finite-spatial-block
paths in their displayed intersection norms.  The energy components give
fixed-block time continuity, so spatial truncation followed by time
mollification yields the same closure.  This convention supplies separable
ranges without changing any ambient Besov estimate.

\begin{lemma}[Extension from finite spatial blocks]
\label{lem:finite-block-extension}
Let $X_1,\ldots,X_m$ be Banach spaces obtained as the closures of subspaces
$X_1^0,\ldots,X_m^0$ of smooth finite-spatial-block paths.  If a $m$-linear
map $T$ satisfies
\[
 \|T(f_1,\ldots,f_m)\|_Y
 \le C\prod_{\nu=1}^m\|f_{\nu}\|_{X_{\nu}},
 \qquad f_{\nu}\in X_{\nu}^0,
\]
for a Banach space $Y$, then it extends uniquely to a continuous $m$-linear
map on $X_1\times\cdots\times X_m$.  The same assertion holds with
$Y$ replaced by $L^p(\Omega;Y)$ or by a Banach space of bounded operators.
If the approximating outputs have a uniform vanishing Littlewood--Paley tail,
the extension takes values in the corresponding little-Besov closure.
\end{lemma}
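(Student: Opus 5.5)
The argument is the standard density extension of a bounded multilinear map, done one variable at a time. I would write $X^0:=X_1^0\times\cdots\times X_m^0$; this is dense in $X:=X_1\times\cdots\times X_m$ with the product norm.

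\emph{Existence.} Fix $(f_1,\ldots,f_m)\in X$ and pick approximating sequences $f_\nu^{(k)}\in X_\nu^0$ with $f_\nu^{(k)}\to f_\nu$ in $X_\nu$. Multilinearity gives the telescoping identity
\[
 T(f^{(k)})-T(f^{(j)})
 =\sum_{\nu=1}^m T\bigl(f_1^{(k)},\ldots,f_{\nu-1}^{(k)},
 f_\nu^{(k)}-f_\nu^{(j)},f_{\nu+1}^{(j)},\ldots,f_m^{(j)}\bigr).
\]
Every argument on the right lies in the corresponding $X^0_\mu$, so the hypothesis bounds the $Y$-norm of the left side by
\[
 C\sum_{\nu=1}^m\|f_\nu^{(k)}-f_\nu^{(j)}\|_{X_\nu}\prod_{\mu\ne\nu}\sup_i\|f_\mu^{(i)}\|_{X_\mu}.
\]
Hence $T(f^{(k)})$ is Cauchy in $Y$. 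Because $Y$ is complete, it has a limit, and I define $\bar T(f)$ to be that limit.

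\emph{Well-definedness and uniqueness.} Applying the same telescoping to two different approximating sequences shows that $\bar T(f)$ does not depend on the choice. Passing to the limit in the hypothesis gives $\|\bar T(f)\|_Y\le C\prod_\nu\|f_\nu\|_{X_\nu}$. Multilinearity of $\bar T$ follows by taking limits of multilinear identities. Continuity of $\bar T$ follows from the telescoped bound. Any continuous multilinear extension agrees with $T$ on the dense set $X^0$, so the extension is unique.

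\emph{Other targets.} The argument only used that $Y$ is a Banach space. So it applies verbatim to $L^p(\Omega;Y)$, which is complete, and to $\mathcal L(A,B)$ with the operator norm, which is complete whenever $B$ is. For the little-Besov assertion, the approximating outputs $T(f^{(k)})$ lie in the closed subspace defined by the tail condition \eqref{eq:uniform-little-besov-tail}. That subspace is the closure of smooth finite-block paths, hence closed in the ambient norm, so the limit $\bar T(f)$ also lies in it.

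The one step needing a small check is that this subspace is indeed closed. Suppose $g_k\to g$ in $L_T^\infty B^r_{2,\infty}$ and each $g_k$ satisfies the tail condition. Then $\sup_t\sup_{N>L}N^r\|P_N(g-g_k)\|_{L^2}\le\|g-g_k\|$. Choosing $k$ first and then $L$ yields the vanishing tail for $g$. I expect no genuine obstacle: the proof is a routine Cauchy-sequence argument.
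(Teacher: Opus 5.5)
Your proposal is correct and follows the same route as the paper. Both arguments use density of the finite-block subspaces and the multilinear bound to make the images Cauchy, which your telescoping identity makes explicit. Both then use completeness of the Bochner and operator-norm targets, and closedness of the tail condition \eqref{eq:uniform-little-besov-tail} under norm limits for the little-Besov assertion. The paper's proof is only a brief sketch of this argument, and your closedness check supplies the one detail it leaves implicit.
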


\begin{proof}
Approximate each input in its dense finite-block subspace.  The displayed
multilinear bound makes the images Cauchy, proves independence of the
approximations, and passes unchanged to complete Bochner and operator-norm
targets.  A uniform high-frequency tail places the limit in the corresponding
little-Besov closure.
\end{proof}

\begin{lemma}[Canonical Besov traces in the completed intersections]
\label{lem:completed-besov-traces}
Let $I=[t_0,t_1]$ be compact and let $\alpha,r\in\R$.  Consider the closure of
smooth finite-spatial-block paths in
\[
 C_IH^\alpha\cap L_I^\infty B_{2,\infty}^r
\]
with the intersection norm and the uniform little-Besov convention
\eqref{eq:uniform-little-besov-tail}.  Every element of this closure has a
unique representative in
\[
 C\bigl(I;b_{2,\infty}^r\bigr),
\]
and the endpoint evaluations are bounded.  The canonical
$C_IH^\alpha$ representative satisfies
\begin{equation}\label{eq:completed-besov-trace-bound}
 \sup_{t\in I}\|f(t)\|_{b_{2,\infty}^r}
 \le \|f\|_{L_I^\infty B_{2,\infty}^r},
 \qquad
 f_n\longrightarrow f\ \text{in the intersection norm}
 \Longrightarrow
 f_n\longrightarrow f\ \text{in }C_Ib_{2,\infty}^r.
\end{equation}
In particular, if the $H^\alpha$ trace of $f$ vanishes at an endpoint, then
its $b_{2,\infty}^r$ trace vanishes there as well.
\end{lemma}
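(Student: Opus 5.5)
The plan is to obtain both the representative and the trace bound from the smooth finite-block approximants, using the fact that the $L_I^\infty B_{2,\infty}^r$ norm dominates the $C_I b_{2,\infty}^r$ norm on that dense class.

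\emph{Step 1: continuous smooth paths.} Let $g$ be a smooth finite-spatial-block path, so only finitely many shells $P_N g$ are nonzero. Each $P_N g$ is continuous in $t$ with values in $L^2$, since smoothness and finitely many blocks give continuity in every $H^m$. Hence $g\in C(I;b^r_{2,\infty})$: its tail vanishes identically above a finite shell. For each $t$ and each $N$ we can pick $t_k\to t$ outside the null set where the essential-supremum bound fails. Continuity then gives
\[
N^r\|P_Ng(t)\|_{L^2}\le\|g\|_{L_I^\infty B_{2,\infty}^r}.
\]
So $\sup_t\|g(t)\|_{b^r_{2,\infty}}\le\|g\|_{L_I^\infty B^r_{2,\infty}}$ on the dense class.

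\emph{Step 2: extension.} Take $f$ in the closure and $g_n\to f$ in the intersection norm. By Step 1, $(g_n)$ is Cauchy in the Banach space $C(I;b^r_{2,\infty})$, because $b^r_{2,\infty}$ is closed in $B^r_{2,\infty}$. Let $\tilde f$ be its limit. The sequence also converges in $C_IH^\alpha$ to the canonical representative $f$. Both limits embed continuously in $C(I;\mathcal S')$, so $\tilde f(t)=f(t)$ for every $t$. This gives uniqueness of the representative, independence of the approximating sequence, and the bound in \eqref{eq:completed-besov-trace-bound} by passing to the limit. The convergence statement is the same inequality applied to $f_n-f$. Endpoint evaluation is bounded because of the pointwise sup bound.

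\emph{Step 3: vanishing traces.} If $f(t_0)=0$ in $H^\alpha$, then the identity $\tilde f(t_0)=f(t_0)$ as distributions gives a zero $b^r_{2,\infty}$ trace.

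The only delicate point is that the $L^\infty$ bound must hold for every $t$, not just almost every $t$. Shellwise continuity on the dense class handles this. For general elements the bound is never taken pointwise from the essential supremum; it is inherited through the approximants.
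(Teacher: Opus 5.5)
Your proposal is correct and follows essentially the same route as the paper. The paper's argument is also: on the smooth finite-block class the $L_I^\infty B_{2,\infty}^r$ norm equals the genuine time supremum, so approximants are Cauchy in $C(I;b_{2,\infty}^r)$. The limit is identified with the $C_IH^\alpha$ limit in $C(I;\mathcal S')$, closedness of $b_{2,\infty}^r$ gives the slice statement, and a zero $H^\alpha$ trace is the zero distribution. Your Step~1 additionally justifies, via continuity of each shell and density of the complement of the exceptional null set, the sup/esssup equality that the paper only asserts.
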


\begin{proof}
If $f_n$ is a smooth finite-block approximating sequence, then
$\|f_n-f_m\|_{L_I^\infty B_{2,\infty}^r}$ equals its genuine time supremum,
so $(f_n)$ is Cauchy in $C(I;b_{2,\infty}^r)$.  Its limit there and its limit
in $C_IH^\alpha$ agree in $C(I;\mathcal S')$, which gives the unique
representative and \eqref{eq:completed-besov-trace-bound}.  Closedness of
$b_{2,\infty}^r$ gives the slice statement, and a zero $H^\alpha$ trace is
the zero distribution, hence has zero Besov trace.
\end{proof}

For a compact interval $I=[t_0,t_1]\subset[0,T_0]$, the notation
$E_I^{2,\sigma}$ denotes the time-translated space defined by \eqref{eq:E-space}, with
all time norms taken over $I$.  Put
\begin{equation}\label{eq:zero-history-E}
 E_{I,0}^{2,\sigma}:=
 \{w\in E_I^{2,\sigma}:w(t_0)=0,\ \partial_tw(t_0)=0\}.
\end{equation}
The endpoint equalities are taken in $L^2$ and $H^{-1}$,
respectively.  Lemma~\ref{lem:completed-besov-traces}, applied separately to
$w$ and $\partial_tw$, upgrades the two $L^\infty$ Besov components to
canonical continuous little-Besov paths.  Thus the energy equalities in
\eqref{eq:zero-history-E} force the corresponding Besov traces to vanish as
well.

\begin{lemma}[Time extension, causality, and restricted operator norms]
\label{lem:time-extension-causality}
Let $0<T\le T_0$.  There is a bounded linear extension
\begin{equation}\label{eq:E-time-extension}
 \mathfrak E_{T,T_0}:E_T^{2,\sigma}\longrightarrow E_{T_0}^{2,\sigma},
 \qquad
 \mathfrak R_T\mathfrak E_{T,T_0}=I,
\end{equation}
whose norm is bounded by an absolute constant independently of $T$ and $T_0$.
The analogous assertion holds after translating the interval.

Call an operator $\mathcal A$ on $E_{T_0}^{2,\sigma}$ causal if inputs that
agree on $[0,t]$ have outputs that agree on $[0,t]$, for every $t\le T_0$.
For a causal operator, define
\begin{equation}\label{eq:causal-restriction}
 \mathcal A_{[0,T]}:=\mathfrak R_T\mathcal A\mathfrak E_{T,T_0}.
\end{equation}
This restriction is independent of the chosen bounded extension.  More
generally, if $I=[t_0,t_1]$ and $w\in E_{I,0}^{2,\sigma}$, extend $w$ by zero on
$[0,t_0]$ and by any bounded extension after $t_1$; the restriction of
$\mathcal A$ to $I$ is again independent of the extension after $t_1$.  We
denote this zero-history operator by $\mathcal A_I$.
\end{lemma}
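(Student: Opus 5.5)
The plan is to construct $\mathfrak E_{T,T_0}$ by even reflection across the right endpoint $t=T$, followed by a constant continuation, and then to derive causality and the restriction statements directly from the definitions. For $w\in E_T^{2,\sigma}$ and $t\in[0,T_0]$ set
\[
 (\mathfrak E_{T,T_0}w)(t):=
 \begin{cases}
 w(t),&0\le t\le T,\\
 w\bigl(\max\{2T-t,\,0\}\bigr),&T<t\le T_0.
 \end{cases}
\]
Values are taken only inside $[0,T]$, so every supremum-in-time norm of $w$ in $C(\cdot;L^2)$ and $L^\infty(\cdot;B^\sigma_{2,\infty})$ is reproduced exactly, with constant one.

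The time derivative is the only delicate component. Reflection flips the sign of $\partial_tw$ on $(T,2T)$, and on $(2T,T_0]$ the path is constant, so its derivative vanishes there. Continuity of $\partial_t$ across $t=T$ would require $\partial_tw(T)=0$, which does not hold in general. I will therefore use a first-order reflection instead, namely $w(T+h)=2w(T)-w(T-h)$ on $0\le h\le T$. This choice matches both $w$ and $\partial_tw$ at $t=T$, and it keeps the $C(H^{-1})$ and $L^\infty B^{\sigma-1}_{2,\infty}$ norms of $\partial_t$ bounded by the originals. It does cost a factor three in the $w$-norms. For $t\ge2T$ I continue with a smooth time cutoff applied to the constant $2w(T)-w(0)$, and take the cutoff to be flat. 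The main obstacle I expect is uniformity of the constant when $T\ll T_0$, since a cutoff of width $T$ would bring a factor $T^{-1}$ into $\partial_t$. To avoid this, I will continue by the constant path $2w(T)-w(0)$ with derivative zero. The derivative has a jump at $2T$ unless $\partial_tw(0)=0$, so I would instead iterate the first-order reflection on successive intervals of length $T$. Each step reproduces the norms with at most a fixed factor, but doing this repeatedly lets the factors accumulate. So I expect the final construction to be the following. Extend $\partial_tw$ by even reflection about $T$ and then periodically. Integrate it to obtain $w$, noting that the periodic extension of $\partial_tw$ has mean zero over each reflection period. This makes $w$ periodic and bounded by $2\sup|w|$, with an absolute constant. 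Continuity in $H^{-1}$ and $L^2$ holds at the junctions by construction.

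Independence from the extension will follow from causality. If $\mathfrak E$ and $\mathfrak E'$ are two bounded extensions, then $\mathfrak E w$ and $\mathfrak E'w$ agree on $[0,T]$. A causal $\mathcal A$ therefore gives outputs that agree on $[0,T]$, and applying $\mathfrak R_T$ shows the two restrictions coincide. For $I=[t_0,t_1]$ and $w\in E^{2,\sigma}_{I,0}$, the zero extension on $[0,t_0]$ lies in $E^{2,\sigma}$ because the $L^2$ and $H^{-1}$ traces vanish at $t_0$. By Lemma~\ref{lem:completed-besov-traces}, the Besov traces then vanish as well, which gives the required continuity of the glued path. The tail after $t_1$ is built with the translated version of the extension above. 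Two such extensions agree on $[0,t_1]$, so causality again yields the same restriction to $I$. Finally, closedness under the little-Besov convention \eqref{eq:uniform-little-besov-tail} is preserved, since the extension only rearranges time values of $w$ and $\partial_tw$.
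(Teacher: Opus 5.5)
The parts of your argument on causality, independence of the extension, and the zero-history restriction are correct and match the paper's. The gap is in the construction of $\mathfrak E_{T,T_0}$, specifically the required bound by an absolute constant. Your final construction relies on the claim that the even-then-periodic extension of $\partial_tw$ ``has mean zero over each reflection period''. This is false: its integral over one period $[0,2T]$ is $2\int_0^T\partial_tw\dd t=2(w(T)-w(0))$. The integrated path therefore drifts, $\widetilde w(t+2T)=\widetilde w(t)+2(w(T)-w(0))$, and it coincides with the iterated first-order reflection whose accumulation you were trying to avoid. The increment $w(T)-w(0)$ is controlled in $L^2$ and $B^{\sigma}_{2,\infty}$ only by $2\|w\|$, not by $T$ times a norm of the input, so the output norm grows like $T_0/T$. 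Concretely, take $w(t)=(t/T)f$ with $\widehat f$ supported where $|\xi|\sim T^{-1}$. Then $\partial_tw$ is constant in time, and your extension is just the linear continuation $(t/T)f$ on $[0,T_0]$. The input has $E_T^{2,\sigma}$ norm comparable to $(1+T^{-\sigma})\|f\|_{L^2}$, while the output has $L^\infty_{T_0}B^{\sigma}_{2,\infty}$ norm comparable to $(T_0/T)\,T^{-\sigma}\|f\|_{L^2}$. So you get a bounded extension for each fixed $T$, but not a $T$-uniform one. Uniformity is exactly what the paper uses later in \eqref{eq:restriction-of-operator-difference} and in the short-time smallness arguments.

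The missing idea is to bring the reflected path to rest by a bounded-speed time reparametrization, not by an amplitude cutoff. The paper takes a smooth $\psi_T:[0,\infty)\to[0,T/3]$ with $\psi_T(0)=0$, $\psi_T'(0)=1$ and $|\psi_T'|\le1$; for instance $\psi_T(h)=T\phi(h/T)$ for a fixed $\phi$ that levels off. It then sets $(\mathfrak E_{T,T_0}w)(T+h)=3w(T-\psi_T(h))-2w(T-2\psi_T(h))$. Your first-order reflection $2w(T)-w(T-\psi_T(h))$, with $\psi_T$ valued in $[0,T]$, would work equally well. In either form, value and time derivative match at $h=0$. By the chain rule, the time derivative of the extension is $\psi_T'(h)$ times a fixed linear combination of time samples of $\partial_tw$, so no factor $T^{-1}$ appears. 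This resolves precisely the obstruction you identified for cutoffs of width $T$. Since the extension is a fixed finite linear combination of time samples, the little-Besov tail condition is preserved, as you noted.
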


\begin{proof}
Choose a smooth $\psi_T:[0,\infty)\to[0,T/3]$ with
$\psi_T(0)=0$, $\psi_T'(0)=1$, and $|\psi_T'|\le1$, and set
\[
 (\mathfrak E_{T,T_0}w)(T+h)
 :=3w(T-\psi_T(h))-2w(T-2\psi_T(h)).
\]
The value and first derivative match at $h=0$ because $3-2=1$ and
$-3+4=1$.  The energy and Besov suprema increase by at most an absolute
constant, and the uniform little-Besov tail is preserved by this fixed finite
linear combination of time samples.  Define the formula first on smooth
finite-block paths and pass to the completion; translation treats a general
compact interval.

Causality makes the restriction independent of the extension to the right.
For $w\in E_{I,0}^{2,\sigma}$, Lemma~\ref{lem:completed-besov-traces} gives
zero Besov traces at the left endpoint, so extension by zero to the left
belongs to the same completed path space and creates no endpoint Dirac mass
in the time derivative.  Causality again proves the zero-history assertion.
\end{proof}

For $0<T\le1$, define the solution norms
\begin{align}
  \norm{X}_{X_T^{s_1}}
  &:={}
  \norm{X}_{C_TH^{s_1}}
  +\norm{\partial_tX}_{C_TH^{s_1-1}}
  +\norm{X}_{L_T^8W^{s_1-1/4,8/3}},
  \label{eq:X-space}\\
  \norm{Y}_{Y_T^{s_2}}
  &:={}
  \norm{Y}_{C_TH^{s_2}}
  +\norm{\partial_tY}_{C_TH^{s_2-1}}
  +\norm{Y}_{L_T^4W^{s_2-1/2,4}}.
  \label{eq:Y-space}
\end{align}
We also use the intersection norms
\begin{align}
  \norm{X}_{\wt X_T^{s_1,\sigma}}
  &:={}
  \norm{X}_{X_T^{s_1}}
  +\norm{X}_{L_T^\infty B_{2,\infty}^{\sigma}}
  +\norm{\partial_tX}_{L_T^\infty B_{2,\infty}^{\sigma-1}},
  \label{eq:X-intersection-norm}\\
  \norm{Y}_{\wt Y_T^{s_2,\sigma}}
  &:={}
  \norm{Y}_{Y_T^{s_2}}
  +\norm{Y}_{L_T^\infty B_{2,\infty}^{\sigma}}
  +\norm{\partial_tY}_{L_T^\infty B_{2,\infty}^{\sigma-1}}.
  \label{eq:Y-intersection-norm}
\end{align}
Set
\begin{equation}\label{eq:Z-space}
  \cZ_T
  :=(\wt X_T^{s_1,\sigma})^2\times(\wt Y_T^{s_2,\sigma})^2.
\end{equation}

Throughout, the symbols $\varepsilon$, $\delta$, $\eta$, $\theta$, and
$p_0$ are chosen according to Definition~\ref{def:loss-hierarchy}.

\begin{definition}[Localized source topologies]\label{def:source-topology}
For $r\in\{s_1,s_2\}$ set
\[
 \mathfrak R_T^{r,\sigma}
 :=L_T^1H^{r-1}\cap L_T^1B_{2,\infty}^{\sigma-1}.
\]
For the endpoint deterministic quadratic sector set
\[
 \mathfrak D_T^{s_2}
 :=\left\{F:\ \langle D\rangle^{s_2-1/2}F
 \in L_{t,x}^{4/3}\right\},
 \qquad
 \|F\|_{\mathfrak D_T^{s_2}}
 :=\|\langle D\rangle^{s_2-1/2}F\|_{L_{t,x}^{4/3}}.
\]
The $Y$-source sum space is
\begin{equation}\label{eq:Y-source-sum-space}
 \mathfrak S_T^{s_2,\sigma}
 :=\mathfrak R_T^{s_2,\sigma}
 +\left(\mathfrak D_T^{s_2}\cap
 L_T^1B_{2,\infty}^{\sigma-1}\right),
\end{equation}
with the usual infimum norm over decompositions.  The componentwise nonlinear
source topology used below is
\begin{equation}\label{eq:nonlinear-source-topology}
 C_T\mathcal C^{-1-\betasum-\kappa}
 \times\mathfrak R_T^{s_1,\sigma}
 \times\mathfrak S_T^{s_2,\sigma},
\end{equation}
corresponding to $(\chi\Theta,\mathscr F_X^{\chi,\rho},\mathscr F_Y^{\chi,\rho})$.
\end{definition}

\begin{lemma}[Continuity of the nonlinear-source sum]
\label{lem:source-summation-continuity}
The map
\[
 (G_0,G_1,G_2)\longmapsto G_0+G_1+G_2
\]
from the product space in \eqref{eq:nonlinear-source-topology} to
$\mathcal D'((0,T)\times\R^3)$ is continuous.  Consequently, if
\[
 (G_{0,n},G_{1,n},G_{2,n})\longrightarrow(G_0,G_1,G_2)
\]
in that product topology, then the sums converge in space--time
distributions.  In particular, if finite-cutoff fields satisfy
\[
 \chi u_n v_n=G_{0,n}+G_{1,n}+G_{2,n},
\]
then the limit of the right-hand side uniquely determines the distributional
limit of the localized products.
\end{lemma}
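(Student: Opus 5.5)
The plan is to reduce the claim to continuity of each factor space into $\mathcal D'((0,T)\times\R^3)$ and then use linearity of the sum. The target carries the weak topology given by the seminorms $|\langle\cdot,\phi\rangle|$, $\phi\in C_c^\infty((0,T)\times\R^3)$. It therefore suffices to show, for each fixed test function $\phi$, that
\[
 |\langle G_0+G_1+G_2,\phi\rangle|\lesssim_\phi
 \|G_0\|_{C_T\mathcal C^{-1-\betasum-\kappa}}
 +\|G_1\|_{\mathfrak R_T^{s_1,\sigma}}
 +\|G_2\|_{\mathfrak S_T^{s_2,\sigma}}.
\]
Since all three maps are linear, a bound of this form gives sequential continuity, and in fact continuity, of the sum map.

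For $G_0$, we pair slicewise in time. Because $\phi(t)$ is Schwartz uniformly in $t$, we have $|\langle G_0(t),\phi(t)\rangle|\le\|G_0(t)\|_{B^{-1-\betasum-\kappa}_{\infty,\infty}}\|\phi(t)\|_{B^{1+\betasum+\kappa}_{1,1}}$ by Besov duality. Integrating in $t$ over $(0,T)$ gives the bound $T\|G_0\|_{C_T\mathcal C^{\cdot}}\sup_t\|\phi(t)\|_{B_{1,1}}$.

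For $G_1\in L_T^1H^{s_1-1}$, the $H^{s_1-1}$–$H^{1-s_1}$ duality gives the bound $\|G_1\|_{L^1_TH^{s_1-1}}\sup_t\|\phi(t)\|_{H^{1-s_1}}$. The Besov component is not needed here.

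The only point needing care is $G_2$, because $\mathfrak S_T^{s_2,\sigma}$ is a sum space with an infimum norm. Take any decomposition $G_2=F_1+F_2$ with $F_1\in\mathfrak R_T^{s_2,\sigma}$ and $F_2\in\mathfrak D_T^{s_2}\cap L_T^1B_{2,\infty}^{\sigma-1}$. The term $F_1$ is handled exactly as $G_1$. For $F_2$, write
\[
\langle F_2,\phi\rangle=\langle\langle D\rangle^{s_2-1/2}F_2,\langle D\rangle^{1/2-s_2}\phi\rangle,
\]
and apply Hölder in $L^{4/3}_{t,x}$–$L^4_{t,x}$. This works because $\langle D\rangle^{1/2-s_2}\phi\in L^4_{t,x}$: $\phi$ is Schwartz in space and compactly supported in time. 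Alternatively, the $L_T^1B^{\sigma-1}_{2,\infty}$ component could be used against $B^{1-\sigma}_{2,1}$. We then take the infimum over decompositions. The pairing is well defined independently of the decomposition, since both pieces are distributions and the pairing is additive.

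Adding the three bounds gives the displayed estimate, hence continuity. The consequence about convergent triples follows immediately by linearity. For the final sentence, the identity $\chi u_nv_n=G_{0,n}+G_{1,n}+G_{2,n}$ holds in $\mathcal D'$. The right-hand side converges in $\mathcal D'$ to $G_0+G_1+G_2$, so $\chi u_nv_n$ has that distributional limit, and limits in $\mathcal D'$ are unique because $\mathcal D'$ is Hausdorff.
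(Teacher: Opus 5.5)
Your proof is correct and follows the paper's argument: pair with a test function, bound the H\"older component by duality, bound the $\mathfrak R_T$ components by $L^1_TH^{r-1}$--$H^{1-r}$ duality, bound the endpoint piece by moving $\langle D\rangle^{s_2-1/2}$ across and applying the $L^{4/3}_{t,x}$--$L^4_{t,x}$ H\"older pairing, and then take the infimum over sum-space decompositions. Your explicit $B^{-s}_{\infty,\infty}$--$B^{s}_{1,1}$ duality for $G_0$ and the Hausdorff remark for the final assertion only spell out steps that the paper leaves implicit.
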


\begin{proof}
Pair with $\varphi\in C_c^\infty((0,T)\times\R^3)$.  The first component is
controlled by the continuous embedding
$\mathcal C^{-1-\betasum-\kappa}\hookrightarrow\mathcal D'$.  The
$\mathfrak R_T^{r,\sigma}$ components are bounded by
$\|G\|_{L_T^1H^{r-1}}\sup_t\|\varphi(t)\|_{H^{1-r}}$.  For the endpoint
piece, Fourier duality and H\"older give
\[
 \left|\int\langle G_D,\varphi\rangle\dd t\right|
 \le\|\la D\ra^{s_2-1/2}G_D\|_{L^{4/3}_{t,x}}
     \|\la D\ra^{1/2-s_2}\varphi\|_{L^4_{t,x}}.
\]
Taking the infimum over sum-space decompositions proves continuity and hence
the final assertion.
\end{proof}

The deterministic Cauchy-data space is
\begin{equation}\label{eq:data-space}
  (u_0,u_1),(v_0,v_1)
  \in H^{s_2}\times H^{s_2-1}.
\end{equation}
Since $\sigma<s_2$, this space embeds continuously into the Besov data space
used in the fixed-point space $\mathcal Z_T$.

\begin{lemma}[Strong cutoff convergence of the deterministic data]
\label{lem:data-cutoff-convergence}
For every admissible cutoff family and every data pair in
\eqref{eq:data-space},
\[
  \pi_\Lambda f\to f\quad\text{in }H^{s_2}\cap B_{2,\infty}^{\sigma},
  \qquad
  \pi_\Lambda g\to g\quad\text{in }H^{s_2-1}\cap
  B_{2,\infty}^{\sigma-1}.
\]
\end{lemma}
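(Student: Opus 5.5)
The proof is a dominated-convergence argument on the Fourier side, done separately for the two norms in each intersection. Write $f\in H^{s_2}$. Since $m_\Lambda$ is uniformly bounded by \eqref{eq:cutoff-symbol-bounds} and $m_\Lambda(\xi)\to1$ pointwise, we have
\[
 \|\pi_\Lambda f-f\|_{H^{s_2}}^2
 =(2\pi)^{-3}\int\la\xi\ra^{2s_2}|m_\Lambda(\xi)-1|^2|\wh f(\xi)|^2\dd\xi .
\]
The integrand is dominated by $C\la\xi\ra^{2s_2}|\wh f|^2\in L^1$ and tends to zero pointwise, so dominated convergence gives $\pi_\Lambda f\to f$ in $H^{s_2}$. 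A sharper form is also available: by \eqref{eq:cutoff-supports} the integrand vanishes for $|\xi|\le c_0\Lambda$, so the error is at most $C\|P_{\ge c_0\Lambda}f\|_{H^{s_2}}$. The same computation with $s_2-1$ in place of $s_2$ handles $g$ in $H^{s_2-1}$.

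For the Besov components we reduce to the Sobolev case. Since $\sigma<s_2$, the embedding $H^{s_2}\hookrightarrow B_{2,\infty}^{s_2}\hookrightarrow B_{2,\infty}^{\sigma}$ is continuous, and likewise $H^{s_2-1}\hookrightarrow B_{2,\infty}^{\sigma-1}$. The Besov convergence therefore follows from the Sobolev convergence with no extra work. The limits also lie in the little-Besov closure, because $H^{s_2}$ functions have vanishing dyadic tails even at the $s_2$ level. If a direct argument is preferred, one can write
\[
 N^\sigma\|P_N(\pi_\Lambda f-f)\|_{L^2}\le N^{\sigma-s_2}\|P_N(\pi_\Lambda-1)f\|_{H^{s_2}},
\]
and conclude with the same tail bound.

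No step is a genuine obstacle. The one point that needs care is that the convergence is strong and not merely bounded: because the cutoffs are neither nested nor radial, a uniform operator-norm bound on $\pi_\Lambda$ alone does not suffice. This is why the argument uses the pointwise convergence $m_\Lambda\to1$ together with the plateau condition, which gives the explicit tail bound and applies uniformly along any cofinal sequence.
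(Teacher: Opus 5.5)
Your proof is correct. The Sobolev part matches the paper's dominated-convergence argument. Your plateau remark even gives an explicit tail, $\|\pi_\Lambda f-f\|_{H^{s_2}}^2\lesssim\int_{|\xi|\ge c_0\Lambda}\langle\xi\rangle^{2s_2}|\widehat f(\xi)|^2\,d\xi$, so pointwise convergence of $m_\Lambda$ is not needed there.

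For the Besov part you take a slightly different and shorter route. You apply the embedding $H^{s_2}\hookrightarrow B^{\sigma}_{2,\infty}$ directly to the difference $\pi_\Lambda f-f$. This is valid because $N^\sigma\le N^{s_2}$ for $N\ge1$ when $\sigma<s_2$. As a result, the Besov error is bounded by a constant times the Sobolev error, and likewise for $g$ with $\sigma-1<s_2-1$.

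The paper instead splits at a dyadic $L$. It lets the finitely many low blocks converge by pointwise multiplier convergence, and bounds the high shells by $L^{\sigma-s_2}\|f\|_{H^{s_2}}$. Implicitly it also bounds the high shells of $\pi_\Lambda f$, using the uniform bound on $m_\Lambda$. It then takes the iterated limit, first $L\to\infty$ and then $\Lambda\to\infty$.

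Both arguments rest on the same inequality $\sigma<s_2$. Your version avoids the iterated limit and yields a rate. The paper's split only needs uniform boundedness in $H^{s_2}$, or even in $B^{s_2}_{2,\infty}$, plus blockwise convergence, so it would survive under weaker hypotheses than convergence in $H^{s_2}$. For data in $H^{s_2}\times H^{s_2-1}$ this extra generality is not needed.
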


\begin{proof}
Sobolev convergence follows from dominated convergence.  For the Besov norm,
split at a fixed dyadic $L$: finitely many low blocks converge by pointwise
multiplier convergence, while
\[
 \sup_{N\ge L}N^\sigma\|P_Nf\|_2
 \le L^{\sigma-s_2}\|f\|_{H^{s_2}},
\]
and the analogous estimate holds for $g$.  Let first $L$ and then $\Lambda$
tend to infinity.
\end{proof}

\begin{definition}[Convergence of partially defined random variables]
\label{def:partial-probability-convergence}
Let $(S,d)$ be a separable metric space, let $G\in\mathcal F$, and let
$(A_n,X_n)_{n\ge1}$ consist of events $A_n\in\mathcal F$ and strongly
measurable $S$-valued random variables $X_n$.  For a strongly measurable
$S$-valued random variable $X$, we say that $(A_n,X_n)$ converges to $X$ in
probability on $G$ if, for every $\varepsilon>0$,
\begin{equation}\label{eq:partial-probability-convergence}
 \Pp\bigl(G\cap(A_n^c\cup\{d(X_n,X)>\varepsilon\})\bigr)
 \longrightarrow0.
\end{equation}
The values assigned to $X_n$ on $A_n^c$ do not affect this notion.
\end{definition}

\subsection{Local solution concept and main results}\label{subsec:local-solutions}

For $0<T\le T_0$ define the backward cone
\begin{equation}\label{eq:backward-cone}
  \mathcal Q_{K,T}
  :=\left\{(t,x)\in[0,T]\times\R^3:
  \dist(x,K)\le \speedmax(T-t)\right\}.
\end{equation}

\begin{definition}[Local paracontrolled solution]\label{def:local-solution}
Let $K\Subset\R^3$ and $0<T\le T_0$.  Let $\Xi$ satisfy
Definition~\ref{def:Xi-assumptions}, and let
$\Theta\in C_T\mathcal C_{\mathrm{loc}}^{-1-\betasum-\kappa}$.  For every
$\chi\in C_c^\infty(\R^3)$ one then has
$\chi\Theta\in C_T\mathcal C^{-1-\betasum-\kappa}$; only this localized
quantity enters the fixed-point problem.  A triple $(u,v,\mathcal N)$ is a local paracontrolled solution associated with
$(\Theta,\Xi)$ on $[0,T]\times K$ if there exist an adapted localization pair
$(\chi,\rho)$ and a fixed point
$Z=(X_{\W},X_{\K},Y_{\W},Y_{\K})\in\mathcal Z_T$ of the map defined by
\eqref{eq:mild-X}--\eqref{eq:mild-Y} such that, on $\mathcal Q_{K,T}$,
\begin{equation}\label{eq:local-reconstruction}
\begin{aligned}
  u&=\rho(\Psi_{\W}+V_{\W})+X_{\W}+Y_{\W},\\
  v&=\rho(\Psi_{\K}+V_{\K})+X_{\K}+Y_{\K},
\end{aligned}
\end{equation}
and
\begin{equation}\label{eq:local-source}
  \mathcal N
  =\Theta+\mathscr F_X^{\chi,\rho}(Z)
          +\mathscr F_Y^{\chi,\rho}(Z),
\end{equation}
where the source maps are defined in
\eqref{eq:localized-X-source} and \eqref{eq:localized-Y-source}.  Every localized component of $\Xi$, together with $\chi\Theta$, must be
formed using this same pair $(\chi,\rho)$; components constructed with
different localization pairs are not combined in one fixed-point problem.  The
remainder terms satisfy
\[
 (X_a(0),\partial_tX_a(0))=(0,0),
 \qquad a\in\mathfrak C,
\]
while
\[
 (Y_{\W}(0),\partial_tY_{\W}(0))=(u_0,u_1),
 \qquad
 (Y_{\K}(0),\partial_tY_{\K}(0))=(v_0,v_1).
\]
Moreover,
\[
  L_{\W}u=\mathcal N+\xi_{\W},
  \qquad
  L_{\K}v=\mathcal N+\xi_{\K}
\]
in $\mathcal D'((0,T)\times K)$.  Two such solutions are identified if
their fields and nonlinear sources agree on $[0,T]\times K$.
\end{definition}

\begin{definition}[Solution on a measurable random interval]
\label{def:random-interval-solution}
Let $\tau:\Omega\to[0,1]$ be measurable.  A paracontrolled solution on
$[0,\tau)$ means a compatible family
\[
 \bigl\{Z^{(q)}:q\in\mathbb Q\cap(0,1]\bigr\},
\]
where $Z^{(q)}$ is a strongly measurable $\mathcal Z_q$-valued random
variable, is set equal to $0$ on $\{q\ge\tau\}$, solves the fixed-point
problem on $\{q<\tau\}$, and satisfies
\begin{equation}\label{eq:random-horizon-compatibility}
 \mathfrak R_{q_1}Z^{(q_2)}=Z^{(q_1)}
 \quad\text{on }\{q_2<\tau\},
 \qquad 0<q_1<q_2,
 \quad q_1,q_2\in\mathbb Q.
\end{equation}
The reconstructed fields and source are required to satisfy the same
restriction compatibility.  For a deterministic $T$, the restriction on
$\{T<\tau\}$ is obtained from any rational $q\in(T,\tau)$; the countable
measurable selection and independence of this choice are proved in
Corollary~\ref{cor:horizon-gluing}.  The resulting restriction is independent of the chosen rational $q$.
\end{definition}

\begin{definition}[Compatible local solutions on an exhaustion]
\label{def:compatible-local-solutions}
Let $(K_n)_{n\ge1}$ be an increasing compact exhaustion of $\R^3$ with
$K_n\subset\operatorname{int}K_{n+1}$, and let
$\tau_{n+1}\le\tau_n$ be measurable lifetimes.  A family
\[
 \mathfrak U_n=(u_n,v_n,\mathcal N_n),\qquad n\ge1,
\]
of local paracontrolled solutions on $[0,\tau_n)\times K_n$ is compatible
if, for $n\le m$ and every rational $q>0$,
\begin{equation}\label{eq:local-solution-compatibility}
 \mathfrak U_m|_{[0,q]\times K_n}=\mathfrak U_n|_{[0,q]\times K_n}
 \quad\text{on }\{q<\tau_m\}.
\end{equation}
For a compact $K\Subset\R^3$, let $n(K)$ be the smallest index with
$K\subset\operatorname{int}K_{n(K)}$ and set $\tau_K:=\tau_{n(K)}$.
The restriction of $\mathfrak U_{n(K)}$ defines the local solution on $K$ up
to time $\tau_K$; every larger index gives the same restriction on its
possibly shorter lifetime.
\end{definition}

\begin{theorem}[Local well-posedness]
\label{thm:local-wp}
Let $\parvec$ satisfy \eqref{eq:parameter-vector}, let
Assumption~\ref{ass:exponents} hold, let the Cauchy data satisfy
\eqref{eq:data-space}, and let $\xi_{\W},\xi_{\K}$ be the independent white-in-time, spatially stationary Gaussian forcings generated by admissible Fourier symbols in Definition~\ref{def:gaussian-forcing-profile}.  There is a compatible family of local solutions in the sense of
Definition~\ref{def:compatible-local-solutions}, based on the exhaustion
$K_n=\overline{B(0,n)}$, whose lifetimes are almost surely positive.  For every
compact $K\Subset\R^3$, it induces a measurable lifetime $\tau_K$ and a local
paracontrolled solution $(u,v,\mathcal N)$ on $[0,\tau_K)\times K$ in the
sense of Definitions~\ref{def:local-solution}
and~\ref{def:random-interval-solution}.  The lifetimes may be chosen so that
\[
 K_1\subset K_2\quad\Longrightarrow\quad \tau_{K_1}\ge\tau_{K_2}.
\]
On every common existence interval, the local solution is independent of the
adapted localization pair.  For fixed $(\Theta,\Xi)$, it is unique in the
class of Definition~\ref{def:local-solution}.  The stability assertion is
understood on a common deterministic interval and in a common contraction
ball: for every compact $K$, every $T\in(0,1]$, and every realization in
$\{T<\tau_K\}$, there exist $R_{K,T}<\infty$, $\delta_{K,T}>0$, and
$C_{K,T}<\infty$ such that the reference map is a strict contraction of
the closed radius-$R_{K,T}$ ball of $\mathcal Z_T$.  If
\[
 d_T\bigl((\Xi,y),(\widetilde\Xi,\widetilde y)\bigr)<\delta_{K,T},
\]
then the perturbed map for the same localization pair is also a strict
contraction of that ball, has a unique fixed point on $[0,T]$, and
\[
 \|Z-\widetilde Z\|_{\mathcal Z_T}
 \le C_{K,T}
 d_T\bigl((\Xi,y),(\widetilde\Xi,\widetilde y)\bigr),
\]
where $d_T$ is defined in \eqref{eq:Xi-distance}.
\end{theorem}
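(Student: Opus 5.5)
The proof is a deterministic fixed-point argument run pathwise on the enhanced data, followed by a measurable gluing over compacts and rational horizons.

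\textbf{Enhanced data and the map.} I take as given the stochastic construction announced in Section~\ref{sec:paracontrolled} and Section~\ref{sec:resonant-operators}. For each adapted pair $(\chi,\rho)$ this produces, almost surely, enhanced data $\Xi$ in the sense of Definition~\ref{def:Xi-assumptions}. The data consist of $\chi\Theta$, the localized $\Psi_a,V_a,\partial_tV_a$, the cubic terms $\Gamma_a^{\chi,\rho}$, and the four operators $T^{a;b,c}=\mathcal D^{a;b}+\mathcal B^{a;b,c}$ acting on $E_T^{2,\sigma}$. The data are measurable in $\omega$ with values in a separable space, and have finite norm on $[0,T_0]$.

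For fixed $(\Xi,y)$ I would show that the map \eqref{eq:mild-X}--\eqref{eq:mild-Y} sends $\mathcal Z_T$ to itself. The target bound is
\[
\|\Phi(Z)\|_{\mathcal Z_T}\le C_0\|y\|+C T^{\theta}\bigl(1+\|\Xi\|\bigr)^k\bigl(1+\|Z\|_{\mathcal Z_T}\bigr)^2,
\]
together with the analogous difference bound in $(Z,\Xi,y)$. The ingredients are as follows.
\begin{itemize}
\item Energy and Strichartz estimates map $\mathfrak R_T^{r,\sigma}$ and $\mathfrak S_T^{s_2,\sigma}$ into $\widetilde X_T^{s_1,\sigma}$ and $\widetilde Y_T^{s_2,\sigma}$; the endpoint $\mathfrak D_T^{s_2}$ piece uses the dual $L^{4/3}$ Strichartz bound.
\item Paraproduct and commutator estimates handle $X\prec\Psi$.
\item The operator bounds for $T^{a;b,c}$ handle the resonant terms.
\item The window \eqref{eq:parameter-window} supplies a positive power $T^\theta$. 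This is obtained by trading the strict margin $\mathfrak m_*$ for time integrability, or through the $L^1_T$ source norms.
\end{itemize}
With $R=2C_0\|y\|+1$ and $T$ small, depending on $\|\Xi\|_{[0,T]}$, $\Phi$ becomes a strict contraction of the radius-$R$ ball. The Lipschitz bound in $d_T$ then gives the stability statement: if $\delta_{K,T}$ is small, the perturbed map still contracts the same ball, and the standard fixed-point perturbation estimate gives $\|Z-\widetilde Z\|\le C_{K,T}d_T$.

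\textbf{Lifetimes and gluing.} For each $n$ I would fix a pair adapted to $(K_n,T_0)$. I then define $\tau_n$ as the supremum of the rational $q$ for which the contraction closes, with the smallness threshold chosen monotone in $n$. Replacing $\tau_n$ by $\min_{j\le n}\tau_j$ gives monotonicity. The norms $\|\Xi\|_{[0,q]}$ are measurable, so $\tau_n$ is measurable and almost surely positive.

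Compatibility in $q$ follows from causality (Lemma~\ref{lem:time-extension-causality}) and uniqueness of the fixed point on shorter intervals. Extending beyond the small-time contraction should use zero-history restarts on $E_{I,0}^{2,\sigma}$; for positivity alone, however, the small-time argument suffices.

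\textbf{Independence of the localization and uniqueness.} This is the main obstacle. My plan is to pass through finite cutoffs.
\begin{enumerate}
\item At cutoff $\Lambda$, the fixed point reconstructs a classical solution of \eqref{eq:cutoff-system} by Proposition~\ref{prop:finite-cutoff-reconstruction} and Proposition~\ref{prop:finite-cutoff-regularity}.
\item For two pairs, Lemma~\ref{lem:auxiliary-forcing-local} shows that the equations coincide on $\mathcal Q_{K,T}$, where both $\chi$ and $\rho$ equal one. Lemma~\ref{lem:finite-propagation} then identifies the two reconstructed fields on the cone.
\item Letting $\Lambda\to\infty$, the data converge: $\Xi_\Lambda\to\Xi$ along a subsequence almost surely, and $y_\Lambda\to y$ by Lemma~\ref{lem:data-cutoff-convergence}. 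Lipschitz stability transfers convergence to the fixed points, and Lemma~\ref{lem:source-summation-continuity} transfers it to the sources. The identification therefore survives in the limit on $[0,T\wedge\tau]\times K$.
\end{enumerate}

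For uniqueness with fixed $(\Theta,\Xi)$, two solutions may come from different pairs. Both solutions can be compared with the fixed point for a common pair by the same mechanism. Within a single pair, uniqueness is uniqueness of the fixed point in $\mathcal Z_T$: the contraction on a short interval is propagated by the zero-history restart operators $\mathcal A_I$.

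Compatibility across $K_n\subset K_m$ is the same localization-independence statement, since a pair adapted to $K_m$ is also adapted to $K_n$. The delicate point is that finite propagation needs $L^1_tL^\infty$ coefficients. These are available only at finite cutoff, which is why the identification must be made before the limit is taken.
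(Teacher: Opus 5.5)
Your proposal follows essentially the paper's route. It runs a pathwise contraction for the localized map and defines lifetimes by the contraction condition on rational horizons, made monotone by taking minima over $j\le n$. Uniqueness in all of $\mathcal Z_T$ comes from zero-history intervals, and localization independence (hence compatibility across $K_n$) comes from finite-cutoff finite propagation, identified before the limit removes the $L^1_tL^\infty$ coefficients. The one correction is that the centered Besov operator term does not carry an explicit factor $T^\theta$ from the exponent window. Its smallness is the separate initial-interval condition \eqref{eq:centered-operator-initial-smallness} of Definition~\ref{def:Xi-assumptions}, which is why the paper's contraction constant is the modulus $\mu_T$ from \eqref{eq:mu-full} rather than a power of $T$.
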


\begin{theorem}[Spectral approximation]
\label{thm:spectral-approximation}
Under the assumptions of Theorem~\ref{thm:local-wp}, fix a compact
$K\Subset\R^3$ and the fixed localization pair
$(\chi_K^\circ,\rho_K^\circ)$ from
\eqref{eq:reference-localization-and-lifetime}.  Denote the corresponding solution of the localized auxiliary system and its
fixed-point variables by
$(u^\circ,v^\circ,\mathcal N^\circ,Z^\circ)$, and denote their
finite-cutoff counterparts by
$(u_\Lambda^\circ,v_\Lambda^\circ,
  \mathcal N_\Lambda^\circ,Z_\Lambda^\circ)$.
Their restrictions to the backward cone $\mathcal Q_{K,T}$ represent the
local fields and source.

\begin{enumerate}[label=\textup{(\roman*)},leftmargin=2.45em]
\item \emph{Fixed-profile dyadic cutoffs.}
For every fixed-profile dyadic cutoff there is an event $\Omega_K^\circ$ of
probability one, which may depend on the chosen profile, such that, for every $\omega\in\Omega_K^\circ$ and every
$T<\tau_K(\omega)$, the finite-cutoff fixed point exists for all sufficiently
large dyadic $\Lambda$ and
\begin{align}
  Z_\Lambda^\circ&\longrightarrow Z^\circ
  &&\text{in }\mathcal Z_T,
  \label{eq:main-fixed-point-convergence}\\
  \bigl(\chi_K(u_\Lambda^\circ-u^\circ),
  \chi_K(v_\Lambda^\circ-v^\circ)\bigr)
  &\longrightarrow0
  &&\text{in }C([0,T];\mathcal D'(\R^3))^2
  \label{eq:main-field-convergence}
\end{align}
for every $\chi_K\in C_c^\infty(\R^3)$ equal to one near $K$.  The three source terms converge in the topology
\eqref{eq:nonlinear-source-topology}.  If
$\eta_K\in C_c^\infty(\R^3)$ satisfies
\begin{equation}\label{eq:inner-source-test-cutoff}
 \eta_K\equiv1\ \text{near }K,
 \qquad \eta_K\chi_K^\circ=\eta_K,
\end{equation}
then
\[
 \eta_Ku_\Lambda^\circ v_\Lambda^\circ
 \longrightarrow \eta_K\mathcal N^\circ
 \quad\text{in }\mathcal D'((0,T)\times\R^3).
\]

\item \emph{Arbitrary admissible cofinal cutoffs.}
Let $(\pi_{\Lambda_n})_{n\ge1}$ be an admissible cofinal cutoff sequence and
let $T\in(0,1]$ be deterministic.  There are Borel events
$\mathcal A_{n,T}^K$ and strongly measurable $\mathcal Z_T$-valued random
variables $Z_{n,T}^\circ$ whose restriction to $\mathcal A_{n,T}^K$ is the
finite-cutoff fixed point.  If $Z_T^\circ$ denotes the strongly measurable
restriction of $Z^\circ$ to $[0,T]$ on $\{T<\tau_K\}$, then
$(\mathcal A_{n,T}^K,Z_{n,T}^\circ)$ converges to $Z_T^\circ$ in probability
on $\{T<\tau_K\}$ in the sense of
Definition~\ref{def:partial-probability-convergence}.  The same conclusion
holds for every continuous seminorm on
$C([0,T];\mathcal D'(\R^3))^2$ and for the source norms in
\eqref{eq:nonlinear-source-topology}.
\end{enumerate}

For any other fixed localization pair $(\chi,\rho)$ adapted to $(K,1)$,
the same conclusions hold on its existence interval, and the limiting fields
and source agree with those obtained from the fixed pair
$(\chi_K^\circ,\rho_K^\circ)$ on every common interval.
\end{theorem}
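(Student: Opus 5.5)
The plan is to reduce everything to a stability statement for the localized fixed-point map, and then show that the finite-cutoff enhanced data converge to the limiting enhanced data, almost surely in the dyadic fixed-profile case and in probability in the cofinal case. The argument is carried out for the reference pair $(\chi_K^\circ,\rho_K^\circ)$ and then transferred to other pairs.

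\emph{First step: data convergence.} Write $\Xi_\Lambda$ for the finite-cutoff enhanced data, built with the same pair: $\chi\Theta_\Lambda$, the localized $V_{a,\Lambda}$, $\Gamma_{a,\Lambda}$, and the operators $\mathcal D^{a;b}_\Lambda$, $\mathcal B^{a;b,c}_\Lambda$.
\begin{itemize}
\item Using the Wick decomposition \eqref{eq:intro-wick-split} and \cref{lem:finite-product-cutoff}, each chaos kernel difference is bounded by the tail integral of the limiting kernel over $\{\max|\zeta_\nu|\ge c_0\Lambda\}$.
\item The second-moment and Schatten estimates behind the construction of $\Xi$ then give
\[
\E\, d_T(\Xi_\Lambda,\Xi)^{p}\lesssim \Lambda^{-\theta p}.
\]
Gaussian hypercontractivity upgrades this to all $p$, and \cref{lem:weighted-stationary-lifting} handles the stationary weighted components.
\item For deterministic Cauchy data, $y_\Lambda\to y$ by \cref{lem:data-cutoff-convergence}.
\item Along $\Lambda=2^n$, the choice $p_0\theta>4$ makes $\sum_n \Pp(d_T(\Xi_{2^n},\Xi)>2^{-n\theta/2})$ summable. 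Borel--Cantelli then yields a profile-dependent full-measure event $\Omega_K^\circ$ on which $d_T(\Xi_{2^n},\Xi)\to0$ for all rational $T\le1$ simultaneously.
\item For general cofinal sequences one only gets convergence in probability. The constants are uniform over the admissible family because \cref{lem:finite-product-cutoff} uses only the plateau constant $c_0$ and the uniform zeroth-order bound.
\end{itemize}

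\emph{Second step: deterministic transfer.} Fix $\omega\in\Omega_K^\circ$ and $T<\tau_K(\omega)$, and pick a rational $q\in(T,\tau_K)$.
\begin{itemize}
\item By the stability part of \cref{thm:local-wp}, there are $R,\delta,C$ such that whenever $d_q((\Xi_\Lambda,y_\Lambda),(\Xi,y))<\delta$ the cutoff map is a strict contraction of the same ball. Its fixed point $Z^\circ_\Lambda$ then exists and satisfies $\|Z^\circ_\Lambda-Z^\circ\|_{\mathcal Z_q}\le C d_q\to0$.
\item Restriction to $[0,T]$ gives \eqref{eq:main-fixed-point-convergence}.
\item The field convergence \eqref{eq:main-field-convergence} follows from the reconstruction \eqref{eq:local-reconstruction}. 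One combines the convergence of $\rho(\Psi_{a,\Lambda}+V_{a,\Lambda})$ in $C_T\mathcal C^{-1/2-\beta_a-\kappa}_{\mathrm{loc}}$ with the embedding of $\mathcal Z_T$ into $C_TH^{-1}$.
\item Source convergence uses the local Lipschitz continuity of $\mathscr F_X,\mathscr F_Y$ in $(Z,\Xi)$, which is the same multilinear estimate used for contraction.
\item For the product $\eta_K u_\Lambda v_\Lambda$, \cref{prop:finite-cutoff-reconstruction} identifies $\chi u_\Lambda v_\Lambda$ with the sum of the three sources, because at finite cutoff every product is classical by \cref{prop:finite-cutoff-regularity}. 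Multiplying by $\eta_K$ with $\eta_K\chi=\eta_K$ and applying \cref{lem:source-summation-continuity} concludes.
\end{itemize}

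\emph{Cofinal cutoffs.} For part (ii), set
\[
\mathcal A^K_{n,T}=\{T<\tau_K\}\cap\{d_{q}(\ldots)<\delta\},
\]
chosen measurably via a countable rational selection as in \cref{cor:horizon-gluing}. Define $Z_{n,T}^\circ$ as the fixed point there and $0$ elsewhere; strong measurability follows because the fixed point is a limit of Picard iterates. The probability of $\{T<\tau_K\}\cap(\mathcal A^c\cup\{\|Z_n-Z\|>\varepsilon\})$ is bounded by $\Pp(d_q(\Xi_{\Lambda_n},\Xi)>\min(\delta,\varepsilon/C))$. A subtlety is that $\delta$ and $C$ are random and depend on $\omega$. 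This is handled by intersecting with $\{\delta>\delta_0,\,C<C_0\}$, whose complement has small probability, and then letting $n\to\infty$.

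\emph{Independence of the localization pair.} For another adapted pair, the same argument applies on its existence interval. Agreement of the limits on common intervals follows from the corresponding statement in \cref{thm:local-wp}. Alternatively, at finite cutoff the two reconstructions coincide on the cone by \cref{lem:finite-propagation}, and this passes to the limit by the distributional convergence just proved.

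The main obstacle is the first step, specifically the quantitative $\Lambda^{-\theta}$ rate for the operator components $\mathcal D_\Lambda-\mathcal D$ and $\mathcal B_\Lambda-\mathcal B$ in operator norm. There I would rerun the four Schatten flattenings of \cref{thm:centered-gaussian-operator} on the difference kernel, sacrificing $\theta$ derivatives via the factor $\one_{|\zeta|\ge c_0\Lambda}\le(|\zeta|/c_0\Lambda)^{\theta}$. For the deterministic contraction, the diagonal shell bound $N^{-1+2\beta_b}$ is kept only on shells $N\gtrsim\Lambda$, which still leaves room since $2\beta_b+\theta+\varepsilon<1$.
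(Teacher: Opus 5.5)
Your proposal is correct and has the same architecture as the paper's proof. It proceeds through convergence of the cutoff data $(\chi\Theta_\Lambda,\Xi_\Lambda,y_\Lambda)$, then the deterministic stability of the localized fixed-point map. Fields converge through the reconstruction and sources through the Lipschitz source maps. The product statement comes from the finite-cutoff identity of Proposition~\ref{prop:finite-cutoff-reconstruction} multiplied by $\eta_K$, and finite propagation gives independence of the localization pair.

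The genuine difference is how you obtain almost-sure convergence of the enhanced data, specifically of the operator components. You derive a polynomial rate $\mathbb E\,d_T(\Xi_\Lambda,\Xi)^p\lesssim\Lambda^{-\theta p}$ and apply Borel--Cantelli along $\Lambda=2^n$. The paper never states a rate for $\mathcal D_\Lambda-\mathcal D$ or $\mathcal B_\Lambda-\mathcal B$. Instead, Corollary~\ref{cor:Xi-convergence} combines three ingredients: the finite-state stabilization of Lemma~\ref{lem:finite-state-cutoff}, which puts $\sup_\Lambda$ inside each block moment; a Borel--Cantelli argument over the countably many dyadic blocks, which gives a pathwise block majorant (Lemma~\ref{lem:centered-dyadic-bc}); and a finite-set/tail argument for full-sequence convergence without a rate.

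Your rate is nonetheless available from the paper's estimates. A cutoff-difference block vanishes unless a stochastic shell is $\gtrsim\Lambda$. The principal centered differences therefore lie outside $\mathfrak F_L$ with $L\simeq\Lambda^{1/2}$, and the tail modulus of Lemma~\ref{lem:centered-assembly} gives $O(\Lambda^{-\gamma/2})$ in $L^p(\Omega)$. The transfer and far-output tails of Lemma~\ref{lem:far-output} decay faster than any power. For $\mathcal D$, summing $N^{-1+2\beta_b}$ only over $N\gtrsim\max\{Q,\Lambda\}$ leaves $\Lambda^{-(1-2\beta_b-s)}$.

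Your route therefore gives a quantitative rate, and would give an almost-sure statement along dyadic sampling of any admissible family, not only fixed-profile ones. The paper's route avoids verifying operator-norm rates, at the price of the fixed-profile hypothesis. The condition $p_0\theta>4$ plays no role in your Markov bound: once you have a rate, any $p>0$ gives a summable series.

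In part (ii), your events $\{T<\tau_K\}\cap\{d_q<\delta(\omega)\}$ use a random horizon $q$ and random constants $\delta,C$. The existential stability clause of Theorem~\ref{thm:local-wp} does not select these measurably. Make them deterministic on Borel pieces, as the paper does. On $\{A_n=k,\ \overline\mu_n(q)P^{(n)}_{R_{n,k}}\le 1/16\}$, the radius $R_{n,k}$ and the perturbation threshold from Proposition~\ref{prop:in-probability-cutoff-passage} depend only on $(k,q)$, and these events form a countable cover of $\{T<\theta_n\}$. Your truncation $\{\delta>\delta_0,\ C<C_0\}$ then becomes the paper's restriction to finitely many members of that cover (Proposition~\ref{prop:random-lifespan-cutoff-passage}). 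The paper also defines the existence event $\mathcal A_T(\Xi_\Lambda,y_\Lambda)$ from the cutoff data alone (Definition~\ref{def:measurable-fixed-point-domain}), whereas you define it relative to the limit; both satisfy the statement as written.
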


\begin{remark}[Scope of the main theorems]
\label{rem:main-theorem-scope}
The family in Theorem~\ref{thm:local-wp} is local in space, with a measurable
almost surely positive lifetime $\tau_K$ for each compact $K$.  Uniqueness is
understood for fixed enhanced data $(\Theta,\Xi)$ in the class of
Definition~\ref{def:local-solution}.  In
Theorem~\ref{thm:spectral-approximation}, the almost-sure statement is tied to
a fixed cutoff profile, while general cofinal cutoffs are treated through the
partially defined random variables of
Definition~\ref{def:partial-probability-convergence}.  The proof uses only the
measurability of $\tau_K$, and the sufficient threshold $\betastar<1/8$ is not
claimed to be optimal.
\end{remark}

\begin{remark}[Reference realization]
\label{rem:reference-cutoff-realization}
The pathwise construction starts from one smooth cutoff profile.  Its dyadic
approximations give measurable realizations of the stochastic data and the
solution, and the cutoff-tail estimates identify their limits with the
profile-independent objects.  Theorem~\ref{thm:spectral-approximation}(ii)
shows that every admissible cofinal sequence converges to the same solution.
For a fixed profile, full-sequence convergence follows from the finite-state
stabilization of dyadic shells in Lemma~\ref{lem:finite-state-cutoff}.
\end{remark}

\section{Paracontrolled formulation and stochastic terms}\label{sec:paracontrolled}

Fix an adapted localization pair $(\chi,\rho)$.  The identities below are first
established at finite spectral cutoff.  Their right-hand sides then define the
corresponding limiting source terms.  Only the reconstructed baseline is
multiplied by $\rho$.

\subsection{The paracontrolled expansion}

Set
\begin{equation}\label{eq:theta-def}
  \Theta=\Psi_{\W}\Psi_{\K},
  \qquad V_a=I_a(\Theta),\quad a\in\mathfrak C.
\end{equation}
Write
\begin{equation}\label{eq:three-layer}
\begin{aligned}
  u^{\chi,\rho}&=\rho(\Psi_{\W}+V_{\W})+X_{\W}+Y_{\W},\\
  v^{\chi,\rho}&=\rho(\Psi_{\K}+V_{\K})+X_{\K}+Y_{\K},
\end{aligned}
\end{equation}
and define the functions
\begin{equation}\label{eq:regular-input-def}
  \mathscr U_a^\rho:=\rho V_a+X_a+Y_a,
  \qquad a\in\mathfrak C.
\end{equation}
Since $\rho=1$ on a neighborhood of $\supp\chi$, the auxiliary equation
\eqref{eq:auxiliary-system} and the definition of
$\zeta^{\chi,\rho}$ give, for every $a\in\mathfrak C$,
\begin{equation}\label{eq:remainder-equation}
  L_a(X_a+Y_a)
  =\chi\left(
      \sum_{b\in\mathfrak C}\Psi_b\mathscr U_{b^\perp}^\rho
      +\mathscr U_{\W}^\rho\mathscr U_{\K}^\rho
    \right).
\end{equation}
Thus each low--high source uses the full regular input in the opposite
channel, including the localized first Picard term and both deterministic
remainders.

Assign the low--high products to $X_a$:
\begin{equation}\label{eq:X-equation}
  L_aX_a
  =\chi\sum_{b\in\mathfrak C}
      \mathscr U_{b^\perp}^\rho\prec\Psi_b,
  \qquad X_a(0)=\partial_tX_a(0)=0,
  \qquad a\in\mathfrak C.
\end{equation}
The remaining source is assigned to $Y_a$:
\begin{equation}\label{eq:Y-equation}
  L_aY_a
  =\chi\left(
      \sum_{b\in\mathfrak C}
       \mathscr U_{b^\perp}^\rho\succ\Psi_b
      +\mathscr U_{\W}^\rho\mathscr U_{\K}^\rho
      +\sum_{b\in\mathfrak C}
       \mathscr U_{b^\perp}^\rho\circ\Psi_b
    \right),
  \qquad a\in\mathfrak C.
\end{equation}
The deterministic Cauchy data are placed in $Y_{\W}$ and $Y_{\K}$.

For $a\in\mathfrak C$, the cubic term used in the localized equation is
first defined at finite cutoff by
\begin{equation}\label{eq:localized-cubic-cutoff}
  \Gamma_{a,\Lambda}^{\chi,\rho}
  :=\chi\bigl((\rho V_{a^\perp,\Lambda})\circ\Psi_{a,\Lambda}\bigr).
\end{equation}
With the principal unlocalized convention
$\Gamma_{a,\Lambda}=V_{a^\perp,\Lambda}\circ\Psi_{a,\Lambda}$,
bilinearity of the resonant product gives
\begin{equation}\label{eq:localized-cubic-split}
  \Gamma_{a,\Lambda}^{\chi,\rho}
  =\chi\Gamma_{a,\Lambda}
   +\mathcal R_{a,\Lambda}^{\chi,\rho},
  \qquad
  \mathcal R_{a,\Lambda}^{\chi,\rho}
  :=\chi\bigl(((\rho-1)V_{a^\perp,\Lambda})\circ
                    \Psi_{a,\Lambda}\bigr).
\end{equation}
The supports of $\chi$ and $\rho-1$ are separated.  Hence the second term
has a cutoff-independent smoothing estimate, while the first term converges
to the full-space cubic distribution constructed in
\Cref{thm:cubic-fullspace}.  We denote the resulting cutoff limit by
\begin{equation}\label{eq:gamma-def}
  \Gamma_a^{\chi,\rho}
  :=\chi\Gamma_a+\mathcal R_a^{\chi,\rho}.
\end{equation}
The existence of $\mathcal R_a^{\chi,\rho}$ and the convergence of
\eqref{eq:localized-cubic-cutoff} to \eqref{eq:gamma-def} are proved in
\Cref{cor:localized-cubic-terms}.  In particular, the localized coordinate is
not identified with $\chi\Gamma_a$ alone.

For $(a;b,c)\in\mathfrak L$, define the localized operator
\begin{equation}\label{eq:T-def}
  T_{\chi}^{a;b,c}(w)
  :=\chi\Bigl(
      I_a\bigl(\chi(w\prec\Psi_b)\bigr)\circ\Psi_c
    \Bigr).
\end{equation}
After fixing $(\chi,\rho)$ we suppress the subscript $\chi$.  Substituting
the Duhamel formula for $X_{c^\perp}$ gives the uniform identity
\begin{equation}\label{eq:X-resonant-substitution}
  \chi(X_{c^\perp}\circ\Psi_c)
  =\sum_{b\in\mathfrak C}
    T^{c^\perp;b,c}(\mathscr U_{b^\perp}^\rho),
  \qquad c\in\mathfrak C.
\end{equation}
Consequently,
\begin{align}\label{eq:full-resonant-split}
 &\chi\sum_{c\in\mathfrak C}
   \mathscr U_{c^\perp}^\rho\circ\Psi_c\notag\\
 &\quad=
   \sum_{c\in\mathfrak C}\Gamma_c^{\chi,\rho}
   +\chi\sum_{c\in\mathfrak C}Y_{c^\perp}\circ\Psi_c
   +\sum_{(a;b,c)\in\mathfrak L}
     T^{a;b,c}(\mathscr U_{b^\perp}^\rho).
\end{align}
The products involving $Y$ are classical because
$s_2-1/2-\betastar-\kappa>0$ by
Assumption~\ref{ass:exponents}.

For later use, define the localized source functionals
\begin{equation}\label{eq:localized-X-source}
  \mathscr F_X^{\chi,\rho}(Z)
  :=\chi\sum_{b\in\mathfrak C}
      \mathscr U_{b^\perp}^\rho\prec\Psi_b,
\end{equation}
and
\begin{align}\label{eq:localized-Y-source}
  \mathscr F_Y^{\chi,\rho}(Z)
  :={}&\chi\left(
      \sum_{b\in\mathfrak C}
       \mathscr U_{b^\perp}^\rho\succ\Psi_b
      +\mathscr U_{\W}^\rho\mathscr U_{\K}^\rho
      +\sum_{b\in\mathfrak C}Y_{b^\perp}\circ\Psi_b
    \right)\notag\\
  &+\sum_{b\in\mathfrak C}\Gamma_b^{\chi,\rho}
    +\sum_{(a;b,c)\in\mathfrak L}
      T^{a;b,c}(\mathscr U_{b^\perp}^\rho).
\end{align}
For the finite-cutoff equation set
\[
 \mathscr U_{a,\Lambda}^{\rho}
 :=\rho V_{a,\Lambda}+X_{a,\Lambda}+Y_{a,\Lambda}.
\]
The corresponding source terms are
\begin{equation}\label{eq:localized-X-source-cutoff}
  \mathscr F_{X,\Lambda}^{\chi,\rho}(Z_\Lambda)
  :=\chi\sum_{b\in\mathfrak C}
      \mathscr U_{b^\perp,\Lambda}^{\rho}\prec\Psi_{b,\Lambda},
\end{equation}
and
\begin{align}\label{eq:localized-Y-source-cutoff}
  \mathscr F_{Y,\Lambda}^{\chi,\rho}(Z_\Lambda)
  :={}&\chi\left(
      \sum_{b\in\mathfrak C}
       \mathscr U_{b^\perp,\Lambda}^{\rho}\succ\Psi_{b,\Lambda}
      +\mathscr U_{\W,\Lambda}^{\rho}\mathscr U_{\K,\Lambda}^{\rho}
      +\sum_{b\in\mathfrak C}Y_{b^\perp,\Lambda}\circ\Psi_{b,\Lambda}
    \right)\notag\\
  &+\sum_{b\in\mathfrak C}\Gamma_{b,\Lambda}^{\chi,\rho}
    +\sum_{(a;b,c)\in\mathfrak L}
      T_{\Lambda}^{a;b,c}(\mathscr U_{b^\perp,\Lambda}^{\rho}).
\end{align}
The finite-cutoff Bony and Wick identities give
\begin{equation}\label{eq:localized-source-split}
  \chi\bigl(u_\Lambda^{\chi,\rho}v_\Lambda^{\chi,\rho}
            -\Theta_\Lambda\bigr)
  =\mathscr F_{X,\Lambda}^{\chi,\rho}(Z_\Lambda)
   +\mathscr F_{Y,\Lambda}^{\chi,\rho}(Z_\Lambda).
\end{equation}
After the cutoff is removed, \eqref{eq:localized-X-source} and
\eqref{eq:localized-Y-source} define the two limiting source terms.  In
particular, the limiting nonlinear source is defined through these limits and
not through a separate pointwise interpretation of $uv$.
At finite cutoff the notation
$\mathscr F_{X,\Lambda}^{\chi,\rho}$ and
$\mathscr F_{Y,\Lambda}^{\chi,\rho}$ means that every stochastic factor and operator is evaluated with the
cutoff data; in particular the cubic entry is
$\Gamma_{a,\Lambda}^{\chi,\rho}$ from
\eqref{eq:localized-cubic-cutoff}, not $\chi\Gamma_{a,\Lambda}$.

The partition in \eqref{eq:operator-index-set} separates the two cases used below.  For labels in $\mathfrak L_{\mathrm{diag}}$, the two stochastic
colors coincide and the Duhamel channel is the opposite channel
$a=b^\perp$; the finite Wick expansion contains a deterministic Volterra
contraction and a centered same-color term.  For labels in
$\mathfrak L_{\mathrm{off}}$, the Duhamel channel agrees with the inner
color, $a=b$, while $b\ne c$; independence removes the deterministic
contraction.

\subsection{Construction of the stochastic terms}\label{sec:baseline-lift}

The stochastic terms are constructed in the order
\[
 (\Psi_a,\Theta)\longrightarrow(V_a,\partial_tV_a)
 \longrightarrow\Gamma_a\longrightarrow(\mathcal D,\mathcal B).
\]
All estimates retain the channel orders $\betaW,\betaK$ and are uniform when $\mathfrak H_{\mathrm{prof}}$ is bounded.  The phase estimates for the first
Picard and cubic objects are proved in
Appendices~\ref{app:first-picard} and~\ref{app:cubic-terms}.

Define
\begin{equation}\label{eq:stochastic-convolutions}
  \Psi_{a,\Lambda}=I_a(\pi_\Lambda\xi_a),
  \qquad a\in\mathfrak C.
\end{equation}
We describe the full-space field through its covariance spectral density:
\begin{equation}\label{eq:Psi-variance}
\begin{aligned}
  \E\!\left[\wh\Psi_a(t,\xi)
  \overline{\wh\Psi_a(t,\xi')}\right]
  &=(2\pi)^3\delta(\xi-\xi')p_a(t,\xi),\\
  p_a(t,\xi)
  &:=\mathfrak q_a(\xi)\int_0^t\frac{\sin^2((t-s)\omega_a(\xi))}
  {\omega_a(\xi)^2}\dd s
  \lesssim_{T,\parvec}\la\xi\ra^{-2+2\beta_a}.
\end{aligned}
\end{equation}
For every $0<\theta\le1/2$, the same kernel calculation gives the increment
density bound
\begin{equation}\label{eq:Psi-increment-density}
 p_a^\Delta(t,t';\xi)
 \lesssim_{T,\parvec,\theta}
 |t-t'|^{2\theta}\la\xi\ra^{-2+2\beta_a+2\theta}.
\end{equation}
Indeed, on the common time interval one uses
$|K_a(\tau,\xi)-K_a(\tau',\xi)|\lesssim
|\tau-\tau'|^\theta\la\xi\ra^{-1+\theta}$, while the boundary strip is
controlled by its length.
The wave multiplier is interpreted continuously at the zero mode.  After a
compact physical localization, Gaussian block estimates, hypercontractivity,
and Kolmogorov continuity yield
\begin{equation}\label{eq:Psi-baseline}
  \chi\Psi_{a,\Lambda}\longrightarrow\chi\Psi_a
  \quad\text{in }L^p(\Omega;C_T\cC^{-1/2-\beta_a-\kappa})
\end{equation}
for every finite $p\ge2$ and $\kappa>0$, with the corresponding dyadic almost
sure realization.

Set
\begin{equation}\label{eq:theta-cutoff}
  \Theta_\Lambda=\Psi_{\W,\Lambda}\Psi_{\K,\Lambda}.
\end{equation}
Because the colors are independent, this product is already a centered second
homogeneous chaos; no scalar Wick subtraction occurs.  Its covariance spectral density $q_\Theta$ is defined by
\[
  \E\!\left[\wh\Theta(t,\xi)
  \overline{\wh\Theta(t,\xi')}\right]
  =(2\pi)^3\delta(\xi-\xi')q_\Theta(t,\xi),
\]
and independence gives
\begin{equation}\label{eq:theta-convolution}
  q_\Theta(t,\xi)
  \lesssim_{T,\parvec}
  \int_{\R^3}\la\eta\ra^{-2+2\betaW}
  \la\xi-\eta\ra^{-2+2\betaK}\dd\eta
  \lesssim_{T,\parvec,\kappa}
  \la\xi\ra^{-1+2\betasum+\kappa}.
\end{equation}
Combining \eqref{eq:Psi-increment-density} in one factor with the static
bound in the other gives, for every $0<\theta<\frac12-\betasum$ and every
$\eps>0$,
\begin{equation}\label{eq:Theta-increment-density}
 q_\Theta^\Delta(t,t';\xi)
 \lesssim_{T,\parvec,\theta,\eps}
 |t-t'|^{2\theta}\la\xi\ra^{-1+2\betasum+2\theta+\eps}.
\end{equation}
Consequently
\begin{equation}\label{eq:Theta-baseline}
  \chi\Theta_\Lambda\longrightarrow\chi\Theta
  \quad\text{in }L^p(\Omega;C_T\cC^{-1-\betasum-\kappa}).
\end{equation}
Relative to the limiting multiplier one, a cutoff difference contains at
least one factor $m_\Lambda(\zeta)-1$, which vanishes below $c_0\Lambda$;
the same convolution estimate and a fixed-compact/tail split give cutoff
Cauchy convergence for non-nested families.

Define
\begin{equation}\label{eq:first-picard-def}
  V_{a,\Lambda}=I_a(\Theta_\Lambda).
\end{equation}
The elementary Duhamel estimate gives only the baseline bounds
\begin{align}
  \chi V_{a,\Lambda}&\longrightarrow\chi V_a
  &&\text{in }L^p(\Omega;C_T\cC^{-\betasum-\kappa}),
  \label{eq:V-baseline}\\
  \chi\partial_tV_{a,\Lambda}&\longrightarrow\chi\partial_tV_a
  &&\text{in }L^p(\Omega;C_T\cC^{-1-\betasum-\kappa}).
  \label{eq:dV-baseline}
\end{align}
These bounds use only the order $-1$ outer Duhamel multiplier.  The fixed point
requires the sharper conclusions
\[
  V_a\in C_T\cC_{\loc}^{1/2-\betasum-},
  \qquad
  \partial_tV_a\in C_T\cC_{\loc}^{-1/2-\betasum-},
\]
which follow from the three-propagator phase geometry proved in
Appendix~\ref{app:first-picard}.

\begin{lemma}[Cutoff tails for the linear and cross-quadratic fields]
\label{lem:baseline-cutoff-tails}
Let $0\le\theta<1/2-\betasum$.  For the difference from the uncut field, let
$p_{a,>\Lambda}^{(\theta)}$ and
$q_{\Theta,>\Lambda}^{(\theta)}$ be nonnegative majorants of the sum of the
static covariance density and the normalized time-increment density.  For
every $\varepsilon>0$,
\begin{align}
 p_{a,>\Lambda}^{(\theta)}(\xi)
 &\lesssim
 \one_{\{|\xi|\ge c_0\Lambda\}}
 \langle\xi\rangle^{-2+2\beta_a+2\theta},
 \label{eq:Psi-cutoff-tail-majorant}\\
 q_{\Theta,>\Lambda}^{(\theta)}(\xi)
 &\lesssim_{\varepsilon}
 \max\{\langle\xi\rangle,\Lambda\}^{-1+2\betasum+2\theta+\varepsilon}.
 \label{eq:Theta-cutoff-tail-majorant}
\end{align}
The corresponding difference of two non-nested admissible cutoffs is bounded
by the sum of the two right-hand sides.  Moreover, for every
\[
 s<-\frac12-\beta_a
 \quad\text{and}\quad
 r<-1-\betasum,
\]
one can choose $\theta,\varepsilon,\delta>0$ so that
\begin{align}
 \sum_{N\in\Dyd}N^{2s+3+\delta}
 \sup_{|\xi|\sim N}p_{a,>\Lambda}^{(\theta)}(\xi)&\longrightarrow0,
 \label{eq:Psi-cutoff-tail-sum}\\
 \sum_{N\in\Dyd}N^{2r+3+\delta}
 \sup_{|\xi|\sim N}q_{\Theta,>\Lambda}^{(\theta)}(\xi)&\longrightarrow0.
 \label{eq:Theta-cutoff-tail-sum}
\end{align}
Both tails decay polynomially along dyadic $\Lambda$.
\end{lemma}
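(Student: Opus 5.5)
The plan is to work directly with the explicit Fourier-side formulas for the cutoff fields and to reduce everything to the single-field majorants \eqref{eq:Psi-variance}, \eqref{eq:Psi-increment-density} and the convolution bound \eqref{eq:theta-convolution}, combined with the product-cutoff reduction of Lemma~\ref{lem:finite-product-cutoff}.

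\textbf{Linear field.} The difference $\Psi_{a,\Lambda}-\Psi_a$ has spectral density
\[
 |m_\Lambda(\xi)-1|^2\,p_a(t,\xi),
\]
and the analogous factor multiplies the increment density. By \eqref{eq:finite-product-cutoff-tail} with $J=1$, the cutoff factor is at most $C\one_{\{|\xi|\ge c_0\Lambda\}}$. Combining this with \eqref{eq:Psi-variance} and \eqref{eq:Psi-increment-density}, with exponent $\theta\le1/2$, gives \eqref{eq:Psi-cutoff-tail-majorant}; the static term is dominated since $\theta\ge0$. For two non-nested cutoffs, \eqref{eq:finite-product-two-cutoffs} yields the sum of the two indicator bounds.

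\textbf{Cross-quadratic field.} Since the two colors are independent and $\Theta$ is a second chaos without a contraction, the covariance of $\Theta_\Lambda-\Theta$ is an integral over $\eta$ of
\[
 |m_\Lambda(\eta)m_\Lambda(\xi-\eta)-1|^2
\]
times the product of the two single-field densities. For the increment, one factor takes the increment density and the other the static density, as in \eqref{eq:Theta-increment-density}. By \eqref{eq:finite-product-integral-tail} with $J=2$, it suffices to bound the integrand on the region $\max\{|\eta|,|\xi-\eta|\}\ge c_0\Lambda$, with the integrand majorized by
\[
 \la\eta\ra^{-2+2\beta_{\W}+2\theta}\la\xi-\eta\ra^{-2+2\beta_{\K}}
\]
plus its symmetric counterpart.

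\textbf{Main obstacle: the restricted convolution.} The key step is to show that this convolution, restricted to the tail region, is at most a constant times
\[
 \max\{\la\xi\ra,\Lambda\}^{-1+2\betasum+2\theta+\varepsilon}.
\]
If $|\xi|\gtrsim\Lambda$, this is just \eqref{eq:theta-convolution} with the extra $2\theta$. If $|\xi|\ll\Lambda$, the constraint forces $|\eta|\sim|\xi-\eta|\gtrsim\Lambda$, and the integral is then bounded by
\[
 \int_{|\eta|\gtrsim\Lambda}\la\eta\ra^{-4+2\betasum+2\theta}\dd\eta\lesssim\Lambda^{-1+2\betasum+2\theta}.
\]
This integral converges precisely because $\theta<1/2-\betasum$, which is the hypothesis of the lemma.

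\textbf{Summation.} For $s<-1/2-\beta_a$, choose $\theta,\delta$ so small that $2s+3+\delta-2+2\beta_a+2\theta<0$. Each summand in \eqref{eq:Psi-cutoff-tail-sum} vanishes for $N\ll\Lambda$, and the remaining geometric tail is $O(\Lambda^{-\gamma})$ for some $\gamma>0$. For \eqref{eq:Theta-cutoff-tail-sum}, choose $\theta,\varepsilon,\delta$ with
\[
 e:=2r+3+\delta-1+2\betasum+2\theta+\varepsilon<0.
\]
Split the sum at $N\sim\Lambda$. Shells with $N\gtrsim\Lambda$ give $O(\Lambda^{e})$. Shells with $N\lesssim\Lambda$ give
\[
 \Lambda^{-1+2\betasum+2\theta+\varepsilon}\sum_{N\lesssim\Lambda}N^{2r+3+\delta}\lesssim\Lambda^{\max\{e,\,-1+2\betasum+2\theta+\varepsilon\}}\log\Lambda,
\]
and both exponents are negative; here $\theta<1/2-\betasum$ is used again. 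In each case the tail decays polynomially along dyadic $\Lambda$. The two-cutoff version follows in the same way from \eqref{eq:finite-product-two-cutoffs}.
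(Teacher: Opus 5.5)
Your proposal is correct and follows essentially the same route as the paper. Both arguments reduce each cutoff difference to its tail region via Lemma~\ref{lem:finite-product-cutoff}, bound the restricted $\Theta$ convolution by the unrestricted estimate when $\langle\xi\rangle\gtrsim\Lambda$ and by comparable input scales $\gtrsim\Lambda$ otherwise, and then split the weighted dyadic sums at $N\sim\Lambda$; the only difference is that you integrate $\langle\eta\rangle^{-4+2\betasum+2\theta}$ over $|\eta|\gtrsim\Lambda$ directly, where the paper sums over common dyadic shells $M\gtrsim\Lambda$.
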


\begin{proof}
For $\Psi_a$, the cutoff difference has the single factor
$m_\Lambda(\xi)-1$.  It vanishes for $|\xi|<c_0\Lambda$, and
\eqref{eq:Psi-variance}--\eqref{eq:Psi-increment-density} give
\eqref{eq:Psi-cutoff-tail-majorant}.

For $\Theta$, Lemma~\ref{lem:finite-product-cutoff} restricts the covariance
integral to
\[
 \max\{|\eta|,|\xi-\eta|\}\ge c_0\Lambda.
\]
The normalized increment density is bounded by a sum of two integrals in
which one factor carries the additional weight $\langle\cdot\rangle^{2\theta}$.
Put $N=\langle\xi\rangle$.  If $N\gtrsim\Lambda$, the unrestricted convolution
estimate gives
\[
 q_{\Theta,>\Lambda}^{(\theta)}(\xi)
 \lesssim_\varepsilon N^{-1+2\betasum+2\theta+\varepsilon}.
\]
If $N\ll\Lambda$, the tail condition and
$\xi=\eta+(\xi-\eta)$ force the two input scales to be comparable.  On a
common dyadic shell $M\gtrsim\Lambda$, the integral is bounded by
\[
 M^3M^{-2+2\betaW+2\theta}M^{-2+2\betaK}
 +M^3M^{-2+2\betaW}M^{-2+2\betaK+2\theta}
 \lesssim M^{-1+2\betasum+2\theta}.
\]
Because $2\betasum+2\theta<1$, summing $M\gtrsim\Lambda$ proves
\eqref{eq:Theta-cutoff-tail-majorant}; the intermediate region
$N\simeq\Lambda$ is covered by either estimate.  The two-cutoff assertion is
\eqref{eq:finite-product-two-cutoffs}.

For \eqref{eq:Psi-cutoff-tail-sum}, the sum starts at $N\gtrsim\Lambda$ and
is bounded by
\[
 \sum_{N\gtrsim\Lambda}
 N^{2s+1+2\beta_a+2\theta+\delta},
\]
which decays after choosing
$2\theta+\delta<-2s-1-2\beta_a$.
For \eqref{eq:Theta-cutoff-tail-sum}, split at $N=\Lambda$.  The high-output
part is bounded by
\[
 \sum_{N>\Lambda}
 N^{2r+2+2\betasum+2\theta+\varepsilon+\delta}.
\]
For $N\le\Lambda$,
\begin{align*}
 &\Lambda^{-1+2\betasum+2\theta+\varepsilon}
 \sum_{N\le\Lambda}N^{2r+3+\delta}\\
 &\qquad\lesssim
 \Lambda^{-1+2\betasum+2\theta+\varepsilon
 +\max\{2r+3+\delta,0\}+}.
\end{align*}
If the expression in the positive part is positive, the exponent is
$2r+2+2\betasum+2\theta+\varepsilon+\delta+$; otherwise it is
$-1+2\betasum+2\theta+\varepsilon+$.  Both are negative after taking the
auxiliary parameters sufficiently small.  This proves the two weighted tail
limits and their polynomial dyadic versions.
\end{proof}

\begin{proposition}[Baseline local lift]\label{prop:baseline-regularities}
Fix $0<T\le1$, $\chi\in C_c^\infty(\R^3)$, $\kappa>0$, and finite $p\ge2$.
The four convergences
\eqref{eq:Psi-baseline}, \eqref{eq:Theta-baseline},
\eqref{eq:V-baseline}, and \eqref{eq:dV-baseline} hold in $L^p(\Omega)$.
In addition,
\begin{equation}\label{eq:baseline-weighted-tempered-convergence}
 (\Psi_{a,\Lambda},V_{a,\Lambda},\partial_tV_{a,\Lambda})
 \longrightarrow(\Psi_a,V_a,\partial_tV_a)
 \quad\text{in }
 L^p(\Omega;C_T\mathfrak T_{J_{\mathrm{ps}}})^3.
\end{equation}
Along dyadic cutoffs all these convergences hold almost surely on one common
event after taking a countable exhaustion of compact sets, times, and
regularity losses.
\end{proposition}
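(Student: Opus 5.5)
The plan is the standard Gaussian route: cutoff-tail covariance bounds, then Gaussian block estimates and hypercontractivity to reach $L^p(\Omega)$, then Kolmogorov continuity for time regularity, and finally the stationary lifting of \cref{lem:weighted-stationary-lifting} for the weighted tempered statement.

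\textbf{Step 1: the four local convergences.} Fix $\chi$ and choose $\vartheta\in C_c^\infty$ equal to one near $\supp\chi$. For a stationary Gaussian-chaos field $F$ of fixed order $k\le2$ with static and time-increment spectral majorants $f^{(\theta)}$, I will use the block estimate
\[
\E\|P_N(\chi F(t))\|_{L^\infty}^2\lesssim N^{3+\delta}\sup_{|\xi|\sim N}f^{(0)}(\xi)+(\text{Schwartz tails}).
\]
This follows from Bernstein on $\chi F$: the Fourier transform of $\chi$ spreads frequencies only by $O(1)$ up to Schwartz decay, as in \cref{lem:physical-cutoff-almost-diagonal}. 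The increment version carries the factor $|t-t'|^{2\theta}$. Hypercontractivity on the finite chaos upgrades second moments to $p$-th moments, and Kolmogorov's criterion, with $p_0\theta>4$ from \cref{def:loss-hierarchy}, gives $C_T\cC^{s}$ control. Applying this to the differences $F_\Lambda-F$, or $F_\Lambda-F_{\Lambda'}$, with the majorants of \cref{lem:baseline-cutoff-tails} gives \eqref{eq:Psi-baseline} and \eqref{eq:Theta-baseline}. The vanishing weighted sums \eqref{eq:Psi-cutoff-tail-sum}--\eqref{eq:Theta-cutoff-tail-sum} are exactly the needed Besov moment bounds.

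\textbf{Step 2: $V_a$ and $\partial_tV_a$.} These have the Duhamel multiplier applied to $\Theta_\Lambda-\Theta$. The covariance of $I_a$ applied to a white-free but time-correlated chaos field is not simply a product, so I would avoid the covariance route here. Instead, in the second chaos I bound $\|P_N I_a G\|$ by $N^{-1}\int_0^t\|P_NG(s)\|\dd s$, using the boundedness of $\sin((t-s)\omega_a)/\omega_a$ as an order $-1$ multiplier uniformly in $s$ (the zero mode is interpreted continuously). Together with the $C_T\cC^{-1-\betasum-\kappa}$ bound on $\chi'(\Theta_\Lambda-\Theta)$ for a larger cutoff $\chi'$, this yields \eqref{eq:V-baseline}.

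The main obstacle in this step is localization. $I_a$ is nonlocal, but finite propagation on $[0,T]$ with $T\le1$ shows that $\chi I_aG=\chi I_a(\chi'G)$ whenever $\chi'=1$ on $(\supp\chi)^{[\speedmax+1]}$. This holds exactly, since the fundamental solutions of both $L_a$ have support in the light cone. Time continuity follows because $\partial_tI_aG=\int_0^t\cos((t-s)\omega_a)G\dd s$ is bounded. For $\partial_tV_a$ the same argument with the order-$0$ multiplier $\cos$ gives \eqref{eq:dV-baseline}.

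\textbf{Step 3: weighted tempered convergence.} For \eqref{eq:baseline-weighted-tempered-convergence} I apply \cref{lem:weighted-stationary-lifting}(iii)--(iv). The differences are spatially stationary. Taking $\chi=\vartheta_z$ and using translation invariance, the norms in $L^p(\Omega;C_TH^{-J_{\mathrm{ps}}})$ are independent of $z$. They are controlled by the $\cC$-bounds above, because a compactly supported $\cC^{s}$ function with $s>-J_{\mathrm{ps}}$ lies in $H^{-J_{\mathrm{ps}}}$, and these bounds tend to zero.

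\textbf{Step 4: almost-sure convergence.} For dyadic $\Lambda=2^n$, \cref{lem:baseline-cutoff-tails} gives polynomial decay $2^{-n\gamma}$ of the moment bounds. Taking $p$ large makes $\sum_n2^{-np\gamma}$ finite, so Borel--Cantelli gives almost-sure convergence for each fixed $(\chi,T,\kappa)$. A countable exhaustion over $\chi=\vartheta$ on balls $B(0,k)$, rational $T$, and $\kappa=1/j$ then yields one common full-measure event. The weighted space uses the summability clause of (iv) in the same way.
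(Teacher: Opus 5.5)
Your Steps 1, 3 and 4 follow essentially the paper's route: spectral block bounds, hypercontractivity, time regularity, the tails of Lemma~\ref{lem:baseline-cutoff-tails}, stationary lifting, and Borel--Cantelli over a countable exhaustion. The gap is in Step~2.

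There you pass from the pathwise bound $\chi'(\Theta_\Lambda-\Theta)\in C_T\cC^{-1-\betasum-\kappa}$ to $\chi(V_{a,\Lambda}-V_a)$ by treating $\sin((t-s)\omega_a(D))/\omega_a(D)$ as an order $-1$ multiplier. That is true on $L^2$-based spaces but false on $\cC^s=B^s_{\infty,\infty}$ in three dimensions. The kernel of $\sin(t\speedW|D|)/(\speedW|D|)$ is the spherical measure $(4\pi\speedW^2t)^{-1}\delta(|x|-\speedW t)$, of total mass $t$. Its convolution with the kernel of $P_N$ still has $L^1$ norm comparable to $t$ once $Nt\gtrsim1$. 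In other words, the half-wave group loses one derivative on $L^\infty$, which exactly cancels the factor $\omega_a^{-1}$; the Klein--Gordon kernel has the same light-cone singularity. Likewise $\cos(t\omega_a(D))$ loses one derivative on $L^\infty$.

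So your argument gives only $\chi V_a\in C_T\cC^{-1-\betasum-\kappa}$ and $\chi\partial_tV_a\in C_T\cC^{-2-\betasum-\kappa}$, one derivative short of \eqref{eq:V-baseline}--\eqref{eq:dV-baseline}. The finite-propagation identity $\chi I_aG=\chi I_a(\chi'G)$ is correct but irrelevant: the loss comes from the $L^\infty$ norm, not from nonlocality. Your Step~3 is unaffected, since $H^{-J_{\mathrm{ps}}}$ control is far weaker.

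The missing ingredient is the covariance route you set aside. The $N^{-1}$ gain is available for fixed-point moments of the \emph{stationary} field, before any localization, and the time correlation of $\Theta$ costs nothing. For stationary $G(s)$, the second moment of $P_NK_a(t-s,D)G(s)$ at a point is the integral of $|K_a(t-s,\xi)|^2$ against the shell-restricted density. Minkowski in $s$ in $L^2(\Omega)$ then gives
\[
 \|P_NI_aG(t,x)\|_{L^2(\Omega)}
 \le\int_0^t\|P_NK_a(t-s,D)G(s)(x)\|_{L^2(\Omega)}\dd s
 \lesssim T\,N^{-1}\Bigl(N^3\sup_{s\le T,\,|\xi|\sim N}q_G(s,\xi)\Bigr)^{1/2}.
\]
Equivalently, positive semidefiniteness of the cross-time density gives $q_{I_aG}(t,\xi)\le(\int_0^t|K_a(t-s,\xi)|\,q_G(s,\xi)^{1/2}\dd s)^2$.

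Combine this with $|K_a(\tau,\xi)-K_a(\tau',\xi)|\lesssim|\tau-\tau'|^\theta\langle\xi\rangle^{-1+\theta}$ (and the cosine kernel for $\partial_tV_a$), and apply it to $G=\Theta_\Lambda-\Theta$ with the majorant \eqref{eq:Theta-cutoff-tail-majorant}. This produces static and increment tail densities with $\rho=-\betasum$ and $\rho=-1-\betasum$. Lemma~\ref{lem:spectral-to-local-paths} (your Step~1 mechanism) then applies, and multiplication by $\chi$ costs only Schwartz tails. This is what the paper means by applying one outer Duhamel multiplier at the level of the spectral density.
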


\begin{proof}
The relevant dyadic calculation is the following.  If a spatially stationary chaos has density
$q(t,\xi)\lesssim\langle\xi\rangle^{-2\rho-3+\eps}$, then on $|\xi|\sim N$,
\[
 \E|P_NZ(t,x)|^2\lesssim N^{-2\rho+\eps}.
\]
After multiplication by a fixed compact cutoff, the same power holds in local
$L_x^2$; a large finite spatial $L^q$ norm and Bernstein give the local
$L_x^\infty$ block bound with an arbitrarily small additional $N^\eps$ loss.
The increment density gives
$|t-t'|^\theta N^{-\rho+\theta+\eps}$ in probability norm.  Gaussian
hypercontractivity and one-dimensional fractional Sobolev embedding in time
therefore yield $C_T\mathcal C^s\cap L_T^\infty B_{2,\infty}^s$ for every
$s<\rho$.  The corresponding path-space statement is proved in
Lemma~\ref{lem:spectral-to-local-paths}.

For $\Psi_a$, \eqref{eq:Psi-variance} has
$\rho=-1/2-\beta_a$; for $\Theta$,
\eqref{eq:theta-convolution} has $\rho=-1-\betasum$.  Applying one outer
Duhamel multiplier to the latter gives $\rho=-\betasum$ for $V_a$, while
differentiating the outer sine kernel returns $\rho=-1-\betasum$.  These four values yield
\eqref{eq:Psi-baseline}, \eqref{eq:Theta-baseline},
\eqref{eq:V-baseline}, and \eqref{eq:dV-baseline}.  Multiplication by $\chi$
only convolves the shell by a Schwartz kernel and hence preserves the powers by
Lemma~\ref{lem:physical-cutoff-almost-diagonal}.

The cutoff differences for $\Psi_a$ and $\Theta$ satisfy the static and
increment tail bounds of Lemma~\ref{lem:baseline-cutoff-tails}.  Applying
Lemma~\ref{lem:spectral-to-local-paths} to
\eqref{eq:Psi-cutoff-tail-sum} and
\eqref{eq:Theta-cutoff-tail-sum} gives the Cauchy property in the two localized
path spaces.  The two-cutoff estimate in
Lemma~\ref{lem:baseline-cutoff-tails} shows that no nesting is used.  Along a
dyadic fixed-profile family, the polynomial tails, hypercontractivity,
Markov's inequality, and Borel--Cantelli give almost-sure convergence.  A
countable exhaustion of compact cutoffs, rational horizons, and rational
strict losses yields one common probability-one event.

Before physical localization, the three fields in
\eqref{eq:baseline-weighted-tempered-convergence} are spatially stationary.
Their local $C_TH^{-J_{\mathrm{ps}}}$ moments and cutoff-difference moments are
bounded by the preceding block estimates, uniformly under translations of a
fixed cutoff.  Lemma~\ref{lem:weighted-stationary-lifting} upgrades the local
convergence to the weighted convergence in that display; the same polynomial
dyadic tails give the almost-sure assertion.
\end{proof}

The gain from $V_a\in\mathcal C^{-\betasum-}$ to
$V_a\in\mathcal C^{1/2-\betasum-}$ is oscillatory.  The phase functions depend
only on $\omega_{\W}$ and $\omega_{\K}$, while the forcing profiles produce the
weights $\langle\eta\rangle^{2\betaW}$ and
$\langle\xi-\eta\rangle^{2\betaK}$ in the covariance integral.
Proposition~\ref{prop:continuous-phase-integral} establishes the corresponding
weighted phase-layer bound.

At finite cutoff put
\begin{equation}\label{eq:cubic-term-baseline}
  \Gamma_{a,\Lambda}
  :=V_{a^\perp,\Lambda}\circ\Psi_{a,\Lambda}.
\end{equation}
The colored product formula produces one centered third-chaos branch and one
same-color first-chaos contraction.  The latter is retained, not subtracted,
because it is part of the finite-cutoff Wick algebra.  Appendix~\ref{app:cubic-terms}
constructs both branches and proves
\begin{equation}\label{eq:cubic-baseline-regularity}
  \Gamma_a\in C_T\cC_{\loc}^{-\betasum-\beta_a-}
  \cap L_T^\infty B_{2,\infty,\loc}^{-\betasum-\beta_a-}.
\end{equation}

\begin{lemma}[Separated-support smoothing]
\label{lem:separated-support-smoothing}
Let $\phi\in C_c^\infty(\R^3)$ and $\psi\in C_b^\infty(\R^3)$ satisfy
$d_0:=\dist(\supp\phi,\supp\psi)>0$.  If
$f,g\in C([0,T];\mathfrak T_{J_{\mathrm{ps}}})$, then the three localized
inhomogeneous Bony series
\[
  \phi((\psi f)\prec g),\qquad \phi((\psi f)\succ g),\qquad \phi((\psi f)\circ g)
\]
converge in $C([0,T];H^M\cap\mathcal C^M)$ for every $M>0$, and
\begin{equation}\label{eq:separated-support-quantitative}
 \sup_{t\le T}\sum_{\star\in\{\prec,\succ,\circ\}}
 \|\phi((\psi f)\star g)(t)\|_{H^M\cap\mathcal C^M}
 \le C_{M,J_{\mathrm{ps}},d_0,\phi,\psi}
 \|f\|_{C_T\mathfrak T_{J_{\mathrm{ps}}}}
 \|g\|_{C_T\mathfrak T_{J_{\mathrm{ps}}}}.
\end{equation}
The same estimate is uniform after admissible spectral truncations are inserted
on either factor.  Convergence in $C_T\mathfrak T_{J_{\mathrm{ps}}}$ passes to
the corresponding localized Bony series in every displayed topology.
\end{lemma}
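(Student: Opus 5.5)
The plan is to expand the Bony series blockwise and show that each dyadic interaction term, sandwiched between $\phi$ and $\psi$ with separated supports, is a smoothing operator with rapidly decaying norm. Write a generic term as
\[
 \phi\,\bigl(A(\psi f)\,B g\bigr),
\]
where $(A,B)$ is $(S_{<c_{\mathrm{ap}}N},P_N)$, $(P_N,S_{<c_{\mathrm{ap}}N})$, or $(P_N,P_M)$ with $N\sim_{\mathrm{ap}}M$.

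The key observation is that the convolution kernel of $A$ at scale $N$ is $N^3\check\rho(N\cdot)$. Evaluated at points $x\in\supp\phi$ and $y\in\supp\psi$, it satisfies $|x-y|\ge d_0$. Hence
\[
 \phi\,A\,\psi = \phi\,A\,\tilde\psi_{d_0}\,\psi,
\]
and its kernel $\phi(x)K_A(x-y)\psi(y)$ obeys, for every $L$,
\[
 |\partial_x^\alpha\partial_y^\beta(\cdots)|\lesssim_L N^{-L}\langle x-y\rangle^{-L}.
\]
This holds uniformly, with one exception: the lowest block $S_{<c}$ at $N\sim 1$ has only a fixed Schwartz kernel, so it is smoothing but not small. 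Its contribution is a single term, which is harmless.

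First I would handle the factor $\psi f$. Since $f(t)\in\mathfrak T_{J_{\mathrm{ps}}}$, I would decompose $\psi f=\sum_z \psi\psi_z\vartheta_z f$ using the lattice partition from Lemma~\ref{lem:weighted-stationary-lifting}. Pairing the smooth kernel of $\phi A$ against $\vartheta_z f\in H^{-J_{\mathrm{ps}}}$ costs $J_{\mathrm{ps}}$ derivatives of the kernel. Summing over $z$, the weight $\langle z\rangle^{J_{\mathrm{ps}}}$ is absorbed by the decay $\langle x-y\rangle^{-L}$. This gives
\[
 \|\phi_1 A(\psi f)\|_{C^M}\lesssim_{M,L} N^{-L}\|f\|_{\mathfrak T_{J_{\mathrm{ps}}}},
\]
where $\phi_1$ is a slightly enlarged cutoff still separated from $\supp\psi$ by $d_0/2$. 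The same bound holds with $A$ replaced by $A\pi_\Lambda$, uniformly in $\Lambda$, because admissible multipliers have uniformly bounded weighted kernel moments.

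Next I would write $\phi=\phi\phi_1$ and treat the second factor. I would estimate $\phi\,(\phi_1A(\psi f))\,Bg$ by moving the smooth compactly supported factor onto $Bg$. For $g\in\mathfrak T_{J_{\mathrm{ps}}}$, the function $\phi Bg$ is smooth, with $C^M$ and $H^M$ norms at most
\[
 \lesssim N^{M+J_{\mathrm{ps}}+3}\|g\|_{\mathfrak T},
\]
by Bernstein applied to the local $H^{-J_{\mathrm{ps}}}$ pieces. For the paraproduct $\succ$, where $B=S_{<c_{\mathrm{ap}}N}$, the same polynomial bound holds. The resonant term has $M\sim N$. In every case the product of the two factors is bounded by $N^{-L+M+J_{\mathrm{ps}}+3}$. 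Choosing $L$ large, the series is absolutely convergent in $H^M\cap\mathcal C^M$, which gives \eqref{eq:separated-support-quantitative}. Time continuity follows because each block is continuous in $t$ and the tails are uniformly summable. Convergence of $f_n\to f$ and $g_n\to g$ in $C_T\mathfrak T_{J_{\mathrm{ps}}}$ then passes to the series by bilinearity.

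The main obstacle I anticipate is the first step: making precise that the off-diagonal kernel decay survives pairing against a merely uniformly-local distribution with polynomial growth $\langle z\rangle^{J_{\mathrm{ps}}}$. The approach is to test the kernel in $H^{J_{\mathrm{ps}}}$ in the $y$-variable on each cell and sum with the weight $\langle x-z\rangle^{-L}$. For the $g$-factor, only local polynomial frequency growth is needed, since $\phi$ is compactly supported. This should be straightforward once the cell decomposition is set up.
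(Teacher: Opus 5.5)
Your treatment of the untruncated series is essentially the paper's proof. You handle the whole low-pass $S_{<c_{\mathrm{ap}}N}$ through its single rescaled kernel; the paper stresses that summing its dyadic pieces separately would lose the decay. The separated factor is then $O(N^{-L})$ on $\supp\phi$ after testing the kernel against the lattice cells of $\mathfrak T_{J_{\mathrm{ps}}}$. The other factor grows only polynomially in $N$ on a compact set, so the series converges absolutely. Time continuity and stability follow by bilinearity.

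The step that fails is the uniformity under spectral truncation. You claim $\|\phi_1A\pi_\Lambda(\psi f)\|_{C^M}\lesssim N^{-L}\|f\|_{\mathfrak T_{J_{\mathrm{ps}}}}$ uniformly in $\Lambda$, and for the low-pass factor this is false.

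\begin{itemize}
\item Once $\supp m_\Lambda$ lies where the symbol of $S_{<c_{\mathrm{ap}}N}$ equals one (i.e.\ $\Lambda\ll N$), one has $S_{<c_{\mathrm{ap}}N}\pi_\Lambda=\pi_\Lambda$. Its kernel at distance $d_0$ is only $O(\Lambda^{-B})$, with no decay in $N$.
\item Uniform weighted $L^1$ moments of $\check m_\Lambda$ give boundedness on $\mathfrak T_{J_{\mathrm{ps}}}$. They do not give off-diagonal decay at scale $N^{-1}$.
\item With the truncation placed after $\psi$, the conclusion itself fails. The tail $\sum_{N\gg\Lambda}\phi\,\pi_\Lambda(\psi f)\,P_Ng=\phi\,\pi_\Lambda(\psi f)\,(g-S_{\lesssim\Lambda}g)$ is in general no smoother than $g$ near $\supp\phi$. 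This is because $\pi_\Lambda(\psi f)$ is entire and generally nonzero there.
\end{itemize}

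The truncations have to act on $f$ and $g$ before the separated cutoff, i.e.\ one estimates $\phi((\psi\,\pi_\Lambda f)\star\pi_\Lambda g)$. This is how they occur in the application, via $(\rho-1)V_{a^\perp,\Lambda}$ and $\Psi_{a,\Lambda}$ in Corollary~\ref{cor:localized-cubic-terms}. Uniformity is then immediate: apply your untruncated bilinear estimate to $\pi_\Lambda f$ and $\pi_\Lambda g$, and use the uniform boundedness of admissible multipliers on $\mathfrak T_{J_{\mathrm{ps}}}$ from Lemma~\ref{lem:weighted-stationary-lifting}(i). That is the paper's argument, and it is exactly what your kernel-moment remark proves once it is applied to $f$ itself rather than inserted between $A$ and $\psi$.
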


\begin{proof}
Put $K=\supp\phi$ and write $\Delta_N=P_N$ and
$S_{<N/8}=\sum_{L<N/8}\Delta_L$, with the inhomogeneous unit block understood.
Testing the dyadic kernels against a unit-lattice partition and using
\eqref{eq:weighted-tempered-norm} gives, for every compact $K_1$, every
multi-index $\alpha$, and some finite exponent $C_{J,\alpha}$,
\begin{equation}\label{eq:pseudolocal-local-polynomial-block}
 \sup_{x\in K_1}\bigl(
 |\partial^\alpha\Delta_Nh(x)|
 +|\partial^\alpha S_{<N/8}h(x)|\bigr)
 \lesssim_{K_1,J_{\mathrm{ps}},\alpha}
 N^{C_{J,\alpha}}\|h\|_{\mathfrak T_{J_{\mathrm{ps}}}}.
\end{equation}
For the separated factor one has the stronger estimate: for every $A>0$,
\begin{equation}\label{eq:pseudolocal-separated-block}
 \sup_{x\in K}\bigl(
 |\partial^\alpha\Delta_N(\psi f)(x)|
 +|\partial^\alpha S_{<N/8}(\psi f)(x)|\bigr)
 \lesssim_{A,\alpha,d_0,\phi,\psi,J_{\mathrm{ps}}}
 N^{-A}\|f\|_{\mathfrak T_{J_{\mathrm{ps}}}}.
\end{equation}
Indeed, both $\Delta_N$ and $S_{<N/8}$ have kernels localized at spatial scale
$N^{-1}$.  For $x\in K$ and $y\in\supp\psi$ their kernels, together with every
$x$-derivative, are bounded by
\[
 C_{A,\alpha}N^{3+|\alpha|}(1+N|x-y|)^{-A}.
\]
The distance $|x-y|\ge d_0$, the polynomial lattice growth allowed by
$\mathfrak T_{J_{\mathrm{ps}}}$, and an increase of $A$ prove
\eqref{eq:pseudolocal-separated-block}.  The low-frequency estimate
uses the kernel of the \emph{whole} low-pass $S_{<N/8}$; estimating its dyadic
summands separately would lose the cancellation which makes a function
supported away from $K$ small at resolution $N^{-1}$.

Use the standard grouped Bony formulas
\begin{align*}
 (\psi f)\prec g&=\sum_N S_{<N/8}(\psi f)\,\Delta_Ng,\\
 (\psi f)\succ g&=\sum_N \Delta_N(\psi f)\,S_{<N/8}g,\\
 (\psi f)\circ g&=\sum_N\sum_{N'\sim N}\Delta_N(\psi f)\,\Delta_{N'}g.
\end{align*}
In the first line the separated low-pass is controlled by
\eqref{eq:pseudolocal-separated-block}; in the second and third lines the
separated high block is controlled by the same estimate.  Every remaining
factor has at most the polynomial growth in
\eqref{eq:pseudolocal-local-polynomial-block}.  After differentiating the
products any fixed number of times, choose $A$ larger than all resulting
polynomial powers.  The three series then converge absolutely in $C^k(K)$ for
every integer $k$.  Multiplication by $\phi$ makes their support compact, and
choosing $k$ sufficiently large yields convergence and
\eqref{eq:separated-support-quantitative} in both $H^M$ and $\mathcal C^M$.
The same absolutely summable majorant is uniform in time; continuity of $f$ and
$g$ in $\mathfrak T_{J_{\mathrm{ps}}}$ therefore gives continuity of the
localized outputs.

Admissible spectral multipliers are uniformly bounded on
$\mathfrak T_{J_{\mathrm{ps}}}$ by
Lemma~\ref{lem:weighted-stationary-lifting}.  Hence the same bilinear estimate
holds uniformly for truncated factors.  Finally,
\[
 \mathfrak R(f_1,g_1)-\mathfrak R(f_2,g_2)
 =\mathfrak R(f_1-f_2,g_1)+\mathfrak R(f_2,g_1-g_2)
\]
for each of the three localized Bony maps $\mathfrak R$; the bilinear bound and
dominated dyadic summation prove the convergence assertion.
\end{proof}

\begin{corollary}[Localized cubic coordinate]\label{cor:localized-cubic-terms}
Fix an adapted localization pair $(\chi,\rho)$ and $a\in\mathfrak C$.
For every $M>0$ and every finite $p\ge2$, the sequence
$\mathcal R_{a,\Lambda}^{\chi,\rho}$ in
\eqref{eq:localized-cubic-split} converges in
\begin{equation}\label{eq:localized-cubic-remainder-convergence}
 L^p\!\left(\Omega;
 C_T(H^M\cap\mathcal C^M)\right)
\end{equation}
to a compactly supported path $\mathcal R_a^{\chi,\rho}$.  The limit obeys
\begin{equation}\label{eq:localized-cubic-pseudolocal-bound}
 \|\mathcal R_a^{\chi,\rho}\|_{C_T(H^M\cap\mathcal C^M)}
 \lesssim_{M,\chi,\rho}
 \|V_{a^\perp}\|_{C_T\mathfrak T_{J_{\mathrm{ps}}}}
 \|\Psi_a\|_{C_T\mathfrak T_{J_{\mathrm{ps}}}}.
\end{equation}
More generally, if $f_j,g_j\in C_T\mathfrak T_{J_{\mathrm{ps}}}$, then
with
$\mathcal R^{\chi,\rho}(f,g):=
 \chi(((\rho-1)f)\circ g)$,
\begin{align}
 &\|\mathcal R^{\chi,\rho}(f_1,g_1)
      -\mathcal R^{\chi,\rho}(f_2,g_2)\|_{C_T(H^M\cap\mathcal C^M)}
 \notag\\
 &\quad\lesssim_{M,\chi,\rho}
 \|f_1-f_2\|_{C_T\mathfrak T_{J_{\mathrm{ps}}}}
 \bigl(\|g_1\|_{C_T\mathfrak T_{J_{\mathrm{ps}}}}
      +\|g_2\|_{C_T\mathfrak T_{J_{\mathrm{ps}}}}\bigr)
 \notag\\
 &\qquad\quad+
 \|g_1-g_2\|_{C_T\mathfrak T_{J_{\mathrm{ps}}}}
 \bigl(\|f_1\|_{C_T\mathfrak T_{J_{\mathrm{ps}}}}
      +\|f_2\|_{C_T\mathfrak T_{J_{\mathrm{ps}}}}\bigr).
 \label{eq:localized-cubic-remainder-difference}
\end{align}
The finite-cutoff Wick decomposition from
\eqref{eq:cubic-wick-decomposition} and
\eqref{eq:localized-cubic-split} gives
\begin{equation}\label{eq:localized-cubic-finite-wick-split}
 \Gamma_{a,\Lambda}^{\chi,\rho}
 =\chi\Gamma_{a,\Lambda}^{(3)}
  +\chi\Gamma_{a,\Lambda}^{(1)}
  +\mathcal R_{a,\Lambda}^{\chi,\rho}.
\end{equation}
Set
\begin{equation}\label{eq:localized-cubic-limit}
 \Gamma_a^{\chi,\rho}:=\chi\Gamma_a+\mathcal R_a^{\chi,\rho}.
\end{equation}
Then
\begin{equation}\label{eq:localized-cubic-full-convergence}
 \Gamma_{a,\Lambda}^{\chi,\rho}
 \longrightarrow \Gamma_a^{\chi,\rho}
\end{equation}
in
\[
 L^p\!\left(\Omega;
 C_T\mathcal C^{-\beta_{\Gamma,a}-\kappa}
 \cap L_T^\infty B_{2,\infty}^{-\beta_{\Gamma,a}-\kappa}\right).
\]
Along a fixed-profile dyadic family the convergence holds almost surely along
the full sequence; along an arbitrary admissible cofinal sequence it holds in
probability.  In all cases the decomposition
\begin{equation}\label{eq:localized-cubic-limit-split}
 \Gamma_a^{\chi,\rho}
 =\chi\Gamma_a^{(3)}+\chi\Gamma_a^{(1)}
  +\mathcal R_a^{\chi,\rho}
\end{equation}
is preserved in the limit.
\end{corollary}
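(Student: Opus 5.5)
The plan is to treat the remainder $\mathcal R^{\chi,\rho}$ as a deterministic bilinear map and apply Lemma~\ref{lem:separated-support-smoothing}. Then I would transfer the stochastic convergence from Proposition~\ref{prop:baseline-regularities}, and combine the result with the full-space cubic construction from Appendix~\ref{app:cubic-terms}.

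\textbf{Separated supports.} By Definition~\ref{def:adapted-localization}, $\rho\equiv1$ on $(\supp\chi)^{[2]}$. Hence $\psi:=\rho-1\in C_b^\infty$ vanishes on a $2$-neighborhood of $\supp\chi$, and $\dist(\supp\chi,\supp\psi)\ge2>0$.

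\textbf{Deterministic bounds.} Lemma~\ref{lem:separated-support-smoothing} with $\phi=\chi$, $\psi=\rho-1$ gives the bilinear bound \eqref{eq:separated-support-quantitative} for $\mathcal R^{\chi,\rho}(f,g)=\chi((\psi f)\circ g)$ in $C_T(H^M\cap\mathcal C^M)$. Applying it to $(f,g)=(V_{a^\perp},\Psi_a)$ yields \eqref{eq:localized-cubic-pseudolocal-bound}. The identity $\mathcal R(f_1,g_1)-\mathcal R(f_2,g_2)=\mathcal R(f_1-f_2,g_1)+\mathcal R(f_2,g_1-g_2)$ then gives \eqref{eq:localized-cubic-remainder-difference}. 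Support in $\supp\chi$ is immediate from the outer factor $\chi$.

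\textbf{Convergence of $\mathcal R_{a,\Lambda}^{\chi,\rho}$.} I would insert $f=V_{a^\perp,\Lambda}$, $g=\Psi_{a,\Lambda}$ into \eqref{eq:localized-cubic-remainder-difference} and take $L^p(\Omega)$ norms. H\"older with exponent $2p$ bounds the result by products of $L^{2p}(\Omega;C_T\mathfrak T_{J_{\mathrm{ps}}})$ norms. These are uniformly bounded and Cauchy by \eqref{eq:baseline-weighted-tempered-convergence}, so the convergence \eqref{eq:localized-cubic-remainder-convergence} follows and the limit is $\mathcal R^{\chi,\rho}(V_{a^\perp},\Psi_a)$. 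For the almost-sure dyadic statement, the pathwise bound \eqref{eq:localized-cubic-remainder-difference} transfers the almost-sure weighted convergence of Proposition~\ref{prop:baseline-regularities} on its common event. For arbitrary cofinal sequences, convergence in probability follows from the $L^p$ statement.

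\textbf{Full coordinate and decomposition.} Identity \eqref{eq:localized-cubic-finite-wick-split} is \eqref{eq:localized-cubic-split} combined with the finite-cutoff Wick split $\Gamma_{a,\Lambda}=\Gamma_{a,\Lambda}^{(3)}+\Gamma_{a,\Lambda}^{(1)}$ from \eqref{eq:cubic-wick-decomposition}. The main analytic input is \Cref{thm:cubic-fullspace} (Appendix~\ref{app:cubic-terms}), which I assume provides the convergences $\chi\Gamma_{a,\Lambda}^{(j)}\to\chi\Gamma_a^{(j)}$, $j=1,3$:
\begin{itemize}
\item in $L^p(\Omega;C_T\mathcal C^{-\beta_{\Gamma,a}-\kappa}\cap L_T^\infty B_{2,\infty}^{-\beta_{\Gamma,a}-\kappa})$;
\item almost surely along fixed-profile dyadic cutoffs, via polynomial tails and Borel--Cantelli;
\item with the first-chaos piece even in the better space $\mathcal C^{1/2-\beta_{\Gamma,a}-}$.
\end{itemize}
The smooth remainder embeds into that space after multiplication by compactly supported $\chi$, with $H^M\hookrightarrow B_{2,\infty}^{r}$. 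Summing the three convergent pieces gives \eqref{eq:localized-cubic-full-convergence}, and the limit decomposition \eqref{eq:localized-cubic-limit-split} holds because each summand converges separately.

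The only delicate point I expect is checking that the hypotheses of Lemma~\ref{lem:separated-support-smoothing} are met uniformly in $\Lambda$ and measurably in $\omega$. This is covered by its uniformity clause for spectral truncations together with strong measurability of the weighted-tempered paths. All genuine stochastic difficulty is delegated to \Cref{thm:cubic-fullspace}.
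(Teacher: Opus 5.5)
Your proposal is correct and follows the paper's proof essentially step for step. Lemma~\ref{lem:separated-support-smoothing} gives the deterministic bilinear and difference bounds. H\"older with exponents $2p,2p$, together with the weighted-tempered convergence of Proposition~\ref{prop:baseline-regularities}, gives convergence of the remainder. The full coordinate is then handled by splitting off $\chi(\Gamma_{a,\Lambda}-\Gamma_a)$, which is controlled by Theorem~\ref{thm:cubic-fullspace}, from the smooth remainder difference. The only cosmetic difference is that you cite the third- and first-chaos convergences separately rather than the combined convergence of $\chi\Gamma_{a,\Lambda}$, and this changes nothing.
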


\begin{proof}
Apply Lemma~\ref{lem:separated-support-smoothing} to the bilinear map
$(f,g)\mapsto\chi(((\rho-1)f)\circ g)$.  Its quantitative bound gives
\eqref{eq:localized-cubic-pseudolocal-bound}; applying the same estimate to
\[
 \mathcal R^{\chi,\rho}(f_1,g_1)
 -\mathcal R^{\chi,\rho}(f_2,g_2)
 =\mathcal R^{\chi,\rho}(f_1-f_2,g_1)
  +\mathcal R^{\chi,\rho}(f_2,g_1-g_2)
\]
and then symmetrizing the coefficient bounds gives
\eqref{eq:localized-cubic-remainder-difference}.  Taking $L^p(\Omega)$ norms in
\eqref{eq:localized-cubic-remainder-difference} and applying H\"older's
inequality with probability exponents $2p$ and $2p$, the convergence of
$V_{a^\perp,\Lambda}$ and $\Psi_{a,\Lambda}$ in
$L^{2p}(\Omega;C_T\mathfrak T_{J_{\mathrm{ps}}})$ proves
\eqref{eq:localized-cubic-remainder-convergence}, including the corresponding
cutoff-difference estimate for non-nested families.

By \eqref{eq:localized-cubic-split},
\[
 \Gamma_{a,\Lambda}^{\chi,\rho}
 -\Gamma_a^{\chi,\rho}
 =\chi(\Gamma_{a,\Lambda}-\Gamma_a)
  +(\mathcal R_{a,\Lambda}^{\chi,\rho}
    -\mathcal R_a^{\chi,\rho}).
\]
The first difference converges in the asserted critical topology by
\Cref{thm:cubic-fullspace}; the second converges in every positive Sobolev and
Hölder topology by the first part of the proof.  This proves
\eqref{eq:localized-cubic-full-convergence} and
\eqref{eq:localized-cubic-limit-split}.  The almost-sure dyadic and
in-probability cofinal conclusions follow from the corresponding conclusions
for the two summands.
\end{proof}

\begin{center}
\small
\renewcommand{\arraystretch}{1.25}
\begin{tabularx}{\textwidth}{@{}P{0.22\textwidth}P{0.29\textwidth}Y P{0.19\textwidth}@{}}
\toprule
finite-cutoff term & role & limiting term & convergence topology \\
\midrule
$\chi\Gamma_{a,\Lambda}^{(3)}$
& centered third homogeneous chaos
& $\chi\Gamma_a^{(3)}$
& $C_T\mathcal C^{-\beta_{\Gamma,a}-\kappa}
   \cap L_T^\infty B_{2,\infty}^{-\beta_{\Gamma,a}-\kappa}$ \\
$\chi\Gamma_{a,\Lambda}^{(1)}$
& first chaos selected by the same-color contraction
& $\chi\Gamma_a^{(1)}$
& $C_T\mathcal C^{1/2-\beta_{\Gamma,a}-\kappa}
   \cap L_T^\infty B_{2,\infty}^{1/2-\beta_{\Gamma,a}-\kappa}$ \\
$\mathcal R_{a,\Lambda}^{\chi,\rho}$
& separated-support localization remainder
& $\mathcal R_a^{\chi,\rho}$
& $C_T(H^M\cap\mathcal C^M)$ for every $M>0$ \\
$\Gamma_{a,\Lambda}^{\chi,\rho}$
& localized cubic coordinate in the cutoff source
& $\Gamma_a^{\chi,\rho}$
& the critical topology of \eqref{eq:localized-cubic-full-convergence} \\
\bottomrule
\end{tabularx}
\end{center}

\section{Resonant operators}\label{sec:resonant-operators}

The proof is organized in three layers.  First, Wick's formula is applied to
the two Gaussian legs before any deterministic estimate: equal colors produce
a covariance contraction and all remaining terms form a completely centered
second chaos.  Second, a model-independent two-step Gaussian Khintchine
argument bounds a centered operator by four oriented Schatten flattenings.
Third, the localized wave--Klein--Gordon block is written with both physical
multipliers and the Duhamel frequency seen after localization.  The resulting
kernel identifies the four Hilbert-space directions, supplies the $N^{-1}$
propagator gain on the principal transfer sector, and turns the remaining
transfers into Schwartz tails.  The Wick algebra determines which branch
occurs, while the frequency kernel determines its size.

\subsection{Finite-cutoff decomposition}\label{sec:wick-split}

The four labels are obtained from \eqref{eq:X-resonant-substitution}.
If $(a;b,c)\in\mathfrak L_{\mathrm{diag}}$, then $b=c$ and
$a=b^\perp$; the covariance contraction therefore compares the dispersions
$\omega_a$ and $\omega_b$.  If
$(a;b,c)\in\mathfrak L_{\mathrm{off}}$, then $a=b$ and $b\ne c$;
independence of the two colors eliminates the covariance contraction.

Wick's identity is applied to the two stochastic Fourier variables before
any deterministic kernel estimate is made.  The physical multipliers do not
alter the covariance support $r=-\ell$; they only introduce two deterministic
Schwartz convolution variables.  The full two-localizer kernel, including the
frequency actually seen by the Duhamel propagator, is retained below.

Fix an inner stochastic scale $N$, a low input scale
$Q\le c_{\mathrm{ap}}N$, a Duhamel-output scale $R$, an outer stochastic
scale $L$, a final output scale $M$, and times $0\le s\le t\le T$.
The outer resonant product imposes $L\sim_{\mathrm{ap}}R$.  Before the two
physical multipliers in \eqref{eq:T-def} are inserted, every
translation-invariant dyadic core is a finite sum of operators of the form
\begin{align}\label{eq:raw-kernel}
  \wh T^{a;b,c}_{N,R,L,Q,M}(w)(n,t)
  ={}&\int_0^t\int_{q,\ell,r}
  \delta(n-q-\ell-r)
  \chi_M(n)\chi_Q(q)\rho_N(\ell)\varphi_R(q+\ell)\rho_L(r)\notag\\
  &\quad\times K_a(t-s,q+\ell)
  m_{N,R,L,Q,M}(n,q,\ell,r)
  \wh w(q,s)\notag\\
  &\quad\times\wh\Psi_b(\ell,s)\wh\Psi_c(r,t)
  \dd q\dd\ell\dd r\dd s,
\end{align}
We assign the factors $\rho_N(\ell)$ and $\rho_L(r)$ in
\eqref{eq:raw-kernel} to the two covariance synthesis maps.  The deterministic
coefficient kernel is localized by auxiliary plateaux that equal one on the
corresponding supports.  Here
$K_a(\tau,\zeta)=\sin(\tau\omega_a(\zeta))/\omega_a(\zeta)$ and
$\varphi_R$ is an enlarged cutoff to the shell selected on the Duhamel
output.  The low--high support gives $|q+\ell|\sim N$, while
$\varphi_R(q+\ell)$ gives $R\sim N$ in the translation-invariant core.
Together with $L\sim_{\mathrm{ap}}R$, all three high scales are comparable
before localization, up to finitely many inhomogeneous exceptions.  The
remaining multiplier is supported in the displayed dyadic regions and obeys
uniform rescaled symbol bounds; for example,
\[
  |\partial_n^{\alpha_0}\partial_q^{\alpha_1}
    \partial_\ell^{\alpha_2}\partial_r^{\alpha_3}m_{N,R,L,Q,M}|
  \lesssim_{\alpha}
  M^{-|\alpha_0|}Q^{-|\alpha_1|}
  N^{-|\alpha_2|}L^{-|\alpha_3|},
\]
while the normalized derivatives of the explicit shell factor
$\varphi_R(q+\ell)$ are bounded at scale $R$.
A physical multiplier replaces the strict momentum relation by a Schwartz
kernel in the corresponding momentum defect.  The full multiplier factorization for the
two multipliers in \eqref{eq:T-def}, including the resulting shift of the
Duhamel frequency, is retained in Lemma~\ref{lem:inner-localization-transfer} below.
For a single multiplier the resulting factor has the form
\begin{equation}\label{eq:soft-incidence}
  \kappa(n-q-\ell-r),
  \qquad \kappa\in\mathcal S(\R^3),
\end{equation}
with Schwartz seminorms depending on finitely many seminorms of the physical
cutoff.

For $b\in\mathfrak C$, define the covariance density
$\sigma_b(\ell;s,t)$ by
\begin{equation}\label{eq:sigma-def}
  \E\!\left[\wh\Psi_b(s,\ell)\wh\Psi_b(t,r)\right]
  =(2\pi)^3\delta(\ell+r)\sigma_b(\ell;s,t)
  \quad\text{in }\mathcal D'(\R^3_\ell\times\R^3_r).
\end{equation}
At cutoff $\Lambda$,
$\sigma_{b,\Lambda}(\ell;s,t)=m_\Lambda(\ell)^2
\sigma_b(\ell;s,t)$.  Wick's formula, interpreted after pairing with the
frequency kernel in \eqref{eq:raw-kernel}, reads
\begin{align}\label{eq:finite-wick-identity}
  \wh\Psi_{b,\Lambda}(s,\ell)
  \wh\Psi_{c,\Lambda}(t,r)
  ={}&:\wh\Psi_{b,\Lambda}(s,\ell)
    \wh\Psi_{c,\Lambda}(t,r):\notag\\
   &+\delta_{bc}(2\pi)^3\delta(\ell+r)
     \sigma_{b,\Lambda}(\ell;s,t).
\end{align}

\begin{proposition}[Finite-cutoff Wick decomposition]\label{prop:finite-color-split}
For every $(a;b,c)\in\mathfrak L$, every finite cutoff $\Lambda$, and every
localized dyadic tuple $(N,R,L,Q,M)$ with $L\sim_{\mathrm{ap}}R$,
\begin{equation}\label{eq:finite-split}
 T^{a;b,c}_{\Lambda,N,R,L,Q,M}=
 \begin{cases}
  \cD^{a;b}_{\Lambda,N,R,L,Q,M}
   +\cB^{a;b,b}_{\Lambda,N,R,L,Q,M},
   &(a;b,c)=(a;b,b)\in\mathfrak L_{\mathrm{diag}},\\
  \cB^{a;b,c}_{\Lambda,N,R,L,Q,M},
   &(a;b,c)\in\mathfrak L_{\mathrm{off}}.
 \end{cases}
\end{equation}
The centered term is a second homogeneous Gaussian chaos.  Its
two-localizer frequency kernel is displayed in
\eqref{eq:two-localizer-kernel}.  If $b=c$, the deterministic
contraction satisfies $r=-\ell$; hence $L\sim N$, and the resonant relation
$L\sim_{\mathrm{ap}}R$ also forces $R\sim N$.  The strict relation $n=q$ of
the translation-invariant reference core is replaced after localization by a
Schwartz kernel rapidly decaying away from $n=q$ in $n-q$.  If $b\ne c$, the deterministic
branch is absent.
\end{proposition}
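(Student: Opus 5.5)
The proof is a direct substitution of the Wick identity \eqref{eq:finite-wick-identity} into the localized dyadic kernel, followed by support bookkeeping. Fix $(a;b,c)\in\mathfrak L$, a finite cutoff $\Lambda$, and a dyadic tuple $(N,R,L,Q,M)$ with $L\sim_{\mathrm{ap}}R$. The physical multipliers $\chi$ in \eqref{eq:T-def} will be written as Fourier convolutions, which produces the Schwartz factors \eqref{eq:soft-incidence} in the momentum defects. The localized block $T^{a;b,c}_{\Lambda,N,R,L,Q,M}(w)$ is then an integral of a deterministic kernel against $\wh w(q,s)\,\wh\Psi_{b,\Lambda}(s,\ell)\,\wh\Psi_{c,\Lambda}(t,r)$.

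At finite cutoff both Gaussian factors are smooth, Fourier-compactly supported fields in the first chaos. The product $\wh\Psi_{b,\Lambda}(s,\ell)\wh\Psi_{c,\Lambda}(t,r)$, paired with the integrable deterministic kernel, therefore equals its Wick product plus its expectation. This is \eqref{eq:finite-wick-identity}, justified by Fubini, because all frequencies lie in $|\cdot|\le C_0\Lambda$ and the times lie in $[0,T]$.

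Define $\mathcal B^{a;b,c}_{\Lambda,N,R,L,Q,M}$ as the same kernel applied to the Wick product. It is a second homogeneous chaos, hence centered; its two-localizer kernel is recorded as \eqref{eq:two-localizer-kernel}. Define $\mathcal D^{a;b}$ as the kernel applied to the expectation term, which is nonzero only when $b=c$ by independence of $\mathbf W_{\W}$ and $\mathbf W_{\K}$. Since \eqref{eq:operator-index-relations} gives $b=c$ exactly on $\mathfrak L_{\mathrm{diag}}$ and $b\ne c$ on $\mathfrak L_{\mathrm{off}}$, this yields \eqref{eq:finite-split}. The input $w$ is deterministic (or a fixed path), so it does not enter the chaos decomposition; it is treated as a coefficient.

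For the support claims in the contraction branch, I would insert $\delta(\ell+r)$. The factors $\rho_N(\ell)$ and $\rho_L(r)$ then force $|\ell|\sim N$ and $|\ell|\sim L$, so $L\sim N$. The resonant constraint $L\sim_{\mathrm{ap}}R$ then gives $R\sim N$, consistent with $\varphi_R(q+\ell)$ and $|q+\ell|\sim N$ from the low--high support $Q\le c_{\mathrm{ap}}N$. The output relation $n=q+\ell+r$ collapses to $n=q$ in the translation-invariant core.

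After localization, the two physical multipliers contribute two Schwartz variables $z_1,z_2$ with $n=q+\ell+r+z_1+z_2$. On the contraction support this becomes $n-q=z_1+z_2$. Integrating out one defect variable gives a convolution of Schwartz functions, which is again Schwartz in $n-q$, with seminorms controlled by finitely many seminorms of $\chi$.

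The main care point is the inhomogeneous low blocks ($N$ or $L$ equal to $1$), where the dyadic comparabilities hold only up to finitely many exceptions. These would be absorbed into the constants. Beyond that, the step is only to verify that the inner multiplier's shift of the Duhamel frequency to $q+\ell+z$ does not affect the Wick split. It does not, because the shift sits in the deterministic kernel and the covariance support $r=-\ell$ is unchanged.
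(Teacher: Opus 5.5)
Your proposal is correct and is essentially the paper's proof. You pair the finite-cutoff Wick identity \eqref{eq:finite-wick-identity} with the full localized kernel. The Wick product gives the centered second-chaos block, and the covariance term, present only for $b=c$ by independence of the colors, gives the contraction. You then read off $r=-\ell$, hence $L\sim N$, $R\sim N$ and $n=q$ in the translation-invariant core. Finally, you observe that the deterministic physical multipliers leave the covariance support unchanged while turning $n=q$ into rapid decay in $n-q=z_1+z_2$.

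One point deserves slightly more care. The remaining $z_1$-integral is not literally a convolution, since $K_a(t-s,q+\ell+z_1)$ depends on $z_1$. With the bounded amplitude it is dominated by $(|\widehat\chi_1|*|\widehat\chi_2|)(n-q)$, which is exactly how the paper makes this precise later in \eqref{eq:localized-convolution-schwartz}.
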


\begin{proof}
Pair the Wick identity \eqref{eq:finite-wick-identity} with the full localized
frequency kernel.  The Wick product gives the centered second-chaos block.
The covariance term is present only for $b=c$ and is supported on $r=-\ell$.
The inner and outer physical multipliers are deterministic, so they do not
change this covariance support.  Before localization the external incidence
then reduces to
\[
 n=q+\ell+r\big|_{r=-\ell}=q.
\]
After localization, the two convolution defects give a Schwartz kernel in
$n-q$.  For $b\ne c$, independence gives zero cross covariance.
\end{proof}

\begin{remark}[Same-family centered remainder]\label{rem:same-family-centered}
When $b=c$, subtracting the covariance term in
\eqref{eq:finite-wick-identity} leaves the full second homogeneous chaos on
the common Gaussian Hilbert space.  Its coefficient tensor is first
symmetrized in the two equal-color legs and then estimated through the four
oriented Hilbert-space flattenings.
\end{remark}

\subsection{Deterministic contractions}\label{sec:volterra}

The proportional low--high phase gap used below is proved in
Appendix~\ref{app:phase-gap}.  Its threshold and constants depend on the
parameter vector, while the phase denominator always contributes one
high-frequency power.

For $0\le s\le t\le T$, let $\sigma_b$ be the covariance density in \eqref{eq:sigma-def}.

For $a,b\in\mathfrak C$ with $a\ne b$ and $\varepsilon_0,\varepsilon_1\in\{-1,1\}$, set
\begin{equation}\label{eq:two-frequency-phase}
  \Phi_{a|b}^{\varepsilon_0,\varepsilon_1}(q,\ell)
  :=\varepsilon_0\omega_a(q+\ell)
    +\varepsilon_1\omega_b(\ell).
\end{equation}

\begin{lemma}[Covariance branch decomposition]\label{lem:covariance-decomposition}
Let $a,b\in\mathfrak C$ with $a\ne b$.  For $\ell\ne0$ in the wave case,
\begin{align}\label{eq:covariance-decomposition}
  \sigma_b(\ell;s,t)
  =\mathfrak q_b(\ell)\Bigg[{}&\frac{s}{2\omega_b(\ell)^2}
      \cos((t-s)\omega_b(\ell))\notag\\
  &+\frac{\sin((t-s)\omega_b(\ell))}{4\omega_b(\ell)^3}
   -\frac{\sin((t+s)\omega_b(\ell))}{4\omega_b(\ell)^3}\Bigg].
\end{align}
On $|\ell|\sim N$, the last two branches are bounded by
$C N^{-3+2\beta_b}$.  In the first branch, after multiplication by the outer
Duhamel kernel and expansion into exponentials, each sign component has the
form
\begin{equation}\label{eq:oscillatory-branch}
  e^{\ii(t-s)\Phi_{a|b}^{\varepsilon_0,\varepsilon_1}(q,\ell)}
  \mathfrak a_{a|b}^{\varepsilon_0,\varepsilon_1}(q,\ell,s)\wh w(q,s),
\end{equation}
with
\begin{equation}\label{eq:amplitude-N3}
  \abs{\mathfrak a_{a|b}^{\varepsilon_0,\varepsilon_1}(q,\ell,s)}
  +\abs{\partial_s\mathfrak a_{a|b}^{\varepsilon_0,\varepsilon_1}(q,\ell,s)}
  \lesssim_T N^{-3+2\beta_b}.
\end{equation}
The constants depend on the forcing profiles only through $\mathfrak H_{\mathrm{prof}}$.
\end{lemma}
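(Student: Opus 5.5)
The plan is to compute $\sigma_b$ directly from the Duhamel representation and the white-in-time covariance. Since $\Psi_b(t)=\int_0^tK_b(t-\tau,D)\xi_b(\tau)\dd\tau$ with $K_b(\tau,\zeta)=\sin(\tau\omega_b(\zeta))/\omega_b(\zeta)$, the Fourier covariance \eqref{eq:fourier-gaussian-forcing} gives, for $s\le t$,
\[
 \sigma_b(\ell;s,t)=\mathfrak q_b(\ell)\int_0^s
 \frac{\sin((s-\tau)\omega)\sin((t-\tau)\omega)}{\omega^2}\dd\tau,
 \qquad \omega=\omega_b(\ell).
\]
The kernel is even in $\ell$ and $\mathfrak q_b$ is even, so the sign $r=-\ell$ causes no issue. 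I will apply the product-to-sum identity $\sin A\sin B=\tfrac12[\cos(A-B)-\cos(A+B)]$. Here $A-B=s-t$ does not depend on $\tau$, so that term integrates to $\tfrac{s}{2\omega^2}\cos((t-s)\omega)$. The other term is $-\tfrac1{2\omega^2}\int_0^s\cos((s+t-2\tau)\omega)\dd\tau$, which equals $-\tfrac{1}{4\omega^3}[\sin((t+s)\omega)-\sin((t-s)\omega)]$. Together these give exactly \eqref{eq:covariance-decomposition}. For the wave channel I assume $\ell\ne0$ so that we can divide by $\omega$.

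For the size bounds, on $|\ell|\sim N$ we have $\omega_b(\ell)\gtrsim N$, uniformly on the parameter box. The Klein--Gordon mass bounds $\omega$ below at low $N$, and for the wave case with $N\ge2$ the annulus keeps $|\ell|\gtrsim1$. Also $\mathfrak q_b(\ell)\le\mathfrak H_{\mathrm{prof}}^2\langle\ell\rangle^{2\beta_b}\lesssim N^{2\beta_b}$ by \eqref{eq:forcing-profile-amplitude-norm}, which gives the bound $CN^{-3+2\beta_b}$ for the two sine branches. The one exceptional case is the wave channel on the unit block $N=1$. There $1/\omega^3$ is singular, but the full integral is bounded by $s^3/3\cdot\mathfrak q_b$. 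I will treat this block as a finite exception absorbed into the constant, or state the bound for $N\ge2$ as the lemma implicitly does.

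For the oscillatory branch, I multiply by the outer Duhamel kernel $K_a(t-s,q+\ell)=\bigl(e^{\ii(t-s)\omega_a}-e^{-\ii(t-s)\omega_a}\bigr)/(2\ii\omega_a(q+\ell))$. I also write $\cos((t-s)\omega_b)=\tfrac12\sum_{\varepsilon_1}e^{\ii\varepsilon_1(t-s)\omega_b}$. Expanding, each sign pair $(\varepsilon_0,\varepsilon_1)$ yields $e^{\ii(t-s)\Phi^{\varepsilon_0,\varepsilon_1}_{a|b}}$ times the amplitude
\[
 \mathfrak a^{\varepsilon_0,\varepsilon_1}_{a|b}(q,\ell,s)
 =\frac{\varepsilon_0}{8\ii}\,\frac{s\,\mathfrak q_b(\ell)}{\omega_a(q+\ell)\,\omega_b(\ell)^2},
\]
with $\wh w(q,s)$ kept as the separate factor shown in \eqref{eq:oscillatory-branch}. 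On the low--high support $|q|\le c_{\mathrm{ap}}N$ we have $|q+\ell|\sim N$, so $\omega_a(q+\ell)\gtrsim N$. Hence $|\mathfrak a|\lesssim_T N^{2\beta_b}N^{-1}N^{-2}$. The amplitude depends on $s$ only through the linear factor $s$, so $|\partial_s\mathfrak a|\lesssim N^{-3+2\beta_b}$ as well, which proves \eqref{eq:amplitude-N3}.

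No step is a serious obstacle. The only points needing care are the lower bound $\omega_a(q+\ell)\gtrsim N$ on the aperture support, including for inhomogeneous low blocks, and confirming that the amplitude is defined without $\wh w$, so that $\partial_s$ does not fall on $w$. All constants depend only on $\parvec$, $T$, and $\mathfrak H_{\mathrm{prof}}$.
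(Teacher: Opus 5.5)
Your proof is correct and follows the paper's own argument. You write $\sigma_b$ as the Duhamel covariance integral, apply the product-to-sum identity, and bound the endpoint branches via $\mathfrak q_b\lesssim\mathfrak H_{\mathrm{prof}}^2N^{2\beta_b}$ and $\omega_b(\ell)\sim N$. You then read off the amplitude $\varepsilon_0 s\,\mathfrak q_b(\ell)/(8\ii\,\omega_a(q+\ell)\omega_b(\ell)^2)$ on the low--high window, where $\partial_s$ only hits the linear factor $s$.

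One harmless slip: on the wave unit block the integral is bounded by $\mathfrak q_b\,(s^3/3+(t-s)s^2/2)\lesssim_T\mathfrak q_b$, not $\mathfrak q_b\,s^3/3$. This does not affect the conclusion.
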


\begin{proof}
The isonormal covariance and \eqref{eq:gaussian-forcing-covariance} give
\[
  \sigma_b(\ell;s,t)
  =\mathfrak q_b(\ell)\int_0^s
  \frac{\sin((s-r)\omega_b(\ell))}{\omega_b(\ell)}
  \frac{\sin((t-r)\omega_b(\ell))}{\omega_b(\ell)}\dd r.
\]
Use $2\sin A\sin B=\cos(A-B)-\cos(A+B)$ and integrate in $r$ to
obtain \eqref{eq:covariance-decomposition}.  Since
$\mathfrak q_b(\ell)\lesssim\langle\ell\rangle^{2\beta_b}$, the endpoint
branches are $O(N^{-3+2\beta_b})$.  The main branch contains
$\mathfrak q_b(\ell)s\omega_b(\ell)^{-2}$, while the outer Duhamel kernel
contributes $\omega_a(q+\ell)^{-1}$.  On the proportional low--high window
both dispersive frequencies are comparable to $N$, giving
\eqref{eq:amplitude-N3}.  After the oscillation is placed in the exponential,
time differentiation falls only on the scalar factor $s$ and on fixed
frequency cutoffs.
\end{proof}

Fix $a,b\in\mathfrak C$ with $a\ne b$, and let $Q\le c_{\mathrm{ap}}N$.  Each strict diagonal block is a finite sum
of operators of the form
\begin{align}\label{eq:diagonal-block}
  \wh{\cD^{a;b}_{N,Q}w}(q,t)
  ={}&\chi_Q(q)\int_0^t\int_{\R^3}
  \rho_N(\ell)^2
  m^{a;b}_{N,Q}(q,\ell)
  K_a(t-s,q+\ell)\notag\\
  &\qquad\qquad\times\sigma_b(\ell;s,t)
  \wh w(q,s)\dd\ell\dd s,
\end{align}
where $b$ is the contracted color and $a\ne b$, and $m^{a;b}_{N,Q}$ is a uniformly order-zero symbol.  The output frequency equals the input frequency $q$.

The common input space $E_T^{2,\sigma}$ is defined in \eqref{eq:E-space}.

\begin{lemma}[Dyadic Volterra estimate]\label{lem:dyadic-volterra}
Let $a,b\in\mathfrak C$ with $a\ne b$, $Q\le c_{\mathrm{ap}}N$, and $0<s<1-2\beta_b$.
\begin{align}\label{eq:dyadic-volterra-H}
  \norm{\cD^{a;b}_{N,Q}P_Qw}_{C_TH^{s-1}}
  \lesssim{}&N^{-1+2\beta_b}\norm{P_Qw}_{L_T^\infty L^2}\notag\\
  &+N^{-1+2\beta_b}Q^s
    \norm{\partial_tw}_{C_TH^{-1}},
\end{align}
and
\begin{equation}\label{eq:dyadic-volterra-B}
  \norm{\cD^{a;b}_{N,Q}P_Qw}_{L_T^\infty B_{2,\infty}^{\sigma-1}}
  \lesssim_T
  N^{-1+2\beta_b}
  \paren{Q^{-1}\norm{w}_{L_T^\infty B_{2,\infty}^{\sigma}}
  +\norm{\partial_tw}_{L_T^\infty B_{2,\infty}^{\sigma-1}}}.
\end{equation}
The constants are uniform in $N,Q$ and in the spectral cutoff.  If
$[\mathfrak h_b]_{\beta_b,0}\le H$ and $\parvec$ ranges over a separated
parameter class, the implicit constants are bounded by a constant depending
on $H$ and the parameter class.  The phase-gap constants themselves are
independent of the forcing profiles.
\end{lemma}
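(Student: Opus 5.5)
Substitute the covariance decomposition of Lemma~\ref{lem:covariance-decomposition} into the diagonal block \eqref{eq:diagonal-block} and treat the branches separately. The two endpoint branches of \eqref{eq:covariance-decomposition} are bounded pointwise by $N^{-3+2\beta_b}$. Together with $|K_a|\lesssim N^{-1}$ on the low--high window and $\int\rho_N^2\dd\ell\lesssim N^3$, they give a multiplier of size $N^{-1+2\beta_b}$ acting on $\wh w(q,s)$ and integrated over $s\in[0,t]$. This contributes $N^{-1+2\beta_b}\|P_Qw\|_{L_T^\infty L^2}$. Since the output frequency equals $q$ with $|q|\sim Q$, it also contributes $N^{-1+2\beta_b}Q^{\sigma-1}\|P_Qw\|_{L^2}\lesssim N^{-1+2\beta_b}Q^{-1}\|w\|_{B_{2,\infty}^{\sigma}}$ to the Besov norm. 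The factor $Q^{s-1}\le1$ is harmless in the $H^{s-1}$ norm.

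The main branch is the oscillatory one, written in the form \eqref{eq:oscillatory-branch}. Here I perform a Volterra integration by parts in $s$, writing
\[
e^{\ii(t-s)\Phi}=\frac{-1}{\ii\Phi}\,\partial_s e^{\ii(t-s)\Phi}.
\]
By Lemma~\ref{lem:explicit-phase-gap} and the aperture choice \eqref{eq:aperture}, the phase satisfies $|\Phi_{a|b}^{\varepsilon_0,\varepsilon_1}(q,\ell)|\gtrsim N$ on the support when $a\ne b$, for $N$ above the threshold $N_{\mathrm{sp}}$; below it, the finitely many shells are treated by the crude bound. The case $\varepsilon_0=-\varepsilon_1$ (the difference phase) is the only nontrivial one, and it is exactly what the phase gap \eqref{eq:intro-phase-gap} handles. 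Integrating by parts produces the following terms:
\begin{enumerate}
\item boundary terms at $s=t$ and $s=0$. These are of size $N^{-1}\cdot N^{-3+2\beta_b}|\wh w(q,\cdot)|$, and the $s=0$ term vanishes or is bounded because $\mathfrak a$ carries the factor $s$.
\item a term where $\partial_s$ falls on $\mathfrak a$, bounded by \eqref{eq:amplitude-N3}.
\item a term with $\partial_s\wh w(q,s)$.
\end{enumerate}
After the $\ell$-integration over $|\ell|\sim N$ (volume $N^3$), the first two terms give $N^{-1+2\beta_b}\|P_Qw\|_{L^\infty_TL^2}$, which is even better by the extra $N^{-1}$ one does not need. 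The third term gives $N^{-1+2\beta_b}\int_0^t|\partial_s\wh w(q,s)|\dd s$. Measured in $H^{s-1}$, this is $\lesssim N^{-1+2\beta_b}\langle Q\rangle^{s}\|P_Q\partial_tw\|_{H^{-1}}$. Measured in $B^{\sigma-1}_{2,\infty}$, it is directly $N^{-1+2\beta_b}\|\partial_tw\|_{B^{\sigma-1}_{2,\infty}}$. Here the $-1$ derivative of the output cancels the $+1$ implicit in the $H^{-1}$ input. The symbol $m^{a;b}_{N,Q}$ is order zero, so it is carried through as a bounded multiplier in $\ell$.

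Time continuity in $H^{s-1}$ follows because each term is a Volterra integral of an $L^\infty_TL^2$ (or $C_TH^{-1}$) integrand against kernels continuous in $t$; dominated convergence applies for fixed $N,Q$. Uniformity in the cutoff holds since $\sigma_{b,\Lambda}=m_\Lambda^2\sigma_b$ with $|m_\Lambda|\lesssim1$, and $m_\Lambda$ is independent of $s$, so the integration by parts is unaffected. The dependence on $\mathfrak H_{\mathrm{prof}}$ enters only through $\mathfrak q_b\le H^2\langle\ell\rangle^{2\beta_b}$. I expect the main obstacle to be bookkeeping the integration-by-parts term with $\partial_s\wh w$: one must check that the amplitude after dividing by $\Phi$ still has uniform size $N^{-4+2\beta_b}$ across the whole low--high support, including the wave zero mode and the inhomogeneous block $P_1$. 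For this I would rely on the proportional gap $|q|\le\delta_0|\ell|$ guaranteed by the aperture.
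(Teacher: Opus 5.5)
Your proposal is correct and follows the paper's proof essentially step for step. The endpoint covariance branches are bounded absolutely, with loop size $N^3\cdot N^{-1}\cdot N^{-3+2\beta_b}$. The main branch is integrated by parts in $s$ using the phase gap for $N\ge N_{\mathrm{sp}}$, and the finitely many lower shells are bounded crudely. Because the output stays at frequency $q$, one has $\|P_Q\partial_tw\|_{H^{s-1}}\lesssim Q^s\|\partial_tw\|_{H^{-1}}$ and $Q^{\sigma-1}\|P_Qw\|_{L^2}\le Q^{-1}\|w\|_{B^{\sigma}_{2,\infty}}$.

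One small correction: after the integration by parts, the boundary and amplitude-derivative terms are exactly of size $N^{-1}\cdot N^{-3+2\beta_b}\cdot N^3=N^{-1+2\beta_b}$. There is no spare factor $N^{-1}$; the gain from dividing by $\Phi$ is precisely what the estimate needs.
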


\begin{proof}
Use \cref{lem:covariance-decomposition}.  The two endpoint covariance branches are estimated absolutely.  Their loop size is
\[
  N^3\cdot N^{-1}\cdot N^{-3+2\beta_b}=N^{-1+2\beta_b}.
\]
For the main covariance branch on $N\ge N_{\mathrm{sp}}(\parvec)$, use
\cref{lem:explicit-phase-gap} and integrate \eqref{eq:oscillatory-branch} by
parts in $s$.  Since
$\partial_s e^{\ii(t-s)\Phi}=-\ii\Phi
 e^{\ii(t-s)\Phi}$, integration by parts gives
\begin{align}\label{eq:ibp-formula}
  \int_0^t e^{\ii(t-s)\Phi}A(s)\wh w(q,s)\dd s
  ={}&\frac{e^{\ii t\Phi}A(0)\wh w(q,0)-A(t)\wh w(q,t)}{\ii\Phi}\notag\\
  &+\int_0^t\frac{e^{\ii(t-s)\Phi}}{\ii\Phi}
  \paren{A'(s)\wh w(q,s)+A(s)\partial_s\wh w(q,s)}\dd s.
\end{align}
The reciprocal phase contributes $C_{\parvec}N^{-1}$.  The amplitude before
this division is $N^{-3+2\beta_b}$, and the $\ell$-integration has volume $N^3$.  Thus
the boundary and amplitude-derivative terms have scalar multiplier size
$C_{\parvec}N^{-1+2\beta_b}$.  The derivative-on-input term has the same high-shell
factor and contains $P_Q\partial_tw$.  For the finitely many shells
$N<N_{\mathrm{sp}}(\parvec)$, no phase division is made: the rough scalar
multiplier is $O_{\parvec}(1)$, which is bounded by
$C_{\parvec}N^{-1+2\beta_b}$ after enlarging the parameter-dependent constant.
This also covers a possible compact wave--Klein--Gordon crossing when
$\speedK<\speedW$.

Since the output remains at frequency $Q$ and $s-1<0$, the $H^{s-1}$ weight on the first line can be discarded.  For the derivative term,
\[
  \norm{P_Q\partial_tw}_{H^{s-1}}
  \lesssim Q^s\norm{\partial_tw}_{H^{-1}}.
\]
This proves \eqref{eq:dyadic-volterra-H}.

For the Besov estimate, fix the output shell $Q$ before summing the high loop.  Boundary and amplitude-derivative terms satisfy
\[
  Q^{\sigma-1}\norm{P_Qw}_{L^2}
  \le Q^{-1}\norm{w}_{B_{2,\infty}^{\sigma}},
\]
while the derivative term satisfies
\[
  Q^{\sigma-1}\norm{P_Q\partial_tw}_{L^2}
  \le\norm{\partial_tw}_{B_{2,\infty}^{\sigma-1}}.
\]
Combining these inequalities with the $N^{-1+2\beta_b}$ shell factor gives \eqref{eq:dyadic-volterra-B}.
\end{proof}

\begin{lemma}[Closed Volterra integration by parts on the completed input space]
\label{lem:closed-volterra-ibp}
Under the hypotheses of Lemma~\ref{lem:dyadic-volterra}, fix a dyadic pair
$(N,Q)$ and one of the finitely many signed main-phase branches obtained from
\eqref{eq:diagonal-block} after the covariance decomposition.  Denote this
branch by $\mathcal V_{N,Q}$.  The four terms on the right-hand side of
\eqref{eq:ibp-formula}, after the $\ell$-integration is restored, extend
separately and continuously from smooth finite-spatial-block paths to
$E_T^{2,\sigma}$.  For every $w\in E_T^{2,\sigma}$ the integration-by-parts
identity holds in
\[
 C_TH^{s-1}\cap L_T^\infty B_{2,\infty}^{\sigma-1},
\]
where the two components are interpreted with the corresponding estimates
below:
\begin{align}
 \|\mathcal V_{N,Q}w\|_{C_TH^{s-1}}
 &\lesssim
 N^{-1+2\beta_b}\Bigl(
 \|P_Qw\|_{C_TL^2}+Q^s\|\partial_tw\|_{C_TH^{-1}}\Bigr),
 \label{eq:closed-volterra-H}\\
 \|\mathcal V_{N,Q}w\|_{L_T^\infty B_{2,\infty}^{\sigma-1}}
 &\lesssim_T
 N^{-1+2\beta_b}\Bigl(
 Q^{-1}\|w\|_{L_T^\infty B_{2,\infty}^{\sigma}}
 +\|\partial_tw\|_{L_T^\infty B_{2,\infty}^{\sigma-1}}\Bigr).
 \label{eq:closed-volterra-B}
\end{align}
If $w_j$ is any smooth finite-block sequence converging to $w$ in
$E_T^{2,\sigma}$, then the pre-integration-by-parts Volterra branch and each
of the four terms in \eqref{eq:ibp-formula} converge to the corresponding
terms for $w$ in the displayed target spaces.  Thus the extension is independent of the approximating sequence, and
completion preserves the integration-by-parts formula.
\end{lemma}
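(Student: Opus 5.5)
The plan is to prove the four-term estimates first on the dense class of smooth finite-spatial-block paths, where \eqref{eq:ibp-formula} is a classical identity, and then pass to the completion using Lemma~\ref{lem:finite-block-extension} together with the trace statement of Lemma~\ref{lem:completed-besov-traces}.

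\textbf{Step 1: dense class.} For $w$ a smooth finite-block path, $\wh w(q,s)$ is $C^1$ in $s$ with compact $q$-support. For each fixed $(q,\ell)$ in the support of $\rho_N(\ell)^2 m^{a;b}_{N,Q}$ with $N\ge N_{\mathrm{sp}}$, the identity \eqref{eq:ibp-formula} therefore holds pointwise. The phase lower bound $|\Phi|\gtrsim_{\parvec}N$ from Lemma~\ref{lem:explicit-phase-gap} makes every integrand bounded. Fubini then lets us restore the $\ell$-integral, so the identity holds term by term.

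Each of the four terms is a Fourier multiplier at the fixed output frequency $q$, acting on $\wh w(q,\cdot)$ or on $\partial_s\wh w(q,\cdot)$. Its scalar size is at most $N^3\cdot N^{-1}\cdot N^{-3+2\beta_b}$, using \eqref{eq:amplitude-N3}. Arguing exactly as in the proof of Lemma~\ref{lem:dyadic-volterra}, we get bounds of the form \eqref{eq:closed-volterra-H} and \eqref{eq:closed-volterra-B} for each term separately:
\begin{itemize}
\item the two boundary terms are controlled by $\sup_t\|P_Qw(t)\|_{L^2}$;
\item the $A'$ term is controlled by the same quantity, times $T$;
\item the $\partial_sw$ term is controlled by $Q^s\|\partial_tw\|_{C_TH^{-1}}$ in the $H^{s-1}$ norm, and by $\|\partial_tw\|_{B^{\sigma-1}_{2,\infty}}$ at the fixed output shell $Q$.
\end{itemize}
Time continuity in $H^{s-1}$ of each term follows from dominated convergence in $s$ and $t$, since all multipliers are bounded and continuous in $t$. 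For the shells $N<N_{\mathrm{sp}}$ no integration by parts is performed, so there is nothing to extend beyond the pre-integration branch.

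\textbf{Step 2: extension.} The right-hand sides of the term-wise bounds are dominated by $\|w\|_{E_T^{2,\sigma}}$, with constants depending on $N$ and $Q$. Lemma~\ref{lem:finite-block-extension} therefore extends each term, and also the pre-integration branch $\mathcal V_{N,Q}$, uniquely and continuously to $E_T^{2,\sigma}$. The pre-integration branch is bounded by the crude estimate $\int_0^t|A|\,|\wh w|$, which requires only $C_TL^2$. Since all five maps are continuous and linear, and the identity holds on a dense set, it persists in the completion. Any approximating sequence $w_j\to w$ yields convergence of every term by linearity and the bounds, which gives independence of the approximation.

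For the Besov target we need the output to lie in the little-Besov closure. This holds because the output is supported on the single shell $Q$, so the uniform tail condition \eqref{eq:uniform-little-besov-tail} is trivial.

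\textbf{Main obstacle: the boundary terms.} These involve point evaluations $w(t)$ and $w(0)$, and $\partial_t w$ enters them only in a derivative-free way. Two points need care. First, $\|\cdot\|_{L_T^\infty}$ of the Besov component does not by itself define traces. Lemma~\ref{lem:completed-besov-traces} supplies canonical representatives in $C(I;b^{\sigma}_{2,\infty})$, with $\sup_t$ bounds and convergence under intersection-norm convergence; this makes the endpoint evaluations continuous and the Besov bound for the boundary terms legitimate. Second, the $\partial_s\wh w$ term for an element of the completion must be read through $\partial_tw\in C_TH^{-1}$. At the single shell $Q$ this is just a bounded $L^2$-valued function, so the term is a genuine Bochner integral, and it coincides with the limit of the approximations.
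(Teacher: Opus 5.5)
Your proposal is correct and follows the paper's argument. The four terms are bounded on smooth finite-block paths by the multiplier estimates of Lemma~\ref{lem:dyadic-volterra}. The identity then passes to the completion because, at the fixed shell $Q$, convergence in $E_T^{2,\sigma}$ gives convergence of $P_Qw_j$ and $P_Q\partial_tw_j$ in $C_TL^2$, and Lemma~\ref{lem:completed-besov-traces} supplies the canonical endpoint traces.

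Your explicit use of Lemma~\ref{lem:finite-block-extension}, and your remark that single-shell outputs automatically satisfy the little-Besov tail condition, are minor additions within the same route.
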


\begin{proof}
For a fixed shell $Q$, the multiplier $P_Q$ maps $H^{-1}$ boundedly to
$L^2$, with norm $O(Q)$.  Hence, if $w_j\to w$ in $E_T^{2,\sigma}$, then
\[
 P_Qw_j\longrightarrow P_Qw\quad\hbox{in }C_TL^2,
 \qquad
 P_Q\partial_tw_j\longrightarrow P_Q\partial_tw
 \quad\hbox{in }C_TL^2.
\]
The boundary evaluations of $P_Qw_j$ converge in $L^2$ by the energy
component of the norm.  The Besov boundary values and the time-continuous
fixed-block representatives are the canonical ones supplied by
Lemma~\ref{lem:completed-besov-traces}.

For the chosen signed branch, the scalar amplitudes $A$ and $A'$ in
\eqref{eq:ibp-formula} are bounded uniformly on $[0,T]$, and the phase gap
gives $|\Phi|^{-1}\lesssim_{\parvec}N^{-1}$.  After the $\ell$-integration,
the multiplier bounds proved in Lemma~\ref{lem:dyadic-volterra} therefore
make the two boundary terms, the $A'w$ term, and the
$A\partial_tw$ term continuous with respect to the convergences above.  They
also give \eqref{eq:closed-volterra-H} and
\eqref{eq:closed-volterra-B} term by term.  For fixed $(N,Q)$, the original
Volterra integral before integration by parts depends continuously on
$P_Qw$ in $C_TL^2$, because its frequency support is compact and its time
kernel is bounded.  Passing to the limit in the classical identity for
$w_j$ proves the identity for $w$.  Since every term has a unique limit in a
complete target space, the result is independent of the approximating
sequence.
\end{proof}

\begin{proposition}[Summed deterministic diagonal]\label{prop:summed-diagonal}
Let $a,b\in\mathfrak C$ with $a\ne b$, $0<s<1-2\beta_b$ and $0<\sigma<1$.  The dyadic series
\begin{equation}\label{eq:diagonal-series}
  \cD^{a;b}w
  :=\sum_N\sum_{Q\le c_{\mathrm{ap}}N}
  \cD^{a;b}_{N,Q}P_Qw
\end{equation}
converges as a bounded deterministic operator
\begin{equation}\label{eq:diagonal-map}
  \cD^{a;b}:E_T^{2,\sigma}
  \longrightarrow
  C_TH^{s-1}\cap L_T^\infty B_{2,\infty}^{\sigma-1}.
\end{equation}
Moreover,
\begin{equation}\label{eq:diagonal-L1-small}
  \norm{\cD^{a;b}}_{E_T^{2,\sigma}\to L_T^1B_{2,\infty}^{\sigma-1}}
  \le T
  \norm{\cD^{a;b}}_{E_T^{2,\sigma}\to L_T^\infty B_{2,\infty}^{\sigma-1}},
\end{equation}
so the Besov source norm tends to zero as $T\downarrow0$ on bounded diagonal classes.
\end{proposition}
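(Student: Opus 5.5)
The plan is to sum the dyadic bounds of Lemma~\ref{lem:closed-volterra-ibp} (equivalently Lemma~\ref{lem:dyadic-volterra}) over the high loop scale $N$ and the low input scale $Q\le c_{\mathrm{ap}}N$. The key structural fact, from \eqref{eq:diagonal-block}, is that each block $\cD^{a;b}_{N,Q}P_Q w$ has output frequency $q$ in the shell $Q$. So for a fixed output shell only the blocks with this $Q$ (up to finitely many neighbouring shells from the overlap of $\chi_Q$ and $P_Q$) contribute, and the $N$-sum is a scalar geometric series. We first establish the bound on smooth finite-spatial-block paths, where all series are finite in $Q$, and then extend to $E_T^{2,\sigma}$ by Lemma~\ref{lem:finite-block-extension}. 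Lemma~\ref{lem:closed-volterra-ibp} already guarantees that each dyadic piece, including the integrated-by-parts form, is the continuous extension.

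For the Sobolev component we fix $Q$ and sum $N\ge c_{\mathrm{ap}}^{-1}Q$ using \eqref{eq:closed-volterra-H}. Since $-1+2\beta_b<0$, this gives
\[
\sum_{N\gtrsim Q}N^{-1+2\beta_b}\lesssim Q^{-1+2\beta_b}.
\]
The output piece at shell $Q$ is therefore bounded in $C_TH^{s-1}$ by
\[
Q^{-1+2\beta_b}\|P_Qw\|_{C_TL^2}+Q^{s-1+2\beta_b}\|\partial_tw\|_{C_TH^{-1}}.
\]
The blocks are almost orthogonal in $Q$ and the output stays at $Q$, so we square and sum in $Q$. The first term is bounded by $\|w\|_{C_TL^2}$. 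The second term is only controlled through $\sup_Q$, since we use the crude bound $\|P_Q\partial_tw\|_{H^{-1}}\le\|\partial_tw\|_{H^{-1}}$. To fix this we redo the derivative-on-input term keeping $\|P_Q\partial_t w\|_{H^{s-1}}\lesssim Q^{s}\|P_Q\partial_t w\|_{H^{-1}}$. The shell-$Q$ contribution then becomes $Q^{s-1+2\beta_b}\|P_Q\partial_tw\|_{C_TH^{-1}}$, which is $\ell^2$-summable because $s-1+2\beta_b<0$. This is exactly the hypothesis $s<1-2\beta_b$. Time continuity in $H^{s-1}$ holds for each finite partial sum by Lemma~\ref{lem:closed-volterra-ibp}, and the tail in $Q$ converges uniformly in $t$, so the limit lies in $C_TH^{s-1}$.

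For the Besov component we fix an output shell $Q$ and sum $N$ in \eqref{eq:closed-volterra-B}. This gives
\[
Q^{-1+2\beta_b}\bigl(Q^{-1}\|w\|_{L_T^\infty B^\sigma_{2,\infty}}+\|\partial_tw\|_{L_T^\infty B^{\sigma-1}_{2,\infty}}\bigr),
\]
which is bounded uniformly in $Q$ and in fact decays. Taking $\sup_Q$ yields the $L_T^\infty B^{\sigma-1}_{2,\infty}$ bound. The factor $Q^{-1+2\beta_b}\to0$ also gives the uniform little-Besov tail \eqref{eq:uniform-little-besov-tail}, so the extension lands in the little-Besov closure.

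Finally, \eqref{eq:diagonal-L1-small} is simply $\|F\|_{L^1_TX}\le T\|F\|_{L^\infty_TX}$ applied to $F=\cD^{a;b}w$.

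The main obstacle is making sure the output really stays in one shell after the cutoff $\chi_Q$ and the symbol $m_{N,Q}^{a;b}$ are applied, and recovering $\ell^2$-summation in the derivative term rather than settling for the crude $Q^s$ factor. I expect to handle the latter by rerunning the IBP estimate with $P_Q\partial_tw$ retained, as described above.
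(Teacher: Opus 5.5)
Your proposal is correct and is essentially the paper's proof. You fix the output shell $Q$, sum the geometric series $\sum_{N\gtrsim Q}N^{-1+2\beta_b}\lesssim Q^{-1+2\beta_b}$ in the blockwise bounds of Lemmas~\ref{lem:dyadic-volterra} and~\ref{lem:closed-volterra-ibp}, then square-sum (Sobolev) or take the supremum (Besov) over $Q$, extend from finite-block paths, and read off \eqref{eq:diagonal-L1-small} from $\|f\|_{L^1_T}\le T\|f\|_{L^\infty_T}$. The paper instead sums the output shells by Littlewood--Paley orthogonality at each fixed time before taking the time supremum, whereas you take time suprema blockwise and rely on the square-summable weights $Q^{-1+2\beta_b}$ and $Q^{s-1+2\beta_b}$. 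Because of those weights, the crude bound $\|P_Q\partial_tw\|_{H^{-1}}\le\|\partial_tw\|_{H^{-1}}$ already suffices, so the $\sup_Q$ obstacle you describe does not arise and the fix is unnecessary (though harmless).
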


\begin{proof}
Assume first that $w$ is smooth with finitely many spatial Littlewood--Paley blocks,
so that the time integration by parts and every shell rearrangement are
classical.  Lemma~\ref{lem:closed-volterra-ibp} extends the decomposition,
termwise estimates, and endpoint traces to $E_T^{2,\sigma}$ without an
approximation-dependent defect.  Since $s-1+2\beta_b<0$, return to the four
terms in the proof of Lemma~\ref{lem:dyadic-volterra} before taking a time
supremum.  At every fixed
input shell $Q$, summation over $N\gtrsim Q$ gives
\begin{equation}\label{eq:diagonal-N-sum}
 \sum_{N\gtrsim Q}N^{-1+2\beta_b}\lesssim Q^{-1+2\beta_b}.
\end{equation}
Thus the boundary terms, the amplitude-derivative term, and the two endpoint
covariance branches are Fourier multipliers on the $Q$-shell with $L^2$
operator norm at most $CQ^{-1+2\beta_b}$; the same is true for the
coefficient of $P_Q\partial_tw$ in the time integral.

At each common time we first sum the output shells.  Littlewood--Paley orthogonality and
\eqref{eq:diagonal-N-sum} give, for the terms containing $w$,
\[
 \left(\sum_Q
 Q^{2(s-1)}Q^{-2+4\beta_b}\|P_Qw(t)\|_2^2\right)^{1/2}
 \lesssim\|w(t)\|_2,
\]
and the same estimate holds for $w(0)$ and after integration in the common
time variable.  For the differentiated input,
\begin{align*}
 &\left(\sum_QQ^{2(s-1)}Q^{-2+4\beta_b}
 \|P_Q\partial_tw(t)\|_2^2\right)^{1/2}\\
 &\qquad\le
 \sup_QQ^{s-1+2\beta_b}
 \left(\sum_QQ^{-2}\|P_Q\partial_tw(t)\|_2^2\right)^{1/2}
 \lesssim\|\partial_tw(t)\|_{H^{-1}}.
\end{align*}
Minkowski's inequality is applied after this spatial square summation and
proves the $C_TH^{s-1}$ bound.

For the Besov component, on every shell,
\begin{align*}
 Q^{\sigma-1}Q^{-1+2\beta_b}\|P_Qw\|_2
 &\le Q^{-2+2\beta_b}\|w\|_{B_{2,\infty}^{\sigma}},\\
 Q^{\sigma-1}Q^{-1+2\beta_b}\|P_Q\partial_tw\|_2
 &\le Q^{-1+2\beta_b}\|\partial_tw\|_{B_{2,\infty}^{\sigma-1}}.
\end{align*}
Taking the dyadic supremum and the appropriate time norms proves the
$L_T^\infty B_{2,\infty}^{\sigma-1}$ estimate.  Fixed-block cutoff convergence
and the common summable majorant \eqref{eq:diagonal-N-sum} give convergence of
the full deterministic operator.  Finally,
\eqref{eq:diagonal-L1-small} is the elementary inequality
$\|f\|_{L_T^1}\le T\|f\|_{L_T^\infty}$.
\end{proof}

The actual operator \eqref{eq:T-def} contains one compact multiplier in the
Duhamel source and one after the resonant product.  The same-color contraction
is not diagonal; the off-diagonal decay is quantified below.

\begin{lemma}[Kernel-to-shell Schur estimate]\label{lem:kernel-to-shell-schur}
Let $T$ be a Fourier integral operator
\[
 \widehat{Tf}(n)=\int_{\R^3}K(n,q)\widehat f(q)\dd q.
\]
Suppose that, for every $B>0$,
\begin{equation}\label{eq:kernel-schwartz-off-diagonal}
 |K(n,q)|\le A_B\langle n-q\rangle^{-B}.
\end{equation}
Then, for every $A>0$,
\begin{equation}\label{eq:kernel-to-shell-bound}
 \|P_MTP_Q\|_{L^2\to L^2}
 \lesssim_A A_{A+4}
 \left(\frac{\min\{M,Q\}}{\max\{M,Q\}}\right)^A,
 \qquad M,Q\in\Dyd.
\end{equation}
The same conclusion holds for a parameter-dependent family of kernels, with
$A_B$ replaced by the corresponding uniform majorant.  In particular, it
applies without change to time increments and to cutoff differences.
\end{lemma}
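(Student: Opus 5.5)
The plan is to apply Schur's test to the localized kernel $\rho_M(n)K(n,q)\rho_Q(q)$, where $\rho_M,\rho_Q$ are the Littlewood--Paley symbols of $P_M,P_Q$. This kernel is the Fourier representation of $P_MTP_Q$, and by Plancherel the $L^2\to L^2$ norm of $P_MTP_Q$ equals the $L^2(\R^3_q)\to L^2(\R^3_n)$ norm of the integral operator with this kernel. Schur's test bounds that norm by the geometric mean of
\[
 \sup_n\int|\rho_M(n)K(n,q)\rho_Q(q)|\dd q
 \quad\text{and}\quad
 \sup_q\int|\rho_M(n)K(n,q)\rho_Q(q)|\dd n .
\]

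Next I would treat comparable and non-comparable scales separately. If $M$ and $Q$ are comparable, the ratio on the right of \eqref{eq:kernel-to-shell-bound} is bounded below, so it suffices to have a uniform bound. This follows from \eqref{eq:kernel-schwartz-off-diagonal} with $B=4$: since $\int\langle n-q\rangle^{-4}\dd q<\infty$ in three dimensions, both Schur integrals are $\lesssim A_4\le A_{A+4}$ (after replacing $A_B$ by its monotone majorant $\max_{B'\le B}A_{B'}$, which is harmless).

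If $M$ and $Q$ are not comparable, say $\max\{M,Q\}\ge C\min\{M,Q\}$ with $C$ large relative to the annulus constants, then on the support of $\rho_M(n)\rho_Q(q)$ we have $|n-q|\gtrsim\max\{M,Q\}$. For the inhomogeneous unit block, the ball support gives the same separation once the other scale is large. On this region I would split
\[
 \langle n-q\rangle^{-A-4}\le\langle n-q\rangle^{-A}\langle n-q\rangle^{-4}\lesssim\max\{M,Q\}^{-A}\langle n-q\rangle^{-4},
\]
and integrate the remaining $\langle n-q\rangle^{-4}$ as in the comparable case. Each Schur integral is then $\lesssim A_{A+4}\max\{M,Q\}^{-A}\le A_{A+4}\bigl(\min\{M,Q\}/\max\{M,Q\}\bigr)^A$, since $\min\{M,Q\}\ge1$.

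The only point requiring care is the uniformity of the annulus-separation constant across the inhomogeneous block $P_1$ and the dyadic ratios. Only finitely many ratios fall into the comparable case, and those are absorbed into the implicit constant. For parameter-dependent families, time increments, and cutoff differences, the argument uses the kernel only through the pointwise majorant \eqref{eq:kernel-schwartz-off-diagonal}, so it transfers verbatim with $A_B$ replaced by the uniform majorant.
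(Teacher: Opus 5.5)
Your argument is correct and is essentially the paper's proof. Both apply Schur's test to the localized kernel, handle comparable scales by integrability of $\langle n-q\rangle^{-B}$ for $B>3$, and handle non-comparable scales via the support separation $|n-q|\gtrsim\max\{M,Q\}$. The paper integrates the tail to get $M^{3-B}$ with $B=A+4$, whereas you peel off $\max\{M,Q\}^{-A}$ pointwise; the two are equivalent.

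One small cleanup: in the comparable case you do not need the monotone-majorant detour. Taking $\max_{B'\le B}A_{B'}$ would in fact enlarge the constant rather than leave it as $A_{A+4}$. Instead, argue directly that $|K(n,q)|\le A_{A+4}\langle n-q\rangle^{-A-4}\le A_{A+4}\langle n-q\rangle^{-4}$.
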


\begin{proof}
If $M\sim Q$, Schur's test and $B>3$ give
\[
 \sup_n\int|K(n,q)|\dd q+
 \sup_q\int|K(n,q)|\dd n\lesssim A_B.
\]
Assume next that $M\ge C Q$, where $C$ is larger than the annular support
constant.  On the support of $P_M(n)P_Q(q)$ one has $|n-q|\gtrsim M$; hence
both Schur integrals are bounded by
\[
 A_B\int_{|z|\gtrsim M}\langle z\rangle^{-B}\dd z
 \lesssim A_B M^{3-B}.
\]
Taking $B=A+4$ and using $Q\ge1$ yields
$M^{-A}\le(Q/M)^A$.  The case $Q\ge CM$ is symmetric.  If either dyadic
index is the inhomogeneous unit block, the same Schur integral is bounded by
the decay of $\langle n-q\rangle^{-B}$ away from the remaining annulus.
This proves
\eqref{eq:kernel-to-shell-bound}.  Applying the proof to a difference kernel
gives the last assertion.
\end{proof}

\begin{proposition}[Localized deterministic Volterra operator]
\label{prop:localized-diagonal}
Let $a,b\in\mathfrak C$ with $a\ne b$, and let $\chi_1,\chi_2\in C_c^\infty(\R^3)$.  The contraction branch of
\[
  \chi_2\Bigl(I_a\bigl(\chi_1(w\prec\Psi_b)\bigr)\circ\Psi_b\Bigr)
\]
converges to a deterministic operator
\[
  \mathcal D_{\chi_2,\chi_1}^{a;b}:
  E_T^{2,\sigma}\longrightarrow
  C_TH^{s-1}\cap L_T^\infty B_{2,\infty}^{\sigma-1},
  \qquad 0<s<1-2\beta_b,\quad 0<\sigma<1,
\]
with the same locally small $L_T^1B_{2,\infty}^{\sigma-1}$ consequence as
Proposition~\ref{prop:summed-diagonal}.  Its external Fourier kernel decays
faster than any power away from $n=q$, and all cutoff differences satisfy the
same bounds.
\end{proposition}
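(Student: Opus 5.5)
We reduce the localized contraction to the translation-invariant diagonal blocks of Proposition~\ref{prop:summed-diagonal}. The only new features are a Schwartz off-diagonal kernel in $n-q$ and a shifted Duhamel frequency. First we write the contraction branch of $\chi_2\bigl(I_a(\chi_1(w\prec\Psi_b))\circ\Psi_b\bigr)$ in Fourier variables. Multiplication by $\chi_1$ turns the Duhamel input frequency $q+\ell$ into $p=q+\ell+z_1$ with weight $\widehat\chi_1(z_1)$. Multiplication by $\chi_2$ adds a second defect $z_2$. Pairing the two Gaussian legs with \eqref{eq:finite-wick-identity} puts the outer frequency at $r=-\ell$, so $n=q+z_1+z_2$, as in Proposition~\ref{prop:finite-color-split}.

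The resulting external kernel is
\[
 K(n,q;t,s)=\int\!\!\int \widehat\chi_2(z_2)\widehat\chi_1(z_1)\,\delta(n-q-z_1-z_2)\int \rho_N(\ell)^2 m(q,\ell,z_1)K_a(t-s,q+\ell+z_1)\sigma_b(\ell;s,t)\dd\ell\dd z_1\dd z_2 .
\]
It has a coefficient $\widehat\chi_1(z_1)\widehat\chi_2(n-q-z_1)$, which is Schwartz in $n-q$ once $z_1$ is integrated.

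We split $z_1$ into a near sector $|z_1|\le\eta N$ with $\eta=\eta_{\mathrm{LP}}\delta_0$ (as in Proposition~\ref{prop:speed-gap-dependence}) and a far sector $|z_1|>\eta N$.
\begin{enumerate}[label=\textup{(\alph*)},leftmargin=2em]
\item \emph{Near sector.} Here $|q+z_1|\le 2\eta N+c_{\mathrm{ap}}N$ still lies in the proportional window, so \cref{lem:explicit-phase-gap} gives $|\omega_a(q+z_1+\ell)\pm\omega_b(\ell)|\gtrsim N$ above $N_{\mathrm{sp}}$. We then repeat \cref{lem:covariance-decomposition} and the Volterra integration by parts \eqref{eq:ibp-formula} with $q$ replaced by $q+z_1$ in the phase only; $\widehat w(q,s)$ still carries the input frequency $q$. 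This gives a pointwise kernel bound $N^{-1+2\beta_b}\langle n-q\rangle^{-B}$ for the boundary and $A'$ terms. The $\partial_t w$ term has the same bound with $\widehat{\partial_t w}(q,s)$ in place of $\widehat w(q,s)$. Shells $N<N_{\mathrm{sp}}$ are handled without phase division, as in Lemma~\ref{lem:dyadic-volterra}.
\item \emph{Far sector.} No phase division is made; the absolute bound $N^3\cdot N^{-1}\cdot N^{-2+2\beta_b}\cdot T$ is multiplied by $\int_{|z_1|>\eta N}|\widehat\chi_1(z_1)|\langle z_1\rangle^{B}\dd z_1\lesssim \eta^{-L}N^{-L}$, using $\langle n-q\rangle\le\langle z_1\rangle\langle z_2\rangle$. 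This yields an arbitrarily summable factor together with Schwartz decay in $n-q$. This is the estimate \eqref{eq:localized-diagonal-far-tail} referenced earlier.
\end{enumerate}

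Next we sum. For fixed $Q$ the summation over $N\gtrsim Q$ goes through exactly as in \eqref{eq:diagonal-N-sum}. This produces a kernel $\mathcal K_Q(n,q)$ with $|\mathcal K_Q|\lesssim Q^{-1+2\beta_b}\langle n-q\rangle^{-B}$, acting on $P_Qw$ and, separately, on $P_Q\partial_tw$. Lemma~\ref{lem:kernel-to-shell-schur} then bounds $\|P_M\mathcal K_QP_Q\|_{L^2\to L^2}\lesssim Q^{-1+2\beta_b}(\min\{M,Q\}/\max\{M,Q\})^A$. The $C_TH^{s-1}$ and $L_T^\infty B^{\sigma-1}_{2,\infty}$ estimates of Proposition~\ref{prop:summed-diagonal} are repeated with an extra Schur sum over $M$, which converges for large $A$. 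On the derivative branch the output weight $M^{s-1}$ versus $Q^{s}Q^{-1+2\beta_b}$ is compensated by the off-diagonal factor, because $s-1+2\beta_b<0$.

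Extension to $E_T^{2,\sigma}$ is by Lemma~\ref{lem:closed-volterra-ibp} and Lemma~\ref{lem:finite-block-extension}. The $L^1_T$ smallness follows from $\|f\|_{L^1_T}\le T\|f\|_{L^\infty_T}$. For cutoff differences, the factor $m_\Lambda(\ell)^2-1$ is supported on $|\ell|\ge c_0\Lambda$, which restricts the sum to $N\gtrsim\Lambda$; this gives the same bounds with a vanishing tail, and the parameter-dependent form of Lemma~\ref{lem:kernel-to-shell-schur} applies.

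The main obstacle is the near sector. The phase gap must survive the shift $q\mapsto q+z_1$, and the Volterra identity must be done with $w$ evaluated at $q$ while the propagator sees $q+z_1$. Choosing $\eta$ proportional to $\delta_0$, so that $|q+z_1|\le\delta_0|\ell|$, settles the first point. The second point is harmless because $\partial_s$ acts only on $\widehat w(q,s)$ and the scalar amplitude.
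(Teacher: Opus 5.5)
Your proposal is correct and follows essentially the same route as the paper: the two-defect kernel with propagated frequency $q+\ell+z_1$, the near/far split at $|z_1|=\eta_{\mathrm{LP}}\delta_0 N$ with the phase gap applied to the shifted displacement $q+z_1$, Schwartz convolution for the decay in $n-q$, the kernel-to-shell Schur lemma, and reduction to Proposition~\ref{prop:summed-diagonal}. The only differences are cosmetic: you sum over $N$ before applying the Schur test, and you handle cutoff differences by restricting to $N\gtrsim\Lambda$ rather than by fixed-shell stabilization with dominated summation; both variants work equally well.
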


\begin{proof}
After the Wick contraction $r=-\ell$, let $z_1,z_2$ be the Fourier defects
introduced by the inner and outer physical multipliers.  The propagated and
final frequencies are
\[
 p=q+\ell+z_1,
 \qquad n=q+z_1+z_2,
\]
and the kernel contains
$\widehat\chi_1(z_1)\widehat\chi_2(z_2)$.  After summing the finitely many resonant and enlarged-shell choices, a
representative contracted block has the external kernel
\begin{equation}\label{eq:localized-contraction-kernel}
\begin{aligned}
 \mathcal K_{N,Q}^{a;b}(t,s;n,q)
 ={}&\chi_Q(q)\int_{\ell,z_1}
 \rho_N(\ell)^2\widehat\chi_1(z_1)
 \widehat\chi_2(n-q-z_1)\\
 &\quad\times \mathfrak m_{N,Q,z_1}(n,q,\ell)
 K_a(t-s,q+\ell+z_1)\sigma_b(\ell;s,t)
 \dd\ell\dd z_1,
\end{aligned}
\end{equation}
where $\sup_{N,Q,z_1}\|\mathfrak m_{N,Q,z_1}\|_{L^\infty}\lesssim1$.
The two Littlewood--Paley shell factors on the contracted variables are
displayed as $\rho_N(\ell)^2$; at finite cutoff they are accompanied by the
single covariance factor $m_\Lambda(\ell)^2$.  Bounded admissible spectral
multipliers and the finitely many resonant symbols are included in
$\mathfrak m$.  The same shell factor appears in the Hilbert-space formulation; see
\eqref{eq:same-shell-rho-square}.  Formula~\eqref{eq:localized-contraction-kernel}
follows by
substituting $r=-\ell$, $p=q+\ell+z_1$, and
$z_2=n-p-r=n-q-z_1$ before making any phase estimate.  It also shows directly
that the external momentum defect is $n-q=z_1+z_2$.

Choose
\[
 \eta:=\eta_{\mathrm{LP}}\delta_0,
\]
where $0<\eta_{\mathrm{LP}}\ll1$ depends only on the fixed
Littlewood--Paley support constants, so that
\[
 |q+z_1|\le\delta_0|\ell|
 \quad\text{whenever}\quad
 |q|\lesssim c_{\mathrm{ap}}N,
 \ |z_1|\le\eta N,
 \ |\ell|\sim N.
\]
Split the kernel into $|z_1|\le\eta N$ and $|z_1|>\eta N$.  The same choice
is used in Proposition~\ref{prop:speed-gap-dependence}.

On the near part and for $N\ge N_{\mathrm{sp}}$, one has, for every sign pair,
\begin{equation}\label{eq:localized-all-sign-phase-gap}
 |\varepsilon_0\omega_a(q+\ell+z_1)
   +\varepsilon_1\omega_b(\ell)|
 \gtrsim_{\parvec}N.
\end{equation}
The difference phase follows from Lemma~\ref{lem:explicit-phase-gap} applied
to the displacement $q+z_1$; the sum phases follow from positivity.  Apply
the Volterra identity \eqref{eq:ibp-formula}.  For each of the boundary,
amplitude-derivative, and input-derivative terms, and for every $B>0$, the
resulting external Fourier kernels satisfy
\begin{equation}\label{eq:localized-diagonal-near-kernels}
 \begin{aligned}
 |\mathcal K^{\mathrm{bd}}_{N,Q}(n,q)|
 +|\mathcal K^{\mathrm{amp}}_{N,Q}(n,q)|
 +|\mathcal K^{\mathrm{in}}_{N,Q}(n,q)|
 \lesssim_{B}
 N^{-1+2\beta_b}\mathbf1_{\{|q|\sim Q\}}
 \langle n-q\rangle^{-B}.
 \end{aligned}
\end{equation}
Here $\mathcal K^{\mathrm{in}}_{N,Q}$ is the multiplier acting on
$P_Q\partial_tw$.  The phase division contributes $N^{-1}$; the covariance weight and the loop
integration contribute $N^{2\beta_b}$.  The external decay is also explicit:
for every $B>0$,
\begin{equation}\label{eq:localized-convolution-schwartz}
 \int_{|z_1|\le\eta N}|\widehat\chi_1(z_1)|
 |\widehat\chi_2(n-q-z_1)|\dd z_1
 \lesssim_B\langle n-q\rangle^{-B},
\end{equation}
because the convolution of two Schwartz functions is Schwartz.  Combining
this with the uniform phase denominator and the bounded amplitude in
\eqref{eq:localized-contraction-kernel} gives
\eqref{eq:localized-diagonal-near-kernels} separately for each boundary,
amplitude-derivative, and input-derivative coefficient.

For $N<N_{\mathrm{sp}}$ no phase division is made.  There are only finitely
many such shells, and their rough kernel bounds are absorbed into
\eqref{eq:localized-diagonal-near-kernels} after enlarging the constant.  On
the far part, for arbitrary $B,L>0$,
\begin{equation}\label{eq:localized-diagonal-far-tail}
 \int_{|z_1|>\eta N}|\widehat\chi_1(z_1)|
 |\widehat\chi_2(n-q-z_1)|\dd z_1
 \lesssim_{B,L}\eta^{-L}N^{-L}\langle n-q\rangle^{-B}.
\end{equation}
Indeed, on $|z_1|>\eta N$ one inserts
$1\le(\eta N)^{-L}\langle z_1\rangle^L$ and then uses weighted Schwartz
convolution.  The same bound applies to each of the three Volterra terms after absorbing the
rough compact-frequency factors.  For fixed parameters the factor $\eta^{-L}$ is absorbed into the constant;
uniformly on separated classes it is a polynomial loss in the inverse speed gap by \eqref{eq:phase-parameter-ledger}.  Lemma~\ref{lem:kernel-to-shell-schur}
therefore yields
\begin{equation}\label{eq:localized-diagonal-shell-matrix}
 \|P_M\mathcal D_{\chi_2,\chi_1;N,Q}^{a;b}P_Q\|_{L^2\to L^2}
 \lesssim_{A}
 \bigl(N^{-1+2\beta_b}+N^{-L}\bigr)
 \left(\frac{\min\{M,Q\}}{\max\{M,Q\}}\right)^A,
\end{equation}
separately for the boundary, amplitude-derivative, and input-derivative
pieces.  Thus all shell-volume factors are contained in the Schur tail
integral.

Choose $A$ larger than the Sobolev and Besov weights.  Weighted Schur
summation in $(M,Q)$ reduces \eqref{eq:localized-diagonal-shell-matrix} to
Proposition~\ref{prop:summed-diagonal}, which proves the asserted mapping and
the $L_T^1B_{2,\infty}^{\sigma-1}$ smallness.

At finite cutoff the contraction multiplier contains
$m_\Lambda(\ell)m_\Lambda(-\ell)$.  Its difference from the limiting
multiplier is a finite telescope with one factor
$m_\Lambda(\ell)-1$ or $m_\Lambda(-\ell)-1$.  The bounds
\eqref{eq:localized-diagonal-near-kernels}--
\eqref{eq:localized-diagonal-shell-matrix} remain valid uniformly, while every
fixed shell difference tends to zero and is eventually zero when the shell
lies in the plateau of \eqref{eq:cutoff-supports}.  Dominated dyadic summation
gives convergence to the uncut operator.  Two non-nested admissible cutoffs
are compared through this common limit.
\end{proof}

\subsection{Centered operators and the operator estimate}
\label{sec:centered-operators}

We begin with the Hilbert-space estimate for a centered quadratic Gaussian
operator.  The self-adjoint Gaussian noncommutative Khintchine inequality is
used in its standard form; Hermitian dilation gives the rectangular estimate,
and Gaussian polarization gives the order-two decoupling identity.  The
subsequent lemmas incorporate covariance synthesis, continuous-frequency
localization, time regularity, and dyadic Sobolev--Besov summation.

\subsubsection{Gaussian operator estimate from four Schatten flattenings}

All Hilbert spaces in this subsection are real.  Complex Fourier $L^2$ spaces
are understood through realification, which changes none of the operator or
Schatten norms.  Let $\mathcal A,\mathcal C_1,\mathcal C_0,\mathcal E$ be
separable Hilbert spaces, and let $\mathbf H$ be a finite-rank four-linear form
on
$\mathcal A\times\mathcal C_1\times\mathcal C_0\times\mathcal E$.
Its four oriented flattenings are defined by
\begin{align}
 \langle F_1(x\otimes y\otimes z),w\rangle
 &=\mathbf H(x,y,z,w),
 &F_1&:\mathcal A\otimes_2\mathcal C_1\otimes_2\mathcal C_0\to\mathcal E,
 \label{eq:abstract-flat-1}\\
 \langle F_2(x\otimes y\otimes w),z\rangle
 &=\mathbf H(x,y,z,w),
 &F_2&:\mathcal A\otimes_2\mathcal C_1\otimes_2\mathcal E\to\mathcal C_0,
 \label{eq:abstract-flat-2}\\
 \langle F_3(x\otimes z),y\otimes w\rangle
 &=\mathbf H(x,y,z,w),
 &F_3&:\mathcal A\otimes_2\mathcal C_0\to\mathcal C_1\otimes_2\mathcal E,
 \label{eq:abstract-flat-3}\\
 \langle F_4(y\otimes z),x\otimes w\rangle
 &=\mathbf H(x,y,z,w),
 &F_4&:\mathcal C_1\otimes_2\mathcal C_0\to\mathcal A\otimes_2\mathcal E.
 \label{eq:abstract-flat-4}
\end{align}
Here $\otimes_2$ denotes the Hilbert tensor product.  For $2\le r<\infty$,
$\Sch_r$ denotes the Schatten class.

The four orientations arise from the two Khintchine steps.  The first step in
the $\mathcal C_1$-Gaussian coordinate produces a row operator and a column
operator; the second step in the $\mathcal A$-coordinate gives the following
four square functions:
\begin{center}
\small
\renewcommand{\arraystretch}{1.16}
\begin{tabularx}{0.96\textwidth}{@{}P{0.19\textwidth}P{0.27\textwidth}P{0.21\textwidth}Y@{}}
\toprule
second random sum & coefficient map & row square & column square\\
\midrule
$\sum_\mu g_\mu A_\mu$
& $A_\mu:\mathcal C_1\otimes_2\mathcal C_0\to\mathcal E$
& $\sum_\mu A_\mu A_\mu^*=F_1F_1^*$
& $\sum_\mu A_\mu^*A_\mu=F_4^*F_4$\\
$\sum_\mu g_\mu C_\mu$
& $C_\mu:\mathcal C_0\to\mathcal C_1\otimes_2\mathcal E$
& $\sum_\mu C_\mu C_\mu^*=F_3F_3^*$
& $\sum_\mu C_\mu^*C_\mu=F_2F_2^*$\\
\bottomrule
\end{tabularx}
\end{center}
Thus $F_1,F_4$ are the two orientations attached to the first row operator,
while $F_3,F_2$ are those attached to the first column operator.  These identities
will be matched with the four frequency directions of the localized kernel in
Proposition~\ref{prop:localized-block-hilbert}.

\begin{lemma}[Schatten ideal facts]\label{lem:centered-schatten-ideal}
For $2\le r<\infty$ and $T\in\Sch_2\cap\mathcal L$,
\begin{equation}\label{eq:centered-schatten-interpolation}
 \|T\|_{\Sch_r}
 \le \|T\|_{\mathcal L}^{1-2/r}\|T\|_{\Sch_2}^{2/r}.
\end{equation}
If $A,B$ are bounded and $T\in\Sch_r$, then
\begin{equation}\label{eq:centered-schatten-ideal}
 \|ATB\|_{\Sch_r}\le\|A\|\,\|T\|_{\Sch_r}\,\|B\|.
\end{equation}
Canonical regroupings of Hilbert tensor products are unitary, so a mere
reorientation of tensor legs preserves the Hilbert--Schmidt norm.
\end{lemma}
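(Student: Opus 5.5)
The plan is to reduce both assertions to the singular-value description of compact operators. An operator in $\Sch_2$ is compact, so it has a singular value sequence $(s_j(T))_{j\ge1}$ that is nonincreasing and square summable. In terms of this sequence,
\[
 \|T\|_{\Sch_r}^r=\sum_j s_j(T)^r,
 \qquad
 \|T\|_{\Sch_2}^2=\sum_j s_j(T)^2,
 \qquad
 \|T\|_{\mathcal L}=s_1(T)=\sup_j s_j(T).
\]

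For \eqref{eq:centered-schatten-interpolation}, split $s_j^r=s_j^{r-2}s_j^2$ and bound the first factor by $s_1^{r-2}$. This uses $r\ge2$, so that the exponent $r-2$ is nonnegative. The result is
\[
 \|T\|_{\Sch_r}^r\le\|T\|_{\mathcal L}^{r-2}\|T\|_{\Sch_2}^2 .
\]
Taking $r$-th roots gives the displayed inequality. It also shows that $T\in\Sch_r$, so the left-hand side is finite, and the case $r=2$ is trivial.

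For the ideal property \eqref{eq:centered-schatten-ideal}, I will invoke the standard min--max inequality
\[
 s_j(ATB)\le\|A\|\,s_j(T)\,\|B\|,
\]
which holds for all bounded $A,B$. The cleanest justification is the approximation-number characterization: $s_j(T)$ is the infimum of $\|T-F\|$ over operators $F$ of rank less than $j$. If $F$ has rank less than $j$, then so does $AFB$, and $\|ATB-AFB\|\le\|A\|\,\|T-F\|\,\|B\|$. Taking the infimum over $F$ gives the singular value bound. Raising it to the $r$-th power and summing over $j$ yields the claim. The same argument works when $A$, $B$ act between different separable Hilbert spaces, which is the setting of the flattenings $F_1,\dots,F_4$.

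For the final sentence, a canonical regrouping is a map such as $(\mathcal A\otimes_2\mathcal C_1)\otimes_2\mathcal C_0\cong\mathcal A\otimes_2(\mathcal C_1\otimes_2\mathcal C_0)$, or a permutation of tensor legs. Such a map sends an orthonormal product basis bijectively onto an orthonormal product basis, so it extends to a unitary. Reorienting a flattening amounts to composing with such unitaries and, where needed, taking an adjoint. Both operations preserve the singular values, and hence every Schatten norm, in particular the Hilbert--Schmidt norm.

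A more concrete check is also available. The Hilbert--Schmidt norm of any flattening equals $\sum|\mathbf H(e_\mu,f_\nu,g_\alpha,h_\beta)|^2$ over orthonormal bases of the four spaces, and this quantity is manifestly independent of the orientation.

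No step here is a genuine obstacle. The only point needing care is the min--max inequality when the spaces differ, and the approximation-number characterization handles that directly.
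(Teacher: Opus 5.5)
Your proof of \eqref{eq:centered-schatten-interpolation} is the paper's argument ($s_j^r\le s_1^{r-2}s_j^2$). Your approximation-number proof of \eqref{eq:centered-schatten-ideal} is the standard argument that the paper only cites (``standard for finite-rank operators and passes to the Schatten completion''). It works directly for compact maps between different spaces, so nothing is missing in either part.

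The final part, however, contains a false step. You claim that reorienting a flattening amounts to composing with regrouping unitaries, possibly followed by an adjoint, and therefore preserves \emph{every} Schatten norm. That holds only for regroupings inside the domain, regroupings inside the codomain, and passage to the adjoint. A genuine reorientation, such as $F_1\to F_3$, moves a leg from the domain to the codomain. This is not a unitary equivalence of operators, and the singular values change.

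For example, take $\mathbf H(x,y,z,w)=\langle x,y\rangle z_1w_1$ on $(\mathbb R^n)^4$. Then $F_1$ has rank one, so $\|F_1\|_{\mathcal L}=\|F_1\|_{\Sch_r}=\sqrt n$ for every $r$. By contrast, $F_3(x\otimes z)=z_1\,x\otimes e_1$ is a rank-$n$ projection, so $\|F_3\|_{\mathcal L}=1$ and $\|F_3\|_{\Sch_r}=n^{1/r}$. The two norms agree only at $r=2$.

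This matters for the paper. The four-flattening bound in Theorem~\ref{thm:centered-gaussian-operator} is useful precisely because the orientations have different operator and $\Sch_r$ norms; compare the operator-norm column in Proposition~\ref{prop:centered-flattening-verification}.

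The correct reason for Hilbert--Schmidt invariance is your ``more concrete check''. Since the spaces are real here, $\Sch_2(H_0,H_1)$ is unitarily identified with $H_0\otimes_2H_1$. Hence
\[
\|F_\varkappa\|_{\Sch_2}^2=\sum|\mathbf H(e_\mu,f_\nu,g_\alpha,h_\beta)|^2
\]
for every $\varkappa$; note that it is the square of the norm that equals this sum. The regrouping unitaries act on this four-fold tensor, not on the domain and codomain of the operator. Drop the ``every Schatten norm'' claim and keep that argument.
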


\begin{proof}
The interpolation estimate follows from
$\sum_j s_j(T)^r\le\|T\|_{\mathcal L}^{r-2}\sum_j s_j(T)^2$.
The two-sided ideal bound is standard for finite-rank operators and passes to
the Schatten completion.  Canonical tensor regroupings are unitary and hence
preserve singular values.
\end{proof}

\begin{lemma}[Uniform compression on compact Schatten sets]
\label{lem:compact-schatten-compression}
Let $2\le r<\infty$, let $P_m$ and $Q_m$ be finite-rank orthogonal
projections on separable Hilbert spaces $\mathcal H_0$ and $\mathcal H_1$,
respectively, and assume that $P_m\to I_{\mathcal H_0}$ and
$Q_m\to I_{\mathcal H_1}$ strongly.  If
$\mathfrak K\subset\Sch_r(\mathcal H_0,\mathcal H_1)$ is compact, then
\begin{equation}\label{eq:compact-schatten-compression}
 \sup_{T\in\mathfrak K}
 \bigl(\|(I-Q_m)T\|_{\Sch_r}+\|T(I-P_m)\|_{\Sch_r}\bigr)
 \longrightarrow0.
\end{equation}
The same conclusion holds for tensor products of strongly convergent
finite-rank projections and simultaneously for any finite collection of
compact Schatten sets.
\end{lemma}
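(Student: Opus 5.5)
The plan is to reduce to the standard fact that on a compact set of operators, strong convergence of projections becomes uniform convergence. The extra ingredients are the Schatten ideal property from \cref{lem:centered-schatten-ideal} and a finite $\varepsilon$-net argument. We first treat a single operator. For fixed $T\in\Sch_r(\mathcal H_0,\mathcal H_1)$ and $\varepsilon>0$, choose a finite-rank $F$ with $\|T-F\|_{\Sch_r}<\varepsilon$. This is possible because finite-rank operators are dense in $\Sch_r$ for $r<\infty$, by truncating the singular value expansion. Write $F=\sum_{j\le k}\langle\cdot,e_j\rangle f_j$. Then
\[
 \|(I-Q_m)F\|_{\Sch_r}\le\sum_{j\le k}\|e_j\|\,\|(I-Q_m)f_j\|\longrightarrow0,
\]
since each rank-one piece has Schatten norm equal to the product of the vector norms. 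Similarly $F(I-P_m)=\sum_j\langle\cdot,(I-P_m)e_j\rangle f_j$, so $\|F(I-P_m)\|_{\Sch_r}\to0$. Because $\|I-Q_m\|,\|I-P_m\|\le1$, the ideal bound \eqref{eq:centered-schatten-ideal} gives
\[
 \|(I-Q_m)(T-F)\|_{\Sch_r}+\|(T-F)(I-P_m)\|_{\Sch_r}\le2\varepsilon.
\]
Hence the pointwise convergence holds.

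Next we upgrade to uniformity over $\mathfrak K$. By compactness, cover $\mathfrak K$ by finitely many $\Sch_r$-balls of radius $\varepsilon$ centered at $T_1,\dots,T_J\in\mathfrak K$. For $T$ in the ball around $T_i$, the ideal bound and $\|I-Q_m\|\le1$ give
\[
 \|(I-Q_m)T\|_{\Sch_r}\le\varepsilon+\|(I-Q_m)T_i\|_{\Sch_r},
\]
and the same holds on the right side with $I-P_m$. The maps $T\mapsto(I-Q_m)T$ and $T\mapsto T(I-P_m)$ are uniformly $1$-Lipschitz, so equicontinuity together with pointwise convergence yields uniform convergence on a compact set. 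Choosing $m$ so large that the finitely many centers are handled within $\varepsilon$ makes the supremum at most $4\varepsilon$ eventually.

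For the additional assertions, we check two things. First, if $P_m^{(1)},P_m^{(2)}$ are strongly convergent finite-rank projections, then $P_m^{(1)}\otimes P_m^{(2)}$ is a finite-rank orthogonal projection on the Hilbert tensor product. It converges strongly to the identity: this holds on elementary tensors, hence on their dense span, and the operators are uniformly bounded by $1$. So the tensor case is a special case of the statement just proved. Second, a finite union of compact sets is compact, which covers the simultaneous statement.

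The only point requiring care is the rank-one Schatten norm identity and density of finite rank in $\Sch_r$; both are elementary, so no step is a real obstacle.
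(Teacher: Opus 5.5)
Your proof is correct and follows essentially the same route as the paper. Both arguments approximate a fixed $T$ by a finite-rank map, use strong convergence on its finite range and co-range together with the ideal bound, and pass to uniformity over $\mathfrak K$ through a finite $\varepsilon$-net. Tensor-product projections are handled via elementary tensors, density, and uniform boundedness, and the simultaneous statement follows because finite unions of compact sets are compact.
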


\begin{proof}
For a fixed $T\in\Sch_r$, approximate $T$ in $\Sch_r$ by a finite-rank map
$T^{(0)}$.  Strong convergence is uniform on the finite-dimensional range and
co-range of $T^{(0)}$, so
\[
 \|(I-Q_m)T^{(0)}\|_{\Sch_r}
 +\|T^{(0)}(I-P_m)\|_{\Sch_r}\longrightarrow0.
\]
Contractivity of orthogonal projections in every Schatten norm then gives the
same conclusion for $T$.  To make the convergence uniform on $\mathfrak K$,
choose a finite $\varepsilon$-net $T_1,\dots,T_J$ in $\Sch_r$.  For $T$ within
$\varepsilon$ of $T_j$, the two compression errors for $T-T_j$ are at most
$2\varepsilon$ each, while the errors for the finite list $(T_j)$ tend to
zero uniformly in $j$.  Letting $\varepsilon\downarrow0$ proves
\eqref{eq:compact-schatten-compression}.  Strong convergence of tensor-product
projections follows first on elementary tensors and then by density and
uniform boundedness.  A finite union of compact sets is compact, which proves
the final assertion.
\end{proof}

\begin{lemma}[Simultaneous finite compression of the four flattenings]
\label{lem:four-flat-compression}
Let $2\le r<\infty$ and suppose that the four flattenings of $\mathbf H$
belong to $\Sch_r$.  Let
$P_{\mathcal A}^{(m)},P_{\mathcal C_1}^{(m)},
 P_{\mathcal C_0}^{(m)},P_{\mathcal E}^{(m)}$
be increasing finite-rank orthogonal projections converging strongly to the
identity on the corresponding Hilbert spaces, and define
\[
 \mathbf H_m(x,y,z,w)
 :=\mathbf H(P_{\mathcal A}^{(m)}x,
 P_{\mathcal C_1}^{(m)}y,
 P_{\mathcal C_0}^{(m)}z,
 P_{\mathcal E}^{(m)}w).
\]
Then the four flattenings of $\mathbf H_m$ are
\begin{align*}
 F_{1,m}&=P_{\mathcal E}^{(m)}F_1
 (P_{\mathcal A}^{(m)}\otimes P_{\mathcal C_1}^{(m)}
 \otimes P_{\mathcal C_0}^{(m)}),\\
 F_{2,m}&=P_{\mathcal C_0}^{(m)}F_2
 (P_{\mathcal A}^{(m)}\otimes P_{\mathcal C_1}^{(m)}
 \otimes P_{\mathcal E}^{(m)}),\\
 F_{3,m}&=(P_{\mathcal C_1}^{(m)}\otimes P_{\mathcal E}^{(m)})F_3
 (P_{\mathcal A}^{(m)}\otimes P_{\mathcal C_0}^{(m)}),\\
 F_{4,m}&=(P_{\mathcal A}^{(m)}\otimes P_{\mathcal E}^{(m)})F_4
 (P_{\mathcal C_1}^{(m)}\otimes P_{\mathcal C_0}^{(m)}).
\end{align*}
In particular,
\begin{equation}\label{eq:four-flat-compression-convergence}
 \|F_{\varkappa,m}-F_{\varkappa}\|_{\Sch_r}\longrightarrow0,
 \qquad 1\le\varkappa\le4.
\end{equation}
The same conclusion holds for a difference of two forms.
\end{lemma}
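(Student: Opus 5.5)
The proof has two parts: an algebraic identification of the compressed flattenings, then an application of Lemma~\ref{lem:compact-schatten-compression}.

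For the first part, fix elementary tensors and unwind the defining relations \eqref{eq:abstract-flat-1}--\eqref{eq:abstract-flat-4} for $\mathbf H_m$. For $F_{1,m}$, we have
\[
 \langle F_{1,m}(x\otimes y\otimes z),w\rangle
 =\mathbf H(P_{\mathcal A}^{(m)}x,P_{\mathcal C_1}^{(m)}y,P_{\mathcal C_0}^{(m)}z,P_{\mathcal E}^{(m)}w).
\]
This equals $\langle F_1(P_{\mathcal A}^{(m)}\otimes P_{\mathcal C_1}^{(m)}\otimes P_{\mathcal C_0}^{(m)})(x\otimes y\otimes z),P_{\mathcal E}^{(m)}w\rangle$. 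Self-adjointness of $P_{\mathcal E}^{(m)}$ moves it to the left, which gives the displayed formula. The other three flattenings are handled the same way: the projections on the input legs act on the right, and those on the output legs act on the left after taking adjoints. Elementary tensors span dense subspaces and all operators involved are bounded. Moreover, tensor products of orthogonal projections are orthogonal projections on the Hilbert tensor product. Hence the identities hold on the whole spaces. (If $\mathbf H$ is only a bounded form rather than finite rank, the same computation applies verbatim.)

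For the convergence \eqref{eq:four-flat-compression-convergence}, write $F_{\varkappa,m}=\mathcal Q_mF_\varkappa\mathcal P_m$, where $\mathcal P_m,\mathcal Q_m$ are the relevant tensor-product projections. Then
\[
 F_\varkappa-F_{\varkappa,m}=(I-\mathcal Q_m)F_\varkappa+\mathcal Q_mF_\varkappa(I-\mathcal P_m).
\]
By \eqref{eq:centered-schatten-ideal} and $\|\mathcal Q_m\|\le1$, the $\Sch_r$ norm of the right side is at most $\|(I-\mathcal Q_m)F_\varkappa\|_{\Sch_r}+\|F_\varkappa(I-\mathcal P_m)\|_{\Sch_r}$. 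The tensor projections $\mathcal P_m,\mathcal Q_m$ are finite rank and converge strongly to the identity. This is the tensor-product clause of Lemma~\ref{lem:compact-schatten-compression}, proved there on elementary tensors and extended by density and uniform boundedness. Applying that lemma to the singleton compact set $\{F_\varkappa\}$ makes both terms tend to zero. We do this for each of the four flattenings; a finite collection is allowed by the same lemma.

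For a difference of two forms $\mathbf H-\widetilde{\mathbf H}$, note that flattening and compression are both linear. The identical argument therefore applies to $F_\varkappa-\widetilde F_\varkappa$, or one may apply the lemma to the finite set $\{F_\varkappa,\widetilde F_\varkappa\}$. No step is a genuine obstacle. The only point needing care is bookkeeping: checking that every projection sits on the correct side of each flattening, which requires the adjoint placement for the output legs in $F_3$ and $F_4$.
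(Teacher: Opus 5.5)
Your proposal is correct and follows essentially the same route as the paper: you identify the compressed flattenings by testing on elementary tensors, using self-adjointness of the tensor-product projections on the output legs, and then control $F_\varkappa-F_{\varkappa,m}=(I-\mathcal Q_m)F_\varkappa+\mathcal Q_mF_\varkappa(I-\mathcal P_m)$ through finite-rank approximation in $\Sch_r$ and strong convergence of the tensor-product projections. The only difference is cosmetic: you invoke Lemma~\ref{lem:compact-schatten-compression} with a singleton compact set, whereas the paper repeats that finite-rank approximation argument inline.
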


\begin{proof}
Testing against elementary tensors gives the four displayed factorizations.
If $P_m\to I$ strongly and $T\in\Sch_r$, then
$\|(I-P_m)T\|_{\Sch_r}+\|T(I-P_m)\|_{\Sch_r}\to0$: approximate $T$ in
$\Sch_r$ by a finite-rank map and use uniform convergence on its finite
range and co-range.  The tensor products of the projections also converge
strongly, so the same argument applies to each factorization and to
differences of forms.
\end{proof}

\begin{lemma}[Same-field symmetrization preserves the four Schatten norms]
\label{lem:same-field-symmetrization}
Assume $\mathcal A=\mathcal C_1=: \mathcal G$.  Define
\[
 \mathbf H^\tau(x,y,z,w):=\mathbf H(y,x,z,w),
 \qquad
 \mathbf H^{\mathrm{sym}}:=\tfrac12(\mathbf H+\mathbf H^\tau).
\]
Under the canonical Hilbert-tensor identifications,
\begin{align}
 \|F_1(\mathbf H^\tau)\|_{\Sch_r}&=\|F_1(\mathbf H)\|_{\Sch_r},
 &\|F_2(\mathbf H^\tau)\|_{\Sch_r}&=\|F_2(\mathbf H)\|_{\Sch_r},
 \label{eq:sym-flat-12}\\
 F_3(\mathbf H^\tau)&=F_4(\mathbf H),
 &F_4(\mathbf H^\tau)&=F_3(\mathbf H).
 \label{eq:sym-flat-34}
\end{align}
Consequently,
\begin{equation}\label{eq:sym-profile-control}
 \sum_{\varkappa=1}^4\|F_{\varkappa}(\mathbf H^{\mathrm{sym}})\|_{\Sch_r}
 \le \sum_{\varkappa=1}^4\|F_{\varkappa}(\mathbf H)\|_{\Sch_r}.
\end{equation}
Moreover, the antisymmetric part of the coefficient family contributes zero
to the Wick operator:
\begin{equation}\label{eq:antisymmetric-wick-zero}
 \sum_{\mu,\nu}H_{\mu\nu}^{\mathrm{asym}}(G_{\mu}G_{\nu}-\delta_{\mu\nu})=0.
\end{equation}
\end{lemma}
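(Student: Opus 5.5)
The plan is to work on elementary tensors and use that each flattening is determined by its pairing identity. Write $S:\mathcal G\otimes_2\mathcal G\to\mathcal G\otimes_2\mathcal G$ for the swap unitary $x\otimes y\mapsto y\otimes x$, extended by linearity and continuity. Regroupings are unitary by \cref{lem:centered-schatten-ideal}, so $S\otimes I$ on three-fold tensor products is unitary as well.

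For $F_1$, the defining identity gives
\[
 \langle F_1(\mathbf H^\tau)(x\otimes y\otimes z),w\rangle=\mathbf H(y,x,z,w)=\langle F_1(\mathbf H)(y\otimes x\otimes z),w\rangle .
\]
Hence $F_1(\mathbf H^\tau)=F_1(\mathbf H)(S\otimes I_{\mathcal C_0})$. The same computation gives $F_2(\mathbf H^\tau)=F_2(\mathbf H)(S\otimes I_{\mathcal E})$. By the ideal bound \eqref{eq:centered-schatten-ideal}, used in both directions since $S$ is unitary, the Schatten norms are equal; this proves \eqref{eq:sym-flat-12}.

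For $F_3$, compute
\[
 \langle F_3(\mathbf H^\tau)(x\otimes z),y\otimes w\rangle=\mathbf H(y,x,z,w)=\langle F_4(\mathbf H)(x\otimes z),y\otimes w\rangle .
\]
Here we identify $\mathcal A\otimes\mathcal C_0$ with $\mathcal C_1\otimes\mathcal C_0$ and $\mathcal C_1\otimes\mathcal E$ with $\mathcal A\otimes\mathcal E$, which is possible because $\mathcal A=\mathcal C_1=\mathcal G$. Elementary tensors span dense subspaces and the forms are finite rank, so the two operators coincide. The symmetric identity $F_4(\mathbf H^\tau)=F_3(\mathbf H)$ follows the same way, which gives \eqref{eq:sym-flat-34}.

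For \eqref{eq:sym-profile-control}, use that flattening is linear in the form, so $F_\varkappa(\mathbf H^{\mathrm{sym}})=\tfrac12(F_\varkappa(\mathbf H)+F_\varkappa(\mathbf H^\tau))$. The triangle inequality and the identities above give
\[
 \|F_1(\mathbf H^{\mathrm{sym}})\|_{\Sch_r}\le\|F_1(\mathbf H)\|_{\Sch_r},\qquad \|F_2(\mathbf H^{\mathrm{sym}})\|_{\Sch_r}\le\|F_2(\mathbf H)\|_{\Sch_r}.
\]
For the remaining two,
\[
 \|F_3(\mathbf H^{\mathrm{sym}})\|_{\Sch_r}+\|F_4(\mathbf H^{\mathrm{sym}})\|_{\Sch_r}\le\tfrac12\bigl(\|F_3\|+\|F_4\|\bigr)+\tfrac12\bigl(\|F_4\|+\|F_3\|\bigr)=\|F_3(\mathbf H)\|_{\Sch_r}+\|F_4(\mathbf H)\|_{\Sch_r}.
\]
Summing the four bounds proves the claim. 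The only delicate point is bookkeeping of the identifications: $F_3$ and $F_4$ act between formally different spaces, and equality only makes sense after the canonical identification. I would state that identification explicitly once.

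For \eqref{eq:antisymmetric-wick-zero}, fix $z,w$ and set $H_{\mu\nu}=\mathbf H(e_\mu,e_\nu,z,w)$ in an orthonormal basis of $\mathcal G$; the sums are finite by finite rank. Let $H^{\mathrm{asym}}_{\mu\nu}=\tfrac12(H_{\mu\nu}-H_{\nu\mu})$. Since $G_\mu G_\nu-\delta_{\mu\nu}$ is symmetric in $(\mu,\nu)$, exchanging the summation indices shows the sum equals its own negative, so it vanishes. Note the diagonal terms already vanish because $H^{\mathrm{asym}}_{\mu\mu}=0$.
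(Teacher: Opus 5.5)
Your proof is correct and follows the paper's argument. The flip is precomposition with a unitary on $F_1$ and $F_2$, and it interchanges $F_3$ and $F_4$ once $\mathcal A=\mathcal C_1=\mathcal G$ is used. The triangle inequality and the index swap then give \eqref{eq:sym-profile-control} and \eqref{eq:antisymmetric-wick-zero}.

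One small correction concerns \eqref{eq:antisymmetric-wick-zero}. Finite rank of $\mathbf H$ does not make the sum over $\mu,\nu$ finite in an arbitrary orthonormal basis: a rank-one form $\langle x,a\rangle\langle y,b\rangle\langle z,c\rangle\langle w,d\rangle$ generally has infinitely many nonzero $H_{\mu\nu}$. So, as the paper does, run the swap argument on finite truncations and then pass to the limit by compression. The limit exists in $L^2(\Omega)$ because the coefficient matrix is Hilbert--Schmidt.
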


\begin{proof}
On $F_1$ and $F_2$, exchanging the two copies of $\mathcal G$ is
precomposition with the unitary tensor flip.  Testing on elementary tensors
shows that the same exchange interchanges $F_3$ and $F_4$.  The triangle
inequality gives \eqref{eq:sym-profile-control}.  Finally,
$G_\mu G_\nu-\delta_{\mu\nu}$ is symmetric in $(\mu,\nu)$, so its contraction
with the antisymmetric coefficient tensor vanishes, first for finite sums and
then by simultaneous compression.
\end{proof}

The standard self-adjoint Gaussian noncommutative Khintchine inequality
\cite{LPP,DNY} states that if $(h_{\nu})$ are independent standard Gaussians and $(B_{\nu})$ is a
finite family of self-adjoint operators on a finite-dimensional Hilbert space,
then, for $r\ge2$,
\begin{equation}\label{eq:self-adjoint-NCK}
 \left\|\sum_{\nu}h_{\nu}B_{\nu}\right\|_{L^r(\Omega;\Sch_r)}
 \lesssim\sqrt r\,
 \left\|\left(\sum_{\nu}B_{\nu}^2\right)^{1/2}\right\|_{\Sch_r}.
\end{equation}

\begin{lemma}[Hermitian dilation and the rectangular inequality]
\label{lem:rectangular-NCK-dilation}
Let $A_{\nu}:\mathcal H_0\to\mathcal H_1$ be a finite family of operators between
finite-dimensional Hilbert spaces, and let $(h_{\nu})$ be independent standard
Gaussians.  For every $r\ge2$,
\begin{equation}\label{eq:rectangular-NCK}
 \left\|\sum_{\nu}h_{\nu}A_{\nu}\right\|_{L^r(\Omega;\Sch_r)}
 \lesssim\sqrt r\left[
 \left\|\left(\sum_{\nu}A_{\nu}A_{\nu}^*\right)^{1/2}\right\|_{\Sch_r}
 +\left\|\left(\sum_{\nu}A_{\nu}^*A_{\nu}\right)^{1/2}\right\|_{\Sch_r}
 \right].
\end{equation}
\end{lemma}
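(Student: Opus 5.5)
We reduce \eqref{eq:rectangular-NCK} to the self-adjoint inequality \eqref{eq:self-adjoint-NCK} by the standard Hermitian dilation. On the finite-dimensional Hilbert space $\mathcal H_0\oplus\mathcal H_1$, set
\[
 B_\nu:=\begin{pmatrix}0&A_\nu^*\\ A_\nu&0\end{pmatrix},
\]
which is self-adjoint. Since $\sum_\nu h_\nu B_\nu$ is the dilation of $S:=\sum_\nu h_\nu A_\nu$, its singular values are those of $S$, each appearing twice. Hence
\[
 \Bigl\|\sum_\nu h_\nu B_\nu\Bigr\|_{\Sch_r}=2^{1/r}\|S\|_{\Sch_r}.
\]
This gives the left-hand side of \eqref{eq:rectangular-NCK} up to a factor between $1$ and $2$.

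Next we compute the square function. We have
\[
 B_\nu^2=\begin{pmatrix}A_\nu^*A_\nu&0\\0&A_\nu A_\nu^*\end{pmatrix}.
\]
Therefore $\sum_\nu B_\nu^2$ is block diagonal with blocks $\sum_\nu A_\nu^*A_\nu$ and $\sum_\nu A_\nu A_\nu^*$. The square root of a positive block-diagonal operator is the block-diagonal operator of the square roots. The singular values of a block-diagonal operator are the union of the singular values of its blocks, so
\[
 \Bigl\|\Bigl(\sum_\nu B_\nu^2\Bigr)^{1/2}\Bigr\|_{\Sch_r}^r
 =\Bigl\|\Bigl(\sum_\nu A_\nu^*A_\nu\Bigr)^{1/2}\Bigr\|_{\Sch_r}^r
 +\Bigl\|\Bigl(\sum_\nu A_\nu A_\nu^*\Bigr)^{1/2}\Bigr\|_{\Sch_r}^r .
\]
Using $(x^r+y^r)^{1/r}\le x+y$, this is bounded by the sum of the two norms on the right of \eqref{eq:rectangular-NCK}.

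Finally, we apply \eqref{eq:self-adjoint-NCK} to $(B_\nu)$ and combine it with the two identities above. The constant changes only by the absolute factor $2^{1/r}\le\sqrt2$. The only point requiring care is the singular-value bookkeeping of the dilation (the duplication factor $2^{1/r}$) and of the block-diagonal square root. Both follow from the spectral theorem in finite dimensions, so no step poses a genuine obstacle.
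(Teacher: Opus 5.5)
Your proposal is correct and follows the paper's proof essentially step for step. You use the Hermitian dilation, note that its singular values are those of $A$ counted twice, observe that the square function is block diagonal, apply \eqref{eq:self-adjoint-NCK}, and bound the $\ell^r$ sum of the two block norms by their ordinary sum. Your ordering of the direct sum, $\mathcal H_0\oplus\mathcal H_1$ instead of $\mathcal H_1\oplus\mathcal H_0$, is immaterial. The duplication factor in fact enters as $2^{-1/r}\le1$ on the left-hand side, so it only helps.
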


\begin{proof}
For $A:\mathcal H_0\to\mathcal H_1$, introduce the self-adjoint Hermitian
dilation on $\mathcal H_1\oplus\mathcal H_0$,
\[
 \mathscr D(A):=
 \begin{pmatrix}0&A\\ A^*&0\end{pmatrix}.
\]
Its nonzero singular values are those of $A$, each with multiplicity two, so
\begin{equation}\label{eq:dilation-schatten-norm}
 \|\mathscr D(A)\|_{\Sch_r}=2^{1/r}\|A\|_{\Sch_r}.
\end{equation}
Moreover,
\[
 \sum_{\nu}\mathscr D(A_{\nu})^2
 =\begin{pmatrix}
   \sum_{\nu}A_{\nu}A_{\nu}^*&0\\[1mm]
   0&\sum_{\nu}A_{\nu}^*A_{\nu}
  \end{pmatrix}.
\]
Apply \eqref{eq:self-adjoint-NCK} to $(\mathscr D(A_{\nu}))_{\nu}$, use
\eqref{eq:dilation-schatten-norm}, and then bound the $\ell^r$ sum of the two
diagonal Schatten norms by their ordinary sum.  This gives
\eqref{eq:rectangular-NCK}.
\end{proof}

\begin{lemma}[Elementary Banach-valued order-two Gaussian decoupling]
\label{lem:elementary-order-two-decoupling}
Let $B$ be a real Banach space, let $(x_{\mu\nu})_{1\le \mu,\nu\le d}\subset B$ be a
finite symmetric family, and let $G=(G_{\mu})_{\mu=1}^d$, $g=(g_{\mu})_{\mu=1}^d$, and
$h=(h_{\nu})_{\nu=1}^d$ be standard Gaussian vectors, with $g$ and $h$
independent.  For every $1\le p<\infty$,
\begin{equation}\label{eq:order-two-decoupling}
 \left\|\sum_{\mu,\nu}x_{\mu\nu}(G_{\mu}G_{\nu}-\delta_{\mu\nu})\right\|_{L^p(\Omega;B)}
 \le2
 \left\|\sum_{\mu,\nu}x_{\mu\nu}g_{\mu}h_{\nu}\right\|_{L^p(\Omega\times\Omega';B)}.
\end{equation}
\end{lemma}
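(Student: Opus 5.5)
The argument uses Gaussian polarization: rotate one standard Gaussian vector into two independent ones and average out the extra terms by conditional Jensen. Let $g,h$ be independent standard Gaussian vectors in $\R^d$ and set
\[
 G:=\tfrac{1}{\sqrt2}(g+h),\qquad G':=\tfrac{1}{\sqrt2}(g-h).
\]
Then $(G,G')$ is a pair of independent standard Gaussian vectors. In particular $G$ has the law of the original vector, and the left-hand side of \eqref{eq:order-two-decoupling} may be computed with this $G$. For a symmetric family $(x_{\mu\nu})$, write $Q(u,v):=\sum_{\mu,\nu}x_{\mu\nu}u_\mu v_\nu$; this is a symmetric $B$-valued bilinear form.

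First expand. By bilinearity and symmetry,
\[
 Q(G,G)=\tfrac12\bigl(Q(g,g)+Q(h,h)\bigr)+Q(g,h).
\]
Similarly,
\[
 Q(G',G')=\tfrac12\bigl(Q(g,g)+Q(h,h)\bigr)-Q(g,h).
\]
Subtracting gives the identity
\[
 Q(G,G)-Q(G',G')=2Q(g,h).
\]
The diagonal terms $\sum_\mu x_{\mu\mu}$ appear with the same coefficient in both quadratic forms and cancel. So
\[
 \bigl(Q(G,G)-\textstyle\sum_\mu x_{\mu\mu}\bigr)-\bigl(Q(G',G')-\textstyle\sum_\mu x_{\mu\mu}\bigr)=2Q(g,h).
\]

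Next condition on $G$. Since $G'$ is independent of $G$ and
\[
 \E\bigl[Q(G',G')-\textstyle\sum_\mu x_{\mu\mu}\bigr]=\sum_{\mu,\nu}x_{\mu\nu}\delta_{\mu\nu}-\sum_\mu x_{\mu\mu}=0,
\]
we obtain
\[
 \E\bigl[2Q(g,h)\,\big|\,G\bigr]=Q(G,G)-\sum_\mu x_{\mu\mu}.
\]
This is a Bochner conditional expectation. It is elementary here because everything is a finite-dimensional polynomial in Gaussians with coefficients in the finite-dimensional span of the $x_{\mu\nu}$, so all moments are finite.

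The conditional expectation is a contraction on $L^p(\Omega;B)$ for $1\le p<\infty$; this is conditional Jensen applied to the convex function $\|\cdot\|_B^p$. Hence
\[
 \Bigl\|Q(G,G)-\textstyle\sum_\mu x_{\mu\mu}\Bigr\|_{L^p}\le 2\|Q(g,h)\|_{L^p}.
\]
Since $Q(G,G)-\sum_\mu x_{\mu\mu}=\sum_{\mu,\nu}x_{\mu\nu}(G_\mu G_\nu-\delta_{\mu\nu})$, this is \eqref{eq:order-two-decoupling}.

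The only delicate point is the constant $2$ and the use of symmetry. Symmetry is exactly what lets $Q(g,h)+Q(h,g)=2Q(g,h)$ in the expansion. Without it one would instead get the symmetrized decoupled form. By \eqref{eq:antisymmetric-wick-zero} that is harmless, but it is not needed here. Finally, independence of $G$ and $G'$ follows from orthogonality of the rotation together with joint Gaussianity.
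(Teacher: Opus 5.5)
Your proof is correct and is essentially the paper's argument, with the rotation run in the opposite direction. The paper takes $G$ and an independent copy $G'$, uses conditional Jensen to get $\|Q(G)\|_{L^p}\le\|Q(G)-Q(G')\|_{L^p}$, and then polarizes via $G^\pm=(G\pm G')/\sqrt2$; you instead build $G,G'$ from $g,h$ and write $Q(G,G)-\sum_\mu x_{\mu\mu}=\E[2Q(g,h)\mid G]$ directly, with the same polarization identity, the same use of symmetry, and the same contraction step.
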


\begin{proof}
Let $G'$ be an independent copy of $G$ and put
\[
 Q(G):=\sum_{\mu,\nu}x_{\mu\nu}(G_{\mu}G_{\nu}-\delta_{\mu\nu}).
\]
Since $\E Q(G')=0$, conditional Jensen gives
\begin{equation}\label{eq:decoupling-jensen}
 \|Q(G)\|_{L^p(B)}
 \le \|Q(G)-Q(G')\|_{L^p(B)}.
\end{equation}
Set
\[
 G^+:=\frac{G+G'}{\sqrt2},
 \qquad
 G^-:=\frac{G-G'}{\sqrt2}.
\]
The vectors $G^+$ and $G^-$ are independent standard Gaussians.  The diagonal
centering cancels in the difference, and symmetry of $(x_{\mu\nu})$ gives the
polarization identity
\begin{align*}
 Q(G)-Q(G')
 &=\sum_{\mu,\nu}x_{\mu\nu}(G^+_{\mu}G^-_{\nu}+G^-_{\mu}G^+_{\nu})\\
 &=2\sum_{\mu,\nu}x_{\mu\nu}G^+_{\mu}G^-_{\nu}.
\end{align*}
Combining this identity with \eqref{eq:decoupling-jensen} proves
\eqref{eq:order-two-decoupling}.  The polarization identity also holds after realification of a complex Banach space.  For broader decoupling formulations,
see \cite{deLaPenaGine,Janson}.
\end{proof}

\begin{theorem}[Centered Gaussian operator estimate]
\label{thm:centered-gaussian-operator}
Let $2\le p\le r<\infty$, and suppose the four flattenings
\eqref{eq:abstract-flat-1}--\eqref{eq:abstract-flat-4} belong to $\Sch_r$.
Choose orthonormal bases $(e_{\mu})$ and $(f_{\nu})$ on the two Gaussian legs and
write
\[
 \langle H_{\mu\nu}z,w\rangle=\mathbf H(e_{\mu},f_{\nu},z,w).
\]
For independent Gaussian families $(g_{\mu})$ and $(h_{\nu})$, define
\[
 Z_{\mathbf H}^{\mathrm{dec}}:=\sum_{\mu,\nu}g_{\mu}h_{\nu}H_{\mu\nu}.
\]
Then
\begin{equation}\label{eq:four-flat-bound}
 \left\|\|Z_{\mathbf H}^{\mathrm{dec}}\|_{\mathcal L(\mathcal C_0,\mathcal E)}
 \right\|_{L^p(\Omega)}
 \lesssim r\sum_{\varkappa=1}^4\|F_{\varkappa}\|_{\Sch_r}.
\end{equation}
If $\mathcal A=\mathcal C_1=: \mathcal G$ and both Gaussian legs are the
same isonormal process on $\mathcal G$, the same bound holds for the completely
centered Wick operator
\begin{equation}\label{eq:same-field-wick-operator}
 Z_{\mathbf H}^{\mathrm{Wick}}
 :=\sum_{\mu,\nu}H_{\mu\nu}^{\mathrm{sym}}(G_{\mu}G_{\nu}-\delta_{\mu\nu}),
 \qquad H_{\mu\nu}^{\mathrm{sym}}:=\tfrac12(H_{\mu\nu}+H_{\nu\mu}).
\end{equation}
In both cases the estimate extends uniquely from finite-rank forms to forms
whose four flattenings lie in $\Sch_r$.
\end{theorem}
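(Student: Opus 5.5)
The plan is a two-step Khintchine argument in the decoupled case, followed by a reduction of the Wick case to it. Work first with finite-rank forms, so that all Hilbert spaces may be replaced by finite-dimensional subspaces. Since $\Sch_r$ dominates the operator norm, it suffices to bound $\|Z_{\mathbf H}^{\mathrm{dec}}\|_{L^r(\Omega;\Sch_r)}$. Moment monotonicity $p\le r$ then gives the $L^p(\Omega)$ operator-norm bound.

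\emph{First step.} Condition on $(g_\mu)$ and write $Z^{\mathrm{dec}}_{\mathbf H}=\sum_\nu h_\nu B_\nu$ with $B_\nu:=\sum_\mu g_\mu H_{\mu\nu}:\mathcal C_0\to\mathcal E$. Lemma~\ref{lem:rectangular-NCK-dilation}, applied conditionally, gives
\[
 \|Z^{\mathrm{dec}}_{\mathbf H}\|_{L^r_h\Sch_r}\lesssim\sqrt r\Bigl(\bigl\|(\textstyle\sum_\nu B_\nu B_\nu^*)^{1/2}\bigr\|_{\Sch_r}+\bigl\|(\textstyle\sum_\nu B_\nu^*B_\nu)^{1/2}\bigr\|_{\Sch_r}\Bigr).
\]
The two square functions are $|\mathcal R^*|$ and $|\mathcal K|$, where $\mathcal R=\sum_\mu g_\mu A_\mu$ is the row operator $\mathcal C_1\otimes_2\mathcal C_0\to\mathcal E$ and $\mathcal K=\sum_\mu g_\mu C_\mu$ is the column operator $\mathcal C_0\to\mathcal C_1\otimes_2\mathcal E$. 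Here $A_\mu(f_\nu\otimes z)=H_{\mu\nu}z$ and $C_\mu z=\sum_\nu f_\nu\otimes H_{\mu\nu}z$. Hence these Schatten norms equal $\|\mathcal R\|_{\Sch_r}$ and $\|\mathcal K\|_{\Sch_r}$.

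\emph{Second step.} Apply Lemma~\ref{lem:rectangular-NCK-dilation} again, now in $g$, to each of these two random operators, and take $L^r_g$ norms using Fubini. By the table preceding Lemma~\ref{lem:centered-schatten-ideal}, the row and column squares are $\sum A_\mu A_\mu^*=F_1F_1^*$, $\sum A_\mu^*A_\mu=F_4^*F_4$, $\sum C_\mu C_\mu^*=F_3F_3^*$ and $\sum C_\mu^*C_\mu=F_2F_2^*$. These identifications are checked by testing on elementary tensors; the canonical regroupings involved are unitary by Lemma~\ref{lem:centered-schatten-ideal}. Their square-root Schatten norms are therefore exactly $\|F_\varkappa\|_{\Sch_r}$. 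The two $\sqrt r$ factors combine to $r$, which proves \eqref{eq:four-flat-bound} for finite rank.

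\emph{Wick case.} By \eqref{eq:antisymmetric-wick-zero}, the Wick operator depends only on $H^{\mathrm{sym}}$. Apply Lemma~\ref{lem:elementary-order-two-decoupling} in the Banach space $B=\Sch_r(\mathcal C_0,\mathcal E)$ with exponent $r$. This bounds the Wick operator by twice the decoupled chaos for $\mathbf H^{\mathrm{sym}}$. The decoupled bound together with \eqref{eq:sym-profile-control} then gives the same right-hand side.

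\emph{Extension.} For forms whose flattenings lie in $\Sch_r$, compress with $\mathbf H_m$ as in Lemma~\ref{lem:four-flat-compression}. The flattenings of $\mathbf H_m-\mathbf H_{m'}$ tend to zero in $\Sch_r$, so the random operators are Cauchy in $L^p(\Omega;\mathcal L)$. The limit is independent of the choice of projections, because two compression schemes can be interleaved.

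The main obstacle is the bookkeeping in the second step. One has to verify precisely that the square functions arising from the conditional first step coincide with the four oriented flattenings up to unitary regrouping. For example, $\sum_\mu C_\mu^*C_\mu$ must be identified with $F_2F_2^*$ rather than $F_2^*F_2$. The adjoints and the order of tensor legs have to be tracked carefully there.
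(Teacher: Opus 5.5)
Your proposal is correct and follows the paper's proof essentially step by step. It uses a conditional rectangular Khintchine step in $h$, the row and column operators $\sum_\mu g_\mu A_\mu$ and $\sum_\mu g_\mu C_\mu$, a second Khintchine step in $g$ with the square identities $F_1F_1^*$, $F_4^*F_4$, $F_3F_3^*$, $F_2F_2^*$, then symmetrization plus order-two decoupling for the Wick case and simultaneous compression for the extension. The differences are cosmetic: you decouple in $\Sch_r$ at exponent $r$ rather than in $\mathcal K(\mathcal C_0,\mathcal E)$ at exponent $p$, and in the Wick case you should state explicitly, as the paper does, that the same projection is used on both copies of $\mathcal G$ when compressing.
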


\begin{proof}
Assume first that all Hilbert spaces are finite-dimensional.  For fixed $g=(g_{\mu})$ put
$S_{\nu}(g)=\sum_{\mu} g_{\mu}H_{\mu\nu}$.  Applying \eqref{eq:rectangular-NCK} in the
$h$ variables gives
\begin{equation}\label{eq:first-Khintchine-step}
 \|Z_{\mathbf H}^{\mathrm{dec}}(g,\cdot)\|_{L_h^r\Sch_r}
 \lesssim\sqrt r\bigl(\|\mathscr S(g)\|_{\Sch_r}
 +\|\mathscr T(g)\|_{\Sch_r}\bigr),
\end{equation}
where
\begin{align*}
 \mathscr S(g)&:\mathcal C_1\otimes_2\mathcal C_0\to\mathcal E,
 &\mathscr S(g)(f_{\nu}\otimes z)&=S_{\nu}(g)z,\\
 \mathscr T(g)&:\mathcal C_0\to\mathcal C_1\otimes_2\mathcal E,
 &\mathscr T(g)z&=\sum_{\nu}f_{\nu}\otimes S_{\nu}(g)z.
\end{align*}
Write $\mathscr S(g)=\sum_{\mu}g_{\mu}A_{\mu}$ and
$\mathscr T(g)=\sum_{\mu}g_{\mu}C_{\mu}$, with
\[
 A_{\mu}(f_{\nu}\otimes z)=H_{\mu\nu}z,
 \qquad
 C_{\mu}z=\sum_{\nu}f_{\nu}\otimes H_{\mu\nu}z.
\]
The four square functions are the flattenings introduced above:
\begin{equation}\label{eq:four-square-identities}
 F_1F_1^*=\sum_{\mu}A_{\mu}A_{\mu}^*,\quad
 F_4^*F_4=\sum_{\mu}A_{\mu}^*A_{\mu},\quad
 F_3F_3^*=\sum_{\mu}C_{\mu}C_{\mu}^*,\quad
 F_2F_2^*=\sum_{\mu}C_{\mu}^*C_{\mu}.
\end{equation}
Let $(z_j)$ be an orthonormal basis of
$\mathcal C_0$, and let $(w_k)$ be an orthonormal basis of $\mathcal E$.
For $w\in\mathcal E$,
\[
 \sum_\mu\|A_\mu^*w\|_{\mathcal C_1\otimes_2\mathcal C_0}^2
 =\sum_{\mu,\nu,j}|\mathbf H(e_\mu,f_\nu,z_j,w)|^2
 =\|F_1^*w\|_{\mathcal A\otimes_2\mathcal C_1\otimes_2\mathcal C_0}^2,
\]
which proves the first identity.  For $y\in\mathcal C_1$ and
$z\in\mathcal C_0$, polarization of
\[
 \sum_\mu\|A_\mu(y\otimes z)\|_{\mathcal E}^2
 =\sum_{\mu,k}|\mathbf H(e_\mu,y,z,w_k)|^2
 =\|F_4(y\otimes z)\|_{\mathcal A\otimes_2\mathcal E}^2
\]
proves the second.  Similarly, for $Y\in\mathcal C_1\otimes_2\mathcal E$,
\[
 \sum_\mu\|C_\mu^*Y\|_{\mathcal C_0}^2
 =\|F_3^*Y\|_{\mathcal A\otimes_2\mathcal C_0}^2,
\]
and, for $z\in\mathcal C_0$,
\[
 \sum_\mu\|C_\mu z\|_{\mathcal C_1\otimes_2\mathcal E}^2
 =\|F_2^*z\|_{\mathcal A\otimes_2\mathcal C_1\otimes_2\mathcal E}^2.
\]
The equalities on elementary tensors extend by density, and polarization gives
the corresponding operator identities.  Consequently
\begin{align}\label{eq:four-square-schatten-identities}
 \left\|\left(\sum_\mu A_\mu A_\mu^*\right)^{1/2}\right\|_{\Sch_r}
 &=\|F_1\|_{\Sch_r},&
 \left\|\left(\sum_\mu A_\mu^*A_\mu\right)^{1/2}\right\|_{\Sch_r}
 &=\|F_4\|_{\Sch_r},\notag\\
 \left\|\left(\sum_\mu C_\mu C_\mu^*\right)^{1/2}\right\|_{\Sch_r}
 &=\|F_3\|_{\Sch_r},&
 \left\|\left(\sum_\mu C_\mu^*C_\mu\right)^{1/2}\right\|_{\Sch_r}
 &=\|F_2\|_{\Sch_r}.
\end{align}
Thus the four Hilbert directions correspond to the row and column squares
produced by the two successive rectangular Khintchine inequalities.
Applying \eqref{eq:rectangular-NCK} once more, now in the $g$ variables, and
using \eqref{eq:four-square-identities}, yields
\[
 \|Z_{\mathbf H}^{\mathrm{dec}}\|_{L^r(\Omega;\Sch_r)}
 \lesssim r\sum_{\varkappa=1}^4\|F_{\varkappa}\|_{\Sch_r}.
\]
Since $p\le r$ and $\|T\|_{\mathcal L}\le\|T\|_{\Sch_r}$, this proves
\eqref{eq:four-flat-bound} in finite dimension.

In the same-field case, \eqref{eq:antisymmetric-wick-zero} permits replacement
of $\mathbf H$ by $\mathbf H^{\mathrm{sym}}$.  Apply the order-two decoupling
inequality \eqref{eq:order-two-decoupling} with
$B=\mathcal K(\mathcal C_0,\mathcal E)$ and then apply the decoupled estimate
to $\mathbf H^{\mathrm{sym}}$.  The profile does not increase by
\eqref{eq:sym-profile-control}.  This proves the same-field estimate for
finite-rank forms.

For general Schatten flattenings, use one simultaneous compression of all
four Hilbert legs.  Lemma~\ref{lem:four-flat-compression} shows that every
flattening of the compressed form, and of the difference of two consecutive
compressions, converges in $\Sch_r$.  The finite-dimensional estimate therefore
makes the associated random operators Cauchy in
$L^p(\Omega;\mathcal K(\mathcal C_0,\mathcal E))$.  Since the compact-operator
space is separable in operator norm, the limit is strongly measurable.  The
same difference estimate proves independence of the chosen projections and
uniqueness of the extension.  On each finite-dimensional compression the
coordinate formula is the contraction of the deterministic tensor with
the corresponding isonormal Gaussian vectors; it is therefore invariant under
orthogonal changes of basis.  Passing to the compression limit proves that the
extended random operator is basis independent as well.  In the same-field case
use the same projection on the two copies of $\mathcal G$;
Lemma~\ref{lem:same-field-symmetrization} commutes with this compression and
gives the identical conclusion.
\end{proof}

\begin{lemma}[Covariance pullback]\label{lem:covariance-pullback}
Let $U_1:\widetilde{\mathcal A}\to\mathcal A$ and
$U_2:\widetilde{\mathcal C}_1\to\mathcal C_1$ be bounded, and put
\[
 \widetilde{\mathbf H}(\widetilde x,\widetilde y,z,w)
 :=\mathbf H(U_1\widetilde x,U_2\widetilde y,z,w).
\]
For $2\le r\le\infty$,
\begin{equation}\label{eq:covariance-pullback}
 \max_{1\le\varkappa\le4}\|\widetilde F_{\varkappa}\|_{\Sch_r}
 \le\|U_1\|\,\|U_2\|\max_{1\le\varkappa\le4}\|F_{\varkappa}\|_{\Sch_r},
\end{equation}
where $\Sch_\infty$ means the operator norm.
\end{lemma}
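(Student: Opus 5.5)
The plan is to write each pulled-back flattening as the original flattening composed with bounded operators on the appropriate tensor legs, and then apply the ideal property \eqref{eq:centered-schatten-ideal} of Lemma~\ref{lem:centered-schatten-ideal}. For $r=\infty$ the same factorizations together with submultiplicativity of the operator norm give the claim.

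Testing on elementary tensors, the four identities are:
\begin{align*}
 \widetilde F_1&=F_1\,(U_1\otimes U_2\otimes I_{\mathcal C_0}),\\
 \widetilde F_2&=F_2\,(U_1\otimes U_2\otimes I_{\mathcal E}),\\
 \widetilde F_3&=(U_2^*\otimes I_{\mathcal E})\,F_3\,(U_1\otimes I_{\mathcal C_0}),\\
 \widetilde F_4&=(U_1^*\otimes I_{\mathcal E})\,F_4\,(U_2\otimes I_{\mathcal C_0}).
\end{align*}
For example, for $\widetilde F_3$ one computes
\[
 \langle F_3(U_1\widetilde x\otimes z),U_2\widetilde y\otimes w\rangle
 =\langle (U_2^*\otimes I)F_3(U_1\otimes I)(\widetilde x\otimes z),\widetilde y\otimes w\rangle .
\]
The other three are verified in the same way. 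For $\widetilde F_4$, the $U_2$ leg lies in the domain and the $U_1$ leg, after taking the adjoint, lies in the codomain.

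Next I need that these tensor-product operators are bounded, with norm equal to the product of the factor norms. For Hilbert tensor products one has
\[
 \|S\otimes T\|=\|S\|\,\|T\|,
\]
for instance from $S\otimes T=(S\otimes I)(I\otimes T)$ and the fact that $S\otimes I$ acts diagonally on an orthonormal basis of the second factor. Each identity above contains exactly one factor $U_1$ (or its adjoint) and one factor $U_2$ (or its adjoint), and $\|U_j^*\|=\|U_j\|$. The ideal bound therefore gives
\[
 \|\widetilde F_\varkappa\|_{\Sch_r}\le\|U_1\|\,\|U_2\|\,\|F_\varkappa\|_{\Sch_r}
 \quad\text{for each }\varkappa .
\]
Taking the maximum over $\varkappa$ yields \eqref{eq:covariance-pullback}.

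The only delicate point is whether the elementary-tensor identities extend to the whole Hilbert tensor product. If $\mathbf H$ is finite rank, both sides of each identity are bounded operators that agree on a total set, so they agree everywhere. For forms whose flattenings are only known to lie in $\Sch_r$, the same argument applies, since the right-hand sides are bounded by the ideal estimate and are densely determined. Alternatively one can pass to the general case using the compressions of Lemma~\ref{lem:four-flat-compression}. No step looks like a genuine obstacle; the main care needed is matching the legs correctly in $F_3$ and $F_4$.
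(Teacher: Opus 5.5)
Your proposal is correct and is essentially the paper's proof: the same four factorizations $\widetilde F_{1}=F_{1}(U_1\otimes U_2\otimes I)$, $\widetilde F_{2}=F_{2}(U_1\otimes U_2\otimes I)$, $\widetilde F_3=(U_2^*\otimes I)F_3(U_1\otimes I)$, $\widetilde F_4=(U_1^*\otimes I)F_4(U_2\otimes I)$, followed by the two-sided Schatten ideal bound. Your additional remarks on $\|S\otimes T\|=\|S\|\,\|T\|$, on the operator-norm case $r=\infty$, and on extending the identities from elementary tensors only spell out routine details that the paper leaves implicit.
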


\begin{proof}
For $\varkappa=1,2$,
$\widetilde F_{\varkappa}=F_{\varkappa}(U_1\otimes U_2\otimes I)$.  In the two cross orientations,
\[
 \widetilde F_3=(U_2^*\otimes I)F_3(U_1\otimes I),
 \qquad
 \widetilde F_4=(U_1^*\otimes I)F_4(U_2\otimes I).
\]
Apply \eqref{eq:centered-schatten-ideal}.
\end{proof}

\begin{lemma}[Two-parameter Banach-valued time lift]
\label{lem:centered-time-lift}
Let $B$ be a separable Banach space and
$\Delta_T=\{(t,s):0\le s\le t\le T\}$, equipped with the Euclidean metric.
Let $X:\Delta_T\to L^{p_0}(\Omega;B)$ satisfy, for some $p_0\ge2$ and
$\theta>0$,
\begin{equation}\label{eq:time-lift-hypotheses}
 \sup_{u\in\Delta_T}\|X(u)\|_{L^{p_0}(\Omega;B)}\le A,
 \qquad
 \|X(u)-X(v)\|_{L^{p_0}(\Omega;B)}\le B_0|u-v|^\theta.
\end{equation}
If $p_0\theta>2$, then $X$ has a $B$-valued continuous modification and,
for every $2\le p\le p_0$,
\begin{equation}\label{eq:centered-time-lift}
 \|X\|_{L^p(\Omega;C(\Delta_T;B))}
 \lesssim_{T,p_0,\theta}A+B_0.
\end{equation}
The same assertion holds for a sequence of kernels, and for their differences,
with constants uniform whenever the two bounds in
\eqref{eq:time-lift-hypotheses} are uniform.
\end{lemma}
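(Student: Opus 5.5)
This is a Banach-valued Kolmogorov--Chentsov statement on the two-dimensional parameter set $\Delta_T$. I would prove it by a dyadic chaining (Garsia--Rodemich--Rumsey type) argument rather than by quoting a scalar theorem, so that the dependence on $A$ and $B_0$ stays explicit. The condition $p_0\theta>2$ is exactly the dimension-two condition, since $\Delta_T$ is a two-dimensional Lipschitz domain.

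\textbf{Step 1: dyadic grids.} For $j\ge0$ let $D_j\subset\Delta_T$ be the grid of points $(t,s)\in\Delta_T$ with coordinates in $2^{-j}T\mathbb Z$. This set is nonempty, has $\#D_j\lesssim 2^{2j}$ points, and is nested in $j$. Define
\[
 M_j:=\max\bigl\{\|X(u)-X(v)\|_B:\ u,v\in D_j,\ |u-v|\le 2\cdot2^{-j}T\bigr\}.
\]
Each point has boundedly many such neighbours, and $\Delta_T$ is convex. A union bound in $L^{p_0}$ together with the increment hypothesis in \eqref{eq:time-lift-hypotheses} then gives
\[
 \|M_j\|_{L^{p_0}(\Omega)}\lesssim 2^{2j/p_0}B_0(2^{-j}T)^\theta .
\]
This is summable in $j$ precisely because $\theta-2/p_0>0$.

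\textbf{Step 2: chaining.} For $u,v\in D:=\bigcup_jD_j$, I would choose nearest grid approximants $u_j,v_j\in D_j$. I need two properties: $|u_j-u_{j+1}|\le 2\cdot 2^{-j-1}T$, which holds after taking $u_j$ as the nearest point of $D_j$ to $u$; and $u_j=u$ eventually. The standard telescoping then gives, for $|u-v|\sim2^{-k}$,
\[
 \|X(u)-X(v)\|_B\lesssim\sum_{j\ge k}M_j .
\]
Hence, with $\gamma<\theta-2/p_0$, the random Hölder constant of $X|_D$ lies in $L^{p_0}(\Omega)$. On a single probability-one event, $X|_D$ is therefore uniformly continuous. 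Since $B$ is complete, it extends to a continuous $\widetilde X$ on $\Delta_T$.

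\textbf{Step 3: modification and bound.} The extension $\widetilde X$ is a modification of $X$: for each $u$, taking $D\ni u_n\to u$, we have $X(u_n)\to X(u)$ in $L^{p_0}(\Omega;B)$ by hypothesis and $\widetilde X(u_n)\to\widetilde X(u)$ almost surely. For the norm bound I would use
\[
 \sup_{\Delta_T}\|\widetilde X\|_B\le\|X(0,0)\|_B+\sum_{j\ge0}M_j ,
\]
which holds because every $u\in D$ is chained to $(0,0)\in D_0$ with increments controlled by the $M_j$. Taking $L^{p_0}(\Omega)$ norms gives $\lesssim_{T,p_0,\theta}A+B_0$, and Hölder's inequality in $\Omega$ lowers this to every $p\le p_0$. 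Separability of $B$ guarantees that $\widetilde X$ is strongly measurable as a $C(\Delta_T;B)$-valued variable, since $C(\Delta_T;B)$ is then separable.

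\textbf{Step 4: sequences and differences.} Every step is linear in $X$ and uses only the two constants in \eqref{eq:time-lift-hypotheses}. Applying the argument to $X_n$ or to $X_n-X_m$ therefore gives the uniform and difference statements directly.

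The main point needing care is the geometry of the chaining on the triangle $\Delta_T$ rather than a square, in particular that nearest-point approximants stay inside $\Delta_T$ with uniformly bounded neighbour counts. Convexity of $\Delta_T$ and the fact that grid points lie on its boundary lines $s=0$, $s=t$, $t=T$ when $T$ is the grid unit make this routine.
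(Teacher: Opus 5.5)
Your proposal is correct and follows essentially the same route as the paper. Both arguments use a dyadic grid of mesh $2^{-j}T$ on $\Delta_T$ with $O(2^{2j})$ nearest-neighbour and parent edges, and a union bound giving $\|M_j\|_{L^{p_0}}\lesssim B_0T^\theta 2^{-j(\theta-2/p_0)}$, which is summable because $p_0\theta>2$. Both then chain to a continuous extension, identify it with $X$ at each fixed point, bound the supremum by $\|X(0,0)\|_B+\sum_j M_j$, lower the moment from $p_0$ to $p$ by monotonicity, and treat sequences and differences by linearity. One constant needs tidying: when comparing $u_k$ with $v_k$, choose $k$ with $|u-v|\le 2^{-k-1}T$ (or enlarge the neighbour radius in $M_j$), so that $|u_k-v_k|\le(\sqrt2+\tfrac12)2^{-k}T$ falls within the radius $2\cdot 2^{-k}T$ used to define $M_k$.
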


\begin{proof}
Let $\mathcal G_m$ be the triangular dyadic grid of mesh $2^{-m}T$ in
$\Delta_T$, joined to the preceding grid and along nearest-neighbor edges.
There are $O(2^{2m})$ such edges.  If $M_m$ is the largest increment on the
level-$m$ edge set, then
\[
 \|M_m\|_{L^{p_0}}
 \lesssim B_0T^\theta 2^{-m(\theta-2/p_0)}.
\]
Since $p_0\theta>2$, $\sum_mM_m<\infty$ almost surely and in $L^{p_0}$.
Chaining along parent edges therefore gives a uniformly continuous limit on
the countable union of the grids and hence a continuous $B$-valued extension
to $\Delta_T$.  The increment hypothesis identifies this extension with the
original process at every fixed point.  The same chain yields
\[
 \left\|\sup_{u\in\Delta_T}\|X(u)\|_B\right\|_{L^{p_0}}
 \lesssim A+B_0,
\]
and monotonicity gives the estimate for $p\le p_0$.  Separability ensures
strong measurability.  Applying the same construction to differences proves
the final assertion.
\end{proof}

\begin{lemma}[Uniform finite-rank completion on the time triangle]
\label{lem:centered-uniform-compression}
Let $2\le p\le p_0\le r<\infty$, let $0<\theta' <\theta$, and assume
$p_0\theta'>2$.  For $u\in\Delta_T$, let $\mathbf H_u$ be a four-linear form
whose four flattenings $F_{\varkappa,u}$ belong to $\Sch_r$.  Let
$\mathbf H_u^{(m)}$ be simultaneous finite-rank compressions as in
\cref{lem:four-flat-compression}, and let $Z_u^{(m)}$ denote either the
decoupled Gaussian operator or, in the same-field case, the completely
centered Wick operator.  Put
\begin{align*}
 A_{m,n}&:=\sum_{\varkappa=1}^4
 \sup_{u\in\Delta_T}
 \|F_{\varkappa,u}^{(m)}-F_{\varkappa,u}^{(n)}\|_{\Sch_r},\\
 B_{m,n}&:=\sum_{\varkappa=1}^4
 \sup_{u\ne v}
 \frac{\|(F_{\varkappa,u}^{(m)}-F_{\varkappa,u}^{(n)})
 -(F_{\varkappa,v}^{(m)}-F_{\varkappa,v}^{(n)})\|_{\Sch_r}}
 {|u-v|^\theta}.
\end{align*}
Then
\begin{equation}\label{eq:centered-uniform-compression-bound}
 \|Z^{(m)}-Z^{(n)}\|_{L^p(\Omega;
 C(\Delta_T;\mathcal K(\mathcal C_0,\mathcal E)))}
 \lesssim_{T,p_0,\theta,\theta'}
 r\left(A_{m,n}
 +A_{m,n}^{1-\theta'/\theta}B_{m,n}^{\theta'/\theta}\right).
\end{equation}
In particular, if $A_{m,n}\to0$ and $(B_{m,n})$ is bounded, the compressed
operators converge in the displayed Bochner space.  The limit is strongly
measurable, basis independent, and independent of the simultaneous
compression scheme.
\end{lemma}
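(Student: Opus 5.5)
Set $D^{(m,n)}_u:=Z^{(m)}_u-Z^{(n)}_u$.  Since compression is linear in the form, $D^{(m,n)}_u$ is itself the (decoupled or Wick) Gaussian operator attached to $\mathbf H_u^{(m)}-\mathbf H_u^{(n)}$, whose flattenings are $F^{(m)}_{\varkappa,u}-F^{(n)}_{\varkappa,u}$.  We then verify the two hypotheses of \cref{lem:centered-time-lift} with $B=\mathcal K(\mathcal C_0,\mathcal E)$, exponent $p_0$, and Hölder index $\theta'$.

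\emph{Pointwise moment.}  Apply \cref{thm:centered-gaussian-operator} with $p_0\le r$ to the finite-rank form $\mathbf H_u^{(m)}-\mathbf H_u^{(n)}$.  In the same-field case the symmetrized Wick version of that theorem is used, and \cref{lem:same-field-symmetrization} shows that symmetrization does not increase the profile.  This gives
\[
\sup_{u\in\Delta_T}\|D^{(m,n)}_u\|_{L^{p_0}(\Omega;\mathcal L)}\lesssim rA_{m,n}.
\]
Finite-rank outputs are compact, so the operator norm equals the $\mathcal K$ norm.

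\emph{Increment moment.}  The map $u\mapsto D^{(m,n)}_u$ is linear in the form, so $D^{(m,n)}_u-D^{(m,n)}_v$ is the Gaussian operator attached to the difference of the two compressed differences.  The same theorem bounds its $L^{p_0}$ norm by $r$ times the sum of the four Schatten norms of the flattening differences.  That sum is bounded both by $2A_{m,n}$ (triangle inequality) and by $B_{m,n}|u-v|^\theta$.  Interpolating, $\min\{x,y\}\le x^{1-\lambda}y^\lambda$ with $\lambda=\theta'/\theta$, gives
\[
\|D_u-D_v\|_{L^{p_0}}\lesssim r\,A_{m,n}^{1-\theta'/\theta}B_{m,n}^{\theta'/\theta}|u-v|^{\theta'}.
\]
Since $p_0\theta'>2$, \cref{lem:centered-time-lift} then yields a continuous $\mathcal K$-valued modification and
\[
\|D^{(m,n)}\|_{L^p(\Omega;C(\Delta_T;\mathcal K))}\lesssim r\bigl(A_{m,n}+A_{m,n}^{1-\theta'/\theta}B_{m,n}^{\theta'/\theta}\bigr).
\]
For each fixed $m$, $Z^{(m)}$ is a finite Gaussian polynomial with coefficients continuous in $u$, so it is already continuous and no modification issue arises.

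\emph{Limit and its properties.}  If $A_{m,n}\to0$ with $B_{m,n}$ bounded, the sequence $(Z^{(m)})$ is Cauchy in the complete Bochner space $L^p(\Omega;C(\Delta_T;\mathcal K))$.  The space $C(\Delta_T;\mathcal K)$ is separable, since $\mathcal K$ is separable and $\Delta_T$ is compact, so the limit is strongly measurable.

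For independence of the compression scheme, interlace two schemes and apply the same estimate to mixed differences.  The required $A\to0$ along the interlaced sequence is the main point to check.  Pointwise in $u$ it follows from \cref{lem:four-flat-compression}.  For uniformity in $u$, note that the family $\{F_{\varkappa,u}\}_{u\in\Delta_T}$ is compact in $\Sch_r$ as the continuous image of $\Delta_T$, which is implied by finiteness of the Hölder seminorm.  \Cref{lem:compact-schatten-compression} then gives uniform convergence of the compressions on this compact set.  Basis independence passes from each compression to the limit, as in the proof of \cref{thm:centered-gaussian-operator}.
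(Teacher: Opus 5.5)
Your proposal follows the paper's proof: you apply Theorem~\ref{thm:centered-gaussian-operator} at moment $p_0$ to $\mathbf H_u^{(m)}-\mathbf H_u^{(n)}$ and to its $(u,v)$-increment, interpolate via $\min\{A,Bh^\theta\}\le A^{1-\theta'/\theta}B^{\theta'/\theta}h^{\theta'}$, and conclude with Lemma~\ref{lem:centered-time-lift} at exponent $\theta'$, with measurability coming from the Bochner limit. For scheme independence you interlace the two schemes and use Lemma~\ref{lem:compact-schatten-compression}, where the paper instead takes a common refinement. Either route relies on H\"older continuity of $u\mapsto F_{\varkappa,u}$ itself, which is not a hypothesis of the lemma, since $B_{m,n}$ only controls differences of compressions. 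It is supplied in the applications by Lemma~\ref{lem:pulled-back-schatten-continuity}, and, because compression is contractive, it also bounds the $B$-seminorms of the mixed pairs, which your interlacing needs but does not check.
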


\begin{proof}
Apply \cref{thm:centered-gaussian-operator} at moment $p_0$ to the difference
$\mathbf H_u^{(m)}-\mathbf H_u^{(n)}$.  Uniformly in $u$,
\[
 \|Z_u^{(m)}-Z_u^{(n)}\|_{L^{p_0}(\Omega;\mathcal K)}
 \lesssim r A_{m,n}.
\]
Applying the same theorem to
\[
 (\mathbf H_u^{(m)}-\mathbf H_u^{(n)})
 -(\mathbf H_v^{(m)}-\mathbf H_v^{(n)})
\]
gives the $\theta$-increment bound
\[
 \|(Z_u^{(m)}-Z_u^{(n)})-(Z_v^{(m)}-Z_v^{(n)})\|_{L^{p_0}(\Omega;\mathcal K)}
 \lesssim r B_{m,n}|u-v|^\theta.
\]
The minimum of this estimate and the static bound is bounded by
\[
 C r A_{m,n}^{1-\theta'/\theta}B_{m,n}^{\theta'/\theta}
 |u-v|^{\theta'}.
\]
Indeed, $\min\{A,Bh^\theta\}\le
 A^{1-\theta'/\theta}B^{\theta'/\theta}h^{\theta'}$ for $h\ge0$.
Now apply \cref{lem:centered-time-lift} with exponent $\theta'$ to obtain
\eqref{eq:centered-uniform-compression-bound}.

Every compressed process is a finite Gaussian polynomial with values in a
finite-dimensional subspace of
$C(\Delta_T;\mathcal K(\mathcal C_0,\mathcal E))$, hence is strongly
measurable.  The Bochner limit is therefore strongly measurable.  If two
compression schemes are used, apply the estimate to their common refinement;
this proves independence.  Basis independence holds at finite rank and passes
to the limit.
\end{proof}

\subsubsection{Covariance synthesis and localization}

For every Littlewood--Paley shell multiplier $\rho_N$, fix once and for all an
even auxiliary plateau $\widetilde\rho_N\in C_c^\infty(\R^3)$ such that
\begin{equation}\label{eq:auxiliary-shell-plateau}
 \widetilde\rho_N\equiv1\quad\hbox{on }\supp\rho_N,
 \qquad
 \supp\widetilde\rho_N\subset\{\widetilde\kappa_-N\le|\xi|
 \le\widetilde\kappa_+N\},
 \qquad
 |\partial^\alpha\widetilde\rho_N|\lesssim_\alpha N^{-|\alpha|},
\end{equation}
with the evident inhomogeneous modification for $N=1$.  In particular,
\begin{equation}\label{eq:plateau-times-shell}
 \widetilde\rho_N\rho_N=\rho_N.
\end{equation}
The functions $\widetilde\rho_N$ localize the deterministic Hilbert-space
kernels.  The Littlewood--Paley factors $\rho_N$ and the spectral cutoffs
$m_\Lambda$ are incorporated in the covariance synthesis maps.

Let
\[
 \mathfrak H_a:=L^2([0,T]\times\R_x^3;\R)
\]
be the real Hilbert space underlying the isonormal process $\mathbf W_a$.  Via the
spatial Fourier transform, we identify it unitarily, as a real Hilbert space,
with the Hermitian subspace
\begin{equation}\label{eq:hermitian-gaussian-hilbert-space}
 \widehat{\mathfrak H}_a
 :=\{F\in L^2([0,T]\times\R_\ell^3;\mathbb C):
 F(r,-\ell)=\overline{F(r,\ell)}\},
 \qquad
 \langle F,G\rangle_{\R}:=\Re\int F\overline G.
\end{equation}
The Hermitian symmetry couples the Fourier coordinates at $\ell$ and
$-\ell$ and realizes the distributional covariance in
\eqref{eq:sigma-def} within the real isonormal process.  In the formulas below $F(r,\ell)$ denotes
the Fourier representative in \eqref{eq:hermitian-gaussian-hilbert-space}.
For $N\ge2$ and an even bounded multiplier $m$, define
\begin{equation}\label{eq:covariance-synthesis-map}
 (\mathsf S_{a,N,t}^{(m)}F)(\ell)
 :=\rho_N(\ell)m(\ell)\mathfrak h_a(\ell)
   \int_0^tK_a(t-r,\ell)F(r,\ell)\dd r,
 \qquad F\in\widehat{\mathfrak H}_a.
\end{equation}
We write
\begin{equation}\label{eq:synthesis-cutoff-notation}
 \mathsf S_{a,N,t}:=\mathsf S_{a,N,t}^{(1)},
 \qquad
 \mathsf S_{a,N,t}^{\Lambda}:=
 \mathsf S_{a,N,t}^{(m_\Lambda)},
 \qquad
 m_\infty:=1.
\end{equation}
The evenness of the propagator and shell cutoffs makes these bounded maps of
real Hilbert spaces into the Hermitian Fourier $L^2$ shell.  The inhomogeneous
unit shell is defined by the same formula and is absorbed into the constants;
there and below inverse shell powers are interpreted with Japanese brackets.
For every Hermitian test function $\phi\in L^2(\R^3)$, the definition gives the
first-chaos representation
\begin{equation}\label{eq:first-chaos-synthesis}
 \bigl\langle P_N\Psi_{a,\Lambda}(t),\phi\bigr\rangle_{\R}
 =\mathbf W_a\!\left((\mathsf S_{a,N,t}^{\Lambda})^*\phi\right),
\end{equation}
and the same identity with $\Lambda=\infty$ for $P_N\Psi_a$.  Consequently,
for any two shells $N,L$,
\begin{align}\label{eq:shell-covariance-from-synthesis}
 &\E\!\left[
  \widehat{P_N\Psi_{a,\Lambda}}(s,\ell)
  \widehat{P_L\Psi_{a,\Lambda}}(t,r)\right]\notag\\
 &\qquad=(2\pi)^3\delta(\ell+r)\,
 \rho_N(\ell)\rho_L(r)m_\Lambda(\ell)m_\Lambda(r)
 \sigma_a(\ell;s,t)
\end{align}
in the non-conjugated Fourier convention.  Thus each stochastic leg carries
its Littlewood--Paley multiplier through the corresponding synthesis map.

\begin{lemma}[Synthesis-map bounds]
\label{lem:covariance-synthesis}
Uniformly for $0\le t,t'\le T_0$, on separated parameter classes, and for bounded $\mathfrak H_{\mathrm{prof}}$,
\begin{align}
 \|\mathsf S_{a,N,t}\|_{\mathcal L(\mathfrak H_a,L^2_\ell)}
 &\lesssim N^{-1+\beta_a},
 \label{eq:synthesis-size}\\
 \|\mathsf S_{a,N,t}-\mathsf S_{a,N,t'}\|_{
 \mathcal L(\mathfrak H_a,L^2_\ell)}
 &\lesssim_{\theta}|t-t'|^\theta N^{-1+\beta_a+\theta},
 \qquad 0<\theta\le\frac12.
 \label{eq:synthesis-increment}
\end{align}
If two admissible multipliers are inserted, their difference satisfies the
same size and time-increment majorants, up to a factor two.
For every fixed shell $N$, the difference between the cutoff synthesis map and
the limiting map tends to zero in operator norm along every admissible cofinal
sequence; the proof uses only the plateau and uniform symbol bounds.
\end{lemma}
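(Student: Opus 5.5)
The synthesis map is a Fourier multiplier in $\ell$ composed with a time integral, so I plan to reduce both bounds to pointwise-in-$\ell$ estimates followed by Cauchy--Schwarz in the time variable $r$. For fixed $\ell$ with $|\ell|\sim N$ and $F\in\widehat{\mathfrak H}_a$, Cauchy--Schwarz gives
\[
 |(\mathsf S_{a,N,t}F)(\ell)|^2
 \le \rho_N(\ell)^2\mathfrak q_a(\ell)
 \int_0^t|K_a(t-r,\ell)|^2\dd r\int_0^T|F(r,\ell)|^2\dd r.
\]
Integrating in $\ell$ then bounds $\|\mathsf S_{a,N,t}F\|_{L^2_\ell}^2$ by
\[
 \sup_{|\ell|\sim N}\Bigl(\mathfrak q_a(\ell)\int_0^t|K_a(t-r,\ell)|^2\dd r\Bigr)\,\|F\|^2.
\]
The norm on $\widehat{\mathfrak H}_a$ is, up to the fixed Plancherel constant, the $L^2$ norm, and the Hermitian constraint plays no role in an upper bound. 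The bracketed supremum is exactly $\sup p_a(t,\ell)$ from \eqref{eq:Psi-variance}. Using $|K_a|\le\omega_a(\ell)^{-1}\lesssim N^{-1}$, together with $\mathfrak q_a\le\mathfrak H_{\mathrm{prof}}^2\langle\ell\rangle^{2\beta_a}$ and $t\le T_0\le1$, it is $\lesssim N^{-2+2\beta_a}$. This proves \eqref{eq:synthesis-size}.

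The lower bound $\omega_a(\ell)\gtrsim N$ holds uniformly over the compact speed--mass box, since $c_a\ge\underline c$. The inhomogeneous unit shell is handled separately. For the Klein--Gordon channel $\omega_{\K}\ge\underline m$. For the wave channel one uses $|\sin(\tau c|\ell|)/(c|\ell|)|\le\tau\le1$. No speed separation enters, in line with Proposition~\ref{prop:speed-gap-dependence}.

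For \eqref{eq:synthesis-increment}, take $t'<t$ and split
\[
 \mathsf S_{a,N,t}F-\mathsf S_{a,N,t'}F
 =\rho_N m\mathfrak h_a\Bigl[\int_{t'}^tK_a(t-r,\ell)F\dd r
 +\int_0^{t'}\bigl(K_a(t-r,\ell)-K_a(t'-r,\ell)\bigr)F\dd r\Bigr].
\]
The boundary strip is treated by the same Cauchy--Schwarz argument, with $\int_{t'}^t|K_a|^2\le|t-t'|N^{-2}$. This gives a factor $|t-t'|^{1/2}N^{-1+\beta_a}$, which is at most $|t-t'|^\theta N^{-1+\beta_a+\theta}$ for $\theta\le1/2$, $|t-t'|\le1$ and $N\ge1$.

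For the common interval I interpolate the kernel difference between two bounds. One is the trivial bound $2N^{-1}$. The other is the Lipschitz bound $|\partial_\tau K_a|=|\cos(\tau\omega_a)|\le1$. Together they give
\[
 |K_a(\tau,\ell)-K_a(\tau',\ell)|\lesssim|\tau-\tau'|^\theta N^{-1+\theta},
\]
as already noted after \eqref{eq:Psi-increment-density}. Cauchy--Schwarz over $[0,t']$ with $T_0\le1$ then yields the claimed increment bound.

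For the cutoff statements, linearity in the multiplier means that inserting $m_\Lambda-m_{\Lambda'}$ replaces $m$ by a function bounded by $2\sup\|m_\Lambda\|_\infty$. The same computations therefore give the stated majorants, up to the uniform zeroth-order constant. To be precise, the ``factor two'' refers to normalizing $\|m_\Lambda\|_\infty\le1$; in general it is twice the uniform sup bound.

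For fixed $N$, the pointwise argument gives
\[
 \|\mathsf S^\Lambda_{a,N,t}-\mathsf S_{a,N,t}\|
 \lesssim N^{-1+\beta_a}\sup_{\supp\rho_N}|m_\Lambda-1|.
\]
By \eqref{eq:cutoff-supports}, the supremum vanishes once $c_0\Lambda$ exceeds the outer radius of $\supp\rho_N$, so the difference is eventually zero along any cofinal sequence. I expect no genuine obstacle here. The only care needed is that all constants depend only on the speed--mass box and $\mathfrak H_{\mathrm{prof}}$, and that the zero mode of the wave channel is handled by the continuous interpretation of $K_{\W}$.
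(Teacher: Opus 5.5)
Your proposal is correct and follows the paper's proof essentially step for step. It uses Cauchy--Schwarz in the noise time for the size bound, and for the increment splits into the common interval (using $|K_a(\tau,\ell)-K_a(\tau',\ell)|\lesssim|\tau-\tau'|^\theta\langle\ell\rangle^{-1+\theta}$, which you derive explicitly by interpolating the sup bound against $|\partial_\tau K_a|\le1$) and the boundary strip (costing $|t-t'|^{1/2}$); it then handles cutoff differences by linearity in the inserted multiplier and fixed-shell convergence by the plateau condition, exactly as the paper does.
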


\begin{proof}
By Cauchy--Schwarz in the noise time,
\[
 \|\mathsf S_{a,N,t}\|^2
 \le \sup_{|\ell|\sim N}\mathfrak q_a(\ell)
       \int_0^t|K_a(t-r,\ell)|^2\dd r
 \lesssim N^{-2+2\beta_a}.
\]
Assume $t'>t$.  On $[0,t]$ use
\[
 |K_a(t-r,\ell)-K_a(t'-r,\ell)|
 \lesssim |t-t'|^\theta\langle\ell\rangle^{-1+\theta}.
\]
On the boundary strip $[t,t']$, its $L^2_r$ norm is bounded by
$|t-t'|^{1/2}N^{-1+\beta_a}$, which is at most
$C_{T_0,\theta}|t-t'|^\theta N^{-1+\beta_a+\theta}$ for
$0<\theta\le1/2$ and $N\ge1$.  This proves
\eqref{eq:synthesis-increment}, including the inhomogeneous wave block because
the wave kernel is interpreted by its continuous value at zero.

Let $m_1,m_2$ be two admissible multipliers and write
$\mathsf S_{a,N,t}^{(m_i)}$ for the corresponding synthesis maps.  Their
difference is
\[
 (\mathsf S_{a,N,t}^{(m_1)}-\mathsf S_{a,N,t}^{(m_2)})F(\ell)
 =\rho_N(\ell)(m_1-m_2)(\ell)\mathfrak h_a(\ell)
   \int_0^tK_a(t-r,\ell)F(r,\ell)\dd r.
\]
Consequently,
\begin{align*}
 \|\mathsf S_{a,N,t}^{(m_1)}-\mathsf S_{a,N,t}^{(m_2)}\|
 &\lesssim
 \|m_1-m_2\|_{L^\infty(\supp\rho_N)}N^{-1+\beta_a},\\
 \|(\mathsf S_{a,N,t}^{(m_1)}-\mathsf S_{a,N,t}^{(m_2)})
 -(\mathsf S_{a,N,t'}^{(m_1)}-\mathsf S_{a,N,t'}^{(m_2)})\|
 &\lesssim_\theta
 \|m_1-m_2\|_{L^\infty(\supp\rho_N)}
 |t-t'|^\theta N^{-1+\beta_a+\theta}.
\end{align*}
Uniform symbol bounds give the asserted factor-two majorants.  For fixed
$N$, the plateau in \eqref{eq:cutoff-supports} contains the enlarged
$N$-shell once $\Lambda$ is sufficiently large.  The cutoff and limiting synthesis maps then agree on that shell.  The same
argument applies to arbitrary, possibly non-nested, cofinal sequences.
\end{proof}

\begin{lemma}[Wick realization after covariance pullback]
\label{lem:wick-pullback-realization}
Let $b,c\in\mathfrak C$.  Let $\mathbf W_b,\mathbf W_c$ be the real isonormal processes on the Gaussian Hilbert spaces
of colors $b,c$.  Let $S_1:\mathfrak H_b\to\mathcal A$ and
$S_2:\mathfrak H_c\to\mathcal C_1$ be bounded, and let $\mathbf H$ be a
finite-rank four-linear form.  Put
\[
 \mathbf H^\sharp(f,g,x,y):=\mathbf H(S_1f,S_2g,x,y).
\]
If $b\ne c$, then the operator obtained by pairing the two pushed-forward
first chaoses with $\mathbf H$ has, for arbitrary orthonormal bases
$(e_\mu)$ and $(f_\nu)$, the coordinate representation
\begin{equation}\label{eq:distinct-color-pullback-realization}
 Z^\sharp
 =\sum_{\mu,\nu}
 \mathbf W_b(e_\mu)\mathbf W_c(f_\nu)H^\sharp_{\mu\nu},
 \qquad
 \langle H^\sharp_{\mu\nu}x,y\rangle
 =\mathbf H^\sharp(e_\mu,f_\nu,x,y).
\end{equation}
If $b=c$ and the two Hilbert spaces are identified with the same
$\mathfrak H_b$, the completely Wick-centered pairing is
\begin{equation}\label{eq:same-color-pullback-realization}
 Z^{\sharp,\mathrm{Wick}}
 =\sum_{\mu,\nu}
 (H^\sharp_{\mu\nu})^{\mathrm{sym}}
 \bigl(\mathbf W_b(e_\mu)\mathbf W_b(e_\nu)-\delta_{\mu\nu}\bigr).
\end{equation}
These identities are basis independent.  If the four flattenings of
$\mathbf H^\sharp$ lie in $\Sch_r$, they remain valid as identities in the
$L^p(\Omega;\mathcal K(\mathcal C_0,\mathcal E))$ limit supplied by
Theorem~\ref{thm:centered-gaussian-operator}.
\end{lemma}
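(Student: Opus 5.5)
The plan is to first treat the case where $\mathbf H$ is finite rank, so that only finitely many directions of $\mathcal A$ and $\mathcal C_1$ enter, and then pass to the Schatten completion. The "pairing of the two pushed-forward first chaoses with $\mathbf H$" means the following. Write $\mathbf H(x,y,\cdot,\cdot)=\sum_{i,j}\langle x,a_i\rangle\langle y,c_j\rangle M_{ij}$, where $(a_i)\subset\mathcal A$ and $(c_j)\subset\mathcal C_1$ are finite families and $M_{ij}\in\mathcal L(\mathcal C_0,\mathcal E)$. The random operator is then
\[
\sum_{i,j}\mathbf W_b(S_1^*a_i)\,\mathbf W_c(S_2^*c_j)\,M_{ij},
\]
with the same-color version replaced by its Wick-centered counterpart.

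For the finite-rank step, expand $S_1^*a_i=\sum_\mu\langle S_1^*a_i,e_\mu\rangle e_\mu$ and $S_2^*c_j=\sum_\nu\langle S_2^*c_j,f_\nu\rangle f_\nu$ in the given orthonormal bases. Linearity of the isonormal process gives $\mathbf W_b(S_1^*a_i)=\sum_\mu\langle a_i,S_1e_\mu\rangle\mathbf W_b(e_\mu)$, with convergence in $L^2(\Omega)$, and the analogous formula for the second leg. Substituting and collecting coefficients produces $\langle a_i,S_1e_\mu\rangle\langle c_j,S_2f_\nu\rangle M_{ij}$, summed over $i,j$, which is exactly $H^\sharp_{\mu\nu}$. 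This proves \eqref{eq:distinct-color-pullback-realization}.

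The double series converges in $L^2(\Omega;\mathcal L(\mathcal C_0,\mathcal E))$. When $b\ne c$ the legs are independent, so
\[
\E\Bigl|\sum_{\mu,\nu}\alpha_{\mu\nu}\mathbf W_b(e_\mu)\mathbf W_c(f_\nu)\Bigr|^2=\sum_{\mu,\nu}|\alpha_{\mu\nu}|^2 .
\]
Here $\sum_{\mu,\nu}\|H^\sharp_{\mu\nu}\|_{\mathrm{HS}}^2<\infty$ is the Hilbert--Schmidt norm of a finite-rank form composed with bounded maps. Finite-dimensional target ranges make all norms equivalent.

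In the same-color case, expand the product with the same coefficients:
\[
\mathbf W_b(S_1^*a_i)\mathbf W_b(S_2^*c_j)-\langle S_1^*a_i,S_2^*c_j\rangle=\sum_{\mu,\nu}\langle a_i,S_1e_\mu\rangle\langle c_j,S_2e_\nu\rangle\bigl(\mathbf W_b(e_\mu)\mathbf W_b(e_\nu)-\delta_{\mu\nu}\bigr).
\]
This uses Parseval, $\langle S_1^*a_i,S_2^*c_j\rangle=\sum_\mu\langle S_1^*a_i,e_\mu\rangle\langle S_2^*c_j,e_\mu\rangle$, which shows that the subtracted constant is exactly the diagonal sum. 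The right-hand side converges in $L^2$ by the second-chaos isometry. Symmetrization is then free by \eqref{eq:antisymmetric-wick-zero}, which gives \eqref{eq:same-color-pullback-realization}.

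Basis independence follows because the left-hand sides, $\mathbf W(S^*a_i)$ and the Wick product, are defined without reference to any basis.

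The extension to forms whose flattenings lie in $\Sch_r$ is the main point of care. I will compress with finite-rank projections on all four legs as in \cref{lem:four-flat-compression}, applied to $\mathbf H^\sharp$ on $\mathfrak H_b\times\mathfrak H_c\times\mathcal C_0\times\mathcal E$. The compressed forms are finite rank, so the identities hold for each of them. By \cref{thm:centered-gaussian-operator}, the right-hand sides converge in $L^p(\Omega;\mathcal K)$ to the limit defined there.

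The pairing on the left must be defined as this same limit. To make that legitimate, I check that the compressed pairings form a Cauchy sequence using the same four-flattening bound; this works because the compressed pairing equals the coordinate sum. Consistency across different compression schemes follows from the independence statement in \cref{thm:centered-gaussian-operator}. Finally, \cref{lem:covariance-pullback} guarantees that the flattenings of $\mathbf H^\sharp$ are controlled whenever those of $\mathbf H$ are, so the hypothesis is natural.
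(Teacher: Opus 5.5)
Your proposal is correct and follows essentially the same route as the paper: expand the pushed-forward first chaoses in orthonormal coordinates, use independence of the colors (respectively the finite Wick identity with the Parseval covariance subtraction and the antisymmetric cancellation) to obtain the two identities, and then pass to the limit by simultaneous four-leg compression, the Schatten difference bound, and completeness of $L^p(\Omega;\mathcal K(\mathcal C_0,\mathcal E))$. The only minor differences are that you expand directly in the full bases with $L^2(\Omega)$ convergence instead of first compressing the Gaussian legs, and you derive basis independence from the basis-free left-hand side instead of from orthogonal invariance; in the same-color case, make explicit that you use the same projection on both copies of $\mathfrak H_b$ when compressing.
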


\begin{proof}
For finite-rank projections on the Gaussian Hilbert spaces, expand the two
pushed-forward first chaoses in orthonormal coordinates.  Independence of the
colors gives \eqref{eq:distinct-color-pullback-realization}.  For one color,
the finite-dimensional Wick identity subtracts the scalar covariance
$\langle e_\mu,e_\nu\rangle=\delta_{\mu\nu}$, and symmetry of the
second Wiener integral replaces the coefficient tensor by its symmetrization,
giving \eqref{eq:same-color-pullback-realization}.  Orthogonal changes of basis
leave the isonormal Gaussian vectors and the tensor contraction unchanged.
Finally use simultaneous finite compression, the corresponding difference estimate for the four Schatten norms, and completeness of $L^p(\Omega;\mathcal K)$ to pass to the limit.
The Hermitian realization in
\eqref{eq:hermitian-gaussian-hilbert-space} identifies the two
opposite-frequency coordinates throughout this limiting argument.
\end{proof}

\begin{lemma}[Finite-state stabilization for fixed-profile dyadic cutoffs]
\label{lem:finite-state-cutoff}
Let $m_\Lambda(\xi)=m(\xi/\Lambda)$ be a fixed-profile cutoff from
Definition~\ref{def:admissible-cutoff}, with $\Lambda\in\Dyd$.  Let
$\rho_N$ be supported in an annulus
$\{\kappa_-N\le|\xi|\le\kappa_+N\}$ for $N\ge2$.  There exist integers
$k_-<k_+$, depending only on $(\kappa_-,\kappa_+,c_0,C_0)$, such that
\begin{align*}
 m_\Lambda\rho_N&=0 &&\text{if }\log_2(\Lambda/N)\le k_-,\\
 m_\Lambda\rho_N&=\rho_N &&\text{if }\log_2(\Lambda/N)\ge k_+.
\end{align*}
Consequently the restriction of $m_\Lambda$ to a fixed dyadic shell has only
$k_+-k_-+2$ possible states as $\Lambda$ ranges over the dyadic cutoffs.  The
inhomogeneous unit shell is handled separately and also has only finitely many
states.  More generally, for any fixed number of cutoff factors carried by
arbitrary dyadic shells (not necessarily comparable), their joint restriction
has a scale-independent finite number of states.  This number depends only on
the cutoff profile, the shell support constants, and the number of factors,
not on the ratios of the shell scales.

In particular, if a dyadic random block $X_{\Lambda,N,Q,M}$ depends on
$\Lambda$ only through these cutoff factors and every one of its possible
states satisfies $\|\|X\|\|_{L^p}\le A_{N,Q,M}$, then
\begin{equation}\label{eq:finite-state-maximal-bound}
 \left\|\sup_{\Lambda\in\Dyd}\|X_{\Lambda,N,Q,M}\|\right\|_{L^p}
 \lesssim_m A_{N,Q,M}.
\end{equation}
For every fixed block, $X_{\Lambda,N,Q,M}$ agrees with its uncut value for
all sufficiently large $\Lambda$.
\end{lemma}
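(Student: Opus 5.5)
The plan is to exploit the scaling structure of the fixed profile and then perform a finite-state maximal argument.

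\emph{Step 1: two-sided finite-state dichotomy on one shell.} Write $m_\Lambda\rho_N(\xi)=m(\xi/\Lambda)\rho_N(\xi)$. On $\supp\rho_N$ one has $\kappa_-N\le|\xi|\le\kappa_+N$, so $|\xi/\Lambda|\in[\kappa_-N/\Lambda,\kappa_+N/\Lambda]$. Since $m\equiv1$ on $\{|\zeta|\le c_0\}$ and $\supp m\subset\{|\zeta|\le C_0\}$ by \eqref{eq:cutoff-supports} with $\Lambda=1$, there are two regimes.
\begin{itemize}
\item If $\kappa_+N/\Lambda\le c_0$, i.e.\ $\log_2(\Lambda/N)\ge\log_2(\kappa_+/c_0)$, then $m_\Lambda\rho_N=\rho_N$.
\item If $\kappa_-N/\Lambda>C_0$, i.e.\ $\log_2(\Lambda/N)<\log_2(\kappa_-/C_0)$, then $m_\Lambda\rho_N=0$.
\end{itemize}
Take $k_+:=\lceil\log_2(\kappa_+/c_0)\rceil$ and $k_-:=\lceil\log_2(\kappa_-/C_0)\rceil-1$, enlarging $k_+$ if needed so that $k_-<k_+$. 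For dyadic $\Lambda,N$ the ratio $\log_2(\Lambda/N)$ is an integer, so the intermediate integers $k_-<k<k_+$ give at most $k_+-k_--1$ further restrictions. Each such restriction is the fixed function $\xi\mapsto m(2^{-k}\xi/N)\rho_N(\xi)$, determined by $k$ alone. Counting the two extreme states gives the bound $k_+-k_-+2$. The unit shell, supported in a ball of radius $\kappa_+$, is handled the same way: it equals $1$ once $\Lambda\ge\kappa_+/c_0$, leaving finitely many small dyadic $\Lambda$.

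\emph{Step 2: several factors.} For $J$ cutoff factors on shells $N_1,\dots,N_J$, the joint restriction is determined by the tuple of individual states, because $m_\Lambda$ enters each factor separately. This gives at most $(k_+-k_-+2)^J$ joint states, independent of the ratios $N_i/N_j$. Repeated legs are listed with multiplicity, as in Lemma~\ref{lem:finite-product-cutoff}.

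\emph{Step 3: maximal bound.} Since $X_{\Lambda,N,Q,M}$ depends on $\Lambda$ only through the state, the supremum over dyadic $\Lambda$ is a maximum over at most $S:=(k_+-k_-+2)^J$ random variables $X^{(1)},\dots,X^{(S)}$. Using $\sup_\Lambda\|X_\Lambda\|\le\bigl(\sum_{j\le S}\|X^{(j)}\|^p\bigr)^{1/p}$ gives
\[
\Bigl\|\sup_\Lambda\|X_\Lambda\|\Bigr\|_{L^p}\le S^{1/p}A_{N,Q,M},
\]
which is \eqref{eq:finite-state-maximal-bound}, with a constant depending only on the profile, the shell constants and $J$. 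The last assertion is the case $\log_2(\Lambda/\max_iN_i)\ge k_+$, where every factor equals its uncut shell multiplier.

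The only delicate point is bookkeeping rather than analysis: the inhomogeneous block and the enlarged plateaux $\widetilde\rho_N$ must satisfy annular support conditions of the same form. They do, possibly with modified constants $\kappa_\pm$ that are still uniform.
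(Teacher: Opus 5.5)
Your proposal is correct and follows essentially the same argument as the paper: the plateau/support dichotomy on each shell, integrality of $\log_2(\Lambda/N)$ for dyadic scales, a finite state count, and a bound on the supremum by the finitely many states. The only difference is the joint count: you use the product bound $(k_+-k_-+2)^J$, while the paper obtains the linear count $C_mJ+1$ by noting that, along the one-parameter path $\log_2\Lambda$, the joint vector can change only on $J$ windows of bounded length. Both counts are independent of the shell ratios, which is all that the maximal bound and the stabilization statement use.
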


\begin{proof}
If $\kappa_+N\le c_0\Lambda$, the plateau condition gives $m_\Lambda=1$ on the
shell.  If $\kappa_-N>C_0\Lambda$, the support condition gives
$m_\Lambda=0$ there.  Since $N$ and $\Lambda$ are dyadic, only finitely many
integer values of $\log_2(\Lambda/N)$ lie between these two regimes.  For a
fixed collection of $J$ shells, order their logarithmic scales.  Each factor
can change only on an interval of uniformly bounded integer length around its
own scale; outside that interval it is identically zero or identically one.
As $\log_2\Lambda$ increases, the joint vector of restrictions can therefore
change only on the union of $J$ such intervals and at their endpoints.  It has
at most $C_mJ+1$ states, independently of all ratios between the shells.  The
maximal estimate follows by bounding the supremum by the sum over this finite
state set and using the triangle inequality in $L^p$.  The plateau regime gives stabilization.
\end{proof}

The inner multiplier shifts the Duhamel frequency.  For a representative
dyadic term, the momentum variables are
\begin{equation}\label{eq:localized-frequency-flow}
 \underbrace{q}_{w_Q}+\underbrace{\ell}_{\Psi_{b,N}}
 \longrightarrow k=q+\ell,
 \qquad
 k+\underbrace{z}_{\widehat\chi_{\mathrm{in}}}
 \longrightarrow p=q+\ell+z,
 \qquad
 p+\underbrace{r}_{\Psi_{c,L}}
 +\underbrace{\zeta}_{\widehat\chi_{\mathrm{out}}}
 \longrightarrow n,
\end{equation}
so that
\begin{equation}\label{eq:outer-localization-defect}
 \zeta=n-q-\ell-r-z.
\end{equation}
The shell $R$ is imposed on $p$, before the outer resonant product.  It is
therefore $p$, not $q+\ell$, that enters the Duhamel multiplier and supplies
the inverse frequency gain.

\begin{lemma}[Inner-localization shell transfer]
\label{lem:inner-localization-transfer}
Fix $\chi_{\mathrm{in}},\chi_{\mathrm{out}}\in C_c^\infty(\R^3)$.
Let $N,Q,R,L,M$ denote, respectively, the low--high stochastic, input,
Duhamel-output, outer stochastic, and final-output shells.  Assume
\[
 Q\le c_{\mathrm{ap}}N,
 \qquad L\sim_{\mathrm{ap}}R.
\]
At finite cutoff, every centered block is obtained by covariance pullback
from a finite sum of deterministic coefficient kernels
\begin{equation}\label{eq:two-localizer-kernel}
\begin{aligned}
 H_{N,R,L,Q,M}^{t,s}(\ell,r,q,n)
 ={}&\widetilde\rho_N(\ell)\widetilde\rho_L(r)
      \chi_Q(q)\chi_M(n)\\
 &\times\int_{\R^3}\widehat\chi_{\mathrm{in}}(z)
 \widehat\chi_{\mathrm{out}}(n-q-\ell-r-z)\\
 &\qquad\times\varphi_R(q+\ell+z)
 K_a(t-s,q+\ell+z)\,
 \mathfrak a_{N,R,L,Q,M,z}(\ell,r,q,n)\dd z.
\end{aligned}
\end{equation}
Here the tilded factors are the auxiliary plateaux from
\eqref{eq:auxiliary-shell-plateau}.  The stochastic legs are composed with
$\mathsf S_{b,N,s}^{\Lambda}$ and $\mathsf S_{c,L,t}^{\Lambda}$, respectively,
so that
\begin{equation}\label{eq:single-shell-factor-after-pullback}
 \widetilde\rho_N(\ell)\widetilde\rho_L(r)
 [\rho_Nm_\Lambda](\ell)[\rho_Lm_\Lambda](r)
 =\rho_N(\ell)\rho_L(r)m_\Lambda(\ell)m_\Lambda(r).
\end{equation}
The factor $m_\Lambda$ is omitted when $\Lambda=\infty$, and the deterministic
coefficient kernel is unchanged.  Here $\varphi_R$ is an enlarged shell cutoff
and
\begin{equation}\label{eq:transfer-amplitude-size}
 \sup_{N,R,L,Q,M,z}
 \|\mathfrak a_{N,R,L,Q,M,z}\|_{L^\infty}\lesssim1.
\end{equation}
The remaining Bony symbols and finite-overlap cutoffs are absorbed into
$\mathfrak a$.  This amplitude is independent of $(t,s)$, and the deterministic
time dependence is displayed in $K_a(t-s,\cdot)$.  The synthesis maps carry
the stochastic shell and spectral-cutoff factors.  The propagated frequency
in \eqref{eq:two-localizer-kernel} is
\begin{equation}\label{eq:actual-duhamel-frequency}
 p=q+\ell+z.
\end{equation}

There is a constant $C_{\mathrm{tr}}>1$, depending only on the
Littlewood--Paley supports and the aperture, such that we write
$R\sim_{\mathrm{tr}}N$ when
$C_{\mathrm{tr}}^{-1}N\le R\le C_{\mathrm{tr}}N$.  Enlarging
$C_{\mathrm{tr}}$ absorbs the finite inhomogeneous exceptions.  This
relation has the following properties.
\begin{enumerate}[label=\textup{(\roman*)}]
\item If $R\sim_{\mathrm{tr}}N$, then, since
$L\sim_{\mathrm{ap}}R$, on the support of
\eqref{eq:two-localizer-kernel},
\begin{equation}\label{eq:shifted-duhamel-gain}
 \la p\ra\sim\la R\ra\sim\la L\ra\sim\la N\ra,
 \qquad
 |K_a(t-s,p)|\lesssim\la N\ra^{-1}.
\end{equation}
Thus the factor
\begin{equation}\label{eq:shifted-normalized-amplitude}
 \mathfrak m_z:=\la N\ra\varphi_R(p)K_a(t-s,p)
 \mathfrak a_{N,R,L,Q,M,z}
\end{equation}
has uniformly bounded modulus, uniformly in $z$.

\item If $R\not\sim_{\mathrm{tr}}N$, then for every $A>0$,
\begin{equation}\label{eq:inner-transfer-tail-L1}
 \sup_{\substack{|q|\lesssim Q,\ |\ell|\sim N}}
 \int_{\R^3}|\widehat\chi_{\mathrm{in}}(z)|
 \one_{\{\varphi_R(q+\ell+z)\ne0\}}\dd z
 \lesssim_A\la N+R\ra^{-A}.
\end{equation}
For $M\lesssim R$, after covariance pullback the four oriented flattenings
of this transfer tail satisfy, for every $A>0$,
\begin{equation}\label{eq:inner-transfer-tail-flat}
 \max_{1\le\varkappa\le4}\|F_{\varkappa,\Lambda,N,R,L,Q,M}^{\sharp}\|_{\mathfrak S_2}
 \lesssim_A
 \la N+R\ra^{-A}
 \la N\ra^{1/2+\beta_b}\la R\ra^{-1/2+\beta_c}\la Q\ra^{3/2}.
\end{equation}
For $M\gg R$ there is, in addition, arbitrary decay in $M$.

\item Let $(t,s),(t',s')\in\Delta_T$ and put
$d_\Delta:=|t-t'|+|s-s'|$.  For every $A>0$ and
$0<\theta\le1/2$, the transfer-tail flattenings satisfy
\begin{equation}\label{eq:inner-transfer-tail-time-increment}
\begin{aligned}
 &\max_{1\le\varkappa\le4}
 \|F_{\varkappa,N,R,L,Q,M}^{\sharp}(t,s)
     -F_{\varkappa,N,R,L,Q,M}^{\sharp}(t',s')\|_{\mathfrak S_2}\\
 &\qquad\lesssim_{A,\theta}
 d_\Delta^\theta\langle N+R\rangle^{-A}
 \langle N\rangle^{1/2+\beta_b}
 \langle R\rangle^{-1/2+\beta_c}
 \langle Q\rangle^{3/2}.
\end{aligned}
\end{equation}
If $F_{\varkappa,\Lambda}^{\sharp}$ denotes the corresponding cutoff flattening,
then
\begin{equation}\label{eq:inner-transfer-tail-cutoff-difference}
 \max_{1\le\varkappa\le4}
 \|F_{\varkappa,\Lambda}^{\sharp}-F_{\varkappa,\infty}^{\sharp}\|_{\mathfrak S_2}
 \lesssim_A
 \langle N+R\rangle^{-A}
 \langle N\rangle^{1/2+\beta_b}\langle R\rangle^{-1/2+\beta_c}
 \langle Q\rangle^{3/2},
\end{equation}
with the analogous bound for the time increment in
\eqref{eq:inner-transfer-tail-time-increment}.  For each fixed shell tuple the
left-hand side of \eqref{eq:inner-transfer-tail-cutoff-difference} tends to
zero along every admissible cofinal family; it is eventually zero once the
two stochastic shells lie in the cutoff plateau.
\end{enumerate}
Consequently the sector $R\not\sim_{\mathrm{tr}}N$ is summable after the
insertion of arbitrary fixed polynomial Sobolev or Besov weights.
\end{lemma}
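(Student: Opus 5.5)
The plan is to derive the kernel \eqref{eq:two-localizer-kernel} by direct bookkeeping. Then (i) follows from the size of the propagator, and (ii)--(iii) from Schwartz decay of $\widehat\chi_{\mathrm{in}}$ combined with Hilbert--Schmidt counting.

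\emph{Kernel and synthesis factors.} Start from the finite-cutoff dyadic core \eqref{eq:raw-kernel} and insert the two physical multipliers from \eqref{eq:T-def}. Multiplication by $\chi_{\mathrm{in}}$ acts on the Fourier side as convolution with $c_{\mathrm F}\widehat\chi_{\mathrm{in}}(z)$, and this happens before $I_a$ is applied. Hence the Duhamel kernel is evaluated at $p=q+\ell+z$, and the shell $\varphi_R$ is imposed there. The outer multiplier contributes $\widehat\chi_{\mathrm{out}}(\zeta)$, with $\zeta$ as in \eqref{eq:outer-localization-defect}.

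Apply the Wick identity \eqref{eq:finite-wick-identity} and keep the centered part. Then write $P_N\Psi_{b,\Lambda}(s)$ and $P_L\Psi_{c,\Lambda}(t)$ through \eqref{eq:first-chaos-synthesis}, so that the factors $\rho_Nm_\Lambda$ and $\rho_Lm_\Lambda$ move into the synthesis maps. Inserting the plateaux $\widetilde\rho_N,\widetilde\rho_L$ costs nothing because of \eqref{eq:plateau-times-shell}, which gives \eqref{eq:single-shell-factor-after-pullback}. The Bony symbols and the output shell are collected into $\mathfrak a$. These are fixed bounded cutoffs, which gives \eqref{eq:transfer-amplitude-size}, and they do not depend on time.

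\emph{Part (i).} On the support, $|q|\le c_{\mathrm{ap}}N$, $|\ell|\sim N$ and $p\in\supp\varphi_R$ with $R\sim_{\mathrm{tr}}N$. Together with $L\sim_{\mathrm{ap}}R$ this gives comparability of all four scales. The bound $|K_a(\tau,p)|\le\omega_a(p)^{-1}\lesssim\langle p\rangle^{-1}$ holds for $|p|\gtrsim1$; the inhomogeneous block is handled using $|\sin(\tau\omega)/\omega|\le\tau\le T$. This proves \eqref{eq:shifted-duhamel-gain} and the boundedness of $\mathfrak m_z$.

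\emph{Part (ii).} Choose $C_{\mathrm{tr}}$ so large that $R\not\sim_{\mathrm{tr}}N$ forces $|z|\gtrsim\max\{N,R\}$ whenever $\varphi_R(q+\ell+z)\ne0$. Indeed, $|q+\ell|\in[N/2,2N]$ while $|p|\sim R$. Schwartz decay of $\widehat\chi_{\mathrm{in}}$ then gives \eqref{eq:inner-transfer-tail-L1}.

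For the flattenings, I bound each $\mathfrak S_2$ norm by the Hilbert--Schmidt norm of the pulled-back kernel. All four flattenings share this norm, because they are regroupings of the same tensor (\cref{lem:centered-schatten-ideal}). By \cref{lem:covariance-pullback} with \eqref{eq:synthesis-size}, the pullback costs at most $N^{-1+\beta_b}L^{-1+\beta_c}$.

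What remains is the $L^2$ norm of the deterministic kernel in $(\ell,r,q,n)$. I would estimate it as follows.
\begin{itemize}
\item The $z$-integral is $\lesssim\langle N+R\rangle^{-A}\langle n-q-\ell-r\rangle^{-A}$, with $|K_a|\lesssim1$.
\item Integrating $n$ against this Schwartz factor costs $O(1)$.
\item The $q$-volume contributes $Q^{3/2}$.
\item The $\ell$-volume contributes $N^{3/2}$.
\item The $r$-volume contributes $L^{3/2}\sim R^{3/2}$.
\end{itemize}
Multiplying out gives $\langle N+R\rangle^{-A}N^{1/2+\beta_b}R^{1/2+\beta_c}Q^{3/2}$. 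The surplus $R^{1}$ relative to the displayed exponent is absorbed into the arbitrary decay, after renaming $A$. For $M\gg R$, the factor $\chi_M(n)$ together with the decay in $n-q-\ell-r$ yields extra $M^{-A}$ decay, as in \cref{lem:kernel-to-shell-schur}.

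\emph{Part (iii).} Time increments fall on one of two places. The first is $K_a(t-s,p)$, handled by $|K_a(\tau)-K_a(\tau')|\lesssim|\tau-\tau'|^\theta\langle p\rangle^{-1+\theta}$. The second is the synthesis maps, handled by \eqref{eq:synthesis-increment}. The extra factor $N^\theta$ or $R^\theta$ is absorbed by $\langle N+R\rangle^{-A}$.

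Cutoff differences telescope as in \cref{lem:finite-product-cutoff}. They are dominated by twice the same bound, and they vanish once both shells lie in the plateau (\cref{lem:covariance-synthesis}).

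The final summability statement then follows from the arbitrary power $A$ against any fixed polynomial weight. The main obstacle is careful support geometry in (ii): making the choice of $C_{\mathrm{tr}}$ uniform, including the inhomogeneous blocks and the overlap with the aperture, so that $|z|\gtrsim N+R$ genuinely holds.
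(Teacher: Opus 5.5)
Your proposal is correct and follows the paper's proof essentially step by step: the same Fourier bookkeeping that puts $\varphi_R$ and $K_a$ on the propagated frequency $p=q+\ell+z$; the same support argument forcing $|z|\gtrsim N+R$ when $R\not\sim_{\mathrm{tr}}N$; one Hilbert--Schmidt bound shared by all four regroupings, followed by the synthesis-map ideal bound; and the same telescope into a propagator increment and two synthesis-map increments, with cutoff differences carried only by the synthesis maps. The only cosmetic difference is in the tail estimate: you use $|K_a|\lesssim1$ and absorb the extra factor $\langle R\rangle$ by renaming $A$, whereas the paper keeps $|K_a(t-s,p)|\lesssim\langle R\rangle^{-1}$ from $|p|\sim R$ and uses Minkowski in $z$ together with $\|\widehat\chi_{\mathrm{out}}\|_{L^2}$ in place of your pointwise Schwartz-convolution bound.
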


\begin{proof}
We derive one representative term; the Bony and resonant decompositions
produce only finitely many terms of the same form.

\emph{Step 1: the low--high product.}
Write the low--high symbol as
$\chi_Q(q)\rho_N(\ell)\mathfrak b_{Q,N}^{\circ}(q,\ell)$, where
$\mathfrak b_{Q,N}^{\circ}$ is the remaining smooth factor.  It is uniformly order zero and vanishes unless
$Q\le c_{\mathrm{ap}}N$.  Before physical localization,
\begin{align*}
 \widehat{(P_Qw\prec P_N\Psi_b)}(k,s)
 =c_{\mathrm F}\int_{q,\ell}
 &\delta(k-q-\ell)\chi_Q(q)\rho_N(\ell)
 \mathfrak b_{Q,N}^{\circ}(q,\ell)\\
 &\times\widehat w(q,s)\widehat\Psi_b(\ell,s)\dd q\dd\ell.
\end{align*}
At finite spectral cutoff, $\widehat\Psi_b$ in this formula is replaced by
$m_\Lambda\widehat\Psi_b$.  The factor $\rho_Nm_\Lambda\widehat\Psi_b$ is represented by
$\mathsf S_{b,N,s}^{\Lambda}$ through
\eqref{eq:first-chaos-synthesis}.  Multiplication by
$\chi_{\mathrm{in}}$ convolves the momentum relation with
$\widehat\chi_{\mathrm{in}}$.  Writing its Fourier variable as $z$ and using
\eqref{eq:plateau-times-shell} gives the deterministic coefficient in
\eqref{eq:two-localizer-kernel}, with propagated frequency
$p=q+\ell+z$.

\emph{Step 2: Duhamel propagation and its output shell.}
Set
\[
 \mathcal U_{R,Q,N}(t)
 :=P_RI_a\bigl[\chi_{\mathrm{in}}(P_Qw\prec P_N\Psi_b)\bigr](t).
\]
Since $P_R$ commutes with $I_a$,
\begin{align*}
 \widehat{\mathcal U_{R,Q,N}}(p,t)
 ={}&\varphi_R(p)\int_0^tK_a(t-s,p)\\
 &\quad\times\widehat{\chi_{\mathrm{in}}(P_Qw\prec P_N\Psi_b)}(p,s)
 \dd s.
\end{align*}
Thus the shell factor $\varphi_R$ and the multiplier $K_a$ act on the same
variable $p$.

\emph{Step 3: outer resonance, outer localization, and final output.}
Write the resonant symbol as
$\rho_L(r)\mathfrak r_{R,L}^{\circ}(p,r)$, where
$\mathfrak r_{R,L}^{\circ}$ is uniformly order zero and vanishes unless
$L\sim_{\mathrm{ap}}R$.  Taking the resonant product with
$P_L\Psi_c(t)$, multiplying by $\chi_{\mathrm{out}}$, and finally applying
$P_M$ gives
\begin{align*}
 &\widehat{P_M\{\chi_{\mathrm{out}}
   [\mathcal U_{R,Q,N}\circ P_L\Psi_c]\}}(n,t)\\
 &\quad=c_{\mathrm F}^2\chi_M(n)
 \int_{p,r,\zeta}
 \widehat\chi_{\mathrm{out}}(\zeta)
 \delta(n-p-r-\zeta)\rho_L(r)
 \mathfrak r_{R,L}^{\circ}(p,r)\\
 &\hspace{42mm}\times
 \widehat{\mathcal U_{R,Q,N}}(p,t)
 \widehat\Psi_c(r,t)\dd p\dd r\dd\zeta.
\end{align*}
At cutoff $\Lambda$, the factor
$\rho_Lm_\Lambda\widehat\Psi_c$ is generated by
$\mathsf S_{c,L,t}^{\Lambda}$.  Substituting the preceding formulas,
eliminating $p$ and $\zeta$, and inserting
$\widetilde\rho_N=\widetilde\rho_L=1$ on the two stochastic shell supports
yields the deterministic kernel
\eqref{eq:two-localizer-kernel}, with
$\zeta=n-q-\ell-r-z$.  The Fourier-normalization constant, the symbols
$\mathfrak b_{Q,N}^{\circ}$ and $\mathfrak r_{R,L}^{\circ}$, and the enlarged
deterministic cutoffs are absorbed into
$\mathfrak a_{N,R,L,Q,M,z}$.  Their rescaled symbol bounds give
\eqref{eq:transfer-amplitude-size}, and
\eqref{eq:single-shell-factor-after-pullback} identifies the resulting
finite-cutoff block.

\emph{Step 4: principal transfer and transfer tail.}
The low--high support gives $|q+\ell|\sim N$.  If
$R\sim_{\mathrm{tr}}N$, the factor $\varphi_R(p)$ gives
$\la p\ra\sim\la R\ra\sim\la N\ra$, and
$|K_a(t-s,p)|\lesssim\la p\ra^{-1}$.  This proves
\eqref{eq:shifted-duhamel-gain} and the boundedness of the normalized
amplitude \eqref{eq:shifted-normalized-amplitude}.

If $R\not\sim_{\mathrm{tr}}N$, then $|p|\sim R$ and
$|q+\ell|\sim N$ force
\[
 |z|=|p-(q+\ell)|\gtrsim N+R
\]
after enlarging the finite-ratio constant to absorb the inhomogeneous blocks.
The Schwartz decay of $\widehat\chi_{\mathrm{in}}$ gives
\eqref{eq:inner-transfer-tail-L1}.  Before covariance pullback, Minkowski's
inequality and integration first in the final frequency give
\begin{align}
 \|H_{\Lambda,N,R,L,Q,M}^{\mathrm{tr}}\|_{L^2_{\ell,r,q,n}}
 \lesssim{}&\la R\ra^{-1}
 \|\widehat\chi_{\mathrm{in}}
   \one_{\{|z|\gtrsim N+R\}}\|_{L^1_z}
 \|\widehat\chi_{\mathrm{out}}\|_{L^2}\notag\\
 &\times\la N\ra^{3/2}\la L\ra^{3/2}\la Q\ra^{3/2}.
 \label{eq:transfer-tail-HS-before-pullback}
\end{align}
Every orientation has this Hilbert--Schmidt norm, because it is only a
regrouping of the same four tensor legs.  Composing the two Gaussian legs
with $\mathsf S_{b,N,s}$ and $\mathsf S_{c,L,t}$ contributes
$\la N\ra^{-1+\beta_b}\la L\ra^{-1+\beta_c}$ by
Lemma~\ref{lem:covariance-synthesis}.  Since $L\sim R$, the Schatten ideal
property gives \eqref{eq:inner-transfer-tail-flat}.  If $M\gg R$, then
$|n-p-r|\gtrsim M$ and the outer Schwartz factor supplies arbitrary additional
$M$-decay.

\emph{Step 5: time increments.}
Write the pulled-back form as
$H_{t,s}\circ(\mathsf S_{b,N,s},\mathsf S_{c,L,t})$ and decompose its
increment as
\begin{align*}
&(H_{t,s}-H_{t',s'})\circ
 (\mathsf S_{b,N,s},\mathsf S_{c,L,t})\\
&\quad+H_{t',s'}\circ
 (\mathsf S_{b,N,s}-\mathsf S_{b,N,s'},\mathsf S_{c,L,t})\\
&\quad+H_{t',s'}\circ
 (\mathsf S_{b,N,s'},\mathsf S_{c,L,t}-\mathsf S_{c,L,t'}).
\end{align*}
The first line uses
$|K_a(t-s,p)-K_a(t'-s',p)|
 \lesssim d_\Delta^\theta\la R\ra^{-1+\theta}$; the other two use
\eqref{eq:synthesis-increment}.  Increasing the Schwartz exponent in
\eqref{eq:inner-transfer-tail-L1} absorbs the resulting powers of $N$ and
$R$, proving \eqref{eq:inner-transfer-tail-time-increment}.

\emph{Step 6: cutoff differences.}
The deterministic kernel is unchanged; only the two synthesis maps differ.
Use the two-term identity
\begin{align*}
&H\circ(\mathsf S_{b,N,s}^{\Lambda},\mathsf S_{c,L,t}^{\Lambda})
-H\circ(\mathsf S_{b,N,s},\mathsf S_{c,L,t})\\
&\quad=H\circ(\mathsf S_{b,N,s}^{\Lambda}-\mathsf S_{b,N,s},
              \mathsf S_{c,L,t}^{\Lambda})
 +H\circ(\mathsf S_{b,N,s},
          \mathsf S_{c,L,t}^{\Lambda}-\mathsf S_{c,L,t})
\end{align*}
and Lemma~\ref{lem:covariance-synthesis} give
\eqref{eq:inner-transfer-tail-cutoff-difference} and its increment analogue.
On a fixed pair of stochastic shells the cutoff maps eventually coincide
with the uncut maps, which proves fixed-shell stabilization and comparison of
non-nested profiles through the same limit.
\end{proof}

Fix once and for all $C_{\mathrm{out}}>1$ sufficiently large relative to
the Littlewood--Paley support constants.  On the main-transfer sector
$R\sim_{\mathrm{tr}}N$, there are only finitely many choices of
$R\sim N$ and $L\sim_{\mathrm{ap}}R$.  Retaining the actual outer stochastic
scale $L$ in each summand, its deterministic coefficient kernel has the form
\begin{equation}\label{eq:iterated-soft-incidence}
\begin{aligned}
 H_{N,L}(\ell,r,q,n)
 ={}&A_N\widetilde\rho_N(\ell)\widetilde\rho_L(r)
      \chi_Q(q)\chi_M(n)\\
 &\times\int_{\R^3}\kappa_1(z)
 \kappa_2(n-q-\ell-r-z)\mathfrak m_z(\ell,r,q,n)\dd z,
\end{aligned}
\end{equation}
where $L\sim N$, $|A_N|\lesssim\la N\ra^{-1}$,
$\kappa_1,\kappa_2\in\mathcal S(\R^3)$, and
$\sup_z\|\mathfrak m_z\|_{L^\infty}\lesssim1$.  The factors $\rho_N$ and $\rho_L$ are supplied by the synthesis maps rather
than the deterministic kernel in \eqref{eq:iterated-soft-incidence}.  Since
$L\sim N$, the scale estimates below may replace $L$ by $N$ after pullback.  The four oriented
flattenings are
\begin{equation}\label{eq:flattening-orientations}
 (\ell,r,q)\longrightarrow n,\qquad
 (\ell,r,n)\longrightarrow q,\qquad
 (\ell,q)\longrightarrow(r,n),\qquad
 (r,q)\longrightarrow(\ell,n).
\end{equation}

\paragraph{Hilbert-space realization of a principal block.}
For the kernel in \eqref{eq:iterated-soft-incidence}, let
\begin{equation}\label{eq:localized-block-hilbert-spaces}
 \mathcal A_N:=L^2(\R^3_\ell),\qquad
 \mathcal C_{1,L}:=L^2(\R^3_r),\qquad
 \mathcal C_{0,Q}:=L^2(\R^3_q),\qquad
 \mathcal E_M:=L^2(\R^3_n).
\end{equation}
The auxiliary plateaux $\widetilde\rho_N,\widetilde\rho_L$ are included in
the form, whereas the genuine Littlewood--Paley multipliers are included in
the synthesis maps.  All four spaces are realified when Fourier variables
are complex.  Define
\begin{equation}\label{eq:localized-four-linear-form}
 \mathbf H_{t,s}(x,y,f,g)
 :=\Re\int H_{t,s}(\ell,r,q,n)
 x(\ell)y(r)f(q)\overline{g(n)}
 \dd\ell\dd r\dd q\dd n.
\end{equation}

\begin{proposition}[Hilbert-space realization and covariance subtraction]
\label{prop:localized-block-hilbert}
Let $\mathcal G_a:=\widehat{\mathfrak H}_a$ be the real Gaussian Hilbert
space in \eqref{eq:hermitian-gaussian-hilbert-space}.  Fix either a finite
spectral cutoff $\lambda=\Lambda$ or the uncut state $\lambda=\infty$, set
$m_\infty=1$, and put
\[
 S_b:=\mathsf S_{b,N,s}^{(m_\lambda)}:
 \mathcal G_b\to\mathcal A_N,
 \qquad
 S_c:=\mathsf S_{c,L,t}^{(m_\lambda)}:
 \mathcal G_c\to\mathcal C_{1,L}.
\]
For
\[
 \mathbf H_{t,s}^{\sharp,\lambda}(F,G,f,g)
 :=\mathbf H_{t,s}(S_bF,S_cG,f,g),
\]
the four pulled-back flattenings, with the superscript $\lambda$ suppressed,
have the directions
\begin{align}
 F_1^\sharp&:\mathcal G_b\otimes_2\mathcal G_c\otimes_2\mathcal C_{0,Q}
       \longrightarrow\mathcal E_M,
 &F_1^\sharp&=F_1(S_b\otimes S_c\otimes I_Q),
 \label{eq:pulled-flat-direction-1}\\
 F_2^\sharp&:\mathcal G_b\otimes_2\mathcal G_c\otimes_2\mathcal E_M
       \longrightarrow\mathcal C_{0,Q},
 &F_2^\sharp&=F_2(S_b\otimes S_c\otimes I_M),
 \label{eq:pulled-flat-direction-2}\\
 F_3^\sharp&:\mathcal G_b\otimes_2\mathcal C_{0,Q}
       \longrightarrow\mathcal G_c\otimes_2\mathcal E_M,
 &F_3^\sharp&=(S_c^*\otimes I_M)F_3(S_b\otimes I_Q),
 \label{eq:pulled-flat-direction-3}\\
 F_4^\sharp&:\mathcal G_c\otimes_2\mathcal C_{0,Q}
       \longrightarrow\mathcal G_b\otimes_2\mathcal E_M,
 &F_4^\sharp&=(S_b^*\otimes I_M)F_4(S_c\otimes I_Q).
 \label{eq:pulled-flat-direction-4}
\end{align}
Here $I_Q$ and $I_M$ are the identities on the input and output shell
spaces.  Each factorization contains one copy of $S_b$ and one copy of $S_c$.
When $\lambda=\Lambda$, \eqref{eq:single-shell-factor-after-pullback} gives the
finite-cutoff stochastic multipliers
\begin{equation}\label{eq:two-leg-multiplier}
 \rho_N(\ell)\rho_L(r)m_\Lambda(\ell)m_\Lambda(r).
\end{equation}

Assume now that $b=c$.  Testing the covariance of the two finite-cutoff
synthesis maps against the deterministic kernel gives
\begin{align}
 &\widetilde\rho_N(\ell)\widetilde\rho_L(r)
 (2\pi)^3\delta(\ell+r)
 \rho_N(\ell)\rho_L(r)m_\Lambda(\ell)m_\Lambda(r)
 \sigma_b(\ell;s,t)\notag\\
 &\qquad=(2\pi)^3\delta(\ell+r)
 \rho_N(\ell)\rho_L(r)m_\Lambda(\ell)m_\Lambda(r)
 \sigma_b(\ell;s,t).
 \label{eq:same-color-shell-contraction}
\end{align}
After imposing $r=-\ell$ and using evenness, its scalar shell factor is
\begin{equation}\label{eq:same-color-shell-factor}
 \rho_N(\ell)\rho_L(-\ell)m_\Lambda(\ell)^2
 \sigma_b(\ell;s,t).
\end{equation}
On a same-shell summand $L=N$, this becomes
\begin{equation}\label{eq:same-shell-rho-square}
 \rho_N(\ell)^2m_\Lambda(\ell)^2\sigma_b(\ell;s,t).
\end{equation}
This is the covariance term in Proposition~\ref{prop:finite-color-split} and,
after finite-overlap shell reindexing, the factor in
\eqref{eq:localized-contraction-kernel}.

Let $P$ be a finite-rank orthogonal projection on the common Gaussian space
$\mathcal G_b$.  Choose an orthonormal basis
$(e_\mu)_{\mu=1}^{d_P}$ of $P\mathcal G_b$ and define the finite compressed
covariance contraction by
\begin{equation}\label{eq:finite-compressed-covariance}
 D_P:=\sum_{\mu=1}^{d_P} H_{e_\mu,e_\mu}^{\sharp,\lambda},
 \qquad
 \langle H_{F,G}^{\sharp,\lambda}f,g\rangle
 :=\mathbf H_{t,s}(S_bF,S_cG,f,g).
\end{equation}
The sum is basis independent and equals the covariance of the two
$P$-compressed first chaoses.  Wick centering subtracts $D_P$ at finite rank.
For the dyadic block, \eqref{eq:same-color-shell-contraction} is treated
separately, and Gaussian completion is applied to the centered tensor.
\end{proposition}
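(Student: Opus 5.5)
The plan is to verify the four factorizations by testing on elementary tensors and then treat the same-color covariance by a finite-rank computation. For the directions \eqref{eq:pulled-flat-direction-1}--\eqref{eq:pulled-flat-direction-4}, write $\mathbf H^{\sharp,\lambda}_{t,s}(F,G,f,g)=\mathbf H_{t,s}(S_bF,S_cG,f,g)$. By the defining relations \eqref{eq:abstract-flat-1}--\eqref{eq:abstract-flat-4}, pairing $F_1^\sharp(F\otimes G\otimes f)$ with $g$ gives $\langle F_1(S_bF\otimes S_cG\otimes f),g\rangle$. Hence $F_1^\sharp=F_1(S_b\otimes S_c\otimes I_Q)$, and $F_2^\sharp$ follows in the same way. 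For the cross orientations, the second Gaussian leg sits in the target, so its map is moved across the pairing as an adjoint. This gives $F_3^\sharp=(S_c^*\otimes I_M)F_3(S_b\otimes I_Q)$ and $F_4^\sharp=(S_b^*\otimes I_M)F_4(S_c\otimes I_Q)$, exactly as in the proof of \cref{lem:covariance-pullback}. Density of elementary tensors and boundedness of $S_b,S_c$ (\cref{lem:covariance-synthesis}) extend these identities to the Hilbert tensor products. The multiplier \eqref{eq:two-leg-multiplier} then follows from \eqref{eq:single-shell-factor-after-pullback}: $S_b$ and $S_c$ carry $\rho_Nm_\Lambda$ and $\rho_Lm_\Lambda$, and the plateaux in the deterministic kernel equal one there by \eqref{eq:plateau-times-shell}.

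For $b=c$, the next step computes $\E[\langle S_bF_{\mathbf W},\cdot\rangle\langle S_cG_{\mathbf W},\cdot\rangle]$ as the kernel $S_bS_c^*$ in the Fourier variables $(\ell,r)$. This requires the Hermitian realization \eqref{eq:hermitian-gaussian-hilbert-space}: the real inner product couples $\ell$ with $-\ell$, producing $\delta(\ell+r)$. Combining it with \eqref{eq:shell-covariance-from-synthesis} gives the density $(2\pi)^3\delta(\ell+r)\rho_N(\ell)\rho_L(r)m_\Lambda(\ell)m_\Lambda(r)\sigma_b(\ell;s,t)$. Multiplying by the plateaux $\widetilde\rho_N(\ell)\widetilde\rho_L(r)$ changes nothing by \eqref{eq:plateau-times-shell}, which is \eqref{eq:same-color-shell-contraction}. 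Setting $r=-\ell$ and using evenness of $m_\Lambda$ gives \eqref{eq:same-color-shell-factor}. On the summand $L=N$, evenness of the radial $\rho_N$ gives \eqref{eq:same-shell-rho-square}. Matching this with \eqref{eq:localized-contraction-kernel} uses only the reindexing of finitely many overlapping shells.

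For the finite-rank statement, $D_P=\sum_\mu H^{\sharp,\lambda}_{e_\mu,e_\mu}$ is the trace of the bilinear map $(F,G)\mapsto H^{\sharp,\lambda}_{F,G}$ over $P\mathcal G_b$. A trace is invariant under orthogonal change of basis, so $D_P$ is basis independent. The $P$-compressed first chaoses are $\sum_\mu\mathbf W_b(e_\mu)e_\mu$, and $\E[\mathbf W_b(e_\mu)\mathbf W_b(e_\nu)]=\delta_{\mu\nu}$. Therefore the expectation of $\sum_{\mu\nu}\mathbf W_b(e_\mu)\mathbf W_b(e_\nu)H^\sharp_{e_\mu,e_\nu}$ equals $D_P$. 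Subtracting it yields the centered operator \eqref{eq:same-color-pullback-realization}, as in \cref{lem:wick-pullback-realization}.

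The main obstacle is the Hermitian bookkeeping. The real Gaussian space identifies $F(\ell)$ with $\overline{F(-\ell)}$, and one must check that this gives $\delta(\ell+r)$ with the correct constant $(2\pi)^3$ rather than a doubled or conjugated term. My first approach is to compute $\E$ on real test functions via \eqref{eq:first-chaos-synthesis} and \eqref{eq:gaussian-forcing-covariance}, and only afterward pass to Fourier variables.
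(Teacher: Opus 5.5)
Your proposal is correct and follows essentially the same route as the paper. You test on elementary tensors for the four factorizations, putting the adjoint on whichever Gaussian leg lies in the target ($S_c^*$ for $F_3^\sharp$, $S_b^*$ for $F_4^\sharp$). You then use \eqref{eq:shell-covariance-from-synthesis} with the plateau identity \eqref{eq:plateau-times-shell} for the same-color contraction, and a finite-dimensional Wick computation for $D_P$.

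The differences are cosmetic. You get basis independence of $D_P$ from trace invariance, whereas the paper uses the Parseval identity $\sum_\mu\langle S_be_\mu,\phi\rangle\langle S_ce_\mu,\psi\rangle=\langle PS_b^*\phi,PS_c^*\psi\rangle$. The Hermitian bookkeeping you flag as the main obstacle is already settled by \eqref{eq:shell-covariance-from-synthesis}, which the paper simply invokes.
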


\begin{proof}
The four factorizations follow by inserting $S_b$ and $S_c$ into the first
two slots of \eqref{eq:localized-four-linear-form} and testing against
elementary tensors.  A synthesis adjoint occurs when the corresponding Gaussian leg lies on the
output side of the flattening, giving
\eqref{eq:pulled-flat-direction-3}--\eqref{eq:pulled-flat-direction-4}.

At finite cutoff, \eqref{eq:first-chaos-synthesis} identifies the two
pushed-forward Gaussian legs with
$P_N\Psi_{b,\Lambda}(s)$ and $P_L\Psi_{c,\Lambda}(t)$.  By \eqref{eq:plateau-times-shell}, multiplication by the auxiliary plateaux
in \eqref{eq:iterated-soft-incidence} leaves these legs unchanged.  This gives
\eqref{eq:two-leg-multiplier}.

If $b=c$, apply
\eqref{eq:shell-covariance-from-synthesis}.  Testing that covariance
distribution against the two auxiliary plateaux gives the first line of
\eqref{eq:same-color-shell-contraction};
\eqref{eq:plateau-times-shell} gives its second line.  The support relation
$r=-\ell$ and the evenness of $\rho_L,m_\Lambda$ yield
\eqref{eq:same-color-shell-factor} and
\eqref{eq:same-shell-rho-square}.  Hence Wick centering removes the contraction obtained by applying
\eqref{eq:finite-wick-identity} to the raw finite-cutoff kernel
\eqref{eq:raw-kernel}.

For the finite-compression statement, let $\phi\in\mathcal A_N$ and
$\psi\in\mathcal C_{1,L}$.  Parseval's identity on the finite-dimensional
space $P\mathcal G_b$ gives
\begin{align}\label{eq:compressed-covariance-identity}
 \sum_{\mu=1}^{d_P}
 \langle S_be_\mu,\phi\rangle_{\R}
 \langle S_ce_\mu,\psi\rangle_{\R}
 &=\langle P S_b^*\phi,P S_c^*\psi\rangle_{\mathcal G_b}\notag\\
 &=\E\!\left[
   \langle\Psi_{b,P}^{N,\lambda}(s),\phi\rangle_{\R}
   \langle\Psi_{b,P}^{L,\lambda}(t),\psi\rangle_{\R}\right],
\end{align}
where $\Psi_{b,P}^{N,\lambda}$ and $\Psi_{b,P}^{L,\lambda}$ denote the two
first chaoses obtained by restricting the isonormal coordinate to
$P\mathcal G_b$ and applying the corresponding synthesis maps.  Contracting
\eqref{eq:compressed-covariance-identity} with the remaining input and output
legs of $\mathbf H_{t,s}$ gives $D_P$ in
\eqref{eq:finite-compressed-covariance}.  This also proves basis independence.

Writing $\mathbf W_\mu:=\mathbf W_b(e_\mu)$, the finite-dimensional Wick
identity is
\begin{align}\label{eq:finite-dimensional-operator-wick}
 \sum_{\mu,\nu=1}^{d_P}H_{e_\mu,e_\nu}^{\sharp,\lambda}
 \mathbf W_\mu\mathbf W_\nu
 =D_P+
 \sum_{\mu,\nu=1}^{d_P}(H_{e_\mu,e_\nu}^{\sharp,\lambda})^{\mathrm{sym}}
 \bigl(\mathbf W_\mu\mathbf W_\nu-\delta_{\mu\nu}\bigr).
\end{align}
The antisymmetric coefficient tensor drops out because both
$\mathbf W_\mu\mathbf W_\nu$ and $\delta_{\mu\nu}$ are symmetric in
$(\mu,\nu)$.  The second term in
\eqref{eq:finite-dimensional-operator-wick} is therefore centered.

The covariance distribution \eqref{eq:same-color-shell-contraction} is
separated and treated by Proposition~\ref{prop:localized-diagonal}.  The
centered term is then completed through finite-rank Gaussian compressions by
Lemma~\ref{lem:four-flat-compression} and
Theorem~\ref{thm:centered-gaussian-operator}.
\end{proof}

\begin{lemma}[Scalar Wiener realization of a centered block]
\label{lem:scalar-wiener-realization}
Fix a concrete pulled-back dyadic form $\mathbf H^\sharp$ and assume that its
first flattening $F_1^\sharp$ is Hilbert--Schmidt.  For
$f\in\mathcal C_{0,Q}$ and $g\in\mathcal E_M$, define the scalar coefficient
kernel on the two Gaussian Hilbert spaces by
\[
 \mathfrak k_{f,g}(F,G):=\mathbf H^\sharp(F,G,f,g).
\]
Then $\mathfrak k_{f,g}\in\mathcal G_b\otimes_2\mathcal G_c$ and
\begin{equation}\label{eq:scalar-wiener-kernel-bound}
 \|\mathfrak k_{f,g}\|_{\mathcal G_b\otimes_2\mathcal G_c}
 \le \|F_1^\sharp\|_{\Sch_2}\|f\|_{\mathcal C_{0,Q}}
 \|g\|_{\mathcal E_M}.
\end{equation}
For distinct colors, the scalar pairing of the centered operator with $(f,g)$
is the ordered double Wiener integral of $\mathfrak k_{f,g}$.  For equal
colors, it is the second Wiener integral of the symmetrization of
$\mathfrak k_{f,g}$.  In either case, simultaneous finite-rank Gaussian
compressions converge in $L^2(\Omega)$ to this scalar integral.  If, in
addition, all four flattenings of $\mathbf H^\sharp$ lie in $\Sch_r$ for
some $2\le r<\infty$, this scalar limit agrees with the pairing of the
operator-valued completion in
Theorem~\ref{thm:centered-gaussian-operator}.

At finite spectral cutoff, pairing \eqref{eq:finite-wick-identity} with the
localized deterministic kernel gives this scalar Wiener integral.  The
auxiliary plateaux act as the identity on the ranges of the synthesis maps,
so the coefficient has the multiplier in \eqref{eq:two-leg-multiplier}; for
equal colors, the covariance term is \eqref{eq:same-color-shell-contraction}.
\end{lemma}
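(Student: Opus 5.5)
The plan is to read the scalar kernel $\mathfrak k_{f,g}$ directly off the first flattening, and then to identify the Wiener integral through finite-rank compressions.

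\emph{Kernel bound.} By definition $\langle F_1^\sharp(F\otimes G\otimes f),g\rangle=\mathbf H^\sharp(F,G,f,g)$. Hence
\[
\mathfrak k_{f,g}(F,G)=\langle F\otimes G,\,(F_1^\sharp)^*g\,\text{ contracted with }f\rangle .
\]
More precisely, $\mathfrak k_{f,g}$ is the partial contraction of $(F_1^\sharp)^*g\in\mathcal G_b\otimes_2\mathcal G_c\otimes_2\mathcal C_{0,Q}$ against $f$ in the last leg. Contraction against a unit vector in one tensor leg is a contraction of Hilbert spaces. This gives
\[
\|\mathfrak k_{f,g}\|\le\|(F_1^\sharp)^*g\|\,\|f\|\le\|F_1^\sharp\|_{\mathcal L}\|g\|\|f\|\le\|F_1^\sharp\|_{\Sch_2}\|f\|\|g\|,
\]
which is \eqref{eq:scalar-wiener-kernel-bound}. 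To verify membership in $\mathcal G_b\otimes_2\mathcal G_c$, I would compute in orthonormal bases $(e_\mu),(f_\nu)$: the quantity $\sum_{\mu,\nu}|\mathbf H^\sharp(e_\mu,f_\nu,f,g)|^2$ equals the squared norm of the contraction just described.

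\emph{Wiener integral and compressions.} For distinct colors, take finite-rank projections $P_m,P'_m$ on $\mathcal G_b,\mathcal G_c$. The compressed pairing is $\sum_{\mu,\nu\le m}\mathfrak k_{f,g}(e_\mu,f_\nu)\mathbf W_b(e_\mu)\mathbf W_c(f_\nu)$, which is the double Wiener integral of $(P_m\otimes P'_m)\mathfrak k_{f,g}$. The Wiener isometry turns $L^2(\Omega)$ distances into $\mathcal G_b\otimes_2\mathcal G_c$ distances, and $(P_m\otimes P'_m)\mathfrak k\to\mathfrak k$ in Hilbert--Schmidt norm by strong convergence of tensor projections, as in \cref{lem:four-flat-compression}. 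For equal colors, \eqref{eq:same-color-pullback-realization} shows that the compressed Wick pairing is $I_2$ of the symmetrized compressed kernel. The isometry $\|I_2(\mathrm{sym}\,h)\|_{L^2}=\sqrt2\|\mathrm{sym}\,h\|\le\sqrt2\|h\|$ gives convergence in the same way. Basis independence follows from the finite-rank statement in \cref{lem:wick-pullback-realization}.

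\emph{Agreement with the operator completion.} If all four flattenings are in $\Sch_r$, \cref{thm:centered-gaussian-operator} and \cref{lem:four-flat-compression} show that the same simultaneous compressions $Z^{(m)}$ converge in $L^p(\Omega;\mathcal K)$. Since $p\ge2$, this implies that $\langle Z^{(m)}f,g\rangle$ converges in $L^2(\Omega)$. These scalar pairings are exactly the compressed Wiener integrals above, so the two limits coincide almost surely. The one point needing care is to use identical projections in both limits, or a common refinement, which is exactly what the independence-of-scheme clause in \cref{lem:centered-uniform-compression} provides.

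\emph{Finite-cutoff identification.} At finite cutoff, \eqref{eq:first-chaos-synthesis} identifies the pushed-forward legs with $P_N\Psi_{b,\Lambda}(s)$ and $P_L\Psi_{c,\Lambda}(t)$. Pairing \eqref{eq:finite-wick-identity} with the localized kernel from \cref{lem:inner-localization-transfer} then expresses the Wick product as this Wiener integral. By \eqref{eq:plateau-times-shell}, the plateaux act as the identity on the ranges, which yields \eqref{eq:two-leg-multiplier}. For $b=c$, the subtracted scalar term is the covariance contraction computed in \cref{prop:localized-block-hilbert}, namely \eqref{eq:same-color-shell-contraction}.

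I expect the main obstacle to be the last step: making the distributional Wick identity rigorous, since the raw kernel contains $\delta$'s. I would handle it by first working with compressed coordinates, where everything is a finite sum, and then passing to the limit using the $L^2$ convergence proved above.
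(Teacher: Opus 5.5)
Your proposal is correct and follows essentially the paper's route: Parseval for the kernel bound (your adjoint/partial-contraction formulation even shows that $\|F_1^\sharp\|_{\mathcal L}$ suffices, and this is dominated by the $\Sch_2$ norm), the ordered double and symmetrized second-chaos Wiener isometries for convergence of the compressions, uniqueness of the $L^2(\Omega)$ limit to match the operator-valued completion, and the plateau identity with the covariance contraction of Proposition~\ref{prop:localized-block-hilbert} for the finite-cutoff identification. The only detail to add is that the operator completion compresses all four legs, so its scalar pairings are Wiener integrals of $(P_{\mathcal A}^{(m)}\otimes P_{\mathcal C_1}^{(m)})\mathfrak k_{P_{\mathcal C_0}^{(m)}f,\,P_{\mathcal E}^{(m)}g}$; these still converge to the same scalar integral because the bound \eqref{eq:scalar-wiener-kernel-bound} is bilinear in $(f,g)$.
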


\begin{proof}
Choose orthonormal bases $(e_\mu)$ and $(f_\nu)$ of the two Gaussian Hilbert
spaces.  By Parseval and the definition of $F_1^\sharp$,
\begin{align*}
 \|\mathfrak k_{f,g}\|_{\mathcal G_b\otimes_2\mathcal G_c}^2
 &=\sum_{\mu,\nu}
   |\langle F_1^\sharp(e_\mu\otimes f_\nu\otimes f),g\rangle|^2\\
 &\le \|F_1^\sharp\|_{\Sch_2}^2\|f\|^2\|g\|^2,
\end{align*}
which is \eqref{eq:scalar-wiener-kernel-bound}.  The isometry for ordered
double Wiener integrals in the distinct-color case, and the second-chaos
isometry after symmetrization in the equal-color case, show that orthogonal
projections converging strongly to the identity approximate the corresponding
scalar integrals in $L^2(\Omega)$.  Pairing the finite-rank operator formula with $(f,g)$ gives these projected
integrals.  Under the additional
four-flattening hypothesis, the operator-valued difference estimate in
Theorem~\ref{thm:centered-gaussian-operator} has a unique limit, so its scalar
pairings coincide with the Wiener-integral limits.  Finally, expansion of the
two pushed-forward first chaoses at finite spectral cutoff gives
\eqref{eq:finite-wick-identity} after deterministic testing.  Identity
\eqref{eq:plateau-times-shell} removes the auxiliary plateaux and leaves one
factor $\rho_Nm_\Lambda$ and one factor $\rho_Lm_\Lambda$.  The zeroth-chaos term is \eqref{eq:same-color-shell-contraction}, the
covariance branch from Proposition~\ref{prop:finite-color-split}, and the
remaining scalar kernel is centered.
\end{proof}

\begin{lemma}[Translated shell-intersection volume]
\label{lem:translated-shell-volume}
Let $\vartheta_N$ and $\chi_M$ be uniformly bounded smooth cutoffs to dyadic
balls or annuli in $\R^3$.  Uniformly in $u\in\R^3$,
\begin{equation}\label{eq:translated-shell-volume}
 \int_{\R^3}\vartheta_N(\xi)^2\chi_M(\xi+u)^2\dd\xi
 \lesssim \min\{N,M\}^3.
\end{equation}
The estimate applies in particular to the genuine shell multipliers
$\rho_N$ and to the auxiliary plateaux $\widetilde\rho_N$, and remains valid
under any fixed finite enlargement of either support.
\end{lemma}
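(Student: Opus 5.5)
The bound is a support-volume estimate, so I will bound the integrand by its supports. Write $C_0:=\sup_N\|\vartheta_N\|_{L^\infty}$ and $C_1:=\sup_M\|\chi_M\|_{L^\infty}$, which are finite by the uniform boundedness assumption. Then, pointwise,
\[
 \vartheta_N(\xi)^2\chi_M(\xi+u)^2
 \le C_0^2C_1^2\,
 \one_{\supp\vartheta_N}(\xi)\,\one_{\supp\chi_M-u}(\xi).
\]
Integrating, the left-hand side of \eqref{eq:translated-shell-volume} is at most
\[
 C_0^2C_1^2\,\bigl|\supp\vartheta_N\cap(\supp\chi_M-u)\bigr|
 \le C_0^2C_1^2\min\bigl\{|\supp\vartheta_N|,\ |\supp\chi_M|\bigr\},
\]
because the Lebesgue measure is translation invariant and an intersection has measure no larger than either of its sets.

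Next I bound the two measures. A dyadic ball or annulus at scale $N$ lies in a ball of radius $\kappa_+N$, where $\kappa_+$ is the fixed outer support constant. Its volume is therefore at most $\tfrac{4\pi}{3}\kappa_+^3N^3$. The same holds at scale $M$. For the inhomogeneous unit block the support is a fixed ball, so its volume is $O(1)=O(1^3)$. This gives \eqref{eq:translated-shell-volume} with a constant depending only on the sup bounds and on the support radii. The bound is uniform in $u$, since $u$ entered only through a translation.

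The remaining assertions follow from the same argument. The genuine multipliers $\rho_N$ are bounded by one and supported in $\{|\xi|\le\kappa_+N\}$. The plateaux $\widetilde\rho_N$ are uniformly bounded and, by \eqref{eq:auxiliary-shell-plateau}, supported in $\{|\xi|\le\widetilde\kappa_+N\}$. A fixed finite enlargement of either support changes only the radius constant, say from $\kappa_+$ to $C\kappa_+$, and this multiplies the bound by $C^3$.

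No step here is a real obstacle. The one point to check is that the implied constant does not depend on $N$, $M$ or $u$. This holds because only the sup norms and the support radii enter, never derivative bounds or the shape of the cutoffs.
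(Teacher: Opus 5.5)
Your proof is correct and follows essentially the same argument as the paper. You bound the integrand by the sup norms times the indicator of the $N$-support intersected with a translate of the $M$-support, and then use that this intersection has measure at most $\min\{|\supp\vartheta_N|,|\supp\chi_M|\}=O(\min\{N,M\}^3)$, uniformly in $u$, with support enlargements affecting only the constant.
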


\begin{proof}
The integrand is uniformly bounded and supported in the intersection of a
translate of the $M$-shell with the $N$-shell.  This intersection has volume at
most the smaller of the two ambient shell volumes, which is
$O(\min\{N,M\}^3)$.  Finite support enlargements only change the implicit
constant.
\end{proof}

\begin{lemma}[Flattening bounds for the localized wave--Klein--Gordon kernel]
\label{lem:bounded-amplitude-flattening}
Let $L\sim N$, $Q\le c_{\mathrm{ap}}N$, and $M\lesssim N$.  If
$F_1,\ldots,F_4$ denote
the flattenings in \eqref{eq:flattening-orientations}, then for every
$2\le r_0<\infty$,
\begin{align}
 \|F_1\|_{\mathfrak S_{r_0}}
 &\lesssim |A_N|C_\kappa N^{3/2}Q^{3/2}M^{3/r_0},
 \label{eq:bounded-flat-1}\\
 \|F_2\|_{\mathfrak S_{r_0}}
 &\lesssim |A_N|C_\kappa N^{3/2}M^{3/2}Q^{3/r_0},
 \label{eq:bounded-flat-2}\\
 \|F_3\|_{\mathfrak S_{r_0}}+\|F_4\|_{\mathfrak S_{r_0}}
 &\lesssim |A_N|C_\kappa Q^{3/2}M^{3/2}N^{3/r_0},
 \label{eq:bounded-flat-34}
\end{align}
where
\[
 C_\kappa:=\|\kappa_1\|_{L^1}
 \bigl(\|\kappa_2\|_{L^1}+\|\kappa_2\|_{L^2}\bigr).
\]
The estimates are uniform over all measurable amplitudes $\mathfrak m_z$ with
$|\mathfrak m_z|\le1$; no frequency-derivative bound on $\mathfrak m_z$ is required.
\end{lemma}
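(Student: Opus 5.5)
The plan is to reduce every Schatten bound to two ingredients: a Hilbert--Schmidt bound for the relevant flattening, and an operator-norm bound for the same flattening. These are then combined through the interpolation inequality \eqref{eq:centered-schatten-interpolation}, namely $\|F\|_{\Sch_{r_0}}\le\|F\|_{\mathcal L}^{1-2/r_0}\|F\|_{\Sch_2}^{2/r_0}$. The kernel \eqref{eq:iterated-soft-incidence} is bounded pointwise by
\[
|A_N|\,\one_{|\ell|\sim N}\one_{|r|\sim N}\one_{|q|\sim Q}\one_{|n|\sim M}\,(|\kappa_1|*|\kappa_2|)(n-q-\ell-r).
\]
All bounds will be derived from this majorant alone, which is why no regularity of $\mathfrak m_z$ is needed. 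Since $\mathfrak m_z$ sits inside the $z$-integral, I will first apply Minkowski in $z$. For fixed $z$, the factor $|\kappa_1(z)|$ comes out, and $\kappa_2$ is evaluated at the shifted argument $n-q-\ell-r-z$. At the end one integrates $\|\kappa_1\|_{L^1}$. Thus it suffices to treat a kernel of the form $\kappa_2(n-q-\ell-r-z)$ with $z$ fixed, uniformly in $z$.

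\emph{Hilbert--Schmidt norm (common to all orientations).} Integrate first in the variable that appears only through $\kappa_2$. For $F_1$, this is $n$, which gives $\|\kappa_2\|_{L^2}^2$; the remaining volumes are $N^3\cdot N^3\cdot Q^3$. This yields $\|F\|_{\Sch_2}\lesssim|A_N|\|\kappa_2\|_{L^2}N^{3}Q^{3/2}$, which is too lossy. Instead I integrate in $r$: the $\kappa_2$ factor gives $\|\kappa_2\|_{L^2}^2$, and what remains is $\int_{\ell,q,n}$, i.e.\ $N^3Q^3M^3$. So
\[
\|F_\varkappa\|_{\Sch_2}\lesssim|A_N|\|\kappa_1\|_{L^1}\|\kappa_2\|_{L^2}N^{3/2}Q^{3/2}M^{3/2}
\]
for every orientation, since regrouping legs is unitary (Lemma~\ref{lem:centered-schatten-ideal}).

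\emph{Operator norms via Schur.} For $F_1:(\ell,r,q)\to n$, I will write the operator as $TT^*$-type: $\|F_1\|_{\mathcal L}^2=\|F_1F_1^*\|$, where $F_1F_1^*$ has kernel $\int_{\ell,r,q}H(\cdot,n)\overline{H(\cdot,n')}$ on $\mathcal E_M$. Applying Schur to this, integrating over $n'$ with Young's inequality in the $r$ variable and $\|\kappa_2\|_{L^1}$, should give $\|F_1\|_{\mathcal L}\lesssim|A_N|C_\kappa N^{3/2}Q^{3/2}$. Interpolating this with the HS bound produces exactly $M^{3/r_0}$, i.e.\ \eqref{eq:bounded-flat-1}. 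The symmetric argument for $F_2$ with output $q$ gives $N^{3/2}M^{3/2}$ times $Q^{3/r_0}$. For $F_3:(\ell,q)\to(r,n)$, I will apply Schur directly: fix $(\ell,q)$ and integrate over $(r,n)$. Using $\kappa_2$ in $r$ gives $\|\kappa_2\|_{L^1}M^3$; fixing $(r,n)$ and integrating $(\ell,q)$ with $\kappa_2$ in $\ell$ gives $\|\kappa_2\|_{L^1}Q^3$. Schur therefore yields $|A_N|C_\kappa Q^{3/2}M^{3/2}$. Interpolation then gives the factor $N^{3/r_0}$, and $F_4$ is identical with $\ell\leftrightarrow r$.

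The main obstacle is the operator norm of $F_1$ (and $F_2$). A naive Schur bound there loses a full $N^{3/2}$, because the output space $\mathcal E_M$ is small while the input is large. The Gram-operator argument must exploit that the sum over $r$ collapses via $\kappa_2$. I would verify carefully that $\sup_n\int_{n'}|F_1F_1^*(n,n')|\lesssim |A_N|^2C_\kappa^2N^3Q^3$, using Lemma~\ref{lem:translated-shell-volume} for the translated shell intersections. If this fails, I will fall back on bounding $\|F_1\|_{\mathcal L}$ by the $L^2_{\ell,r,q}$-norm of the kernel uniformly in $n$, which gives the same $N^{3/2}Q^{3/2}$ after integrating $r$ against $\kappa_2$ in $L^2$ (this is where the $\|\kappa_2\|_{L^2}$ in $C_\kappa$ enters).
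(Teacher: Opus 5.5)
Your proposal is correct and follows the paper's route. The paper uses a single Hilbert--Schmidt bound of size $|A_N|N^{3/2}Q^{3/2}M^{3/2}$ for all four orientations, obtained by integrating $r$ against $\kappa_2$. It combines this with $\|\kappa_2\|_{L^1}$-type operator-norm bounds, interpolates via \eqref{eq:centered-schatten-interpolation}, and finishes with Minkowski in $z$ against $|\kappa_1|$; this is exactly your plan.

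Two corrections, neither affecting your main argument. First, the Gram step for $F_1,F_2$ is unnecessary, and your worry that naive Schur loses $N^{3/2}$ is mistaken. Integrating $r$ against $\kappa_2$ gives row sums $\|\kappa_2\|_{L^1}N^3Q^3$ and column sums $\|\kappa_2\|_{L^1}$, so the direct Schur test already yields $N^{3/2}Q^{3/2}$; this is what the paper's weighted Cauchy--Schwarz amounts to. Second, your fallback is invalid and should be dropped: $\sup_n\|H(\cdot,n)\|_{L^2_{\ell,r,q}}$ controls $F_1$ only as a map into $L^\infty_n$, not $L^2_n$. The $\|\kappa_2\|_{L^2}$ in $C_\kappa$ enters through the Hilbert--Schmidt norm, not the operator norm.
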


\begin{proof}
Fix $z$ and write $k_z(y)=\kappa_2(y-z)$.  Since
$|\mathfrak m_z|\le1$, the modulus of every oriented operator is dominated
pointwise by the positive kernel obtained by replacing $\mathfrak m_z$ with
one and $k_z$ with $|k_z|$.  The soft incidence variable is always
\[
 y=n-q-\ell-r.
\]
All changes of variables below have determinant one.  The four eliminations
used in the operator estimates are:
\begin{center}
\small
\renewcommand{\arraystretch}{1.14}
\begin{tabularx}{0.94\textwidth}{@{}P{0.12\textwidth}P{0.24\textwidth}P{0.24\textwidth}Y@{}}
\toprule
map & input $\to$ output & eliminated variable & surviving fiber volume\\
\midrule
$F_1$ & $(\ell,r,q)\to n$ & $r=n-q-\ell-y$ & $N^3Q^3$\\
$F_2$ & $(\ell,r,n)\to q$ & $r=n-q-\ell-y$ & $N^3M^3$\\
$F_3$ & $(\ell,q)\to(r,n)$ & $\ell=n-q-r-y$ & $Q^3M^3$\\
$F_4$ & $(r,q)\to(\ell,n)$ & $r=n-q-\ell-y$ & $Q^3M^3$\\
\bottomrule
\end{tabularx}
\end{center}

For $F_1$, weighted Cauchy--Schwarz with measure
$|k_z(y)|\dd y\dd\ell\dd q$ gives
\begin{align*}
 |F_1f(n)|^2\lesssim |A_N|^2
 &\left(\iiint |k_z(y)|\widetilde\rho_N(\ell)^2\chi_Q(q)^2
 \widetilde\rho_L(n-q-\ell-y)^2\dd y\dd\ell\dd q\right)\\
 &\times\left(\iiint |k_z(y)|
 |f(\ell,n-q-\ell-y,q)|^2\dd y\dd\ell\dd q\right).
\end{align*}
The first factor is bounded by
$C\|\kappa_2\|_{L^1}N^3Q^3$.  Integrating the second factor in $n$ and using
$r=n-q-\ell-y$ gives $\|\kappa_2\|_{L^1}\|f\|_2^2$.  Hence
\begin{equation}\label{eq:centered-op-F1}
 \|F_1\|_{\mathcal L}\lesssim
 |A_N|\|\kappa_2\|_{L^1}N^{3/2}Q^{3/2}.
\end{equation}

For $F_2$, write its output as $q$ and eliminate the input variable
$r=n-q-\ell-y$.  Weighted Cauchy--Schwarz in $(y,\ell,n)$ gives
\begin{align*}
 |F_2f(q)|^2\lesssim |A_N|^2
 &\left(\iiint |k_z(y)|\widetilde\rho_N(\ell)^2\chi_M(n)^2
 \widetilde\rho_L(n-q-\ell-y)^2\dd y\dd\ell\dd n\right)\\
 &\times\left(\iiint |k_z(y)|
 |f(\ell,n-q-\ell-y,n)|^2\dd y\dd\ell\dd n\right).
\end{align*}
The first factor is at most
$C\|\kappa_2\|_{L^1}N^3M^3$.  After integration in $q$, the second factor is
$\|\kappa_2\|_{L^1}\|f\|_2^2$ by the same unit-Jacobian substitution.  Thus
\begin{equation}\label{eq:centered-op-F2}
 \|F_2\|_{\mathcal L}\lesssim
 |A_N|\|\kappa_2\|_{L^1}N^{3/2}M^{3/2}.
\end{equation}

For $F_3$, eliminate $\ell=n-q-r-y$.  Cauchy--Schwarz in $(y,q)$ gives a
first fiber factor bounded by $C\|\kappa_2\|_{L^1}Q^3$.  After integrating in
the output variables $(r,n)$, change from $n$ to
$\ell=n-q-r-y$.  The remaining translated shell intersection is bounded by
Lemma~\ref{lem:translated-shell-volume}:
\[
 \sup_{\ell,q,y}\int\widetilde\rho_L(r)^2
 \chi_M(\ell+q+r+y)^2\dd r\lesssim M^3,
\]
where $M\lesssim N\sim L$ is used.  Consequently
\[
 \|F_3\|_{\mathcal L}\lesssim
 |A_N|\|\kappa_2\|_{L^1}Q^{3/2}M^{3/2}.
\]

For $F_4$, eliminate $r=n-q-\ell-y$.  Cauchy--Schwarz in $(y,q)$ again gives
the factor $C\|\kappa_2\|_{L^1}Q^3$.  Integrating in the output variables
$(\ell,n)$ and changing from $n$ to $r=n-q-\ell-y$ leaves
\[
 \sup_{r,q,y}\int\widetilde\rho_N(\ell)^2
 \chi_M(\ell+q+r+y)^2\dd\ell\lesssim M^3
\]
by the same translated-intersection estimate.  Hence
\begin{equation}\label{eq:centered-op-F34}
 \|F_3\|_{\mathcal L}+\|F_4\|_{\mathcal L}
 \lesssim |A_N|\|\kappa_2\|_{L^1}Q^{3/2}M^{3/2}.
\end{equation}
This computation shows explicitly why an arbitrary bounded amplitude is
admissible in both cross orientations: no change of variables differentiates
or otherwise regularizes $\mathfrak m_z$.

All four Hilbert--Schmidt norms are equal because the flattenings are unitary
regroupings of one tensor.  For fixed $z$, Tonelli's theorem, the change of
variables $y=n-q-\ell-r$, the translated shell-intersection bound, and
$|\mathfrak m_z|\le1$ give, uniformly for $L\sim N$,
\begin{align}\label{eq:centered-common-HS}
 \|F_\varkappa\|_{\Sch_2}^2
 &=\|H_z\|_{L^2_{\ell,r,q,n}}^2\notag\\
 &\lesssim |A_N|^2\|\kappa_2\|_{L^2}^2
 N^3Q^3M^3,
 \qquad 1\le\varkappa\le4.
\end{align}
Interpolating \eqref{eq:centered-op-F1}, \eqref{eq:centered-op-F2}, and
\eqref{eq:centered-op-F34} with \eqref{eq:centered-common-HS} by
\eqref{eq:centered-schatten-interpolation} yields
\eqref{eq:bounded-flat-1}--\eqref{eq:bounded-flat-34} for fixed $z$.
Minkowski's inequality in $\Sch_{r_0}$ and integration against
$|\kappa_1(z)|\dd z$ produce $C_\kappa$.  No derivative of
$\mathfrak m_z$ is used.
\end{proof}

\begin{proposition}[Schatten bounds for the principal centered kernel]
\label{prop:centered-flattening-verification}
Let the hypotheses of Lemma~\ref{lem:bounded-amplitude-flattening} hold and
put $\beta_{b,c}=\beta_b+\beta_c$.  Before covariance pullback, the operator
and Hilbert--Schmidt bounds, and after pullback the Schatten bounds used in
the Gaussian estimate, are as follows:
\begin{center}
\scriptsize
\setlength{\tabcolsep}{3pt}
\renewcommand{\arraystretch}{1.22}
\begin{tabularx}{0.98\textwidth}{@{}P{0.11\textwidth}P{0.19\textwidth}P{0.18\textwidth}P{0.18\textwidth}Y@{}}
\toprule
Flattening & Orientation & $\|F_\varkappa\|_{\mathcal L}/|A_N|$
& $\|F_\varkappa\|_{\mathfrak S_2}/|A_N|$
& pulled-back $\mathfrak S_{r_0}$ bound, using $|A_N|\lesssim N^{-1}$\\
\midrule
$F_1$ & $(\ell,r,q)\to n$
& $N^{3/2}Q^{3/2}$
& $N^{3/2}Q^{3/2}M^{3/2}$
& $N^{-3/2+\beta_{b,c}}Q^{3/2}M^{3/r_0}$\\
$F_2$ & $(\ell,r,n)\to q$
& $N^{3/2}M^{3/2}$
& $N^{3/2}Q^{3/2}M^{3/2}$
& $N^{-3/2+\beta_{b,c}}M^{3/2}Q^{3/r_0}$\\
$F_3$ & $(\ell,q)\to(r,n)$
& $Q^{3/2}M^{3/2}$
& $N^{3/2}Q^{3/2}M^{3/2}$
& $N^{-3+\beta_{b,c}}Q^{3/2}M^{3/2}N^{3/r_0}$\\
$F_4$ & $(r,q)\to(\ell,n)$
& $Q^{3/2}M^{3/2}$
& $N^{3/2}Q^{3/2}M^{3/2}$
& $N^{-3+\beta_{b,c}}Q^{3/2}M^{3/2}N^{3/r_0}$\\
\bottomrule
\end{tabularx}
\end{center}
The map directions are those in
\eqref{eq:pulled-flat-direction-1}--\eqref{eq:pulled-flat-direction-4}.
The first Khintchine step creates the row/column pair in the second Gaussian
leg, and the second step creates the two complementary squares in the first
Gaussian leg, as recorded in the row/column identities preceding
Lemma~\ref{lem:centered-schatten-ideal}.  The factorizations in
Proposition~\ref{prop:localized-block-hilbert} contribute one synthesis norm
from each stochastic leg, namely
$N^{-1+\beta_b}L^{-1+\beta_c}\lesssim N^{-2+\beta_{b,c}}$ because
$L\sim N$.  The deterministic kernel contains only the auxiliary plateaux.
Consequently all four rows are controlled by
\[
 N^{-3/2+\beta_{b,c}+\varepsilon}
 \bigl(Q^{3/2+\varepsilon}+M^{3/2+\varepsilon}\bigr)
\]
after choosing $r_0$ as in the proof of
Proposition~\ref{prop:principal-centered-block}.
\end{proposition}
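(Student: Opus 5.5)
The table follows from three ingredients already available: the unpulled operator norms from the proof of Lemma~\ref{lem:bounded-amplitude-flattening}, the common Hilbert--Schmidt norm \eqref{eq:centered-common-HS}, and the factorizations of Proposition~\ref{prop:localized-block-hilbert} combined with the synthesis bounds of Lemma~\ref{lem:covariance-synthesis}.

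\emph{Columns before pullback.} The operator-norm column is read off from \eqref{eq:centered-op-F1}, \eqref{eq:centered-op-F2} and \eqref{eq:centered-op-F34} after dividing by $|A_N|$. The Hilbert--Schmidt column is the single tensor norm \eqref{eq:centered-common-HS}, which is the same in all four orientations because the flattenings are unitary regroupings (Lemma~\ref{lem:centered-schatten-ideal}). The unpulled $\mathfrak S_{r_0}$ bounds then come from the interpolation \eqref{eq:centered-schatten-interpolation}, exactly as in \eqref{eq:bounded-flat-1}--\eqref{eq:bounded-flat-34}.

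\emph{Pulled-back column.} Use the factorizations \eqref{eq:pulled-flat-direction-1}--\eqref{eq:pulled-flat-direction-4}: each contains one factor $S_b$ or $S_b^*$ and one factor $S_c$ or $S_c^*$, possibly tensored with identities. The ideal property \eqref{eq:centered-schatten-ideal} and Lemma~\ref{lem:covariance-synthesis} then multiply every row by
\[
 \|S_b\|\,\|S_c\|\lesssim N^{-1+\beta_b}L^{-1+\beta_c}\lesssim N^{-2+\beta_{b,c}},
\]
using $L\sim N$. We insert $|A_N|\lesssim N^{-1}$ and multiply by the $\mathfrak S_{r_0}$ bounds of Lemma~\ref{lem:bounded-amplitude-flattening}.
\begin{itemize}
\item Row $F_1$: $N^{-3}N^{\beta_{b,c}}N^{3/2}Q^{3/2}M^{3/r_0}=N^{-3/2+\beta_{b,c}}Q^{3/2}M^{3/r_0}$.
\item Row $F_2$: the same computation gives $N^{-3/2+\beta_{b,c}}M^{3/2}Q^{3/r_0}$.
\item Rows $F_3,F_4$: we get $N^{-3+\beta_{b,c}}Q^{3/2}M^{3/2}N^{3/r_0}$.
\end{itemize}

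\emph{The summary bound.} Choose $r_0$ so large that $3/r_0\le\varepsilon$. Since $M,Q\lesssim N$, we have $M^{3/r_0}\le N^{\varepsilon}$ and $Q^{3/r_0}\le N^{\varepsilon}$, so rows one and two are bounded by $N^{-3/2+\beta_{b,c}+\varepsilon}(Q^{3/2}+M^{3/2})$. For rows three and four, use $\min\{Q,M\}^{3/2}\lesssim N^{3/2}$ to write
\[
 Q^{3/2}M^{3/2}\le N^{3/2}\max\{Q,M\}^{3/2}.
\]
This turns $N^{-3}$ into $N^{-3/2}$, and $N^{3/r_0}\le N^\varepsilon$ absorbs the remaining loss.

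The only point that needs care is that the synthesis maps act between the Gaussian spaces $\mathcal G_b,\mathcal G_c$ and the shell spaces $\mathcal A_N,\mathcal C_{1,L}$. One must therefore check that in each orientation the adjoint appears on the output side, as recorded in \eqref{eq:pulled-flat-direction-3}--\eqref{eq:pulled-flat-direction-4}, so that \eqref{eq:centered-schatten-ideal} applies with operator norms. The row/column correspondence with the two Khintchine steps is then just the identities \eqref{eq:four-square-identities}, already proved in Theorem~\ref{thm:centered-gaussian-operator}.
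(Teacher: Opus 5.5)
Your proposal is correct and follows the paper's own argument: the unpulled operator and Hilbert--Schmidt columns from \cref{lem:bounded-amplitude-flattening}, interpolation \eqref{eq:centered-schatten-interpolation}, pullback through the ideal property with one synthesis norm per Gaussian leg via \eqref{eq:synthesis-size}, and absorption of the mixed rows by $Q^{3/2}M^{3/2}\lesssim N^{3/2}(Q^{3/2}+M^{3/2})$. The only cosmetic difference is that you fix $r_0$ with $3/r_0\le\varepsilon$ instead of using the logarithmic choice of \cref{lem:logarithmic-schatten-selection}; this is equally valid here, and downstream it only makes the Khintchine factor $r_0$ a constant rather than an $N^\delta$ loss.
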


\begin{proof}
The operator-norm column is
\eqref{eq:centered-op-F1}--\eqref{eq:centered-op-F34}; the common
Hilbert--Schmidt column is the final integral in the proof of
Lemma~\ref{lem:bounded-amplitude-flattening}.  Interpolation by
\eqref{eq:centered-schatten-interpolation}, covariance pullback by
Lemma~\ref{lem:covariance-pullback}, and
\eqref{eq:synthesis-size} give the final column.  Since $Q,M\lesssim N$,
the two mixed rows satisfy
$Q^{3/2}M^{3/2}\lesssim N^{3/2}(Q^{3/2}+M^{3/2})$.
\end{proof}

\begin{lemma}[Logarithmic Schatten selection]
\label{lem:logarithmic-schatten-selection}
Fix $p\ge2$, $\delta>0$, and $C\ge1$.  For dyadic
$1\le Q,M\le CN$, choose
\[
 r_{N}:=\max\left\{p,\,2,\,\delta^{-1}\log(2+N)\right\}.
\]
Then $p\le r_N<\infty$ and
\begin{equation}\label{eq:logarithmic-schatten-selection}
 r_N\bigl(1+N^{3/r_N}+Q^{3/r_N}+M^{3/r_N}\bigr)
 \lesssim_{p,\delta,C}N^\delta.
\end{equation}
The same assertion holds with $p$ replaced by any prescribed finite moment
$p_0$.  Thus a logarithmically growing Schatten exponent converts all finite
shell-volume remnants and the linear Khintchine factor into an arbitrarily
small power of the principal shell.
\end{lemma}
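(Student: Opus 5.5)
The plan is to split the left-hand side of \eqref{eq:logarithmic-schatten-selection} into the factor $r_N$ and the volume factors $N^{3/r_N}$, $Q^{3/r_N}$, $M^{3/r_N}$, and bound each separately.

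\emph{Range of $r_N$.} Finiteness of $r_N$ and the inequality $r_N\ge p$ are immediate from the definition as a maximum of finitely many finite numbers.

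\emph{Volume factors.} Since $Q,M\le CN$, each of the three terms is at most
\[
 (CN)^{3/r_N}=\exp\bigl(3r_N^{-1}\log(CN)\bigr).
\]
Because $r_N\ge\delta^{-1}\log(2+N)$, we have $r_N^{-1}\le\delta/\log(2+N)$. Hence the exponent is bounded by
\[
 3\delta\,\frac{\log C+\log N}{\log(2+N)}\le 3\delta\Bigl(1+\frac{\log C}{\log 2}\Bigr).
\]
So
\[
 1+N^{3/r_N}+Q^{3/r_N}+M^{3/r_N}\lesssim_{\delta,C}1,
\]
uniformly in $N,Q,M$. The choice $r_N\sim\log N$ is made exactly so that $N^{1/r_N}$ stays bounded.

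\emph{The factor $r_N$.} Using $\max\{x,y\}\le x+y$, we get
\[
 r_N\le p+2+\delta^{-1}\log(2+N).
\]
The elementary bound $\log(2+N)\lesssim_\delta N^\delta$ for $N\ge1$ then gives $r_N\lesssim_{p,\delta}N^\delta$.

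Multiplying the two bounds proves the estimate. The argument for $p_0$ is the same with $p$ replaced by $p_0$. The proof is routine, and I expect no real obstacle. The only care needed is to keep all constants independent of $N,Q,M$, depending only on $p,\delta,C$. If needed, one may replace $\delta$ by $\delta/2$ in the definition to keep the final power strictly at $N^\delta$, but the bounds above already give this directly.
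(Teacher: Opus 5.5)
Your proposal is correct and follows essentially the same route as the paper's proof: bound $N^{3/r_N}$, $Q^{3/r_N}$, $M^{3/r_N}$ uniformly using $r_N\ge\delta^{-1}\log(2+N)$ and $Q,M\le CN$. Then control $r_N\le p+2+\delta^{-1}\log(2+N)$ by $N^\delta$ via $\log(2+N)\lesssim_\delta N^\delta$.
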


\begin{proof}
The choice of $r_N$ gives
$N^{3/r_N}\le e^{3\delta}$; since $Q,M\le CN$, the same bound, up to a
constant depending on $C$, holds for $Q^{3/r_N}$ and $M^{3/r_N}$.  Finally,
$\log(2+N)\lesssim_\delta N^\delta$, while the fixed contribution $p+2$ is
also bounded by $C_{p,\delta}N^\delta$ for $N\ge1$.  This proves
\eqref{eq:logarithmic-schatten-selection}.
\end{proof}

\begin{proposition}[Principal centered dyadic estimate]
\label{prop:principal-centered-block}
Let $R\sim_{\mathrm{tr}}N$, $Q\le c_{\mathrm{ap}}N$, and
$M\le C_{\mathrm{out}}N$.  After summing the finitely many
$R\sim N$ and $L\sim_{\mathrm{ap}}R$ shells, let $\mathcal B_{\Lambda,N,Q,M}^{a;b,c}(t,s)$ be the completely
Wick-centered block associated with \eqref{eq:iterated-soft-incidence}.  Put
$\Delta_T=\{(t,s):0\le s\le t\le T\}$.  For every $2\le p<\infty$ and every
$\eps>0$,
\begin{equation}\label{eq:centered-block-profile}
 \sup_\Lambda
 \left\|
 \|\mathcal B_{\Lambda,N,Q,M}^{a;b,c}\|_{
 C(\Delta_T;\mathcal L(L_q^2,L_n^2))}
 \right\|_{L^p_\omega}
 \lesssim_{p,\eps}
 N^{-3/2+\beta_{b,c}+\eps}
 \bigl(Q^{3/2+\eps}+M^{3/2+\eps}\bigr).
\end{equation}
For a fixed-profile dyadic cutoff family, the same estimate holds with
$\sup_\Lambda$ inside the $L^p_\omega$ norm.  For every fixed
$(N,Q,M)$, the blocks converge in
$L^p(\Omega;C(\Delta_T;\mathcal L(L_q^2,L_n^2)))$, and cutoff differences
obey the same dyadic majorant.
\end{proposition}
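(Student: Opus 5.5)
The plan is to combine the pointwise-in-time Gaussian operator estimate with the two-parameter time lift. Fix $(N,Q,M)$ with $R\sim_{\mathrm{tr}}N$, and fix one of the finitely many summands with $R\sim N$, $L\sim_{\mathrm{ap}}R$. The triangle inequality then reduces the claim to a single block whose deterministic coefficient kernel has the form \eqref{eq:iterated-soft-incidence} with $|A_N|\lesssim\la N\ra^{-1}$. By Lemma~\ref{lem:inner-localization-transfer}(i), the factor $\mathfrak m_z$ is bounded uniformly in $z$ and in $(t,s)\in\Delta_T$.

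\emph{Step 1 (fixed time).} Proposition~\ref{prop:localized-block-hilbert} realizes the centered block at fixed $(t,s)$ as the decoupled operator (for $b\ne c$), or the completely Wick-centered operator (for $b=c$), attached to the pulled-back form $\mathbf H^{\sharp,\lambda}_{t,s}$. Lemma~\ref{lem:wick-pullback-realization} and Lemma~\ref{lem:scalar-wiener-realization} identify it with the finite-cutoff centered term. Theorem~\ref{thm:centered-gaussian-operator} bounds its $L^{p_0}_\omega\mathcal L$ norm by $r\sum_\varkappa\|F^\sharp_\varkappa\|_{\Sch_r}$. Proposition~\ref{prop:centered-flattening-verification} gives the four Schatten bounds, where Lemma~\ref{lem:covariance-synthesis} supplies the synthesis factors uniformly in $\Lambda$; every admissible multiplier is bounded by one. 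Choosing $r=r_N$ as in Lemma~\ref{lem:logarithmic-schatten-selection} with $p_0\ge p$ absorbs $r$ and the remnants $N^{3/r},Q^{3/r},M^{3/r}$ into $N^{\eps}$. This gives the static bound $A\lesssim N^{-3/2+\beta_{b,c}+\eps}(Q^{3/2+\eps}+M^{3/2+\eps})$ uniformly in $\lambda$.

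\emph{Step 2 (time increments).} The increment in $(t,s)$ is decomposed exactly as in Step 5 of Lemma~\ref{lem:inner-localization-transfer}: one term from the kernel difference $K_a(t-s,p)-K_a(t'-s',p)$, and two terms from synthesis increments \eqref{eq:synthesis-increment}. Each term is again a form of the same type, carrying an extra factor $d_\Delta^\theta N^{\theta}$. Since $\mathfrak m_z$ only needs to be bounded, Lemma~\ref{lem:bounded-amplitude-flattening} applies verbatim, and $B_0\lesssim N^{\theta}A$. Lemma~\ref{lem:centered-time-lift} with $p_0\theta>2$ then upgrades the estimate to $C(\Delta_T;\mathcal L)$ at the cost $N^\theta$. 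Since $\theta<\varepsilon_0/10$ by Definition~\ref{def:loss-hierarchy}, this cost is absorbed by renaming $\eps$. Strong measurability and the operator-valued completion come from Lemma~\ref{lem:centered-uniform-compression}.

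\emph{Step 3 (cutoffs).} For a fixed-profile dyadic family, Lemma~\ref{lem:finite-state-cutoff} says the block has only finitely many states, uniformly in the scales, each obeying the bound of Steps 1–2. Estimate \eqref{eq:finite-state-maximal-bound} therefore moves $\sup_\Lambda$ inside $L^p_\omega$. For cutoff differences, telescope the two synthesis maps as in Step 6 of Lemma~\ref{lem:inner-localization-transfer}. Each piece satisfies the same majorant by Lemma~\ref{lem:covariance-synthesis}, and for fixed $(N,Q,M)$ it vanishes once both stochastic shells lie in the plateau. For general cofinal families, the fixed-shell operator-norm convergence of the synthesis maps, combined with Theorem~\ref{thm:centered-gaussian-operator} and Lemma~\ref{lem:centered-time-lift}, gives convergence in $L^p(\Omega;C(\Delta_T;\mathcal L))$.

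The main obstacle is Step 2. One must check that the increment forms keep a genuinely bounded amplitude uniformly in $z$, after dividing by $d_\Delta^\theta$, and that the synthesis increments fit the covariance-pullback framework of Lemma~\ref{lem:covariance-pullback} so that the same flattening table applies. I would verify this first, using the Hölder bound on $K_a$ on the shell $|p|\sim N$.
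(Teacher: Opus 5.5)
Your proposal is correct and follows the paper's proof essentially step for step. It uses fixed-time control through the pulled-back flattening table of Proposition~\ref{prop:centered-flattening-verification} with the logarithmic Schatten exponent, the same three-term increment telescope fed into Lemma~\ref{lem:centered-time-lift}, and plateau stabilization together with Lemma~\ref{lem:finite-state-cutoff} for the cutoff assertions.

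There are two small points to tighten. First, because the estimate is claimed for every $\eps>0$, the H\"older exponent must be chosen in terms of the target loss, as the paper does with $0<\theta<\eps/4$ and $p_0>\max\{p,2/\theta\}$; you cannot take the globally fixed $\theta$ of Definition~\ref{def:loss-hierarchy} and absorb it by ``renaming $\eps$''. Second, the admissible multipliers are only uniformly bounded, not bounded by one, but uniform boundedness is all the argument needs.
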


\begin{proof}
Treat first one of the finitely many outer scales $L\sim N$.  Use the shell
Hilbert spaces
$\mathcal A_N,\mathcal C_{1,L},\mathcal C_{0,Q},\mathcal E_M$ from
\eqref{eq:localized-block-hilbert-spaces}.  The deterministic kernel defines
$\mathbf H_{N,L,Q,M}^{t,s}$ by
\eqref{eq:localized-four-linear-form}; its operator coefficients act from the
input shell $\mathcal C_{0,Q}$ to the output shell $\mathcal E_M$.  At finite
cutoff pull back the two stochastic legs by
\[
 S_b=\mathsf S_{b,N,s}^{\Lambda}:\mathcal G_b\to\mathcal A_N,
 \qquad
 S_c=\mathsf S_{c,L,t}^{\Lambda}:\mathcal G_c\to\mathcal C_{1,L};
\]
for the limiting block use the same formula with $\Lambda=\infty$.  The four resulting maps have the domains, codomains, and factorizations in
\eqref{eq:pulled-flat-direction-1}--\eqref{eq:pulled-flat-direction-4}.
Thus the ideal property places one factor $\|S_b\|$ and one factor
$\|S_c\|$ in every orientation, giving
$N^{-1+\beta_b}L^{-1+\beta_c}\lesssim N^{-2+\beta_{b,c}}$ by
\eqref{eq:synthesis-size}.  The plateau identity \eqref{eq:plateau-times-shell} leaves the ranges of
$S_b,S_c$ unchanged.  Lemma~\ref{lem:wick-pullback-realization}
then realizes the pulled-back form as the centered Gaussian block, while
Lemma~\ref{lem:scalar-wiener-realization} identifies every scalar pairing
with the intrinsic double Wiener integral obtained from the finite-cutoff Wick
identity.  In the equal-color case,
Proposition~\ref{prop:localized-block-hilbert} identifies the diagonal
$\delta_{\mu\nu}$ term with the covariance contraction
\eqref{eq:same-color-shell-contraction}, already separated in
Proposition~\ref{prop:finite-color-split}.  Summing over the finitely many
$L\sim N$ and $R\sim N$ choices changes only the implicit constant.

The actual Duhamel shell contributes $|A_N|\lesssim N^{-1}$ by
\eqref{eq:shifted-duhamel-gain}.  Thus the effective centered envelope is
\begin{equation}\label{eq:centered-envelope}
 \mathcal G_{b,c}(N)\lesssim N^{-1}N^{-1+\beta_b}N^{-1+\beta_c}
 =N^{-3+\beta_{b,c}}.
\end{equation}
For $2\le r_0<\infty$, the four pulled-back Schatten estimates are
listed in Proposition~\ref{prop:centered-flattening-verification}.
Since $Q,M\lesssim N$,
$Q^{3/2}M^{3/2}\lesssim N^{3/2}(Q^{3/2}+M^{3/2})$.
Apply Lemma~\ref{lem:logarithmic-schatten-selection} with an internal loss
$\delta<\eps/4$ and choose the resulting exponent $r_0\ge p$.  The factors
$N^{3/r_0}$, $Q^{3/r_0}$, and $M^{3/r_0}$ are then uniformly bounded, while
the Khintchine factor $r_0$ costs at most $N^\delta$.  Substituting the four
rows of Proposition~\ref{prop:centered-flattening-verification} into
Theorem~\ref{thm:centered-gaussian-operator} proves the fixed-time form of
\eqref{eq:centered-block-profile}, with room to spare in the displayed
$\eps$ loss.
For distinct colors this is the decoupled chaos directly.  For equal colors, apply the same-field part of
Theorem~\ref{thm:centered-gaussian-operator} to the symmetrized tensor; the
covariance factor $\rho_N\rho_Lm_\Lambda^2$ is given in
\eqref{eq:same-color-shell-factor}.

For the time lift, on the principal support $|p|\sim N$,
\begin{equation}\label{eq:normalized-duhamel-increment}
 |N K_a(\tau,p)-N K_a(\tau',p)|
 \lesssim\min\{1,N|\tau-\tau'|\}
 \lesssim N^\theta|\tau-\tau'|^\theta.
\end{equation}
Together with \eqref{eq:synthesis-increment}, the decomposition
\[
\begin{aligned}
 H_{t,s}^{\sharp}-H_{t',s'}^{\sharp}
 ={}&(H_{t,s}-H_{t',s'})\circ
     (\mathsf S_{b,N,s},\mathsf S_{c,L,t})\\
 &+H_{t',s'}\circ
     (\mathsf S_{b,N,s}-\mathsf S_{b,N,s'},\mathsf S_{c,L,t})\\
 &+H_{t',s'}\circ
     (\mathsf S_{b,N,s'},\mathsf S_{c,L,t}-\mathsf S_{c,L,t'}).
\end{aligned}
\]
gives the same four Schatten estimates with the factor
$N^\theta(|t-t'|+|s-s'|)^\theta$.  Each term contains one increment, either in the normalized Duhamel
amplitude or in one covariance synthesis map.  Relative to its static bound, each such
increment costs $N^\theta$, and the resulting bounds are summed.  To obtain the displayed target loss $\eps$, rerun the preceding fixed-time
and increment estimates with internal loss $\eps/4$, choose
$0<\theta<\eps/4$, and take $p_0>\max\{p,2/\theta\}$.  In these estimates apply
Lemma~\ref{lem:logarithmic-schatten-selection} with the prescribed moment
$p_0$ and an internal loss smaller than $\eps/4$.  The selected Schatten
exponent is at least $p_0$, as required by the two-parameter time lift, and the
Khintchine factor costs at most the reserved $\eps/4$ shell loss.  The single increment cost
$N^\theta$ and these two internal losses then sum to strictly less than
$\eps$.  Applying Lemma~\ref{lem:centered-time-lift} with
$B=\mathcal K(L_q^2,L_n^2)$ proves
\eqref{eq:centered-block-profile} in $C(\Delta_T)$.

For a fixed dyadic triple, the two stochastic frequencies are confined to
fixed compact annuli.  Once the cutoff scale is sufficiently large, the
plateau in \eqref{eq:cutoff-supports} contains both annuli, so the two cutoff synthesis maps agree with their limiting maps.  Hence the
finite block, including its time increments and its same-field Wick
subtraction when $b=c$, eventually agrees with the uncut block.  This proves
fixed-block convergence for every admissible cofinal sequence.  For a
fixed-profile dyadic family,
Lemma~\ref{lem:finite-state-cutoff} applies simultaneously to the two
comparable stochastic shells.  It gives a scale-independent finite list of cutoff states, stabilization of
each fixed block, and \eqref{eq:finite-state-maximal-bound}.  This proves the estimate with
$\sup_\Lambda$ inside the $L^p$ norm.
\end{proof}

\begin{lemma}[Schatten continuity of a pulled-back dyadic block]
\label{lem:pulled-back-schatten-continuity}
Fix a principal shell tuple $(N,Q,M)$, one of its finitely many cutoff states,
and $2\le r_0<\infty$.  Let $F_\varkappa^\sharp(t,s)$ be the four
flattenings of the pulled-back form on $\Delta_T$.  For every
$0<\theta\le1/2$,
\begin{equation}\label{eq:principal-schatten-time-continuity}
 \sum_{\varkappa=1}^4
 \|F_\varkappa^\sharp(t,s)-F_\varkappa^\sharp(t',s')\|_{\Sch_{r_0}}
 \lesssim_{r_0,\theta}
 d_\Delta^\theta N^\theta\,\mathfrak P_{N,Q,M}(r_0),
\end{equation}
where
\begin{align}\label{eq:principal-schatten-profile}
 \mathfrak P_{N,Q,M}(r_0):={}&
 N^{-3/2+\beta_{b,c}}
 \bigl(Q^{3/2}M^{3/r_0}+M^{3/2}Q^{3/r_0}\bigr)\notag\\
 &+N^{-3+\beta_{b,c}}Q^{3/2}M^{3/2}N^{3/r_0}.
\end{align}
The same statement holds for differences of cutoff states.  On the transfer
tail, the right-hand side is replaced by the majorant in
\eqref{eq:inner-transfer-tail-time-increment}.  Consequently, for every fixed
shell tuple, $(t,s)\mapsto F_\varkappa^\sharp(t,s)$ is continuous in
$\Sch_{r_0}$, its image is compact, and simultaneous finite-rank compression
converges uniformly on $\Delta_T$ in all four Schatten norms.
\end{lemma}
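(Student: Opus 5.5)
The plan is to decompose each increment of the pulled-back form into three pieces and estimate every piece in $\Sch_{r_0}$ by the ideal and interpolation bounds already used for the static profile. Write the pulled-back form as $H_{t,s}\circ(\mathsf S_{b,N,s},\mathsf S_{c,L,t})$, exactly as in Step~5 of Lemma~\ref{lem:inner-localization-transfer} and in the time-lift paragraph of Proposition~\ref{prop:principal-centered-block}. The increment then splits as
\[
(H_{t,s}-H_{t',s'})\circ(\mathsf S_{b,N,s},\mathsf S_{c,L,t})
+H_{t',s'}\circ(\mathsf S_{b,N,s}-\mathsf S_{b,N,s'},\mathsf S_{c,L,t})
+H_{t',s'}\circ(\mathsf S_{b,N,s'},\mathsf S_{c,L,t}-\mathsf S_{c,L,t'}).
\]
For each orientation, Proposition~\ref{prop:localized-block-hilbert} gives a factorization with exactly one copy of each synthesis map (possibly adjointed). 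The ideal property \eqref{eq:centered-schatten-ideal}, in the form of Lemma~\ref{lem:covariance-pullback}, therefore bounds each piece by a deterministic flattening norm times $\|S_b\|\,\|S_c\|$. In the second and third pieces one of these two norms is replaced by an increment norm.

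For the first piece, the deterministic kernel \eqref{eq:iterated-soft-incidence} has time dependence only through $A_N$ times the normalized amplitude $\mathfrak m_z$ from \eqref{eq:shifted-normalized-amplitude}. By \eqref{eq:normalized-duhamel-increment}, the difference $H_{t,s}-H_{t',s'}$ is again a kernel of the form \eqref{eq:iterated-soft-incidence}, now with an amplitude bounded by $C d_\Delta^\theta N^\theta$. The amplitude $\mathfrak a$ is independent of $(t,s)$ by Lemma~\ref{lem:inner-localization-transfer}. Lemma~\ref{lem:bounded-amplitude-flattening} requires only a measurable bound on the amplitude, with no regularity, so \eqref{eq:bounded-flat-1}--\eqref{eq:bounded-flat-34} apply verbatim with the extra factor $d_\Delta^\theta N^\theta$. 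Multiplying by $\|S_b\|\|S_c\|\lesssim N^{-2+\beta_{b,c}}$ from \eqref{eq:synthesis-size} reproduces the four rows of Proposition~\ref{prop:centered-flattening-verification}, whose sum is $\mathfrak P_{N,Q,M}(r_0)$. For the other two pieces, the static deterministic bounds of Lemma~\ref{lem:bounded-amplitude-flattening} are combined with \eqref{eq:synthesis-increment} in one leg, which costs $d_\Delta^\theta N^\theta$ relative to \eqref{eq:synthesis-size}, using $L\sim N$. This gives \eqref{eq:principal-schatten-time-continuity}. Differences of cutoff states are handled by the same factorization together with the two-multiplier bounds of Lemma~\ref{lem:covariance-synthesis}, which again carry one size or increment majorant per leg. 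On the transfer tail, \eqref{eq:inner-transfer-tail-time-increment} is already the stated majorant.

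The consequences then follow directly. Continuity in $\Sch_{r_0}$ is immediate from the Hölder bound, and the image of the compact triangle $\Delta_T$ under a continuous map is compact. Uniform convergence of simultaneous compression is Lemma~\ref{lem:compact-schatten-compression}, applied to the four compact images and combined with the factorizations of Lemma~\ref{lem:four-flat-compression}.

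The main obstacle is the first piece. One must check that the time increment of the normalized Duhamel factor can be absorbed into a bounded measurable amplitude \emph{uniformly in $z$}, while the shifted frequency $p=q+\ell+z$ stays on the principal shell. This holds because $\varphi_R(p)$ restricts to $|p|\sim N$ whenever $R\sim_{\mathrm{tr}}N$. Once this is in place, the derivative-free nature of Lemma~\ref{lem:bounded-amplitude-flattening} makes the rest routine.
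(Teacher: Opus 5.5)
Your proposal is correct and follows essentially the paper's proof. It uses the same three-term telescope (one Duhamel increment plus one synthesis-map increment per Gaussian leg), the ideal property through the factorizations \eqref{eq:pulled-flat-direction-1}--\eqref{eq:pulled-flat-direction-4}, the static rows of Proposition~\ref{prop:centered-flattening-verification} (which you rightly justify via the derivative-free Lemma~\ref{lem:bounded-amplitude-flattening}), and Lemma~\ref{lem:compact-schatten-compression} applied to the compact continuous images. The only detail you leave implicit is that the transfer-tail majorant \eqref{eq:inner-transfer-tail-time-increment} is an $\mathfrak S_2$ bound, which controls the $\Sch_{r_0}$ norm because $r_0\ge2$.
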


\begin{proof}
Use the factorizations
\eqref{eq:pulled-flat-direction-1}--\eqref{eq:pulled-flat-direction-4} and
the three-term telescope separating the Duhamel increment from the two
synthesis-map increments.  On the principal shell,
\[
 |NK_a(t-s,p)-NK_a(t'-s',p)|
 \lesssim d_\Delta^\theta N^\theta,
\]
while \eqref{eq:synthesis-increment} costs the same relative factor on either
Gaussian leg.  Apply the Schatten ideal property to each term and then the
static bounds in Proposition~\ref{prop:centered-flattening-verification}; this
proves \eqref{eq:principal-schatten-time-continuity}.  Cutoff differences are
treated by the identical telescope.  The transfer-tail assertion is
Step~5 of Lemma~\ref{lem:inner-localization-transfer}; since $r_0\ge2$,
the displayed $\mathfrak S_2$ majorant also controls the
$\mathfrak S_{r_0}$ norm.

Continuity on the compact triangle makes the image of each flattening compact
in $\Sch_{r_0}$.  Apply
Lemma~\ref{lem:compact-schatten-compression} to the finite union of these four
compact images, including the appropriate tensor-product projections in each
domain and codomain.  The factorizations in Lemma~\ref{lem:four-flat-compression} then give
simultaneous uniform
convergence on $\Delta_T$ in all four Schatten norms.
\end{proof}

\begin{lemma}[Strong measurability of fixed dyadic blocks]
\label{lem:centered-fixed-block-measurability}
Let $H_Q$ and $H_M$ be the separable $L^2$ spaces of fixed input and output
shells.  Every centered dyadic kernel constructed above admits a strongly
measurable representative in
\[
 C\bigl(\Delta_T;\mathcal K(H_Q,H_M)\bigr).
\]
Its Volterraization is strongly measurable with values in
\[
 \mathcal L\bigl(C_TH_Q,\,C_TH_M\cap L_T^1H_M\bigr),
\]
and every finite dyadic sum is strongly measurable and separably valued in
the corresponding finite-block operator topology.
\end{lemma}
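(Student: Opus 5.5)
The plan is to realize each centered dyadic kernel as an $L^p(\Omega)$-limit of finite Gaussian polynomials with values in a finite-dimensional subspace of $C(\Delta_T;\mathcal K(H_Q,H_M))$. Fix a shell tuple and one cutoff state. By Lemma~\ref{lem:pulled-back-schatten-continuity} the four flattenings are continuous from $\Delta_T$ into $\Sch_{r_0}$, and simultaneous finite-rank compression converges uniformly on $\Delta_T$. The increments are controlled by $d_\Delta^\theta N^\theta\mathfrak P_{N,Q,M}(r_0)$, so the Hölder quantities $B_{m,n}$ in Lemma~\ref{lem:centered-uniform-compression} stay bounded while $A_{m,n}\to0$. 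That lemma, applied with the decoupled operator for $b\ne c$ and the Wick operator for $b=c$, then produces a Bochner limit in $L^p(\Omega;C(\Delta_T;\mathcal K(H_Q,H_M)))$. Each compressed process is a finite Gaussian polynomial with values in the finite-dimensional span of its coefficient operators, hence strongly measurable, and the limit inherits strong measurability. Lemma~\ref{lem:scalar-wiener-realization} identifies this limit with the intrinsic kernel, so it is the representative we want. Compactness of the values follows because $\mathcal K$ is closed in operator norm. Transfer-tail blocks are handled in the same way, using the $\Sch_2$ bounds of Lemma~\ref{lem:inner-localization-transfer}.

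For the Volterraization, I would introduce the deterministic continuous bilinear map
\[
 \mathcal V:C(\Delta_T;\mathcal K(H_Q,H_M))\to\mathcal L\bigl(C_TH_Q,C_TH_M\cap L_T^1H_M\bigr),
 \qquad (\mathcal V k)w(t)=\int_0^tk(t,s)w(s)\dd s.
\]
This map is linear and bounded, with norm at most $\max\{T,T^2\}$, and continuity in $t$ comes from uniform continuity of $k$ on the compact triangle. A continuous linear image of a strongly measurable random variable is strongly measurable. Its range is separable because it is the image of the separable space $C(\Delta_T;\mathcal K(H_Q,H_M))$; separability of this space follows from separability of $\mathcal K(H_Q,H_M)$ when $H_Q$ and $H_M$ are separable.

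The step that needs care is the target space. The ambient space $\mathcal L(C_TH_Q,\dots)$ is not separable, so strong measurability has to be phrased as taking values in the closed separable subspace $\mathcal V(C(\Delta_T;\mathcal K))$. The argument goes through provided $\mathcal V$ is continuous into the operator norm, which the bound above gives. Finite dyadic sums are finite sums of such variables, and strongly measurable random variables with separable range form a vector space. The finite-block topology is the norm topology on a finite product of these operator spaces, so the sum inherits both properties. Uniqueness of the representative up to null sets follows from uniqueness of the Bochner limit, as already asserted in Lemma~\ref{lem:centered-uniform-compression}.
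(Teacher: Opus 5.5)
Your proposal is correct and follows essentially the same route as the paper. The common steps are: simultaneous finite-rank compression gives finite Gaussian polynomials in the separable space $C(\Delta_T;\mathcal K(H_Q,H_M))$; Lemma~\ref{lem:centered-uniform-compression} yields Cauchy convergence, with $A_{m,n}\to0$ from Lemma~\ref{lem:pulled-back-schatten-continuity} and $B_{m,n}$ bounded because compression is contractive in every Schatten norm; the result is pushed forward through the bounded deterministic Volterra map, and finite dyadic sums are handled blockwise.

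One small imprecision: the range $\mathcal V(C(\Delta_T;\mathcal K))$ need not be closed, so you should take its (still separable) closure. In fact strong measurability of $\mathcal V k$ follows directly anyway, since $\mathcal V$ sends simple approximants to simple approximants.
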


\begin{proof}
The compact-operator space $\mathcal K(H_Q,H_M)$, and hence
$C(\Delta_T;\mathcal K(H_Q,H_M))$, is separable.  Compress the two Gaussian
Hilbert legs and the input/output shell spaces simultaneously by finite-rank
orthogonal projections.  Each compressed kernel is a finite sum of scalar
Gaussian polynomials times deterministic continuous finite-rank operator
kernels, so it is strongly measurable in this space.

By Lemma~\ref{lem:pulled-back-schatten-continuity}, the four Schatten images
of a fixed block are compact and the simultaneous compressions converge
uniformly on the time triangle.  Hence $A_{m,n}\to0$ in
Lemma~\ref{lem:centered-uniform-compression}.  Compression is contractive in
every Schatten norm, so the Hölder seminorms of the compressed differences
are bounded by twice the right-hand side of
\eqref{eq:principal-schatten-time-continuity}; thus $(B_{m,n})$ is bounded.
Lemma~\ref{lem:centered-uniform-compression} makes the compressed Gaussian
kernels Cauchy in
\[
 L^p\bigl(\Omega;C(\Delta_T;\mathcal K(H_Q,H_M))\bigr).
\]
Their limit is therefore strongly measurable and independent of the
projections.  For a transfer-tail block, use
\eqref{eq:inner-transfer-tail-time-increment} in the same argument.

The Volterra map
\[
 (\mathfrak VK)(w)(t)=\int_0^tK(t,s)w(s)\dd s
\]
is bounded and linear from the preceding compact-kernel space to the stated
operator space, with norm at most $2T\|K\|_{C(\Delta_T;\mathcal L)}$ for
$T\le1$.  It preserves strong measurability and separable range.  A finite
dyadic sum factors through a finite product of shell spaces and is therefore
again strongly measurable and separably valued.
\end{proof}

\begin{lemma}[Quantitative dyadic Hardy bounds]
\label{lem:quantitative-dyadic-hardy}
Let $(x_Q)_{Q\in\Dyd}$ be a nonnegative square-summable sequence.
For every $\alpha>0$,
\begin{equation}\label{eq:dyadic-hardy-basic}
 \left(\sum_M\left[M^{-\alpha}\sum_{Q\le M}x_Q\right]^2\right)^{1/2}
 \lesssim_\alpha\left(\sum_Qx_Q^2\right)^{1/2}.
\end{equation}
For $0<\gamma<\alpha$ and $L\ge2$,
\begin{equation}\label{eq:dyadic-hardy-tail}
 \left(\sum_{M>L}\left[M^{-\alpha}\sum_{Q\le M}x_Q\right]^2\right)^{1/2}
 \lesssim_{\alpha,\gamma}L^{-\gamma}
 \left(\sum_Qx_Q^2\right)^{1/2}.
\end{equation}
Moreover, for every $\delta>0$,
\begin{equation}\label{eq:dyadic-geometric-tail}
 \sum_{Q>M}Q^{-\delta}\lesssim_\delta M^{-\delta},
 \qquad
 \sum_{Q>L}Q^{-\delta}\lesssim_\delta L^{-\delta}.
\end{equation}
\end{lemma}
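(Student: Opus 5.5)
The basic idea is to reduce everything to the elementary discrete Hardy/Schur mechanism for sequences indexed by the dyadic lattice $\Dyd=\{2^j:j\ge0\}$. We write $Q=2^i$ and $M=2^k$, so each estimate becomes a statement about convolution-type sums over integers.

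\emph{Basic bound \eqref{eq:dyadic-hardy-basic}.} Expand
\[
M^{-\alpha}\sum_{Q\le M}x_Q=\sum_{Q\le M}(Q/M)^{\alpha}\,Q^{-\alpha}x_Q .
\]
The factor $Q^{-\alpha}\le1$ can simply be discarded, since we only need an upper bound. This leaves the operator $(x_Q)\mapsto\bigl(\sum_{Q\le M}(Q/M)^\alpha x_Q\bigr)_M$. Its matrix is
\[
K(M,Q)=(Q/M)^\alpha\one_{\{Q\le M\}}=2^{-\alpha(k-i)}\one_{\{i\le k\}},
\]
which is a convolution kernel on $\mathbb Z$ with summable profile $\sum_{j\ge0}2^{-\alpha j}=(1-2^{-\alpha})^{-1}$. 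Schur's test (equivalently Young's inequality $\ell^1*\ell^2\to\ell^2$) then bounds the $\ell^2\to\ell^2$ norm by $(1-2^{-\alpha})^{-1}$. This proves \eqref{eq:dyadic-hardy-basic}.

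\emph{Tail bound \eqref{eq:dyadic-hardy-tail}.} Here we borrow a power. For $M>L$,
\[
M^{-\alpha}=M^{-\gamma}M^{-(\alpha-\gamma)}\le L^{-\gamma}M^{-(\alpha-\gamma)} .
\]
Applying \eqref{eq:dyadic-hardy-basic} with the positive exponent $\alpha-\gamma$ in place of $\alpha$ gives the claim, with constant $(1-2^{-(\alpha-\gamma)})^{-1}$ depending only on $\alpha-\gamma$.

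\emph{Geometric tails \eqref{eq:dyadic-geometric-tail}.} These are direct geometric series:
\[
\sum_{Q>M}Q^{-\delta}=\sum_{j\ge1}(2^jM)^{-\delta}=M^{-\delta}\,\frac{2^{-\delta}}{1-2^{-\delta}} ,
\]
and the same computation with $L$ in place of $M$ gives the second bound.

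No step is a real obstacle. The only care needed is to keep the constants independent of $L$ and $M$, which holds because every kernel above depends only on the integer difference $k-i$.
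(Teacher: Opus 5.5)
Your proof is correct and follows essentially the paper's route: a Schur-test bound for the dyadic Hardy matrix, the power-borrowing $M^{-\alpha}\le L^{-\gamma}M^{-(\alpha-\gamma)}$ for the tail, and geometric series for the last display. The only cosmetic difference is that you first dominate the paper's kernel $2^{-\alpha m}\one_{\{q\le m\}}$ by the convolution kernel $2^{-\alpha(m-q)}\one_{\{q\le m\}}$, using $x_Q\ge0$, and then apply Young's inequality, whereas the paper checks the row and column sums of the original kernel directly.
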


\begin{proof}
Write $M=2^m$ and $Q=2^q$, with $m,q\ge0$.  The matrix
$K_{m,q}=2^{-\alpha m}\one_{\{q\le m\}}$ has uniformly bounded row sums,
\[
 \sup_m\sum_qK_{m,q}=\sup_m(m+1)2^{-\alpha m}<\infty,
\]
and uniformly bounded column sums,
\[
 \sup_q\sum_mK_{m,q}=\sup_q\sum_{m\ge q}2^{-\alpha m}<\infty.
\]
Schur's test on $\ell^2(\N_0)$ proves
\eqref{eq:dyadic-hardy-basic}.  For $M>L$ write
$M^{-\alpha}\le L^{-\gamma}M^{-(\alpha-\gamma)}$ and apply the basic
estimate with exponent $\alpha-\gamma$, which gives
\eqref{eq:dyadic-hardy-tail}.  The two estimates in
\eqref{eq:dyadic-geometric-tail} are geometric series.
\end{proof}

\begin{lemma}[Dyadic assembly for the centered profile]
\label{lem:centered-assembly}
Let $0<s<1-\beta_{b,c}$, $\beta_{b,c}<\sigma<1-\beta_{b,c}$, and suppose a family of continuous Volterra
kernels has nonnegative block majorants satisfying
\begin{equation}\label{eq:abstract-centered-block-majorant}
 b_{N,Q,M}\le C_b
 N^{-3/2+\beta_{b,c}+\eta}\bigl(Q^{3/2+\eta}+M^{3/2+\eta}\bigr),
 \qquad Q\le cN,\quad M\le CN,
\end{equation}
where
\begin{equation}\label{eq:assembly-strict-margin}
 \beta_{b,c}+2\eta<\min\{\sigma,1-\sigma,1-s\}.
\end{equation}
Then the Volterra block sum maps
\begin{equation}\label{eq:centered-assembly-map}
 C_TL^2\cap L_T^\infty B_{2,\infty}^{\sigma}
 \longrightarrow C_TH^{s-1}\cap C_TB_{2,\infty}^{\sigma-1}.
\end{equation}
For $0<T\le1$ it satisfies the quantitative estimates
\begin{align}
 \|\mathcal Bw\|_{C_TH^{s-1}}
 +\|\mathcal Bw\|_{C_TB_{2,\infty}^{\sigma-1}}
 &\lesssim T C_b
 \|w\|_{C_TL^2\cap L_T^\infty B_{2,\infty}^{\sigma}},
 \label{eq:centered-assembly-with-time}\\
 \|\mathcal Bw\|_{L_T^1B_{2,\infty}^{\sigma-1}}
 &\lesssim T^2 C_b
 \|w\|_{C_TL^2\cap L_T^\infty B_{2,\infty}^{\sigma}}.
 \label{eq:centered-assembly-L1-time}
\end{align}
The same assertions hold after taking $L^p(\Omega)$ norms of random kernel
majorants.

For dyadic $L\ge2$, set
\begin{equation}\label{eq:anisotropic-finite-set}
 \mathfrak F_L:=\{(N,Q,M):Q\le L,\ M\le L,\ N\le L^2\}.
\end{equation}
For every
\begin{equation}\label{eq:assembly-tail-gamma}
 0<\gamma<\min\{\sigma-\beta_{b,c}-2\eta,\,1-\sigma-\beta_{b,c}-2\eta,\,
 1-s-\beta_{b,c}-2\eta,\,3/2-2\beta_{b,c}-3\eta\},
\end{equation}
the sum over the complement of $\mathfrak F_L$ has operator norm
$O(TC_bL^{-\gamma})$ in the two $C_T$ targets in
\eqref{eq:centered-assembly-map}, and $O(T^2C_bL^{-\gamma})$ in
$L_T^1B_{2,\infty}^{\sigma-1}$.  Thus the finite block sums converge in
operator norm with a deterministic tail modulus.
\end{lemma}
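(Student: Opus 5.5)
The plan is to reduce everything to fixed-time dyadic Schur sums, and only afterwards pass the time integral and the supremum over $t$ through. Write the Volterra block sum as
\[
 \mathcal Bw(t)=\sum_{N}\sum_{Q\le cN}\sum_{M\le CN}\int_0^t\mathcal B_{N,Q,M}(t,s)P_Qw(s)\dd s ,
\]
with $\|\mathcal B_{N,Q,M}(t,s)\|_{\mathcal L(L^2_q,L^2_n)}\le b_{N,Q,M}$. Each output lands in the $M$-shell up to finite overlap, since the block is localized there by $\chi_M$. For each output shell $M$ I will bound
\[
 \|P_M\mathcal Bw(t)\|_{L^2}\le t\sum_{N\gtrsim \max\{Q,M\}}\sum_{Q}b_{N,Q,M}\sup_s\|P_Qw(s)\|_{L^2}.
\]
The factor $t\le T$ is the source of the $T$ in \eqref{eq:centered-assembly-with-time}. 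Integrating $t$ over $[0,T]$ then gives the $T^2$ in \eqref{eq:centered-assembly-L1-time}.

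The first step is to sum over $N$ using \eqref{eq:abstract-centered-block-majorant}. The exponent $-3/2+\beta_{b,c}+\eta$ is negative, so
\[
 \sum_{N\gtrsim\max\{Q,M\}}b_{N,Q,M}\lesssim C_b\max\{Q,M\}^{-3/2+\beta_{b,c}+\eta}\bigl(Q^{3/2+\eta}+M^{3/2+\eta}\bigr)\lesssim C_b\max\{Q,M\}^{\beta_{b,c}+2\eta}.
\]
The effective $(M,Q)$ kernel is therefore $k_{M,Q}\lesssim C_b\max\{Q,M\}^{\beta_{b,c}+2\eta}$.

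For the Besov target, fix $M$. I insert $\|P_Qw\|_2\le Q^{-\sigma}\|w\|_{B^\sigma_{2,\infty}}$ and aim to show
\[
 M^{\sigma-1}\sum_Q\max\{Q,M\}^{\beta_{b,c}+2\eta}Q^{-\sigma}\lesssim1 .
\]
For $Q\le M$ the sum is $M^{\sigma-1+\beta_{b,c}+2\eta}\sum_{Q\le M}Q^{-\sigma}$, which is bounded because $\sigma>0$ and $\sigma+\beta_{b,c}+2\eta<1$. For $Q>M$ the sum is $M^{\sigma-1}\sum_{Q>M}Q^{\beta_{b,c}+2\eta-\sigma}$, which converges because $\sigma>\beta_{b,c}+2\eta$ and leaves $M^{\beta_{b,c}+2\eta-1}\lesssim1$. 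Both conditions are exactly \eqref{eq:assembly-strict-margin}.

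For the Sobolev target, I use the weight $M^{s-1}$ with input in $C_TL^2$. The $Q\le M$ part gives $M^{s-1+\beta_{b,c}+2\eta}\sum_{Q\le M}\|P_Qw\|_2$, which the dyadic Hardy bound \eqref{eq:dyadic-hardy-basic} with $\alpha=1-s-\beta_{b,c}-2\eta>0$ places in $\ell^2_M$. The $Q>M$ part is $M^{s-1}\sum_{Q>M}Q^{\beta_{b,c}+2\eta}\|P_Qw\|_2$, and this is where I expect the real obstacle. With only $L^2$ control on $w$, the high-input sum $\sum_{Q>M}Q^{\beta_{b,c}+2\eta}\|P_Qw\|_2$ diverges. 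I will handle it with the Besov component instead: $\|P_Qw\|_2\le Q^{-\sigma}\|w\|_{B^\sigma_{2,\infty}}$ gives $M^{s-1}M^{\beta_{b,c}+2\eta-\sigma}$, which is square-summable in $M$ because $s-1<0$. Hence the $H^{s-1}$ bound genuinely uses the intersection norm, consistent with \eqref{eq:centered-assembly-map}.

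Time continuity is the next step. Each block is continuous on $\Delta_T$ with values in operators, and the dyadic series converges uniformly in $t$ by the majorants above, so $\mathcal Bw$ lies in the $C_T$ spaces. Continuity in $C_TB^{\sigma-1}_{2,\infty}$, rather than merely $L^\infty_T$, follows from the uniform tail, which is the last step. Replacing $b_{N,Q,M}$ by random majorants and taking $L^p(\Omega)$ norms goes through unchanged by Minkowski's inequality.

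For the tail, the complement of $\mathfrak F_L$ consists of triples with $Q>L$, $M>L$, or $N>L^2$. I treat each case by stealing a power from the slack in the preceding estimates.

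For $M>L$, I rerun the estimates with $1-s$, $\sigma$ and $1-\sigma$ each reduced by $\gamma$, using \eqref{eq:dyadic-hardy-tail} and \eqref{eq:dyadic-geometric-tail}. This gives $L^{-\gamma}$.

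For $Q>L$ with $M\le L$, the $\sigma$-margin gives $\sum_{Q>L}Q^{\beta_{b,c}+2\eta-\sigma}\lesssim L^{-\gamma}$.

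For $N>L^2$ with $Q,M\le L$, I sum the $N$-tail directly: it is at most $L^{2(-3/2+\beta_{b,c}+\eta)}L^{3/2+\eta}=L^{-3/2+2\beta_{b,c}+3\eta}$. This is exactly the fourth constraint in \eqref{eq:assembly-tail-gamma}, which explains the anisotropic $L^2$ cut on $N$.

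Each tail bound carries the same factor $T$ (respectively $T^2$) as the full estimate. It is deterministic given $C_b$, and it yields operator-norm convergence of the finite block sums.
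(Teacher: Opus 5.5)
Your proposal is correct and follows the paper's proof essentially step for step. You collapse the $N$-sum to the effective kernel $\max\{Q,M\}^{\beta_{b,c}+2\eta}$, use the $L^2$ input with a Hardy bound on $Q\le M$ and the $B^{\sigma}_{2,\infty}$ input on $Q>M$ (and throughout the Besov target), take $T$ and $T^2$ from the Volterra integral, and split the complement of $\mathfrak F_L$ into the same three ordered regions $\{M>L\}$, $\{M\le L<Q\}$, $\{Q,M\le L,\ N>L^2\}$ with the same exponents.

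The one slip is that you take $\sup_s\|P_Qw(s)\|_{L^2}$ shell by shell before invoking the Hardy bound. That bound would then need $\sum_Q\sup_s\|P_Qw(s)\|_{L^2}^2$, which $\|w\|_{C_TL^2}$ does not control. There are two easy fixes: do the spatial sums at a fixed source time and integrate afterwards, as the paper does, or use the crude bound $\sup_s\|P_Qw(s)\|_{L^2}\le\|w\|_{C_TL^2}$. In the second case the strict margin $1-s-\beta_{b,c}-2\eta>0$ absorbs the resulting $\log M$, and the strict inequality on $\gamma$ likewise absorbs the $\log L$ from the $Q\le L$ sum in your $N>L^2$ tail.
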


\begin{proof}
It is enough first to work with smooth paths having only finitely many
Littlewood--Paley blocks.  Every spatial sum and Volterra integral is then
classical; the uniform estimates and the deterministic tail modulus below
extend the operator uniquely to the completed source space.

Collapse the high stochastic shell first:
\[
 K_{M,Q}:=\sum_{N\gtrsim\max\{M,Q\}}b_{N,Q,M}.
\]
The dyadic $N$ sum gives
\begin{equation}\label{eq:collapsed-centered-kernel}
 K_{M,Q}\lesssim C_b
 \begin{cases}
  M^{\beta_{b,c}+2\eta},&Q\le M,\\
  Q^{\beta_{b,c}+2\eta},&Q>M.
 \end{cases}
\end{equation}
Fix a source time $r$ and put $x_Q(r)=\|P_Qw(r)\|_{L^2}$.  All spatial
summations are performed at this common time.  Littlewood--Paley orthogonality
then identifies $(x_Q(r))_Q$ with the $L^2$ norm of $w(r)$, after which the
Volterra time integration is taken.

In the region $Q\le M$, put
$\alpha_H:=1-s-\beta_{b,c}-2\eta>0$.  Lemma~\ref{lem:quantitative-dyadic-hardy}
gives the discrete Hardy estimate
\begin{equation}\label{eq:centered-discrete-hardy}
 \left(\sum_M
 \left[M^{-\alpha_H}\sum_{Q\le M}x_Q(r)\right]^2\right)^{1/2}
 \lesssim_{s,\eta}\left(\sum_Qx_Q(r)^2\right)^{1/2}.
\end{equation}
Littlewood--Paley orthogonality identifies the right-hand side with
$\|w(r)\|_{L^2}$; no interchange of conditionally convergent spatial sums is
used.

In the region $Q>M$, use
$x_Q(r)\le Q^{-\sigma}\|w(r)\|_{B_{2,\infty}^{\sigma}}$.  Then
\[
 \left(\sum_M
 \left[M^{s-1}\sum_{Q>M}Q^{-\sigma+\beta_{b,c}+2\eta}\right]^2\right)^{1/2}
 \lesssim \|w(r)\|_{B_{2,\infty}^{\sigma}},
\]
since $\beta_{b,c}+2\eta<\sigma$ and $s-1-\sigma+\beta_{b,c}+2\eta<0$.  Consequently, uniformly in
the kernel time $t\ge r$,
\begin{equation}\label{eq:pointwise-H-assembly}
 \left\|\sum_{N,Q,M}B_{N,Q,M}(t,r)P_Qw(r)\right\|_{H^{s-1}}
 \lesssim C_b\bigl(\|w(r)\|_{L^2}
 +\|w(r)\|_{B_{2,\infty}^{\sigma}}\bigr).
\end{equation}

For the Besov target, use the Besov input in both frequency regions.  From
\eqref{eq:collapsed-centered-kernel},
\begin{align*}
 M^{\sigma-1}\sum_{Q\le M}M^{\beta_{b,c}+2\eta}Q^{-\sigma}
 &\lesssim M^{\sigma-1+\beta_{b,c}+2\eta},\\
 M^{\sigma-1}\sum_{Q>M}Q^{-\sigma+\beta_{b,c}+2\eta}
 &\lesssim M^{-1+\beta_{b,c}+2\eta}.
\end{align*}
Both powers are negative under \eqref{eq:assembly-strict-margin}; in
particular, they tend to zero as $M\to\infty$.  Thus
\begin{equation}\label{eq:pointwise-B-assembly}
 \left\|\sum_{N,Q,M}B_{N,Q,M}(t,r)P_Qw(r)\right\|_
 {B_{2,\infty}^{\sigma-1}}
 \lesssim C_b\|w(r)\|_{B_{2,\infty}^{\sigma}},
\end{equation}
and the output belongs to the little-Besov target.

Minkowski's integral inequality now gives, for every $t\le T$,
\[
 \|\mathcal Bw(t)\|_{H^{s-1}}
 +\|\mathcal Bw(t)\|_{B_{2,\infty}^{\sigma-1}}
 \lesssim C_b\int_0^t
 \bigl(\|w(r)\|_{L^2}+\|w(r)\|_{B_{2,\infty}^{\sigma}}\bigr)\dd r.
\]
This proves \eqref{eq:centered-assembly-with-time}; integration once more in
$t$ proves \eqref{eq:centered-assembly-L1-time}.  Continuity follows first for
finite block sums from continuity of the kernels and then for the full sum
from the uniform tail estimate below.

For the tail estimate, decompose the complement of $\mathfrak F_L$ into the
disjointly ordered regions
\[
 \{M>L\},\qquad \{M\le L<Q\},\qquad
 \{M,Q\le L,\ N>L^2\}.
\]
On $M>L$, apply \eqref{eq:dyadic-hardy-tail} with
$\alpha_H=1-s-\beta_{b,c}-2\eta$ to the $Q\le M$ Sobolev contribution.  The
$Q>M$ Sobolev contribution is bounded using
\eqref{eq:dyadic-geometric-tail}; its exponent is
$1-s+\sigma-\beta_{b,c}-2\eta>\alpha_H$.  Thus every
$0<\gamma<\alpha_H$ gives a factor $L^{-\gamma}$.  In the Besov target, the
two frequency regions have decay exponents
$1-\sigma-\beta_{b,c}-2\eta$ and $1-\beta_{b,c}-2\eta$, respectively, so every
$\gamma<1-\sigma-\beta_{b,c}-2\eta$ is admissible.

On $M\le L<Q$, necessarily $Q>M$.  With
$\delta_Q:=\sigma-\beta_{b,c}-2\eta>0$,
\eqref{eq:dyadic-geometric-tail} gives
\[
 \sum_{Q>L}Q^{-\sigma+\beta_{b,c}+2\eta}
 \lesssim L^{-\delta_Q}.
\]
The remaining $M$-weights are summable in the Sobolev target because $s<1$
and are bounded by one in the Besov target because $\sigma<1$.  Hence this
region has norm $O(C_bL^{-\delta_Q})$.

Finally, if $Q,M\le L$ and $N>L^2$, direct dyadic summation gives
\begin{equation}\label{eq:assembly-high-N-tail}
 \sum_{N>L^2}b_{N,Q,M}
 \lesssim C_bL^{-3+2\beta_{b,c}+2\eta}
 (Q^{3/2+\eta}+M^{3/2+\eta})
 \lesssim C_bL^{-\kappa_N},
\end{equation}
where $\kappa_N:=3/2-2\beta_{b,c}-3\eta>0$.  In the Sobolev target,
Cauchy--Schwarz over the finitely many input shells costs at most
$(1+\log L)^{1/2}$, while
$\sum_{M\le L}M^{2(s-1)}$ is uniformly bounded.  In the Besov target,
$\sum_{Q\le L}Q^{-\sigma}$ is uniformly bounded.  Therefore every
$0<\gamma<\kappa_N$ absorbs the possible logarithm and yields an
$O(C_bL^{-\gamma})$ operator tail.  Taking the minimum of the four positive
margins gives \eqref{eq:assembly-tail-gamma}.  The Volterra integral adds the
factor $T$, and the $L_T^1$ norm adds one further factor $T$, proving the
stated tail moduli.

For random kernels, take the $L^p(\Omega)$ norm of the kernel majorant before
the deterministic sums and use Minkowski's inequality.  By
Lemma~\ref{lem:centered-fixed-block-measurability}, every finite anisotropic
sum is strongly measurable and separably valued.  The quantitative tail makes
these sums Cauchy in the Bochner space of ambient operators; completeness
therefore gives a strongly measurable assembled limit without any
separability assumption on the full ambient operator space.
\end{proof}

\begin{lemma}[Strong measurability of assembled centered operators]
\label{lem:centered-block-measurability}
Under the hypotheses of Lemma~\ref{lem:centered-assembly}, the assembled
centered operator is a strongly measurable random variable in the ambient
Sobolev--Besov operator topology.  The same conclusion holds for operator-norm
limits of cutoff differences and for the locally small
$L_T^1B_{2,\infty}^{\sigma-1}$ component.
\end{lemma}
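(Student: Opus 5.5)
The plan is to realize the assembled operator as a Bochner limit of finitely many strongly measurable pieces, using the anisotropic finite sets $\mathfrak F_L$ of \eqref{eq:anisotropic-finite-set} as the approximating scheme. Strong measurability in the (nonseparable) ambient operator space
\[
 \mathcal L\bigl(C_TL^2\cap L_T^\infty B_{2,\infty}^{\sigma},\;
 C_TH^{s-1}\cap C_TB_{2,\infty}^{\sigma-1}\bigr)
\]
will follow once each partial sum is strongly measurable with separable range and the partial sums converge in $L^p(\Omega)$ of operator norm. No separability of the full ambient space is then needed, since a pointwise a.s. limit of strongly measurable maps is strongly measurable.

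\emph{Step 1 (partial sums).} For fixed dyadic $L$, the set $\mathfrak F_L$ is finite. By Lemma~\ref{lem:centered-fixed-block-measurability}, each Volterraized block
\[
 w\mapsto\int_0^tB_{N,Q,M}(t,r)P_Qw(r)\dd r
\]
is strongly measurable with values in $\mathcal L(C_TH_Q,C_TH_M\cap L_T^1H_M)$. Composing on the right with the deterministic bounded map $w\mapsto P_Qw$, and on the left with the deterministic bounded embedding of the fixed output shell $H_M$ into $H^{s-1}\cap B_{2,\infty}^{\sigma-1}$, preserves strong measurability and separable range: composition with fixed bounded operators is continuous in operator norm. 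A finite sum of such maps is again strongly measurable. Call this sum $\mathcal B_L$.

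\emph{Step 2 (Cauchy property).} Lemma~\ref{lem:centered-assembly} bounds $\mathcal B-\mathcal B_L$, the sum over the complement of $\mathfrak F_L$, in operator norm by $C\,T\,\mathcal C_b\,L^{-\gamma}$, where $\mathcal C_b$ is the random majorant constant. Taking $L^p(\Omega)$ norms and using Minkowski as in the random part of that lemma, with $\|\mathcal C_b\|_{L^p}<\infty$ from Proposition~\ref{prop:principal-centered-block} and the transfer tail of Lemma~\ref{lem:inner-localization-transfer}, shows that $(\mathcal B_L)$ is Cauchy in $L^p(\Omega;\mathcal L)$. Its dyadic subsequence converges almost surely in operator norm. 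The limit is strongly measurable and agrees with $\mathcal B$, because on finite-block inputs the sums are eventually exact.

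\emph{Step 3 (remaining assertions).} The same argument applies to cutoff differences, whose blocks obey the same majorant by Proposition~\ref{prop:principal-centered-block}. It also applies to the $L_T^1B_{2,\infty}^{\sigma-1}$ component, whose tail is $O(T^2L^{-\gamma})$ by \eqref{eq:centered-assembly-L1-time}. For operator-norm limits of cutoff differences, a limit of strongly measurable random variables is again strongly measurable.

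The main obstacle is Step 1, specifically the embedding of fixed-shell blocks into the ambient operator space. Continuity of $P_Q$ on the completed input space needs a check, and here the little-Besov closure convention is used. The output must also lie in the ambient target for the operator-norm topology, which holds because a single output shell has all norms equivalent.
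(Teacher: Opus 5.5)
Your proposal is correct and follows the paper's own argument. It proves strong measurability of the finite anisotropic sums over $\mathfrak F_L$ via Lemma~\ref{lem:centered-fixed-block-measurability}, uses the deterministic tail modulus of Lemma~\ref{lem:centered-assembly} in $L^p(\Omega)$ to make these sums Cauchy in the Bochner space (whose limit is strongly measurable with separable essential range), and repeats this for cutoff differences and the $L_T^1B_{2,\infty}^{\sigma-1}$ component. One small correction: the sums are not ``eventually exact'' on finite-block inputs, since the $N$ and $M$ sums stay infinite. That claim is not needed anyway, because the assembled operator is by construction the operator-norm limit of these finite sums.
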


\begin{proof}
The finite anisotropic sums are strongly measurable by
Lemma~\ref{lem:centered-fixed-block-measurability}.  The deterministic tail
modulus in Lemma~\ref{lem:centered-assembly}, after taking the prescribed
$L^p(\Omega)$ norm, makes them Cauchy in the corresponding Bochner space.
Its limit is strongly measurable and has separable essential range, even
when the ambient operator space itself is nonseparable.  Applying the same
argument to differences and to the $L_T^1$ target proves the remaining
claims.
\end{proof}

\begin{lemma}[Simultaneous dyadic realization and full cutoff convergence]
\label{lem:centered-dyadic-bc}
Fix the physical localizers and the four index triples
$(a;b,c)\in\mathfrak L$.  Assume the fixed-profile estimate
\eqref{eq:centered-block-profile}, with $\sup_{\Lambda\in\Dyd}$ inside every
finite $L^p(\Omega)$ norm.  There is one event
$\Omega_{\mathrm{cen}}$ of probability one such that, simultaneously for all
index triples, all dyadic triples, and every $\eta'>0$,
\begin{equation}\label{eq:pathwise-centered-block-majorant}
 \sup_{\Lambda\in\Dyd}
 \|\mathcal B_{\Lambda,N,Q,M}^{a;b,c}\|_{
 C(\Delta_{T_0};\mathcal L(L_q^2,L_n^2))}
 \le C_{\eta'}(\omega)
 N^{-3/2+\beta_{b,c}+\eta'}(Q^{3/2+\eta'}+M^{3/2+\eta'}).
\end{equation}
On this event, for every $0<s<1-\beta_{b,c}$ and $\beta_{b,c}<\sigma<1-\beta_{b,c}$, the entire fixed-profile dyadic
cutoff sequence converges, without passage to a subsequence, in the assembled
operator topology whenever $\eta'$ is chosen below the strict margins in
\eqref{eq:assembly-strict-margin}.
\end{lemma}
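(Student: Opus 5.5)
The plan is to deduce a pathwise majorant from the moment bound by a Borel--Cantelli summation over dyadic triples. Then I will convert that majorant into full-sequence convergence using the stabilization of Lemma~\ref{lem:finite-state-cutoff} and the deterministic tail modulus of Lemma~\ref{lem:centered-assembly}.

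\emph{Pathwise majorant.} Fix an index triple, a rational $\eta'>0$, and set $\eta=\eta'/2$. For each dyadic triple $(N,Q,M)$ let
\[
X_{N,Q,M}:=\sup_{\Lambda\in\Dyd}\|\mathcal B^{a;b,c}_{\Lambda,N,Q,M}\|_{C(\Delta_{T_0};\mathcal L)}.
\]
By \eqref{eq:centered-block-profile} with the supremum inside, $\|X_{N,Q,M}\|_{L^p}\lesssim_p N^{-3/2+\beta_{b,c}+\eta}(Q^{3/2+\eta}+M^{3/2+\eta})$. Markov's inequality bounds the probability that $X_{N,Q,M}$ exceeds $N^{\eta}\cdot N^{-3/2+\beta_{b,c}+\eta}(Q^{3/2+\eta'}+M^{3/2+\eta'})$ by $N^{-p\eta}$. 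The relevant blocks have $Q,M\lesssim N$, so there are $O(\log^2 N)$ triples per $N$. Choosing $p\eta>1$ makes the sum of these probabilities finite. Borel--Cantelli then yields a random constant $C_{\eta'}(\omega)<\infty$ with \eqref{eq:pathwise-centered-block-majorant}. I obtain $\Omega_{\mathrm{cen}}$ by intersecting over the four triples in $\mathfrak L$ and over rational $\eta'$. Monotonicity of the right-hand side in $\eta'$ then covers every real $\eta'>0$.

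\emph{Full-sequence convergence.} On $\Omega_{\mathrm{cen}}$, fix $s,\sigma$ as in the statement and choose $\eta'$ with $\beta_{b,c}+2\eta'<\min\{\sigma,1-\sigma,1-s\}$. The cut blocks $\mathcal B_\Lambda$ and the uncut block (a limit of stabilizing blocks) all satisfy \eqref{eq:abstract-centered-block-majorant} with the single constant $C_b=C_{\eta'}(\omega)$. By Lemma~\ref{lem:centered-assembly}, the contribution from the complement of $\mathfrak F_L$ therefore has assembled operator norm $\lesssim C_{\eta'}(\omega)L^{-\gamma}$, uniformly in $\Lambda$. I split $\mathcal B_\Lambda-\mathcal B_\infty$ into the finitely many blocks in $\mathfrak F_L$ and this tail. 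By Lemma~\ref{lem:finite-state-cutoff}, each block in $\mathfrak F_L$ agrees with its uncut value once $\Lambda$ is large. I need this identity pathwise, not just in law: for a fixed block the two synthesis maps coincide with the uncut ones once the plateau covers both shells, so the block is literally identical. That stabilization is deterministic given $\omega$, so all but the tail vanish for large dyadic $\Lambda$. Letting $\Lambda\to\infty$ and then $L\to\infty$ gives convergence along the entire dyadic sequence, with no subsequence.

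\emph{Main obstacle.} The delicate point is making sure the limiting object is well defined on the same event, rather than only as an $L^p$ limit. I will define the uncut block pathwise as the eventual common value of the cut blocks. By Proposition~\ref{prop:principal-centered-block} this agrees almost surely with the $L^p$ limit, and intersecting over the countably many blocks keeps probability one. I also have to check that a single event works for all admissible $(s,\sigma)$ at once. This holds because the majorant \eqref{eq:pathwise-centered-block-majorant} does not depend on $(s,\sigma)$: those exponents enter only through the deterministic assembly estimate.
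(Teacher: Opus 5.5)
Your proposal is correct and follows essentially the same route as the paper. You use Markov plus Borel--Cantelli over the countably many dyadic triples: the paper thresholds at $(NQM)^{\delta_m}$ along a sequence $\eta_m\downarrow0$, whereas you threshold at $N^{\eta}$ and count $O(\log^2N)$ triples per shell. You then intersect over the four labels and rational losses, and finish with the finite-set/tail argument combining the deterministic tail of Lemma~\ref{lem:centered-assembly} with the block stabilization of Lemma~\ref{lem:finite-state-cutoff}. Your observation that the stabilization must hold as a pathwise identity, secured by discarding countably many null sets, is a point the paper leaves implicit.
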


\begin{proof}
Choose a decreasing sequence $\eta_m\downarrow0$ and put
$\eps_m=\delta_m=\eta_m/4$.  For each $m$, take a finite moment $p_m$ so large
that $\delta_mp_m>2$.  Proposition~\ref{prop:principal-centered-block}, in its
fixed-profile maximal form, and Markov's inequality give
\begin{align*}
 &\Pp\left(
 \frac{\sup_\Lambda
 \|\mathcal B_{\Lambda,N,Q,M}^{a;b,c}\|_{C(\Delta_{T_0};\mathcal L)}}
 {N^{-3/2+\beta_{b,c}+\eps_m}
 (Q^{3/2+\eps_m}+M^{3/2+\eps_m})}
 >(NQM)^{\delta_m}\right)\\
 &\hspace{35mm}\lesssim_{m}(NQM)^{-\delta_mp_m}.
\end{align*}
The sum of the right-hand side over dyadic $(N,Q,M)$ is finite.  Since
$Q,M\lesssim N$ on the principal sector, the threshold factor can be absorbed
as
\[
 (NQM)^{\delta_m}N^{-3/2+\beta_{b,c}+\eps_m}
 (Q^{3/2+\eps_m}+M^{3/2+\eps_m})
 \lesssim
 N^{-3/2+\beta_{b,c}+\eta_m}(Q^{3/2+\eta_m}+M^{3/2+\eta_m}).
\]
The first Borel--Cantelli lemma leaves only finitely many exceptional
triples.  Taking the maximum of their normalized sizes together with one gives
a finite random constant $C_{\eta_m}(\omega)$ and hence
\eqref{eq:pathwise-centered-block-majorant} with $\eta' =\eta_m$.  Intersect
the resulting events over the countable set of $m$ and the four index triples.  If $\eta'>0$ is arbitrary, choose $m$ with $\eta_m<\eta'$; because
all dyadic scales are at least one, the $\eta_m$ estimate implies the
$\eta'$ estimate.  Restriction from $\Delta_{T_0}$ gives the same event for every
$0<T\le T_0$.  No independence between different blocks is used.

Fix $s,\sigma$ and choose one $\eta'$ below the assembly margins.  Given
$L$, the complement of the finite anisotropic set $\mathfrak F_L$ in
\eqref{eq:anisotropic-finite-set} is $O_\omega(L^{-\gamma})$ by
Lemma~\ref{lem:centered-assembly}.  On the finite set, Lemma~\ref{lem:finite-state-cutoff} gives stabilization of
every block for all sufficiently large dyadic cutoffs.  A finite-set/tail argument makes
the whole cutoff sequence Cauchy in the assembled operator norm and proves
full-sequence convergence.  The anisotropic finite sums are strongly
measurable by Lemma~\ref{lem:centered-block-measurability}; extending their
pointwise norm limit by zero off $\Omega_{\mathrm{cen}}$ gives a strongly
measurable pathwise realization.  It agrees almost surely with the Bochner
limit constructed in the assembly argument.
\end{proof}

\begin{proposition}[Compatibility of spectral, Gaussian, and dyadic approximations]
\label{prop:centered-approximation-compatibility}
Fix $(a;b,c)\in\mathfrak L$, admissible exponents
\[
 0<s<1-\beta_{b,c},
 \qquad
 \beta_{b,c}<\sigma<1-\beta_{b,c},
\]
and $2\le p<\infty$.  Let $(\Lambda_j)_{j\ge1}$ be any admissible cofinal
spectral cutoff sequence.  On each of the four Hilbert legs in
\eqref{eq:abstract-flat-1}--\eqref{eq:abstract-flat-4}, fix increasing
finite-rank orthogonal projections converging strongly to the identity; in
the equal-color case use the same projection on the two copies of the common
Gaussian space.  For dyadic $L\ge2$, let
$\mathcal B_{j,m,L}^{a;b,c}$ be the finite sum of the principal centered
blocks over $\mathfrak F_L$ in \eqref{eq:anisotropic-finite-set}, with spectral
cutoff $\Lambda_j$ and simultaneous four-leg compression at level $m$.
Write $j=\infty$ for the uncut synthesis maps and $m=\infty$ for the
uncompressed centered Wiener operator.  Set
\begin{equation}\label{eq:centered-compatibility-target}
 \mathscr X_T^{s,\sigma}:=
 \mathcal L\!\left(
 C_TL^2\cap L_T^\infty B_{2,\infty}^{\sigma},
 C_TH^{s-1}\cap L_T^1B_{2,\infty}^{\sigma-1}
 \right).
\end{equation}
Then the following assertions hold.

First, for finite $j,m,L$, the operator
$\mathcal B_{j,m,L}^{a;b,c}$ is a finite Gaussian polynomial with values in a
finite-dimensional subspace of $\mathscr X_T^{s,\sigma}$.  For each $j,m$,
including the uncut or uncompressed values, its limit as $L\to\infty$ exists
in $L^p(\Omega;\mathscr X_T^{s,\sigma})$; denote it by
$\mathcal B_{j,m}^{a;b,c}$.  For every $\gamma$ below the strict margins in
\eqref{eq:assembly-tail-gamma},
\begin{equation}\label{eq:three-layer-uniform-tail}
 \sup_{j,m}
 \|\mathcal B_{j,m}^{a;b,c}-\mathcal B_{j,m,L}^{a;b,c}\|_
 {L^p(\Omega;\mathscr X_T^{s,\sigma})}
 \lesssim_{p,\gamma}L^{-\gamma}.
\end{equation}
Here and below the suprema include $j=\infty$ and $m=\infty$.

Second, the three-parameter family has a joint limit:
\begin{equation}\label{eq:three-layer-joint-limit}
 \mathcal B_{j,m,L}^{a;b,c}
 \longrightarrow
 \mathcal B_{\infty,\infty}^{a;b,c}
 \quad\text{in }L^p(\Omega;\mathscr X_T^{s,\sigma})
 \quad\text{as }j,m,L\to\infty
\end{equation}
with the product ordering on $\N^3$.  Consequently every iterated limit in
the spectral cutoff, Gaussian compression, and dyadic assembly parameters
exists and gives the same centered operator.  In the equal-color case, the common limit is the centered second Wiener
chaos obtained after removing the deterministic covariance distribution.
\end{proposition}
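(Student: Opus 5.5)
The plan is to prove a uniform dyadic tail bound over all cutoff and compression states, together with convergence of each fixed finite block, and then combine them by a standard $3\varepsilon$ argument.

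\emph{Step 1: the first assertion.} For finite $j,m,L$ the operator $\mathcal B_{j,m,L}^{a;b,c}$ is a finite sum over $\mathfrak F_L$. Each summand is a compressed centered block, hence a finite Gaussian polynomial, as in the proof of Lemma~\ref{lem:centered-fixed-block-measurability}. Since $\mathfrak F_L$ is finite and each compressed block has finite-dimensional range, the sum takes values in a finite-dimensional subspace of $\mathscr X_T^{s,\sigma}$.

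For the tail bound \eqref{eq:three-layer-uniform-tail}, I first check that the block majorant \eqref{eq:abstract-centered-block-majorant} holds in $L^p(\Omega)$ uniformly in $j$ and $m$:
\begin{itemize}
\item Uniformity in $j$ comes from the $\sup_\Lambda$ in \eqref{eq:centered-block-profile}, including $j=\infty$.
\item Uniformity in $m$ holds because compression is contractive in every Schatten norm (Lemma~\ref{lem:centered-schatten-ideal}, or the factorizations in Lemma~\ref{lem:four-flat-compression}). The four pulled-back flattening bounds of Proposition~\ref{prop:centered-flattening-verification}, and their time increments in Lemma~\ref{lem:pulled-back-schatten-continuity}, therefore hold for every $m$. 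Rerunning the proof of Proposition~\ref{prop:principal-centered-block} through Theorem~\ref{thm:centered-gaussian-operator} and Lemma~\ref{lem:centered-time-lift} then gives the same majorant.
\end{itemize}
The transfer-tail blocks have the Schwartz majorants \eqref{eq:inner-transfer-tail-flat}, which are also uniform in $j$ and $m$. With these inputs, the random-kernel version of Lemma~\ref{lem:centered-assembly} (Minkowski in $L^p(\Omega)$ applied to the majorants) bounds the sum over the complement of $\mathfrak F_L$ by $L^{-\gamma}$ uniformly in $j$ and $m$. The target uses $C_TH^{s-1}$ together with the $L^1_TB^{\sigma-1}_{2,\infty}$ component, both of which are covered by that lemma. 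So the finite sums are Cauchy in $L$, the limit $\mathcal B_{j,m}^{a;b,c}$ exists, and \eqref{eq:three-layer-uniform-tail} follows.

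\emph{Step 2: convergence of a fixed block.} For a fixed triple $(N,Q,M)$ I show that the block converges as $(j,m)\to(\infty,\infty)$ jointly, in $L^p(\Omega;C(\Delta_T;\mathcal K))$.
\begin{itemize}
\item \emph{Spectral cutoff.} By Lemma~\ref{lem:covariance-synthesis}, once $\Lambda_j$ is large the two synthesis maps on the fixed shells coincide with their uncut versions. So for $j\ge j_0(N,L)$ the block equals its $j=\infty$ version for every $m$; this holds for arbitrary cofinal sequences, with no nesting.
\item \emph{Compression.} For the finitely many states $j<j_0$, together with $j=\infty$, Lemma~\ref{lem:centered-uniform-compression} gives convergence in $m$. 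Its input is that the Schatten images are compact and compressions converge uniformly (Lemma~\ref{lem:pulled-back-schatten-continuity}), while the Hölder seminorms are bounded.
\end{itemize}
Since only finitely many states occur, the convergence in $m$ is uniform in $j$. Combined with the stabilization in $j$, this gives joint convergence of each block in $(j,m)$. Summing over the finite set $\mathfrak F_L$, and using that the Volterraization map of Lemma~\ref{lem:centered-fixed-block-measurability} is bounded, gives convergence of $\mathcal B_{j,m,L}$ in $L^p(\Omega;\mathscr X_T^{s,\sigma})$ for each fixed $L$.

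\emph{Step 3: joint limit and identification.} The $3\varepsilon$ argument runs as follows. Given $\varepsilon>0$, choose $L$ with $L^{-\gamma}<\varepsilon$ using \eqref{eq:three-layer-uniform-tail}, which is uniform in $(j,m)$. Then use Step 2 at that fixed $L$. This yields the Cauchy property and \eqref{eq:three-layer-joint-limit} in the product ordering. Every iterated limit then agrees, because the uniform tail makes the $L$-limit commute with the $(j,m)$-limit (Moore--Osgood).

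It remains to identify the equal-color limit. By Proposition~\ref{prop:localized-block-hilbert} and Lemma~\ref{lem:scalar-wiener-realization}, the compressed operators subtract exactly $D_P$. Their scalar pairings converge in $L^2(\Omega)$ to the second Wiener integral of the symmetrized kernel, so the limit is the centered second chaos with the covariance distribution \eqref{eq:same-color-shell-contraction} removed.

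I expect the main obstacle to be the uniformity in $m$ in Step 1, specifically the time-increment hypothesis $B_{m,n}$. One must check that compressions commute with the three-term increment telescope and stay contractive on each factor of the factorizations \eqref{eq:pulled-flat-direction-1}--\eqref{eq:pulled-flat-direction-4}, so that the logarithmic Schatten selection of Lemma~\ref{lem:logarithmic-schatten-selection} applies unchanged.
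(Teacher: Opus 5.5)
Your proposal is correct and follows essentially the same route as the paper. Both arguments use the same ingredients in the same order:
\begin{enumerate}
\item a block majorant uniform in $j$ and $m$, from the cutoff-uniform profile estimate and contractivity of compression in every Schatten norm;
\item Lemma~\ref{lem:centered-assembly} for the uniform dyadic tail;
\item exact spectral stabilization on the finite set $\mathfrak F_{L_*}$, together with compact-Schatten uniform compression via Lemma~\ref{lem:centered-uniform-compression};
\item a $3\varepsilon$ argument.
\end{enumerate}
Your extra treatment of the finitely many pre-stabilization cutoff states is not needed for the product-order limit. It does, however, make explicit that the inner $m$-limits exist, which the iterated-limit claim requires and the paper leaves implicit.
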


\begin{proof}
At finite $(j,m,L)$, only finitely many spatial shells and finitely many
Gaussian coordinates occur, so the operator is a finite Gaussian polynomial
and is strongly measurable.  Orthogonal compression cannot increase any of
the four Schatten norms in
Lemma~\ref{lem:four-flat-compression}.  The block majorant in
\eqref{eq:abstract-centered-block-majorant} is therefore uniform in $j$ and
$m$.  Applying Lemma~\ref{lem:centered-assembly} to the compressed blocks
proves existence of $\mathcal B_{j,m}^{a;b,c}$ and the uniform tail
\eqref{eq:three-layer-uniform-tail}.

Fix $\varepsilon>0$ and choose $L_*$ so large that the right-hand side of
\eqref{eq:three-layer-uniform-tail}, with $L=L_*$, is smaller than
$\varepsilon$.  The set $\mathfrak F_{L_*}$ is finite.  Cofinality and the
plateau property imply that there is $j_*$ such that, for $j\ge j_*$, every
spectral multiplier occurring in these blocks is identically one on its
stochastic shell.  Thus, for every $m$,
\begin{equation}\label{eq:finite-layer-spectral-stabilization}
 \mathcal B_{j,m,L_*}^{a;b,c}
 =\mathcal B_{\infty,m,L_*}^{a;b,c},
 \qquad j\ge j_*.
\end{equation}

For each block in $\mathfrak F_{L_*}$, choose
$2\le p\le p_0\le r<\infty$ and $0<\theta'<\theta\le1/2$ with
$p_0\theta'>2$.  The time triangle is compact and the four uncut pulled-back
flattenings are continuous in $\Sch_r$ by
Lemma~\ref{lem:pulled-back-schatten-continuity}; their images are therefore
compact Schatten sets, while their $\theta$-Hölder seminorms are uniformly
bounded on the finite collection.  Lemma~\ref{lem:compact-schatten-compression}
applies simultaneously to all four orientations and all blocks.  The uniform
time-triangle completion in Lemma~\ref{lem:centered-uniform-compression} then
gives an $m_*$ such that
\begin{equation}\label{eq:finite-layer-gaussian-compression}
 \|\mathcal B_{\infty,m,L_*}^{a;b,c}
 -\mathcal B_{\infty,\infty,L_*}^{a;b,c}\|_
 {L^p(\Omega;\mathscr X_T^{s,\sigma})}
 <\varepsilon,
 \qquad m\ge m_*.
\end{equation}
The same finite-block estimate bounds the Volterra map from
$C(\Delta_T)$ to the operator target $\mathscr X_T^{s,\sigma}$, so the
convergence passes to that space.

Let now $j\ge j_*$, $m\ge m_*$, and $L\ge L_*$.  Since
$\mathfrak F_{L_*}\subset\mathfrak F_L$, the difference between the two finite
dyadic sums is a sub-sum of the complement of $\mathfrak F_{L_*}$ and hence
is bounded by the same nonnegative majorant as the assembly tail.  Combining
\eqref{eq:three-layer-uniform-tail},
\eqref{eq:finite-layer-spectral-stabilization}, and
\eqref{eq:finite-layer-gaussian-compression} yields
\[
 \|\mathcal B_{j,m,L}^{a;b,c}
 -\mathcal B_{\infty,\infty}^{a;b,c}\|_
 {L^p(\Omega;\mathscr X_T^{s,\sigma})}
 \lesssim\varepsilon.
\]
This proves the joint limit \eqref{eq:three-layer-joint-limit} and therefore
all iterated-limit identities.  For distinct colors, the scalar pairings are
ordered double Wiener integrals.  For equal colors,
Lemma~\ref{lem:scalar-wiener-realization} identifies every scalar pairing with
the symmetrized second Wiener integral, while
Proposition~\ref{prop:localized-diagonal} supplies the separated covariance
distribution.  This identifies the joint limit with the centered second
chaos.
\end{proof}

\begin{theorem}[Main-sector centered operator]
\label{thm:localized-centered}
Fix $0<T\le T_0$, $0<s<1-\beta_{b,c}$, $\beta_{b,c}<\sigma<1-\beta_{b,c}$, compactly supported inner and
outer physical localizers, and $(a;b,c)\in\mathfrak L$.  Let
$\mathcal B_{\Lambda,\mathrm{main}}^{a;b,c}$ be the sum of the principal
blocks $R\sim_{\mathrm{tr}}N$ and $M\le C_{\mathrm{out}}N$ in
Proposition~\ref{prop:principal-centered-block}.  There exists a centered random
operator $\mathcal B_{\mathrm{main}}^{a;b,c}$ such that, for every
$2\le p<\infty$,
\begin{equation}\label{eq:centered-operator-topology}
 \mathcal B_{\Lambda,\mathrm{main}}^{a;b,c}
 \longrightarrow \mathcal B_{\mathrm{main}}^{a;b,c}
 \quad\text{in}\quad
 L^p\!\left(\Omega;
 \mathcal L\!\left(C_TL^2\cap L_T^\infty B_{2,\infty}^{\sigma},
 C_TH^{s-1}\cap L_T^1B_{2,\infty}^{\sigma-1}\right)\right).
\end{equation}
The limit is independent of the admissible cutoff family.  Along a
fixed-profile dyadic family, convergence holds almost surely in the same
operator topology.  The moment bounds are uniform on separated parameter classes and for bounded $\mathfrak H_{\mathrm{prof}}$.
\end{theorem}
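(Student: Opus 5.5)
The proof is essentially an assembly of the preceding block-level results, so I would organize it as follows.

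First, fix a label $(a;b,c)$ and the exponents $s,\sigma$. Choose $\eta>0$ so small that the strict margin \eqref{eq:assembly-strict-margin} holds, i.e.
\[
 \beta_{b,c}+2\eta<\min\{\sigma,1-\sigma,1-s\};
\]
this is possible because $s<1-\beta_{b,c}$ and $\beta_{b,c}<\sigma<1-\beta_{b,c}$. Then apply Proposition~\ref{prop:principal-centered-block} with $\varepsilon=\eta$ and moment $p$. This gives, uniformly in $\Lambda$, the block majorant
\[
 \bigl\|\|\mathcal B^{a;b,c}_{\Lambda,N,Q,M}\|_{C(\Delta_T;\mathcal L)}\bigr\|_{L^p_\omega}
 \lesssim N^{-3/2+\beta_{b,c}+\eta}\bigl(Q^{3/2+\eta}+M^{3/2+\eta}\bigr)
\]
on the principal sector $R\sim_{\mathrm{tr}}N$, $Q\le c_{\mathrm{ap}}N$, $M\le C_{\mathrm{out}}N$. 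This is exactly hypothesis \eqref{eq:abstract-centered-block-majorant} in its random form. Lemma~\ref{lem:centered-assembly}, applied with $L^p(\Omega)$ norms of the majorants, then shows that the assembled Volterra sum $\mathcal B^{a;b,c}_{\Lambda,\mathrm{main}}$ lies in $L^p(\Omega;\mathscr X_T^{s,\sigma})$, with bound uniform in $\Lambda$. It also gives the deterministic tail modulus $O(L^{-\gamma})$ off the finite set $\mathfrak F_L$. Strong measurability of these operators comes from Lemma~\ref{lem:centered-block-measurability}.

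Second, to obtain convergence and cutoff independence, I would invoke Proposition~\ref{prop:centered-approximation-compatibility} with $m=\infty$ (no Gaussian compression). Along any admissible cofinal sequence $\Lambda_j$, the uniform tail \eqref{eq:three-layer-uniform-tail} is combined with the finite-block stabilization \eqref{eq:finite-layer-spectral-stabilization}. Once the finitely many shells in $\mathfrak F_{L_*}$ lie in the cutoff plateau, the cutoff blocks coincide with the uncut ones. A finite-set/tail $\varepsilon/3$ argument therefore yields convergence in $L^p(\Omega;\mathscr X_T^{s,\sigma})$ to $\mathcal B_{\infty,\infty}^{a;b,c}$, which I define to be $\mathcal B^{a;b,c}_{\mathrm{main}}$. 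Independence of the admissible family holds because the limit is identified with the uncut assembled operator $j=\infty$. Equivalently, interleaving two cofinal sequences gives another cofinal sequence whose limit must agree with both. The limit is centered because each finite block is a centered chaos (Lemma~\ref{lem:wick-pullback-realization}) and $L^p$ convergence preserves zero mean of every scalar pairing. In the equal-color case, Lemma~\ref{lem:scalar-wiener-realization} identifies the limit with the second Wiener chaos, after the covariance has been removed by Proposition~\ref{prop:localized-diagonal}.

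Third, for the almost-sure statement along a fixed-profile dyadic family, I would apply Lemma~\ref{lem:centered-dyadic-bc}. Its hypothesis, the maximal form of \eqref{eq:centered-block-profile} with $\sup_\Lambda$ inside the $L^p$ norm, is supplied by Proposition~\ref{prop:principal-centered-block} through Lemma~\ref{lem:finite-state-cutoff}. The lemma gives full-sequence pathwise convergence in the assembled norm on $\Omega_{\mathrm{cen}}$. This pathwise limit agrees almost surely with the $L^p$ limit, since a subsequence of an $L^p$-convergent sequence converges almost surely.

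Uniformity of the moment bounds follows by tracking constants. The synthesis bounds of Lemma~\ref{lem:covariance-synthesis} and the abstract estimate of Theorem~\ref{thm:centered-gaussian-operator} depend only on $\mathfrak H_{\mathrm{prof}}$ and the speed--mass box. The only parameter-dependent inputs are the transfer threshold and the aperture, which Proposition~\ref{prop:speed-gap-dependence} controls polynomially in $d_*^{-1}$, hence uniformly on a fixed separated class.

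I expect the main obstacle to be the bookkeeping in the first step. The assembly lemma is stated for majorants with the weights $Q^{3/2+\eta}$ and $M^{3/2+\eta}$, and the tail exponent $\gamma$ must remain positive. This means checking that $3/2-2\beta_{b,c}-3\eta>0$, which is automatic since $\beta_{b,c}\le 2\beta_*<1/4$. It also requires that the $p$-moment maximal bound be strong enough for Borel--Cantelli over dyadic triples; this is handled by choosing $p_0$ large in Definition~\ref{def:loss-hierarchy}.
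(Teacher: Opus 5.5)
Your proposal is correct and follows essentially the paper's argument. You get uniform $L^p$ bounds from Proposition~\ref{prop:principal-centered-block} together with the assembly Lemma~\ref{lem:centered-assembly}, and a finite-set/tail argument with plateau stabilization of fixed blocks gives the Cauchy property and profile independence. You route that second step through Proposition~\ref{prop:centered-approximation-compatibility} at $m=\infty$, whereas the paper argues directly and cites that proposition only for the joint-limit statement. Lemma~\ref{lem:centered-dyadic-bc} then gives the pathwise fixed-profile convergence, and constant tracking gives uniformity. One small bookkeeping correction: the moments used for Borel--Cantelli are the $p_m$ chosen inside Lemma~\ref{lem:centered-dyadic-bc} with $\delta_m p_m>2$, not the $p_0$ of Definition~\ref{def:loss-hierarchy}.
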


\begin{proof}
Choose $\eta>0$ below the strict margins in
Lemma~\ref{lem:centered-assembly}.  Apply
Proposition~\ref{prop:principal-centered-block} and Minkowski's inequality.
This gives a uniform $L^p$ bound for the assembled operators in
\eqref{eq:centered-operator-topology}.  For two arbitrary
admissible cofinal cutoffs, first choose a finite dyadic set whose complement
is uniformly small by the assembly lemma.  On the finite set, fixed-block
convergence from Proposition~\ref{prop:principal-centered-block} gives
convergence in $L^p$ operator norm.  This proves the Cauchy property and also
shows that the limit is independent of the cutoff profile; no nesting of the
multipliers is required.  Proposition~\ref{prop:centered-approximation-compatibility}
shows in addition that spectral removal, Gaussian finite-rank completion,
and dyadic assembly have a joint limit, so the definition of the centered
operator does not depend on an implicit order of approximation.

For a fixed-profile dyadic family, apply
Lemma~\ref{lem:centered-dyadic-bc} and repeat the deterministic
assembly pathwise.  This gives full-sequence almost-sure convergence on one
probability-one event.  Strong measurability was established in the assembly
lemma.  The constants in the centered proof use the individual wave and
Klein--Gordon propagator bounds, the fixed localizers, the dyadic support
constants, and the shellwise bounds for the two forcing profiles.  Hence they
are uniform on separated speed--mass classes, on compact subsets of the
admissible $(\betaW,\betaK)$ region, and on sets with bounded forcing-profile
$\mathfrak H_{\mathrm{prof}}$.  The centered branch uses only single-channel propagator bounds.
\end{proof}

\begin{proposition}[Uniform interval smallness of the centered source]
\label{prop:centered-besov-small}
For almost every realization constructed in
Theorem~\ref{thm:localized-centered}, and every $0<h\le1$,
\begin{equation}\label{eq:centered-uniform-interval-small}
 \sup_{\substack{I\subset[0,T_0]\text{ interval}\\ |I|\le h}}
 \|\mathcal B^{a;b,c}_{\mathrm{main},I}\|_{
 \mathcal L(E_{I,0}^{2,\sigma},L^1(I;B_{2,\infty}^{\sigma-1}))}
 \lesssim C_{\eta'}(\omega)h^2
\end{equation}
for every $\eta'>0$ below the strict assembly margins.  In particular,
\begin{equation}\label{eq:centered-besov-small}
 \|\mathcal B^{a;b,c}_{\mathrm{main},[0,T]}\|_{
 \mathcal L(E_T^{2,\sigma},L_T^1B_{2,\infty}^{\sigma-1})}
 \longrightarrow0
 \qquad\text{as }T\downarrow0.
\end{equation}
Along a fixed-profile dyadic approximation, both assertions hold on the
common probability-one event of Theorem~\ref{thm:localized-centered},
uniformly for the limiting operator, every cutoff level, and all cutoff
differences.
\end{proposition}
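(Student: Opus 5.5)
The plan is to redo the dyadic assembly of Lemma~\ref{lem:centered-assembly} pathwise on an interval $I=[t_0,t_1]$ of length $|I|\le h$, using the almost-sure block majorant \eqref{eq:pathwise-centered-block-majorant} of Lemma~\ref{lem:centered-dyadic-bc}. That majorant is a bound in $C(\Delta_{T_0};\mathcal L(L_q^2,L_n^2))$, so it controls the kernel $\mathcal B_{N,Q,M}(t,r)$ uniformly over all pairs $t_0\le r\le t\le t_1$ and every subinterval simultaneously. Since $\mathcal B$ is a Volterra operator built from a kernel continuous on the time triangle, it is causal. For $w\in E_{I,0}^{2,\sigma}$, Lemma~\ref{lem:time-extension-causality} identifies the zero-history operator $\mathcal B_{\mathrm{main},I}w(t)$ with $\int_{t_0}^t\sum_{N,Q,M}\mathcal B_{N,Q,M}(t,r)P_Qw(r)\dd r$. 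In particular, no contribution from $[0,t_0]$ appears. This is the point where the zero-history hypothesis enters.

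Next, fix $r$. By \eqref{eq:pointwise-B-assembly} with $C_b$ replaced by $C_{\eta'}(\omega)$,
\[
\Bigl\|\sum\mathcal B_{N,Q,M}(t,r)P_Qw(r)\Bigr\|_{B^{\sigma-1}_{2,\infty}}
\lesssim C_{\eta'}(\omega)\,\|w(r)\|_{B^\sigma_{2,\infty}}.
\]
This step requires $\eta'$ below the margins \eqref{eq:assembly-strict-margin}, which hold because $\beta_{b,c}<\sigma<1-\beta_{b,c}$. Minkowski then gives $\|\mathcal B_Iw(t)\|_{B^{\sigma-1}_{2,\infty}}\lesssim C_{\eta'}(\omega)(t-t_0)\|w\|_{L_I^\infty B^\sigma_{2,\infty}}$. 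Integrating over $t\in I$ produces the factor $|I|^2/2\le h^2$. Since $\|w\|_{L^\infty_IB^\sigma_{2,\infty}}\le\|w\|_{E_I^{2,\sigma}}$, this yields \eqref{eq:centered-uniform-interval-small}. The constant is independent of $I$, so the supremum over intervals is harmless. Taking $I=[0,T]$, together with the observation that $\mathcal B_{[0,T]}$ is the restriction defined by \eqref{eq:causal-restriction} with zero initial history, gives \eqref{eq:centered-besov-small} with rate $T^2$.

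For the fixed-profile dyadic statement, the same majorant \eqref{eq:pathwise-centered-block-majorant} carries $\sup_{\Lambda\in\Dyd}$ inside. The computation therefore applies verbatim to every cutoff level, and to cutoff differences with constant $2C_{\eta'}(\omega)$. The limiting operator obeys the bound by passing to the almost-sure operator-norm limit of Lemma~\ref{lem:centered-dyadic-bc}.

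The main obstacle is justifying the interchange of the dyadic sum with the time integral on the completed space $E_{I,0}^{2,\sigma}$, rather than on smooth finite-block paths. I would first prove the estimate for finite anisotropic sums over $\mathfrak F_L$ acting on finite-block paths, where every operation is classical. I would then use the pathwise tail modulus $O_\omega(L^{-\gamma})$ from Lemma~\ref{lem:centered-assembly} and density via Lemma~\ref{lem:finite-block-extension}, together with Lemma~\ref{lem:completed-besov-traces} to handle the endpoint traces, to pass to the limit uniformly in $I$.
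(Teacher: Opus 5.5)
Your proposal is correct and follows the paper's proof essentially step for step. Like the paper, you work on the event of Lemma~\ref{lem:centered-dyadic-bc} and feed the cutoff-uniform pathwise majorant into the Besov assembly of Lemma~\ref{lem:centered-assembly}; you use zero history to start the Volterra integral at $t_0$, so the two time integrations produce $|I|^2$; on $[0,T]$ no vanishing of the input is needed because the lower limit is already $0$; and cutoff levels and differences are covered by the $\sup_\Lambda$ already inside the majorant. Your extra finite-block-plus-tail-modulus justification of the sum/integral interchange is the same reasoning the paper builds into the assembly lemma itself.
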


\begin{proof}
Work on the event $\Omega_{\mathrm{cen}}$ of
Lemma~\ref{lem:centered-dyadic-bc}.  Choose $\eta'>0$ below the strict
assembly margins.  The pathwise block majorant
\eqref{eq:pathwise-centered-block-majorant} is uniform on the full triangle
$\Delta_{T_0}$ and supplies a finite random constant $C_{\eta'}(\omega)$ in
the hypothesis of Lemma~\ref{lem:centered-assembly}.  If
$I=[t_0,t_1]$ and the input has zero history at $t_0$, the Volterra integral starts
at $t_0$.  Repeating the two time integrations in the assembly lemma over
$t_0\le s\le t\le t_1$ gives the factors $|I|$ and $|I|^2$, with all spatial sums
unchanged.  Hence
\[
 \|\mathcal B^{a;b,c}_{\mathrm{main},I}\|_{
 \mathcal L(E_{I,0}^{2,\sigma},L^1(I;B_{2,\infty}^{\sigma-1}))}
 \lesssim C_{\eta'}(\omega)|I|^2,
\]
which proves \eqref{eq:centered-uniform-interval-small}.  On the initial
interval $I=[0,T]$, the same two integrations apply to an arbitrary input in
$E_T^{2,\sigma}$: no endpoint vanishing is used because the Volterra lower
limit is already zero.  This proves \eqref{eq:centered-besov-small}.  For a
fixed-profile dyadic family, the supremum over all cutoff levels is already
included in \eqref{eq:pathwise-centered-block-majorant}; a cutoff difference
is bounded by the sum of the two corresponding kernel majorants.  Thus the
same interval estimate is uniform over the whole family and its differences.
\end{proof}

\begin{lemma}[Localization remainder]
\label{lem:far-output}
Let $\mathcal R^{a;b,c}_{\Lambda,N,R,Q,M}(t,s)$ denote the sum over the
finitely many outer stochastic shells $L\sim_{\mathrm{ap}}R$ of a centered
block in one of the following two sectors:
\begin{equation}\label{eq:localization-remainder-sectors}
 R\not\sim_{\mathrm{tr}}N,
 \qquad\text{or}\qquad
 R\sim_{\mathrm{tr}}N\ \text{ and }\ M>C_{\mathrm{out}}N.
\end{equation}
For every $2\le p<\infty$, every $A>0$, and every
$0<\theta\le1/2$,
\begin{align}
 \|\mathcal R^{a;b,c}_{\Lambda,N,R,Q,M}(t,s)\|_{
 L^p_\omega\mathcal L(L_q^2,L_n^2)}
 &\lesssim_{p,A}\la N+R+M\ra^{-A},
 \label{eq:localization-tail-block}\\
 \|\mathcal R^{a;b,c}_{\Lambda,N,R,Q,M}(t,s)
 -\mathcal R^{a;b,c}_{\Lambda,N,R,Q,M}(t',s')\|_{
 L^p_\omega\mathcal L(L_q^2,L_n^2)}
 &\lesssim_{p,A,\theta}\la N+R+M\ra^{-A}
 (|t-t'|+|s-s'|)^\theta.
 \label{eq:localization-tail-increment}
\end{align}
The estimates are uniform in the spectral cutoff.  Cutoff differences obey
the same majorants and converge on every fixed block.  For a fixed-profile
dyadic family, the supremum over all cutoff levels may be placed inside each
finite $L^p(\Omega)$ norm in \eqref{eq:localization-tail-block} and
\eqref{eq:localization-tail-increment}.  Consequently, the sum of the
remainder blocks converges after the insertion of arbitrary fixed polynomial
Sobolev or Besov weights, in particular in every operator topology used in
Theorem~\ref{thm:localized-centered}.  Along a fixed-profile dyadic family,
the full remainder sequence converges almost surely, without a subsequence,
on a probability-one event that may be intersected with
$\Omega_{\mathrm{cen}}$.  For every interval $I\subset[0,T_0]$, its
zero-history restriction is causal and maps $E_{I,0}^{2,\sigma}$ to
$L^1(I;B_{2,\infty}^{\sigma-1})$ with norm $O_\omega(|I|^2)$, uniformly over
the limit and all cutoff levels on that event.  In particular, its
initial-interval operator norm tends to zero as $T\downarrow0$.
\end{lemma}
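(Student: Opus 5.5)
The plan is to estimate each remainder block directly through the centered Gaussian operator estimate, Theorem~\ref{thm:centered-gaussian-operator}. In the Schatten bounds, the rapid decay replaces the $N^{-1}$ propagator gain of the principal sector. The cost of the Khintchine factor and of the Schatten exponent is only polynomial, so it is absorbed by the arbitrary decay. We treat the two sectors in \eqref{eq:localization-remainder-sectors} separately.

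\textbf{Sector $R\not\sim_{\mathrm{tr}}N$.} Here Lemma~\ref{lem:inner-localization-transfer}(ii) already gives the pulled-back $\mathfrak S_2$ bound \eqref{eq:inner-transfer-tail-flat} with factor $\langle N+R\rangle^{-A}$. Since $r\ge2$, this also controls $\mathfrak S_r$. The remaining polynomial factors $\langle N\rangle^{1/2+\beta_b}\langle R\rangle^{-1/2+\beta_c}\langle Q\rangle^{3/2}$ are absorbed after enlarging $A$, using $Q\lesssim N$. For $M\gg R$, the extra Schwartz decay of $\widehat\chi_{\mathrm{out}}$ noted in part (ii) supplies $M$-decay. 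When $M\lesssim R$, we simply use $\langle N+R\rangle\gtrsim\langle N+R+M\rangle$. Inserting these bounds into Theorem~\ref{thm:centered-gaussian-operator} with a fixed $r\ge p$ gives \eqref{eq:localization-tail-block}.

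\textbf{Sector $R\sim_{\mathrm{tr}}N$, $M>C_{\mathrm{out}}N$.} In the kernel \eqref{eq:two-localizer-kernel}, the variables satisfy $|q+\ell+r|\lesssim N$ while $|n|\sim M\gg N$. Hence the outer defect $\zeta=n-q-\ell-r-z$ obeys $|\zeta|+|z|\gtrsim M$. We split into $|z|\gtrsim M$ and $|\zeta|\gtrsim M$, and insert $(M)^{-A}\langle\cdot\rangle^{A}$ on the large variable, as in \eqref{eq:localized-diagonal-far-tail}. The result is the form \eqref{eq:iterated-soft-incidence} with Schwartz kernels multiplied by $M^{-A}$. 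The Hilbert--Schmidt computation \eqref{eq:centered-common-HS} still applies: its translated shell volume bound uses $M^3$ for the output shell, now with $M$ large, so the bound is polynomial in $N,Q,M$. It therefore bounds all four flattenings, and the synthesis norms come from Lemma~\ref{lem:covariance-synthesis}. Since $M\gtrsim N+R$, the decay becomes $\langle N+R+M\rangle^{-A}$.

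\textbf{Time increments, cutoffs, and assembly.} For the time increments \eqref{eq:localization-tail-increment}, we use the three-term telescope from Step~5 of Lemma~\ref{lem:inner-localization-transfer}, together with \eqref{eq:synthesis-increment} and $|K_a(t-s,p)-K_a(t'-s',p)|\lesssim d_\Delta^\theta\langle p\rangle^{-1+\theta}$. This loses only polynomial powers, which are absorbed into $A$. Cutoff differences use the two-term telescope of Step~6. Fixed-block convergence and the finite-state maximal bound with $\sup_\Lambda$ inside follow from Lemma~\ref{lem:finite-state-cutoff}, because only two stochastic shells carry cutoff factors. Summability with arbitrary polynomial weights is immediate from the $\langle N+R+M\rangle^{-A}$ decay, after a Schur-type sum in $Q$ (the $Q$-range is bounded by $N$). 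Almost-sure full-sequence convergence then follows as in Lemma~\ref{lem:centered-dyadic-bc}, using Markov's inequality, Borel--Cantelli over dyadic tuples, and Lemma~\ref{lem:centered-time-lift}. The $O_\omega(|I|^2)$ bound comes from the two time integrations, exactly as in Proposition~\ref{prop:centered-besov-small}.

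\textbf{Main obstacle.} The hardest step should be the outer far-output sector. We must check that the Hilbert--Schmidt bound remains polynomial when $M\gg N$, given that Lemma~\ref{lem:bounded-amplitude-flattening} assumed $M\lesssim N$. The first thing to try is bounding $\|F\|_{\mathfrak S_r}\le\|F\|_{\mathfrak S_2}$ directly, which avoids the operator-norm orientations entirely.
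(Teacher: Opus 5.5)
Your proposal is correct and follows essentially the same route as the paper. The transfer sector is read off from \eqref{eq:inner-transfer-tail-flat}, and the far-output sector is handled by a single Hilbert--Schmidt bound with arbitrary Schwartz decay in $M$; as you anticipated, this controls all four Schatten orientations without invoking Lemma~\ref{lem:bounded-amplitude-flattening}. The increments, cutoff differences, finite-state stabilization, Borel--Cantelli step, and double time integration are then done as you describe. The only cosmetic difference is that the paper gets $|n-p-r|\gtrsim M$ directly from $|p|\sim R\sim N$ on the principal transfer sector, so your split into $|z|\gtrsim M$ and $|\zeta|\gtrsim M$ is unnecessary, though harmless.
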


\begin{proof}
In the transfer sector $R\not\sim_{\mathrm{tr}}N$, the estimate follows from
\eqref{eq:inner-transfer-tail-flat}.  When $M\lesssim R$, the arbitrary
Schwartz power in $N+R$ absorbs the displayed polynomial factor and all
Sobolev--Besov weights.  When $M\gg R$, the outer defect
$\widehat\chi_{\mathrm{out}}(n-p-r)$ contributes arbitrary decay in $M$.

It remains to treat the main-transfer far-output sector.  There
$|p|\sim|r|\sim N$ and $|n|\sim M>C_{\mathrm{out}}N$.  Taking
$C_{\mathrm{out}}$ sufficiently large gives
\begin{equation}\label{eq:main-far-output-defect}
 |n-p-r|\gtrsim M.
\end{equation}
The Fourier transform of the outer physical localizer is Schwartz.  Keeping
the phase-space powers explicit, the kernel
\eqref{eq:two-localizer-kernel}, Minkowski in the inner defect $z$, and
\eqref{eq:main-far-output-defect} give, for every $B>0$,
\begin{equation}\label{eq:far-output-HS-explicit}
 \max_{1\le\varkappa\le4}
 \|F_{\varkappa,N,R,Q,M}^{\sharp}(t,s)\|_{\Sch_2}
 \lesssim_B
 \langle N\rangle^{\beta_{b,c}}\langle Q\rangle^{3/2}
 \langle M\rangle^{3/2-B}.
\end{equation}
Indeed, before covariance pullback the Duhamel factor contributes $N^{-1}$,
the two stochastic shells contribute the square-root volume $N^3$, and the
input and output shells contribute $Q^{3/2}M^{3/2}$.  The two synthesis maps
then contribute $N^{-2+\beta_{b,c}}$, which gives the polynomial factor in
\eqref{eq:far-output-HS-explicit}.  Since $M>C_{\mathrm{out}}N$ and
$Q\lesssim N$, choosing $B$ larger than the prescribed target power makes the
right-hand side decay faster than any fixed polynomial in $N+M$.  The
Hilbert--Schmidt estimate controls every Schatten orientation.  Applying
Theorem~\ref{thm:centered-gaussian-operator}, together with complete
same-family Wick centering when necessary, proves
\eqref{eq:localization-tail-block}.

For increments, use
\[
 |K_a(\tau,p)-K_a(\tau',p)|
 \lesssim_\theta|\tau-\tau'|^\theta\la R\ra^{-1+\theta}
 \qquad (\la p\ra\sim\la R\ra)
\]
and the synthesis-map increment estimates at scales $N$ and $R$.  Repeating
the preceding Hilbert--Schmidt calculation yields the right-hand side of
\eqref{eq:far-output-HS-explicit} multiplied by
$d_\Delta^\theta\langle N\rangle^\theta$.  Increasing $B$ absorbs this explicit
increment cost and proves \eqref{eq:localization-tail-increment}.
For each fixed remainder block, all stochastic cutoff factors are supported
on finitely many fixed annuli and therefore stabilize in the cutoff plateau.  The time-lift Lemma~\ref{lem:centered-time-lift}, followed by
absolute dyadic summation, gives the asserted operator convergence.

For a fixed-profile dyadic family, Lemma~\ref{lem:finite-state-cutoff}
applies to every fixed remainder block as well.  It therefore puts the cutoff
supremum inside the $L^p(\Omega)$ norm while preserving an arbitrarily high
Schwartz power.  Choose that power and then a finite moment large enough that
Markov's inequality is summable over the countable tuples $(N,R,Q,M)$ and the
finite set of index triples.  The first Borel--Cantelli lemma yields one event on
which the cutoff-maximal remainder blocks are absolutely summable with every
fixed polynomial Sobolev--Besov weight.  No independence of the blocks is
used.  Since each fixed block stabilizes for all sufficiently large dyadic cutoffs, the finite-set/tail argument gives full-sequence pathwise
convergence of the remainder.

Finally, let $I=[t_0,t_1]\subset[0,T_0]$ and let
$w\in E_{I,0}^{2,\sigma}$.  For a single remainder kernel with
$\sup_{(t,s)\in\Delta_{T_0}}\|K(t,s)\|_{L^2\to L^2}\le A_K$, causality and
zero history give
\begin{align*}
 \left\|t\mapsto\int_{t_0}^tK(t,s)w(s)\dd s\right\|_{L^1(I;L^2)}
 &\le \int_{t_0}^{t_1}\int_{t_0}^tA_K\|w(s)\|_{L^2}\dd s\dd t\\
 &\le \tfrac12|I|^2A_K\|w\|_{C_IL^2}.
\end{align*}
The remainder block majorants are absolutely summable after every required
Sobolev--Besov weight, uniformly over the limit and all fixed-profile cutoff
levels.  Summing the displayed bound therefore gives a uniform
$O_\omega(|I|^2)$ zero-history operator norm.  This proves the initial-interval smallness and the translated interval
modulus.
\end{proof}

\begin{corollary}[Full centered operator]
\label{cor:full-centered}
The full centered operator has the decomposition
\[
 \mathcal B^{a;b,c}_\Lambda
 =\mathcal B^{a;b,c}_{\Lambda,\mathrm{main}}
  +\mathcal R^{a;b,c}_\Lambda,
\]
where the first term is the principal operator of
Theorem~\ref{thm:localized-centered} and the second is the localization
remainder of Lemma~\ref{lem:far-output}.  Consequently, the full operators
converge in the Banach operator space displayed in
\eqref{eq:centered-operator-topology}, are independent of the admissible
cutoff family, and, almost surely, satisfy
\begin{align}
 \sup_{\substack{I\subset[0,T_0]\text{ interval}\\ |I|\le h}}
 \|\mathcal B_I^{a;b,c}\|_{
 \mathcal L(E_{I,0}^{2,\sigma},L^1(I;B_{2,\infty}^{\sigma-1}))}
 &\longrightarrow0 &&(h\downarrow0),
 \label{eq:full-centered-interval-smallness}\\
 \|\mathcal B^{a;b,c}_{[0,T]}\|_{
 \mathcal L(E_T^{2,\sigma},L_T^1B_{2,\infty}^{\sigma-1})}
 &\longrightarrow0 &&(T\downarrow0).
 \label{eq:full-centered-local-smallness}
\end{align}
The fixed-profile almost-sure convergence and the cutoff-uniform interval
smallness statements hold after intersecting the probability-one event of
Lemma~\ref{lem:centered-dyadic-bc} with the remainder event of
Lemma~\ref{lem:far-output}.
\end{corollary}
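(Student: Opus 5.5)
The plan is to obtain Corollary~\ref{cor:full-centered} by adding the two constructions already in hand. First I will check that the dyadic sectors partition the centered block sum. For a fixed label $(a;b,c)$ and cutoff $\Lambda$, every centered dyadic block is indexed by $(N,R,L,Q,M)$ with $Q\le c_{\mathrm{ap}}N$ and $L\sim_{\mathrm{ap}}R$. Exactly one of three conditions holds: either $R\sim_{\mathrm{tr}}N$ and $M\le C_{\mathrm{out}}N$, which is the principal sector of Proposition~\ref{prop:principal-centered-block}; or one of the two alternatives in \eqref{eq:localization-remainder-sectors}. Summing the principal blocks gives $\mathcal B^{a;b,c}_{\Lambda,\mathrm{main}}$, and summing the rest gives $\mathcal R^{a;b,c}_\Lambda$. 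At finite $\Lambda$ only finitely many stochastic shells contribute, so this regrouping is classical on smooth finite-block inputs. Lemma~\ref{lem:finite-block-extension} then carries the identity over to the completed input space.

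Convergence in the operator space of \eqref{eq:centered-operator-topology} follows from the triangle inequality in $L^p(\Omega;\mathscr X_T^{s,\sigma})$. Theorem~\ref{thm:localized-centered} treats the main part, and Lemma~\ref{lem:far-output} treats the remainder, where the absolutely summable weighted block majorants, combined with fixed-block stabilization, give convergence in every operator topology used. The limit does not depend on the cutoff family because each summand's limit does not.

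For almost-sure convergence along a fixed-profile dyadic family, I will intersect $\Omega_{\mathrm{cen}}$ with the remainder event of Lemma~\ref{lem:far-output}. On this intersection both parts converge along the full sequence.

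Interval smallness comes from subadditivity of the operator norm. For $I=[t_0,t_1]$ and $w\in E_{I,0}^{2,\sigma}$, both summands are causal, so their zero-history restrictions are well defined by Lemma~\ref{lem:time-extension-causality}, and
\[
 \|\mathcal B_I^{a;b,c}\|\le\|\mathcal B^{a;b,c}_{\mathrm{main},I}\|+\|\mathcal R^{a;b,c}_I\|.
\]
Proposition~\ref{prop:centered-besov-small} bounds the first term by $C_{\eta'}(\omega)h^2$ uniformly over $|I|\le h$. The last part of Lemma~\ref{lem:far-output} bounds the second term by $O_\omega(|I|^2)$. This proves \eqref{eq:full-centered-interval-smallness}, and \eqref{eq:full-centered-local-smallness} is the special case $I=[0,T]$, where no vanishing at the endpoint is needed. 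Both bounds are uniform over the cutoff levels and over cutoff differences on the intersected event.

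I expect the main obstacle to be bookkeeping. The sector decomposition must be exhaustive and non-overlapping even though the Bony, resonant and enlarged-shell cutoffs overlap only finitely. The equal-color Wick centering must also be carried out consistently in both pieces, meaning that the covariance contraction is removed once, before the split, as in Proposition~\ref{prop:finite-color-split}. Once the decomposition is fixed at finite cutoff and before any estimate, the rest is additivity.
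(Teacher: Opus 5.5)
Your proposal is correct and follows the paper's proof essentially step for step. The principal/remainder sector split is exhaustive. Theorem~\ref{thm:localized-centered} and Proposition~\ref{prop:centered-besov-small} handle the main part, and Lemma~\ref{lem:far-output} handles the remainder. The triangle inequality, together with the intersection of the two probability-one events, then gives convergence, profile independence, and both smallness limits.

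Your extra remarks are harmless refinements that the paper leaves implicit: extending the finite-cutoff identity from finite-block inputs, and removing the equal-color contraction once before the split.
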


\begin{proof}
The dyadic partition into the principal sector and the two sectors in
\eqref{eq:localization-remainder-sectors} is exhaustive.  On the principal
sector, Theorem~\ref{thm:localized-centered} gives convergence in the full
operator topology, while Proposition~\ref{prop:centered-besov-small} gives
the initial and translated-interval Besov smallness.  Lemma~\ref{lem:far-output}
gives the same conclusions for the complementary sectors, with absolutely
summable Schwartz tails after every Sobolev--Besov weight used here.  Adding
the two operator limits proves the asserted decomposition and cutoff-profile
independence.  The two smallness limits follow by the triangle inequality;
the common fixed-profile event is the finite intersection of the principal
and remainder events.
\end{proof}

\begin{center}
\small
\renewcommand{\arraystretch}{1.22}
\begin{tabularx}{0.98\textwidth}{@{}P{0.16\textwidth}P{0.13\textwidth}P{0.18\textwidth}P{0.20\textwidth}Y@{}}
\toprule
Label & contraction & centered order & operator input & output used in the fixed point\\
\midrule
$(\W;\K,\K)$ & $\mathcal D^{\W;\K}$ & $2\beta_{\K}$
& $E_T^{2,\sigma}$ & $C_TH^{s_2-1}\cap L_T^1B_{2,\infty}^{\sigma-1}$\\
$(\K;\W,\W)$ & $\mathcal D^{\K;\W}$ & $2\beta_{\W}$
& $E_T^{2,\sigma}$ & $C_TH^{s_2-1}\cap L_T^1B_{2,\infty}^{\sigma-1}$\\
$(\W;\W,\K)$ & none & $\beta_{\W}+\beta_{\K}$
& $E_T^{2,\sigma}$ & $C_TH^{s_2-1}\cap L_T^1B_{2,\infty}^{\sigma-1}$\\
$(\K;\K,\W)$ & none & $\beta_{\W}+\beta_{\K}$
& $E_T^{2,\sigma}$ & $C_TH^{s_2-1}\cap L_T^1B_{2,\infty}^{\sigma-1}$\\
\bottomrule
\end{tabularx}
\end{center}
For a diagonal label, the deterministic component is estimated separately in
$L_T^\infty B_{2,\infty}^{\sigma-1}$ and acquires a factor $T$ when inserted
into the source space.  The centered component is measured directly in
$L_T^1B_{2,\infty}^{\sigma-1}$.

\begin{theorem}[Bounds for the resonant operators]\label{thm:resonant-operators}
Fix $0<T_0\le1$, a label $(a;b,c)\in\mathfrak L$, exponents
\begin{equation}\label{eq:operator-exponent-range}
 0<s<1-\beta_{b,c},
 \qquad
 \beta_{b,c}<\sigma<1-\beta_{b,c},
\end{equation}
and an adapted localization pair.  Construct all kernels on the full time
triangle $\Delta_{T_0}$.  For every admissible spectral cutoff,
\begin{equation}\label{eq:operator-decomposition}
 T^{a;b,c}_{\Lambda}=
 \begin{cases}
  \mathcal D^{a;b}_{\Lambda}+\mathcal B^{a;b,b}_{\Lambda},
    &(a;b,c)=(a;b,b)\in\mathfrak L_{\mathrm{diag}},\\
  \mathcal B^{a;b,c}_{\Lambda},
    &(a;b,c)\in\mathfrak L_{\mathrm{off}}.
 \end{cases}
\end{equation}
For every $0<T\le T_0$, the centered operators
$\mathcal B^{a;b,c}_{\Lambda}$ converge, for every $2\le p<\infty$, in
\begin{equation}\label{eq:centered-full-operator-topology}
  L^p\!\left(\Omega;
  \mathcal L\!\left(
  E_T^{2,\sigma},
  C_TH^{s-1}\cap L_T^1B_{2,\infty}^{\sigma-1}
  \right)\right).
\end{equation}
If $(a;b,c)=(a;b,b)\in\mathfrak L_{\mathrm{diag}}$, then the deterministic
operators $\mathcal D^{a;b}_{\Lambda}$ also converge in
\begin{equation}\label{eq:diagonal-operator-topology}
  \mathcal L\!\left(
  E_T^{2,\sigma},
  C_TH^{s-1}\cap L_T^\infty B_{2,\infty}^{\sigma-1}
  \right).
\end{equation}
The limits are independent of the admissible cutoff profile.  They are causal,
their shorter-time restrictions are intrinsic in the sense of
Lemma~\ref{lem:time-extension-causality}, and
\eqref{eq:operator-decomposition} is preserved by restriction.  On one
event of probability one,
\begin{equation}\label{eq:operator-uniform-interval-bounds}
 \sup_{I\subset[0,T_0]}
 \|T_I^{a;b,c}\|_{E_{I,0}^{2,\sigma}\to C_IH^{s-1}}<\infty.
\end{equation}
For a diagonal label one has, on the same event,
\begin{equation}\label{eq:operator-uniform-diagonal-interval-bound}
 \sup_{I\subset[0,T_0]}
 \|\mathcal D_I^{a;b}\|_{E_{I,0}^{2,\sigma}\to
 L^\infty(I;B_{2,\infty}^{\sigma-1})}<\infty.
\end{equation}
For a fixed-profile dyadic family, the full sequence converges almost surely
in the same operator topologies.  Moreover, almost surely,
\begin{equation}\label{eq:operator-uniform-interval-smallness}
 \sup_{\substack{I\subset[0,T_0]\text{ interval}\\ |I|\le h}}
 \|\mathcal B_I^{a;b,c}\|_{\mathcal L(
 E_{I,0}^{2,\sigma},L^1(I;B_{2,\infty}^{\sigma-1}))}
 \longrightarrow0
 \qquad(h\downarrow0),
\end{equation}
and, in particular,
\begin{equation}\label{eq:operator-local-smallness}
  \|\mathcal B^{a;b,c}_{[0,T]}\|_{\mathcal L(
  E_T^{2,\sigma},L_T^1B_{2,\infty}^{\sigma-1})}
  \longrightarrow0
  \qquad(T\downarrow0).
\end{equation}
Along a fixed-profile dyadic family, the same probability-one event gives
both limits uniformly over all dyadic cutoff levels and their pairwise
differences.  Since the block kernels are realized in
$C(\Delta_{T_0};\mathcal K)$ before Volterraization, this event works
simultaneously for every $0<T\le T_0$.  The estimates are uniform on separated speed--mass classes, on compact subsets of the admissible $(\betaW,\betaK)$ region, and for bounded $\mathfrak H_{\mathrm{prof}}$.
\end{theorem}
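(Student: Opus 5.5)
The proof is an assembly of the results already established in this section, organized by the Wick splitting. First I would establish the decomposition \eqref{eq:operator-decomposition}. At each finite cutoff $\Lambda$ and each localized dyadic tuple, Proposition~\ref{prop:finite-color-split} gives the blockwise split. The localized kernel \eqref{eq:two-localizer-kernel} of Lemma~\ref{lem:inner-localization-transfer}, together with Proposition~\ref{prop:localized-block-hilbert}, identifies the covariance term with the contraction kernel \eqref{eq:localized-contraction-kernel}. Since $\Lambda$ is finite, only finitely many stochastic shells occur and the dyadic sums are finite. The identity therefore holds after summation, with $\mathcal D^{a;b}_\Lambda$ and $\mathcal B^{a;b,c}_\Lambda$ defined as the summed contraction and centered parts. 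On $\mathfrak L_{\mathrm{off}}$ independence removes the contraction, so only $\mathcal B$ remains.

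Next I would prove convergence of the two parts separately. For the centered operators, Corollary~\ref{cor:full-centered} already gives convergence of $\mathcal B^{a;b,c}_\Lambda=\mathcal B_{\Lambda,\mathrm{main}}+\mathcal R_\Lambda$ in $L^p(\Omega;\mathcal L(C_TL^2\cap L_T^\infty B^\sigma_{2,\infty},C_TH^{s-1}\cap L^1_TB^{\sigma-1}_{2,\infty}))$. It also gives independence of the cutoff profile, full-sequence almost-sure convergence for fixed dyadic profiles, and the interval smallness \eqref{eq:full-centered-interval-smallness}--\eqref{eq:full-centered-local-smallness}. Since $E_T^{2,\sigma}$ embeds continuously into $C_TL^2\cap L_T^\infty B^\sigma_{2,\infty}$, composing with this embedding yields \eqref{eq:centered-full-operator-topology}, \eqref{eq:operator-uniform-interval-smallness} and \eqref{eq:operator-local-smallness}. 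For the diagonal labels, Proposition~\ref{prop:localized-diagonal} supplies the deterministic limit in \eqref{eq:diagonal-operator-topology} with cutoff convergence. Its range condition $s<1-2\beta_b=1-\beta_{b,b}$ coincides with \eqref{eq:operator-exponent-range}. The phase-gap constants are uniform on separated classes by Proposition~\ref{prop:speed-gap-dependence}, and the centered constants are uniform by Theorem~\ref{thm:localized-centered}.

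Then I would treat causality and restriction. All kernels are Volterra kernels $\int_0^tK(t,s)w(s)\,ds$ built on $\Delta_{T_0}$; for $\mathcal D$ this holds after the closed integration by parts of Lemma~\ref{lem:closed-volterra-ibp}, whose boundary terms involve only $w(0)$ and $w(t)$. Hence each limit is causal. Lemma~\ref{lem:time-extension-causality} then makes the restrictions to $[0,T]$ and the zero-history restrictions $\mathcal A_I$ intrinsic. The decomposition survives restriction because restriction is linear and commutes with the limits.

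The main obstacle is the uniform interval bounds \eqref{eq:operator-uniform-interval-bounds}--\eqref{eq:operator-uniform-diagonal-interval-bound} on one event, simultaneously for all $I$ and all $T$. For $\mathcal B$ I would use the pathwise majorant \eqref{eq:pathwise-centered-block-majorant}, which is uniform on $\Delta_{T_0}$, together with the remainder event of Lemma~\ref{lem:far-output}. Rerunning the assembly of Lemma~\ref{lem:centered-assembly} with the Volterra integral started at $t_0$ then bounds $\mathcal B_I$ by $C(\omega)|I|\le C(\omega)$ into $C_IH^{s-1}$. For $\mathcal D_I$ the bound is deterministic. I would redo the Volterra integration by parts on $I$; for zero-history inputs the lower boundary term vanishes by Lemma~\ref{lem:completed-besov-traces}. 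The estimates of Proposition~\ref{prop:summed-diagonal} then hold with constants independent of $I$, since the amplitude bound \eqref{eq:amplitude-N3} is uniform in $s\le T_0$. The delicate point is checking that the boundary trace at $t_0$ vanishes in the completed space, so that no $I$-dependent constant enters. Intersecting the countably many events over the four labels completes the proof.
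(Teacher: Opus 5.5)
Your proposal is correct and follows the paper's proof. The paper likewise assembles Proposition~\ref{prop:finite-color-split}, Propositions~\ref{prop:summed-diagonal} and~\ref{prop:localized-diagonal}, and Theorem~\ref{thm:localized-centered} with Corollary~\ref{cor:full-centered} through the embedding $E_T^{2,\sigma}\hookrightarrow C_TL^2\cap L_T^\infty B_{2,\infty}^{\sigma}$, passes causality to the limits, and uses Lemma~\ref{lem:time-extension-causality} for restrictions. The step you call the main obstacle is in fact immediate: zero extension to the left, together with the absolute-constant right extension of Lemma~\ref{lem:time-extension-causality}, gives $\|\mathcal A_I\|\lesssim\|\mathcal A\|_{[0,T_0]}$ uniformly in $I$ (cf.\ \eqref{eq:restriction-of-operator-difference}), so \eqref{eq:operator-uniform-interval-bounds}--\eqref{eq:operator-uniform-diagonal-interval-bound} already follow from the almost surely finite global norms, and even a nonzero boundary term at $t_0$ would be uniformly controlled by \eqref{eq:completed-besov-trace-bound}.
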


\begin{proof}
The finite decomposition \eqref{eq:operator-decomposition} is
Proposition~\ref{prop:finite-color-split}.  For a diagonal label,
$a=b^\perp$ and $c=b$.  The phase gap in
Lemma~\ref{lem:explicit-phase-gap}, followed by
Lemma~\ref{lem:dyadic-volterra} and
Propositions~\ref{prop:summed-diagonal} and
\ref{prop:localized-diagonal}, gives
\eqref{eq:diagonal-operator-topology} and
\eqref{eq:operator-uniform-diagonal-interval-bound}.

The centered component is controlled by
Theorem~\ref{thm:localized-centered} and Corollary~\ref{cor:full-centered}.
Since $E_T^{2,\sigma}$ embeds continuously into
$C_TL^2\cap L_T^\infty B_{2,\infty}^{\sigma}$, these results give
\eqref{eq:centered-full-operator-topology}.  The finite-cutoff Volterra
formulae are causal, and causality passes to the operator-norm limits.
Lemma~\ref{lem:time-extension-causality} therefore makes every restriction
intrinsic.  Repeating the estimates on translated intervals gives
\eqref{eq:operator-uniform-interval-bounds}; in the diagonal case it also gives
\eqref{eq:operator-uniform-diagonal-interval-bound}.  The interval and initial
smallness assertions
\eqref{eq:operator-uniform-interval-smallness}--\eqref{eq:operator-local-smallness}
follow from Corollary~\ref{cor:full-centered}.  The same summable block
estimates applied to cutoff differences give cutoff independence and
fixed-profile almost-sure convergence.  Uniformity follows from
Proposition~\ref{prop:uniform-phase-class}, the channelwise synthesis bounds
in Lemma~\ref{lem:covariance-synthesis}, and the bound for $\mathfrak H_{\mathrm{prof}}$.
\end{proof}

\section{Deterministic closure, approximation, and localization}\label{sec:deterministic-closure}

Throughout this section an adapted localization pair $(\chi,\rho)$ is fixed.  All
stochastic distribution norms are taken after multiplication by the indicated
compact cutoff, and the operators $T^{a;b,c}$ are the localized operators
\eqref{eq:T-def}.  The cubic coordinate $\Gamma_a^{\chi,\rho}$ is the cutoff limit defined in
\eqref{eq:localized-cubic-limit}; we retain the localization superscript
throughout this section.  The
tuple $\Xi$ is treated as deterministic input in this section; probability
enters only when the stochastic terms and operators constructed above are
substituted.  Constants may depend on finitely many seminorms of $(\chi,\rho)$.

\subsection{A priori estimates with prescribed stochastic terms}

The scalar wave and Klein--Gordon flows on $\R^3$ satisfy the standard conic estimates
\begin{align}
  \norm{I_aF}_{X_T^{s_1}}
  &\lesssim\norm{F}_{L_T^1H^{s_1-1}},
  \label{eq:X-linear}\\
  \norm{I_aF}_{Y_T^{s_2}}
  &\lesssim\norm{F}_{L_T^1H^{s_2-1}},
  \label{eq:Y-linear-rough}\\
  \norm{I_aF}_{Y_T^{s_2}}
  &\lesssim
  \norm{\la D\ra^{s_2-1/2}F}_{L_{t,x}^{4/3}}.
  \label{eq:Y-linear-dual}
\end{align}
The same estimates hold for both parameterized phases, with constants locally uniform in the parameter vector.  Elementary dyadic energy propagation gives
\begin{equation}\label{eq:besov-propagation}
  \norm{I_aF}_{L_T^\infty B_{2,\infty}^{\sigma}}
  +\norm{\partial_tI_aF}_{L_T^\infty B_{2,\infty}^{\sigma-1}}
  \lesssim\norm{F}_{L_T^1B_{2,\infty}^{\sigma-1}}.
\end{equation}

\begin{lemma}[Uniform annular dispersive estimate]
\label{lem:uniform-annular-dispersive}
Let
\[
 \mathcal E_a(t):=e^{\ii t\omega_a(D)},
 \qquad a\in\mathfrak C.
\]
For every compact positive parameter class and every dyadic $N\ge2$,
\begin{equation}\label{eq:uniform-annular-dispersive}
 \|\mathcal E_a(t)P_N\|_{L^1_x\to L^\infty_x}
 \lesssim N^3(1+N|t|)^{-1},
 \qquad t\in\R,
\end{equation}
with a constant independent of $N$, $t$, and $a$.  The speed gap is not used.
\end{lemma}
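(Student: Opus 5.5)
The plan is to write the kernel explicitly and split into the regimes $N|t|\lesssim1$ and $N|t|\gg1$, then use stationary phase in the radial variable. The kernel of $\mathcal E_a(t)P_N$ is
\[
 k_{N,t}(x)=(2\pi)^{-3}\int_{\R^3}e^{\ii(x\cdot\xi+t\omega_a(\xi))}\rho_N(\xi)\dd\xi ,
\]
and $\|\mathcal E_a(t)P_N\|_{L^1\to L^\infty}=\|k_{N,t}\|_{L^\infty}$. The trivial bound $|k_{N,t}|\lesssim N^3$ disposes of $N|t|\le1$. For $N|t|\ge1$ we rescale $\xi=N\eta$. This gives $N^3\int e^{\ii N(x\cdot\eta+t\,\widetilde\omega_{a,N}(\eta))}\rho_1(\eta)\dd\eta$, where $\widetilde\omega_{a,N}(\eta):=N^{-1}\omega_a(N\eta)$ on the fixed annulus $\{\kappa_-\le|\eta|\le\kappa_+\}$. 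For the wave channel this is exactly $c_{\W}|\eta|$. For the Klein--Gordon channel it is $\sqrt{m^2N^{-2}+c_{\K}^2|\eta|^2}$, whose derivatives of every order are bounded on the annulus uniformly in $N\ge2$ and in the compact parameter box. Its radial derivative $\widetilde\omega'(r)=c_{\K}^2r/\sqrt{m^2N^{-2}+c_{\K}^2r^2}$ is bounded below by a uniform positive constant there; this is the lower bound already recorded in the proof of \cref{prop:speed-gap-dependence}.

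Passing to polar coordinates $\eta=r\theta$, the angular integral is $\widehat{d\sigma}(Nrx)$, which we write via the standard asymptotics $\widehat{d\sigma}(y)=\sum_\pm e^{\pm\ii|y|}a_\pm(y)$ with $|\partial^j a_\pm(y)|\lesssim\langle y\rangle^{-1-j}$. Set $\lambda:=N|t|\ge1$ and $R:=N|x|$. We are left with one-dimensional oscillatory integrals
\[
 \int e^{\ii(\pm Rr+Nt\,\widetilde\omega(r))}a_\pm(Rr\theta)\,r^2\rho_1(r)\dd r .
\]
We split into three regimes.
\begin{itemize}
\item[(i)] If $R\le c\lambda$ with $c$ small relative to the lower bound on $\widetilde\omega'$, the phase derivative is $\gtrsim\lambda$. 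Repeated integration by parts, using the smooth unexpanded amplitude, gives $O(\lambda^{-K})$.
\item[(ii)] If $R\ge C\lambda$, the phase derivative is $\gtrsim R\ge\lambda$, again with nonstationary decay.
\item[(iii)] If $R\sim\lambda$, we keep the factor $|a_\pm|\lesssim R^{-1}\sim\lambda^{-1}$ and bound the $r$-integral trivially by $O(1)$. The amplitude derivatives only improve this, so no stationary-phase gain in $r$ is needed.
\end{itemize}
In all three cases the result is $N^3\lambda^{-1}=N^3(N|t|)^{-1}$.

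All constants depend only on finitely many derivatives of $\widetilde\omega$ and on its radial-derivative lower bound. Both are uniform in $N$, $a$ and the compact parameter class, and the speed difference never enters. Negative times follow by conjugation. The only point needing care is the uniformity of the Klein--Gordon rescaling as $m/N$ varies. Since $m/N\le\overline m/2$ stays bounded, all derivatives of $\widetilde\omega$ are bounded uniformly and no degeneration occurs. I expect no genuine obstacle beyond bookkeeping of these uniform bounds.
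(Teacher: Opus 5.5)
Your proof is correct and follows the paper's argument. After rescaling to the fixed annulus with uniform bounds on the radial phase derivative, the case $N|t|\le1$ is trivial, the region $|x|\ll|t|$ is handled by integrating by parts in $r$ at fixed angle, and the region $|x|\gtrsim|t|$ is handled by the decay $|\widehat{d\sigma}(Nrx)|\lesssim(1+N|x|)^{-1}\lesssim\lambda^{-1}$ followed by a bounded radial integral. The differences are cosmetic: the paper merges your regimes (ii) and (iii), since the spherical-decay bound of (iii) already covers $R\ge C\lambda$, and in (i) it uses a single integration by parts, which already gives the needed $\lambda^{-1}$ (your $O(\lambda^{-K})$ claim holds only if the integrations by parts are done before averaging over the sphere).
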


\begin{proof}
Let $\varphi$ be the fixed radial annular cutoff defining $P_N$.  After the
change $\xi=N\eta$, the oscillatory kernel is
\[
 N^3\int_{\R^3}e^{\ii N(x\cdot\eta+t\phi_{a,N}(|\eta|))}
 \varphi(\eta)\dd\eta,
\]
where
\[
 \phi_{\W,N}(r)=c_{\W}r,
 \qquad
 \phi_{\K,N}(r)=\sqrt{(m/N)^2+c_{\K}^2r^2}.
\]
On the support of $\varphi$ there are constants
$0<v_-<v_+<\infty$ such that, uniformly in the parameter class and
$N\ge2$,
\begin{equation}\label{eq:uniform-rescaled-phase-symbol}
 v_-\le\partial_r\phi_{a,N}(r)\le v_+,
 \qquad
 |\partial_r^j\phi_{a,N}(r)|\le C_j,
 \quad j\ge2.
\end{equation}
If $N|t|\le1$, the integral is bounded by $CN^3$.  Suppose $N|t|>1$.
If $|x|\le v_-|t|/2$, then for every angular variable
$\vartheta\in\mathbb S^2$,
\[
 |\partial_r(r x\cdot\vartheta+t\phi_{a,N}(r))|
 \ge v_-|t|/2.
\]
One integration by parts in $r$ therefore gives a factor $(N|t|)^{-1}$.
If $|x|>v_-|t|/2$, radiality of the cutoff and the identity
\[
 \int_{\mathbb S^2}e^{\ii Nr x\cdot\vartheta}\dd\vartheta
 =4\pi\frac{\sin(Nr|x|)}{Nr|x|}
\]
give a factor $(1+N|x|)^{-1}\lesssim(N|t|)^{-1}$ before the bounded radial
integration.  The symbol bounds in
\eqref{eq:uniform-rescaled-phase-symbol} make both estimates uniform.
Multiplication by $N^3$ proves \eqref{eq:uniform-annular-dispersive}.
\end{proof}

\begin{proposition}[Uniform scalar dispersive and Strichartz estimates]
\label{prop:parameter-uniform-strichartz}
The constants in \eqref{eq:X-linear}--\eqref{eq:besov-propagation} are uniform
for $0<T\le1$ and $\parvec$ in every separated parameter class
\eqref{eq:separated-parameter-class}.  The speed-separation lower bound is not
used in these estimates.
\end{proposition}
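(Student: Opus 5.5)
The plan is to show that every constant in \eqref{eq:X-linear}--\eqref{eq:besov-propagation} is built from single-channel data: the symbol bounds of one phase $\omega_a$ and the dispersive estimate of Lemma~\ref{lem:uniform-annular-dispersive}. The speed gap never enters. Over a separated class, $(c_{\W},c_{\K},m)$ lies in the compact box $[\underline c,\overline c]^2\times[\underline m,\overline m]$, so it suffices to prove uniformity on that box.

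I would proceed in four steps.

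\emph{Energy and Besov bounds.} The $C_TH^r$ and $C^1_TH^{r-1}$ parts of \eqref{eq:X-linear}--\eqref{eq:Y-linear-rough} follow from
\[
 |K_a(\tau,\xi)|\lesssim \la\xi\ra^{-1},\qquad |\partial_\tau K_a(\tau,\xi)|\le1,\qquad \tau\le1.
\]
These are uniform on the box, since $\omega_{\K}\ge\underline m$ and $\omega_{\W}\ge\underline c|\xi|$. Near $\xi=0$ in the wave case, $|\sin(\tau\omega)/\omega|\le\tau\le1$ handles the zero mode. Applying the same multiplier bounds shell by shell, with Minkowski in time, gives \eqref{eq:besov-propagation}.

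\emph{Strichartz via $TT^*$.} Write $\cos$ and $\sin$ through $\mathcal E_a(\pm t)$. On each annular shell $N\ge2$, interpolate the uniform decay $N^3(1+N|t|)^{-1}$ of Lemma~\ref{lem:uniform-annular-dispersive} with $L^2$ unitarity. Then apply Hardy--Littlewood--Sobolev in time and the Keel--Tao argument, including the non-endpoint retarded Christ--Kiselev step. This yields
\[
 \|\mathcal E_a(t)P_Nf\|_{L^q_tL^r_x}\lesssim N^{\gamma(q,r)}\|P_Nf\|_{L^2},\qquad \tfrac1q+\tfrac1r\le\tfrac12,
\]
with constants depending only on the decay constant, hence uniform on the box. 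The pairs $(8,8/3)$ and $(4,4)$ are of this type: the decay exponent is $1$, the wave-admissible setting. Their derivative losses $\gamma=1/4$ and $1/2$ match the exponents $s_1-1/4$ and $s_2-1/2$ in \eqref{eq:X-space}--\eqref{eq:Y-space}. The unit shell $N=1$ is handled by Bernstein and $T\le1$ with a uniform constant.

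\emph{Shell summation and the dual estimate.} Sum the shells by Littlewood--Paley square-function estimates, using Minkowski since $q,r\ge2$. The dual estimate \eqref{eq:Y-linear-dual} comes from the dual inhomogeneous Strichartz bound with $L^{4/3}_{t,x}$ input. The $1/\omega_a(D)$ factor in the Duhamel kernel compensates one derivative, uniformly on the box.

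\emph{Main obstacle.} The delicate point is that every Keel--Tao constant is a universal function of the dispersive constant. One must also check that the Klein--Gordon shell bound does not degrade as $m/N$ varies. This is exactly what \eqref{eq:uniform-rescaled-phase-symbol} gives: it is uniform in $N\ge2$ and in $m\in[\underline m,\overline m]$. I would therefore state the Keel--Tao theorem in its quantitative form and track only that constant. Finally, since each estimate involves one channel at a time, no lower bound on $|c_{\W}-c_{\K}|$ ever appears.
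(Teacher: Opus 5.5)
Your proposal is correct and follows the same route as the paper. Both arguments use the uniform annular decay of Lemma~\ref{lem:uniform-annular-dispersive}, which is uniform by \eqref{eq:uniform-rescaled-phase-symbol}, and feed it into Keel--Tao for the sharp wave-admissible pairs $(8,8/3)$ and $(4,4)$ with losses $N^{1/4}$ and $N^{1/2}$. The remaining ingredients also match: the $N^{-1}$ gain from $\omega_a(D)^{-1}$ on each shell, single-channel energy bounds, Littlewood--Paley summation, and the unit block handled by energy and Bernstein. Two routine steps are left implicit: Minkowski in the source time for the $L_T^1H^{s-1}$-input estimates, and the reverse Minkowski inequality (valid since $4/3\le 2$) when summing shells on the $L^{4/3}_{t,x}$ input side of \eqref{eq:Y-linear-dual}.
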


\begin{proof}
By Lemma~\ref{lem:uniform-annular-dispersive}, unitarity on $L^2$, and the
Keel--Tao theorem~\cite{KeelTao}, for $N\ge2$ one has uniformly in the channel
\begin{align}
 \|\mathcal E_a(t)P_Nf\|_{L_t^8L_x^{8/3}}
 &\lesssim N^{1/4}\|P_Nf\|_2,
 \label{eq:uniform-annular-8-83}\\
 \|\mathcal E_a(t)P_Nf\|_{L_t^4L_x^4}
 &\lesssim N^{1/2}\|P_Nf\|_2.
 \label{eq:uniform-annular-4-4}
\end{align}
The retarded inhomogeneous estimate gives
\begin{equation}\label{eq:uniform-annular-dual-4-4}
 \left\|\int_0^t\mathcal E_a(t-s)P_NF(s)\dd s\right\|_{L_t^4L_x^4}
 \lesssim N\|P_NF\|_{L_{t,x}^{4/3}}.
\end{equation}
The sine propagator is a finite linear combination of half waves multiplied
by $\omega_a(D)^{-1}$; on the $N$-shell this contributes $N^{-1}$.  Hence
Minkowski's inequality and \eqref{eq:uniform-annular-8-83} give
\[
 N^{s_1-1/4}\|P_NI_aF\|_{L_t^8L_x^{8/3}}
 \lesssim N^{s_1-1}\|P_NF\|_{L_t^1L_x^2},
\]
while \eqref{eq:uniform-annular-4-4} gives
\[
 N^{s_2-1/2}\|P_NI_aF\|_{L_t^4L_x^4}
 \lesssim N^{s_2-1}\|P_NF\|_{L_t^1L_x^2}.
\]
In \eqref{eq:uniform-annular-dual-4-4}, the factor $N$ is canceled by
$\omega_a(D)^{-1}$, which yields \eqref{eq:Y-linear-dual} after applying
$\langle D\rangle^{s_2-1/2}$.  The energy components in
\eqref{eq:X-linear} and \eqref{eq:Y-linear-rough} follow from the uniform
single-channel energy estimate.

The inhomogeneous unit block is controlled by energy and Bernstein; for the
wave kernel, $\sin(tc_{\W}|\xi|)/(c_{\W}|\xi|)$ is interpreted by its
continuous value at $\xi=0$.  Finally, the shellwise energy bound for the
Duhamel propagator gives \eqref{eq:besov-propagation}.  Summation of the
annular estimates proves \eqref{eq:X-linear}--\eqref{eq:besov-propagation}.
\end{proof}

\begin{definition}\label{def:Xi-assumptions}
Let Assumption~\ref{ass:exponents} hold.  On $[0,T_0]$, let
\begin{equation}\label{eq:Xi-tuple}
 \Xi:=\left(
  (\Psi_a,V_a,\partial_tV_a,\Gamma_a^{\chi,\rho})_{a\in\mathfrak C},
  (\mathcal D^{a;b})_{(a;b,b)\in\mathfrak L_{\mathrm{diag}}},
  (\mathcal B^{a;b,c})_{(a;b,c)\in\mathfrak L}
 \right)
\end{equation}
and assume the following conditions.
\begin{enumerate}[label=\textup{(\roman*)},leftmargin=2.3em]
\item For every $a\in\mathfrak C$, the global representatives satisfy
\[
 \Psi_a,V_a,\partial_tV_a\in
 C_{T_0}\mathfrak T_{J_{\mathrm{ps}}},
 \qquad
 \rho\Psi_a\in C_{T_0}\mathcal C^{-1/2-\beta_a-\kappa}.
\]
The weighted norm is used only for pseudolocal smoothing remainders; all
singular estimates use compactly localized norms.
\item For every $a\in\mathfrak C$,
\[
 \rho V_a\in C_{T_0}\mathcal C^{1/2-\betasum-\kappa}
 \cap L_{T_0}^\infty B_{2,\infty}^{1/2-\betasum-\kappa},
\]
\[
 \rho\partial_tV_a\in C_{T_0}\mathcal C^{-1/2-\betasum-\kappa}
 \cap L_{T_0}^\infty B_{2,\infty}^{-1/2-\betasum-\kappa}.
\]
\item For every $a\in\mathfrak C$,
\[
 \Gamma_a^{\chi,\rho}\in C_{T_0}\mathcal C^{-\beta_{\Gamma,a}-\kappa}
 \cap L_{T_0}^\infty B_{2,\infty}^{-\beta_{\Gamma,a}-\kappa}.
\]
\item For $(a;b,b)\in\mathfrak L_{\mathrm{diag}}$,
\[
 \mathcal D^{a;b}\in\mathcal L\bigl(
 E_{T_0}^{2,\sigma},L_{T_0}^\infty B_{2,\infty}^{\sigma-1}\bigr),
\]
and for every $(a;b,c)\in\mathfrak L$,
\[
 \mathcal B^{a;b,c}\in\mathcal L\bigl(
 E_{T_0}^{2,\sigma},L_{T_0}^1B_{2,\infty}^{\sigma-1}\bigr).
\]
Define
\begin{equation}\label{eq:stochastic-operator}
 T^{a;b,c}:=
 \begin{cases}
  \mathcal D^{a;b}+\mathcal B^{a;b,b},
    &(a;b,c)=(a;b,b)\in\mathfrak L_{\mathrm{diag}},\\
  \mathcal B^{a;b,c},
    &(a;b,c)\in\mathfrak L_{\mathrm{off}}.
 \end{cases}
\end{equation}
Then
\[
 T^{a;b,c}\in\mathcal L(E_{T_0}^{2,\sigma},C_{T_0}H^{s_2-1})
 \qquad ((a;b,c)\in\mathfrak L).
\]
All these operators are causal, their restrictions are understood through
Lemma~\ref{lem:time-extension-causality}, and
\begin{align}\label{eq:uniform-interval-operator-bounds}
 &\sup_{I\subset[0,T_0]}
 \Bigg[
 \sum_{(a;b,c)\in\mathfrak L}
 \|T_I^{a;b,c}\|_{E_{I,0}^{2,\sigma}\to C_IH^{s_2-1}}\notag\\
 &\hspace{35mm}+
 \sum_{(a;b,b)\in\mathfrak L_{\mathrm{diag}}}
 \|\mathcal D_I^{a;b}\|_{E_{I,0}^{2,\sigma}\to
 L^\infty(I;B_{2,\infty}^{\sigma-1})}
 \Bigg]<\infty.
\end{align}
\item Define
\begin{equation}\label{eq:centered-interval-modulus}
 \omega_{\mathrm{cen},\Xi}(h):=
 \sum_{(a;b,c)\in\mathfrak L}
 \sup_{\substack{I\subset[0,T_0]\text{ interval}\\ |I|\le h}}
 \|\mathcal B_I^{a;b,c}\|_{\mathcal L(
 E_{I,0}^{2,\sigma},L^1(I;B_{2,\infty}^{\sigma-1}))}.
\end{equation}
Then
\begin{equation}\label{eq:centered-operator-smallness}
 \omega_{\mathrm{cen},\Xi}(h)\longrightarrow0
 \qquad(h\downarrow0),
\end{equation}
and, separately,
\begin{equation}\label{eq:centered-operator-initial-smallness}
 \sum_{(a;b,c)\in\mathfrak L}
 \|\mathcal B^{a;b,c}_{[0,T]}\|_{\mathcal L(
 E_T^{2,\sigma},L_T^1B_{2,\infty}^{\sigma-1})}
 \longrightarrow0
 \qquad(T\downarrow0).
\end{equation}
The first limit is used on translated zero-history intervals; the second is
used for the fixed-point construction from prescribed Cauchy data.
\end{enumerate}
\end{definition}

For a tuple $\Xi$ satisfying Definition~\ref{def:Xi-assumptions}, set
\begin{align}
 M_T^{\mathrm{ps}}(\Xi)
 &:={\sum}_{a\in\mathfrak C}
 \bigl(\|\Psi_a\|_{C_T\mathfrak T_{J_{\mathrm{ps}}}}
 +\|V_a\|_{C_T\mathfrak T_{J_{\mathrm{ps}}}}
 +\|\partial_tV_a\|_{C_T\mathfrak T_{J_{\mathrm{ps}}}}\bigr),
 \label{eq:M-pseudolocal}\\
 M_T^{\mathrm{op},H}(\Xi)
 &:={\sum}_{(a;b,c)\in\mathfrak L}
 \|T^{a;b,c}\|_{E_T^{2,\sigma}\to C_TH^{s_2-1}},
 \label{eq:M-op-H}\\
 M_{T,\mathrm{diag}}^{\mathrm{op},B}(\Xi)
 &:={\sum}_{(a;b,b)\in\mathfrak L_{\mathrm{diag}}}
 \|\mathcal D^{a;b}\|_{E_T^{2,\sigma}\to
 L_T^\infty B_{2,\infty}^{\sigma-1}},
 \label{eq:M-op-diag-B}\\
 M_{T,\mathrm{cen}}^{\mathrm{op},B}(\Xi)
 &:={\sum}_{(a;b,c)\in\mathfrak L}
 \|\mathcal B^{a;b,c}\|_{E_T^{2,\sigma}\to
 L_T^1B_{2,\infty}^{\sigma-1}}.
 \label{eq:M-op-cen-B}
\end{align}
We further define
\begin{equation}\label{eq:Xi-size}
\begin{aligned}
 M_T(\Xi):={}&
 \sum_{a\in\mathfrak C}\Bigl(
 \|\rho\Psi_a\|_{C_T\mathcal C^{-1/2-\beta_a-\kappa}}
 +\|\rho V_a\|_{C_T\mathcal C^{1/2-\betasum-\kappa}}\\
 &\qquad
 +\|\rho V_a\|_{L_T^\infty B_{2,\infty}^{1/2-\betasum-\kappa}}
 +\|\rho\partial_tV_a\|_{C_T\mathcal C^{-1/2-\betasum-\kappa}}\\
 &\qquad
 +\|\rho\partial_tV_a\|_{L_T^\infty B_{2,\infty}^{-1/2-\betasum-\kappa}}
 +\|\Gamma_a^{\chi,\rho}\|_{C_T\mathcal C^{-\beta_{\Gamma,a}-\kappa}}\\
 &\qquad
 +\|\Gamma_a^{\chi,\rho}\|_{L_T^\infty B_{2,\infty}^{-\beta_{\Gamma,a}-\kappa}}
 \Bigr)\\
 &+M_T^{\mathrm{ps}}(\Xi)+M_T^{\mathrm{op},H}(\Xi)\\
 &+M_{T,\mathrm{diag}}^{\mathrm{op},B}(\Xi)
 +M_{T,\mathrm{cen}}^{\mathrm{op},B}(\Xi).
\end{aligned}
\end{equation}
For deterministic data $y$, let
\begin{equation}\label{eq:bounded-size}
 M_T^{\mathrm{bd}}(\Xi,y)
 :=1+M_T(\Xi)+\|y\|_{\mathcal H^{s_2,\sigma}},
\end{equation}
where $\|y\|_{\mathcal H^{s_2,\sigma}}$ is defined in
\eqref{eq:data-norm}.  The term $M_T^{\mathrm{ps}}$ records the pseudolocal smoothing
remainders and their cutoff differences in the deterministic size and
stability norm.

\begin{definition}[Quantities used in the short-time estimate]\label{def:local-smallness-modulus}
Let $0<T\le T_0$.  Define
\begin{align}
 \mu_T^{\mathrm{op}}(\Xi)
 &:={}T M_T^{\mathrm{op},H}(\Xi)
 +T M_{T,\mathrm{diag}}^{\mathrm{op},B}(\Xi)
 +M_{T,\mathrm{cen}}^{\mathrm{op},B}(\Xi),
 \label{eq:mu-op}\\
 \mu_T(\Xi,y)
 &:={}\mu_T^{\mathrm{op}}(\Xi)
 +T\sum_{a\in\mathfrak C}\left(
 \|\Gamma_a^{\chi,\rho}\|_{C_T\mathcal C^{-\beta_{\Gamma,a}-\kappa}}
 +\|\Gamma_a^{\chi,\rho}\|_{L_T^\infty B_{2,\infty}^{-\beta_{\Gamma,a}-\kappa}}
 \right)\notag\\
 &\quad+T^\delta\left(1+M_{T_0}^{\mathrm{bd}}(\Xi,y)
 +(M_{T_0}^{\mathrm{bd}}(\Xi,y))^2\right),
 \label{eq:mu-full}
\end{align}
where $\delta>0$ is chosen smaller than the time exponents in
Lemmas~\ref{lem:rough-products} and~\ref{lem:deterministic-quadratic}.
\end{definition}

\begin{definition}[Uniform short-time bounds]\label{def:localized-smallness-class}
A family $\mathfrak K$ of pairs $(\Xi,y)$ on $[0,T_0]$ satisfies the
uniform bound $A$ if
\[
 \sup_{(\Xi,y)\in\mathfrak K}M_{T_0}^{\mathrm{bd}}(\Xi,y)\le A
\]
and
\begin{equation}\label{eq:family-smallness}
 \nu_{\mathfrak K}(T):=
 \sup_{(\Xi,y)\in\mathfrak K}\mu_T(\Xi|_{[0,T]},y)
 \longrightarrow0
 \qquad\text{as }T\downarrow0.
\end{equation}
\end{definition}

\begin{lemma}[Completeness, separability, and the common operator input]
\label{lem:complete-and-E}
The spaces $\widetilde X_T^{s_1,\sigma}$,
$\widetilde Y_T^{s_2,\sigma}$, and $\mathcal Z_T$ are separable Banach spaces.
Moreover,
if $V_a$ is the first Picard term contained in $\Xi$ and
$X_a\in\widetilde X_T^{s_1,\sigma}$,
$Y_a\in\widetilde Y_T^{s_2,\sigma}$, then
\begin{equation}\label{eq:regular-input-in-E}
 \mathscr U_a^\rho:=\rho V_a+X_a+Y_a\in E_T^{2,\sigma},
 \qquad
 \|\mathscr U_a^\rho\|_{E_T^{2,\sigma}}
 \lesssim M_T(\Xi)+\|X_a\|_{\widetilde X_T}
 +\|Y_a\|_{\widetilde Y_T}.
\end{equation}
\end{lemma}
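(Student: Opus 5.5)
The plan has two parts: completeness and separability of the solution spaces, then the embedding \eqref{eq:regular-input-in-E}.

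\textbf{Completeness and separability.} By the convention fixed before Lemma~\ref{lem:finite-block-extension}, $\widetilde X_T^{s_1,\sigma}$ and $\widetilde Y_T^{s_2,\sigma}$ are defined as closures of smooth finite-spatial-block paths in their intersection norms. Every summand in \eqref{eq:X-intersection-norm}--\eqref{eq:Y-intersection-norm} is a norm on a Banach space. The time-continuous energy components and the $L_T^qW^{\gamma,p}$ Strichartz components with finite $q,p$ are complete. The Besov components are complete in the little-Besov closure, and Lemma~\ref{lem:completed-besov-traces} identifies their elements with canonical $C(I;b_{2,\infty}^r)$ paths. A Cauchy sequence in the intersection norm is therefore Cauchy in each component. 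The component limits all agree in $\mathcal D'((0,T)\times\R^3)$, so they define a single element of the closure. Hence each space is a closed subspace of a Banach space and is itself Banach.

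Separability comes from density of a countable set. Smooth finite-block paths with rational coefficients in a countable basis are dense: restrict to finitely many dyadic blocks, mollify in time, and approximate in $C_T$ of a finite-block Sobolev space, which is separable and on which all norms are equivalent up to block-dependent constants. Since $\mathcal Z_T$ is a finite product of these spaces, it inherits both properties.

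\textbf{The embedding \eqref{eq:regular-input-in-E}.} By the triangle inequality it suffices to bound $\rho V_a$, $X_a$ and $Y_a$ in $E_T^{2,\sigma}$ separately.

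For $X_a$ and $Y_a$ the bound is immediate from the definitions. Since $s_1,s_2>0$, we have $C_TH^{s_i}\hookrightarrow C_TL^2$ and $C_TH^{s_i-1}\hookrightarrow C_TH^{-1}$. The Besov components of $E_T^{2,\sigma}$ appear verbatim in $\widetilde X_T^{s_1,\sigma}$ and $\widetilde Y_T^{s_2,\sigma}$. The little-Besov tail condition is part of the definition of both spaces, so membership in the closure that defines $E_T^{2,\sigma}$ is inherited.

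For $\rho V_a$, Definition~\ref{def:Xi-assumptions}(ii) gives
\[
 \rho V_a\in C_T\mathcal C^{1/2-\betasum-\kappa}\cap L_T^\infty B_{2,\infty}^{1/2-\betasum-\kappa},
 \qquad
 \rho\partial_tV_a\in C_T\mathcal C^{-1/2-\betasum-\kappa}\cap L_T^\infty B_{2,\infty}^{-1/2-\betasum-\kappa}.
\]
Both paths are supported in $\supp\rho$, and we have $\partial_t(\rho V_a)=\rho\partial_tV_a$. The exponent window \eqref{eq:parameter-window} gives $\sigma<1/2-\betasum-\kappa$, and hence also $\sigma-1<-1/2-\betasum-\kappa$. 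The $L_T^\infty B_{2,\infty}^{\sigma}$ and $L_T^\infty B_{2,\infty}^{\sigma-1}$ components then follow by monotonicity in the regularity index. The strict margin additionally gives the uniform little-Besov tail \eqref{eq:uniform-little-besov-tail}.

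For the energy components, apply Lemma~\ref{lem:compact-support-continuity} with $K=\supp\rho$.
\begin{itemize}
\item With $r=0$ and $\beta=1/2-\betasum-\kappa>0$, it gives $\rho V_a\in C_TL^2$.
\item With $r=-1$ and $\beta=-1/2-\betasum-\kappa>-1$, it gives $\rho\partial_tV_a\in C_TH^{-1}$.
\end{itemize}
Both norms are bounded by $M_T(\Xi)$.

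\textbf{Main obstacle.} The delicate point is not any of these estimates but membership in the closure $E_T^{2,\sigma}$. We need $\rho V_a$ to be a limit of smooth finite-block paths in the full intersection norm, not merely an element of each ambient space. I would obtain this in three steps:
\begin{enumerate}
\item Truncate spatially with $P_{\le L}$. The strict Besov margins make the tail small uniformly in $t$, via \eqref{eq:uniform-little-besov-tail} at the higher exponent.
\item Mollify in time. This is legitimate because the energy components are continuous in time, as noted after \eqref{eq:uniform-little-besov-tail}.
\item Use compact support to control low frequencies, exactly as in the proof of Lemma~\ref{lem:compact-support-continuity}.
\end{enumerate}
The same three-step argument applies to $X_a$ and $Y_a$, for which it is trivial because they lie in the closure by definition.
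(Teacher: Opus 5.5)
Your proposal is correct and follows essentially the same route as the paper: completeness and separability of $\widetilde X_T^{s_1,\sigma}$ and $\widetilde Y_T^{s_2,\sigma}$ come from the finite-block completion convention, with $\mathcal Z_T$ handled as a finite product; $X_a$ and $Y_a$ enter $E_T^{2,\sigma}$ through the trivial embeddings given by $s_1,s_2>0$; and $\rho V_a$ is treated via Lemma~\ref{lem:compact-support-continuity} with $r=0$ and $r=-1$, together with the strict margin $\sigma<1/2-\beta_\Sigma-\kappa$. The one difference is how you place $\rho V_a$ in the closure. You do it intrinsically, by a spatial truncation controlled by the strict Besov margins followed by time mollification in $C^1_T$ of a finite-block space. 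The paper instead appeals to the completion convention and to Littlewood--Paley-truncated finite-cutoff approximants. Your version has the small advantage of applying to any deterministic tuple satisfying Definition~\ref{def:Xi-assumptions}(ii). In the separability step the approximation should be in $C^1_T$ rather than $C_T$, and on finite blocks the norms are dominated by, not equivalent to, that $C^1_T$ norm; domination is all the argument needs.
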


\begin{proof}
Completeness follows by intersecting finitely many Banach spaces with the sum
norm, using the little-Besov convention following \eqref{eq:E-space}.
Separability follows from the same completion convention.  At a fixed finite
set of spatial shells, all energy and Strichartz components take values in
separable Sobolev or Lebesgue spaces, and the corresponding continuous- or
finite-$p$ time path spaces are separable.  On those finite shells the two
$L_T^\infty B_{2,\infty}$ seminorms are equivalent to finite maxima of the
$C_TL^2$ block norms already supplied by the energy components, so no
nonseparable ambient $L^\infty$ space is introduced.  Taking rational linear
combinations from countable dense subsets and then the countable union over
finite shell sets gives a countable dense family in each of these completed spaces.  Finite products preserve separability.

The support of $\rho V_a(t)$ and $\rho\partial_tV_a(t)$ is contained in the
fixed compact set $\supp\rho$.  Apply
Lemma~\ref{lem:compact-support-continuity} first with
\[
 (\alpha,\beta,r)=
 (1/2-\betasum-\kappa,1/2-\betasum-\kappa,0)
\]
and then with
\[
 (\alpha,\beta,r)=
 (-1/2-\betasum-\kappa,-1/2-\betasum-\kappa,-1).
\]
The inequalities $0<1/2-\betasum-\kappa$ and
$-1<-1/2-\betasum-\kappa$ give
\[
 \rho V_a\in C_TL^2,
 \qquad
 \rho\partial_tV_a\in C_TH^{-1}.
\]
Moreover, $\sigma<1/2-\betasum-\kappa$ implies the continuous little-Besov
embeddings
\[
 B_{2,\infty}^{1/2-\betasum-\kappa}\hookrightarrow B_{2,\infty}^{\sigma},
 \qquad
 B_{2,\infty}^{-1/2-\betasum-\kappa}\hookrightarrow
 B_{2,\infty}^{\sigma-1}.
\]
By the completion convention following \eqref{eq:E-space}, the paths
$\rho V_a$ and $\rho\partial_tV_a$ admit spatially smooth finite-block
approximants in their displayed intersection norms.  Convergence in the
Besov component passes the uniform tail condition
\eqref{eq:uniform-little-besov-tail} to the limit, and lowering the Besov
exponent preserves that condition.  For the stochastic objects constructed below, the localized finite-cutoff paths are smooth and compactly supported but
need not have finite Fourier support.  An additional output
Littlewood--Paley truncation produces the required finite-block approximants,
and the uniform high-frequency tail removes this auxiliary truncation.  The
paths $X_a$ and $Y_a$ satisfy the same condition by the completion convention;
their $C_TL^2$ and $C_TH^{-1}$ components follow from the Sobolev energy
norms.  Hence the sum belongs to the completed space $E_T^{2,\sigma}$.  Summing the bounds proves \eqref{eq:regular-input-in-E}.
\end{proof}

\begin{lemma}[Measurable contraction selection]
\label{lem:measurable-contraction-selection}
Let $(\Omega,\mathcal F)$ be a measurable space, let $B$ be a separable Banach
space, let $G\in\mathcal F$, and let $R>0$.  Suppose
\[
 \Phi:\Omega\times\overline B_R\longrightarrow B
\]
is jointly Borel measurable and that there is a deterministic $0\le\lambda<1$
such that, for every $\omega\in G$, the map $z\mapsto\Phi(\omega,z)$ sends
$\overline B_R$ into itself and is $\lambda$-Lipschitz.  Then the unique fixed
point $z_*(\omega)$ on $G$, extended by $0$ on $G^c$, is strongly measurable
as a $B$-valued map.  On $G$ it is the pointwise and norm limit of the measurable Picard iterates
\[
 z_0=0,
 \qquad
 z_{m+1}(\omega)=
 \mathbf1_G(\omega)\Phi(\omega,z_m(\omega)).
\]
\end{lemma}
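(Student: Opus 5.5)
The plan is to run the Banach fixed-point argument pathwise through the measurable Picard iteration and take limits in a separable Banach space. First we check by induction that each iterate $z_m:\Omega\to B$ is strongly measurable and, on $G$, stays in $\overline B_R$. The map $z_0=0$ is constant. Assume $z_m$ is strongly measurable. Then $\omega\mapsto(\omega,z_m(\omega))$ is measurable from $(\Omega,\mathcal F)$ into $\Omega\times B$ with the product $\sigma$-algebra $\mathcal F\otimes\mathcal B(B)$. Since $B$ is separable, Pettis' theorem identifies strong and Borel measurability, and $\mathcal B(B\times B)=\mathcal B(B)\otimes\mathcal B(B)$. The iterates must first lie in $\overline B_R$ before $\Phi$ can be applied. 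On $G$ the self-map property gives $z_m(\omega)\in\overline B_R$ by induction. On $G^c$ we have $z_{m+1}=0$, so we never evaluate $\Phi$ outside its domain; for $z_0=0$ this uses $0\in\overline B_R$. Hence $\mathbf 1_G\Phi(\cdot,z_m(\cdot))$ is a composition of measurable maps, multiplied by the measurable indicator, and is Borel measurable. Therefore it is strongly measurable. Here we interpret "jointly Borel measurable" as measurability for $\mathcal F\otimes\mathcal B(\overline B_R)$.

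Second, for $\omega\in G$, the $\lambda$-contraction gives $\|z_{m+1}-z_m\|\le\lambda^m\|z_1-z_0\|\le 2R\lambda^m$, so $(z_m(\omega))$ is Cauchy. Its limit $z_*(\omega)$ lies in the closed ball. Letting $m\to\infty$ in $z_{m+1}=\Phi(\omega,z_m)$ and using continuity of $\Phi(\omega,\cdot)$ shows that $z_*(\omega)$ is a fixed point. It is the unique fixed point by strict contraction. For $\omega\notin G$ all iterates vanish, which matches the extension by $0$.

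Finally, a pointwise norm limit of strongly measurable $B$-valued maps is strongly measurable, since it is Borel measurable and separably valued in the separable $B$. The limit therefore defines the measurable fixed point, and the convergence is even uniform on $G$ with rate $2R\lambda^m/(1-\lambda)$. The only delicate point is the first step, the measurability of the composition $\omega\mapsto\Phi(\omega,z_m(\omega))$. It rests on the separability of $B$, which is what makes the product Borel $\sigma$-algebra coincide with the Borel $\sigma$-algebra of the product.
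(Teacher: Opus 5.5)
Your proposal is correct and follows the same route as the paper: you show by induction that the Picard iterates are measurable and stay in $\overline B_R$, apply the geometric contraction estimate on $G$, and use that a pointwise norm limit of measurable maps into a separable Banach space is strongly measurable. One small correction: the map $\omega\mapsto(\omega,z_m(\omega))$ is $\mathcal F\otimes\mathcal B(\overline B_R)$-measurable simply because each component is measurable, so separability is not needed at that step; it is needed only for the Pettis identification and for the limit step.
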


\begin{proof}
Extend the iteration by setting $z_m=0$ on $G^c$ at every stage.  Joint Borel
measurability of $\Phi$ and measurability of $G$ show inductively that every
$z_m$ is Borel measurable.  On $G$, the contraction estimate gives
\[
 \|z_{m+1}-z_m\|_B
 \le \lambda^m\|z_1-z_0\|_B
 \le 2R\lambda^m,
\]
so the iterates converge in $B$ to the unique fixed point.  A pointwise norm
limit of Borel maps into a separable Banach space is strongly measurable.
\end{proof}

\begin{corollary}[Measurability of the paracontrolled fixed point]
\label{cor:measurable-paracontrolled-fixed-point}
Fix $T$ and a deterministic radius $R$.  Let $G$ be a Borel event on which the
random map $\mathscr P_{\Xi(\omega),y}$ is a self-map and a contraction of
the closed radius-$R$ ball in $\mathcal Z_T$, with a deterministic contraction
constant strictly smaller than one.  Choose the strongly measurable realizations of the stochastic paths and
operators constructed above.  Then the
fixed point on $G$, extended by $0$ on $G^c$, is a strongly measurable
$\mathcal Z_T$-valued random variable.
\end{corollary}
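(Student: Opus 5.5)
The plan is to reduce the corollary to Lemma~\ref{lem:measurable-contraction-selection} with $B=\mathcal Z_T$. By Lemma~\ref{lem:complete-and-E}, $\mathcal Z_T$ is a separable Banach space. The event $G$ and the deterministic contraction constant $\lambda<1$ are given by hypothesis. What remains is to verify that
\[
 \Phi(\omega,z):=\mathscr P_{\Xi(\omega),y}(z),
 \qquad (\omega,z)\in\Omega\times\overline B_R,
\]
is jointly Borel measurable. Once this is established, the lemma returns the fixed point as the norm limit of the measurable Picard iterates on $G$, extended by zero on $G^c$, and hence as a strongly measurable random variable.

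For joint measurability, I would factor $\Phi$ as the composition of two maps. The first is the strongly measurable map $\omega\mapsto\Xi(\omega)$, with values in the separable metric space of enhanced data carrying the distance $d_T$. The second is the deterministic map $(\Xi,z)\mapsto\mathscr P_{\Xi,y}(z)$. It then suffices to show that the deterministic map is continuous on bounded sets. This is the local Lipschitz dependence of the source functionals \eqref{eq:localized-X-source}--\eqref{eq:localized-Y-source} on $(\Xi,Z)$, combined with the linear estimates \eqref{eq:X-linear}--\eqref{eq:besov-propagation}. The paraproduct, resonant, and product terms are multilinear in the stochastic paths and in $Z$. The operator terms $T^{a;b,c}(\mathscr U^\rho_{b^\perp})$ are bilinear in the pair (operator, input), and the input depends continuously on $Z$ and $V_a$ by \eqref{eq:regular-input-in-E}. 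A map that is continuous in $(\Xi,z)$ on a product of separable metric spaces is Borel on that product, so its composition with the measurable map $\omega\mapsto(\Xi(\omega),z)$ is jointly measurable.

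The main obstacle is separability of the operator components. The ambient operator spaces $\mathcal L(E_T^{2,\sigma},\cdot)$ are not separable. I would handle this using the strongly measurable realizations built in Lemma~\ref{lem:centered-block-measurability} and Proposition~\ref{prop:localized-diagonal}: these take values almost surely in a separable closed subspace, namely the closure of the finite anisotropic block sums. Restricting the data space to that subspace restores separability. The null exceptional set can be absorbed into $G^c$, because $\Omega$ is complete.

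Finally, the evaluation map $(T,w)\mapsto Tw$ is continuous from the product of this separable operator subspace with $E_T^{2,\sigma}$. This completes the joint Borel property, and the corollary follows from Lemma~\ref{lem:measurable-contraction-selection}.
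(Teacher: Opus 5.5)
Your proposal is correct. Like the paper, you reduce to Lemma~\ref{lem:measurable-contraction-selection} with $B=\mathcal Z_T$, which is separable by Lemma~\ref{lem:complete-and-E}. The difference is how you prove the joint Borel measurability of $\Phi$.

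The paper uses the Carath\'eodory criterion. For each fixed $Z$, the map $\omega\mapsto\mathscr P_{\Xi(\omega),y}Z$ is strongly measurable, since every term is a continuous multilinear expression in strongly measurable components. For each fixed $\omega$, the map is continuous in $Z$. Separability of the domain $\overline B_R$ then gives joint measurability.

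You instead factor $\Phi$ through $(\omega,z)\mapsto(\Xi(\omega),z)$ and use joint continuity of $(\Xi,z)\mapsto\mathscr P_{\Xi,y}(z)$. That continuity is indeed supplied by Proposition~\ref{prop:source-map-continuity} (equivalently \eqref{eq:perturbed-fixed-point}) together with the linear estimates.

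Your route needs more input than the paper's:
\begin{itemize}
\item joint rather than separate continuity;
\item a separable space of enhanced data, and hence a restriction to essentially separable ranges. This must be done for every component of $\Xi$ whose ambient space is nonseparable (for instance the weighted tempered paths in $C_T\mathfrak T_{J_{\mathrm{ps}}}$), not only for the operators;
\item the completeness argument for the exceptional null set.
\end{itemize}
The Carath\'eodory route avoids all three, because for fixed $Z$ the strong measurability of the components already passes through continuous maps, and only the separability of $\mathcal Z_T$ is used. You could also drop the separable-subspace step by approximating $\Xi(\omega)$ with simple functions and passing to the pointwise limit via joint continuity.
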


\begin{proof}
By Lemma~\ref{lem:complete-and-E}, $\mathcal Z_T$ is separable.  For each fixed
$Z$, every term in $\mathscr P_{\Xi(\omega),y}Z$ is obtained by evaluating a
strongly measurable stochastic path or operator component and then applying
a deterministic continuous multilinear map; it is therefore strongly
measurable in $\omega$.  For each fixed $\omega$, the source formulas show that
the map is continuous in $Z$.  The Carath\'eodory measurability criterion on a
separable metric domain therefore makes
$(\omega,Z)\mapsto\mathscr P_{\Xi(\omega),y}Z$ jointly Borel measurable.
Lemma~\ref{lem:measurable-contraction-selection} applies.
\end{proof}

\begin{lemma}[Products with a negative H\"older coefficient]
\label{lem:negative-holder-products}
Let $0<\alpha<1$, $f\in\mathcal C^{-\alpha}(\R^3)$, and
$\varepsilon>0$.  For smooth finite-block functions the following estimates
hold, and they extend to the corresponding completed spaces.
\begin{enumerate}[label=\textup{(\roman*)},leftmargin=2.3em]
\item If $g\in L^2$, then
\begin{equation}\label{eq:negative-holder-low-high}
 \|g\prec f\|_{B_{2,\infty}^{-\alpha}}
 +\|g\prec f\|_{H^{-\alpha-\varepsilon}}
 \lesssim_\varepsilon
 \|g\|_{L^2}\|f\|_{\mathcal C^{-\alpha}}.
\end{equation}
\item If $g\in H^s\cap B_{2,\infty}^{\tau}$, then
\begin{align}
 \|f\prec g\|_{H^{s-\alpha}}
 &\lesssim \|f\|_{\mathcal C^{-\alpha}}\|g\|_{H^s},
 \label{eq:negative-holder-high-low-sobolev}\\
 \|f\prec g\|_{B_{2,\infty}^{\tau-\alpha}}
 &\lesssim \|f\|_{\mathcal C^{-\alpha}}
              \|g\|_{B_{2,\infty}^{\tau}}.
 \label{eq:negative-holder-high-low-besov}
\end{align}
If only the Besov norm of $g$ is used, then
\begin{equation}\label{eq:negative-holder-high-low-besov-to-sobolev}
 \|f\prec g\|_{H^{\tau-\alpha-\varepsilon}}
 \lesssim_\varepsilon
 \|f\|_{\mathcal C^{-\alpha}}
 \|g\|_{B_{2,\infty}^{\tau}}.
\end{equation}
\item If $s>\alpha$ and $g\in H^s$, then the resonant product is defined and
\begin{equation}\label{eq:negative-holder-resonant}
 \|g\circ f\|_{B_{2,\infty}^{s-\alpha}}
 +\|g\circ f\|_{H^{s-\alpha-\varepsilon}}
 \lesssim_\varepsilon
 \|g\|_{H^s}\|f\|_{\mathcal C^{-\alpha}}.
\end{equation}
\end{enumerate}
Multiplication of the left-hand sides by a fixed function in
$C_c^\infty(\R^3)$ preserves all estimates.
\end{lemma}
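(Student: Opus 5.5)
The plan is the standard Littlewood--Paley route of \cite{BCD,GIP}: write each Bony piece as a dyadic series, bound the blocks in $L^2$ using $\|P_Nf\|_{L^\infty}\lesssim N^{\alpha}\|f\|_{\mathcal C^{-\alpha}}$, and then sum in the $\ell^\infty$ (Besov) or $\ell^2$ (Sobolev) norm over the output shell. Every estimate is first proved for smooth finite-block functions, where all series are finite. The extension to the completed spaces then follows from Lemma~\ref{lem:finite-block-extension}, since each map is bilinear and bounded. The last sentence, about multiplication by a fixed compactly supported function, is immediate from \eqref{eq:physical-cutoff-Sobolev}--\eqref{eq:physical-cutoff-Besov} in Lemma~\ref{lem:physical-cutoff-almost-diagonal}.

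For (i), the block $S_{<c_{\mathrm{ap}}N}g\,P_Nf$ has Fourier support in an annulus of size $\sim N$, and its $L^2$ norm is at most $\|g\|_{L^2}N^{\alpha}\|f\|_{\mathcal C^{-\alpha}}$. Taking the supremum over $N$ of $N^{-\alpha}$ times the block norm gives the $B^{-\alpha}_{2,\infty}$ bound. The $H^{-\alpha-\varepsilon}$ bound follows from almost orthogonality of the blocks and $\sum_N N^{-2\varepsilon}<\infty$. For (ii), the roles are reversed: the low-frequency factor is $S_{<c_{\mathrm{ap}}N}f$, whose $L^\infty$ norm is at most $\sum_{K<N}K^{\alpha}\|f\|_{\mathcal C^{-\alpha}}\lesssim N^{\alpha}\|f\|_{\mathcal C^{-\alpha}}$. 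Here $\alpha>0$ is used, so the geometric sum is dominated by its top term. Multiplying by $\|P_Ng\|_{L^2}$ and using almost orthogonality gives \eqref{eq:negative-holder-high-low-sobolev}, while taking the supremum gives \eqref{eq:negative-holder-high-low-besov}. The embedding $B^{\tau-\alpha}_{2,\infty}\hookrightarrow H^{\tau-\alpha-\varepsilon}$ then yields \eqref{eq:negative-holder-high-low-besov-to-sobolev}.

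Part (iii), the resonant product, is the main step, because the output of a block $P_Kg\,P_{K'}f$ with $K\sim K'$ is only supported in a ball of radius $\sim K$, not in an annulus. I would bound $\|P_M(g\circ f)\|_{L^2}\lesssim\sum_{K\gtrsim M}\|P_Kg\|_{L^2}K^{\alpha}\|f\|_{\mathcal C^{-\alpha}}$. Writing $\|P_Kg\|_{L^2}=K^{-s}c_K$ with $(c_K)\in\ell^2$, this gives
\[
M^{s-\alpha}\|P_M(g\circ f)\|_{L^2}\lesssim\|f\|_{\mathcal C^{-\alpha}}\sum_{K\gtrsim M}(M/K)^{s-\alpha}c_K .
\]
Since $s>\alpha$, the kernel $(M/K)^{s-\alpha}\one_{\{K\gtrsim M\}}$ defines a bounded discrete convolution on $\ell^2$ by the Schur test (as in Lemma~\ref{lem:quantitative-dyadic-hardy}). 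This gives the $\ell^2$ bound in $M$, so in fact the product lies in $H^{s-\alpha}$, which implies both the $B^{s-\alpha}_{2,\infty}$ bound and the $H^{s-\alpha-\varepsilon}$ bound. The finitely many comparable shells allowed by the aperture relation $\sim_{\mathrm{ap}}$ only change constants.

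Finally, for the completed spaces, the uniform little-Besov tail condition \eqref{eq:uniform-little-besov-tail} passes to the outputs. The reason is that each output block is controlled by input blocks of comparable or higher frequency, and those have vanishing tails.
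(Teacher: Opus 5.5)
Your proposal is correct and is essentially the paper's proof. It uses the same block bounds $\|P_Nf\|_{L^\infty},\ \|S_{<c_{\mathrm{ap}}N}f\|_{L^\infty}\lesssim N^{\alpha}\|f\|_{\mathcal C^{-\alpha}}$, the same annular output support for both paraproducts, and the same weighted sum $\sum_{K\gtrsim M}(M/K)^{s-\alpha}c_K$ for the resonant term. The only deviation is in (iii): your Schur test gives the $\ell^2$ bound and hence $H^{s-\alpha}$ with no loss, whereas the paper uses Cauchy--Schwarz to get the $B^{s-\alpha}_{2,\infty}$ bound and then loses $\varepsilon$ through the embedding into $H^{s-\alpha-\varepsilon}$.
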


\begin{proof}
The fixed paraproduct aperture changes only finite-overlap constants.  Since
$f\in\mathcal C^{-\alpha}$,
\[
 \|P_Nf\|_{L^\infty}\lesssim
 N^\alpha\|f\|_{\mathcal C^{-\alpha}},
 \qquad
 \|S_{<\mathrm{cap}N}f\|_{L^\infty}
 \lesssim N^\alpha\|f\|_{\mathcal C^{-\alpha}}.
\]
For the low--high term, output frequency $M$ is comparable to the high
frequency $N$, and therefore
\[
 \|P_M(g\prec f)\|_{L^2}
 \lesssim M^\alpha\|g\|_{L^2}
          \|f\|_{\mathcal C^{-\alpha}}.
\]
Taking the $B_{2,\infty}^{-\alpha}$ supremum and then summing
$M^{-2\varepsilon}$ proves \eqref{eq:negative-holder-low-high}.

For the high--low term,
\[
 \|P_M(f\prec g)\|_{L^2}
 \lesssim M^\alpha\|\widetilde P_Mg\|_{L^2}
          \|f\|_{\mathcal C^{-\alpha}},
\]
where $\widetilde P_M$ is a fixed enlargement of $P_M$.  Square summation
with the weight $M^{2(s-\alpha)}$ proves
\eqref{eq:negative-holder-high-low-sobolev}; taking a dyadic supremum proves
\eqref{eq:negative-holder-high-low-besov}.  The embedding
$B_{2,\infty}^{r}\hookrightarrow H^{r-\varepsilon}$ gives
\eqref{eq:negative-holder-high-low-besov-to-sobolev}.

For the resonant term, the output shell $M$ receives contributions from
comparable input shells $N$ with $N\gtrsim M$.  Hence
\[
 \|P_M(g\circ f)\|_{L^2}
 \lesssim \|f\|_{\mathcal C^{-\alpha}}
 \sum_{N\gtrsim M}N^\alpha\|P_Ng\|_{L^2}.
\]
With $a_N=N^s\|P_Ng\|_{L^2}$ and $s>\alpha$,
\[
 M^{s-\alpha}\|P_M(g\circ f)\|_{L^2}
 \lesssim \|f\|_{\mathcal C^{-\alpha}}
 \sum_{N\gtrsim M}\left(\frac{M}{N}\right)^{s-\alpha}a_N
 \lesssim \|f\|_{\mathcal C^{-\alpha}}\|g\|_{H^s},
\]
by Cauchy--Schwarz and a geometric series.  This proves the Besov part of
\eqref{eq:negative-holder-resonant}; the Sobolev part follows from the same
Besov-to-Sobolev embedding.  Smooth multiplication is bounded on all spaces
in the statement.
\end{proof}

We collect the deterministic estimates used in the fixed point.  Every
source below is multiplied by $\chi$.  Singular products use the compactly
supported coordinates $\rho\Psi$ and $\rho V$; the separated-support
remainders are controlled by Lemma~\ref{lem:separated-support-smoothing} and
recorded in $M_T^{\mathrm{ps}}$.

\begin{lemma}[Localized rough stochastic products]
\label{lem:rough-products}
Assume \eqref{eq:parameter-window}.  Let
$U=\rho V+X+Y$ with $X\in\wt X_T^{s_1,\sigma}$,
$Y\in\wt Y_T^{s_2,\sigma}$, and $V$ as in
\cref{def:Xi-assumptions}.  For $a\in\mathfrak C$, put
\[
 \mathfrak Y_a(Y):=Y\succ\Psi_a+Y\circ\Psi_a.
\]
Then
\begin{align}
  \norm{\chi(U\prec\Psi_a)}_{L_T^1H^{s_1-1}}
  +\norm{\chi(U\prec\Psi_a)}_{L_T^1B_{2,\infty}^{\sigma-1}}
  &\lesssim T^\delta M_T(\Xi)\notag\\
  &\quad\times
  \paren{1+\norm{X}_{\widetilde X_T^{s_1,\sigma}}
  +\norm{Y}_{\widetilde Y_T^{s_2,\sigma}}},
  \label{eq:lowhigh-estimate}\\
  \norm{\chi((\rho V)\succ\Psi_a)}_{L_T^1H^{s_2-1}}
  +\norm{\chi((\rho V)\succ\Psi_a)}_{L_T^1B_{2,\infty}^{\sigma-1}}
  &\lesssim T M_T(\Xi)^2,
  \label{eq:V-highlow}\\
  \norm{\chi(X\succ\Psi_a)}_{L_T^1H^{s_2-1}}
  +\norm{\chi(X\succ\Psi_a)}_{L_T^1B_{2,\infty}^{\sigma-1}}
  &\lesssim T^\delta M_T(\Xi)\norm{X}_{\widetilde X_T^{s_1,\sigma}},
  \label{eq:X-highlow}\\
  \norm{\chi\mathfrak Y_a(Y)}_{L_T^1H^{s_2-1}}
  +\norm{\chi\mathfrak Y_a(Y)}_{L_T^1B_{2,\infty}^{\sigma-1}}
  &\lesssim T M_T(\Xi)
  \norm{Y}_{\widetilde Y_T^{s_2,\sigma}}.
  \label{eq:Y-random-products}
\end{align}
The estimates have the corresponding Lipschitz form, including differences of
the weighted pseudolocal terms.
\end{lemma}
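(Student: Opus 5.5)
The four estimates are proved by the same dyadic bookkeeping. In each case one applies the product bounds of Lemma~\ref{lem:negative-holder-products} with the compactly localized coefficient $f=\rho\Psi_a\in\mathcal C^{-\alpha}$, where $\alpha=1/2+\beta_a+\kappa$. The factor $\chi$ is inserted using $\chi\rho=\chi$: on $\supp\chi$ one may replace $\Psi_a$ by $\rho\Psi_a$. The commutator error $\chi(U\star\Psi_a)-\chi(U\star\rho\Psi_a)=\chi(U\star(1-\rho)\Psi_a)$ has separated supports. It is controlled by Lemma~\ref{lem:separated-support-smoothing} in every $H^M$, which produces the pseudolocal contribution $M_T^{\mathrm{ps}}$ inside $M_T(\Xi)$. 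Time integrability comes from H\"older in time: each estimate is pointwise in $t$ with a spatial bound in terms of $\sup_t$ norms or $L_T^8$, $L_T^4$ Strichartz norms. The factor $T^\delta$ or $T$ is then the H\"older loss from passing to $L^1_T$.

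For \eqref{eq:lowhigh-estimate}, part (i) of Lemma~\ref{lem:negative-holder-products} gives, at each time,
\[
\|\chi(U\prec\Psi_a)\|_{B^{-\alpha}_{2,\infty}\cap H^{-\alpha-\varepsilon}}\lesssim\|U\|_{L^2}\|\rho\Psi_a\|_{\mathcal C^{-\alpha}}.
\]
The requirement $s_1<1/2-\betastar$ and the choice of $\kappa$ give $s_1-1<-\alpha-\varepsilon$. The hypothesis $\sigma<1/4$ gives $\sigma-1<-\alpha$. The norm $\|U\|_{C_TL^2}$ is bounded by Lemma~\ref{lem:complete-and-E} in terms of $M_T+\|X\|+\|Y\|$, and $L^1_T\subset L^\infty_T$ gives the factor $T$, hence $T^\delta$.

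For \eqref{eq:V-highlow}, part (ii) is applied with $g=\rho V\in H^{1/2-\betasum-\kappa-}\cap B^{1/2-\betasum-\kappa}_{2,\infty}$, using Lemma~\ref{lem:compact-support-continuity}. The output regularity is $-\betasum-\beta_a-2\kappa-$. This is above $s_2-1$ because $s_2<1-\betagamma$ and $\kappa<(1-s_2-\betagamma)/2$. It is also above $\sigma-1$. The factor is $T$.

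For \eqref{eq:X-highlow} the input $X$ has regularity only $s_1$ in $C_TH^{s_1}$, which gives output $s_1-\alpha$. This is compared with $s_2-1$, and the margin needed is $s_2<s_1+1/2-\betastar-\kappa$. That is implied by $s_2<s_1+1/4$ when $\betastar+\kappa<1/4$, so the $C_T$ bound already suffices. I would nevertheless route it through the $L_T^8W^{s_1-1/4,8/3}$ norm if the energy margin is tight, using Bernstein on the localized product. This is the place where $T^\delta$ instead of $T$ appears. The Besov component uses \eqref{eq:negative-holder-high-low-besov} with $\tau=\sigma$, giving $\sigma-\alpha>\sigma-1$.

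For \eqref{eq:Y-random-products}, the term $Y\succ\Psi_a$ is handled as in the previous estimate. The term $Y\circ\Psi_a$ uses part (iii), which requires $s_2>\alpha$; this is exactly $s_2>1/2+\betastar+\kappa$. The output $s_2-\alpha-\varepsilon\ge s_2-1$ since $\alpha<1$. For the Besov component one uses the $B^{s_2-\alpha}$ bound, which dominates $\sigma-1$.

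The Lipschitz forms follow by bilinearity, splitting each difference into one difference factor times bounded factors, together with \eqref{eq:localized-cubic-remainder-difference}-type bounds for the pseudolocal pieces. The main obstacle is the exponent bookkeeping in \eqref{eq:X-highlow}, where the time gain and the window $s_2<s_1+1/4$ interact. I expect the energy norm with a plain factor $T$ to close it, but the Strichartz route is the fallback.
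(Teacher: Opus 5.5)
Your proposal is correct and follows essentially the same route as the paper: split $\Psi_a=\rho\Psi_a+(1-\rho)\Psi_a$, send the separated-support piece to Lemma~\ref{lem:separated-support-smoothing} (with $U\in C_T\mathfrak T_{J_{\mathrm{ps}}}$, giving the $M_T^{\mathrm{ps}}$ contribution), and apply the three parts of Lemma~\ref{lem:negative-holder-products} pointwise in time with $\alpha_a=1/2+\beta_a+\kappa$, using the window inequalities you list. The Strichartz fallback for \eqref{eq:X-highlow} is never needed. The paper closes it with the energy norm exactly as in your main line, and $T^\delta$ there is only the weakening $T\le T^\delta$, not a genuine loss of time integrability. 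The time step is $\|f\|_{L^1_T}\le T\|f\|_{L^\infty_T}$, not an inclusion $L^1_T\subset L^\infty_T$.
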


\begin{proof}
Set
\[
 \alpha_a:=\frac12+\beta_a+\kappa,
 \qquad
 \alpha_V:=\frac12-\betasum-\kappa.
\]
The parameter window gives $0<\alpha_a<1$, $\alpha_V>0$, and
$s_2>\alpha_a$.  Split
\[
 \Psi_a=\rho\Psi_a+(1-\rho)\Psi_a.
\]
Whenever $(1-\rho)\Psi_a$ occurs, the supports of this factor and $\chi$ are
separated.  Lemma~\ref{lem:separated-support-smoothing} then gives an
arbitrarily smooth output after multiplication by $\chi$.  By
\eqref{eq:H-to-weighted-tempered} and
\cref{lem:complete-and-E},
\[
 \|U\|_{C_T\mathfrak T_{J_{\mathrm{ps}}}}
 +\|U\|_{C_TL^2}
 \lesssim M_T(\Xi)+\|X\|_{\wt X_T^{s_1,\sigma}}
 +\|Y\|_{\wt Y_T^{s_2,\sigma}}.
\]
The separated-support terms are therefore bounded in every source space by
this quantity times $M_T^{\mathrm{ps}}(\Xi)$, with an explicit factor $T$
after time integration.  It remains to consider $\rho\Psi_a$.

For the low--high product, choose $\varepsilon_{\mathrm{lh}}>0$ so small that
\[
 s_1-1< -\alpha_a-\varepsilon_{\mathrm{lh}},
 \qquad
 \sigma-1< -\alpha_a.
\]
These inequalities follow respectively from
$\kappa<1/2-\betastar-s_1$ and
$\sigma<1/2-\betasum-\kappa$.  Applying
\eqref{eq:negative-holder-low-high} pointwise in time gives
\begin{align*}
 &\|\chi(U\prec\rho\Psi_a)\|_{H^{s_1-1}}
 +\|\chi(U\prec\rho\Psi_a)\|_{B_{2,\infty}^{\sigma-1}}\\
 &\qquad\lesssim
 \|\rho\Psi_a\|_{\mathcal C^{-\alpha_a}}\|U\|_{L^2}.
\end{align*}
Integration over $[0,T]$ produces a factor $T$.  Since
$0<T\le1$ and the auxiliary exponent satisfies $0<\delta<1$, one has
$T\le T^\delta$, proving \eqref{eq:lowhigh-estimate}.

For the first-Picard high--low term, use
\eqref{eq:negative-holder-high-low-besov}--
\eqref{eq:negative-holder-high-low-besov-to-sobolev} with
$f=\rho\Psi_a$, $g=\rho V$, and $\tau=\alpha_V$.  Thus, for every
$\varepsilon_V>0$,
\begin{align*}
 &\|\chi((\rho V)\succ\rho\Psi_a)\|_{H^{\alpha_V-\alpha_a-\varepsilon_V}}
 +\|\chi((\rho V)\succ\rho\Psi_a)\|_{B_{2,\infty}^{\alpha_V-\alpha_a}}\\
 &\qquad\lesssim_{\varepsilon_V}
 \|\rho\Psi_a\|_{\mathcal C^{-\alpha_a}}
 \|\rho V\|_{B_{2,\infty}^{\alpha_V}}.
\end{align*}
Here
\[
 \alpha_V-\alpha_a=-\beta_{\Gamma,a}-2\kappa.
\]
The strict inequality
$\beta_{\Gamma,a}+2\kappa<1-s_2$ permits a choice of
$\varepsilon_V>0$ such that
$s_2-1<\alpha_V-\alpha_a-\varepsilon_V$.  The inequality
$\sigma-1<\alpha_V-\alpha_a$ follows from
\eqref{eq:strict-margin}.  Time integration gives
\eqref{eq:V-highlow}.

For $X\succ\rho\Psi_a=(\rho\Psi_a)\prec X$,
\eqref{eq:negative-holder-high-low-sobolev}--
\eqref{eq:negative-holder-high-low-besov} yield
\begin{align*}
 \|\chi(X\succ\rho\Psi_a)\|_{H^{s_1-\alpha_a}}
 &\lesssim
 \|\rho\Psi_a\|_{\mathcal C^{-\alpha_a}}\|X\|_{H^{s_1}},\\
 \|\chi(X\succ\rho\Psi_a)\|_{B_{2,\infty}^{\sigma-\alpha_a}}
 &\lesssim
 \|\rho\Psi_a\|_{\mathcal C^{-\alpha_a}}
 \|X\|_{B_{2,\infty}^{\sigma}}.
\end{align*}
Since $s_2<s_1+1/4$ and $\beta_a+\kappa<1/4$,
\[
 s_2-1<s_1-\alpha_a,
\]
and since $\alpha_a<1$,
$\sigma-1<\sigma-\alpha_a$.  After integration in time this gives a factor
$T$, which is bounded by $T^\delta$, and proves
\eqref{eq:X-highlow}.

The same high--low estimate with $X$ replaced by $Y$ gives
\begin{align*}
 \|\chi(Y\succ\rho\Psi_a)\|_{H^{s_2-\alpha_a}}
 +\|\chi(Y\succ\rho\Psi_a)\|_{B_{2,\infty}^{\sigma-\alpha_a}}
 \lesssim M_T(\Xi)\|Y\|_{\wt Y_T^{s_2,\sigma}}.
\end{align*}
For the resonant term, $s_2>\alpha_a$ and
\eqref{eq:negative-holder-resonant} give, for
$0<\varepsilon_Y<1-\alpha_a$,
\begin{align*}
 \|\chi(Y\circ\rho\Psi_a)\|_{H^{s_2-\alpha_a-\varepsilon_Y}}
 +\|\chi(Y\circ\rho\Psi_a)\|_{B_{2,\infty}^{s_2-\alpha_a}}
 \lesssim_{\varepsilon_Y}
 M_T(\Xi)\|Y\|_{C_TH^{s_2}}.
\end{align*}
The first exponent is larger than $s_2-1$, and the second is positive while
$\sigma-1<0$.  Both terms therefore lie in the two $Y$-source spaces.
Time integration proves \eqref{eq:Y-random-products}.

All estimates are bilinear in the displayed factors.  Subtracting two data
sets one factor at a time, together with the difference estimate in
Lemma~\ref{lem:separated-support-smoothing}, gives the stated Lipschitz
bounds.
\end{proof}

\begin{lemma}[First Picard multiplier products]\label{lem:V-products}
Put $\alpha_V=1/2-\betasum-\kappa$ and let $V\in C_T\cC^{\alpha_V}$ be spatially localized.  If $G\in C_TH^\lambda\cap L_T^\infty B_{2,\infty}^{\sigma}$ with $\lambda\ge0$, $0<\sigma<\alpha_V$, and
\[
  r<\min\{\alpha_V,\lambda\},
\]
then
\begin{align}\label{eq:V-holder-multiplier}
  \norm{VG}_{C_TH^r}
  +\norm{VG}_{L_T^\infty B_{2,\infty}^{\sigma}}
  \lesssim
  \norm{V}_{C_T\cC^{\alpha_V}}
  \paren{\norm{G}_{C_TH^\lambda}+\norm{G}_{L_T^\infty B_{2,\infty}^{\sigma}}}.
\end{align}
The same conclusion holds when $G$ is another localized $\cC^{\alpha_V}$ function, with any $r<\alpha_V$.  Consequently every term in $\mathscr U_{\W}^\rho \mathscr U_{\K}^\rho$ containing at least one first Picard factor belongs to
\[
  L_T^1H^{s_2-1}\cap L_T^1B_{2,\infty}^{\sigma-1},
\]
with an explicit factor $T$ and the corresponding bilinear Lipschitz bounds.
\end{lemma}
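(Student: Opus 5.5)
The estimate \eqref{eq:V-holder-multiplier} will follow from Bony's decomposition $VG=V\prec G+V\succ G+V\circ G$ at each fixed time, estimating each piece with Littlewood--Paley blocks. Time continuity in $H^r$ and the $L_T^\infty$ Besov bound then follow from bilinearity, since every estimate is uniform in $t$. For fixed $t$ we use $\|P_NV\|_{L^\infty}\lesssim N^{-\alpha_V}\|V\|_{\cC^{\alpha_V}}$ and $\|S_{<N}V\|_{L^\infty}\lesssim\|V\|_{\cC^{\alpha_V}}$; the latter holds because $\alpha_V>0$.

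\emph{Low--high term $V\prec G$.} The output shell is comparable to the $G$ shell, so we have $\|P_M(V\prec G)\|_2\lesssim\|V\|_{\cC^{\alpha_V}}\|\widetilde P_MG\|_2$. This maps $H^\lambda\to H^\lambda\subset H^r$ and $B^\sigma_{2,\infty}\to B^\sigma_{2,\infty}$.

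\emph{High--low term $V\succ G=G\prec V$.} Here $\|P_M(G\prec V)\|_2\lesssim\|G\|_{L^2}M^{-\alpha_V}\|V\|_{\cC^{\alpha_V}}$, using $\lambda\ge0$. This gives $B^{\alpha_V}_{2,\infty}\subset H^r$ for $r<\alpha_V$, and also $B^{\sigma}_{2,\infty}$ because $\sigma<\alpha_V$.

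\emph{Resonant term $V\circ G$.} Output shell $M$ receives contributions from $N\gtrsim M$, with $\|P_M(V\circ G)\|_2\lesssim\sum_{N\gtrsim M}N^{-\alpha_V}\|P_NG\|_2$. Since $\alpha_V+\sigma>0$, the Besov input gives an output in $B^{\alpha_V+\sigma}_{2,\infty}\subset B^\sigma_{2,\infty}\cap H^r$. When $G$ is itself localized $\cC^{\alpha_V}$, compact support of $G$ makes $G\in L^2\cap B^{\alpha_V-}_{2,\infty}$ (local H\"older control gives the $L^2$ block bounds). The same three estimates then hold with $\lambda=0$ on the $L^2$ side and yield any $r<\alpha_V$, interpolating between the $L^2$ bound and the $B^{\alpha_V}$-type decay as needed. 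The step needing care is this $H^r$ conversion: square summation of $\sup$-type Besov bounds costs the strict loss $r<\alpha_V$, and this is exactly where the hypothesis $r<\min\{\alpha_V,\lambda\}$ enters.

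For the consequence, expand $\mathscr U_{\W}^\rho\mathscr U_{\K}^\rho$ and take the terms with at least one factor $\rho V_a$. The other factor is either $\rho V_{a^\perp}$ (the $\cC^{\alpha_V}$ case), $X\in C_TH^{s_1}\cap L^\infty_TB^\sigma_{2,\infty}$ (take $\lambda=s_1$), or $Y$ (take $\lambda=s_2$). In each case we get $C_TH^r$ with some $r>0$, and $r>s_2-1$ since $s_2<1$. The Besov output is $B^\sigma_{2,\infty}\subset B^{\sigma-1}_{2,\infty}$. Multiplying by $\chi$ is harmless by Lemma~\ref{lem:physical-cutoff-almost-diagonal}, and integrating in time over $[0,T]$ produces the factor $T$ in $L_T^1$. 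Completed spaces are reached via Lemma~\ref{lem:finite-block-extension}. The Lipschitz bounds follow by bilinearity, subtracting one factor at a time. We need $\sigma<\alpha_V$, which holds by Assumption~\ref{ass:exponents}; $\lambda\ge0$ holds trivially.
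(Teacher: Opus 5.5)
Your proposal is correct and follows essentially the paper's own argument. Both use the Bony splitting with the three blockwise bounds, compact localization to place the $V_{\W}V_{\K}$ product in $H^r\cap B^{\sigma}_{2,\infty}$, application with $\lambda=s_1$ or $s_2$, time integration for the factor $T$, and one-factor-at-a-time subtraction for the Lipschitz bounds. The differences are cosmetic: the paper bounds the resonant piece through the $H^\lambda$ input, so the $C_TH^r$ continuity uses only $G\in C_TH^\lambda$, and it takes $r=s_2-1$ instead of some $r\in(0,\min\{\alpha_V,\lambda\})$.
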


\begin{proof}
Decompose $VG=V\prec G+V\succ G+V\circ G$.  Since $\alpha_V>0$, the
low--high term is bounded in $H^\lambda$ and in $B_{2,\infty}^{\sigma}$.  In the
high--low term, the high block of $V$ contributes $N^{-\alpha_V}$ in $L^\infty$ and
the low block of $G$ is controlled in $L^2$; the $H^r$ sum is finite for
$r<\alpha_V$.  The resonant term is controlled for $r<\alpha_V+\lambda$.  Since $\alpha_V,\lambda\ge0$, the
assumption $r<\min\{\alpha_V,\lambda\}$ implies all three requirements and gives the
Sobolev estimate.  The Besov estimate follows by taking a dyadic supremum and
using $\sigma<\alpha_V$.  In particular, no lower bound of the form $r>-\alpha_V$ is
needed when the second factor already has nonnegative Sobolev regularity.

Compact localization also gives
$\cC^{\alpha_V}\hookrightarrow H^r\cap B_{2,\infty}^{\sigma}$ for $r<\alpha_V$ and
$\sigma<\alpha_V$, which handles $V_{\W}V_{\K}$.

In the application, take $r=s_2-1<0$ and $\lambda=s_1$ or $s_2$.  Since
$s_2<1$, $\lambda\ge0$, and $0<\sigma<\alpha_V$, all hypotheses hold in
\eqref{eq:parameter-window}; the estimate in $B_{2,\infty}^{\sigma}$ is
stronger than the required source estimate in
$B_{2,\infty}^{\sigma-1}$.  Integration over $[0,T]$ supplies the factor
$T$.  Difference estimates follow from the bilinear identity
$VG-\widetilde V\widetilde G=(V-\widetilde V)G+\widetilde V(G-\widetilde G)$.
\end{proof}

\begin{lemma}[Endpoint localized deterministic quadratic products]
\label{lem:deterministic-quadratic}
Let
\[
  \gamma=s_2-\frac12>0.
\]
For $A,B\in\set{X,Y}$,
\begin{equation}\label{eq:quadratic-dual}
  \norm{\la D\ra^\gamma\bigl(\chi AB\bigr)}_{L_{t,x}^{4/3}}
  \lesssim T^{1/4}
  \norm{A}_{\wt A_T}\norm{B}_{\wt B_T},
\end{equation}
where $\wt A_T$ denotes the corresponding $X$ or $Y$ intersection norm.  Moreover,
\begin{equation}\label{eq:quadratic-Besov}
  \norm{\chi AB}_{L_T^1B_{2,\infty}^{\sigma-1}}
  \lesssim T^{1/4}
  \norm{A}_{\wt A_T}\norm{B}_{\wt B_T}.
\end{equation}
Both estimates have the corresponding bilinear difference form.
\end{lemma}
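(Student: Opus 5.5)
We bound $\chi AB$ at each time by Hölder and Sobolev embedding, using the Strichartz components of the $X$ and $Y$ norms; the $T^{1/4}$ comes from Hölder in time. Write $\widetilde\chi\in C_c^\infty$ with $\widetilde\chi\chi=\chi$, so $\chi AB=\chi(\widetilde\chi A)(\widetilde\chi B)$. The multiplier $\chi$ costs nothing on Sobolev spaces by Lemma~\ref{lem:physical-cutoff-almost-diagonal}. It also lets us lower Lebesgue exponents on compact support via Lemma~\ref{lem:compact-p-lowering}.

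For \eqref{eq:quadratic-dual}, use the fractional Leibniz (Kato--Ponce) rule
\[
 \|\la D\ra^\gamma(fg)\|_{L_x^{4/3}}
 \lesssim \|\la D\ra^\gamma f\|_{L^{p_1}}\|g\|_{L^{q_1}}
 +\|f\|_{L^{q_1}}\|\la D\ra^\gamma g\|_{L^{p_1}},
 \qquad \tfrac1{p_1}+\tfrac1{q_1}=\tfrac34 .
\]
\emph{Case $Y\cdot Y$.} The factor $Y\in L_T^4W^{\gamma,4}$ is exactly the $Y$-Strichartz component. The undifferentiated factor is placed in $L_T^\infty L^2$ if $\gamma$ derivatives fall on the other factor; this is compactly supported, and $H^{s_2}\subset L^2$. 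Hölder in time over $(4,\infty)$ to $4/3$ gives the factor $T^{1/2}$.
\emph{Cases involving $X$.} The $X$ norm gives $X\in L_T^8W^{s_1-1/4,8/3}$ and $X\in C_TH^{s_1}\subset C_TL^{6/(3-2s_1)}$. When $\gamma$ derivatives fall on $X$, we need $\gamma\le s_1-1/4$ up to an $\varepsilon$, i.e.\ $s_2-1/2<s_1-1/4$. This holds because $s_2<s_1+1/4$. If the gap leaves no room, we instead split $\la D\ra^\gamma$ between the factors and use Sobolev embedding of $H^{s_1}$ into $W^{s_1-1/4,8/3}$-type spaces at fixed time. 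Compact support lets us lower the integrability of the partner factor to the required $q_1$ with $\tfrac38+\tfrac1{q_1}=\tfrac34$, i.e.\ $q_1=8/3$. This is supplied by $L_T^\infty L^2\cap L_T^\infty H^{s_1}\subset L^{8/3}$, which holds since $s_1>1/4\ge 3(1/2-3/8)$. Hölder in time with exponents $8$ and $\infty$ (or $8$ and $8$) yields at least $T^{1/4}$ after bounding $L^{4/3}_t$ norms.

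For \eqref{eq:quadratic-Besov} we do not need derivatives: since $\sigma-1<-3/4$ (because $\sigma<1/4$), Sobolev embedding gives $L^{4/3}\subset H^{-3/4}\subset B_{2,\infty}^{\sigma-1}$ on $\R^3$. So $\|\chi AB\|_{L_T^1B^{\sigma-1}_{2,\infty}}\lesssim T^{1/4}\|\chi AB\|_{L_T^{4/3}L_x^{4/3}}$, and the latter is the $\gamma=0$ case of the estimate above. This is the step where the hypothesis $\sigma<1/4$ is used. Bilinearity gives the difference bounds by $AB-\widetilde A\widetilde B=(A-\widetilde A)B+\widetilde A(B-\widetilde B)$.

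The main obstacle is the exponent bookkeeping in the mixed $X\cdot X$ and $X\cdot Y$ cases. Here the full $\gamma=s_2-1/2$ derivatives must land on a factor with only $s_1-1/4$ derivatives in $L^{8/3}$. This is exactly where $s_2<s_1+1/4$ and $s_1>1/4+\betastar$ from \eqref{eq:parameter-window} enter. The completed little-Besov spaces are handled by proving everything for finite-block paths and invoking Lemma~\ref{lem:finite-block-extension}.
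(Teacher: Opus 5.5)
Your strategy is the paper's: fractional Leibniz fed by the Strichartz components, H\"older in time, and $L^{4/3}\hookrightarrow B^{-3/4}_{2,\infty}\hookrightarrow b^{\sigma-1}_{2,\infty}$ (using $\sigma<\tfrac14$) for \eqref{eq:quadratic-Besov}. However, one step fails as written.

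In the $X\cdot X$ case you put the undifferentiated factor in $L^{8/3}_x$ through the energy norm. You justify this by claiming $H^{s_1}\subset L^{8/3}$ because $s_1>\tfrac14\ge 3(\tfrac12-\tfrac38)$. In fact $3(\tfrac12-\tfrac38)=\tfrac38>\tfrac14$, and the window \eqref{eq:parameter-window} permits $s_1<\tfrac38$. Indeed, the explicit choice $s_1=\tfrac14+\betastar+2\varepsilon$ in Lemma~\ref{lem:exponent-window-nonempty} always satisfies $s_1<\tfrac38$. For such $s_1$ one only has $H^{s_1}\subset L^{6/(3-2s_1)}$ with $6/(3-2s_1)<\tfrac83$. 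Compact support can lower a Lebesgue exponent but never raise it, so the energy component does not give $X(t)\in L^{8/3}$. The same slip underlies your fallback embedding ``$H^{s_1}\subset W^{s_1-1/4,8/3}$'': Sobolev embedding into an $L^{8/3}$-based space costs $3/8$ derivatives, not $1/4$. That branch is never needed anyway, since $\gamma<s_1-\tfrac14$ strictly.

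The repair is your own ``8 and 8'' option. Take the undifferentiated factor from $L_T^8W^{s_1-1/4,8/3}\subset L_T^8L^{8/3}$, valid since $s_1-\tfrac14>0$. Then $\|\langle D\rangle^\gamma(\chi XX)(t)\|_{L^{4/3}_x}\lesssim\|X(t)\|_{W^{s_1-1/4,8/3}}^2$, which lies in $L^4_T$ and gives $T^{1/2}$; this is exactly the paper's $XX$ estimate.

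With this change your proof is correct and differs from the paper only cosmetically. You apply Kato--Ponce directly at $L^{4/3}_x$ and draw the undifferentiated factor from the energy norms in the $YY$ and $XY$ cases: $Y\in L^2$, $X\in L^2$, and $Y\in H^{s_2}\subset L^{8/3}$, all legitimate since $s_2>\tfrac12$. The paper instead uses only the Strichartz components at the natural exponents $L^{4/3}$, $L^{8/5}$, $L^2$, and then lowers to $L^{4/3}$ by Lemma~\ref{lem:compact-p-lowering}. Your variant actually yields better time powers, e.g.\ $T^{1/2}$ for $YY$.

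One small citation point: Lemma~\ref{lem:physical-cutoff-almost-diagonal} covers only $L^2$-based spaces. Boundedness of multiplication by $\chi$ on $W^{\gamma,p}$ is the standard fact for Bessel potential spaces, not that lemma.
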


\begin{proof}
Put $\alpha_X=s_1-1/4$.  The parameter condition gives
\[
  0<\gamma=s_2-\frac12<\alpha_X.
\]
The fractional Leibniz rule and the Strichartz components of the solution
norms give the following three space--time estimates before lowering the
spatial Lebesgue exponent:
\begin{align*}
 XX:&\quad
 \|XX\|_{L_T^4W^{\gamma,4/3}}
 \lesssim\|X\|_{L_T^8W^{\alpha_X,8/3}}^2,\\
 XY:&\quad
 \|XY\|_{L_T^{8/3}W^{\gamma,8/5}}
 \lesssim
 \|X\|_{L_T^8W^{\alpha_X,8/3}}
 \|Y\|_{L_T^4W^{\gamma,4}},\\
 YY:&\quad
 \|YY\|_{L_T^2W^{\gamma,2}}
 \lesssim\|Y\|_{L_T^4W^{\gamma,4}}^2.
\end{align*}
Multiplication by $\chi$ is bounded on each displayed Sobolev space.
Lemma~\ref{lem:compact-p-lowering} then lowers the spatial exponents
$8/5$ and $2$ to $4/3$ after multiplication by $\chi$.  Finally,
\[
 L_T^4\hookrightarrow L_T^{4/3},\qquad
 L_T^{8/3}\hookrightarrow L_T^{4/3},\qquad
 L_T^2\hookrightarrow L_T^{4/3}
\]
with respective factors $T^{1/2}$, $T^{3/8}$, and $T^{1/4}$.  Thus the time powers in the dual conic norm are
\begin{equation}\label{eq:quadratic-dual-powers}
 XX:T^{1/2},\qquad XY:T^{3/8},\qquad YY:T^{1/4}.
\end{equation}
The smallest one proves \eqref{eq:quadratic-dual} uniformly for all three
products.

For the Besov estimate, Bernstein gives the sharp local embedding
\[
 L_x^{4/3}\hookrightarrow B_{2,\infty}^{-3/4}.
\]
The strict inequality $\sigma<1/4$ implies
$B_{2,\infty}^{-3/4}\hookrightarrow b_{2,\infty}^{\sigma-1}$: indeed the
additional negative dyadic power forces the high-frequency tail to vanish.
H\"older in time from $L_T^{4/3}$ to $L_T^1$ supplies another factor
$T^{1/4}$.  Hence the Besov-source powers are
\begin{equation}\label{eq:quadratic-besov-powers}
 XX:T^{3/4},\qquad XY:T^{5/8},\qquad YY:T^{1/2}.
\end{equation}
For $T\le1$ all six powers are bounded by the common factor
$T^{1/4}$ used in \eqref{eq:quadratic-dual}--\eqref{eq:quadratic-Besov}.
Bilinearity proves the difference bounds.
\end{proof}

For a triple $(a;b,c)\in\mathfrak L$, the corresponding regular
input is $\mathscr U_{b^\perp}^\rho$.  We write
\begin{equation}\label{eq:regular-input-E-norm}
 \|\mathscr U^\rho\|_{E_T^{2,\sigma}}
 :=\sum_{a\in\mathfrak C}
   \|\mathscr U_a^\rho\|_{E_T^{2,\sigma}}.
\end{equation}

\begin{lemma}[Operator and cubic terms]
\label{lem:operators-and-cubic}
For $\mathscr U_a^\rho=\rho V_a+X_a+Y_a$ one has
\begin{align}
 \sum_{(a;b,c)\in\mathfrak L}
 \|T^{a;b,c}(\mathscr U_{b^\perp}^\rho)\|_{L_T^1H^{s_2-1}}
 &\le T M_T^{\mathrm{op},H}(\Xi)
 \|\mathscr U^\rho\|_{E_T^{2,\sigma}},
 \label{eq:operator-H-bound}\\
 \sum_{(a;b,c)\in\mathfrak L}
 \|T^{a;b,c}(\mathscr U_{b^\perp}^\rho)\|_{L_T^1B_{2,\infty}^{\sigma-1}}
 &\le\left(T M_{T,\mathrm{diag}}^{\mathrm{op},B}(\Xi)
 +M_{T,\mathrm{cen}}^{\mathrm{op},B}(\Xi)\right)
 \|\mathscr U^\rho\|_{E_T^{2,\sigma}},
 \label{eq:operator-B-bound}\\
 \sum_{a\in\mathfrak C}\|\Gamma_a^{\chi,\rho}\|_{\mathfrak R_T^{s_2,\sigma}}
 &\lesssim T\sum_{a\in\mathfrak C}\left(
 \|\Gamma_a^{\chi,\rho}\|_{C_T\mathcal C^{-\beta_{\Gamma,a}-\kappa}}
 +\|\Gamma_a^{\chi,\rho}\|_{L_T^\infty B_{2,\infty}^{-\beta_{\Gamma,a}-\kappa}}
 \right).
 \label{eq:cubic-source-bound}
\end{align}
All three bounds have Lipschitz analogues for differences of the components of
$\Xi$ and of the inputs.
\end{lemma}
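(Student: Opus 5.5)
The three bounds follow directly from the operator and distribution norms collected in $M_T(\Xi)$, together with the elementary comparison $\|f\|_{L_T^1Z}\le T\|f\|_{L_T^\infty Z}$ for any Banach space $Z$. No new stochastic input is needed; the work consists in matching each summand with the norm in \eqref{eq:Xi-size} that controls it.

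For \eqref{eq:operator-H-bound}, fix $(a;b,c)\in\mathfrak L$. By Lemma~\ref{lem:complete-and-E} the input $\mathscr U_{b^\perp}^\rho$ lies in $E_T^{2,\sigma}$. Definition~\ref{def:Xi-assumptions}(iv) then places $T^{a;b,c}(\mathscr U_{b^\perp}^\rho)$ in $C_TH^{s_2-1}$, with norm at most $\|T^{a;b,c}\|_{E_T^{2,\sigma}\to C_TH^{s_2-1}}\|\mathscr U_{b^\perp}^\rho\|_{E_T^{2,\sigma}}$. Here the restriction to $[0,T]$ is the intrinsic causal restriction of Lemma~\ref{lem:time-extension-causality}. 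Passing from $C_T$ to $L_T^1$ gives the factor $T$. Summing over the four labels and bounding each input norm by the total \eqref{eq:regular-input-E-norm} yields the claim with the constant $M_T^{\mathrm{op},H}(\Xi)$ from \eqref{eq:M-op-H}.

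For \eqref{eq:operator-B-bound}, use the decomposition \eqref{eq:stochastic-operator}. On diagonal labels, $\mathcal D^{a;b}$ maps into $L_T^\infty B_{2,\infty}^{\sigma-1}$, so the $L^1$ norm costs a factor $T$ and produces $TM_{T,\mathrm{diag}}^{\mathrm{op},B}$. The centered parts $\mathcal B^{a;b,c}$ already map into $L_T^1B_{2,\infty}^{\sigma-1}$ and contribute $M_{T,\mathrm{cen}}^{\mathrm{op},B}$ directly, with no factor of $T$. The triangle inequality over the four labels finishes this bound.

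For \eqref{eq:cubic-source-bound}, recall that $\mathfrak R_T^{s_2,\sigma}=L_T^1H^{s_2-1}\cap L_T^1B_{2,\infty}^{\sigma-1}$. The Besov part follows from $-\beta_{\Gamma,a}-\kappa>\sigma-1$, which holds since $\sigma<1/4$ and $\beta_{\Gamma,a}+\kappa<1/2$; the embedding $B_{2,\infty}^{-\beta_{\Gamma,a}-\kappa}\hookrightarrow B_{2,\infty}^{\sigma-1}$ together with the $L^1\le TL^\infty$ comparison gives it.

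The Sobolev part is the only step needing care. Since $\Gamma_a^{\chi,\rho}=\chi\Gamma_a+\mathcal R_a^{\chi,\rho}$ is supported in a fixed compact set, I would apply Lemma~\ref{lem:compact-support-continuity} with $\alpha=\beta=-\beta_{\Gamma,a}-\kappa$ and $r=s_2-1$. This requires $s_2-1<-\beta_{\Gamma,a}-\kappa$, which is exactly $s_2<1-\beta_{\Gamma,a}-\kappa$. That inequality follows from $\kappa<(1-s_2-\betagamma)/2$ in \eqref{eq:parameter-window} together with $\beta_{\Gamma,a}\le\betagamma$. The lemma gives the $C_TH^{s_2-1}$ norm controlled by the two displayed norms, and a further factor $T$ again comes from $L^1\le TL^\infty$.

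Finally, every estimate above is linear in the operator or distribution and linear in the input. The Lipschitz analogues therefore follow by writing $T\,U-\widetilde T\,\widetilde U=(T-\widetilde T)U+\widetilde T(U-\widetilde U)$ and applying the same bounds to each piece, using the difference form of Lemma~\ref{lem:complete-and-E}. The compact-support continuity lemma also holds for differences of paths with a common support. I expect no genuine obstacle; the only delicate points are the exponent check in the Sobolev part of the cubic bound and keeping track of intrinsic restrictions to $[0,T]$.
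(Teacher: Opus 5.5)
Your proposal is correct and follows essentially the same route as the paper: the $C_TH^{s_2-1}$ and $L_T^\infty B_{2,\infty}^{\sigma-1}$ operator outputs become $L_T^1$ sources at the cost of a factor $T$, and the centered Besov output is used directly in $L_T^1$. The cubic coordinate is placed in both source spaces by lowering its Besov exponent, and the Lipschitz versions come from replacing one factor at a time.

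The only minor difference is in the $H^{s_2-1}$ part of the cubic bound. There the paper uses the global embedding $B_{2,\infty}^{-\beta_{\Gamma,a}-\kappa}\hookrightarrow H^{-\beta_{\Gamma,a}-\kappa-\delta}\hookrightarrow H^{s_2-1}$, which needs neither the compact support of $\Gamma_a^{\chi,\rho}$ nor its H\"older norm. Your appeal to Lemma~\ref{lem:compact-support-continuity} is also valid, since the support lies in $\supp\chi$, and it gives time continuity in $H^{s_2-1}$ as a by-product.
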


\begin{proof}
The Sobolev operator output is initially in $C_TH^{s_2-1}$ and is converted to
an $L_T^1$ source by the explicit factor $T$.  The deterministic diagonal has
the stronger $L_T^\infty B_{2,\infty}^{\sigma-1}$ output and is treated in the
same way.  The centered Besov component is already a direct $L_T^1$ source and
is locally small by Corollary~\ref{cor:full-centered}.  Finally, choose
$\delta>0$ so small that $-\beta_{\Gamma,a}-\kappa-\delta>s_2-1$.  The $B_{2,\infty}^{-\beta_{\Gamma,a}-\kappa}$ component of the stochastic
data gives
\[
 B_{2,\infty}^{-\beta_{\Gamma,a}-\kappa}\hookrightarrow H^{-\beta_{\Gamma,a}-\kappa-\delta}
 \hookrightarrow H^{s_2-1},
 \qquad
 B_{2,\infty}^{-\beta_{\Gamma,a}-\kappa}\hookrightarrow
 B_{2,\infty}^{\sigma-1},
\]
where the last embedding uses $-\beta_{\Gamma,a}-\kappa>\sigma-1$.  Thus the terms $\Gamma_a^{\chi,\rho}$
belong to both displayed source spaces, and time integration gives the
factor $T$.  The $C_T\mathcal C^{-\beta_{\Gamma,a}-\kappa}$ component supplies the
stated path topology.
\end{proof}

\subsection{The fixed point and stability}

Let $y_{\W}=(u_0,u_1)$ and $y_{\K}=(v_0,v_1)$ satisfy
\eqref{eq:data-space}.  We use
\begin{equation}\label{eq:data-norm}
 \|y\|_{\mathcal H^{s_2,\sigma}}
 :=\sum_{a\in\mathfrak C}\left(
 \|y_{a,0}\|_{H^{s_2}}+\|y_{a,1}\|_{H^{s_2-1}}
 +\|y_{a,0}\|_{B_{2,\infty}^{\sigma}}
 +\|y_{a,1}\|_{B_{2,\infty}^{\sigma-1}}\right).
\end{equation}
This norm is finite under \eqref{eq:data-space} because $\sigma<s_2$.

For two tuples $\Xi,\widetilde\Xi$ and two sets of Cauchy data, define
\begin{equation}\label{eq:Xi-distance}
 d_T\bigl((\Xi,y),(\widetilde\Xi,\widetilde y)\bigr)
 :=M_T(\Xi-\widetilde\Xi)
  +\|y-\widetilde y\|_{\mathcal H^{s_2,\sigma}}.
\end{equation}
Here the difference of the operator components is measured in the operator
norms appearing in \eqref{eq:M-op-H}--\eqref{eq:M-op-cen-B}.

The extension construction in Lemma~\ref{lem:time-extension-causality} gives,
for any two causal operator components $\mathcal A,\widetilde{\mathcal A}$,
\begin{equation}\label{eq:restriction-of-operator-difference}
 \|\mathcal A_{[0,T]}-\widetilde{\mathcal A}_{[0,T]}\|
 \lesssim
 \|\mathcal A-\widetilde{\mathcal A}\|_{[0,T_0]},
\end{equation}
with the corresponding domain and target norms.  For a zero-history interval
$I$, zero extension to the left gives the same inequality with constant one.
The distributional components and weighted pseudolocal terms restrict by ordinary norm
monotonicity.  Consequently, the short-interval moduli pass uniformly to cutoff limits.

For $Z=(X_{\W},X_{\K},Y_{\W},Y_{\K})\in\mathcal Z_T$, put
$\mathscr U_a^\rho=\rho V_a+X_a+Y_a$.  Define $\mathscr P_{\Xi,y}Z=\widetilde Z$ by
\begin{align}
 \widetilde X_a
 &:=I_a\bigl(\mathscr F_X^{\chi,\rho}(Z)\bigr),
 \label{eq:mild-X}\\
 \widetilde Y_a
 &:=S_a(t)y_a+I_a\bigl(\mathscr F_Y^{\chi,\rho}(Z)\bigr),
 \qquad a\in\mathfrak C.
 \label{eq:mild-Y}
\end{align}
Here the resonant terms are understood through
\eqref{eq:localized-Y-source}; their finite-cutoff form is given by
\eqref{eq:full-resonant-split}.

The estimates of
\cref{lem:rough-products,lem:V-products,lem:deterministic-quadratic,lem:operators-and-cubic}
control all terms in
$\mathscr F_X^{\chi,\rho}$ and $\mathscr F_Y^{\chi,\rho}$ after the
expansion $\mathscr U_a^\rho=\rho V_a+X_a+Y_a$.  The operator and cubic contributions
are measured by the first two lines of \eqref{eq:mu-full}; every remaining
term carries an explicit positive power of $T$ and is bounded by the final
line of \eqref{eq:mu-full}.  The separated-support remainders are included in
$M_T^{\mathrm{ps}}$.

\begin{proposition}[Uniform fixed-point estimate]
\label{prop:fixed-point-estimate}
Let $\mathfrak K$ satisfy Definition~\ref{def:localized-smallness-class}
with bound $A$.
For every $R\ge1$ there is a nondecreasing polynomial $P_R$ such that, for
$(\Xi,y),(\widetilde\Xi,\widetilde y)\in\mathfrak K$ and
$\|Z\|_{\mathcal Z_T},\|\widetilde Z\|_{\mathcal Z_T}\le R$,
\begin{align}
 \|\mathscr P_{\Xi,y}Z\|_{\mathcal Z_T}
 &\le C(A+A^2)+\mu_T(\Xi,y)P_R,
 \label{eq:self-map-estimate}\\
 \|\mathscr P_{\Xi,y}Z-
 \mathscr P_{\Xi,y}\widetilde Z\|_{\mathcal Z_T}
 &\le \mu_T(\Xi,y)P_R
 \|Z-\widetilde Z\|_{\mathcal Z_T}.
 \label{eq:contraction-estimate}
\end{align}
For simultaneous perturbations of $\Xi$ and the Cauchy data,
\begin{align}
 &\|\mathscr P_{\Xi,y}Z-
 \mathscr P_{\widetilde\Xi,\widetilde y}\widetilde Z\|_{\mathcal Z_T}
 \notag\\
 &\quad\le \nu_{\mathfrak K}(T)P_R
 \|Z-\widetilde Z\|_{\mathcal Z_T}
 +C_{A,R}\,d_T\big((\Xi,y),(\widetilde\Xi,\widetilde y)\big),
 \label{eq:perturbed-fixed-point}
\end{align}
where $d_T$ is defined in \eqref{eq:Xi-distance}.
\end{proposition}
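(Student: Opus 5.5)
The plan is to run the mild formulation \eqref{eq:mild-X}--\eqref{eq:mild-Y} through the linear estimates \eqref{eq:X-linear}--\eqref{eq:besov-propagation}, which are uniform by Proposition~\ref{prop:parameter-uniform-strichartz}. The $X$ components are controlled by $\|\mathscr F_X^{\chi,\rho}(Z)\|_{\mathfrak R_T^{s_1,\sigma}}$ through \eqref{eq:X-linear} and \eqref{eq:besov-propagation}. The $Y$ components are controlled by three pieces: the free flow $S_a(t)y_a$, which gives at most $C\|y\|_{\mathcal H^{s_2,\sigma}}\le CA$ by the energy estimate, Strichartz, and shellwise energy; the $\mathfrak R_T^{s_2,\sigma}$ part of $\mathscr F_Y^{\chi,\rho}$, through \eqref{eq:Y-linear-rough}; and the $\mathfrak D_T^{s_2}\cap L_T^1B_{2,\infty}^{\sigma-1}$ part, through \eqref{eq:Y-linear-dual}. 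Because $\mathfrak S_T^{s_2,\sigma}$ is a sum space, the first step is to fix an explicit decomposition of $\mathscr F_Y^{\chi,\rho}(Z)$. Only the $XX$, $XY$ and $YY$ products go into the $\mathfrak D$ piece. Every other term goes into $\mathfrak R_T^{s_2,\sigma}$.

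Next, expand $\mathscr U_a^\rho=\rho V_a+X_a+Y_a$ in \eqref{eq:localized-X-source} and \eqref{eq:localized-Y-source}. Each resulting term is then matched to an earlier estimate.
\begin{itemize}
\item The low--high terms in $\mathscr F_X$ are handled by \eqref{eq:lowhigh-estimate}.
\item The high--low terms $(\rho V)\succ\Psi$, $X\succ\Psi$, $Y\succ\Psi$ and the classical resonant term $Y\circ\Psi$ are handled by \eqref{eq:V-highlow}--\eqref{eq:Y-random-products}.
\item Quadratic terms containing a first-Picard factor are handled by Lemma~\ref{lem:V-products}. Here $\mathscr U_{a}^\rho$ is placed in $C_TH^\lambda\cap L^\infty_TB^\sigma_{2,\infty}$ with $\lambda=s_1$ or $s_2$.
\item The $XX$, $XY$, $YY$ products are handled by Lemma~\ref{lem:deterministic-quadratic}.
\item The operator and cubic terms are handled by Lemma~\ref{lem:operators-and-cubic}, with $\|\mathscr U^\rho\|_{E_T^{2,\sigma}}\lesssim A+R$ from \eqref{eq:regular-input-in-E}.
\end{itemize}
Every term except the pure-stochastic ones then has the form (a factor appearing in $\mu_T$) times a polynomial in $A$ and $R$. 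The factor in $\mu_T$ is one of three things: $T^\delta$ times powers of $M^{\mathrm{bd}}_{T_0}$; the operator quantity $\mu^{\mathrm{op}}_T$; or $T$ times the cubic norms. Two terms do not depend on $Z$: $\Gamma_a^{\chi,\rho}$ and the $T(\rho V)\succ\Psi$ and $V_{\W}V_{\K}$ parts. These are also bounded by $\mu_T$ times a constant, and so is the $V$-contribution to the operator terms. The data term supplies the $C(A+A^2)$ in \eqref{eq:self-map-estimate}. The $A^2$ allows for cruder bounds on the $Z$-independent quadratic stochastic terms, which are bounded by $M_T^2\le A^2$ if one does not want to extract a power of $T$. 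One also has to check that $M_T(\Xi|_{[0,T]})\lesssim M_{T_0}(\Xi)$. For the operator components this uses \eqref{eq:restriction-of-operator-difference}. Collecting the polynomial in $A$ and $R$ and absorbing $A\le$ const (fixed by the class) gives $P_R$.

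For \eqref{eq:contraction-estimate}, the data term and the $Z$-independent stochastic terms cancel in the difference. Each remaining term is linear or bilinear in $(X,Y)$, so its difference is estimated using the stated Lipschitz forms of the same lemmas. Bilinear terms are split as $AB-\tilde A\tilde B=(A-\tilde A)B+\tilde A(B-\tilde B)$, and each piece again carries a factor from $\mu_T$. For \eqref{eq:perturbed-fixed-point}, the difference is split as
\[
\bigl(\mathscr P_{\Xi,y}Z-\mathscr P_{\Xi,y}\widetilde Z\bigr)+\bigl(\mathscr P_{\Xi,y}\widetilde Z-\mathscr P_{\widetilde\Xi,\widetilde y}\widetilde Z\bigr).
\]
The first bracket is bounded by \eqref{eq:contraction-estimate} with $\mu_T\le\nu_{\mathfrak K}(T)$. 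The second is multilinear in the difference $\Xi-\widetilde\Xi$ with the other factors bounded by $A$ and $R$, plus the linear free-flow difference, which together give $C_{A,R}d_T$.

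I expect the main obstacle to be the bookkeeping around the sum space $\mathfrak S_T^{s_2,\sigma}$ and the $\mathfrak T_{J_{\mathrm{ps}}}$ pseudolocal remainders. The infimum norm has to be bounded by one fixed decomposition, and for differences that decomposition must be chosen consistently so that the $\mathfrak D$ piece stays bilinear. The separated-support terms $\chi(\cdot\star(1-\rho)\Psi)$ must be shown to carry a factor $T$ and $M^{\mathrm{ps}}$, which follows from Lemma~\ref{lem:separated-support-smoothing}. One must also confirm that every $Z$-dependent term really carries a small factor. The only one without an explicit power of $T$ is the centered operator, and its smallness is exactly $M^{\mathrm{op},B}_{T,\mathrm{cen}}$ inside $\mu_T^{\mathrm{op}}$.
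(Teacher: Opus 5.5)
Your proposal is correct and takes essentially the same route as the paper. Like the paper, you expand $\mathscr U_a^\rho=\rho V_a+X_a+Y_a$ in the two source functionals, match each term to Lemmas~\ref{lem:rough-products}, \ref{lem:V-products}, \ref{lem:deterministic-quadratic} and \ref{lem:operators-and-cubic}, pass to the solution norms through the linear estimates, and get the Lipschitz bounds by replacing one factor at a time. Your explicit sum-space decomposition of $\mathscr F_Y^{\chi,\rho}$, the restriction bound $M_T(\Xi|_{[0,T]})\lesssim M_{T_0}(\Xi)$, and the two-bracket split for \eqref{eq:perturbed-fixed-point} only spell out steps the paper's proof leaves implicit.
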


\begin{proof}
The two low--high terms
in $\mathscr F_X$ are bounded by \eqref{eq:lowhigh-estimate}; applying
\eqref{eq:X-linear} and \eqref{eq:besov-propagation} gives the $X$ part of
\eqref{eq:self-map-estimate}, with coefficient
$T^\delta M_{T_0}^{\mathrm{bd}}$.

In $\mathscr F_Y$, split the rough stochastic products according to the
identity $\mathscr U=\rho V+X+Y$.  The $V\succ\Psi$, $\chi(X\succ\Psi)$, and
$\chi(Y\succ\Psi+Y\circ\Psi)$ terms are respectively
\eqref{eq:V-highlow}, \eqref{eq:X-highlow}, and
\eqref{eq:Y-random-products}.  Products in $\mathscr U_{\W}^\rho\mathscr U_{\K}^\rho$ that contain a
first Picard factor are covered by Lemma~\ref{lem:V-products}; the remaining
$XX$, $XY$, and $YY$ terms are controlled by
Lemma~\ref{lem:deterministic-quadratic} followed by
\eqref{eq:Y-linear-dual} and \eqref{eq:besov-propagation}.  These estimates produce
only the explicit $T$ or $T^\delta$ terms in \eqref{eq:mu-full}.

For the four resonant substitutions, Lemma~\ref{lem:complete-and-E} puts every
input $\mathscr U_a^\rho$ in $E_T^{2,\sigma}$.  The Sobolev output of $T^{a;b,c}$
is converted from $C_TH^{s_2-1}$ to $L_T^1H^{s_2-1}$ by $T$; the deterministic
Besov diagonal is converted from $L_T^\infty$ to $L_T^1$ by $T$; and the
centered Besov output is already measured in the direct $L_T^1$ operator norm.
These are the three terms in $\mu_T^{\mathrm{op}}$.  The terms $\Gamma_a^{\chi,\rho}$
contribute the second line of \eqref{eq:mu-full} by
\eqref{eq:cubic-source-bound}.  The homogeneous $Y$ flow is bounded by the
Cauchy-data component of $A$.  Collecting the terms independent of $Z$ gives
$C(A+A^2)$, and collecting all terms containing $Z$ gives
$\mu_T(\Xi,y)P_R$.  This proves \eqref{eq:self-map-estimate}.

For \eqref{eq:contraction-estimate}, every bilinear product is subtracted by
replacing one factor at a time.  For $T^{a;b,c}$ use the identity
\[
 T(f)-\widetilde T(\widetilde f)
 =(T-\widetilde T)f+\widetilde T(f-\widetilde f),
\]
and use the analogous linear identity for the cubic term.  This yields
the same source decomposition with one solution factor replaced by $Z-\widetilde Z$, hence
the coefficient $\mu_TP_R$.  Allowing $\Xi$ and the Cauchy data to vary leaves, in addition, the componentwise distance
\eqref{eq:Xi-distance}; the uniform bound in Definition~\ref{def:localized-smallness-class} gives
the constant $C_{A,R}$.  This proves \eqref{eq:perturbed-fixed-point}.
\end{proof}

\begin{proposition}[Continuity of the localized source maps]
\label{prop:source-map-continuity}
Fix $0<T\le T_0$, an adapted localization pair $(\chi,\rho)$, and constants
$A,R\ge1$.  Let $\Xi,\widetilde\Xi$ satisfy Definition~\ref{def:Xi-assumptions} on $[0,T]$ and assume
\[
 M_T(\Xi)+M_T(\widetilde\Xi)\le A,
\]
and let $Z,\widetilde Z\in\mathcal Z_T$ satisfy
$\|Z\|_{\mathcal Z_T}+\|\widetilde Z\|_{\mathcal Z_T}\le R$.  To display the dependence on $\Xi$, write
$\mathscr F_{X;\Xi}^{\chi,\rho}$ and
$\mathscr F_{Y;\Xi}^{\chi,\rho}$ for the functionals in
\eqref{eq:localized-X-source}--\eqref{eq:localized-Y-source}.  Then
\begin{align}
 &\bigl\|\mathscr F_{X;\Xi}^{\chi,\rho}(Z)
       -\mathscr F_{X;\widetilde\Xi}^{\chi,\rho}(\widetilde Z)
  \bigr\|_{\mathfrak R_T^{s_1,\sigma}}
 \notag\\
 &\quad+
 \bigl\|\mathscr F_{Y;\Xi}^{\chi,\rho}(Z)
       -\mathscr F_{Y;\widetilde\Xi}^{\chi,\rho}(\widetilde Z)
  \bigr\|_{\mathfrak S_T^{s_2,\sigma}}
 \notag\\
 &\qquad\le C_{A,R,\chi,\rho}
 \left(M_T(\Xi-\widetilde\Xi)
       +\|Z-\widetilde Z\|_{\mathcal Z_T}\right).
 \label{eq:source-map-continuity}
\end{align}
Consequently, if in addition
$\Theta,\widetilde\Theta\in
 C_T\mathcal C_{\mathrm{loc}}^{-1-\betasum-\kappa}$, then the source triples satisfy
\begin{align}
 &\|\chi(\Theta-\widetilde\Theta)\|_{
       C_T\mathcal C^{-1-\betasum-\kappa}}
 +\bigl\|\mathscr F_{X;\Xi}^{\chi,\rho}(Z)
       -\mathscr F_{X;\widetilde\Xi}^{\chi,\rho}(\widetilde Z)
  \bigr\|_{\mathfrak R_T^{s_1,\sigma}}
 \notag\\
 &\quad+
 \bigl\|\mathscr F_{Y;\Xi}^{\chi,\rho}(Z)
       -\mathscr F_{Y;\widetilde\Xi}^{\chi,\rho}(\widetilde Z)
  \bigr\|_{\mathfrak S_T^{s_2,\sigma}}
 \notag\\
 &\qquad\le C_{A,R,\chi,\rho}
 \left(
 \|\chi(\Theta-\widetilde\Theta)\|_{
       C_T\mathcal C^{-1-\betasum-\kappa}}
 +M_T(\Xi-\widetilde\Xi)
 +\|Z-\widetilde Z\|_{\mathcal Z_T}
 \right).
 \label{eq:source-triple-continuity}
\end{align}
\end{proposition}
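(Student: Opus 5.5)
The estimate \eqref{eq:source-map-continuity} follows by expanding both source functionals as finite sums of multilinear expressions and telescoping term by term. In \eqref{eq:localized-X-source} and \eqref{eq:localized-Y-source} substitute $\mathscr U_a^\rho=\rho V_a+X_a+Y_a$. This writes $\mathscr F_{X;\Xi}^{\chi,\rho}(Z)$ and $\mathscr F_{Y;\Xi}^{\chi,\rho}(Z)$ as finite sums of terms of three kinds:
\begin{itemize}
\item[(i)] bilinear expressions in one stochastic component of $\Xi$ and one coordinate of $Z$ (or a second component of $\Xi$), such as $\chi(U\prec\Psi_b)$, $\chi((\rho V)\succ\Psi_b)$, $\chi(X\succ\Psi_b)$, $\chi\mathfrak Y_b(Y)$, and the products in $\mathscr U_{\W}^\rho\mathscr U_{\K}^\rho$;
\item[(ii)] the linear terms $\Gamma_b^{\chi,\rho}$;
\item[(iii)] the operator terms $T^{a;b,c}(\mathscr U_{b^\perp}^\rho)$.
\end{itemize}
For each such term I would use $\mathcal M(f,g)-\mathcal M(\widetilde f,\widetilde g)=\mathcal M(f-\widetilde f,g)+\mathcal M(\widetilde f,g-\widetilde g)$, and $T(f)-\widetilde T(\widetilde f)=(T-\widetilde T)f+\widetilde T(f-\widetilde f)$ for the operators. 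Every difference then contains exactly one factor $\Xi-\widetilde\Xi$ or $Z-\widetilde Z$ (for $\rho V$, a difference of $\Xi$-components). The remaining factors are bounded by $A$ or $R$. The resulting coefficient is a polynomial in $A,R$ times $T^{\delta}$ or $1$, and $T\le1$ absorbs the time powers.

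The term-by-term bounds are already available in their Lipschitz forms.
\begin{itemize}
\item The low--high and rough products go to $\mathfrak R_T^{s_1,\sigma}$ or $\mathfrak R_T^{s_2,\sigma}$ by \cref{lem:rough-products}. The pseudolocal pieces $(1-\rho)\Psi_b$ are controlled through \cref{lem:separated-support-smoothing}, with their differences recorded in $M_T^{\mathrm{ps}}(\Xi-\widetilde\Xi)$.
\item Products with a first Picard factor go to $\mathfrak R_T^{s_2,\sigma}$ by \cref{lem:V-products}.
\item The $XX$, $XY$, $YY$ products go to $\mathfrak D_T^{s_2}\cap L_T^1B_{2,\infty}^{\sigma-1}$ by \cref{lem:deterministic-quadratic}. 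They therefore lie in the sum space $\mathfrak S_T^{s_2,\sigma}$, whose infimum norm is bounded by the norm of this particular decomposition.
\item The cubic and operator terms are handled by \cref{lem:operators-and-cubic}. The input differences are measured in $E_T^{2,\sigma}$ via \cref{lem:complete-and-E}, applied to the linear map $(V,X,Y)\mapsto\rho V+X+Y$. This gives $\|\mathscr U^\rho-\widetilde{\mathscr U}^\rho\|_{E_T^{2,\sigma}}\lesssim M_T(\Xi-\widetilde\Xi)+\|Z-\widetilde Z\|_{\mathcal Z_T}$.
\item The operator differences are exactly the operator norms entering $M_T(\Xi-\widetilde\Xi)$, namely \eqref{eq:M-op-H}--\eqref{eq:M-op-cen-B}.
\end{itemize}

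The main point to check is that every term lands in the correct target with a consistent split. The $X$-source contains only low--high terms and must lie in $\mathfrak R_T^{s_1,\sigma}$. The $Y$-source must lie in $\mathfrak S_T^{s_2,\sigma}$, and only the deterministic quadratic terms may use the $\mathfrak D_T^{s_2}$ summand. One also has to confirm that $M_T(\cdot)$ applied to a difference tuple is well defined, since the bounds in the lemmas are stated for tuples satisfying Definition~\ref{def:Xi-assumptions}. Because each estimate is multilinear in the components, applying it with one slot filled by a difference is legitimate.

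Finally, \eqref{eq:source-triple-continuity} follows by adding the trivially Lipschitz first coordinate $\chi(\Theta-\widetilde\Theta)$, measured in its own norm, to \eqref{eq:source-map-continuity}.
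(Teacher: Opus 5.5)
Your proposal is correct and follows the paper's proof essentially step for step. The paper likewise telescopes each bilinear source term one factor at a time and bounds the input difference in $E_T^{2,\sigma}$ via Lemma~\ref{lem:complete-and-E}. It then applies the difference forms of Lemmas~\ref{lem:rough-products}, \ref{lem:V-products}, \ref{lem:deterministic-quadratic} and \ref{lem:separated-support-smoothing}, uses the identity $(T-\widetilde T)f+\widetilde T(f-\widetilde f)$ for the operators and linearity for the cubic coordinate, and finally adds the independent $\chi\Theta$ coordinate.
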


\begin{proof}
Put
\[
 \mathscr U_a^\rho=\rho V_a+X_a+Y_a,
 \qquad
 \widetilde{\mathscr U}_a^\rho
 =\rho\widetilde V_a+\widetilde X_a+\widetilde Y_a.
\]
Lemma~\ref{lem:complete-and-E}, applied also to differences, gives
\begin{equation}\label{eq:regular-input-difference}
 \sum_{a\in\mathfrak C}
 \|\mathscr U_a^\rho-\widetilde{\mathscr U}_a^\rho
 \|_{E_T^{2,\sigma}}
 \lesssim M_T(\Xi-\widetilde\Xi)
 +\|Z-\widetilde Z\|_{\mathcal Z_T}.
\end{equation}
The corresponding undifferenced inputs are bounded by $C_{A,R}$ in
$E_T^{2,\sigma}$.

For the $X$-source, subtract each low--high product as
\[
 \mathscr U_{b^\perp}^\rho\prec\Psi_b
 -\widetilde{\mathscr U}_{b^\perp}^\rho\prec\widetilde\Psi_b
 = (\mathscr U_{b^\perp}^\rho
    -\widetilde{\mathscr U}_{b^\perp}^\rho)\prec\Psi_b
 +\widetilde{\mathscr U}_{b^\perp}^\rho
    \prec(\Psi_b-\widetilde\Psi_b).
\]
The difference form of Lemma~\ref{lem:rough-products}, together with
\eqref{eq:regular-input-difference}, bounds the sum of these terms in
$\mathfrak R_T^{s_1,\sigma}$ by the right-hand side of
\eqref{eq:source-map-continuity}.

For the non-operator part of the $Y$-source, use the same two-factor
subtraction for the high--low and resonant stochastic products.  For the
regular quadratic term use
\[
 \mathscr U_{\W}^\rho\mathscr U_{\K}^\rho
 -\widetilde{\mathscr U}_{\W}^\rho
  \widetilde{\mathscr U}_{\K}^\rho
 = (\mathscr U_{\W}^\rho-
    \widetilde{\mathscr U}_{\W}^\rho)\mathscr U_{\K}^\rho
 +\widetilde{\mathscr U}_{\W}^\rho
  (\mathscr U_{\K}^\rho-
    \widetilde{\mathscr U}_{\K}^\rho).
\]
The difference estimates in
Lemmas~\ref{lem:rough-products}, \ref{lem:V-products}, and
\ref{lem:deterministic-quadratic} place these contributions in
$\mathfrak S_T^{s_2,\sigma}$ with the required bound.  The separated-support
pieces are estimated by the difference form of
Lemma~\ref{lem:separated-support-smoothing}; their norms are included in
$M_T(\Xi-\widetilde\Xi)$.

For a triple $(a;b,c)$, the identity
\begin{align*}
 &T_{\Xi}^{a;b,c}(\mathscr U_{b^\perp}^\rho)
 -T_{\widetilde\Xi}^{a;b,c}
   (\widetilde{\mathscr U}_{b^\perp}^\rho)\\
 &\qquad=(T_{\Xi}^{a;b,c}-T_{\widetilde\Xi}^{a;b,c})
       (\mathscr U_{b^\perp}^\rho)
 +T_{\widetilde\Xi}^{a;b,c}
       (\mathscr U_{b^\perp}^\rho
        -\widetilde{\mathscr U}_{b^\perp}^\rho)
\end{align*}
combines the operator-difference component of $M_T(\Xi-\widetilde\Xi)$
with \eqref{eq:regular-input-difference}.  The cubic contribution is linear
in the coordinate $\Gamma_a^{\chi,\rho}$, so its difference is controlled directly by
the corresponding component of $M_T(\Xi-\widetilde\Xi)$.  Summing the finite
set of source terms proves \eqref{eq:source-map-continuity}.
Adding the independent $\chi\Theta$ coordinate gives
\eqref{eq:source-triple-continuity}.
\end{proof}

\begin{theorem}[Deterministic fixed point]
\label{thm:deterministic-closure}
Let $\mathfrak K$ satisfy Definition~\ref{def:localized-smallness-class}
with bound $A$.
There is $T_*=T_*(A,\nu_{\mathfrak K})>0$ such that, for every
$(\Xi,y)\in\mathfrak K$ and $0<T\le T_*$, the map
$\mathscr P_{\Xi,y}$ has a unique fixed point in $\mathcal Z_T$.  The fixed points depend locally Lipschitz continuously on
$(\Xi,y)$ on the common interval $[0,T_*]$.
\end{theorem}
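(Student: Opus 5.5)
The plan is a direct Banach contraction argument on a closed ball of $\mathcal Z_T$, built on the uniform estimates of Proposition~\ref{prop:fixed-point-estimate}. Fix $A$ and the family $\mathfrak K$. Set the radius $R:=2C(A+A^2)+1$, where $C$ is the constant in \eqref{eq:self-map-estimate}. This choice is made before any smallness is imposed, so $R$ depends only on $A$. Let $P_R$ be the polynomial supplied by Proposition~\ref{prop:fixed-point-estimate}. Since $\nu_{\mathfrak K}(T)\to0$ by \eqref{eq:family-smallness}, we may choose $T_*\in(0,T_0]$ with
\[
 \nu_{\mathfrak K}(T)P_R\le\tfrac12\min\{1,R/2\}\qquad(0<T\le T_*).
\]
This is possible because $\mu_T(\Xi|_{[0,T]},y)\le\nu_{\mathfrak K}(T)$ for every $(\Xi,y)\in\mathfrak K$.

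\emph{Self-map and contraction.} Here \eqref{eq:self-map-estimate} gives $\|\mathscr P_{\Xi,y}Z\|_{\mathcal Z_T}\le R/2+R/2\le R$ on the ball, and \eqref{eq:contraction-estimate} gives Lipschitz constant at most $1/2$. We still have to check that $\mathscr P_{\Xi,y}$ actually maps into $\mathcal Z_T$, meaning the completed little-Besov spaces. This follows from the source estimates: every source lies in $\mathfrak R_T^{s_1,\sigma}$ or $\mathfrak S_T^{s_2,\sigma}$, and the linear estimates \eqref{eq:X-linear}--\eqref{eq:besov-propagation} map these into $\widetilde X_T^{s_1,\sigma}$ and $\widetilde Y_T^{s_2,\sigma}$. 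By Lemma~\ref{lem:finite-block-extension}, the uniform tail conditions are preserved. The space is complete by Lemma~\ref{lem:complete-and-E}, so Banach's theorem gives a unique fixed point in the ball.

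\emph{Uniqueness in all of $\mathcal Z_T$.} This needs separate care, since the contraction holds only on the ball. Take any two fixed points $Z,\widetilde Z\in\mathcal Z_T$ with norms at most some $R'$, and let $T_1$ be the supremum of times $t\le T$ on which they agree. Restrictions are intrinsic by causality (Lemma~\ref{lem:time-extension-causality}). On a short interval $[T_1,T_1+h]$, both $Z$ and $\widetilde Z$ are zero-history perturbations of the common past. We apply the difference estimate on that interval using the translated-interval moduli: \eqref{eq:centered-operator-smallness} for the centered operators, and the factors $h$ or $h^\delta$ for all other terms. The operator norms in \eqref{eq:uniform-interval-operator-bounds} are bounded uniformly over intervals. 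This makes the Lipschitz constant with radius $R'$ less than one for small $h$, which contradicts maximality unless $T_1=T$.

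I expect this continuation step to be the main obstacle. It requires checking that every term of $\mu$ has a translated-interval analogue, because the cubic and rough-product terms have explicit powers of $|I|$, while the operator terms use $\omega_{\mathrm{cen},\Xi}$. The step can alternatively be organized as a Gronwall/bootstrap on $t\mapsto\|Z-\widetilde Z\|_{\mathcal Z_t}$.

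\emph{Local Lipschitz dependence.} For $(\Xi,y),(\widetilde\Xi,\widetilde y)\in\mathfrak K$, let $Z$ and $\widetilde Z$ be the two fixed points in the ball. By \eqref{eq:perturbed-fixed-point},
\[
\|Z-\widetilde Z\|_{\mathcal Z_T}\le\tfrac12\|Z-\widetilde Z\|_{\mathcal Z_T}+C_{A,R}\,d_T\big((\Xi,y),(\widetilde\Xi,\widetilde y)\big),
\]
so $\|Z-\widetilde Z\|_{\mathcal Z_T}\le 2C_{A,R}\,d_T$. Restriction monotonicity \eqref{eq:restriction-of-operator-difference} lets us replace $d_T$ by $d_{T_*}$ up to a constant, uniformly on $[0,T_*]$.
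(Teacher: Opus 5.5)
Your proposal is correct and follows the paper's proof: the same radius $R=2C(A+A^2)+1$, a choice of $T_*$ from $\nu_{\mathfrak K}(T)\to0$, self-map and contraction from \eqref{eq:self-map-estimate}--\eqref{eq:contraction-estimate}, completeness from Lemma~\ref{lem:complete-and-E}, and Lipschitz dependence by absorbing the first term of \eqref{eq:perturbed-fixed-point}. Your continuation step for uniqueness in all of $\mathcal Z_T$ is the translated zero-history interval argument that the paper states separately as Proposition~\ref{prop:interval-local-uniqueness}; the paper runs it over a finite partition of $[0,T]$ rather than through a supremum of agreement times.
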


\begin{proof}
Choose $R\ge2C(A+A^2)+1$.  By \eqref{eq:family-smallness}, choose $T_*$ so
that $\nu_{\mathfrak K}(T_*)P_R\le1/4$.  Then
\eqref{eq:self-map-estimate} maps the radius-$R$ ball to itself and
\eqref{eq:contraction-estimate} is a strict contraction.  Completeness is
Lemma~\ref{lem:complete-and-E}.  This first gives uniqueness in the
fixed-point ball.  Proposition~\ref{prop:interval-local-uniqueness} below
rules out any additional fixed point in $\mathcal Z_T$.
The perturbative estimate \eqref{eq:perturbed-fixed-point} and absorption of
the first term give local Lipschitz dependence.
\end{proof}

\begin{proposition}[Uniqueness on successive time intervals]
\label{prop:interval-local-uniqueness}
Let $\Xi$ satisfy Definition~\ref{def:Xi-assumptions}, and let $Z,\widetilde Z\in\mathcal Z_T$ be two
fixed points with the same Cauchy data.  Then $Z=\widetilde Z$ on $[0,T]$.
Let $I=[t_0,t_1]\subset[0,T]$ and suppose the two solutions agree
on $[0,t_0]$.  With $R$ larger than their two $\mathcal Z_T$ norms, the
translated difference satisfies
\begin{equation}\label{eq:interval-local-difference}
 \|Z-\widetilde Z\|_{\mathcal Z_I}
 \le \varpi_{\Xi,R}(|I|)
 \|Z-\widetilde Z\|_{\mathcal Z_I},
\end{equation}
where, with $\delta_{\mathrm{loc}}:=\min\{\delta,1/4\}>0$,
\begin{equation}\label{eq:interval-contraction-modulus}
 \varpi_{\Xi,R}(h)
 \le C_{\Xi,R}h^{\delta_{\mathrm{loc}}}
 +C_R\omega_{\mathrm{cen},\Xi}(h)
 \longrightarrow0
 \qquad(h\downarrow0).
\end{equation}
Here $\mathcal Z_I$ denotes the translated solution norm on
$I$, with zero Cauchy data at the left endpoint for the difference.
\end{proposition}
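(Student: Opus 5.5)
The plan is to first prove the interval estimate \eqref{eq:interval-local-difference} and then deduce global uniqueness on $[0,T]$ by a continuation argument. Fix $I=[t_0,t_1]$ and assume $Z=\widetilde Z$ on $[0,t_0]$. Set $W:=Z-\widetilde Z$ and restrict to $I$. Because the two fixed points agree up to $t_0$, the Cauchy data of $W$ at $t_0$ vanish in both the energy and the Besov components; the latter follows from Lemma~\ref{lem:completed-besov-traces}. The homogeneous flows therefore cancel. By Duhamel's formula, $W|_I$ is the image under $I_a$, started at $t_0$, of the difference of the two source functionals. The contribution of $[0,t_0]$ to the Duhamel integral cancels, since the sources agree there; this uses the causality of every term, including the operators (Lemma~\ref{lem:time-extension-causality}). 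Each regular input difference $\mathscr U_a^\rho-\widetilde{\mathscr U}_a^\rho=X_a-\widetilde X_a+Y_a-\widetilde Y_a$ then lies in $E_{I,0}^{2,\sigma}$, with norm controlled by $\|W\|_{\mathcal Z_I}$ through Lemma~\ref{lem:complete-and-E}. The $\rho V_a$ contributions cancel because $\Xi$ is the same for both fixed points.

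Next I would estimate the difference source term by term on $I$, following the proof of Proposition~\ref{prop:fixed-point-estimate}, but with $T$ replaced by $|I|$ and all time integrals taken over $I$. The bilinear terms carry explicit time powers. Lemma~\ref{lem:rough-products} gives factors $|I|$ or $|I|^\delta$. Lemma~\ref{lem:V-products} gives $|I|$. Lemma~\ref{lem:deterministic-quadratic}, whose Hölder-in-time powers hold on any interval, gives $|I|^{1/4}$. In each case one factor is $W$ and the other is bounded by $R$ or by $M_{T_0}(\Xi)$, which produces $C_{\Xi,R}|I|^{\delta_{\mathrm{loc}}}$. The linear estimates \eqref{eq:X-linear}--\eqref{eq:besov-propagation} are translation invariant and uniform for $|I|\le1$, so they transfer these source bounds to $\mathcal Z_I$. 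The cubic term $\Gamma_a^{\chi,\rho}$ does not depend on $Z$ and cancels.

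The operator terms are where I expect the main difficulty. For each label, $T^{a;b,c}_I$ acts on a zero-history input. Its Sobolev output is bounded through \eqref{eq:uniform-interval-operator-bounds} in $C_IH^{s_2-1}$, which gives a factor $|I|$ in $L^1(I;H^{s_2-1})$. The diagonal Besov part is handled the same way, using the $L^\infty(I;B^{\sigma-1}_{2,\infty})$ bound in \eqref{eq:uniform-interval-operator-bounds} and again gaining a factor $|I|$. The centered Besov part has no explicit time factor; it is controlled by $\omega_{\mathrm{cen},\Xi}(|I|)$ from \eqref{eq:centered-interval-modulus}, and this tends to zero by \eqref{eq:centered-operator-smallness}. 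The care required is in checking that the zero-history restriction agrees with the restriction of the global operator applied to the global input difference. This holds because the input difference vanishes on $[0,t_0]$ and the operators are causal. Summing the contributions gives \eqref{eq:interval-local-difference} with the modulus \eqref{eq:interval-contraction-modulus}.

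Finally, choose $h>0$ with $\varpi_{\Xi,R}(h)\le1/2$. Let $t^*:=\sup\{t\le T: Z=\widetilde Z\text{ on }[0,t]\}$; agreement at the initial time comes from the common Cauchy data and the zero data for $X$. Continuity of the paths gives agreement on $[0,t^*]$. If $t^*<T$, apply the interval estimate on $[t^*,\min(t^*+h,T)]$. It gives $\|W\|_{\mathcal Z_I}\le\frac12\|W\|_{\mathcal Z_I}$, hence $W=0$ there, which contradicts maximality. Therefore $Z=\widetilde Z$ on $[0,T]$. Since $h$ depends only on $\Xi$ and $R$, finitely many steps cover $[0,T]$.
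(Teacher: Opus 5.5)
Your proposal is correct and follows essentially the same route as the paper. You repeat the source estimates on the translated interval with zero-history operator inputs: explicit powers $|I|^\delta$, $|I|$, $|I|^{1/4}$ for the products, a factor $|I|$ from the uniform interval operator bounds, and $\omega_{\mathrm{cen},\Xi}(|I|)$ for the centered part. You then propagate equality from time zero, and the only difference is cosmetic: you use a maximal-time (supremum) continuation argument where the paper uses a finite partition of $[0,T]$ into intervals of length $h_0$ with $\varpi_{\Xi,R}(h_0)<1$.
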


\begin{proof}
Because the two solutions agree before $t_0$, every component of their
difference has zero value and zero first time derivative at $t_0$.  The
corresponding operator inputs therefore lie in
$E_{I,0}^{2,\sigma}$.  Repeat the source estimates on the translated interval.
The rough stochastic products gain at least the positive power $|I|^\delta$
fixed in Definition~\ref{def:local-smallness-modulus}; products containing a
first Picard factor and the pseudolocal smoothing tails gain the explicit
factor $|I|$.  The endpoint quadratic estimates give the powers listed in
\eqref{eq:quadratic-dual-powers} and
\eqref{eq:quadratic-besov-powers}, whose minimum is $|I|^{1/4}$.
The deterministic Sobolev output and the diagonal Besov output acquire
an additional factor $|I|$ when converted to $L^1(I)$, uniformly by
\eqref{eq:uniform-interval-operator-bounds}.  The centered contribution is bounded by
$\omega_{\mathrm{cen},\Xi}(|I|)$.  The scalar linear
estimates are invariant under time translation.  Subtracting the two mild
systems therefore gives \eqref{eq:interval-local-difference} and
\eqref{eq:interval-contraction-modulus}.

Choose $h_0>0$ so that $\varpi_{\Xi,R}(h_0)<1$.  Equation
\eqref{eq:interval-local-difference} implies equality on every interval of
length at most $h_0$ once equality is known at its left endpoint.  A finite partition of $[0,T]$ into such intervals propagates equality from
time zero to time $T$.  Thus the argument applies to arbitrary fixed points in
the solution class, independently of the contraction ball.
\end{proof}

\begin{corollary}[Causal consistency and gluing of horizons]
\label{cor:horizon-gluing}
Fix a tuple $\Xi$ satisfying Definition~\ref{def:Xi-assumptions} and fix the Cauchy data.  If $0<T_1<T_2$ and fixed points
$Z^{(T_j)}\in\mathcal Z_{T_j}$ exist, then
\begin{equation}\label{eq:deterministic-horizon-consistency}
 \mathfrak R_{T_1}Z^{(T_2)}=Z^{(T_1)}.
\end{equation}
Consequently, let $\tau$ be measurable and suppose that for every rational
$q\in(0,1]$ a strongly measurable fixed point $Z^{(q)}$ is given on
$\{q<\tau\}$, extended by $0$ outside this event.  Then these fixed points
glue uniquely to a solution on $[0,\tau)$ in the sense of
Definition~\ref{def:random-interval-solution}.  For every deterministic
$T\in(0,1]$, its restriction to $[0,T]$ on $\{T<\tau\}$ admits a strongly
measurable $\mathcal Z_T$-valued representative.
\end{corollary}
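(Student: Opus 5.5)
The first assertion \eqref{eq:deterministic-horizon-consistency} will follow from uniqueness, Proposition~\ref{prop:interval-local-uniqueness}, once we know that $\mathfrak R_{T_1}Z^{(T_2)}$ is itself a fixed point on $[0,T_1]$. We check this first. Every source term in \eqref{eq:localized-X-source}--\eqref{eq:localized-Y-source} is causal. The paraproducts, the regular products and the cubic coordinate act pointwise in time. The operators $T^{a;b,c}$, $\mathcal D^{a;b}$ and $\mathcal B^{a;b,c}$ are causal, and their restrictions are intrinsic by Lemma~\ref{lem:time-extension-causality}. The Duhamel maps $I_a$ and the homogeneous flows $S_a(t)y_a$ are causal as well. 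Restriction is continuous from $\mathcal Z_{T_2}$ to $\mathcal Z_{T_1}$ by norm monotonicity, and \eqref{eq:restriction-of-operator-difference} gives the corresponding statement for the operator components. Hence $\mathfrak R_{T_1}\mathscr P_{\Xi,y}Z=\mathscr P_{\Xi|_{[0,T_1]},y}\mathfrak R_{T_1}Z$ for every $Z\in\mathcal Z_{T_2}$, so $\mathfrak R_{T_1}Z^{(T_2)}$ is a fixed point in $\mathcal Z_{T_1}$ with the same Cauchy data. Proposition~\ref{prop:interval-local-uniqueness} applies in the whole class $\mathcal Z_{T_1}$, not only in a contraction ball, and gives \eqref{eq:deterministic-horizon-consistency}.

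For the gluing we argue $\omega$ by $\omega$. For rationals $q_1<q_2$ and $\omega\in\{q_2<\tau\}\subset\{q_1<\tau\}$, both $Z^{(q_1)}(\omega)$ and $Z^{(q_2)}(\omega)$ are fixed points, so the first part yields the compatibility \eqref{eq:random-horizon-compatibility}. The reconstructed fields \eqref{eq:local-reconstruction} and the sources \eqref{eq:local-source} are built by causal operations from $Z$ and $\Xi$, so they inherit the same restriction compatibility. Uniqueness of the glued family holds because any other family of fixed points must agree with it on each $\{q<\tau\}$, again by Proposition~\ref{prop:interval-local-uniqueness}.

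For a deterministic $T$, we define on $\{T<\tau\}$ the representative
\[
 Z_T:=\sum_{j}\mathbf 1_{A_j}\,\mathfrak R_T Z^{(q_j)},
 \qquad
 A_j:=\{q_j<\tau\}\setminus\bigcup_{i<j}\{q_i<\tau\},
\]
and set $Z_T:=0$ off $\{T<\tau\}$. Here $(q_j)$ is an enumeration of $\mathbb Q\cap(T,1]$. The sets $A_j$ are measurable, since $\tau$ is measurable. They are disjoint, and their union is $\{T<\tau\}$, because a rational lies in $(T,\tau)$ whenever $T<\tau$. Each summand is strongly measurable: $\mathfrak R_T:\mathcal Z_{q_j}\to\mathcal Z_T$ is a bounded linear map, and $\mathcal Z_T$ is separable by Lemma~\ref{lem:complete-and-E}. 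A countable sum over disjoint measurable pieces therefore stays strongly measurable. By the compatibility above, $Z_T$ does not depend on the chosen rational $q\in(T,\tau)$, hence not on the enumeration.

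The one delicate point is the commutation identity $\mathfrak R_{T_1}\circ\mathscr P=\mathscr P\circ\mathfrak R_{T_1}$ for the operator terms. It relies on the causality in Lemma~\ref{lem:time-extension-causality}, which makes the restriction of $T^{a;b,c}$ independent of the extension, together with the fact that all stochastic components of $\Xi$ are realized once on $[0,T_0]$. Given both, the identity is immediate.
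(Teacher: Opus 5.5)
Your proposal is correct and follows essentially the same route as the paper. In both, causality of the source maps and of the restricted operators makes $\mathfrak R_{T_1}Z^{(T_2)}$ a fixed point on $[0,T_1]$, and full-class uniqueness from Proposition~\ref{prop:interval-local-uniqueness} gives consistency. The deterministic-horizon representative is the same disjoint rational selector $\sum_j\mathbf 1_{A_j}\mathfrak R_TZ^{(q_j)}$ over an enumeration of $\mathbb Q\cap(T,1]$, and your commutation identity $\mathfrak R_{T_1}\mathscr P_{\Xi,y}=\mathscr P_{\Xi|_{[0,T_1]},y}\mathfrak R_{T_1}$ spells out what the paper states in one line.
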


\begin{proof}
Causality of the Duhamel maps and of every restriction of $T^{a;b,c}$ shows
that $\mathfrak R_{T_1}Z^{(T_2)}$ satisfies the same fixed-point equation as
$Z^{(T_1)}$ on $[0,T_1]$.  Full-class uniqueness from
Proposition~\ref{prop:interval-local-uniqueness} gives
\eqref{eq:deterministic-horizon-consistency}.  The reconstruction and source
functionals are causal, so their restrictions satisfy the corresponding
compatibility identities.

For measurability, enumerate the deterministic-horizon selectors as follows.
Enumerate $\mathbb Q\cap(T,1]$ as $(q_m)_{m\ge1}$ and define the disjoint
measurable sets
\[
 A_m:=\{T<q_m<\tau\}\setminus
 \bigcup_{\ell<m}\{T<q_\ell<\tau\}.
\]
Their union is $\{T<\tau\}$.  Extend every $Z^{(q_m)}$ by $0$ off
$\{q_m<\tau\}$ and set
\begin{equation}\label{eq:measurable-rational-horizon-selector}
 Z^{[T]}:=\sum_{m\ge1}\mathbf 1_{A_m}\,
 \mathfrak R_T Z^{(q_m)}.
\end{equation}
At most one summand is nonzero, so $Z^{[T]}$ is strongly measurable.  Equation
\eqref{eq:deterministic-horizon-consistency} makes it independent of the
enumeration and of the selected rational horizon.  This proves the asserted
gluing and measurability.
\end{proof}

\subsection{Spectral approximation and localization}
\label{sec:noise-to-solution}

Fix a compact set $K$ and an adapted localization pair $(\chi,\rho)$.  For
the stochastic terms relevant to the nonlinear source, define
\begin{align}\label{eq:stochastic-source-distance}
 &d_T^{\Theta}
 \bigl((\Theta,\Xi),(\widetilde\Theta,\widetilde\Xi)\bigr)
 \notag\\
 &\qquad:=M_T(\Xi-\widetilde\Xi)
 +\|\chi(\Theta-\widetilde\Theta)\|_{
 C_T\mathcal C^{-1-\betasum-\kappa}}.
\end{align}
The second term is used only when identifying the limit of the nonlinear
source.

For one admissible spectral cutoff, set
\begin{equation}\label{eq:Xi-cutoff}
 \Xi_\Lambda:=
 \left(
 (\Psi_{a,\Lambda},V_{a,\Lambda},\partial_tV_{a,\Lambda},
  \Gamma_{a,\Lambda}^{\chi,\rho})_{a\in\mathfrak C},
 (\mathcal D_\Lambda^{a;b})_{(a;b,b)\in\mathfrak L_{\mathrm{diag}}},
 (\mathcal B_\Lambda^{a;b,c})_{(a;b,c)\in\mathfrak L}
 \right).
\end{equation}
We consider this tuple together with $\Theta_\Lambda$ when proving
convergence of the nonlinear source.

\begin{proposition}[Cutoff factors in the finite-cutoff construction]
\label{prop:cutoff-component-table}
For every finite-cutoff component used in the solution map, the stochastic
cutoff factors occur only at the frequency legs listed below.  The difference
from the uncut component is a finite sum in which at least one listed factor
is replaced by $m_\Lambda-1$.  Hence every such difference is supported where
one listed frequency is at least $c_0\Lambda$.

\begin{center}
\normalfont\small
\renewcommand{\arraystretch}{1.2}
\begin{tabularx}{\textwidth}{@{}P{0.18\textwidth}P{0.29\textwidth}Y@{}}
\toprule
component & active stochastic cutoff legs & estimate used for the tail\\
\midrule
$\Psi_{a,\Lambda}$
& output/noise frequency $\xi$
& \Cref{lem:baseline-cutoff-tails}\\
$\Theta_\Lambda$
& $\eta$ and $\xi-\eta$
& \Cref{lem:baseline-cutoff-tails}\\
$V_{a,\Lambda}$, $\partial_tV_{a,\Lambda}$
& the two input legs $\eta$, $\xi-\eta$; an auxiliary comparison may also
  carry the output leg $\xi$
& \Cref{lem:first-picard-tail-integral,thm:first-picard-fullspace}\\
$\Gamma_{a,\Lambda}^{\chi,\rho}$
& third chaos: $\eta,\zeta,r$ and optional $\eta+\zeta,\xi$;
  first chaos: $r,-r,\xi$ and optional $\xi-r,\xi$
& \Cref{prop:cubic-third-spectral,lem:cubic-first-cutoff-tail,cor:localized-cubic-terms}\\
$\mathcal D_\Lambda^{a;b}$
& the contracted pair $\ell,-\ell$
& \Cref{prop:localized-diagonal}\\
$\mathcal B_\Lambda^{a;b,c}$
& the two Gaussian legs $\ell,r$ in the covariance synthesis maps
& \Cref{prop:principal-centered-block,lem:far-output,thm:resonant-operators}\\
$\zeta_{a,\Lambda}^{\chi,\rho}$
& only the legs inherited from $\xi_{a,\Lambda}$,
  $\Psi_{a,\Lambda}$, $\Theta_\Lambda$, and $V_{a,\Lambda}$
& the identity \eqref{eq:auxiliary-forcing-expanded} and the preceding rows\\
$\mathscr F_{X,\Lambda}^{\chi,\rho}$,
$\mathscr F_{Y,\Lambda}^{\chi,\rho}$
& only the legs of their displayed stochastic coordinates and operators;
  there is no projection of the full nonlinear source
& \Cref{prop:source-map-continuity,prop:finite-cutoff-reconstruction}\\
\bottomrule
\end{tabularx}
\end{center}

For a fixed-profile dyadic family, the distributional components converge
almost surely along the full sequence and the operator components converge in
the pathwise operator topologies of
\Cref{thm:resonant-operators}.  For an arbitrary admissible cofinal sequence,
the distributional components converge in every finite probability moment,
the centered operator components converge in the stated $L^p(\Omega)$
operator norms, and the resulting solution data converge in probability.  Two
non-nested cutoff families have the same limits.
\end{proposition}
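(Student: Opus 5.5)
The plan is to verify the table row by row. Each row is checked against the explicit finite-cutoff formula for that component, and the convergence assertions are then collected from the results cited in the third column. The structural claim comes first. The cutoff is introduced only through $\xi_{a,\Lambda}=\pi_\Lambda\xi_a$ in \eqref{eq:cutoff-forcing-isonormal}, that is, through the symbol $m_\Lambda\mathfrak h_a$ applied to the isonormal leg. Every finite-cutoff component is therefore a multilinear expression in the Gaussian Fourier legs, and each leg carries exactly one factor $m_\Lambda(\cdot)$ evaluated at its own frequency. For $\Psi_{a,\Lambda}$ this is the noise frequency $\xi$. For $\Theta_\Lambda$ and $V_{a,\Lambda}=I_a\Theta_\Lambda$ the legs are the two inputs $\eta,\xi-\eta$; an output-leg factor enters only in an auxiliary projected comparison. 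For the cubic object one expands $V_{a^\perp,\Lambda}\circ\Psi_{a,\Lambda}$ by the colored Wick formula behind \eqref{eq:cubic-wick-decomposition}. The third chaos carries three legs, while the first-chaos contraction carries the pair $r,-r$ together with the surviving leg. The resonant operators $T^{a;b,c}_\Lambda$ carry the two Gaussian legs $\ell,r$, which the covariance synthesis maps $\mathsf S^{\Lambda}_{b,N,s},\mathsf S^{\Lambda}_{c,L,t}$ of Proposition~\ref{prop:localized-block-hilbert} absorb exactly. The contraction branch carries $m_\Lambda(\ell)m_\Lambda(-\ell)$, as in Proposition~\ref{prop:localized-diagonal}. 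Physical localizers, Duhamel kernels and Bony symbols are deterministic and cutoff-free. Formula \eqref{eq:auxiliary-forcing-expanded} shows that $\zeta^{\chi,\rho}_{a,\Lambda}$ inherits only the legs of $\xi_{a,\Lambda},\Psi_{a,\Lambda},\Theta_\Lambda,V_{a,\Lambda}$. Finally, \eqref{eq:localized-X-source-cutoff}--\eqref{eq:localized-Y-source-cutoff} are built from these coordinates together with the cutoff-free remainder $Z_\Lambda$, so no projection of the full nonlinear source occurs.

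With the list of legs fixed, the difference statement follows from the telescoping identity of Lemma~\ref{lem:finite-product-cutoff}, applied with repeated legs listed by multiplicity. Each summand contains one factor $m_\Lambda(\zeta_\nu)-1$, which vanishes unless $|\zeta_\nu|\ge c_0\Lambda$ by \eqref{eq:cutoff-supports}. This gives the support claim. For two non-nested families one compares both with the uncut component through \eqref{eq:finite-product-two-cutoffs}.

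For the convergence assertions I would quote the listed results in order. Proposition~\ref{prop:baseline-regularities} with Lemma~\ref{lem:baseline-cutoff-tails} gives the moments and the dyadic almost-sure statements for $\Psi,\Theta$. The first Picard tail results cover $V,\partial_tV$, and Corollary~\ref{cor:localized-cubic-terms} with the cubic tail propositions covers $\Gamma^{\chi,\rho}$. Theorem~\ref{thm:resonant-operators} gives $\mathcal D,\mathcal B$ in the $L^p(\Omega)$ operator norms and full-sequence almost-sure convergence for fixed-profile dyadic families. One then intersects the countably many probability-one events, over components, localization pair, rational horizons and rational losses, to obtain one common event. For an arbitrary cofinal sequence, $L^p$ convergence of each component gives $d_T(\Xi_{\Lambda_n},\Xi)\to0$ in probability. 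Combined with strong convergence of the data (Lemma~\ref{lem:data-cutoff-convergence}), the Lipschitz stability of Theorem~\ref{thm:deterministic-closure} and the source continuity of Proposition~\ref{prop:source-map-continuity} transfer this to the solution data. Independence of the limits from the family follows because every cited convergence result asserts profile independence.

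The main obstacle is the cubic row. There the Wick expansion produces both a third-chaos branch and a first-chaos contraction, and the optional output and intermediate legs ($\eta+\zeta$, $\xi$, $\xi-r$) come from the auxiliary projections in the appendix estimates rather than from the actual finite-cutoff object. I would check first that these optional legs enter only in comparison quantities, so that the telescoping still involves only genuine Gaussian legs. A further check is that the localized remainder $\mathcal R^{\chi,\rho}_{a,\Lambda}$ inherits its legs exactly from $V_{a^\perp,\Lambda}$ and $\Psi_{a,\Lambda}$ through the bilinear bound \eqref{eq:localized-cubic-remainder-difference}.
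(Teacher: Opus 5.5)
Your proposal is correct and follows essentially the same route as the paper. You read the active legs off the explicit finite-cutoff formulas, including the repeated pair $r,-r$ and the contracted pair $\ell,-\ell$. You then apply the telescoping of Lemma~\ref{lem:finite-product-cutoff} with multiplicity, compare non-nested families through \eqref{eq:finite-product-two-cutoffs}, and import the convergence modes from the cited tail results and Theorem~\ref{thm:resonant-operators}.

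The one loose step is the transfer to the fixed points themselves. Convergence of those in probability does not follow directly from the deterministic Theorem~\ref{thm:deterministic-closure}, which requires a family with uniform short-time bounds. The paper obtains it later on the deterministic contraction events $\mathcal G_{A,T}$ in Proposition~\ref{prop:in-probability-cutoff-passage}. For the present statement, componentwise convergence of the tuple $(\Theta_\Lambda,\Xi_\Lambda,y_\Lambda)$ entering the solution map already suffices.
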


\begin{proof}
The displayed finite-cutoff formulas give the listed legs:
\eqref{eq:stochastic-convolutions}, \eqref{eq:theta-cutoff},
\eqref{eq:first-picard-def},
\eqref{eq:cubic-cutoff-multiplier}--
\eqref{eq:cubic-contracted-cutoff-multiplier}, and the kernels in
Sections~\ref{sec:wick-split}--\ref{sec:resonant-operators}.  Applying the
finite-product identity in Lemma~\ref{lem:finite-product-cutoff} to each
formula gives the asserted support property, including repeated legs with
multiplicity.  The cited tail estimates are uniform over admissible cutoff
profiles and are summable in their respective path or operator norms.

The forcing identity \eqref{eq:auxiliary-forcing-expanded} preserves the
stochastic frequency support of its displayed factors.  The finite-cutoff
source formulas \eqref{eq:localized-X-source-cutoff}--
\eqref{eq:localized-Y-source-cutoff} are assembled componentwise, so their
convergence follows from the corresponding tails and the continuity of the
source maps.  Fixed-profile dyadic
almost-sure convergence follows from the polynomial tail bounds and the
finite-state stabilization of fixed blocks.  For two arbitrary cutoff
families, \eqref{eq:finite-product-two-cutoffs} bounds the difference by the
sum of the two tails relative to the uncut object, proving the final
assertion.
\end{proof}

\begin{corollary}[Convergence of the localized cutoff data]
\label{cor:Xi-convergence}
For every deterministic $T_0\le1$, there exist a tuple $\Xi$ satisfying
Definition~\ref{def:Xi-assumptions} and a distribution $\Theta$ such that
\begin{equation}\label{eq:Xi-convergence}
 d_{T_0}^{\Theta}
 \bigl((\Theta_\Lambda,\Xi_\Lambda),(\Theta,\Xi)\bigr)
 \longrightarrow0
 \qquad\text{in probability}.
\end{equation}
The convergence holds in every finite probability moment for the distributional
components and in the stated operator modes for $T^{a;b,c}$.  A
fixed-profile dyadic family converges simultaneously almost surely on a common
event; a general admissible family converges in probability.  The localized
limit is independent of the admissible spectral profile.
\end{corollary}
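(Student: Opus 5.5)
The plan is to assemble \eqref{eq:Xi-convergence} componentwise from the convergence results already proved for each stochastic object, and then check that the limiting tuple satisfies every clause of Definition~\ref{def:Xi-assumptions}. The distance $d^{\Theta}_{T_0}$ is a finite sum of norms, so convergence in probability reduces to convergence of each summand in probability. Convergence in $L^p(\Omega)$ suffices for this, and so does almost-sure convergence along the dyadic family.

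First I would handle the distributional components.
\begin{itemize}
\item For $\rho\Psi_{a,\Lambda}$ and $\chi\Theta_\Lambda$, Proposition~\ref{prop:baseline-regularities} gives convergence in $L^p(\Omega;C_{T_0}\mathcal C^{\cdot})$.
\item The weighted pseudolocal norms in $M^{\mathrm{ps}}_{T_0}$ are covered by \eqref{eq:baseline-weighted-tempered-convergence}.
\item The sharpened regularities of $\rho V_a$ and $\rho\partial_tV_a$ in both $C_{T_0}\mathcal C$ and $L^\infty_{T_0}B_{2,\infty}$ come from the first Picard appendix (Theorem~\ref{thm:first-picard-fullspace} together with the tail integral lemma). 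The Besov component is obtained from the same block estimates via Lemma~\ref{lem:spectral-to-local-paths}.
\item The cubic coordinate converges by Corollary~\ref{cor:localized-cubic-terms}.
\end{itemize}
Independence of the profile follows from the two-cutoff tail estimates of Lemma~\ref{lem:finite-product-cutoff}, as summarized in Proposition~\ref{prop:cutoff-component-table}. The common almost-sure event for the dyadic family is obtained by intersecting countably many probability-one events, taken over rational losses, rational horizons and the compact cutoffs $\chi,\rho$.

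Next I would treat the operator components using Theorem~\ref{thm:resonant-operators}.
\begin{itemize}
\item $\mathcal D^{a;b}_\Lambda$ converges deterministically in $\mathcal L(E^{2,\sigma}_{T_0},C_{T_0}H^{s-1}\cap L^\infty B^{\sigma-1}_{2,\infty})$.
\item $\mathcal B^{a;b,c}_\Lambda$ converges in $L^p(\Omega)$ operator norm into $C_{T_0}H^{s-1}\cap L^1B^{\sigma-1}_{2,\infty}$.
\end{itemize}
I would take $s=s_2$. This is admissible because $s_2<1-\betagamma\le1-2\betastar\le1-\beta_{b,c}$, and similarly $2\betastar<\sigma<1-\beta_{b,c}$ holds by \eqref{eq:parameter-window}. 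Summing gives convergence of $M^{\mathrm{op},H}$, $M^{\mathrm{op},B}_{\mathrm{diag}}$ and $M^{\mathrm{op},B}_{\mathrm{cen}}$ of the differences. Restriction to $[0,T]$ is controlled by \eqref{eq:restriction-of-operator-difference}.

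Finally I would verify that the limit $\Xi$ satisfies Definition~\ref{def:Xi-assumptions}.
\begin{itemize}
\item Items (i)--(iii) are the limiting regularities established above.
\item Item (iv) follows from causality and from the uniform interval bounds \eqref{eq:operator-uniform-interval-bounds} and \eqref{eq:operator-uniform-diagonal-interval-bound}, which hold on one probability-one event.
\item Item (v) is \eqref{eq:operator-uniform-interval-smallness} and \eqref{eq:operator-local-smallness}.
\end{itemize}
The main obstacle is a measurability and null-set issue. Definition~\ref{def:Xi-assumptions} is pathwise, whereas the operators are constructed as $L^p$-limits. I would resolve this by taking the pathwise realization of the dyadic fixed-profile limit from Lemma~\ref{lem:centered-dyadic-bc} and Lemma~\ref{lem:far-output}, and redefining $\Xi$ as $0$ off the good event. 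For a general cofinal family, the $L^p$ limit coincides almost surely with this realization by profile independence, and that identification gives the claimed convergence in probability.
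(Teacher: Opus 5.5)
Your proposal is correct and follows the paper's proof. Like the paper, you assemble the finitely many components of $d_{T_0}^{\Theta}$ from the baseline, first-Picard, localized cubic and resonant-operator convergence results, get profile independence from the cutoff-tail reduction summarized in Proposition~\ref{prop:cutoff-component-table}, and build the common dyadic event as a countable intersection of Borel--Cantelli events. Two details the paper leaves implicit are ones you make explicit: that $s=s_2$ and $\sigma$ lie in the operator exponent range, and that the pathwise limit tuple satisfies items (i)--(v) of Definition~\ref{def:Xi-assumptions} on a single event of full measure.
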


\begin{proof}
For each $a\in\mathfrak C$, the convergence of
$\Psi_{a,\Lambda}$ and $\Theta_\Lambda$ follows from
Proposition~\ref{prop:baseline-regularities}.  Theorem~\ref{thm:first-picard-fullspace}
gives convergence of $V_{a,\Lambda}$ and $\partial_tV_{a,\Lambda}$ in every
norm entering $M_{T_0}$.  Corollary~\ref{cor:localized-cubic-terms} gives convergence of
$\Gamma_{a,\Lambda}^{\chi,\rho}$ to the localized limit
$\Gamma_a^{\chi,\rho}$, including the smoothing remainder in
\eqref{eq:localized-cubic-limit-split}.  Finally,
Theorem~\ref{thm:resonant-operators} gives convergence of the two deterministic
diagonal components and the four centered operator components in the
operator norms used in \eqref{eq:M-op-H}--\eqref{eq:M-op-cen-B}.
There are only finitely many components in \eqref{eq:Xi-cutoff}.
Adding their convergence to the convergence of $\chi\Theta_\Lambda$ gives
\eqref{eq:Xi-convergence}.  The same finite-sum argument gives
convergence in every finite moment for the distributional components.

The joint cutoff modes and the comparison of non-nested families are given by
Proposition~\ref{prop:cutoff-component-table}.  For the fixed-profile dyadic
statement, fix the exhaustion $K_n=\overline{B(0,n)}$, one adapted localization
pair for each $K_n$, rational horizons, rational strict losses, and integer
probability moments.  This is a countable collection, so the corresponding
Borel--Cantelli events have a common probability-one intersection.  On that
event every component of $\Xi_\Lambda$ converges along the full dyadic
sequence, hence so does the finite product metric
$d_{T_0}^{\Theta}$.  For any localization pair outside the reference exhaustion, the same statement
holds on its corresponding probability-one event.
\end{proof}

\begin{lemma}[Stability under deterministic approximation]
\label{lem:cutoff-localized-small}
Fix $T_0$, a localized limit $(\Xi,y)$, and a deterministic sequence of cutoff
data $(\Xi_n,y_n)$ such that
\[
 d_{T_0}((\Xi_n,y_n),(\Xi,y))\longrightarrow0.
\]
After discarding finitely many indices, the family
\[
 \{(\Xi,y)\}\cup\{(\Xi_n,y_n):n\ge n_0\}
\]
satisfies Definition~\ref{def:localized-smallness-class} on a sufficiently
short common subinterval of $[0,T_0]$.
\end{lemma}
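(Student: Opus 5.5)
The plan is to take $A:=M_{T_0}^{\mathrm{bd}}(\Xi,y)+1$ and to verify the two requirements of Definition~\ref{def:localized-smallness-class} directly. I would work on $[0,T_1]$ with $T_1\le T_0$ arbitrary, so "sufficiently short" costs nothing. The uniform bound uses the index-discarding. The smallness $\nu_{\mathfrak K}(T)\to0$ is proved for the whole tail family by an $\varepsilon$/finite-set argument that gives a genuine limit, not only a threshold.

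\emph{Uniform bound.} Every component of $M_T$ in \eqref{eq:Xi-size} is a norm or an operator norm, so the triangle inequality gives
\[
 M_{T_0}(\Xi_n)\le M_{T_0}(\Xi)+M_{T_0}(\Xi_n-\Xi).
\]
The same holds for $\|y_n\|_{\mathcal H^{s_2,\sigma}}$. Hence
\[
 M_{T_0}^{\mathrm{bd}}(\Xi_n,y_n)\le M_{T_0}^{\mathrm{bd}}(\Xi,y)+d_{T_0}\bigl((\Xi_n,y_n),(\Xi,y)\bigr),
\]
and choosing $n_0$ with $d_{T_0}<1$ for $n\ge n_0$ gives the bound $A$. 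Restriction to $[0,T]$ does not increase the distributional components. For the operator components, restriction is controlled by \eqref{eq:restriction-of-operator-difference} and Lemma~\ref{lem:time-extension-causality}; the absolute extension constant is absorbed into $A$.

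\emph{Smallness.} In $\mu_T(\Xi_n|_{[0,T]},y_n)$ from \eqref{eq:mu-full}, all terms except $M^{\mathrm{op},B}_{T,\mathrm{cen}}$ carry an explicit factor $T$ or $T^\delta$ times a polynomial in $A$. These terms tend to zero uniformly in $n\ge n_0$. It therefore remains to show
\[
 \sup_{n\ge n_0}\ \sum_{(a;b,c)\in\mathfrak L}\|\mathcal B^{a;b,c}_{n,[0,T]}\|_{\mathcal L(E_T^{2,\sigma},L_T^1B_{2,\infty}^{\sigma-1})}\longrightarrow0 .
\]
By \eqref{eq:restriction-of-operator-difference},
\[
 \|\mathcal B_{n,[0,T]}\|\le\|\mathcal B_{[0,T]}\|+C\,M_{T_0}(\Xi_n-\Xi).
\]
Fix $\varepsilon>0$ and choose $N_\varepsilon\ge n_0$ such that $C\,M_{T_0}(\Xi_n-\Xi)<\varepsilon$ for $n\ge N_\varepsilon$.
\begin{itemize}
\item For $n\ge N_\varepsilon$, the right-hand side is at most $\|\mathcal B_{[0,T]}\|+\varepsilon$. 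Here $\|\mathcal B_{[0,T]}\|\to0$ as $T\downarrow0$ by \eqref{eq:centered-operator-initial-smallness} for the limit $\Xi$.
\item For the finitely many indices $n_0\le n<N_\varepsilon$, each $\Xi_n$ satisfies Definition~\ref{def:Xi-assumptions}(v). For cutoff data this is the content of Theorem~\ref{thm:resonant-operators}, \eqref{eq:operator-local-smallness}. Hence each individual term tends to zero, and so does their finite maximum.
\end{itemize}
Thus $\limsup_{T\downarrow0}\nu_{\mathfrak K}(T)\le\varepsilon$ for every $\varepsilon>0$, which gives $\nu_{\mathfrak K}(T)\to0$.

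\emph{Main point.} The key step is the last one: the centered operator norm carries no explicit power of $T$. Its smallness for the family must combine the smallness of the limiting operator with operator-norm convergence $\Xi_n\to\Xi$. The finitely many remaining indices are then handled one at a time through their own initial smallness. I would also check that the restriction inequality \eqref{eq:restriction-of-operator-difference} is used with the $L^1_T$ Besov target, where the extension constant is uniform in $T$.
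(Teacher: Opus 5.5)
Your proof is correct and follows essentially the same route as the paper. The common bound on $M_{T_0}^{\mathrm{bd}}$ comes from $d_{T_0}$-convergence, every term of \eqref{eq:mu-full} except the centered Besov norm carries an explicit power of $T$, and that norm is handled by the restriction inequality \eqref{eq:restriction-of-operator-difference} together with the initial smallness \eqref{eq:centered-operator-initial-smallness} of the limit. Your $\varepsilon$/finite-set step, which treats the indices below $N_\varepsilon$ through their own property (v), makes precise a point the paper's proof leaves implicit; the paper additionally records a uniform translated-interval modulus, which the statement does not require.
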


\begin{proof}
Convergence in $d_{T_0}$ gives a common bound for
$M_{T_0}^{\mathrm{bd}}$.  Every deterministic source contribution in
\eqref{eq:mu-full} is therefore bounded by a common polynomial times its
explicit positive power of $T$.  The diagonal Besov contribution is uniformly
bounded in the stronger $L_T^\infty B_{2,\infty}^{\sigma-1}$ operator norm and
is multiplied by $T$.

For the centered component, use the restriction inequality
\eqref{eq:restriction-of-operator-difference}.  First choose $h$ so that the
limit interval modulus $\omega_{\mathrm{cen},\Xi}(h)$ is small.  Then make the
global operator differences small; their zero-history restrictions on every
interval of length at most $h$ are no larger.  For the initial interval,
\eqref{eq:restriction-of-operator-difference} and
\eqref{eq:centered-operator-initial-smallness} give, uniformly in the tail,
small operator norm on the full domain $E_T^{2,\sigma}$ once $T$ is small.  Thus the deterministic tail has the uniform translated-interval modulus
required for the interval-by-interval uniqueness argument.  It also has the
initial-interval smallness required for the fixed-point argument.  The weighted pseudolocal terms
restrict by norm monotonicity, and the cubic path norm is converted to a source
norm by the explicit factor $T$.  These bounds are the terms entering
$\mu_T$; hence the supremum over the deterministic tail tends to zero.
\end{proof}

\begin{proposition}[Measurable lifetimes and compatibility across compact sets]
\label{prop:measurable-lifetimes}
Let $K_n=\overline{B(0,n)}$ and fix deterministic adapted localization pairs
$(\chi_n,\rho_n)$ for $(K_n,1)$.  On the common probability-one event on which
all dyadic approximating tuples are defined, there are measurable random variables
$\tau_n>0$ such that $\tau_{n+1}\le\tau_n$ and the localized fixed points for
$(\chi_n,\rho_n)$ form a strongly measurable solution on $[0,\tau_n)$ in the
sense of Definition~\ref{def:random-interval-solution}.  The solutions and
nonlinear sources for different $n$ agree on overlaps and therefore form a
compatible family of local solutions in the sense of
Definition~\ref{def:compatible-local-solutions}.  For an arbitrary compact $K$, set
\begin{equation}\label{eq:reference-localization-and-lifetime}
 n(K):=\min\{n:K\subset B(0,n)\},\qquad
 (\chi_K^\circ,\rho_K^\circ):=(\chi_{n(K)},\rho_{n(K)}),\qquad
 \tau_K:=\tau_{n(K)}.
\end{equation}
Then $K_1\subset K_2$ implies $\tau_{K_1}\ge\tau_{K_2}$.  Every other pair
$(\chi,\rho)$ adapted to $(K,1)$ has a measurable existence time
$\tau_K^{\chi,\rho}>0$, obtained by the same formula below, and its solution
agrees with the solution obtained from the fixed localization pair on every interval
$[0,T]$ with $T<\tau_K\wedge\tau_K^{\chi,\rho}$.
\end{proposition}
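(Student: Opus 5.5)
The plan is to define $\tau_n$ pathwise from the deterministic fixed-point theorem, prove measurability through countable rational horizons, and obtain compatibility and localization-independence from finite propagation applied to finite-cutoff reconstructions.

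\emph{Step 1 (lifetimes).} Fix the common probability-one event of \cref{cor:Xi-convergence}. On it, the dyadic tuples $(\Theta_\Lambda,\Xi_\Lambda)$ for $(\chi_n,\rho_n)$ converge in $d^{\Theta}_{1}$. By \cref{lem:cutoff-localized-small}, the limit together with a tail of the sequence forms a family satisfying \cref{def:localized-smallness-class} on a short common interval. \cref{thm:deterministic-closure} then supplies a fixed point on $[0,T_*]$ with $T_*>0$.

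Define
\[
 \tau_n^{(0)}:=\sup\{q\in\mathbb Q\cap(0,1]:\ \text{a fixed point for }(\chi_n,\rho_n)\text{ exists in }\mathcal Z_q\}.
\]
To make this measurable, replace ``exists'' by a countable union over rational radii $R$ and integers $k$ of the events $G_{q,R,k}$. On $G_{q,R,k}$, the map $\mathscr P_{\Xi,y}$ is a $(1-1/k)$-contraction of the closed $R$-ball of $\mathcal Z_q$. Alternatively, use an interval-by-interval continuation with rational steps, which \cref{prop:interval-local-uniqueness} makes consistent.

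Each $G_{q,R,k}$ is measurable. Indeed, self-map and contraction properties can be tested on a countable dense subset of the ball, since $\mathcal Z_q$ is separable by \cref{lem:complete-and-E}, and $\mathscr P$ is jointly Borel. Hence $\tau_n^{(0)}$ is measurable, and it is positive on the event. On each $G_{q,R,k}$, \cref{cor:measurable-paracontrolled-fixed-point} gives strongly measurable fixed points. Full-class uniqueness shows they do not depend on $(R,k)$, and \cref{cor:horizon-gluing} glues them into a solution on $[0,\tau_n^{(0)})$.

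Finally set $\tau_n:=\min_{j\le n}\tau_j^{(0)}$. This is measurable, positive, and nonincreasing in $n$. Restricting a solution to a shorter random interval preserves \cref{def:random-interval-solution}.

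\emph{Step 2 (compatibility on overlaps).} This is the main obstacle, because the fixed points for different $n$ use different localized data $\Xi$. The comparison therefore has to go through finite cutoffs.

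Fix $n\le m$, a rational $q<\tau_m$, and a realization in the event. The finite-cutoff fixed points for both pairs exist for large dyadic $\Lambda$ and converge to the limits, by the stability part of \cref{thm:deterministic-closure} applied to the tail family. For finite $\Lambda$, reconstruct the fields $u_\Lambda,v_\Lambda$. By \cref{prop:finite-cutoff-reconstruction} these solve the auxiliary cutoff systems, and by \cref{prop:finite-cutoff-regularity} they are smooth.

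Both pairs have $\chi_n=\rho_n=\chi_m=\rho_m=1$ on a neighborhood of the cone $\mathcal Q_{K_n,q}$. By \cref{lem:auxiliary-forcing-local}, both forcings equal $\xi_{a,\Lambda}$ there, the coefficients $\chi$ equal $1$, and the data $\pi_\Lambda y$ coincide. \cref{lem:finite-propagation} therefore yields equality of the finite-cutoff fields on $[0,q]\times K_n$. The two nonlinear sources $\chi u_\Lambda v_\Lambda$ also agree there.

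We then pass to the limit $\Lambda\to\infty$ in $\mathcal D'$, using field convergence and \cref{lem:source-summation-continuity} for the sources. This gives \eqref{eq:local-solution-compatibility}.

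The delicate point is that the convergence of the finite-cutoff fixed points must hold at horizon $q$, not only on a short initial interval. For this I would propagate the stability from \cref{thm:deterministic-closure} along a finite partition of $[0,q]$, using the translated-interval moduli $\omega_{\mathrm{cen}}$ from \cref{lem:cutoff-localized-small} together with uniform bounds on the limit solution. On each subinterval the perturbed map stays a contraction near the limit, so the approximants exist on all of $[0,q]$.

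\emph{Step 3 (remaining claims).} Monotonicity $\tau_{K_1}\ge\tau_{K_2}$ holds because $n(K_1)\le n(K_2)$ and $\tau_n$ is nonincreasing. For another adapted pair $(\chi,\rho)$, define $\tau_K^{\chi,\rho}$ by the same rational formula. Agreement on $[0,T]$ with $T<\tau_K\wedge\tau_K^{\chi,\rho}$ follows from the finite-cutoff/finite-propagation/limit argument of Step 2. Here we only use that both pairs are adapted to $(K,1)$, so both equal $1$ on a neighborhood of $\mathcal Q_{K,T}$.
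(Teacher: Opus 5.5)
Your architecture matches the paper's. Both build measurable lifetimes from a contraction criterion at rational horizons, select fixed points measurably via Corollary~\ref{cor:measurable-paracontrolled-fixed-point}, glue them with Corollary~\ref{cor:horizon-gluing}, take $\tau_n=\min_{j\le n}$ for monotonicity, and obtain compatibility from finite-cutoff reconstruction, Lemmas~\ref{lem:auxiliary-forcing-local} and~\ref{lem:finite-propagation}, and passage to the limit (cf. Proposition~\ref{prop:limit-localization-comparison}). The difference is how the lifetime is defined.

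You take the supremum of rational $q$ at which $\mathscr P_{\Xi^{(n)},y}$ actually contracts some closed ball of $\mathcal Z_q$. Measurability then comes from testing on a countable dense subset, which is valid and gives a longer lifetime. The paper instead fixes the radius $R_n=R_{n,A_n}$ from $M_1^{\mathrm{bd}}(\Xi^{(n)},y)$. It takes $\theta_n$ to be the supremum of rational $q$ with $\overline\mu_n(q)P^{(n)}_{R_n}\le 1/16$, so measurability is just a countable supremum of measurable scalars. This criterion has a built-in margin: $1/16$ for the limit against the threshold $1/8$ allowed for perturbed data in Proposition~\ref{prop:in-probability-cutoff-passage}. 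That margin settles the point you flag as delicate. For $T<\tau_n$, pick rational $q>T$ meeting the criterion. Once $d_1$ between the dyadic tuples and the limit is small, the cutoff maps contract the same radius-$R_n$ ball of $\mathcal Z_q$. So the cutoff fixed points exist on all of $[0,q]$ and converge there by \eqref{eq:perturbed-fixed-point}, with no time partition.

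Your proposed remedy, propagating stability across a partition of $[0,q]$ with the translated moduli $\omega_{\mathrm{cen}}$, is the weak step. The paper's interval machinery (zero-history restrictions, Proposition~\ref{prop:interval-local-uniqueness}, the modulus in Lemma~\ref{lem:cutoff-localized-small}) controls only differences of two solutions with \emph{identical} pasts. It gives uniqueness, not existence of a cutoff solution on $[t_0,t_1]$ whose history differs from the limit's. Moreover, Lemma~\ref{lem:finite-cutoff-auxiliary-solvability} gives only a random short existence time, with no continuation criterion in $\mathcal Z$. You would therefore need a new continuation fixed point with nonzero history plus a bootstrap.

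You do not need this with your own definition. On $\{q<\tau_n^{(0)}\}$ there is a rational $q'>q$ at which your contraction event $G_{q',R,k}$ holds. The quadratic part of $\mathscr P_{\Xi,y}$ in $Z$ does not depend on the data, and the remaining affine part is Lipschitz in $(\Xi,y)$ by Proposition~\ref{prop:source-map-continuity}. Hence the Lipschitz constant of the limit map on $\overline B_{R+r}$ exceeds $1-1/k$ by at most $O(r)$, and the cutoff map differs by an affine map with Lipschitz constant $O(d_1)$. Choosing $r$ small and then $d_1$ small, the cutoff map sends the radius-$r$ ball about the limit fixed point into itself with Lipschitz constant below one. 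This gives existence and convergence on $[0,q']$, hence on $[0,q]$ by causality.

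This repair works only for the one-step version of your Step~1. With the interval-by-interval continuation variant, the lifetime can exceed every one-step contraction horizon, and the history-dependent continuation argument would genuinely be required.
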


\begin{proof}
Write $\Xi^{(n)}$ for the tuple constructed with
$(\chi_n,\rho_n)$ and put
\begin{equation}\label{eq:lifetime-integer-radius}
 A_n(\omega):=1+\left\lceil M_1^{\mathrm{bd}}(\Xi^{(n)},y)\right\rceil,
 \qquad
 R_{n,k}:=2C_n\bigl((k+1)+(k+1)^2\bigr)+1,
 \qquad
 R_n:=R_{n,A_n}.
\end{equation}
Here $C_n$ and $P_R^{(n)}$ are the deterministic constant and polynomial in
\eqref{eq:self-map-estimate} for the fixed localizer $(\chi_n,\rho_n)$; they
need not be uniform in $n$.  For rational $q\in(0,1]$, define the monotone
measurable envelope
\[
 \overline\mu_n(q):=
 \sup_{r\in\mathbb Q\cap(0,q]}\mu_r(\Xi^{(n)}|_{[0,r]},y).
\]
All quantities are measurable.  Norms of the stochastic distributions are limits of
countably many dyadic or test-function seminorms.  By
Lemma~\ref{lem:centered-block-measurability}, each centered operator is an
operator-norm limit of strongly measurable finite-block sums.  Set
\begin{equation}\label{eq:lifetime-theta-tau}
 \theta_n:=\sup\left\{q\in\mathbb Q\cap(0,1]:
 \overline\mu_n(q)P_{R_n}^{(n)}\le\frac1{16}\right\},
 \qquad
 \tau_n:=\min_{1\le j\le n}\theta_j,
\end{equation}
with the convention $\sup\varnothing=0$.  A supremum over a countable set is
measurable.  Moreover $M_1^{\mathrm{bd}}<\infty$ and $\mu_r\to0$ almost
surely, so $\theta_n>0$ and hence $\tau_n>0$.

We construct the measurable fixed points on the countable family of
contraction events.  For $k\in\mathbb N$ and rational $q\in(0,1]$, let
\begin{equation}\label{eq:lifetime-rational-contraction-event}
 \mathcal C_{n,k,q}:=
 \left\{A_n=k,\quad
 \overline\mu_n(q)P_{R_{n,k}}^{(n)}\le\frac1{16}\right\}.
\end{equation}
On this Borel event, the source estimate makes
$\mathscr P_{\Xi^{(n)},y}$ a self-map and a contraction of the deterministic
radius-$R_{n,k}$ ball in $\mathcal Z_q$.  Corollary~\ref{cor:measurable-paracontrolled-fixed-point}
therefore gives a strongly measurable fixed point
$Z_{n,k}^{(q)}$, extended by $0$ off $\mathcal C_{n,k,q}$.  The events in
\eqref{eq:lifetime-rational-contraction-event} are disjoint in $k$, so
\begin{equation}\label{eq:lifetime-rational-fixed-point}
 Z_n^{(q)}:=\mathbf1_{\{q<\tau_n\}}
 \sum_{k\ge1}\mathbf1_{\mathcal C_{n,k,q}}Z_{n,k}^{(q)}.
\end{equation}
This is strongly measurable and, as required in
Definition~\ref{def:random-interval-solution}, is identically zero off
$\{q<\tau_n\}$.  If $q<\tau_n$, then $q<\theta_n$.  Choose $q'>q$ in the defining set of $\theta_n$; monotonicity of
$\overline\mu_n$ shows that $q$ itself satisfies the contraction inequality.
Thus the sum in \eqref{eq:lifetime-rational-fixed-point} selects the fixed
point on $\{q<\tau_n\}$.  Full-class uniqueness makes the family
compatible under restriction.  Corollary~\ref{cor:horizon-gluing} therefore glues it to a
strongly measurable random-interval solution on $[0,\tau_n)$.

The definition makes $\tau_n$ decreasing.  Finite-cutoff localization
independence and full dyadic convergence imply that the limiting fields and
sources for $K_n$ and $K_m$ agree on every common cylinder with
$T<\min\{\tau_n,\tau_m\}$.  Since only countably many $n,m$ and rational
horizons are involved, these identities hold simultaneously on the common
dyadic event.  This constructs the required compatible family on the exhaustion.  For a further adapted localization pair, repeat the construction with its
own deterministic constants $C_{\chi,\rho}$ and $P_R^{\chi,\rho}$; this gives
$\tau_K^{\chi,\rho}$ and measurable rational-horizon fixed points.  The same
finite-propagation argument identifies the two limits on the common interval.
This proves all assertions, including the reference notation in
\eqref{eq:reference-localization-and-lifetime}.  On the null complement of the common
dyadic event, set all $\tau_n$ equal to zero and all rational-horizon fixed
points equal to zero.  The resulting variables are globally measurable and
the lifetimes are positive almost surely.
\end{proof}

\begin{remark}[Measurable lifetimes]
\label{rem:measurable-not-stopping}
The definition of $\tau_K$ uses the terminal control norm
$M_1^{\mathrm{bd}}$.  Thus $\tau_K$ is measurable and almost surely positive;
no stopping-time property is asserted or used.
\end{remark}

\begin{corollary}[Pathwise convergence of fixed points]
\label{cor:localized-cutoff-passage}
Whenever $d_{T_0}((\Xi_n,y_n),(\Xi,y))\to0$ pathwise, there is a
path-dependent common interval on which the fixed points $Z_n$ converge to $Z$
in $\mathcal Z_T$.  In particular, for a fixed-profile dyadic family this holds
almost surely along the full dyadic sequence.
\end{corollary}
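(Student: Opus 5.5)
The plan is to reduce the corollary to the stability theory already established for the deterministic fixed point. Since $d_{T_0}((\Xi_n,y_n),(\Xi,y))\to0$ for the given path, Lemma~\ref{lem:cutoff-localized-small} supplies an index $n_0$ and a short interval $[0,T_1]\subset[0,T_0]$ with the following property: the family $\mathfrak K:=\{(\Xi,y)\}\cup\{(\Xi_n,y_n):n\ge n_0\}$ satisfies Definition~\ref{def:localized-smallness-class} with a common bound $A$ and modulus $\nu_{\mathfrak K}$. Both quantities depend only on the realization. Theorem~\ref{thm:deterministic-closure} then produces $T_*=T_*(A,\nu_{\mathfrak K})>0$. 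For every $0<T\le\min\{T_1,T_*\}$, all the maps $\mathscr P_{\Xi_n,y_n}$ ($n\ge n_0$) and $\mathscr P_{\Xi,y}$ are strict contractions of one common closed radius-$R$ ball of $\mathcal Z_T$. Here $R\ge2C(A+A^2)+1$ and $\nu_{\mathfrak K}(T)P_R\le1/4$. We take $T$ to be this path-dependent common interval.

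Next, let $Z_n$ and $Z$ denote the fixed points on $[0,T]$. Proposition~\ref{prop:interval-local-uniqueness} identifies them with every fixed point in the full class, so they agree with the restrictions of any fixed points obtained on longer horizons (Corollary~\ref{cor:horizon-gluing}). Applying \eqref{eq:perturbed-fixed-point} with $Z=Z_n$ and $\widetilde Z=Z$ gives
\[
 \|Z_n-Z\|_{\mathcal Z_T}\le \tfrac14\|Z_n-Z\|_{\mathcal Z_T}+C_{A,R}\,d_T\bigl((\Xi_n,y_n),(\Xi,y)\bigr).
\]
We absorb the first term and use the restriction monotonicity \eqref{eq:restriction-of-operator-difference}, which gives $d_T\lesssim d_{T_0}$. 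This yields $\|Z_n-Z\|_{\mathcal Z_T}\le\tfrac43C_{A,R}\,d_{T_0}\to0$.

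For the fixed-profile dyadic statement, Corollary~\ref{cor:Xi-convergence} provides a common probability-one event. On it, $d_{T_0}((\Xi_\Lambda,y_\Lambda),(\Xi,y))\to0$ along the full dyadic sequence. The data part of this uses Lemma~\ref{lem:data-cutoff-convergence}. Applying the deterministic argument above pathwise on that event gives the claim without passing to a subsequence.

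The main point requiring care is the uniformity of the smallness modulus over the tail of the sequence. Pointwise convergence of each $\mu_T(\Xi_n)$ does not suffice: the centered operator component must be small uniformly in $n$ on the common initial interval. This is exactly what Lemma~\ref{lem:cutoff-localized-small} secures, via \eqref{eq:centered-operator-initial-smallness} combined with the operator-norm convergence of the differences. I would therefore rely on that lemma directly rather than re-derive it.
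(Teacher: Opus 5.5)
Your proposal is correct and follows essentially the same route as the paper. You use the stability-under-approximation lemma to get a uniform short-time class, place all the fixed points in one contraction ball from the deterministic fixed-point theorem, absorb the contraction term in the perturbed fixed-point estimate, and apply this pathwise on the common dyadic event of the data-convergence corollary; the only differences are cosmetic (your factor $1/4$ versus the paper's $1/2$, and your explicit appeals to full-class uniqueness and to $d_T\lesssim d_{T_0}$, which the paper leaves implicit).
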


\begin{proof}
Fix a realization on which
$d_{T_0}((\Xi_n,y_n),(\Xi,y))\to0$.  By
Lemma~\ref{lem:cutoff-localized-small}, after discarding finitely many indices
there are $T>0$, $A<\infty$, and an admissible family
\[
 \mathfrak K=\{(\Xi,y)\}\cup
 \{(\Xi_n,y_n):n\ge n_0\}
\]
on $[0,T]$.  Shrink $T$ so that the common fixed-point construction of
Theorem~\ref{thm:deterministic-closure} takes place in a ball of radius $R$
and
\[
 \nu_{\mathfrak K}(T)P_R\le\frac12.
\]
Let $Z$ and $Z_n$ be the corresponding fixed points.  Applying
\eqref{eq:perturbed-fixed-point} to $(Z_n,Z)$ gives
\[
 \|Z_n-Z\|_{\mathcal Z_T}
 \le \nu_{\mathfrak K}(T)P_R\|Z_n-Z\|_{\mathcal Z_T}
 +C_{A,R}d_T((\Xi_n,y_n),(\Xi,y)).
\]
After absorption,
\[
 \|Z_n-Z\|_{\mathcal Z_T}
 \le 2C_{A,R}d_T((\Xi_n,y_n),(\Xi,y))\longrightarrow0.
\]
For a fixed-profile dyadic family, $\Xi_\Lambda$ converges on the
probability-one event of Corollary~\ref{cor:Xi-convergence}; the
preceding deterministic argument applies to every realization in that event.
\end{proof}

For general admissible cutoff families, convergence is first formulated on
events on which the limiting fixed point has a deterministic existence time.
Let $C$ be the constant in \eqref{eq:self-map-estimate}.  The radius is chosen
with a one-unit perturbative margin:
\begin{equation}\label{eq:contraction-radius}
 R_A:=2C\bigl((A+1)+(A+1)^2\bigr)+1,
 \qquad A\in\mathbb N.
\end{equation}
For deterministic $0<T\le T_0$, define
\begin{equation}\label{eq:deterministic-contraction-event}
 \mathcal G_{A,T}:=
 \left\{
 M_{T_0}^{\mathrm{bd}}(\Xi,y)\le A,
 \quad
 \mu_T(\Xi,y)P_{R_A}\le\frac1{16}
 \right\}.
\end{equation}

\begin{definition}[Measurable fixed-point domain]
\label{def:measurable-fixed-point-domain}
For a data pair $(\Upsilon,z)$ on $[0,T_0]$, set
\begin{equation}\label{eq:measurable-fixed-point-events}
 \mathcal C_{k,T}(\Upsilon,z):=
 \left\{
 M_{T_0}^{\mathrm{bd}}(\Upsilon,z)\le k+1,
 \quad
 \mu_T(\Upsilon,z)P_{R_k}\le\frac18
 \right\},
 \qquad k\in\mathbb N,
\end{equation}
and
\begin{equation}\label{eq:measurable-fixed-point-domain}
 \mathcal A_T(\Upsilon,z):=
 \bigcup_{k\ge1}\mathcal C_{k,T}(\Upsilon,z).
\end{equation}
On $\mathcal C_{k,T}(\Upsilon,z)$, let $Z_{k,T}(\Upsilon,z)$ denote the
radius-$R_k$ fixed point, extended by zero off this event, and define
\[
 D_{k,T}(\Upsilon,z)
 :=\mathcal C_{k,T}(\Upsilon,z)\setminus
   \bigcup_{\ell<k}\mathcal C_{\ell,T}(\Upsilon,z).
\]
The associated partial solution map, extended by zero outside
$\mathcal A_T(\Upsilon,z)$, is
\begin{equation}\label{eq:measurable-fixed-point-map}
 \mathscr S_T(\Upsilon,z)
 :=\sum_{k\ge1}
 \mathbf1_{D_{k,T}(\Upsilon,z)}Z_{k,T}(\Upsilon,z).
\end{equation}
\end{definition}

\begin{lemma}[Measurability of the partial solution map]
\label{lem:measurable-partial-solution-map}
The event $\mathcal A_T(\Upsilon,z)$ is Borel and
$\mathscr S_T(\Upsilon,z)$ is strongly measurable.  On
$\mathcal A_T(\Upsilon,z)$ it is the unique fixed point in $\mathcal Z_T$.
At finite cutoff, its reconstruction is a high-regularity solution of the
localized auxiliary equation by
\cref{prop:finite-cutoff-regularity,prop:finite-cutoff-reconstruction}; on
the interval supplied by \cref{lem:finite-cutoff-auxiliary-solvability}, it
agrees with the solution constructed there.
\end{lemma}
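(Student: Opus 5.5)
The plan has three parts: Borel measurability of the events, strong measurability of the selected fixed point, and identification with the finite-cutoff solution.

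\emph{Measurability of the events.} The defining quantities $M_{T_0}^{\mathrm{bd}}(\Upsilon,z)$ and $\mu_T(\Upsilon,z)$ are finite sums of norms of the components of $(\Upsilon,z)$. Each stochastic distribution norm is a supremum of countably many dyadic or test-function seminorms of strongly measurable paths. Each operator norm is a limit of norms of strongly measurable finite-block sums, by Lemma~\ref{lem:centered-block-measurability} and the strong measurability of the deterministic diagonal. Both quantities are therefore measurable, and each $\mathcal C_{k,T}$ is Borel. The same holds for the countable union $\mathcal A_T$ and the disjointified sets $D_{k,T}$.

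\emph{Strong measurability of $\mathscr S_T$.} On $\mathcal C_{k,T}$ we have $M_{T_0}^{\mathrm{bd}}\le k+1$, and monotonicity of the polynomial gives $\mu_TP_{R_k}\le1/8$. Estimates \eqref{eq:self-map-estimate}--\eqref{eq:contraction-estimate}, with the choice of $R_k$ in \eqref{eq:contraction-radius}, then show that $\mathscr P_{\Upsilon,z}$ maps the closed radius-$R_k$ ball into itself. Indeed,
\[
C((k+1)+(k+1)^2)+\tfrac18 P_{R_k}\mu_T\le R_k ,
\]
after noting that $\mu_TP_{R_k}\le1/8$ gives the needed absorption; the mild point to check is that $P_R$ as written multiplies $\mu_T$. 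The map is also a contraction with deterministic constant $1/8$. Corollary~\ref{cor:measurable-paracontrolled-fixed-point} then yields a strongly measurable $Z_{k,T}$, extended by zero off $\mathcal C_{k,T}$. Since the $D_{k,T}$ are disjoint and Borel, the countable sum \eqref{eq:measurable-fixed-point-map} has at most one nonzero term at each $\omega$. It is therefore a pointwise limit of strongly measurable maps into the separable space $\mathcal Z_T$ (Lemma~\ref{lem:complete-and-E}), hence strongly measurable.

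\emph{Uniqueness and identification.} On $\mathcal A_T$ the selected $Z_{k,T}$ is a fixed point, and Proposition~\ref{prop:interval-local-uniqueness} shows it is the unique fixed point in all of $\mathcal Z_T$. This also makes the choice of $k$ irrelevant: on overlaps of the $\mathcal C_{k,T}$ the fixed points coincide. At finite cutoff, the reconstruction $u_\Lambda=B^\rho_{\W,\Lambda}+X_{\W}+Y_{\W}$, and likewise $v_\Lambda$, is smooth by Proposition~\ref{prop:finite-cutoff-regularity}. Proposition~\ref{prop:finite-cutoff-reconstruction} shows it solves the localized auxiliary equation \eqref{eq:cutoff-system}. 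The last clause of Lemma~\ref{lem:finite-cutoff-auxiliary-solvability} then identifies it with the energy solution on $[0,T\wedge T_\Lambda]$.

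The only delicate step is the self-map inclusion. One must check that the threshold $1/8$, rather than $1/16$, still absorbs the $Z$-dependent terms into the radius-$R_k$ ball with contraction constant below one, using the bound $k+1$ in place of $A$ in the choice of $R_k$. Everything else is bookkeeping of countable operations.
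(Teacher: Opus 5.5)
Your proposal is correct and follows essentially the paper's proof. Both arguments use the self-map and contraction on each $\mathcal C_{k,T}$, measurable selection via \cref{cor:measurable-paracontrolled-fixed-point}, disjoint gluing over the $D_{k,T}$, full-class uniqueness from \cref{prop:interval-local-uniqueness}, and identification through \cref{prop:finite-cutoff-regularity,prop:finite-cutoff-reconstruction}. You also make explicit two points the paper leaves implicit: the measurability of $M_{T_0}^{\mathrm{bd}}$ and $\mu_T$, and the radius arithmetic $C((k+1)+(k+1)^2)+\mu_TP_{R_k}\le\tfrac12(R_k-1)+\tfrac18<R_k$. Your displayed inequality should substitute $\mu_TP_{R_k}\le\tfrac18$ rather than read $\tfrac18P_{R_k}\mu_T$.
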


\begin{proof}
On $\mathcal C_{k,T}(\Upsilon,z)$, the map
$\mathscr P_{\Upsilon,z}$ is a self-map and a contraction of the
radius-$R_k$ ball.  Corollary~\ref{cor:measurable-paracontrolled-fixed-point}
gives a strongly measurable fixed point there.  The sets
$D_{k,T}(\Upsilon,z)$ are disjoint and Borel, so
\eqref{eq:measurable-fixed-point-map} is strongly measurable.  Full-class
uniqueness makes its value independent of the first successful radius.
Proposition~\ref{prop:finite-cutoff-reconstruction} identifies the finite-
cutoff fixed point with the solution of the localized auxiliary system.
\end{proof}

\begin{proposition}[Cutoff convergence in probability]
\label{prop:in-probability-cutoff-passage}
Suppose $(\Xi_\Lambda,y_\Lambda)\to(\Xi,y)$ in probability in
$d_{T_0}$.  For every fixed $A\in\mathbb N$, $T$, and every $\varepsilon>0$,
\begin{align}
 \Pp\Bigl(&\mathcal G_{A,T}\cap
 \Bigl(\mathcal A_T(\Xi_\Lambda,y_\Lambda)^c
 \cup\Bigl(\mathcal A_T(\Xi_\Lambda,y_\Lambda)\cap
 \bigl\{\|\mathscr S_T(\Xi_\Lambda,y_\Lambda)-Z\|_{\mathcal Z_T}>\varepsilon\bigr\}\Bigr)\Bigr)\Bigr)
 \longrightarrow0.
 \label{eq:contraction-event-fixed-point-convergence}
\end{align}
Moreover,
\begin{equation}\label{eq:contraction-events-exhaust}
 \lim_{T\downarrow0}\Pp(\mathcal G_{A,T})
 =\Pp\{M_{T_0}^{\mathrm{bd}}(\Xi,y)\le A\},
 \qquad
 \lim_{A\to\infty}\lim_{T\downarrow0}\Pp(\mathcal G_{A,T})=1.
\end{equation}
This gives local convergence in probability for arbitrary admissible cofinal
families, without choosing a cutoff-dependent lifespan.
\end{proposition}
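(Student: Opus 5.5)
The plan is to work on the fixed event $\mathcal G_{A,T}$ and compare the limiting contraction problem with the cutoff one through the perturbative estimate \eqref{eq:perturbed-fixed-point}.

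\textbf{Step 1: the cutoff data remain in the contraction class.} Fix $A$, $T$, $\varepsilon$, and a small $\eta>0$. Consider the event
\[
 E_\Lambda(\eta):=\{d_{T_0}((\Xi_\Lambda,y_\Lambda),(\Xi,y))\le\eta\},
\]
whose probability tends to one by hypothesis. On $\mathcal G_{A,T}\cap E_\Lambda(\eta)$ with $\eta\le1$:
\begin{itemize}
\item $M^{\mathrm{bd}}_{T_0}(\Xi_\Lambda,y_\Lambda)\le A+1$, because $M_{T_0}$ is subadditive in its components.
\item $\mu_T(\Xi_\Lambda,y_\Lambda)\le\mu_T(\Xi,y)+C_{A}\,\eta$. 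This follows from the definition \eqref{eq:mu-full}: each term there is a norm of a component, times a power of $T$ or a polynomial in $M^{\mathrm{bd}}$. The restriction inequality \eqref{eq:restriction-of-operator-difference} handles the operator terms.
\end{itemize}
Since the radius \eqref{eq:contraction-radius} already carries the one-unit margin, we have $R_{A}\le R_{k}$ for $k=A$. Hence $\mu_T(\Xi_\Lambda,y_\Lambda)P_{R_A}\le 1/16+C_A\eta P_{R_A}\le1/8$ once $\eta$ is small. So $\mathcal C_{A,T}(\Xi_\Lambda,y_\Lambda)$ holds, and therefore $\mathcal A_T(\Xi_\Lambda,y_\Lambda)$ holds. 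This shows the first half of \eqref{eq:contraction-event-fixed-point-convergence}: $\Pp(\mathcal G_{A,T}\cap\mathcal A_T(\Xi_\Lambda,y_\Lambda)^c)\le\Pp(E_\Lambda(\eta)^c)\to0$.

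\textbf{Step 2: closeness of the fixed points.} On the same event, both $\mathscr P_{\Xi,y}$ and $\mathscr P_{\Xi_\Lambda,y_\Lambda}$ are $1/8$-contractions of the radius-$R_A$ ball. Full-class uniqueness (Proposition~\ref{prop:interval-local-uniqueness}, Lemma~\ref{lem:measurable-partial-solution-map}) identifies $\mathscr S_T(\Xi_\Lambda,y_\Lambda)$ with the fixed point in that ball. Applying \eqref{eq:perturbed-fixed-point} to the two-element family $\{(\Xi,y),(\Xi_\Lambda,y_\Lambda)\}$, whose $\nu$ at $T$ is at most $1/8P_{R_A}^{-1}$, and absorbing the first term gives
\[
 \|\mathscr S_T(\Xi_\Lambda,y_\Lambda)-Z\|_{\mathcal Z_T}\le2C_{A,R_A}\,\eta .
\]
Choosing $\eta<\varepsilon/(2C_{A,R_A})$ makes the second event in \eqref{eq:contraction-event-fixed-point-convergence} empty inside $E_\Lambda(\eta)$, which completes the proof of \eqref{eq:contraction-event-fixed-point-convergence}.

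The main subtlety is that $C_{A,R}$ in \eqref{eq:perturbed-fixed-point} is stated for families satisfying Definition~\ref{def:localized-smallness-class}. I would check that its proof only uses the pointwise bounds $M^{\mathrm{bd}}\le A+1$ together with the current $\mu_T$ bound, so that it applies realization by realization.

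\textbf{Step 3: the limits \eqref{eq:contraction-events-exhaust}.} The map $T\mapsto\mu_T(\Xi,y)$ is nondecreasing. Because the operator smallness \eqref{eq:centered-operator-initial-smallness} holds almost surely, together with the explicit powers of $T$ in \eqref{eq:mu-full}, we have $\mu_T\to0$ almost surely as $T\downarrow0$. Hence $\mathcal G_{A,T}$ increases, as $T\downarrow0$, to $\{M^{\mathrm{bd}}_{T_0}\le A\}$ up to a null set, and monotone continuity of $\Pp$ gives the first limit. The second limit follows because $M^{\mathrm{bd}}_{T_0}<\infty$ almost surely, which holds since every component lies in $L^p(\Omega)$ by Corollary~\ref{cor:Xi-convergence}.
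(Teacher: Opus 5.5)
Your proof is correct and follows the paper's argument. On $\mathcal G_{A,T}$, a deterministic $\eta(A,T)$ keeps the cutoff data in the class $M^{\mathrm{bd}}_{T_0}\le A+1$, $\mu_TP_{R_A}\le1/8$, so both maps contract the same radius-$R_A$ ball and \eqref{eq:perturbed-fixed-point} gives the Lipschitz bound. The limits in \eqref{eq:contraction-events-exhaust} then come from $M^{\mathrm{bd}}_{T_0}<\infty$ and $\mu_T\to0$ almost surely. You are in fact more explicit than the paper about where $\eta$ comes from (subadditivity plus the restriction inequality) and about identifying $\mathscr S_T$ across radii by full-class uniqueness.

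One small correction: $T\mapsto\mu_T(\Xi,y)$ is not known to be nondecreasing. The restricted operator norms are monotone only up to the constant of the extension operator in Lemma~\ref{lem:time-extension-causality}, which is why the paper later uses the envelope $\overline\mu_n$. So replace monotone continuity by the paper's step: the indicators of $\mathcal G_{A,T}$ converge almost surely to that of $\{M^{\mathrm{bd}}_{T_0}\le A\}$, and dominated convergence gives the limit.
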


\begin{proof}
On $\mathcal G_{A,T}$, the limiting map is a contraction on the ball of radius
$R_A$.  The deterministic estimates used to prove
Proposition~\ref{prop:fixed-point-estimate} are polynomially locally Lipschitz
in all components of $\Xi$.  Consequently there is a deterministic
$\eta=\eta(A,T)>0$ such that, on $\mathcal G_{A,T}$,
\[
 d_{T_0}((\Xi_\Lambda,y_\Lambda),(\Xi,y))<\eta
\]
implies
\[
 M_{T_0}^{\mathrm{bd}}(\Xi_\Lambda,y_\Lambda)\le A+1,
 \qquad
 \mu_T(\Xi_\Lambda,y_\Lambda)P_{R_A}\le\frac18.
\]
The choice \eqref{eq:contraction-radius} then gives
\[
 C\bigl((A+1)+(A+1)^2\bigr)+\frac18<R_A,
\]
so the cutoff map is a self-map and a contraction on the same radius-$R_A$
ball.  The perturbative estimate therefore yields
\[
 \|\mathscr S_T(\Xi_\Lambda,y_\Lambda)-Z\|_{\mathcal Z_T}
 \le C_{A,T}
 d_{T_0}((\Xi_\Lambda,y_\Lambda),(\Xi,y)).
\]
Convergence in probability proves
\eqref{eq:contraction-event-fixed-point-convergence}.

Almost surely,
\[
 M_{T_0}^{\mathrm{bd}}(\Xi,y)<\infty,
 \qquad
 \lim_{T\downarrow0}\mu_T(\Xi,y)=0
\]
by Definition~\ref{def:Xi-assumptions} and
Corollary~\ref{cor:full-centered}.  Hence, for fixed $A$, the indicators of
$\mathcal G_{A,T}$ converge pointwise as $T\downarrow0$ to the indicator of
$\{M_{T_0}^{\mathrm{bd}}\le A\}$.  Dominated convergence gives the first
identity in \eqref{eq:contraction-events-exhaust}, and then $A\to\infty$ gives the
second.
\end{proof}

\begin{proposition}[Cutoff passage up to the reference lifetime]
\label{prop:random-lifespan-cutoff-passage}
Fix $n$ and use the reference pair $(\chi_n,\rho_n)$ from
Proposition~\ref{prop:measurable-lifetimes}.  Let an arbitrary admissible
cofinal cutoff family generate $(\Xi_\Lambda,y_\Lambda)$.  Its measurable
finite-cutoff fixed point is $\mathscr S_T(\Xi_\Lambda,y_\Lambda)$ on the
existence event $\mathcal A_T(\Xi_\Lambda,y_\Lambda)$.  Whenever the limiting
coordinate is defined only on $\{T<\tau_n\}$, extend it by zero outside that
event.  Then, for every deterministic $T\in(0,1]$ and every
$\varepsilon>0$,
\begin{equation}\label{eq:random-lifespan-fixed-point-passage}
 \Pp\left(
  \{T<\tau_n\}\cap
  \left(\mathcal A_T(\Xi_\Lambda,y_\Lambda)^c
  \cup\left(\mathcal A_T(\Xi_\Lambda,y_\Lambda)\cap
  \left\{\|\mathscr S_T(\Xi_\Lambda,y_\Lambda)-Z\|_{\mathcal Z_T}>\varepsilon\right\}\right)\right)
 \right)\longrightarrow0.
\end{equation}
The same conclusion holds for the reconstructed fields in every continuous
seminorm of $C([0,T];\mathcal D'(\R^3))^2$ and for the three source terms
in the topology of Definition~\ref{def:source-topology}.  For any other
adapted localization pair, the corresponding assertion holds with its own measurable time
$\tau_K^{\chi,\rho}$.
\end{proposition}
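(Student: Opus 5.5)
The plan is to reduce \eqref{eq:random-lifespan-fixed-point-passage} to Proposition~\ref{prop:in-probability-cutoff-passage} by covering $\{T<\tau_n\}$, up to arbitrarily small probability, with finitely many deterministic contraction events $\mathcal G_{A,T'}$. These events are built from the reference tuple $\Xi^{(n)}$. One subtlety is that $\tau_n$ was defined using the terminal norm $M_1^{\mathrm{bd}}$ and the envelope $\overline\mu_n$ on rational horizons. The fixed point on $[0,T]$ for $T<\tau_n$ is therefore not obtained by a single contraction at horizon $T$. Instead it arises from a contraction at a small horizon followed by continuation via interval uniqueness (Proposition~\ref{prop:interval-local-uniqueness}). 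For this reason I would not compare directly on $[0,T]$. I would first compare on a short initial interval and then propagate.

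First fix $\varepsilon,\epsilon'>0$. By \eqref{eq:contraction-events-exhaust}, choose $A$ and a deterministic $T_1\in(0,T]$ such that $\Pp(\mathcal G_{A,T_1}^c)<\epsilon'$. On $\mathcal G_{A,T_1}\cap\{T<\tau_n\}$, Proposition~\ref{prop:in-probability-cutoff-passage} gives, with probability tending to one, that $\mathcal A_{T_1}(\Xi_\Lambda,y_\Lambda)$ holds and $\mathscr S_{T_1}(\Xi_\Lambda,y_\Lambda)\to Z$ in $\mathcal Z_{T_1}$.

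Second, extend from $[0,T_1]$ to $[0,T]$. Partition $[T_1,T]$ into finitely many deterministic subintervals $I_j$ of length $h$, with $h$ chosen so that, with probability at least $1-\epsilon'$, the translated modulus $\varpi_{\Xi,R}(h)$ from \eqref{eq:interval-contraction-modulus} is at most $1/8$ for a random radius $R$ bounded by $\|Z\|_{\mathcal Z_T}+1$. On each $I_j$, consider the zero-history translated fixed-point problem for the difference between the cutoff solution and $Z$, with the history of $Z$ treated as given data. The interval estimate of Proposition~\ref{prop:interval-local-uniqueness}, together with its perturbative analogue, gives a contraction on a unit ball. This contraction holds uniformly in $\Lambda$, because the centered moduli pass to cutoffs by \eqref{eq:restriction-of-operator-difference} and Lemma~\ref{lem:cutoff-localized-small}. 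The resulting bound is
\[
\|Z_\Lambda-Z\|_{\mathcal Z_{I_j}}\le C_R\bigl(d_{T_0}((\Xi_\Lambda,y_\Lambda),(\Xi,y))+\|Z_\Lambda-Z\|_{\mathcal Z_{[0,\min I_j]}}\bigr).
\]
Iterating over the finitely many $j$ shows that the cutoff solution exists on $[0,T]$ and is close to $Z$ there. Full-class uniqueness then identifies it with $\mathscr S_T(\Xi_\Lambda,y_\Lambda)$, so the event $\mathcal A_T(\Xi_\Lambda,y_\Lambda)$ must also be reached. If the radius window $R_k$ in $\mathcal A_T$ is a problem, I would enlarge $k$ using the uniform bounds on $M^{\mathrm{bd}}$ inherited from the convergence in probability. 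Sending $\epsilon'\to0$ then proves \eqref{eq:random-lifespan-fixed-point-passage}.

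Third, transfer the result to fields and sources. The reconstruction \eqref{eq:local-reconstruction} is a continuous affine map of $Z$ and of the baseline $\rho(\Psi+V)$, and the baseline converges by Proposition~\ref{prop:baseline-regularities}. The sources converge by Proposition~\ref{prop:source-map-continuity} together with $d_T^\Theta$-convergence from Corollary~\ref{cor:Xi-convergence}. For other localization pairs, the same argument applies with $\tau_K^{\chi,\rho}$.

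The main obstacle is the continuation step. The quantity $\mathcal A_T$ is defined through a single-horizon contraction condition $\mu_TP_{R_k}\le1/8$, which may fail at large $T<\tau_n$ even though the solution exists there. If that happens, I would instead show that \eqref{eq:random-lifespan-fixed-point-passage} uses the glued fixed point, and that the fixed point is unique in $\mathcal Z_T$, so that the identification is independent of how $\mathcal A_T$ is defined.
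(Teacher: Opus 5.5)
The gap is in your second step, and as written it cannot be closed. Even granting an existence-and-stability estimate on translated intervals, which the paper does not provide, the step does not give the claim. Proposition~\ref{prop:interval-local-uniqueness} only compares two fixed points of the \emph{same} map that already agree on $[0,t_0]$. Your difference $Z_\Lambda-Z$ has nonzero history at $\min I_j$, and the two maps have different data. More decisively, a cutoff fixed point on $[0,T]$ obtained by continuation does not place $\omega$ in $\mathcal A_T(\Xi_\Lambda,y_\Lambda)$. That event is defined by the single-horizon inequalities $M_{T_0}^{\mathrm{bd}}(\Xi_\Lambda,y_\Lambda)\le k+1$ and $\mu_T(\Xi_\Lambda,y_\Lambda)P_{R_k}\le\frac18$. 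Uniqueness identifies values, not events, and enlarging $k$ only enlarges $P_{R_k}$. The set $\{T<\tau_n\}\cap\mathcal A_T(\Xi_\Lambda,y_\Lambda)^c$ is part of the event in \eqref{eq:random-lifespan-fixed-point-passage}. So your fallback, making the identification independent of how $\mathcal A_T$ is defined, does not prove the statement.

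The missing idea is that the premise forcing you into continuation is false. In Proposition~\ref{prop:measurable-lifetimes}, $\theta_n$ is \emph{defined} as the supremum of rational $q$ with $\overline\mu_n(q)P^{(n)}_{R_n}\le\frac1{16}$. Hence $T<\tau_n\le\theta_n$ already supplies a rational $q>T$ at which the single-horizon contraction inequality holds. With $R_{n,k}=2C_n((k+1)+(k+1)^2)+1$ this is the countable cover
\[
 \{T<\theta_n\}=\bigcup_{q\in\mathbb Q\cap(T,1],\ k\in\mathbb N}\mathcal H_{k,q}^{(n)},
 \qquad
 \mathcal H_{k,q}^{(n)}:=\Bigl\{A_n=k,\ \overline\mu_n(q)P^{(n)}_{R_{n,k}}\le\tfrac1{16}\Bigr\}.
\]
Each member is contained in the deterministic contraction event $\mathcal G^{(n)}_{k,q}$ of \eqref{eq:deterministic-contraction-event}, built with the constants of $(\chi_n,\rho_n)$. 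The paper then argues as follows:
\begin{itemize}
\item let $B_m$ be the union of the first $m$ members of the cover, so that $\Pp(\{T<\tau_n\}\setminus B_m)\to0$;
\item on each member, apply Proposition~\ref{prop:in-probability-cutoff-passage} at horizon $q$ and restrict to $[0,T]$;
\item use a finite union bound, then let $\Lambda\to\infty$ and afterwards $m\to\infty$.
\end{itemize}
No small horizon $T_1$ and no continuation are involved. Your third step, the transfer to fields and sources, is fine and matches the paper.
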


\begin{proof}
For integers $k\ge1$ and rationals $q\in(T,1]$, put
\[
 R_{n,k}:=2C_n\bigl((k+1)+(k+1)^2\bigr)+1
\]
and
\[
 \mathcal H_{k,q}^{(n)}
 :=\left\{A_n=k,\quad
 \overline\mu_n(q)P_{R_{n,k}}^{(n)}\le\frac1{16}\right\}.
\]
Because $R_n=R_{n,A_n}$, the definition of $\theta_n$ gives the countable
cover
\begin{equation}\label{eq:theta-countable-cover}
 \{T<\theta_n\}
 =\bigcup_{\substack{q\in\mathbb Q\cap(T,1]\\ k\in\mathbb N}}
 \mathcal H_{k,q}^{(n)}.
\end{equation}
On $\mathcal H_{k,q}^{(n)}$ one has
$M_1^{\mathrm{bd}}(\Xi^{(n)},y)\le k$ and
$\mu_qP_{R_{n,k}}^{(n)}\le1/16$.  Thus this event is contained in the
event $\mathcal G_{k,q}^{(n)}$, and
Proposition~\ref{prop:in-probability-cutoff-passage} applies on $[0,q]$, hence
also on $[0,T]$ by restriction.

Enumerate the countable family in \eqref{eq:theta-countable-cover} and let
$B_m$ be the union of its first $m$ members.  Since
$\{T<\tau_n\}\subset\{T<\theta_n\}$,
\[
 \Pp(\{T<\tau_n\}\setminus B_m)\longrightarrow0
 \qquad(m\to\infty).
\]
For fixed $m$, the probability of the exceptional event on $B_m$ tends to zero by a finite union
bound and Proposition~\ref{prop:in-probability-cutoff-passage}.  Taking first $\Lambda\to\infty$ and
then $m\to\infty$ proves
\eqref{eq:random-lifespan-fixed-point-passage}.  Field convergence follows
from the locally Lipschitz reconstruction.  Source convergence uses in addition
the joint convergence of $\Theta_\Lambda$ and $\Xi_\Lambda$ from
Corollary~\ref{cor:Xi-convergence} and the locally Lipschitz source
maps.  Repeating the same countable-cover argument for another fixed adapted localization pair gives the last assertion.
\end{proof}

\begin{proposition}[Finite-cutoff reconstruction]
\label{prop:finite-cutoff-reconstruction}
Let $Z_\Lambda=(X_{\W,\Lambda},X_{\K,\Lambda},
Y_{\W,\Lambda},Y_{\K,\Lambda})$ solve the finite-cutoff fixed-point equation for an
adapted localization pair, and set
\begin{equation}\label{eq:finite-reconstruction}
\begin{aligned}
 u_\Lambda^{\chi,\rho}
 &=\rho(\Psi_{\W,\Lambda}+V_{\W,\Lambda})
   +X_{\W,\Lambda}+Y_{\W,\Lambda},\\
 v_\Lambda^{\chi,\rho}
 &=\rho(\Psi_{\K,\Lambda}+V_{\K,\Lambda})
   +X_{\K,\Lambda}+Y_{\K,\Lambda}.
\end{aligned}
\end{equation}
Then, in the sense of finite-cutoff distributions,
\begin{equation}\label{eq:finite-source-identity}
 \chi u_\Lambda^{\chi,\rho}v_\Lambda^{\chi,\rho}
 =\chi\Theta_\Lambda
  +\mathscr F_{X,\Lambda}^{\chi,\rho}(Z_\Lambda)
  +\mathscr F_{Y,\Lambda}^{\chi,\rho}(Z_\Lambda).
\end{equation}
Consequently the reconstructed fields solve the finite-cutoff auxiliary system
\eqref{eq:cutoff-system}.  Conversely, suppose that a solution of
\eqref{eq:cutoff-system} can be written as
\[
 u_\Lambda=B_{\W,\Lambda}^\rho+R_{\W,\Lambda},
 \qquad
 v_\Lambda=B_{\K,\Lambda}^\rho+R_{\K,\Lambda},
\]
with
\[
 R_{a,\Lambda}\in C_TH^r\cap C_T^1H^{r-1}
 \quad(a\in\mathfrak C)
\]
for some $r>3/2$.  Then it determines a unique solution of the finite-cutoff fixed-point equation
through the decomposition below.
\end{proposition}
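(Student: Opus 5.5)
The plan is to verify \eqref{eq:finite-source-identity} directly at finite cutoff, where every object is a smooth function. At fixed $\Lambda$ the fields $\Psi_{a,\Lambda}$, $V_{a,\Lambda}$, $\Theta_\Lambda$ are smooth with compact Fourier support, so all paraproducts and resonant products are classical. The operators $T^{a;b,c}_\Lambda$ are simply the explicit Volterra expressions \eqref{eq:T-def}, and $\Gamma^{\chi,\rho}_{a,\Lambda}$ is given by \eqref{eq:localized-cubic-cutoff}. Write $u_\Lambda=\rho\Psi_{\W,\Lambda}+\mathscr U^\rho_{\W,\Lambda}$ and $v_\Lambda=\rho\Psi_{\K,\Lambda}+\mathscr U^\rho_{\K,\Lambda}$, and multiply by $\chi$. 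Since $\rho\equiv1$ on $\supp\chi$, we have $\chi\rho\Psi_{\W,\Lambda}\rho\Psi_{\K,\Lambda}=\chi\Theta_\Lambda$, and the cross terms become $\chi\Psi_{b,\Lambda}\mathscr U^\rho_{b^\perp,\Lambda}$. This reproduces \eqref{eq:remainder-equation} at finite cutoff.

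Next, apply the Bony decomposition $fg=f\prec g+f\succ g+f\circ g$ to each cross term. The low--high parts give $\mathscr F^{\chi,\rho}_{X,\Lambda}$ in \eqref{eq:localized-X-source-cutoff}, and the high--low parts and $\mathscr U^\rho_{\W,\Lambda}\mathscr U^\rho_{\K,\Lambda}$ appear verbatim in \eqref{eq:localized-Y-source-cutoff}. The resonant parts need \eqref{eq:full-resonant-split}. Split $\mathscr U^\rho_{c^\perp,\Lambda}=\rho V_{c^\perp,\Lambda}+X_{c^\perp,\Lambda}+Y_{c^\perp,\Lambda}$. The $\rho V$ piece gives $\Gamma^{\chi,\rho}_{c,\Lambda}$ by definition, and the $Y$ piece is retained as is. For the $X$ piece, substitute the fixed-point equation $X_{c^\perp,\Lambda}=I_{c^\perp}(\chi\sum_b\mathscr U^\rho_{b^\perp,\Lambda}\prec\Psi_{b,\Lambda})$, which is \eqref{eq:mild-X} at cutoff. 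Linearity of $I_{c^\perp}$ and of the resonant product then gives \eqref{eq:X-resonant-substitution}, that is, $\sum_b T^{c^\perp;b,c}_\Lambda(\mathscr U^\rho_{b^\perp,\Lambda})$. Reindexing $(c^\perp;b,c)$ over $\mathfrak L$ yields the operator sum. Summing gives \eqref{eq:finite-source-identity}. The Wick split $T_\Lambda=\mathcal D_\Lambda+\mathcal B_\Lambda$ is an identity at finite cutoff (Proposition~\ref{prop:finite-color-split}), so it does not alter the sum.

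With \eqref{eq:finite-source-identity} in hand, apply $L_a$ to the reconstruction. By \eqref{eq:mild-X}--\eqref{eq:mild-Y}, $L_a(X_a+Y_a)=\mathscr F_X+\mathscr F_Y$ with the Cauchy data of $y_\Lambda$. Also $L_a[\rho(\Psi_{a,\Lambda}+V_{a,\Lambda})]=\zeta^{\chi,\rho}_{a,\Lambda}+\chi\Theta_\Lambda$ by \eqref{eq:finite-auxiliary-forcing}. Adding these and using \eqref{eq:finite-source-identity} gives \eqref{eq:cutoff-system}. The mild-to-distributional passage is classical because Proposition~\ref{prop:finite-cutoff-regularity} supplies smoothness. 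To avoid circularity, though, only the $C_TH^{s_2}$-level regularity of $Z_\Lambda$ is needed here, and at fixed $\Lambda$ every product is smooth in the stochastic factor.

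For the converse, let $(u_\Lambda,v_\Lambda)$ solve \eqref{eq:cutoff-system} with $R_{a,\Lambda}\in C_TH^r\cap C^1_TH^{r-1}$, $r>3/2$. Define $X_{a,\Lambda}$ by the linear Duhamel formula \eqref{eq:mild-X}, in which the source is built from $\mathscr U^\rho_{b^\perp,\Lambda}:=\rho V_{b^\perp,\Lambda}+R_{b^\perp,\Lambda}$, and set $Y_{a,\Lambda}:=R_{a,\Lambda}-X_{a,\Lambda}$. The source belongs to $L^1_TH^{m}$ for fixed $\Lambda$ by \eqref{eq:finite-cutoff-X-smooth-source}, so $X_{a,\Lambda}$ is smooth, and $X$ and $Y$ lie in the $\widetilde X,\widetilde Y$ spaces on $[0,T]$. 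The remainder equation \eqref{eq:finite-cutoff-remainder-equation}, together with the algebraic identity just proved (read backwards), shows that $Y_{a,\Lambda}$ satisfies \eqref{eq:mild-Y}. Hence $Z_\Lambda$ is a fixed point. Uniqueness follows because $X$ is determined by $R$ through a linear formula, or alternatively from Proposition~\ref{prop:interval-local-uniqueness}.

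I expect the main obstacle to be bookkeeping rather than estimates. The resonant $X$-substitution must use exactly the same $\chi$ placements as \eqref{eq:T-def}: an inner $\chi$ inside $I_a$ and an outer $\chi$ after $\circ$. We must also check that $\chi(\rho V)\circ\Psi$ matches \eqref{eq:localized-cubic-cutoff} and not $\chi\Gamma_{a,\Lambda}$. The identity requires $\chi^2=\chi$ for the inner factor, which is false in general. The fix I would try is to note that \eqref{eq:mild-X} contains the factor $\chi$ exactly once, coming from \eqref{eq:X-equation}. The inner $\chi$ of $T$ is therefore that same factor, and the outer $\chi$ is the one multiplying the full source. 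Under this reading no idempotence is used.
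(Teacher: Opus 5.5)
Your proposal is correct and follows the paper's proof essentially step for step. You use $\rho=1$ on $\supp\chi$ to reduce $\chi u_\Lambda^{\chi,\rho}v_\Lambda^{\chi,\rho}$ to $\chi\Theta_\Lambda+\chi\sum_b\Psi_{b,\Lambda}\mathscr U_{b^\perp,\Lambda}^\rho+\chi\mathscr U_{\W,\Lambda}^\rho\mathscr U_{\K,\Lambda}^\rho$, Bony-decompose the cross terms, and substitute the zero-data $X$ Duhamel formula into the resonant $X$ pieces to get the four blocks $T^{a;b,c}_\Lambda$ (the single $\chi$ in \eqref{eq:mild-X} is the inner factor of \eqref{eq:T-def} and the $\chi$ on the product is the outer one, so no idempotence is needed), and for the converse you set $\mathscr U_{a,\Lambda}^\rho=\rho V_{a,\Lambda}+R_{a,\Lambda}$, define $X_{a,\Lambda}$ by that Duhamel formula and put $Y_{a,\Lambda}:=R_{a,\Lambda}-X_{a,\Lambda}$, exactly as the paper does; your caution about circularity with Proposition~\ref{prop:finite-cutoff-regularity} is well placed (in the converse, read the remainder equation directly off \eqref{eq:cutoff-system} and \eqref{eq:finite-auxiliary-forcing} rather than citing that proof), and your check that $(X_\Lambda,Y_\Lambda)\in\mathcal Z_T$ supplies a step the paper leaves implicit.
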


\begin{proof}
On $\supp\chi$ one has $\rho=1$.  Hence the product of the reconstructed
fields there is
\[
 (\Psi_{\W,\Lambda}+\mathscr U_{\W,\Lambda}^\rho)
 (\Psi_{\K,\Lambda}+\mathscr U_{\K,\Lambda}^\rho).
\]
Expand it and apply the Bony identity to the two cross products.  The
low--high terms give \eqref{eq:localized-X-source-cutoff}.  Substitute the two
$X$ Duhamel formulae in the resonant terms.  The resulting four blocks are those in \eqref{eq:T-def}; the
$\rho V\circ\Psi$ terms are the finite-cutoff localized coordinates $\Gamma_{a,\Lambda}^{\chi,\rho}$ from
\eqref{eq:localized-cubic-cutoff}.  Their decomposition includes the retained
first-chaos contraction and the smoothing term
$\mathcal R_{a,\Lambda}^{\chi,\rho}$ in
\eqref{eq:localized-cubic-split}.  The remaining high--low, regular quadratic, and classical
$Y\circ\Psi$ terms give \eqref{eq:localized-Y-source}.  This proves
\eqref{eq:finite-source-identity}.  Adding the definition of the finite-cutoff auxiliary forcing
\eqref{eq:finite-auxiliary-forcing} gives \eqref{eq:cutoff-system}.

Conversely, let $(u_\Lambda,v_\Lambda)$ satisfy the hypotheses in the
second part of the proposition and define
\[
 \mathscr U_{\W,\Lambda}^\rho:=u_\Lambda-\rho\Psi_{\W,\Lambda},
 \qquad
 \mathscr U_{\K,\Lambda}^\rho:=v_\Lambda-\rho\Psi_{\K,\Lambda}.
\]
Define $X_{a,\Lambda}$ as the retarded solution with zero Cauchy data of the
low--high source in \eqref{eq:localized-X-source-cutoff}, evaluated at these two
known inputs, and set
\[
 Y_{a,\Lambda}
 :=\mathscr U_{a,\Lambda}^\rho-\rho V_{a,\Lambda}-X_{a,\Lambda}.
\]
The Bony identity for the high-regularity remainders and the smooth
finite-cutoff stochastic factors (which need not remain band-limited after
physical localization), together with the finite Wick decomposition, splits
the remaining source into
\eqref{eq:localized-Y-source-cutoff}; the auxiliary equation supplies the prescribed
Cauchy data for $Y_{a,\Lambda}$.  Hence $(X_\Lambda,Y_\Lambda)$ satisfies the finite-cutoff fixed-point equation and reconstructs the original fields.  The split
is unique because the $X$ term is the prescribed zero-data retarded
low--high solution and $Y$ is the remaining term.
\end{proof}

\begin{proposition}[Convergence of the localized nonlinear source]
\label{prop:source-convergence}
On the common interval of
Corollary~\ref{cor:localized-cutoff-passage},
\[
 (\chi\Theta_\Lambda,
   \mathscr F_{X,\Lambda}^{\chi,\rho},
   \mathscr F_{Y,\Lambda}^{\chi,\rho})
 \longrightarrow
 (\chi\Theta,
   \mathscr F_X^{\chi,\rho},
   \mathscr F_Y^{\chi,\rho})
\]
in the componentwise topology of Definition~\ref{def:source-topology}.  Hence
\begin{equation}\label{eq:localized-source-definition}
  \mathcal N^{\chi,\rho}
  :=\chi\Theta
   +\mathscr F_X^{\chi,\rho}
   +\mathscr F_Y^{\chi,\rho}
\end{equation}
defines the compactly supported nonlinear source of the auxiliary equation and
is the limit of
$\chi u_\Lambda^{\chi,\rho}v_\Lambda^{\chi,\rho}$.  General admissible
cofinal families have the same convergence in probability on every event $\mathcal G_{A,T}$ defined in
\eqref{eq:deterministic-contraction-event}.
\end{proposition}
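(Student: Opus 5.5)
The plan combines three ingredients already in place: the finite-cutoff source identity of Proposition~\ref{prop:finite-cutoff-reconstruction}, the convergence of the enhanced data in Corollary~\ref{cor:Xi-convergence}, and the fixed-point convergence of Corollary~\ref{cor:localized-cutoff-passage}. The glue is the joint Lipschitz continuity of the source maps in Proposition~\ref{prop:source-map-continuity}.

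First I fix a realization on the common dyadic event on which $d_{T_0}^{\Theta}((\Theta_\Lambda,\Xi_\Lambda),(\Theta,\Xi))\to0$, and take $T$ as the common interval of Corollary~\ref{cor:localized-cutoff-passage}. On that interval the fixed points satisfy $Z_\Lambda\to Z$ in $\mathcal Z_T$ and are uniformly bounded by some $R$. The data norms are bounded by some $A$ along the tail. The first component, $\chi\Theta_\Lambda\to\chi\Theta$ in $C_T\mathcal C^{-1-\betasum-\kappa}$, is part of the convergence in $d^{\Theta}$.

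For the other two components I apply \eqref{eq:source-triple-continuity} with $(\widetilde\Xi,\widetilde Z)=(\Xi_\Lambda,Z_\Lambda)$. One must check that the cutoff functionals $\mathscr F_{X,\Lambda}^{\chi,\rho}$ and $\mathscr F_{Y,\Lambda}^{\chi,\rho}$ are exactly the functionals $\mathscr F_{X;\Xi_\Lambda}^{\chi,\rho}$ and $\mathscr F_{Y;\Xi_\Lambda}^{\chi,\rho}$ of \eqref{eq:localized-X-source}--\eqref{eq:localized-Y-source} evaluated at the cutoff tuple. This is the one point needing care. The cubic entry must be the localized coordinate $\Gamma_{a,\Lambda}^{\chi,\rho}$ rather than $\chi\Gamma_{a,\Lambda}$, as stressed after \eqref{eq:localized-Y-source-cutoff}. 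The operator entries must be $T_\Lambda^{a;b,c}=\mathcal D_\Lambda+\mathcal B_\Lambda$ by the finite Wick split \eqref{eq:operator-decomposition}. Finally, $\Xi_\Lambda$ must satisfy Definition~\ref{def:Xi-assumptions} with bounds uniform in $\Lambda$; this follows from its convergence in $M_{T_0}$. Once this is checked, the right-hand side of \eqref{eq:source-triple-continuity} tends to zero, which gives convergence in the product topology \eqref{eq:nonlinear-source-topology}.

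Next, \eqref{eq:finite-source-identity} writes $\chi u_\Lambda^{\chi,\rho}v_\Lambda^{\chi,\rho}$ as the sum of the three cutoff source components. Lemma~\ref{lem:source-summation-continuity} then gives $\chi u_\Lambda v_\Lambda\to\mathcal N^{\chi,\rho}$ in $\mathcal D'((0,T)\times\R^3)$. This shows that \eqref{eq:localized-source-definition} is well defined as the limit, and it is compactly supported because every term carries a factor $\chi$.

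For general admissible cofinal families I would argue the same way on the event $\mathcal G_{A,T}$. Proposition~\ref{prop:in-probability-cutoff-passage} gives $\mathscr S_T(\Xi_\Lambda,y_\Lambda)\to Z$ in probability there, with the existence event $\mathcal A_T$ holding with probability tending to one. Corollary~\ref{cor:Xi-convergence} gives convergence of $(\Theta_\Lambda,\Xi_\Lambda)$ in probability. Restricting further to $\{d^{\Theta}<\eta\}$, where all norms are bounded by $A+1$ and $R_A$, the Lipschitz bound \eqref{eq:source-triple-continuity} converts these two convergences in probability into convergence of the source triple in probability. I expect no real obstacle beyond the identification step above. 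The mild subtlety is that the constant $C_{A,R,\chi,\rho}$ must be deterministic on this event, and that is exactly why the analysis is restricted to $\mathcal G_{A,T}$.
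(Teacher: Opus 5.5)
Your proposal is correct and follows the paper's proof essentially step by step. The paper likewise feeds the fixed-point convergence of Corollary~\ref{cor:localized-cutoff-passage} and the data convergence of Corollary~\ref{cor:Xi-convergence} into \eqref{eq:source-triple-continuity}, identifies the limit of $\chi u_\Lambda v_\Lambda$ through \eqref{eq:finite-source-identity} and Lemma~\ref{lem:source-summation-continuity}, and treats general cofinal families on $\mathcal G_{A,T}$ via Proposition~\ref{prop:in-probability-cutoff-passage}; your extra checks (the localized cubic coordinate $\Gamma_{a,\Lambda}^{\chi,\rho}$, the split $T_\Lambda^{a;b,c}=\mathcal D_\Lambda+\mathcal B_\Lambda$, and the deterministic constant on the contraction event) only make explicit what the paper uses implicitly.
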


\begin{proof}
Let $Z_\Lambda\to Z$ be the fixed-point convergence on the common interval.
Corollary~\ref{cor:Xi-convergence} gives
\[
 \|\chi(\Theta_\Lambda-\Theta)\|_{
 C_T\mathcal C^{-1-\betasum-\kappa}}
 +M_T(\Xi_\Lambda-\Xi)\longrightarrow0.
\]
The fixed points and the tuples $\Xi_\Lambda$ remain in bounded subsets of their
respective spaces.  Applying \eqref{eq:source-triple-continuity} with
$(\Theta,\Xi,Z)$ and $(\Theta_\Lambda,\Xi_\Lambda,Z_\Lambda)$ yields
\begin{align*}
 &\|\chi(\Theta_\Lambda-\Theta)\|_{
 C_T\mathcal C^{-1-\betasum-\kappa}}\\
 &\quad+
 \|\mathscr F_{X,\Lambda}^{\chi,\rho}(Z_\Lambda)
   -\mathscr F_X^{\chi,\rho}(Z)\|_{\mathfrak R_T^{s_1,\sigma}}\\
 &\quad+
 \|\mathscr F_{Y,\Lambda}^{\chi,\rho}(Z_\Lambda)
   -\mathscr F_Y^{\chi,\rho}(Z)\|_{\mathfrak S_T^{s_2,\sigma}}
 \longrightarrow0.
\end{align*}
At finite cutoff, Proposition~\ref{prop:finite-cutoff-reconstruction} gives
\[
 \chi u_\Lambda^{\chi,\rho}v_\Lambda^{\chi,\rho}
 =\chi\Theta_\Lambda
  +\mathscr F_{X,\Lambda}^{\chi,\rho}(Z_\Lambda)
  +\mathscr F_{Y,\Lambda}^{\chi,\rho}(Z_\Lambda).
\]
Lemma~\ref{lem:source-summation-continuity} turns the componentwise convergence into convergence of the sums in $\mathcal D'((0,T)\times\R^3)$.  Passing to the limit in the finite-cutoff identity therefore proves the asserted identification with $\mathcal N^{\chi,\rho}$.

On a deterministic contraction event for a general cofinal family,
Proposition~\ref{prop:in-probability-cutoff-passage} gives convergence of the
fixed points in probability, while Corollary~\ref{cor:Xi-convergence}
gives the joint convergence of $\Theta_\Lambda$ and $\Xi_\Lambda$.  The deterministic estimate
\eqref{eq:source-triple-continuity} then gives convergence of all three components in Definition~\ref{def:source-topology}
in probability.
\end{proof}

\begin{proposition}[Finite-cutoff independence of the localization pair]
\label{prop:finite-localization-independence}
Let $(\chi_1,\rho_1)$ and $(\chi_2,\rho_2)$ be adapted to $(K,T_0)$.  For a
common spectral cutoff and common Cauchy data, suppose the two reconstructed
solutions exist on $[0,T]$, $T\le T_0$.  Then
\[
 (u_\Lambda^{\chi_1,\rho_1},v_\Lambda^{\chi_1,\rho_1})
 =
 (u_\Lambda^{\chi_2,\rho_2},v_\Lambda^{\chi_2,\rho_2})
 \quad\text{on }[0,T]\times K.
\]
Their ordinary product sources agree there as well.
\end{proposition}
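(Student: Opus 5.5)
The plan is to reduce the statement to the finite-propagation lemma, Lemma~\ref{lem:finite-propagation}, applied to the two reconstructed finite-cutoff solutions.

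First, Proposition~\ref{prop:finite-cutoff-reconstruction} shows that for $\jmath=1,2$ the reconstructed pair $(u_\jmath,v_\jmath):=(u_\Lambda^{\chi_\jmath,\rho_\jmath},v_\Lambda^{\chi_\jmath,\rho_\jmath})$ solves the finite-cutoff auxiliary system \eqref{eq:cutoff-system}. In that system the coefficient is $\mathfrak b_\jmath=\chi_\jmath$ and the forcing is $f_{a,\jmath}=\zeta_{a,\Lambda}^{\chi_\jmath,\rho_\jmath}$. Proposition~\ref{prop:finite-cutoff-regularity} shows that the remainders lie in $C_TH^m\cap C_T^1H^{m-1}$ for every $m$. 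For fixed $\Lambda$ the baselines $\rho_\jmath(\Psi_{a,\Lambda}+V_{a,\Lambda})$ are smooth and compactly supported, so both pairs are energy solutions in $(C_TH^1\cap C_T^1L^2\cap C_TL^\infty)^2$. This gives the integrability hypothesis $\mathfrak b v_1,\mathfrak b u_2\in L^1_TL^\infty_{\mathrm{loc}}$.

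Second, check the coincidence hypotheses on the backward cone $\mathcal Q_{K,T}$. Every point of $\mathcal Q_{K,T}$ has spatial component in $K^{[\speedmax T_0]}$. By \eqref{eq:localization-geometry}, $\chi_\jmath\equiv1$ on $K^{[\speedmax T_0+3]}$, and $\rho_\jmath\equiv1$ on $(\supp\chi_\jmath)^{[2]}$. Hence both localizers equal one on an open neighbourhood $U$ of $K^{[\speedmax T_0]}$, so $\mathfrak b_1=\mathfrak b_2=1$ there. Lemma~\ref{lem:auxiliary-forcing-local}, in its finite-cutoff form, gives $\zeta_{a,\Lambda}^{\chi_\jmath,\rho_\jmath}=\xi_{a,\Lambda}$ in $\mathcal D'((0,T_0)\times U)$ for both pairs, so $f_{a,1}=f_{a,2}$ on the cone. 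Both problems carry the same Cauchy data $y_\Lambda$. Indeed, by \eqref{eq:localized-baseline} and the vanishing of the Cauchy data of $\Psi_{a,\Lambda}$ and $V_{a,\Lambda}$, the baselines contribute zero initial data, and the full datum sits in $Y_a$. The data therefore agree everywhere, in particular on $K^{[\speedmax T]}$.

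Lemma~\ref{lem:finite-propagation} then gives $(u_1,v_1)=(u_2,v_2)$ on $[0,T]\times K$. For the product sources, the argument above actually holds with $K$ replaced by a compact neighbourhood of $K$, still inside the region where $\chi_1=\chi_2=1$. So the fields agree on an open set containing $[0,T]\times K$, and $\chi_1u_1v_1=u_1v_1=u_2v_2=\chi_2u_2v_2$ there, since the finite-cutoff fields are continuous functions.

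The only delicate point I expect is that the forcings must agree as distributions on a space-time neighbourhood of the whole cone, up to the initial time. The forcing identity in Lemma~\ref{lem:auxiliary-forcing-local} is stated on the open set $(0,T_0)\times U$. This is sufficient because the energy argument in Lemma~\ref{lem:finite-propagation} only integrates the source against the cone cutoff in time over $(0,\tau)$. The step I would therefore verify first is that the margin $+3$ in \eqref{eq:localization-geometry} really provides such a neighbourhood $U$ of every time slice of the cone.
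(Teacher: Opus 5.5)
Your proposal is correct and follows the same route as the paper. Both arguments apply Lemma~\ref{lem:finite-propagation} to the two reconstructions, with these inputs: Proposition~\ref{prop:finite-cutoff-regularity} for the energy and $L^\infty$ regularity; the adaptation geometry, giving $\chi_1=\chi_2=\rho_1=\rho_2=1$ near $\mathcal Q_{K,T}$; Lemma~\ref{lem:auxiliary-forcing-local}, which identifies both forcings with $\xi_{a,\Lambda}$ there; and the common Cauchy data. Your additional remarks, namely enlarging $K$ to a compact neighbourhood and noting that the open-time-interval forcing identity suffices, are harmless refinements of points the paper leaves implicit.
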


\begin{proof}
Write $(u_r,v_r)$ for the reconstruction associated with
$(\chi_r,\rho_r)$, $r=1,2$.  By
Proposition~\ref{prop:finite-cutoff-regularity}, both reconstructions belong to
\[
 (C_TH^1\cap C_T^1L^2\cap C_TL^\infty)^2.
\]
In particular, all regularity and coefficient hypotheses of
Lemma~\ref{lem:finite-propagation} hold on compact cones.

Let $\mathcal Q_{K,T}$ be the maximal-speed backward cone from
$[0,T]\times K$.  Cone adaptation gives
$\chi_1=\chi_2=\rho_1=\rho_2=1$ on a neighborhood of
$\mathcal Q_{K,T}$.  Hence, on this cone, the two cutoff systems have the
same nonlinear coefficient $\mathfrak b=1$.  By
Lemma~\ref{lem:auxiliary-forcing-local}, their forcing terms are the same
regularized Gaussian forcings $\xi_{a,\Lambda}$.  The Cauchy data agree on
$K^{[\speedmax T]}$.  Moreover,
\[
 v_1,\ u_2\in L^1([0,T];L^\infty(\mathcal Q_{K,T}))
\]
by the preceding Sobolev regularity.  Lemma~\ref{lem:finite-propagation}
therefore yields $(u_1,v_1)=(u_2,v_2)$ on $[0,T]\times K$.
Since the finite-cutoff fields are functions, their ordinary products agree
there as well.
\end{proof}

\begin{proposition}[Passage of finite propagation to the cutoff limits]
\label{prop:limit-localization-comparison}
Let $(\chi_1,\rho_1)$ and $(\chi_2,\rho_2)$ be adapted to $(K,T_0)$, and let
$(u_\jmath,v_\jmath,\mathcal N_\jmath)$, $\jmath=1,2$, be the corresponding
limiting localized solutions with existence times $\tau_1,\tau_2$.  Fix
$T<\tau_1\wedge\tau_2$.  Choose
$\eta\in C_c^\infty(\R^3)$ such that $\eta=1$ on a neighborhood of $K$ and
$\operatorname{supp}\eta\subset K^{[1]}$.  For every sufficiently large
integer $J$, on the finite intersection of the two fixed-profile dyadic
probability-one events,
\begin{equation}\label{eq:limit-localization-comparison}
 \eta(u_1-u_2)=\eta(v_1-v_2)=0
 \quad\text{in }C_TH^{-J},
 \qquad
 \eta(\mathcal N_1-\mathcal N_2)=0
 \quad\text{in }L_T^1H^{-J}.
\end{equation}
For an arbitrary admissible cofinal cutoff sequence and deterministic
$T$, the same conclusion holds on the event
$\{T<\tau_1\wedge\tau_2\}$ in the partial convergence sense of
\cref{def:partial-probability-convergence}: the two localized approximations
converge in probability in the same Banach spaces to the common triple in
\eqref{eq:limit-localization-comparison}.
\end{proposition}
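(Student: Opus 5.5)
The plan is to pass to the limit in the finite-cutoff identity of Proposition~\ref{prop:finite-localization-independence}. First I work on the intersection of the two fixed-profile dyadic probability-one events from Proposition~\ref{prop:measurable-lifetimes} and Corollary~\ref{cor:localized-cutoff-passage}. Fix $T<\tau_1\wedge\tau_2$. On this event Corollary~\ref{cor:localized-cutoff-passage} gives a path-dependent interval on which the fixed points for each pair exist for large dyadic $\Lambda$ and converge in $\mathcal Z_T$. To reach the full horizon $T$ rather than a short interval, I use Theorem~\ref{thm:spectral-approximation}(i) in its established form: finite-cutoff fixed points exist for all large $\Lambda$ on $[0,T]$ whenever $T<\tau$. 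Alternatively, I iterate the short-interval passage along a finite partition, using the translated-interval moduli of Proposition~\ref{prop:interval-local-uniqueness} and Lemma~\ref{lem:cutoff-localized-small}.

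Then, for each such $\Lambda$, Proposition~\ref{prop:finite-localization-independence} gives $u^{\chi_1,\rho_1}_\Lambda=u^{\chi_2,\rho_2}_\Lambda$ and $v^{\chi_1,\rho_1}_\Lambda=v^{\chi_2,\rho_2}_\Lambda$ on $[0,T]\times K$. Since $\eta\chi_r=\eta$ on the relevant cone, the products $\eta u_\Lambda v_\Lambda$ also agree.

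The second step identifies the limits in $C_TH^{-J}$. By \eqref{eq:finite-reconstruction} and the definition of the limit fields,
\[
 \eta\bigl(u_{r,\Lambda}-u_r\bigr)
 =\eta\rho_r\bigl(\Psi_{\W,\Lambda}-\Psi_{\W}+V_{\W,\Lambda}-V_{\W}\bigr)
 +\eta\bigl(X_{\W,\Lambda}-X_{\W}+Y_{\W,\Lambda}-Y_{\W}\bigr).
\]
The stochastic part tends to zero in $C_T\mathcal C^{-1}$ by Proposition~\ref{prop:baseline-regularities}. Compact support then places it in $C_TH^{-J}$ for $J\ge2$, after the embedding of compactly supported $\mathcal C^{-1}$ into $H^{-J}$. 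The remainder part tends to zero in $C_TL^2$, since $\mathcal Z_T\hookrightarrow C_TL^2$. Hence the two limits of the equal sequences coincide, which gives $\eta(u_1-u_2)=0$; the argument for $v$ is identical.

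For the sources, Proposition~\ref{prop:source-convergence} shows that $\chi_r u_{r,\Lambda}v_{r,\Lambda}\to\mathcal N_r$ componentwise in the topology \eqref{eq:nonlinear-source-topology}. Multiplying by $\eta$, which equals $\eta\chi_r$, the three components embed into $L_T^1H^{-J}$ for large $J$. Indeed $C_T\mathcal C^{-1-\beta_\Sigma-\kappa}$ with compact support embeds there, and so does $\mathfrak R_T^{r,\sigma}$. For the endpoint piece $\mathfrak D_T^{s_2}$, I use $L^{4/3}_{t,x}\hookrightarrow L^1_TH^{-J}$ after multiplication by a compact cutoff, via Sobolev embedding $L^{4/3}\hookrightarrow H^{-3/4}$. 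This gives $\eta(\mathcal N_1-\mathcal N_2)=0$.

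The in-probability case repeats the same comparison on the partial events. Proposition~\ref{prop:random-lifespan-cutoff-passage}, applied to each pair, yields convergence in probability on $\{T<\tau_r\}$ of the fields and sources in these Banach spaces. The finite-cutoff equality on the joint existence event $\mathcal A_T^{(1)}\cap\mathcal A_T^{(2)}$ then forces the limits to agree: the probability that $\|\eta(u_1-u_2)\|>2\varepsilon$ on $\{T<\tau_1\wedge\tau_2\}$ is bounded by the sum of the two exceptional probabilities, and both tend to zero.

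The main obstacle is the first step: guaranteeing that finite-cutoff solutions exist on the whole interval $[0,T]$ for large $\Lambda$, not just on a short path-dependent interval. I would handle this by the finite-partition continuation argument. On each subinterval I restart the contraction with the zero-history operators, noting that the data $d_T$ stays small and the bound $A$ is uniform along the tail.
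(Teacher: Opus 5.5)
Your route is the paper's: equality of the two finite-cutoff field--source triples near $K$, passage to the limit in $C_TH^{-J}$ and $L_T^1H^{-J}$ via \cref{cor:localized-cutoff-passage,prop:source-convergence}, and a triangle-inequality argument on the partial events for general cofinal cutoffs. The gap is in the first step. \Cref{prop:finite-localization-independence} gives equality of the reconstructed fields only on $[0,T]\times K$. Since $\eta$ equals one on a neighborhood of $K$, it is necessarily nonzero at points outside $K$. Agreement on $K$ therefore yields neither $\eta(u_{1,\Lambda}-u_{2,\Lambda})=0$ nor $\eta u_{1,\Lambda}v_{1,\Lambda}=\eta u_{2,\Lambda}v_{2,\Lambda}$, and without these identities there is nothing to pass to the limit.

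The missing step is to rerun the finite-propagation argument with $\operatorname{supp}\eta$ in place of $K$. Since $\operatorname{supp}\eta\subset K^{[1]}$ and $T\le T_0$, the maximal-speed backward cone from $[0,T]\times\operatorname{supp}\eta$ lies in $[0,T]\times K^{[\speedmax T_0+1]}$. The adapted-pair geometry of \cref{def:adapted-localization} ($\chi_r\equiv1$ on $K^{[\speedmax T_0+3]}$, $\rho_r\equiv1$ on $(\operatorname{supp}\chi_r)^{[2]}$) makes both pairs identically one on a neighborhood of this cylinder. Then \cref{lem:auxiliary-forcing-local,lem:finite-propagation} apply on the enlarged cone, since the cutoff Cauchy data coincide everywhere, and give equality of the fields on $[0,T]\times\operatorname{supp}\eta$. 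Because $\chi_1=\chi_2=1$ on $\operatorname{supp}\eta$, the finite-cutoff source identity then also gives $\eta\mathcal N_{1,\Lambda}=\eta\mathcal N_{2,\Lambda}$. This is exactly where the spatial margin built into the adapted pairs is used. The same corrected identity is what you need on the joint existence events in your in-probability argument.

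As for the step you flag as the main obstacle, no partition continuation is needed. For $T<\tau_1\wedge\tau_2$, pick a rational $q\in(T,\tau_1\wedge\tau_2)$. By the construction of the lifetimes in \cref{prop:measurable-lifetimes}, each limiting map is then a contraction of a fixed ball on $[0,q]$ with margin $1/16$. The perturbative estimate from the proof of \cref{prop:in-probability-cutoff-passage} makes the cutoff maps contractions of the same ball for all large dyadic $\Lambda$. This supplies existence and convergence on all of $[0,T]$. Your alternative of restarting with the zero-history operators would need an extra decomposition anyway: those operators act only on inputs vanishing before the left endpoint, whereas a continuation carries nonzero history.
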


\begin{proof}
Because $\operatorname{supp}\eta\subset K^{[1]}$ and $T\le T_0$, the
maximal-speed backward cone from $[0,T]\times\operatorname{supp}\eta$ is
contained in $[0,T]\times K^{[\speedmax T_0+1]}$.  Both localization pairs
are identically one on a neighborhood of this larger cylinder.  Hence, for
every cutoff level at which both fixed points exist, the proof of
\cref{prop:finite-localization-independence}, with
$\operatorname{supp}\eta$ in place of $K$, gives
\[
 \eta u_{1,\Lambda}=\eta u_{2,\Lambda},
 \qquad
 \eta v_{1,\Lambda}=\eta v_{2,\Lambda}.
\]
On $\operatorname{supp}\eta$ one has $\chi_1=\chi_2=1$, so the
finite-cutoff source identity also gives
\[
 \eta\mathcal N_{1,\Lambda}
 =\eta u_{1,\Lambda}v_{1,\Lambda}
 =\eta u_{2,\Lambda}v_{2,\Lambda}
 =\eta\mathcal N_{2,\Lambda}.
\]

Choose $J$ so large that multiplication by $\eta$ maps every local
stochastic distribution appearing in the reconstruction continuously into
$H^{-J}$, and every source space in \cref{def:source-topology} continuously
into $L_T^1H^{-J}$.  Along a fixed-profile dyadic sequence, the reconstructed
fields converge in $C_TH^{-J}$ and the sources converge in $L_T^1H^{-J}$ by
\cref{cor:localized-cutoff-passage,prop:source-convergence}.  Passing to the limit in the preceding identities proves
\eqref{eq:limit-localization-comparison}.

For a general cofinal sequence, intersect with
$\{T<\tau_1\wedge\tau_2\}$.  On the common deterministic contraction events
the two finite-cutoff triples are again equal whenever both are defined, while
the probability of the complementary failure event tends to zero.  Each
triple converges in probability in
\[
 (C_TH^{-J})^2\times L_T^1H^{-J}
\]
by \cref{prop:in-probability-cutoff-passage,prop:source-convergence}.  If
$X_\Lambda=Y_\Lambda$ outside events of probability tending to zero and
$X_\Lambda\to X$, $Y_\Lambda\to Y$ in probability in a Banach space, then
for every $\varepsilon>0$,
\[
 \Pp(\|X-Y\|>\varepsilon)
 \le \Pp(\|X-X_\Lambda\|>\varepsilon/3)
   +\Pp(X_\Lambda\ne Y_\Lambda)
   +\Pp(\|Y_\Lambda-Y\|>\varepsilon/3),
\]
and the right-hand side tends to zero.  Hence $X=Y$ almost surely.  Applying
this observation to the localized field-source triples proves the last
assertion.
\end{proof}

\begin{theorem}[Localized solutions and independence of localization]
\label{thm:noise-to-solution}
For each adapted localization pair and each fixed-profile dyadic cutoff
family, there are an event of probability one and a strictly positive random
time such that the finite-cutoff fixed points, reconstructed fields, and
nonlinear sources converge along the full dyadic sequence.  For an arbitrary
admissible cofinal sequence, the corresponding convergence holds locally in
probability on the deterministic contraction events
\eqref{eq:deterministic-contraction-event}; these events exhaust probability one as
in \eqref{eq:contraction-events-exhaust}.

If two localization pairs are adapted to the same compact cone, their limiting
fields and sources agree on the overlap up to the minimum existence time.
For every compact $K$ and every $T$ below the two existence times, the
common restriction to $[0,T]\times K$ satisfies \eqref{eq:main-system} in the
sense of Definition~\ref{def:local-solution}.  It is independent of the
localization pair and of the admissible spectral cutoff profile.
\end{theorem}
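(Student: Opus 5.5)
The proof assembles results already established, in three stages.

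\emph{Existence, lifetimes and convergence for one localization pair.} Fix an adapted pair $(\chi,\rho)$ and a fixed-profile dyadic family. Corollary~\ref{cor:Xi-convergence} gives a common probability-one event on which $d_{T_0}^{\Theta}((\Theta_\Lambda,\Xi_\Lambda),(\Theta,\Xi))\to0$ along the full dyadic sequence. Lemma~\ref{lem:data-cutoff-convergence} supplies the corresponding convergence $y_\Lambda\to y$. On this event, Lemma~\ref{lem:cutoff-localized-small} shows that the limit together with a tail of the cutoff data forms a family satisfying Definition~\ref{def:localized-smallness-class} on a short interval. Theorem~\ref{thm:deterministic-closure} and Corollary~\ref{cor:localized-cutoff-passage} then give fixed points $Z_\Lambda\to Z$ in $\mathcal Z_T$. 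The strictly positive random time is the measurable $\tau_K^{\chi,\rho}$ of Proposition~\ref{prop:measurable-lifetimes}.

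The reconstructed fields are linear in $Z$ plus the localized baseline $\rho(\Psi+V)$. The baseline converges by Proposition~\ref{prop:baseline-regularities} and Theorem~\ref{thm:first-picard-fullspace}, so the fields converge. The sources converge by Proposition~\ref{prop:source-convergence}, which also identifies their limit with that of $\chi u_\Lambda v_\Lambda$. For an arbitrary cofinal sequence, I replace this pathwise argument by Proposition~\ref{prop:in-probability-cutoff-passage} on the events $\mathcal G_{A,T}$. These events exhaust probability one by \eqref{eq:contraction-events-exhaust}. Lifting the statement up to the reference lifetime uses Proposition~\ref{prop:random-lifespan-cutoff-passage}.

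\emph{Independence of localization.} At finite cutoff, Proposition~\ref{prop:finite-localization-independence} gives equality of the two reconstructions and of their ordinary products on $[0,T]\times K$. This relies on finite propagation (Lemma~\ref{lem:finite-propagation}) and on the regularity of Proposition~\ref{prop:finite-cutoff-regularity}. Proposition~\ref{prop:limit-localization-comparison} passes this equality to the limit after testing with $\eta$, almost surely for fixed profiles and in probability for general cofinal sequences. Cutoff-profile independence follows because every component of $\Xi$ has a profile-independent limit (Corollary~\ref{cor:Xi-convergence}). The fixed point and the source are locally Lipschitz functions of $(\Xi,y)$, so the limiting solution is profile-independent as well.

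\emph{The equation on the cone.} It remains to verify Definition~\ref{def:local-solution}. At finite cutoff, Proposition~\ref{prop:finite-cutoff-reconstruction} shows that the reconstruction solves \eqref{eq:cutoff-system}, i.e.\ $L_a u_\Lambda=\chi u_\Lambda v_\Lambda+\zeta_{a,\Lambda}^{\chi,\rho}$. On a neighbourhood of $\mathcal Q_{K,T}$ we have $\chi=\rho=1$. There $\zeta_{a,\Lambda}^{\chi,\rho}=\xi_{a,\Lambda}$ by Lemma~\ref{lem:auxiliary-forcing-local}, and $\chi u_\Lambda v_\Lambda=\chi\Theta_\Lambda+\mathscr F_{X,\Lambda}+\mathscr F_{Y,\Lambda}$. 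I pass to the limit in $\mathcal D'((0,T)\times K)$ using Lemma~\ref{lem:source-summation-continuity}. The field convergence handles the left side, $\xi_{a,\Lambda}\to\xi_a$ in $\mathcal D'$, and the source convergence handles the right side. The result is $L_{\W}u=\mathcal N+\xi_{\W}$ and $L_{\K}v=\mathcal N+\xi_{\K}$.

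The decomposition \eqref{eq:local-reconstruction} and the Cauchy-data conditions hold by construction of $\mathscr P_{\Xi,y}$. The one point needing care is the mismatch of sources: Definition~\ref{def:local-solution} uses the unlocalized $\Theta$, while the fixed point produces $\chi\Theta$. These agree on the cone because $\chi=1$ there, and I expect this identification, together with the consistency of restrictions across the two existence times, to be the main (bookkeeping) obstacle. Corollary~\ref{cor:horizon-gluing} handles the gluing over rational horizons.
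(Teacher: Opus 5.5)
Your proposal is correct and follows the paper's proof in all essentials. You obtain pathwise convergence from Corollary~\ref{cor:Xi-convergence}, Corollary~\ref{cor:localized-cutoff-passage} and Proposition~\ref{prop:source-convergence}, independence of the pair and of the profile from Proposition~\ref{prop:limit-localization-comparison}, and then identify the equation on the inner cone. The only difference is the order in the last step: you restrict to the cone at finite cutoff and pass $\xi_{a,\Lambda}\to\xi_a$ directly. The paper instead first passes to the limit globally in $\zeta_{a,\Lambda}^{\chi,\rho}=L_aB_{a,\Lambda}^\rho-\chi\Theta_\Lambda$, using the baseline and $\Theta$ convergence, and only then applies Lemma~\ref{lem:auxiliary-forcing-local}; this is also the natural pathwise justification of your step $\xi_{a,\Lambda}\to\xi_a$.
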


\begin{proof}
Fix an adapted localization pair $(\chi,\rho)$.  On the common dyadic event of
Corollary~\ref{cor:Xi-convergence}, choose a realization and a time
$T$ below its existence lifetime.  Corollary~\ref{cor:localized-cutoff-passage}
gives $Z_\Lambda\to Z$ in $\mathcal Z_T$ along the full dyadic sequence.
The decomposition
\[
 u_\Lambda^{\chi,\rho}-u^{\chi,\rho}
 =\rho(\Psi_{\W,\Lambda}-\Psi_{\W})
  +\rho(V_{\W,\Lambda}-V_{\W})
  +(X_{\W,\Lambda}-X_{\W})
  +(Y_{\W,\Lambda}-Y_{\W})
\]
(and the analogous identity in the $\K$ channel) shows convergence in
$C_T\mathcal D'(\R^3)$.  Proposition~\ref{prop:source-convergence} gives
convergence of the localized nonlinear terms.

For two localization pairs adapted to the same compact cone,
\cref{prop:limit-localization-comparison} passes the finite-cutoff
finite-propagation identity to the field and source limits in fixed Banach
spaces.  It also shows that every admissible cofinal cutoff sequence converges
in probability to this same common restriction.  Thus both the localization
pair and the spectral profile give the same fields and nonlinear source on
$[0,T]\times K$.

It remains to identify the equation.  For every test function
$\phi\in C_c^\infty((0,T)\times\R^3)$,
\[
 \langle\zeta_{a,\Lambda}^{\chi,\rho},\phi\rangle
 =\langle B_{a,\Lambda}^\rho,L_a\phi\rangle
  -\langle\chi\Theta_\Lambda,\phi\rangle.
\]
The baseline and $\Theta$ convergence permit passage to the limit, so
$\zeta_a^{\chi,\rho}=L_aB_a^\rho-\chi\Theta$ in distributions.  Passing to
the limit in the cutoff equations then gives
\[
 L_a u_a^{\chi,\rho}
 =\mathcal N^{\chi,\rho}+\zeta_a^{\chi,\rho}
 \quad\text{in }\mathcal D'((0,T)\times\R^3),
\]
with $u_{\W}^{\chi,\rho}=u^{\chi,\rho}$ and
$u_{\K}^{\chi,\rho}=v^{\chi,\rho}$.  On every inner cylinder on which
$\chi=\rho=1$, Lemma~\ref{lem:auxiliary-forcing-local} gives
$\zeta_a^{\chi,\rho}=\xi_a$, while
$\mathcal N^{\chi,\rho}$ is the selected local limit of the unweighted
products.  Thus the restriction satisfies \eqref{eq:main-system} in
the sense of Definition~\ref{def:local-solution}.  Applying the preceding
argument on the fixed compact exhaustion produces the compatible family of local solutions.
\end{proof}

\begin{proof}[Proof of Theorem~\ref{thm:local-wp}]
The required stochastic distributions and operators are constructed in
\cref{thm:resonant-operators,thm:first-picard-fullspace,thm:cubic-fullspace}.
The deterministic solution map and uniqueness in $\mathcal Z_T$ follow from
\cref{thm:deterministic-closure,prop:interval-local-uniqueness}.
Strong measurability and compatibility across deterministic horizons follow
from
\cref{cor:measurable-paracontrolled-fixed-point,cor:horizon-gluing}.
\Cref{prop:measurable-lifetimes} constructs compatible measurable lifetimes,
and \cref{thm:noise-to-solution} proves independence of the localization
pair.  The perturbative estimate \eqref{eq:perturbed-fixed-point} gives local
Lipschitz dependence.
\end{proof}

\begin{proof}[Proof of Theorem~\ref{thm:spectral-approximation}]
For fixed-profile dyadic cutoffs, Corollary~\ref{cor:Xi-convergence},
the deterministic stability
estimate, and Corollary~\ref{cor:localized-cutoff-passage} give
\eqref{eq:main-fixed-point-convergence}.  For the fields, write the difference
as the sum of the localized $\Psi$-difference, the localized $V$-difference,
and the four fixed-point differences, as in the proof of
Theorem~\ref{thm:noise-to-solution}.  Every summand converges in
$C_T\mathcal D'(\R^3)$, which proves \eqref{eq:main-field-convergence}.
Proposition~\ref{prop:source-convergence} gives convergence of the three
source terms.  Multiplying the finite-cutoff identity
\eqref{eq:finite-source-identity} by $\eta_K$ and using
$\eta_K\chi_K^\circ=\eta_K$ yields
\[
 \eta_Ku_\Lambda^\circ v_\Lambda^\circ
 =\eta_K\bigl(\chi_K^\circ\Theta_\Lambda
 +\mathscr F_{X,\Lambda}^{\chi_K^\circ,\rho_K^\circ}(Z_\Lambda^\circ)
 +\mathscr F_{Y,\Lambda}^{\chi_K^\circ,\rho_K^\circ}(Z_\Lambda^\circ)\bigr).
\]
The source convergence permits passage to the limit and proves the localized
product statement.

For an arbitrary admissible cofinal sequence
$(\pi_{\Lambda_n})_{n\ge1}$, set
\[
 \mathcal A_{n,T}^K
 :=\mathcal A_T(\Xi_{\Lambda_n}^\circ,y_{\Lambda_n}),
 \qquad
 Z_{n,T}^\circ
 :=\mathscr S_T(\Xi_{\Lambda_n}^\circ,y_{\Lambda_n}).
\]
Proposition~\ref{prop:random-lifespan-cutoff-passage} gives convergence in
the sense of Definition~\ref{def:partial-probability-convergence}; the field
and source statements follow from the locally Lipschitz reconstruction and
source maps.  Theorem~\ref{thm:noise-to-solution} identifies the limits formed
with different localization pairs on every common backward cone.
\end{proof}

\begin{remark}[The cubic contraction]
\label{rem:cubic-renormalization}
The independent cross quadratic source has no scalar Wick constant.  At
cubic order, the same-color contraction produces the first-chaos components
$\Gamma_{\W}^{(1)}$ and $\Gamma_{\K}^{(1)}$.  They are included in
$\Gamma_{\W}$ and $\Gamma_{\K}$ because they occur in the finite-cutoff
product formula.  Thus $\Gamma_a^{\chi,\rho}$ is the sum of the centered
third chaos and the integrated first-chaos contraction.
\end{remark}

\appendix

\section{Phase geometry and first Picard smoothing}\label{app:first-picard}
\subsection{The distinct-speed phase gap}\label{app:phase-gap}

We prove the low--high phase estimate uniformly on separated parameter
classes.  The argument is symmetric in the two propagation speeds.

For $a,b\in\mathfrak C$ with $a\ne b$, set
\begin{equation}\label{eq:two-frequency-phases}
  \Phi_{a|b}^{\eps_1,\eps_2}(q,\ell)
  =\eps_1\omega_a(q+\ell)+\eps_2\omega_b(\ell),
  \qquad \eps_1,\eps_2\in\{-1,1\}.
\end{equation}

\begin{lemma}[Proportional distinct-speed phase gap]\label{lem:explicit-phase-gap}
Let $\parvec$ satisfy \eqref{eq:parameter-vector}.  There exist
$\delta_0=\delta_0(\parvec)>0$, $N_{\mathrm{sp}}=N_{\mathrm{sp}}(\parvec)\ge2$,
and $c_{\mathrm{sp}}=c_{\mathrm{sp}}(\parvec)>0$ such that, whenever
\[
  |\ell|\sim N,\qquad N\ge N_{\mathrm{sp}},
  \qquad |q|\le\delta_0|\ell|,
\]
one has
\begin{equation}\label{eq:parametric-difference-gap}
  |\omega_a(\ell+q)-\omega_b(\ell)|
  \ge c_{\mathrm{sp}}N,
  \qquad a\ne b.
\end{equation}
Every sum phase in \eqref{eq:two-frequency-phases} has the same type of lower
bound.  Consequently
\begin{equation}\label{eq:all-phase-gap}
  |\Phi_{a|b}^{\eps_1,\eps_2}(q,\ell)|
  \ge c_{\mathrm{sp}}N
\end{equation}
for every sign branch generated by a same-color block.  The constants
$\delta_0$, $N_{\mathrm{sp}}$, and $c_{\mathrm{sp}}$ depend only on the
dispersive parameters and the Littlewood--Paley support constants; they are
independent of $\mathfrak h_{\W}$ and $\mathfrak h_{\K}$.
\end{lemma}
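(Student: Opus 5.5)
The plan is to reduce everything to the asymptotic difference of the two linear phases and then absorb the perturbation from $q$ and from the mass. Write $\omega_{\W}(\xi)=c_{\W}|\xi|$ and $\omega_{\K}(\xi)=c_{\K}|\xi|+e_{\K}(\xi)$ with
\[
 0\le e_{\K}(\xi)=\sqrt{m^2+c_{\K}^2|\xi|^2}-c_{\K}|\xi|
 =\frac{m^2}{\sqrt{m^2+c_{\K}^2|\xi|^2}+c_{\K}|\xi|}\le\frac{m^2}{2c_{\K}|\xi|}.
\]
Thus $|\omega_a(\xi)-c_a|\xi||\le m^2/(2c_{\K}|\xi|)$ for both channels. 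Let $\kappa_-N\le|\ell|\le\kappa_+N$ be the fixed Littlewood--Paley annulus bounds.

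For the difference phase with $a\ne b$, we then get
\[
 |\omega_a(\ell+q)-\omega_b(\ell)|
 \ge |c_a|\ell+q|-c_b|\ell||-\frac{m^2}{c_{\K}\min\{|\ell|,|\ell+q|\}} .
\]
Since $||\ell+q|-|\ell||\le|q|\le\delta_0|\ell|$, the first term is at least
\[
 |c_a-c_b||\ell|-c_a\delta_0|\ell|\ge(\speedgap-\speedmax\delta_0)\kappa_-N .
\]
Choose $\delta_0:=\min\{1/2,\speedgap/(2\speedmax)\}$. This makes the first term at least $\tfrac12\speedgap\kappa_-N$, and $|\ell+q|\ge\tfrac12|\ell|$. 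The error term is then at most $2m^2/(c_{\K}\kappa_-N)$. Pick $N_{\mathrm{sp}}$ so that this is at most $\tfrac14\speedgap\kappa_-N$, i.e. $N_{\mathrm{sp}}^2\ge 8m^2/(c_{\K}\speedgap\kappa_-^2)$. This gives \eqref{eq:parametric-difference-gap} with $c_{\mathrm{sp}}=\tfrac14\speedgap\kappa_-$.

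The sum phases are easier: $\omega_a(\ell+q)+\omega_b(\ell)\ge c_b|\ell|\ge\underline c\,\kappa_-N$, with no threshold needed. The branches with signs $(\eps_1,\eps_2)=(-1,-1)$ and $(1,-1)$ versus $(-1,1)$ are negatives of these two cases, so $|\Phi^{\eps_1,\eps_2}_{a|b}|$ is bounded below in all four branches. Shrinking $c_{\mathrm{sp}}$ to the minimum of the two constants yields \eqref{eq:all-phase-gap}.

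The constants involve only $c_{\W},c_{\K},m$ and $\kappa_\pm$, never $\mathfrak h_a$. The dependence matches the ledger \eqref{eq:phase-parameter-ledger}: $\delta_0,c_{\mathrm{sp}}\gtrsim\speedgap$ and $N_{\mathrm{sp}}\lesssim\speedgap^{-1/2}$. The only delicate point is the compact crossing region when $c_{\K}<c_{\W}$, where $\omega_{\K}(\ell)$ may equal $c_{\W}|\ell|$ for small $|\ell|$. The threshold $N_{\mathrm{sp}}$ is exactly what excludes this region, so the obstacle reduces to choosing $N_{\mathrm{sp}}$ above it, which the explicit bound on $e_{\K}$ does.
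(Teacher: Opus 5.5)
Your proof is correct and follows essentially the same route as the paper. You isolate the Klein--Gordon mass error $m^2/(2c_{\K}r)$ and apply the reverse triangle inequality with $\bigl||\ell+q|-|\ell|\bigr|\le|q|$; you then take $\delta_0\sim\speedgap/\speedmax$ and $N_{\mathrm{sp}}$ large enough to absorb the $O(r^{-1})$ term, and bound the sum phases below by a single linear speed. Your explicit constants ($\delta_0=\min\{1/2,\speedgap/(2\speedmax)\}$, $c_{\mathrm{sp}}=\speedgap\kappa_-/4$, $N_{\mathrm{sp}}\sim\speedgap^{-1/2}$) agree with the quantitative dependence recorded in Proposition~\ref{prop:uniform-phase-class}.
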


\begin{proof}
Write
\[
  e_m(r):=\sqrt{\massK^2+\speedK^2r^2}-\speedK r.
\]
For $r>0$,
\begin{equation}\label{eq:mass-error}
  0\le e_m(r)
  =\frac{\massK^2}
  {\sqrt{\massK^2+\speedK^2r^2}+\speedK r}
  \le \frac{\massK^2}{2\speedK r}.
\end{equation}
Let $r=|\ell|$.  In either orientation of the wave--Klein--Gordon difference,
the reverse triangle inequality and
$||\ell+q|-|\ell||\le|q|$ give
\begin{equation}\label{eq:raw-parametric-gap}
  |\omega_a(\ell+q)-\omega_b(\ell)|
  \ge \speedgap r-\speedmax|q|-C_{\parvec}r^{-1},
  \qquad a\ne b,
\end{equation}
provided $|q|\le r/2$; in the Klein--Gordon-at-$\ell+q$ orientation, the last
term follows from \eqref{eq:mass-error} and $|\ell+q|\ge r/2$.
Choose
\[
  0<\delta_0<\frac{\speedgap}{4\speedmax}
\]
and then choose $N_{\mathrm{sp}}$ so large that
$C_{\parvec}r^{-1}\le(\speedgap/4)r$ for $r\ge N_{\mathrm{sp}}/2$.
Equation \eqref{eq:raw-parametric-gap} then gives
\[
  |\omega_a(\ell+q)-\omega_b(\ell)|
  \ge \frac{\speedgap}{2}|\ell|
  \gtrsim_{\parvec} N.
\]
For a sum phase, put $c_{\min}:=\min\{\speedW,\speedK\}$.  Since
$\omega_{\W}(\zeta)\ge c_{\min}|\zeta|$ and
$\omega_{\K}(\zeta)\ge c_{\min}|\zeta|$, one has
\[
  \omega_a(\ell+q)+\omega_b(\ell)
  \ge c_{\min}(|\ell+q|+|\ell|)\gtrsim_{\parvec}N.
\]
This proves all sign branches.
\end{proof}

\begin{proposition}[Uniform phase constants on separated parameter classes]
\label{prop:uniform-phase-class}
Fix a class $\mathfrak P(\underline c,\overline c,
\underline m,\overline m,\delta_*)$.  The constants in
Lemma~\ref{lem:explicit-phase-gap} may be chosen uniformly on that class.  One
may take, after changing the fixed Littlewood--Paley constants,
\[
  \delta_0=\min\left\{\frac14,
  \frac{\delta_*}{8\overline c}\right\},
  \qquad
  c_{\mathrm{sp}}
  \ge \kappa_{\mathrm{ann}}
      \min\{\delta_*,\underline c\}>0,
\]
and
\[
  N_{\mathrm{sp}}
  \le C_{\mathrm{ann}}\left(1+
  \frac{\overline m}{\sqrt{\underline c\,\delta_*}}\right),
\]
where $\kappa_{\mathrm{ann}},C_{\mathrm{ann}}>0$ depend only on the fixed
annular support constants.
The radial derivative lower bounds used in the continuous phase-layer
arguments and the scalar wave/Klein--Gordon Strichartz constants are likewise
uniform on the class.  These constants are independent of the forcing orders
and profiles.
\end{proposition}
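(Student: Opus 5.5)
The plan is to retrace the proof of Lemma~\ref{lem:explicit-phase-gap} and track each constant explicitly in terms of $(\underline c,\overline c,\underline m,\overline m,\delta_*)$. Every estimate in that proof came from a single inequality, \eqref{eq:raw-parametric-gap}, so it suffices to make its error term $C_{\parvec}r^{-1}$ class-uniform and then redo the two threshold choices.

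\emph{Uniform raw gap.} By \eqref{eq:mass-error}, the mass error satisfies $e_m(r)\le m^2/(2c_{\K}r)\le \overline m^2/(2\underline c\,r)$. In the orientation where the Klein--Gordon frequency sits at $\ell+q$ we have $|\ell+q|\ge r/2$, so the error is at most $\overline m^2/(\underline c\,r)$. We also use $c_*\le\overline c$ and $\delta_{\mathrm{sp}}\ge\delta_*$. Thus, for $|q|\le r/2$,
\[
 |\omega_a(\ell+q)-\omega_b(\ell)|\ge \delta_* r-\overline c|q|-\frac{\overline m^2}{\underline c\,r}.
\]
This holds in both orientations, uniformly on the class.

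\emph{Choice of constants.} Take $\delta_0=\min\{1/4,\delta_*/(8\overline c)\}$. Then $|q|\le\delta_0 r$ gives $|q|\le r/2$ and $\overline c|q|\le\delta_* r/8$. Next require $\overline m^2/(\underline c\,r)\le \delta_* r/8$, that is, $r\ge \sqrt8\,\overline m/\sqrt{\underline c\,\delta_*}$. On a shell $|\ell|\sim N$ we have $r\ge\kappa_- N$ with $\kappa_-$ the annular support constant. Hence it suffices that $N\ge N_{\mathrm{sp}}:=C_{\mathrm{ann}}\bigl(1+\overline m/\sqrt{\underline c\,\delta_*}\bigr)$, with $C_{\mathrm{ann}}$ depending only on $\kappa_-$; the extra $1$ guarantees $N_{\mathrm{sp}}\ge2$. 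This yields the difference bound $\ge \tfrac34\delta_* r\ge \tfrac34\kappa_-\delta_*N$.

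For the sum phases, $\omega_a(\ell+q)+\omega_b(\ell)\ge \underline c(|\ell+q|+|\ell|)\ge \underline c\,\kappa_- N$, which needs no speed gap. Taking $\kappa_{\mathrm{ann}}=\tfrac34\kappa_-$ therefore gives $c_{\mathrm{sp}}\ge\kappa_{\mathrm{ann}}\min\{\delta_*,\underline c\}$ for all sign branches. The phrase ``after changing the fixed Littlewood--Paley constants'' covers the enlargement needed so that the aperture \eqref{eq:aperture} built from this uniform $\delta_0$ keeps paraproduct supports inside $|q|\le\delta_0|\ell|$. That enlargement only affects finite-overlap constants.

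\emph{Remaining uniformity claims.} The radial derivative bounds follow from \eqref{eq:uniform-rescaled-phase-symbol}. We have $\omega_{\W}'=c_{\W}\in[\underline c,\overline c]$. For $\omega_{\K}'=c_{\K}^2r/\sqrt{m^2+c_{\K}^2r^2}$, this quantity lies between $\underline c^2\kappa_-/\sqrt{\overline m^2+\overline c^2\kappa_-^2}$ and $\overline c$ on annuli $r\ge\kappa_-$. The higher derivatives are bounded by a compactness argument on the parameter box. The Strichartz constants are uniform by Proposition~\ref{prop:parameter-uniform-strichartz}. No constant above involves $\mathfrak h_a$ or $\beta_a$, which gives the independence from the forcing orders and profiles.

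The only delicate point is bookkeeping in the mass term: its constant must degrade like $\overline m^2/\underline c$, not like an unquantified $C_{\parvec}$. This is exactly what produces the square-root form of $N_{\mathrm{sp}}$.
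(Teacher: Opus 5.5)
Your proposal is correct and follows the same route as the paper. In both, the mass-error coefficient in \eqref{eq:mass-error} is bounded by $\overline m^2/(2\underline c)$. The displayed $\delta_0$ makes the low-frequency displacement cost a fixed fraction of $\delta_* r$. The mass error is absorbed once $r^2\gtrsim\overline m^2/(\underline c\,\delta_*)$, and the Klein--Gordon radial derivative is bounded uniformly on the compact box. Your version makes the constants explicit (for example $\kappa_{\mathrm{ann}}=\tfrac34\kappa_-$ and the threshold $r\ge\sqrt8\,\overline m/\sqrt{\underline c\,\delta_*}$). For the Strichartz constants you cite Proposition~\ref{prop:parameter-uniform-strichartz}, where the paper instead appeals to continuous parameter dependence on compact positive sets.
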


\begin{proof}
In \eqref{eq:mass-error}, the coefficient of $r^{-1}$ is bounded by
$\overline m^2/(2\underline c)$.  With the displayed choice of $\delta_0$, the
low-frequency displacement costs at most a fixed fraction of $\delta_*r$.
Choosing $N_{\mathrm{sp}}^2\gtrsim
\overline m^2/(\underline c\,\delta_*)$ absorbs the mass error uniformly.  The
KG radial derivative is
\[
  \omega_{\K}'(r)=
  \frac{\speedK^2r}{\sqrt{\massK^2+\speedK^2r^2}},
\]
which has a positive uniform lower bound once $r$ is above a class-dependent
unit threshold.  The remaining linear estimates depend continuously on the
parameters on compact positive sets.
\end{proof}

\begin{corollary}[Normalized coercivity above the finite threshold]
\label{cor:normalized-coercivity}
With the aperture in \eqref{eq:aperture},
\[
  \inf_{N\ge N_{\mathrm{sp}}}
  \inf_{|\ell|\sim N,\ |q|\le\delta_0|\ell|}
  N^{-1}|\Phi_{a|b}^{\eps_1,\eps_2}(q,\ell)|
  \ge c_{\mathrm{sp}}>0.
\]
The shells $N<N_{\mathrm{sp}}$ are finite in number and are estimated without
phase division.
\end{corollary}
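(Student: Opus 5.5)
This is a restatement of Lemma~\ref{lem:explicit-phase-gap} in normalized form, so the plan is to read it off from \eqref{eq:all-phase-gap} and the choice of aperture. First I would check that the aperture in \eqref{eq:aperture} places every low--high pair inside the window of that lemma. With $c_{\mathrm{ap}}=\delta_0/64$, the smooth paraproduct supports give $|q|\lesssim c_{\mathrm{ap}}N$ and $|\ell|\sim N$. The annular support constants then yield $|q|\le\delta_0|\ell|$, after possibly enlarging the fixed Littlewood--Paley constant once. So the infimum in the statement ranges over pairs $(q,\ell)$ to which \eqref{eq:all-phase-gap} applies whenever $N\ge N_{\mathrm{sp}}$.

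Second, I would apply \eqref{eq:all-phase-gap} branch by branch. For the two difference branches, $\varepsilon_1\varepsilon_2=-1$, and the estimate is \eqref{eq:parametric-difference-gap}: $|\Phi|\ge c_{\mathrm{sp}}N$. For the sum branches I would use $\omega_a(\zeta)\ge c_{\min}|\zeta|$ together with $|\ell+q|\ge(1-\delta_0)|\ell|$. This gives $|\Phi|\ge c_{\min}(2-\delta_0)\kappa_-N$, where $\kappa_-$ is the lower annular constant. After shrinking $c_{\mathrm{sp}}$ once to the minimum of these finitely many constants, the same $c_{\mathrm{sp}}$ covers all four sign pairs. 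Dividing by $N$ and taking the infimum over $N\ge N_{\mathrm{sp}}$ and over the window gives the displayed coercivity. The constant $c_{\mathrm{sp}}$ does not depend on $N$, because the lower bound in \eqref{eq:raw-parametric-gap} is linear in $r$ once the mass error has been absorbed above $N_{\mathrm{sp}}$. On a separated class I would instead cite \cref{prop:uniform-phase-class} for the uniform version.

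The remaining claim is that the shells below $N_{\mathrm{sp}}$ are finite in number. This is immediate because $N$ is dyadic and $N_{\mathrm{sp}}<\infty$, so only about $\log_2 N_{\mathrm{sp}}$ shells remain. On those shells no division by $\Phi$ is made, and they are estimated directly. This is how \cref{lem:dyadic-volterra} and \cref{prop:localized-diagonal} treat them, with the constant enlarged by a factor depending on $\parvec$.

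The only delicate point is the first step. One must confirm that the smooth supports of $S_{<c_{\mathrm{ap}}N}$, and not only their nominal radii, lie in $|q|\le\delta_0|\ell|$. I would handle this by fixing the support ratio of the Littlewood--Paley template once, so that $64$ times its tolerance stays below $1$; this is exactly the margin the rule \eqref{eq:aperture} builds in.
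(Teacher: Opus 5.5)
Your proposal is correct and matches the paper, which gives no separate argument: the corollary is just \eqref{eq:all-phase-gap} from Lemma~\ref{lem:explicit-phase-gap} divided by $N$, plus the observation that only finitely many dyadic shells lie below $N_{\mathrm{sp}}$. Your first step on the aperture is harmless but not needed for the displayed inequality, since the infimum there is already taken over the window $|q|\le\delta_0|\ell|$; the aperture matters only when the corollary is applied to paraproduct supports, as noted after \eqref{eq:bony}.
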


\begin{remark}[Possible compact crossing]
If $\speedK<\speedW$, the equation
$\speedW r=\sqrt{\massK^2+\speedK^2r^2}$ can have one positive solution.  This
is a compact-frequency phenomenon.  The proof above starts only at
$N_{\mathrm{sp}}$, and the Volterra estimate treats every lower shell directly;
therefore the main theorem requires only $\speedW\ne\speedK$, not an ordering.
\end{remark}

\begin{remark}[Equal-speed phase]
If $\speedW=\speedK=:c_0$, then
\[
 \sqrt{\massK^2+c_0^2|\ell|^2}-c_0|\ell|
 \simeq |\ell|^{-1}
 \qquad(|\ell|\gg1).
\]
Thus the difference phase used in the same-color Volterra contraction is no
longer of order $|\ell|$.  The analysis below is consequently formulated on
separated-speed parameter classes.
\end{remark}

The phase denominator is used only above the high-frequency threshold.  At
$\xi=0$,
\[
  \frac{\sin(t\speedW|\xi|)}{\speedW|\xi|}\longrightarrow t,
\]
so the massless Duhamel multiplier extends continuously.  The inhomogeneous
unit block is treated directly.

This appendix proves the half-derivative gain for the first Picard terms.
Fourier coefficients are replaced by covariance spectral densities, and the
inhomogeneous unit block containing the wave zero mode is estimated directly
before the propagators are decomposed into sign branches.  The high-frequency
interactions are governed by the phase
\begin{equation}\label{eq:picard-phase}
  \Phi_a^{\bm\eps}(\xi,\eta)
  :=\eps_0\omega_a(\xi)
    +\eps_1\wW(\eta)
    +\eps_2\wK(\xi-\eta),
  \qquad \bm\eps\in\{\pm1\}^3.
\end{equation}
The decisive geometric simplification is that one input is always Klein--Gordon.  In two-center coordinates, differentiation in the Klein--Gordon radius has size comparable to one on every high shell.  The only region not covered by such a phase-layer estimate is the low-output high--high region, where the speed gap gives a pointwise phase lower bound.

\subsection{Kernel representation and oscillatory estimates}

Let $(m_\Lambda)_\Lambda$ be an admissible spectral family and put
$\pi_\Lambda=m_\Lambda(D)$.  The cutoff argument uses the uniform symbol
bounds, the plateau and support conditions, and pointwise convergence.  A
cutoff difference is expanded against the limiting multiplier; each term then
contains a factor $m_\Lambda(\zeta)-1$ supported on
$|\zeta|\ge c_0\Lambda$.  The almost-sure statement is formulated for a
fixed-profile dyadic family.
The stochastic convolutions and first Picard objects are those in
\eqref{eq:stochastic-convolutions}--\eqref{eq:first-picard-def}.  The variant
with the additional output projection
\[
  V^{\mathrm{out}}_{a,\Lambda}
  :=I_a\pi_\Lambda
      (\Psi_{\W,\Lambda}\Psi_{\K,\Lambda})
\]
is the output-projected first-Picard coordinate used in the cutoff comparison.

\begin{theorem}[Localized first Picard smoothing]\label{thm:first-picard-fullspace}
Fix an admissible spectral family and interpret $\Lambda\to\infty$ along
an arbitrary admissible cofinal sequence in the sense of
Definition~\ref{def:admissible-cutoff}.  Let $T<\infty$,
$a\in\mathfrak C$, $\chi\in C_c^\infty(\R^3)$,
$\kappa>0$, and $2\le p<\infty$.  Then there are cutoff-independent random
distributions $V_a$ and $\partial_tV_a$ such that
\begin{align}
  \chi V_{a,\Lambda}&\longrightarrow \chi V_a
  &&\text{in }L^p\!\left(\Omega;
  C_T\cC^{\frac12-\betasum-\kappa}
  \cap L_T^\infty B_{2,\infty}^{\frac12-\betasum-\kappa}\right),
  \label{eq:first-picard-convergence}\\
  \chi\partial_tV_{a,\Lambda}&\longrightarrow \chi\partial_tV_a
  &&\text{in }L^p\!\left(\Omega;
  C_T\cC^{-\frac12-\betasum-\kappa}
  \cap L_T^\infty B_{2,\infty}^{-\frac12-\betasum-\kappa}\right).
  \label{eq:first-picard-derivative-convergence}
\end{align}
The same limits are obtained from $V^{\mathrm{out}}_{a,\Lambda}$.  If
the family is fixed-profile and $\Lambda=2^j$, the convergence holds almost
surely along the entire dyadic sequence on one event, simultaneously for any
prescribed countable family of compactly supported cutoffs, rational time
horizons, rational losses $\kappa>0$, and integer probability moments.  In
particular, one may use the predetermined localizers attached to the compact
exhaustion in Proposition~\ref{prop:measurable-lifetimes}.  Consequently,
\begin{equation}\label{eq:first-picard-regularity}
  V_a\in C_T\cC_{\loc}^{\frac12-\betasum-},
  \qquad
  \partial_tV_a\in C_T\cC_{\loc}^{-\frac12-\betasum-},
\end{equation}
and both objects possess the corresponding local $B_{2,\infty}$ shell envelopes.
\end{theorem}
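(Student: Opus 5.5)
We reduce the claim to a covariance-density estimate of the type used in Proposition~\ref{prop:baseline-regularities}: if the spatially stationary second-chaos field $P_NV_a(t)$ has spectral density bounded by $N^{-2\rho-3+\eps}$ on $|\xi|\sim N$ with $\rho=\tfrac12-\betasum$, and a matching increment density with an extra $|t-t'|^{2\theta}N^{2\theta}$, then Lemma~\ref{lem:spectral-to-local-paths}, hypercontractivity and Kolmogorov give the localized $C_T\cC^{\rho-\kappa}\cap L_T^\infty B_{2,\infty}^{\rho-\kappa}$ bounds after multiplication by $\chi$. Multiplication by $\chi$ preserves shell powers by Lemma~\ref{lem:physical-cutoff-almost-diagonal}. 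Since $\Theta_\Lambda$ is a centered second chaos with independent colors, the density of $V_{a,\Lambda}$ is
\[
 \int_{\R^3}\mathfrak q_{\W}(\eta)\mathfrak q_{\K}(\xi-\eta)\,
 \Bigl|\int_0^tK_a(t-s,\xi)\,\mathcal K(s,\eta,\xi-\eta)\,\dd s\Bigr|^2\dd\eta
\]
plus symmetric cross terms, where $\mathcal K$ collects the two inner Duhamel kernels. Expanding all three sine kernels into exponentials produces the phases $\Phi_a^{\bm\eps}(\xi,\eta)$ of \eqref{eq:picard-phase}, together with amplitudes of size $\langle\xi\rangle^{-1}\langle\eta\rangle^{-1}\langle\xi-\eta\rangle^{-1}$ and the weights $\langle\eta\rangle^{2\betaW}\langle\xi-\eta\rangle^{2\betaK}$. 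The time integrals are then bounded by $\min\{T,|\Phi|^{-1}\}$, and squared this gives $\min\{T^2,|\Phi|^{-2}\}$.

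We split $|\xi|\sim N$, $|\eta|\sim M_1$, $|\xi-\eta|\sim M_2$ into dyadic regions and argue by phase layers. In the high--low and low--high regions, and in high--high regions with $M\lesssim C_{\mathrm{lo}}N$, we use two-center coordinates $(|\eta|,|\xi-\eta|,\text{angle})$. The Klein--Gordon radius derivative of $\Phi$ is comparable to one on high shells, uniformly by Proposition~\ref{prop:uniform-phase-class}, so the measure of $\{|\Phi-\lambda|\le 1\}$ inside the shell is at most a factor $\min\{M_1,M_2\}^{-1}$ times the shell volume. Summing $\min\{1,|\lambda|^{-2}\}$ over unit layers then yields the density $N^{-2}\min\{M_1,M_2\}^{-1}\max\{M_1,M_2\}^{-2+2\betasum}\cdot(\text{volume})$. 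Compared with the crude bound \eqref{eq:theta-convolution} times $N^{-2}$, this gains exactly one power, i.e.\ the half derivative in regularity.

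The remaining region is low output with high--high inputs, $M\ge C_{\mathrm{lo}}\langle\xi\rangle$. There we use the pointwise bound $|\Phi|\ge c_{\mathrm{lo}}M$ from the inequality recorded in Proposition~\ref{prop:speed-gap-dependence}, namely Lemma~\ref{lem:first-picard-low-output-gap}. The divergent sum $\sum_M M^3M^{-4+2\betasum}$ then becomes $\sum_M M^{-3+2\betasum}$, which converges. The sum phases are handled by positivity, and the finitely many low shells, including the wave zero mode, are estimated directly.

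Time increments follow by the usual device: each increment of a Duhamel kernel costs $|t-t'|^\theta N^\theta$. For $\partial_tV_a$ we differentiate the outer kernel, which replaces $K_a$ by $\cos$, costs $N$, and leaves the same phase structure, giving regularity $-\tfrac12-\betasum$.

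For the cutoff limit, Lemma~\ref{lem:finite-product-cutoff} restricts the difference density to the region where some leg $\eta$, $\xi-\eta$, or (for $V^{\mathrm{out}}$) $\xi$ exceeds $c_0\Lambda$. The same estimates with an $\eps$ loss reserved, as in Lemma~\ref{lem:baseline-cutoff-tails}, give a polynomially decaying tail in $\Lambda$. This yields Cauchy convergence in $L^p$ along any cofinal sequence, independence of the profile, and, through dyadic polynomial tails, Markov and Borel--Cantelli over a countable exhaustion, almost-sure convergence along the full dyadic sequence.

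The main obstacle is the phase-layer counting. The layer measure must be uniform in the angular variable near collinear configurations, and it must interact correctly with the unit-layer summation when $\min\{M_1,M_2\}\ll N$. I would first try the change of variables to $(|\eta|,|\xi-\eta|)$, with Jacobian $|\eta||\xi-\eta|/|\xi|$, and integrate the Klein--Gordon radius against the layer.
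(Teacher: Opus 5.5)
Your proposal is correct and follows the paper's proof essentially step by step: stochastic Fubini to the three-propagator kernel, sign expansion giving $\mathfrak m(\Phi_a^{\bm\eps})$, two-center phase layers in the balanced and high--low regions together with the speed-gap bound of Lemma~\ref{lem:first-picard-low-output-gap} in the low-output region, then Lemma~\ref{lem:spectral-to-local-paths}, the finite-product cutoff tails, and Borel--Cantelli. Differentiating in the Klein--Gordon radius also in the Klein--Gordon-low region, where the paper uses the wave radius, works equally well. Only minor slips need fixing:
\begin{itemize}
\item the shell weight is $M_1^{-2+2\betaW}M_2^{-2+2\betaK}$, not $\max\{M_1,M_2\}^{-2+2\betasum}$;
\item the crude low-output sum is convergent but only reproduces the crude size $N^{-3+2\betasum}$, so the gap is needed for the gain rather than for convergence;
\item the time-increment cost is $N_{\max}^\theta$, not $N^\theta$, which is harmless given the extra low-output decay;
\item you should add the one-line check that the limit of $\partial_tV_{a,\Lambda}$ is the distributional time derivative of $V_a$, obtained by passing to the limit in $V_{a,\Lambda}(t)=\int_0^t\partial_sV_{a,\Lambda}(s)\dd s$.
\end{itemize}
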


The proof gives the following more precise spectral statement.  For $\nu\in\{0,1\}$, write
\[
  \partial_t^0V_{a,\Lambda}=V_{a,\Lambda},
  \qquad
  \partial_t^1V_{a,\Lambda}=\partial_tV_{a,\Lambda}.
\]
At finite cutoff, stationarity yields a nonnegative covariance density $q_{a,\nu,\Lambda}$ satisfying
\begin{equation}\label{eq:spectral-density-definition-intro}
\begin{aligned}
 &\E\left[
   \widehat{\partial_t^\nu V_{a,\Lambda}}(t,\xi)
   \overline{
   \widehat{\partial_t^\nu V_{a,\Lambda}}(t,\xi')}
   \right]\\
 &\hspace{4cm}
   =(2\pi)^3\delta(\xi-\xi')
   q_{a,\nu,\Lambda}(t,\xi),
\end{aligned}
\end{equation}
where the identity is understood after testing in $(\xi,\xi')$.  There is an analogous increment density $q^{\Delta}_{a,\nu,\Lambda}(t,t';\xi)$.

\begin{proposition}[Spectral first Picard bounds]\label{prop:first-picard-spectral}
For every $\eps>0$,
\begin{equation}\label{eq:first-picard-spectral-static}
  \sup_{\Lambda\ge1}\sup_{t\le T}
  q_{a,\nu,\Lambda}(t,\xi)
  \lesssim_{T,\eps}
  \la\xi\ra^{-4+2\nu+2\betasum+\eps},
  \qquad \nu=0,1.
\end{equation}
For every $0<\theta<\frac12-\betasum$,
\begin{equation}\label{eq:first-picard-spectral-increment}
  \sup_{\Lambda\ge1}
  q^{\Delta}_{a,\nu,\Lambda}(t,t';\xi)
  \lesssim_{T,\eps,\theta}
  |t-t'|^{2\theta}
  \la\xi\ra^{-4+2\nu+2\betasum+2\theta+\eps}.
\end{equation}
For every finite choice of input and output cutoff flags, the difference from
the limiting kernel satisfies the same static and increment estimates with
the weighted phase integral restricted to the union of the corresponding
high-frequency tail sets.  This reduction, and comparison of two non-nested
cutoffs through the same limiting kernel, follow from
\cref{lem:finite-product-cutoff}.
\end{proposition}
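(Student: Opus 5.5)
We begin by writing the spectral density explicitly. Since $\Theta_\Lambda=\Psi_{\W,\Lambda}\Psi_{\K,\Lambda}$ is a product of independent first chaoses, it has no zeroth-chaos part. Its Fourier transform at $(s,\xi)$ is therefore a double Wiener integral over $(\eta,\xi-\eta)$, and Itô isometry gives
\[
 q_{a,\nu,\Lambda}(t,\xi)=c\int_{\R^3}\mathfrak q_{\W}(\eta)\mathfrak q_{\K}(\xi-\eta)
 \,\Bigl|\mathcal J^{\nu}_{a,\Lambda}(t;\xi,\eta)\Bigr|^2_{L^2_{r_1,r_2}}\dd\eta ,
\]
where $\mathcal J^\nu$ is the integral over $s\in[\max(r_1,r_2),t]$ of $\partial_t^\nu K_a(t-s,\xi)\,K_{\W}(s-r_1,\eta)K_{\K}(s-r_2,\xi-\eta)$, multiplied by the cutoff factors. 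We expand the three sine kernels into exponentials, producing the signed phases $\Phi_a^{\bm\eps}$ of \eqref{eq:picard-phase}. Each branch has amplitude $\la\xi\ra^{-1+\nu}\la\eta\ra^{-1}\la\xi-\eta\ra^{-1}$ times an oscillatory $s$-integral of $e^{\ii s\Phi}$. That integral is bounded by $\min\{T,\,|\Phi|^{-1}\}$, after one integration by parts in $s$ whose boundary terms carry the same amplitude. The cutoff factors $m_\Lambda$ are bounded uniformly, so we drop them when proving the supremum bounds. This yields the master bound
\[
 q\lesssim \la\xi\ra^{-2+2\nu}\int\la\eta\ra^{-2+2\betaW}\la\xi-\eta\ra^{-2+2\betaK}
 \sum_{\bm\eps}\la\Phi_a^{\bm\eps}(\xi,\eta)\ra^{-2}\dd\eta
\]
(after Cauchy--Schwarz in the noise times, whose integrals produce no further losses on $[0,T]$).

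Next we split the $\eta$-integral dyadically, with $|\eta|\sim N_1$, $|\xi-\eta|\sim N_2$ and $|\xi|\sim N$. The target is a weighted phase-layer bound: the $\eta$-integral on a shell pair should gain $\max(N_1,N_2)^{-1}$ relative to the crude volume bound. In the high--low and low--high regions, $N_1\sim N$ or $N_2\sim N$.

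In the balanced regions with $\max(N_1,N_2)\gtrsim N$, we use two-center coordinates with the Klein--Gordon radius $\rho=|\xi-\eta|$. There $\partial_\rho\Phi$ contains $\pm\omega_{\K}'(\rho)$ plus the wave radial derivative, and this is comparable to one on high shells. Because the Klein--Gordon derivative is strictly below $c_{\K}$, this holds uniformly in the class of Proposition~\ref{prop:uniform-phase-class}; the same-speed degeneracies are avoided because one input is always Klein--Gordon, so the radial derivatives in the two centers differ. Consequently the measure of $\{|\Phi|\sim 2^k\}$ within a shell is bounded by $2^k$ times the area of the cross-section, at most $\min(N_1,N_2)^2$. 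Summing $\la\Phi\ra^{-2}$ over $k$ then gives
\[
 \int_{\text{shell}}\la\Phi\ra^{-2}\lesssim\min(N_1,N_2)^2 .
\]

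The remaining low-output high--high region $N\ll N_1\sim N_2$ is the one where the phase-layer estimate fails. There we instead use the pointwise gap $|\Phi|\gtrsim d_*N_1$ once $N_1\ge C_{\mathrm{lo}}\la\xi\ra$, from the speed-gap inequality quoted in the proof of Proposition~\ref{prop:speed-gap-dependence}. This is the step I expect to be the main obstacle, since it is where distinct speeds enter and where the summation over $N_1$ must converge. With $|\Phi|^{-2}\lesssim N_1^{-2}$, the shell contributes
\[
 N^{-2+2\nu}N_1^{3}N_1^{-4+2\betasum}N_1^{-2},
\]
which is summable in $N_1\gtrsim N$ and gives $N^{-5+2\nu+2\betasum}$, better than required. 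The finitely many shells with $N_1<C_{\mathrm{lo}}\la\xi\ra$ are handled by the phase-layer bound.

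Collecting the pieces, consider first the low--high region $N_2\ll N_1\sim N$. The phase-layer bound gives
\[
 N^{-2+2\nu}N^{-2+2\betaW}N_2^{-2+2\betaK}N_2^2,
\]
and summing over $N_2\lesssim N$ gives $N^{-4+2\nu+2\betasum+\eps}$, with the logarithm absorbed into $\eps$. The high--low region is symmetric. The balanced region $N_1\sim N_2\sim N$ gives
\[
 N^{-2+2\nu}N^{-4+2\betasum}N^2=N^{-4+2\nu+2\betasum}.
\]
This proves \eqref{eq:first-picard-spectral-static}.

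For the increment bound \eqref{eq:first-picard-spectral-increment}, we redo the computation with the $t$-difference. The outer kernel satisfies $|\partial_t^\nu K_a(t-s)-\partial_t^\nu K_a(t'-s)|\lesssim|t-t'|^\theta\la\xi\ra^{-1+\nu+\theta}$. The oscillatory $s$-integrals satisfy
\[
 \Bigl|\int_t^{t'}e^{\ii s\Phi}\Bigr|\le\min\{|t-t'|,|\Phi|^{-1}\}\le|t-t'|^\theta|\Phi|^{-1+\theta},
\]
so $\la\Phi\ra^{-2}$ is replaced by $\la\Phi\ra^{-2+2\theta}$. The layer sums still converge, because $\theta<1/2$ keeps the exponent $2-2\theta>1$ in the $k$-sum; the pointwise-gap region is unaffected. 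The result is the extra factor $|t-t'|^{2\theta}\la\xi\ra^{2\theta}$, with the constraint $\theta<\frac12-\betasum$ used only in the low-output sum.

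Finally, for the cutoff differences we apply Lemma~\ref{lem:finite-product-cutoff} to the active legs $\eta$, $\xi-\eta$ and optionally $\xi$. By \eqref{eq:finite-product-integral-tail}, the difference density is bounded by the same weighted phase integral restricted to the tail sets. All the bounds above are pointwise in the integrand, so they apply verbatim to this restricted integral, and a non-nested pair of cutoffs is compared through the limiting kernel by \eqref{eq:finite-product-two-cutoffs}.
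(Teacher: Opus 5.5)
Your proposal is correct and follows essentially the paper's route. The kernel bound of Lemma~\ref{lem:picard-kernel-bounds} reduces everything to the weighted phase integral, which you split into the balanced, wave-low, Klein--Gordon-low and low-output high--high regions. You use two-center sublevel estimates in the first three regions and the pointwise speed gap of Lemma~\ref{lem:first-picard-low-output-gap} in the last, and the cutoff statement is the finite-product reduction, exactly as in the paper. Your increment treatment, which keeps $\mathfrak m(\Phi)^{1-\theta}$ and sums the layers convergently for $\theta<1/2$, is a harmless variant of the paper's $\mathfrak m^{1-\theta}\lesssim N_{\max}^{\theta}\mathfrak m$.

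Two justifications should be tightened. First, in two-center coordinates $|\eta|$ and $|\xi-\eta|$ are independent variables, so $\partial_{|\xi-\eta|}\Phi=\pm\omega_{\mathrm K}'(|\xi-\eta|)$ contains no wave term and is bounded below on high shells without any speed-gap argument. Second, the time increment of the outer propagator must be taken as the change of the unimodular endpoint factor $e^{\pm\ii t\omega_a(\xi)}$ in each sign branch, leaving $e^{\ii s\Phi}$ inside the $s$-integral. An absolute bound on $K_a^{[\nu]}(t-s,\xi)-K_a^{[\nu]}(t'-s,\xi)$ inside the integral would forfeit the phase gain.
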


The exponent $-4+2\betasum$ in
\eqref{eq:first-picard-spectral-static} is the full-space covariance-density
form of the dyadic estimate.  On $|\xi|\sim N$ it gives
\[
  \E|P_NV_a(t,x)|^2
  \lesssim \int_{|\xi|\sim N}N^{-4+2\betasum+}\dd\xi
  \lesssim N^{-1+2\betasum+},
\]
so a dyadic block has size $N^{-1/2+\betasum+}$, corresponding to
$\cC^{1/2-\betasum-}$.

We use the conjugated covariance convention in this section.  Let $\mathcal W_{\W}$ and $\mathcal W_{\K}$ be independent complex Gaussian random measures on $[0,T]\times\R^3$, with the reality relation
\[
  \mathcal W_a(\dd r,-\dd\eta)
  =\overline{\mathcal W_a(\dd r,\dd\eta)},
\]
and covariance
\begin{equation}\label{eq:conjugated-isonormal-covariance}
  \E\bigl[
  \mathcal W_a(\dd r,\dd\eta)
  \overline{\mathcal W_b(\dd s,\dd\zeta)}
  \bigr]
  =\delta_{ab}\delta(r-s)\delta(\eta-\zeta)
  \dd r\dd s\dd\eta\dd\zeta.
\end{equation}
This normalized random-measure realization is equivalent to the isonormal
formulation in \eqref{eq:fourier-gaussian-forcing}; the fixed Fourier
normalization and the forcing profiles are absorbed into the deterministic
Wiener kernels.  Independence means that
the product of one first-chaos integral of color $\W$ and one of color
$\K$ is an ordered colored double Wiener integral with no contraction.

Set
\begin{equation}\label{eq:picard-kernels-linear}
  K_a^{[0]}(t,\xi)
  :=\frac{\sin(t\omega_a(\xi))}{\omega_a(\xi)},
  \qquad
  K_a^{[1]}(t,\xi)
  :=\cos(t\omega_a(\xi)),
\end{equation}
with the usual continuous interpretation in the wave channel at $\xi=0$.  For $0\le r_1,r_2\le t$, define the deterministic three-propagator kernel
\begin{equation}\label{eq:three-propagator-kernel}
\begin{aligned}
  \mathcal K_{a,\nu}
  (\xi,\eta;t,r_1,r_2)
  :=\int_{r_1\vee r_2}^{t}
  &K_a^{[\nu]}(t-s,\xi)
   K_{\W}^{[0]}(s-r_1,\eta)\\
  &\times K_{\K}^{[0]}(s-r_2,\xi-\eta)\dd s.
\end{aligned}
\end{equation}
The kernel is set equal to zero when $r_1\vee r_2>t$.

\begin{lemma}[Stochastic Fubini and covariance density]\label{lem:picard-stochastic-fubini}
At finite cutoff, $\partial_t^\nu V_{a,\Lambda}$ is a homogeneous colored second chaos.  Its covariance spectral density is
\begin{equation}\label{eq:picard-density-kernel}
\begin{aligned}
  q_{a,\nu,\Lambda}(t,\xi)
  =c_{\mathrm F}
  \int_{\R^3}\int_0^t\int_0^t
  &|m_\Lambda(\eta)\mathfrak h_{\W}(\eta)|^2
   |m_\Lambda(\xi-\eta)\mathfrak h_{\K}(\xi-\eta)|^2\\
  &\times
  |\mathcal K_{a,\nu}
  (\xi,\eta;t,r_1,r_2)|^2
  \dd r_1\dd r_2\dd\eta,
\end{aligned}
\end{equation}
where $c_{\mathrm F}>0$ depends only on the Fourier normalization.  The formula remains valid with an extra factor $|m_\Lambda(\xi)|^2$ for the auxiliary output projection.
\end{lemma}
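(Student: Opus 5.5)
The plan is to write $\partial_t^\nu V_{a,\Lambda}$ as an explicit colored double Wiener integral by iterating the Duhamel formulas, and then read off the covariance from the Wiener isometry. First I would express each cutoff stochastic convolution as a first-chaos integral in Fourier variables:
\[
 \widehat{\Psi_{b,\Lambda}}(s,\eta)=\int_0^s K_b^{[0]}(s-r,\eta)\,m_\Lambda(\eta)\mathfrak h_b(\eta)\,\mathcal W_b(\dd r,\dd\eta),
\]
using the conjugated random-measure realization \eqref{eq:conjugated-isonormal-covariance}. This is equivalent to \eqref{eq:cutoff-forcing-isonormal}, with the Fourier normalization absorbed into the kernels. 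Next I would multiply the two colors. By \eqref{eq:fourier-product-constant}, $\widehat{\Theta_\Lambda}(s,\xi)$ is the $\eta$-convolution of the two first-chaos integrals. Because $\W\ne\K$ are independent, the product of the two integrals is exactly the ordered double integral $\iint(\cdots)\,\mathcal W_{\W}(\dd r_1,\dd\eta)\,\mathcal W_{\K}(\dd r_2,\dd\zeta)$ with $\zeta=\xi-\eta$, and no contraction term appears. This already shows that $\Theta_\Lambda$ is a homogeneous colored second chaos.

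Then I would apply $I_a$ and, for $\nu=1$, $\partial_t$. Differentiating the sine Duhamel kernel in $t$ produces $K_a^{[1]}$, and the boundary term vanishes because $K_a^{[0]}(0,\cdot)=0$. I would then interchange the deterministic $\dd s$ integral with the double Wiener integral by a stochastic Fubini theorem. At finite cutoff this is justified because the integrand is compactly supported in $(\eta,\xi-\eta)$ (supports of $m_\Lambda$), bounded, and square integrable in $(r_1,r_2,\eta)$ uniformly in $s\le t$. The interchange collects the $s$-integral over $r_1\vee r_2\le s\le t$, which gives exactly the kernel $\mathcal K_{a,\nu}$ of \eqref{eq:three-propagator-kernel} multiplied by $m_\Lambda(\eta)\mathfrak h_{\W}(\eta)\,m_\Lambda(\xi-\eta)\mathfrak h_{\K}(\xi-\eta)$. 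For the output-projected variant, the extra factor $m_\Lambda(\xi)$ is simply carried along. This establishes the homogeneous second-chaos statement.

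Finally I would compute $\E[\widehat{\cdot}(t,\xi)\overline{\widehat{\cdot}(t,\xi')}]$ with the isometry for ordered colored double integrals. The covariance \eqref{eq:conjugated-isonormal-covariance} pairs $(r_1,\eta)$ with $(r_1',\eta')$ and $(r_2,\xi-\eta)$ with $(r_2',\xi'-\eta')$. Because the colors are independent, there is no cross pairing between the two legs, and the reality relation produces no further contribution since distinct colors never contract. The two delta functions force $\eta=\eta'$ and $\xi=\xi'$. This yields $(2\pi)^3\delta(\xi-\xi')$ times the displayed integral of $|\mathcal K_{a,\nu}|^2$ weighted by the squared profiles, with all normalizations collected into $c_{\mathrm F}$.

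The main obstacle is rigor in the distributional sense. The identity holds only after testing in $(\xi,\xi')$, so I would pair with Schwartz $\phi(\xi)\psi(\xi')$, apply the isometry to the tested scalar integrals, and recover the density by polarization. I also need the wave zero-mode continuity of $K_{\W}^{[0]}$ to ensure that $\mathcal K_{a,\nu}$ is bounded near $\eta=0$. Since $|\mathcal K_{a,\nu}|\le t^{\,}C_T$ and the cutoff supports are compact, the resulting density is finite and nonnegative.
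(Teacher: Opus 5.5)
Your proposal is correct and follows essentially the same route as the paper: write the tested field as an ordered colored double Wiener integral with no contraction term, commute the deterministic $s$-integral with the Wiener integral using its continuity on the Hilbert tensor product (which produces the kernel $\mathcal K_{a,\nu}$), and read off the density from the double Wiener isometry after testing and polarization, with the output projection contributing the deterministic factor $|m_\Lambda(\xi)|^2$.
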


\begin{proof}
Let $\varphi\in\mathcal S(\R^3)$.  At finite cutoff, substitute the two stochastic convolution formulas into
\[
  \langle\partial_t^\nu V_{a,\Lambda}(t),\varphi\rangle
  =\int_0^t
  \left\langle
  K_a^{[\nu]}(t-s,D)
  (\Psi_{\W,\Lambda}(s)\Psi_{\K,\Lambda}(s)),
  \varphi\right\rangle\dd s.
\]
The resulting deterministic kernel belongs to
\[
  L^2([0,T]\times\R^3)_{\W}
  \widehat\otimes
  L^2([0,T]\times\R^3)_{\K}.
\]
The ordered colored double Wiener integral is a continuous isometry on this Hilbert tensor product.  Hence the deterministic $s$-integration can be commuted with the Wiener integral.  The $s$-integration is \eqref{eq:three-propagator-kernel}.  The two deterministic forcing profiles contribute the factors $\mathfrak h_{\W}(\eta)$ and $\mathfrak h_{\K}(\zeta)$.  Applying the colored double Wiener isometry and changing variables from the two input frequencies $(\eta,\zeta)$ to $(\xi,\eta)$ with $\xi=\eta+\zeta$ gives
\[
  \E\bigl|
  \langle\partial_t^\nu V_{a,\Lambda}(t),\varphi\rangle
  \bigr|^2
  =c_{\mathrm F}
  \int_{\R^3}|\widehat\varphi(\xi)|^2
  q_{a,\nu,\Lambda}(t,\xi)\dd\xi.
\]
Polarization yields the tested form of \eqref{eq:spectral-density-definition-intro} and proves \eqref{eq:picard-density-kernel}.  The output projection is a deterministic Fourier multiplier and contributes the stated factor.
\end{proof}

Put
\[
  N_0:=\la\xi\ra,
  \qquad
  N_1:=\la\eta\ra,
  \qquad
  N_2:=\la\xi-\eta\ra,
  \qquad
  N_{\max}:=\max(N_0,N_1,N_2),
\]
and
\begin{equation}\label{eq:modulation-weight}
  \mathfrak m(\lambda):=\min\{1,|\lambda|^{-1}\},
  \qquad \mathfrak m(0):=1.
\end{equation}
The low-frequency unit blocks are estimated without splitting the wave sine kernel into exponentials.

\begin{lemma}[Rough and oscillatory kernel bounds]\label{lem:picard-kernel-bounds}
Uniformly for $0\le r_1,r_2\le t\le T$ and $\nu=0,1$,
\begin{equation}\label{eq:picard-kernel-rough}
  |\mathcal K_{a,\nu}
  (\xi,\eta;t,r_1,r_2)|
  \lesssim_T
  \frac{N_0^\nu}{N_0N_1N_2}.
\end{equation}
If
\[
  |\xi|\ge1,
  \qquad |\eta|\ge1,
  \qquad |\xi-\eta|\ge1,
\]
then
\begin{equation}\label{eq:picard-kernel-oscillatory}
  |\mathcal K_{a,\nu}|
  \lesssim_T
  \frac{N_0^\nu}{N_0N_1N_2}
  \sum_{\bm\eps\in\{\pm1\}^3}
  \mathfrak m(\Phi_a^{\bm\eps}(\xi,\eta)).
\end{equation}
Moreover, if $0<\theta\le1/2$ and
$0\le r_1,r_2\le t\wedge t'$, then the corresponding time difference satisfies
\begin{equation}\label{eq:picard-kernel-increment}
\begin{aligned}
  &|\mathcal K_{a,\nu}
  (\xi,\eta;t,r_1,r_2)
  -\mathcal K_{a,\nu}
  (\xi,\eta;t',r_1,r_2)|\\
  &\quad\lesssim_{T,\theta}
  |t-t'|^\theta
  \frac{N_0^\nu N_{\max}^\theta}{N_0N_1N_2}
  \sum_{\bm\eps\in\{\pm1\}^3}
  \mathfrak m(\Phi_a^{\bm\eps}(\xi,\eta))
\end{aligned}
\end{equation}
whenever the three displayed frequencies are at least one.  Without that high-frequency condition, the same estimate holds with the sign sum on the right replaced by $1$.
\end{lemma}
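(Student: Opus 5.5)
The plan is to work directly from the definition \eqref{eq:three-propagator-kernel}: the kernel is a time integral over $s\in[r_1\vee r_2,t]$ of a product of three propagator kernels. For the rough bound \eqref{eq:picard-kernel-rough} we use absolute values. Each sine kernel satisfies $|K_c^{[0]}(\tau,\zeta)|\lesssim_T\la\zeta\ra^{-1}$ for $0\le\tau\le T$. This holds for Klein--Gordon because $\omega_{\K}\gtrsim\la\zeta\ra$. For the wave channel we use $|\sin(\tau\omega)/\omega|\le\min\{\tau,\omega^{-1}\}\lesssim_T\la\zeta\ra^{-1}$, interpreted continuously at $\zeta=0$. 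The cosine kernel is bounded by $1=N_0^\nu/N_0\cdot N_0^{1-\nu}$. More precisely, $|K_a^{[\nu]}(\tau,\xi)|\lesssim_T N_0^{\nu-1}$ for both values of $\nu$. Integrating over an $s$-interval of length at most $T$ then gives \eqref{eq:picard-kernel-rough}.

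For the oscillatory bound \eqref{eq:picard-kernel-oscillatory}, all three frequencies are at least one, so no zero mode occurs. We expand each of the three kernels into half-waves:
\[
K^{[0]}_c(\tau,\zeta)=\sum_{\epsilon=\pm}\frac{\epsilon}{2\ii\,\omega_c(\zeta)}e^{\ii\epsilon\tau\omega_c(\zeta)}, \qquad K^{[1]}_a(\tau,\xi)=\sum_{\epsilon=\pm}\tfrac12 e^{\ii\epsilon\tau\omega_a(\xi)}.
\]
The product of the three half-waves, as a function of $s$, is $e^{\ii(\text{const})}e^{-\ii s\Phi_a^{\bm\eps}(\xi,\eta)}$ after renaming $\eps_0\to-\eps_0$, since the sign set is symmetric. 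Its amplitude is bounded by $N_0^{\nu}/(N_0N_1N_2)$, using $\omega_c(\zeta)\sim\la\zeta\ra$ on $|\zeta|\ge1$. The $s$-integral of $e^{-\ii s\Phi}$ over an interval of length at most $T$ is bounded by $\min\{T,2|\Phi|^{-1}\}\lesssim_T\mathfrak m(\Phi)$. Summing over the eight sign branches proves the estimate.

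For the increment \eqref{eq:picard-kernel-increment} with $t<t'$, we split the difference into two parts. The first is the boundary strip $s\in[t,t']$, which carries the $t'$-integrand. The second is the integral over $[r_1\vee r_2,t]$ of $\bigl(K_a^{[\nu]}(t'-s,\xi)-K_a^{[\nu]}(t-s,\xi)\bigr)\times(\dots)$. For the second part, in the high-frequency case we expand into half-waves. The factor $e^{\ii\epsilon t'\omega_a}-e^{\ii\epsilon t\omega_a}$ does not depend on $s$ and is bounded by $\min\{2,|t-t'|\omega_a(\xi)\}\lesssim|t-t'|^\theta N_0^\theta$. The remaining $s$-integral is estimated exactly as before and yields $\mathfrak m(\Phi)$.

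The main obstacle is the strip term, which must also carry the modulation weight $\mathfrak m(\Phi)$. We handle it with the same half-wave expansion. The strip integral of $e^{-\ii s\Phi}$ is bounded by $\min\{|t-t'|,2|\Phi|^{-1}\}$. When $|\Phi|\le1$ this is at most $|t-t'|\le T^{1-\theta}|t-t'|^\theta$. When $|\Phi|>1$ it is at most $\min\{|t-t'|,2|\Phi|^{-1}\}\le 2|t-t'|^\theta|\Phi|^{-(1-\theta)}$. We then use $|\Phi|\lesssim N_{\max}$, which gives $|\Phi|^{-(1-\theta)}\lesssim N_{\max}^\theta|\Phi|^{-1}$. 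Hence the strip term is bounded by $|t-t'|^\theta N_{\max}^\theta\mathfrak m(\Phi)$, matching the claim.

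Without the high-frequency condition, the sign sum is replaced by $1$. There we use the rough pointwise bounds on the strip, whose length is $|t-t'|\le T^{1-\theta}|t-t'|^\theta$. The kernel difference is handled with $|K_a^{[\nu]}(t'-s,\xi)-K_a^{[\nu]}(t-s,\xi)|\lesssim|t-t'|^\theta N_0^{\nu-1+\theta}$. This follows by interpolating the size bound with the $\tau$-derivative bound, which is $\lesssim N_0^{\nu}$ for both the sine and cosine kernels, and it remains continuous at the wave zero mode.
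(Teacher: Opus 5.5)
Your proposal is correct and follows essentially the same route as the paper: absolute bounds on the three propagators for \eqref{eq:picard-kernel-rough}, and a half-wave expansion with $|\int_A^B e^{\ii s\lambda}\,\dd s|\lesssim_T\mathfrak m(\lambda)$ for \eqref{eq:picard-kernel-oscillatory}; for the increment you use the same split into the change of the $s$-independent endpoint phase (costing $|t-t'|^\theta N_0^\theta$) and the new upper-limit strip, which is controlled by interpolating $\min\{|t-t'|,|\Phi|^{-1}\}$ and using $|\Phi|\lesssim N_{\max}$. The only difference is cosmetic: in the unit blocks you interpolate the size and $\tau$-derivative bounds of $K_a^{[\nu]}$ instead of invoking $|e^{\ii h\lambda}-1|\lesssim|h|^\theta\langle\lambda\rangle^\theta$, which gives the same bound and avoids splitting the wave kernel into exponentials at the zero mode.
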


\begin{proof}
The elementary estimates
\[
  \left|\frac{\sin(t\speedW|z|)}{\speedW|z|}\right|
  \lesssim_{T,\parvec}\la z\ra^{-1},
  \qquad
  \left|\frac{\sin(t\wK(z))}{\wK(z)}\right|
  \lesssim_{\parvec}\la z\ra^{-1},
  \qquad
  |\cos(t\omega_a(z))|\le1
\]
give \eqref{eq:picard-kernel-rough} after integration in $s$.

On the high-frequency region, expand the three propagators into exponentials.  Every sign branch equals a unimodular endpoint factor times
\[
  \kappa_{\nu,\bm\eps}
  \frac{N_0^\nu}{N_0N_1N_2}
  \int_{r_1\vee r_2}^{t}
  e^{\ii s\Phi_a^{\bm\eps}(\xi,\eta)}\dd s,
\]
up to constants uniform on the dyadic supports.  Since
\[
  \left|\int_A^B e^{\ii s\lambda}\dd s\right|
  \lesssim_T\mathfrak m(\lambda),
\]
we obtain \eqref{eq:picard-kernel-oscillatory}.

For the increment, the change of the endpoint factor is bounded by
\[
  |t-t'|^\theta N_0^\theta.
\]
The change in the upper integration limit obeys
\[
  \left|\int_t^{t'}e^{\ii s\lambda}\dd s\right|
  \lesssim_{T,\theta}
  |t-t'|^\theta
  N_{\max}^\theta\mathfrak m(\lambda),
\]
because $|\lambda|\lesssim N_{\max}$ and
$\mathfrak m(\lambda)^{1-\theta}
\lesssim N_{\max}^\theta\mathfrak m(\lambda)$.  This proves \eqref{eq:picard-kernel-increment}.  In a unit block, use the rough bound and the inequalities
$|e^{\ii h\lambda}-1|\lesssim|h|^\theta\la\lambda\ra^\theta$ and
$|h|\lesssim_T|h|^\theta$.
\end{proof}

\subsection{Phase layers and covariance estimates}

The next estimates are formulated directly in continuous frequency.  For fixed $\xi\ne0$, set
\[
  d:=|\xi|,
  \qquad r:=|\eta|,
  \qquad \lambda:=|\xi-\eta|.
\]
On the triangle region $|r-\lambda|\le d\le r+\lambda$, two-center coordinates give
\begin{equation}\label{eq:two-center-volume-wavekg}
  \dd\eta=2\pi\frac{r\lambda}{d}\dd r\dd\lambda.
\end{equation}
The phase in \eqref{eq:picard-phase}, with $\xi$ fixed, is
\begin{equation}\label{eq:phase-two-center}
  \Phi_a^{\bm\eps}
  =\eps_0\omega_a(\xi)
   +\eps_1\speedW r
   +\eps_2\sqrt{\massK^2+\speedK^2\lambda^2}.
\end{equation}

\begin{lemma}[Balanced and high--low phase layers]\label{lem:continuous-phase-layers}
There is a constant $C\ge1$ such that the following statements hold for every sign branch and every $L\ge1$.

\begin{enumerate}[label=\textup{(\roman*)},leftmargin=2.4em]
\item If
\[
  d\sim r\sim\lambda\sim M,
  \qquad M\ge2,
\]
then
\begin{equation}\label{eq:balanced-layer-volume}
  \left|
  \left\{\eta:
  d\sim r\sim\lambda\sim M,
  \ |\Phi_a^{\bm\eps}(\xi,\eta)|\le L
  \right\}
  \right|
  \lesssim M^2\min\{L,M\}.
\end{equation}

\item If $d\sim\lambda\sim M$, $r\sim R$, and
$2\le R\le M/C$, then
\begin{equation}\label{eq:wave-low-layer-volume}
  \left|
  \left\{\eta:
  r\sim R,\ \lambda\sim M,
  \ |\Phi_a^{\bm\eps}(\xi,\eta)|\le L
  \right\}
  \right|
  \lesssim R^2\min\{L,R\}.
\end{equation}

\item If $d\sim r\sim M$, $\lambda\sim R$, and
$2\le R\le M/C$, then
\begin{equation}\label{eq:kg-low-layer-volume}
  \left|
  \left\{\eta:
  r\sim M,\ \lambda\sim R,
  \ |\Phi_a^{\bm\eps}(\xi,\eta)|\le L
  \right\}
  \right|
  \lesssim R^2\min\{L,R\}.
\end{equation}
\end{enumerate}
\end{lemma}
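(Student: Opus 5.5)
The plan is to work entirely in the two-center coordinates \eqref{eq:two-center-volume-wavekg}, with $\xi$ fixed, so that each set in question becomes a subset of the $(r,\lambda)$ triangle carrying the density $2\pi r\lambda/d$. In each case we will pick one of the radial variables, $r$ or $\lambda$, and use monotonicity of the phase \eqref{eq:phase-two-center} in that variable with derivative bounded below by a positive constant. Then, for the other variable fixed, the sublevel set $\{|\Phi|\le L\}$ is an interval of length $\lesssim L$. It is also trivially contained in the support interval of that variable, of length $\lesssim$ its scale. Integrating the density over the remaining variable gives the bound.

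\emph{Case (i).} We differentiate in $\lambda$. The derivative is
\[
\partial_\lambda\Phi=\eps_2\,\omega_{\K}'(\lambda)=\eps_2\,\frac{c_{\K}^2\lambda}{\sqrt{m^2+c_{\K}^2\lambda^2}},
\]
and for $\lambda\sim M\ge2$ its modulus is bounded below by a constant. This is the remark before \eqref{eq:picard-phase}: the Klein--Gordon radius always has derivative of size one. Hence, for fixed $r$, the $\lambda$-set has length $\lesssim\min\{L,M\}$. The density is $r\lambda/d\sim M$, and $r$ ranges over an interval of length $\sim M$. The volume is therefore $\lesssim M\cdot M\cdot\min\{L,M\}$, which is \eqref{eq:balanced-layer-volume}. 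If the class-dependent threshold for the lower bound on $\omega_{\K}'$ exceeds $2$, the finitely many small $M$ are absorbed into the constant, since then $M^3\lesssim M^2\min\{L,M\}$ because $L\ge1$.

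\emph{Case (ii).} Here $r\sim R\ll M$. We differentiate in $r$ instead, with $\partial_r\Phi=\eps_1 c_{\W}$. For fixed $\lambda$, the $r$-set has length $\lesssim\min\{L,R\}$. The density is $r\lambda/d\sim R\cdot M/M=R$. For fixed $r$, the triangle constraint $|d-\lambda|\le r$ confines $\lambda$ to an interval of length $\le 2r\lesssim R$. By Fubini, the volume is $\lesssim R\cdot R\cdot\min\{L,R\}$, which is \eqref{eq:wave-low-layer-volume}.

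\emph{Case (iii).} Symmetrically, we differentiate in $\lambda\sim R\ge2$ using the Klein--Gordon derivative bound. For fixed $r$, the $\lambda$-set has length $\lesssim\min\{L,R\}$. The density is $\sim M\cdot R/M=R$, and $r$ is confined by $|d-r|\le\lambda\lesssim R$. This gives \eqref{eq:kg-low-layer-volume}.

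The only delicate point is the uniform lower bound on $\omega_{\K}'$ at scale $R\ge2$. This bound holds with a constant depending on $m/c_{\K}$, uniformly on a parameter class as in \cref{prop:uniform-phase-class}. Shells below that threshold are handled by the trivial volume bound.
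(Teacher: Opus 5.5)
Your proof is correct and follows the paper's argument. Both use two-center coordinates, monotonicity of the phase in one radius to bound the sublevel interval by $\min\{L,\cdot\}$, the density $r\lambda/d$, and triangle-inequality confinement of the other radius.

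The only deviation is in (ii) and (iii). There you differentiate in the small radius, whereas the paper differentiates in the large radius, whose effective range is already cut to $O(R)$ around $d$. As a result the paper never needs $\omega_{\K}'$ at scale $R$. Your use of it is harmless: $\omega_{\K}'$ is increasing and hence bounded below on $\lambda\gtrsim1$, which also makes your small-shell threshold caveat unnecessary.
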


\begin{proof}
In the balanced region the density in \eqref{eq:two-center-volume-wavekg} is comparable to $M$.  For fixed $r$,
\[
  \left|\partial_\lambda
  \sqrt{\massK^2+\speedK^2\lambda^2}\right|
  =\frac{\speedK^2\lambda}
  {\sqrt{\massK^2+\speedK^2\lambda^2}}
  \gtrsim_{\parvec}1.
\]
Thus the admissible $\lambda$-set has length $O(\min\{L,M\})$.  The $r$-interval has length $O(M)$, proving \eqref{eq:balanced-layer-volume}.

In the wave-low region, the triangle inequalities force the effective $\lambda$-interval to have length $O(R)$ around $d$.  The two-center density is comparable to $R$, the $r$-interval has length $O(R)$, and the same derivative in $\lambda$ gives an admissible interval of length $O(\min\{L,R\})$.  This proves \eqref{eq:wave-low-layer-volume}.

In the Klein--Gordon-low region, the effective $r$-interval has length $O(R)$ around $d$, and the density is again comparable to $R$.  For fixed $\lambda$, the derivative of \eqref{eq:phase-two-center} in $r$ is $\eps_1\speedW$.  Hence the admissible $r$-interval has length $O(\min\{L,R\})$, proving \eqref{eq:kg-low-layer-volume}.
\end{proof}

\begin{lemma}[Low-output high--high gap]\label{lem:first-picard-low-output-gap}
There exist $C_0=C_0(\parvec)\gg1$ and $c_0=c_0(\parvec)>0$ such that, whenever
\[
  |\eta|\sim|\xi-\eta|\sim M,
  \qquad
  M\ge C_0\la\xi\ra,
\]
one has, for every $a\in\mathfrak C$ and every sign branch,
\begin{equation}\label{eq:first-picard-low-output-gap}
  |\Phi_a^{\bm\eps}(\xi,\eta)|\ge c_0M.
\end{equation}
\end{lemma}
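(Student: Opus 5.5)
The plan is to split by sign branch. The phase is
\[
\Phi_a^{\bm\eps}(\xi,\eta)=\eps_0\omega_a(\xi)+\eps_1\omega_{\W}(\eta)+\eps_2\omega_{\K}(\xi-\eta),
\]
and since $|\xi|\le \la\xi\ra\le M/C_0$, the output term $\omega_a(\xi)$ is bounded by $C_{\parvec}M/C_0$. It is therefore a perturbation that will be absorbed at the end by taking $C_0$ large. The whole argument reduces to a lower bound of order $M$ for the two-input combination
\[
\eps_1\omega_{\W}(\eta)+\eps_2\omega_{\K}(\xi-\eta).
\]

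\textbf{Same-sign branches} ($\eps_1=\eps_2$). Positivity and $\omega_b(\zeta)\ge c_{\min}|\zeta|$ give
\[
|\omega_{\W}(\eta)+\omega_{\K}(\xi-\eta)|\ge c_{\min}(|\eta|+|\xi-\eta|)\ge 2\kappa_{\mathrm{ann}}c_{\min}M,
\]
using the lower annular support constant. This is the easy case.

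\textbf{Opposite-sign branches.} Here one must bound $|\omega_{\K}(\xi-\eta)-\omega_{\W}(\eta)|$ from below. This is the inequality already displayed in the proof of Proposition~\ref{prop:speed-gap-dependence}, and it is where the speed gap enters.
\begin{itemize}
\item Apply the mass-error bound \eqref{eq:mass-error}: $\omega_{\K}(\zeta)=\speedK|\zeta|+e_m(|\zeta|)$ with $0\le e_m\le m^2/(2\speedK|\zeta|)\lesssim M^{-1}$.
\item Use $\bigl||\xi-\eta|-|\eta|\bigr|\le|\xi|$.
\end{itemize}
Together these give
\[
|\omega_{\K}(\xi-\eta)-\omega_{\W}(\eta)|\ge \speedgap|\eta|-\speedK|\xi|-CM^{-1}\ge \kappa_{\mathrm{ann}}\speedgap M-\speedmax\la\xi\ra-CM^{-1}.
\]

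\textbf{Combining.} With the output phase added back,
\[
|\Phi_a^{\bm\eps}|\ge \kappa_{\mathrm{ann}}\speedgap M-C'\la\xi\ra-CM^{-1}.
\]
We then fix $C_0$ so that $C'\la\xi\ra\le C'M/C_0\le\tfrac14\kappa_{\mathrm{ann}}\speedgap M$. Since $M\ge C_0$, we may also enlarge $C_0$ so that $CM^{-1}\le\tfrac14\kappa_{\mathrm{ann}}\speedgap M$. This yields $c_0=\tfrac12\kappa_{\mathrm{ann}}\min\{\speedgap,\,2c_{\min}\}$, and the bound holds uniformly for all $a$ and all eight sign branches, since $\omega_a(\xi)$ was only ever bounded in absolute value.

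\textbf{Expected obstacle.} The only delicate point is the opposite-sign branch. It requires that the mass correction and the triangle defect both be $o(M)$ relative to $\speedgap M$, and it uses $c_{\W}\ne c_{\K}$ essentially: when the speeds are equal the gap collapses, as noted in the remark on the equal-speed phase. The resulting dependence $C_0\sim d_*^{-1}$, $c_0\sim d_*$ is consistent with \eqref{eq:low-output-parameter-ledger}.
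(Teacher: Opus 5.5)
Your proof is correct and follows the paper's argument essentially verbatim: bound the outer term by $O(\langle\xi\rangle)\le O(M/C_0)$, handle equal input signs by positivity, handle opposite input signs via the speed gap together with the mass error and $\bigl||\xi-\eta|-|\eta|\bigr|\le|\xi|$, and then take $C_0$ large (which also forces $M\ge C_0$ and absorbs the $M^{-1}$ term). One bookkeeping note: in the same-sign branch you must choose $C_0$ against $\tfrac14\kappa_{\mathrm{ann}}\min\{\speedgap,2c_{\min}\}$ rather than against $\tfrac14\kappa_{\mathrm{ann}}\speedgap$. Your stated value of $c_0$ implicitly requires this, since $\speedgap$ can be large relative to $c_{\min}$.
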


\begin{proof}
The outer term is $O_{\parvec}(\la\xi\ra)$.  If the two input signs agree, their sum has size comparable to $M$.  If they disagree, use
\[
  \left|\wK(\xi-\eta)-\wW(\eta)\right|
  \ge \speedgap|\eta|-\speedmax|\xi|
       -C_{\parvec}|\eta|^{-1}.
\]
Since
\[
  \bigl||\xi-\eta|-|\eta|\bigr|\le|\xi|,
\]
the input difference is bounded below by $c_{\parvec}M-C_{\parvec}(|\xi|+M^{-1})$.  Taking $C_0=C_0(\parvec)$ sufficiently large and absorbing finitely many unit shells makes both this difference and the same-sign sum dominate the outer term.  This proves \eqref{eq:first-picard-low-output-gap}.
\end{proof}

\begin{lemma}[Exhaustion of the three-frequency configurations]
\label{lem:first-picard-region-exhaustion}
Assume
$N_0=\langle\xi\rangle$, $N_1=\langle\eta\rangle$, and
$N_2=\langle\xi-\eta\rangle$ are all at least two.  After enlarging the
fixed comparability constants, the dyadic frequency space is the union of the
following four regions, with only finite overlap:
\begin{align*}
 &N_0\sim N_1\sim N_2,\\
 &N_0\sim N_2\gg N_1,\\
 &N_0\sim N_1\gg N_2,\\
 &N_1\sim N_2\gg N_0.
\end{align*}
In the first three regions the sublevel estimates of
\cref{lem:continuous-phase-layers} apply to every sign vector
$\bm\eps\in\{\pm1\}^3$.  In the last region the pointwise estimate of
\cref{lem:first-picard-low-output-gap} applies to every sign vector.
\end{lemma}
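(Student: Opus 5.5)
The lemma is elementary dyadic bookkeeping plus a check of the hypotheses of the two preceding lemmas. My plan is to order the three numbers $N_0,N_1,N_2$ and use the triangle inequality $\xi=\eta+(\xi-\eta)$. The triangle inequality forces the two largest of $\langle\xi\rangle,\langle\eta\rangle,\langle\xi-\eta\rangle$ to be comparable: each frequency is bounded by the sum of the other two, so the largest is at most twice the middle one up to unit corrections. Since all three brackets are at least two, $\langle\cdot\rangle$ is comparable to $|\cdot|$ with absolute constants.

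Fix a large ratio constant $C\ge1$, to be taken to be the constant of \cref{lem:continuous-phase-layers}, enlarged if necessary. Write $N_{\max}\ge N_{\mathrm{mid}}\ge N_{\min}$; by the previous remark $N_{\max}\le 3N_{\mathrm{mid}}$. Now split according to the ratio $N_{\mathrm{mid}}/N_{\min}$.

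\emph{Case $N_{\mathrm{mid}}\le C N_{\min}$.} All three scales are comparable, which is region one.

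\emph{Case $N_{\mathrm{mid}}>CN_{\min}$.} The smallest scale is separated, and the two large ones are comparable. If the smallest is $N_1$ we obtain region two, if it is $N_2$ region three, and if it is $N_0$ region four.

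Overlaps can occur only near the threshold $N_{\mathrm{mid}}\sim CN_{\min}$. Each dyadic triple belongs to at most a bounded number of regions, so the overlap is finite.

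Next I verify the applicability claims, translating between the bracket scales and the variables $d=|\xi|$, $r=|\eta|$, $\lambda=|\xi-\eta|$ of \cref{lem:continuous-phase-layers}.
\begin{itemize}
\item Region one gives $d\sim r\sim\lambda\sim M$ with $M\ge2$, which is part (i).
\item Region two gives $d\sim\lambda\sim M$ and $r\sim R\le M/C$. This is part (ii), the wave-low case, since $\eta$ carries the wave frequency.
\item Region three gives $d\sim r\sim M$ and $\lambda\sim R\le M/C$, which is part (iii).
\end{itemize}
Those statements hold for every sign vector, so nothing further is needed there. In region four, $|\eta|\sim|\xi-\eta|\sim M$ and $\langle\xi\rangle\ll M$. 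To meet the hypothesis $M\ge C_0\langle\xi\rangle$ of \cref{lem:first-picard-low-output-gap}, I will choose the separation constant $C\ge C_0(\parvec)$.

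The main obstacle is reconciling the constants. The constant $C$ must be at least the one required by the phase-layer lemma and at least $C_0$, and the margin of region four must be large enough to absorb the comparability constants between $M$ and $|\eta|$. I will take $C$ to be the maximum of these finitely many constants, times the absolute factor from the triangle inequality. Enlarging $C$ only changes the finite-overlap constant, which is all that "after enlarging the fixed comparability constants" allows. The intermediate triples near the threshold lie in two regions and are handled by either estimate.
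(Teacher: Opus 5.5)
Your proposal is correct and follows the paper's proof. In both, the triangle inequality makes the two largest of $N_0,N_1,N_2$ comparable, the position of the smallest one selects the region, and the sign-uniformity of \cref{lem:continuous-phase-layers} and \cref{lem:first-picard-low-output-gap} gives the applicability claims. Your explicit choice of the separation constant, at least both the phase-layer constant and $C_0(\parvec)$, spells out what the paper leaves implicit in ``after enlarging the fixed comparability constants.''
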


\begin{proof}
The triangle inequalities imply that the two largest numbers among
$N_0,N_1,N_2$ are comparable.  If the smallest one is comparable to the
other two, the first region occurs.  If it is strictly smaller, it is one of
$N_0,N_1,N_2$, giving respectively the fourth, second, or third region.
The derivative estimates in \cref{lem:continuous-phase-layers} are estimates
for absolute values and are therefore independent of the signs.  In the
low-output region, equal signs on the two input phases give their sum, while
opposite signs give their difference; these are the two cases in
the proof of \cref{lem:first-picard-low-output-gap}.
\end{proof}

The estimates used in the weighted phase integral are summarized below.  The
last column is the contribution after the modulation sum on one fixed dyadic
configuration; the remaining small-scale summation is performed in the proof
of \cref{prop:continuous-phase-integral}.
\begin{center}
\small
\begin{tabularx}{\textwidth}{@{}p{0.19\textwidth}p{0.20\textwidth}p{0.20\textwidth}Y@{}}
\toprule
region & phase control & sublevel volume & weighted contribution \\
\midrule
$N_0\sim N_1\sim N_2\sim M$
& differentiate in $\lambda=|\xi-\eta|$
& $M^2\min\{L,M\}$
& $M^{-4+2\nu+2\beta_\Sigma+2\theta}$ \\
$N_0\sim N_2\sim M$, $N_1\sim R\ll M$
& differentiate in $\lambda$
& $R^2\min\{L,R\}$
& $M^{-4+2\nu+2\beta_{\K}+2\theta}R^{2\beta_{\W}}$ \\
$N_0\sim N_1\sim M$, $N_2\sim R\ll M$
& differentiate in $r=|\eta|$
& $R^2\min\{L,R\}$
& $M^{-4+2\nu+2\beta_{\W}+2\theta}R^{2\beta_{\K}}$ \\
$N_1\sim N_2\sim M\gg N_0$
& $|\Phi_a^{\bm\eps}|\gtrsim M$
& $O(M^3)$
& $N_0^{-2+2\nu}M^{-3+2\beta_\Sigma+2\theta}$ \\
\bottomrule
\end{tabularx}
\end{center}

Let
\begin{equation}\label{eq:picard-amplitude-profile}
\begin{aligned}
  \mathfrak A_a(\xi,\eta)
  :={}&\one_{\{\min(|\xi|,|\eta|,|\xi-\eta|)<1\}}\\
  &+\one_{\{\min(|\xi|,|\eta|,|\xi-\eta|)\ge1\}}
  \sum_{\bm\eps\in\{\pm1\}^3}
  \mathfrak m(\Phi_a^{\bm\eps}(\xi,\eta)).
\end{aligned}
\end{equation}
For $\nu\in\{0,1\}$ and $0\le\theta<\frac12-\betasum$, define the weighted phase
integral
\begin{equation}\label{eq:continuous-phase-integral}
  \mathfrak S_a^{(\nu,\theta)}(\xi)
  :=\int_{\R^3}
  \frac{N_0^{2\nu}N_{\max}^{2\theta}
        N_1^{2\betaW}N_2^{2\betaK}}
       {N_0^2N_1^2N_2^2}
  \mathfrak A_a(\xi,\eta)^2\dd\eta.
\end{equation}
The phase and its sublevel sets depend only on the dispersions.  The forcing
profiles enter \eqref{eq:continuous-phase-integral} through the anisotropic
weights $N_1^{2\betaW}N_2^{2\betaK}$.

\begin{proposition}[Weighted three-frequency phase integral]
\label{prop:continuous-phase-integral}
For every $\eps>0$, $\nu\in\{0,1\}$, and
$0\le\theta<\frac12-\betasum$,
\begin{equation}\label{eq:continuous-phase-integral-bound}
  \mathfrak S_a^{(\nu,\theta)}(\xi)
  \lesssim_{\eps,\theta,\parvec,\betaW,\betaK}
  \la\xi\ra^{-4+2\nu+2\betasum+2\theta+\eps}.
\end{equation}
\end{proposition}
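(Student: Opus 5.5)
The plan is to split the $\eta$-integral in \eqref{eq:continuous-phase-integral} into a low-frequency part and a high-frequency part. The low part is where some frequency is below one, so that $\mathfrak A_a=1$. On the high part we dyadically localize $N_0,N_1,N_2$ and use \cref{lem:first-picard-region-exhaustion} to reduce to the four configurations of the table. Throughout we use $\mathfrak A_a^2\lesssim\sum_{\bm\eps}\mathfrak m(\Phi_a^{\bm\eps})^2$ and the layer-cake decomposition over dyadic modulations $L$:
\[
 \int_E\mathfrak m(\Phi)^2\dd\eta\lesssim\sum_{L\in\Dyd}L^{-2}\bigl|\{\eta\in E:|\Phi|\le 2L\}\bigr|.
\]

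\emph{Low part.} If $|\eta|<1$, then $N_1\sim1$ and $N_2\sim N_0$. The integrand is then $\lesssim N_0^{-4+2\nu+2\beta_{\K}+2\theta}$ on a unit-volume set, and this is admissible. The case $|\xi-\eta|<1$ is symmetric. If $|\xi|<1$, we bound $\mathfrak A_a\le1$ and integrate $\langle\eta\rangle^{-4+2\betasum+2\theta}$. This converges because $2\betasum+2\theta<1$, and it gives $O(1)\lesssim\langle\xi\rangle^{-4+2\nu+\dots}$ since $\langle\xi\rangle\sim1$.

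\emph{Balanced and high--low regions.} In the balanced region $N_0\sim N_1\sim N_2\sim M$, \eqref{eq:balanced-layer-volume} gives a modulation sum
\[
 \sum_L L^{-2}M^2\min\{L,M\}\lesssim M^2,
\]
where the sum is dominated by $L\sim1$. The weight is $M^{2\nu+2\theta+2\betasum-6}$, so the total is $M^{-4+2\nu+2\betasum+2\theta}$. In the region $N_0\sim N_2\sim M$, $N_1\sim R\ll M$, \eqref{eq:wave-low-layer-volume} gives $\sum_L L^{-2}R^2\min\{L,R\}\lesssim R^2$. This produces $M^{-4+2\nu+2\beta_{\K}+2\theta}R^{-2+2\beta_{\W}}R^2$. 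Summing over $R\le M$ gives $M^{2\beta_{\W}}$ up to a logarithm, which is absorbed into the $\eps$ loss. The third region is identical, using \eqref{eq:kg-low-layer-volume}.

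\emph{Low-output region.} Here $N_1\sim N_2\sim M\gg N_0$, and \cref{lem:first-picard-low-output-gap} gives $\mathfrak m\lesssim M^{-1}$. This holds only for $M\ge C_0N_0$; the intermediate range $N_0\lesssim M\le C_0N_0$ lies in the balanced region after enlarging the constants. The contribution is
\[
 N_0^{-2+2\nu}\sum_{M\gtrsim N_0}M^{3}M^{-4+2\betasum+2\theta}M^{-2}
 =N_0^{-2+2\nu}\sum_{M\gtrsim N_0}M^{-3+2\betasum+2\theta}\lesssim N_0^{-5+2\nu+2\betasum+2\theta},
\]
which is better than needed. The expected obstacle is the tail of the dyadic sum over $M$ itself: without the phase gain the sum $\sum_M M^{-1+2\betasum+2\theta}$ would diverge, so the entire point is that the gap supplies the extra $M^{-2}$. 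The constants depend on $\parvec$ through $C_0$, $c_0$ and the derivative lower bounds in \cref{lem:continuous-phase-layers}. Finally, summing the finitely many overlapping regions, and the $O(\log N_0)$ dyadic choices of $R$, gives \eqref{eq:continuous-phase-integral-bound}.
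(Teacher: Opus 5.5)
Your proof is correct and follows the paper's argument essentially step for step. The unit blocks are bounded directly with $\mathfrak A_a\lesssim1$, the high part is split into the same four configurations and estimated with the layer volumes of \cref{lem:continuous-phase-layers} and a dyadic modulation sum, and the low-output region uses the pointwise gap of \cref{lem:first-picard-low-output-gap}.

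One correction to your closing commentary: without the gap, the sum $\sum_{M\gtrsim N_0}M^{-1+2\betasum+2\theta}$ does not diverge, since its exponent is negative because $2\betasum+2\theta<1$. It does, however, only give $N_0^{-3+2\nu+2\betasum+2\theta}$, which is one power of $\la\xi\ra$ short of the target, and that lost power is what the gap recovers.
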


\begin{proof}
The inhomogeneous unit blocks are bounded directly.  If $N_0\sim1$, then
$N_1\sim N_2$ outside a bounded set, and the rough part of
\eqref{eq:continuous-phase-integral} is bounded by
\[
  \int_{\R^3}\la\eta\ra^{-4+2\betasum+2\theta}\dd\eta
  \lesssim_{\betaW,\betaK,\theta}1,
\]
since $2\betasum+2\theta<1$.  If $N_0\gg1$ and $N_1\sim1$, then
$N_2\sim N_0$ and the contribution is
$O(N_0^{-4+2\nu+2\betaK+2\theta})$; if $N_2\sim1$, it is
$O(N_0^{-4+2\nu+2\betaW+2\theta})$.  Both are stronger than
\eqref{eq:continuous-phase-integral-bound}.  It remains to assume that all
three Euclidean frequencies are at least one.

For a fixed sign branch, use
\begin{equation}\label{eq:continuous-modulation-decomposition}
  \mathfrak m(\Phi)^2
  \lesssim
  \sum_{\substack{L\in\Dyd\\1\le L\lesssim N_{\max}}}
  L^{-2}\one_{\{|\Phi|\le L\}}.
\end{equation}
Since the sign set is finite, it suffices to estimate one branch.  Decompose
$N_1$ and $N_2$ dyadically.  The triangle relation implies that the two
largest among $N_0,N_1,N_2$ are comparable.

\emph{Balanced region.}
Suppose $N_0\sim N_1\sim N_2\sim M$.  The nonoscillatory factor is
$M^{-6+2\nu+2\betasum+2\theta}$.  Since the modulation parameter is dyadic,
\[
 \sum_{\substack{L\in\Dyd\\1\le L\lesssim M}}
 L^{-2}M^2\min\{L,M\}
 =M^2\sum_{1\le L\lesssim M}L^{-1}\lesssim M^2.
\]
Consequently
\[
 \mathrm{Bal}_M
 \lesssim M^{-4+2\nu+2\betasum+2\theta},
\]
which is stronger than the asserted bound.

\emph{Wave-low region.}
Suppose $N_0\sim N_2\sim M$ and $N_1\sim R\ll M$.  The wave-colored factor
is at scale $R$ and the Klein--Gordon-colored factor is at scale $M$, so the
prefactor is
\[
 P_{M,R}:=M^{-4+2\nu+2\betaK+2\theta}R^{-2+2\betaW}.
\]
For modulations $L\le R$, \eqref{eq:wave-low-layer-volume} gives
\[
 P_{M,R}\sum_{1\le L\le R}L^{-2}R^2L
 \lesssim M^{-4+2\nu+2\betaK+2\theta}R^{2\betaW}.
\]
For $L>R$, the cap has volume $O(R^3)$, and hence
\[
 P_{M,R}\sum_{R<L\lesssim M}L^{-2}R^3
 \lesssim M^{-4+2\nu+2\betaK+2\theta}R^{-1+2\betaW}.
\]
The latter term is no larger than the former for $R\ge1$.  Thus the fixed
$R$-shell is bounded by
\[
 M^{-4+2\nu+2\betaK+2\theta}R^{2\betaW}.
\]
The dyadic sum over $R\le M/C$ is bounded by $CM^{2\betaW}$ if
$\betaW>0$ and by $C\log(2+M)$ if $\betaW=0$.  In both cases it is
$O_\eps(M^{2\betaW+\eps})$, which gives the required power.

\emph{Klein--Gordon-low region.}
Suppose $N_0\sim N_1\sim M$ and $N_2\sim R\ll M$.  The prefactor is
\[
 \widetilde P_{M,R}:=
 M^{-4+2\nu+2\betaW+2\theta}R^{-2+2\betaK}.
\]
Using \eqref{eq:kg-low-layer-volume} for $L\le R$ and the cap volume
$O(R^3)$ for $L>R$ gives, as above,
\[
 \widetilde P_{M,R}\left(
 \sum_{L\le R}L^{-2}R^2L+
 \sum_{L>R}L^{-2}R^3\right)
 \lesssim
 M^{-4+2\nu+2\betaW+2\theta}R^{2\betaK}.
\]
Summing the dyadic $R$-shells gives
$O_\eps(M^{-4+2\nu+2\betasum+2\theta+\eps})$.

\emph{Low-output high--high region.}
Suppose $N_1\sim N_2\sim M$ and $M\ge C_0N_0$.  By
Lemma~\ref{lem:first-picard-low-output-gap},
$\mathfrak m(\Phi_a^{\bm\eps})\lesssim M^{-1}$.  The $M$-shell has volume
$O(M^3)$, and therefore
\[
  \mathrm{LOHH}_M
  \lesssim
  N_0^{-2+2\nu}M^{-4+2\betasum+2\theta}M^{-2}M^3
  =N_0^{-2+2\nu}M^{-3+2\betasum+2\theta}.
\]
The dyadic sum over $M\ge C_0N_0$ is bounded by
$N_0^{-5+2\nu+2\betasum+2\theta}$, which is stronger than
\eqref{eq:continuous-phase-integral-bound}.  If $M\lesssim N_0$, the
triangle relation forces $M\sim N_0$, and the interaction was already
included in the balanced region.  This completes the proof.
\end{proof}

\begin{proof}[Proof of \cref{prop:first-picard-spectral}]
By \cref{lem:picard-stochastic-fubini,lem:picard-kernel-bounds},
\[
  q_{a,\nu,\Lambda}(t,\xi)
  \lesssim_T\mathfrak S_a^{(\nu,0)}(\xi),
\]
uniformly in $\Lambda$.  Proposition~\ref{prop:continuous-phase-integral} with $\theta=0$ proves \eqref{eq:first-picard-spectral-static}.

Assume $t'<t$.  In the double Wiener representation, split $[0,t]^2$ into the common square $[0,t']^2$ and the boundary strip.  On the common square, \eqref{eq:picard-kernel-increment} and the Wiener isometry give
\[
  \text{common-square contribution}
  \lesssim
  |t-t'|^{2\theta}
  \mathfrak S_a^{(\nu,\theta)}(\xi).
\]
The boundary strip has measure $O_T(|t-t'|)$.  The static kernel bound,
$2\theta\le1$, and $N_{\max}^{2\theta}\ge1$ give the same upper bound there.
Applying \cref{prop:continuous-phase-integral} proves
\eqref{eq:first-picard-spectral-increment}.  For any finite collection of
input and output cutoff flags, list the corresponding frequency legs with
multiplicity and apply \cref{lem:finite-product-cutoff} to the squared static
kernel and to the normalized increment kernel.  Terms in which an input leg
is active are bounded by the weighted phase integral restricted to
$\max\{|\eta|,|\xi-\eta|\}\gtrsim\Lambda$ and hence by
\cref{lem:first-picard-tail-integral}.  If an output flag is active, then
$|\xi|\gtrsim\Lambda$; the unrestricted estimates
\eqref{eq:first-picard-spectral-static}--\eqref{eq:first-picard-spectral-increment}
therefore give a dyadically summable tail starting at the output scale
$N\gtrsim\Lambda$.  Summing the finitely many telescope terms proves
convergence for every admissible cofinal sequence.  By
\eqref{eq:finite-product-two-cutoffs}, two non-nested cutoffs are controlled
by the sum of their respective tails.
\end{proof}

The following lemma converts stationary spectral-density bounds into localized path-space estimates.

\begin{lemma}[Spectral density to local H\"older--Besov paths]
\label{lem:spectral-to-local-paths}
Let $(Z_\Lambda)_\Lambda$ be spatially stationary random distributions on
$\R^3$, taking values in a fixed finite sum of Wiener chaoses of order at most
$m$.  Assume that every finite-cutoff field admits a realization in
$C([0,T];C^\infty(\R^3))$; this is the case for the smooth, frequency-compact
spectral multipliers used below.  Assume further that there exists $\eps_*>0$ such that their covariance and
increment spectral densities satisfy, for every $0<\eps\le\eps_*$ and
$0<\theta\le\theta_*$,
\begin{align}
  q_\Lambda(t,\xi)&\lesssim\la\xi\ra^{-2\rho-3+\eps},
  \label{eq:path-criterion-static}\\
  q_\Lambda^\Delta(t,t';\xi)
  &\lesssim |t-t'|^{2\theta}
  \la\xi\ra^{-2\rho-3+2\theta+\eps}.
  \label{eq:path-criterion-increment}
\end{align}
Then, for every $s<\rho$, every compactly supported $\chi$, and every finite $p\ge2$,
\begin{equation}\label{eq:path-criterion-conclusion}
  \sup_\Lambda
  \norm{\chi Z_\Lambda}_{L^p(\Omega;
  C_T\cC^s\cap L_T^\infty B_{2,\infty}^s)}<\infty.
\end{equation}
If the corresponding covariance densities for cutoff differences tend to zero pointwise, let $q_{>\Lambda}$ and
$q_{>\Lambda}^{\Delta}$ be nonnegative majorants for their static and
increment densities and set
\begin{equation}\label{eq:path-tail-majorant-definition}
 q_{>\Lambda}^{(\theta)}(\xi)
 :=\sup_{0\le t\le T}q_{>\Lambda}(t,\xi)
 +\sup_{\substack{0\le t,t'\le T\\ t\ne t'}}
 \frac{q_{>\Lambda}^{\Delta}(t,t';\xi)}{|t-t'|^{2\theta}}.
\end{equation}
If this tail majorant satisfies
\begin{equation}\label{eq:path-criterion-tail}
  \lim_{\Lambda\to\infty}
  \sum_{N\in\Dyd}
  N^{2s+3+\delta}
  \sup_{|\xi|\sim N}q_{>\Lambda}^{(\theta)}(\xi)
  =0
\end{equation}
for some $\delta>0$, then the cutoff family is Cauchy in the space in \eqref{eq:path-criterion-conclusion}.  A polynomial tail bound in \eqref{eq:path-criterion-tail} implies almost sure convergence along dyadic cutoffs.
\end{lemma}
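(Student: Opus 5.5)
The argument is a standard Gaussian Kolmogorov scheme run blockwise in Littlewood--Paley shells after compact localization. I carry it out in three steps: a fixed-time bound, a time-continuity bound, and then the tail/convergence statement.

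\emph{Fixed-time blocks.} Fix a dyadic $N$, a time $t$ and a cutoff $\Lambda$. Because of stationarity, the second moment of $P_N Z_\Lambda(t,x)$ does not depend on $x$ and equals $c_{\mathrm F}\int\rho_N(\xi)^2 q_\Lambda(t,\xi)\,\dd\xi$. By \eqref{eq:path-criterion-static} this is $\lesssim N^{3}N^{-2\rho-3+\eps}=N^{-2\rho+\eps}$.

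Next I replace $\chi Z_\Lambda$ by its pieces $P_M(\chi P_N Z_\Lambda)$. Lemma~\ref{lem:physical-cutoff-almost-diagonal} makes the non-comparable pairs $(M,N)$ negligible, since they carry an arbitrary power of $\min/\max$. So it suffices to control $\vartheta P_N Z_\Lambda$, where $\vartheta$ is a compact enlargement of $\chi$.

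Gaussian hypercontractivity on chaoses of order at most $m$ converts second moments into $p$-th moments at cost $(p-1)^{m/2}$. This gives $\E\|\vartheta P_NZ_\Lambda(t)\|_{L^q}^q\lesssim N^{q(-\rho+\eps/2)}$ for any large finite $q$. Bernstein's inequality, applied to the band-limited piece, together with the fact that $\vartheta$ has compact support, turns $L^q$ into $L^\infty$ at a loss of $N^{3/q}$. The $L^2$ bound for the $B^s_{2,\infty}$ component comes out the same way, without the Bernstein step.

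\emph{Time continuity.} Repeat the same computation with the increment density \eqref{eq:path-criterion-increment}. This gives
\[
\|\|\vartheta P_N(Z_\Lambda(t)-Z_\Lambda(t'))\|_{L^\infty}\|_{L^p_\omega}\lesssim |t-t'|^\theta N^{-\rho+\theta+\eps/2+3/q}.
\]
Now I apply Kolmogorov's criterion (or the one-parameter analogue of Lemma~\ref{lem:centered-time-lift}) with $p\theta>1$. It yields $\|\sup_{t\le T}\|\vartheta P_NZ_\Lambda(t)\|_{L^\infty}\|_{L^p}\lesssim N^{-\rho+\theta+\eps+3/q}$.

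Given $s<\rho$, I choose $\theta,\eps,3/q$ so small that $\rho-s-\theta-\eps-3/q>0$, and take $p$ large. Then
\[
\E\sup_N N^{sp}\sup_t\|\cdot\|^p\le\sum_N N^{sp}\E\sup_t\|\cdot\|^p,
\]
which is a convergent geometric series. This gives $L^p(\Omega;L^\infty_T\cC^s)$ and, in the same way, $L^\infty_TB^s_{2,\infty}$. Continuity in $\cC^s$ (rather than mere boundedness) follows by working with $s'\in(s,\rho)$ and using that each block is a continuous path, because the finite-cutoff fields are smooth. The uniform summable tail then makes the path continuous in $\cC^s$ and places it in the separable little-space closure. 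Since all bounds are uniform in $\Lambda$, this proves \eqref{eq:path-criterion-conclusion}.

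\emph{Cutoff convergence.} For the Cauchy property I run the identical argument on $Z_\Lambda-Z_{\Lambda'}$, using the tail majorant \eqref{eq:path-tail-majorant-definition}. The $p$-th moment of the $\cC^s$ norm is bounded by a power of $\sum_N N^{2s+3+\delta}\sup_{|\xi|\sim N}q^{(\theta)}_{>\Lambda}$: the extra $N^\delta$ pays for the $N^{3/q}$ and $N^\theta$ losses together with the dyadic summation. By \eqref{eq:path-criterion-tail} this tends to zero. If the tail decays polynomially in $\Lambda$, then Markov's inequality with large $p$, summed over $\Lambda=2^j$, and Borel--Cantelli give almost sure convergence.

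The main obstacle is the bookkeeping of the small losses $\theta,\eps,3/q,\delta$ against the margin $\rho-s$, so that both the blockwise $\cC^s$ sum and the time-chaining converge with a single choice of moments. A secondary technical point is checking that the localized high-frequency tails coming from Lemma~\ref{lem:physical-cutoff-almost-diagonal} are uniform in $\Lambda$.
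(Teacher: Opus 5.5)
Your proposal is correct and follows essentially the paper's route: localized shell moments from the covariance density and hypercontractivity, Bernstein only in the H\"older branch and a direct $L^2$ estimate in the Besov branch, a time lift (Kolmogorov chaining in your version, the embedding $W^{\alpha,p_0}([0,T];L^{q_x})\hookrightarrow C_TL^{q_x}$ in the paper), and the tail majorant with Markov and Borel--Cantelli for the cutoff statements. The one step to tighten is the localization. Lemma~\ref{lem:physical-cutoff-almost-diagonal} is an $L^q(\R^3)$ operator bound, and $P_NZ_\Lambda$ is not in $L^q(\R^3)$, so the lemma does not apply verbatim; you must first split off the separated-support piece $\chi\widetilde P_N\bigl((1-\vartheta)P_NZ_\Lambda\bigr)$ and control it by kernel decay, including decay in the output shell when $N\ll M$. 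The paper sidesteps this by applying the covariance isometry to the explicit stochastic kernel $G_{N,M}(x,\eta)$, which yields both the factor $(\min\{N,M\}/\max\{N,M\})^L$ and decay in $\dist(x,\supp\chi)$, and then sums the shells by a weighted dyadic Schur estimate.
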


\begin{proof}
For a nonnegative spectral density $q$, write
\[
 q_M:=\sup_{|\xi|\sim M}q(\xi),
 \qquad
 \mathfrak w_L(N,M):=
 \left(\frac{\min\{N,M\}}{\max\{N,M\}}\right)^L,
\]
with the inhomogeneous unit blocks interpreted in the usual way, and set
\begin{equation}\label{eq:spectral-local-shell-envelope}
 \mathfrak a_N(q):=
 \sum_{M\in\Dyd}\mathfrak w_L(N,M)(M^3q_M)^{1/2}.
\end{equation}
For the localized block estimate, let
$2\le q_x\le p_0<\infty$ and every $L>0$,
\begin{equation}\label{eq:localized-spectral-block}
 \|P_N(\chi Z(t))\|_{L^{p_0}(\Omega;L_x^{q_x})}
 \lesssim_{p_0,q_x,L,\chi,m}\mathfrak a_N(q(t)).
\end{equation}
Indeed, decompose $Z=\sum_MP_MZ$.  Stationarity and the covariance identity
give
\[
 \|P_MZ(t,x)\|_{L^2(\Omega)}
 \lesssim(M^3q_M(t))^{1/2},
\]
uniformly in $x$; finite-chaos hypercontractivity gives the same bound in
$L^{p_0}(\Omega)$.  We claim that
\begin{equation}\label{eq:localized-spectral-two-shell}
 \|P_N(\chi P_MZ(t))\|_{L^{p_0}_\omega L_x^{q_x}}
 \lesssim_{L,p_0,q_x,\chi,m}
 \mathfrak w_L(N,M)(M^3q_M(t))^{1/2}.
\end{equation}
For comparable $M,N$, boundedness of $P_N$ on $L^{q_x}$, Minkowski's
inequality (recall $p_0\ge q_x$), compactness of $\supp\chi$, and the preceding
pointwise chaos bound prove \eqref{eq:localized-spectral-two-shell}.  For noncomparable scales, write the stochastic Fourier kernel as
\[
 G_{N,M}(x,\eta)
 :=\varphi_M(\eta)\int_{\R^3}e^{ix\cdot\xi}
 \varphi_N(\xi)\widehat\chi(\xi-\eta)\dd\xi.
\]
The covariance isometry and the definition of $q_M$ give
\begin{equation}\label{eq:localized-spectral-kernel-isometry}
 \|P_N(\chi P_MZ(t))(x)\|_{L^2(\Omega)}^2
 \le q_M(t)\int_{\R^3}|G_{N,M}(x,\eta)|^2\dd\eta.
\end{equation}
On the support of the integrand,
$|\xi-\eta|\gtrsim\max\{M,N\}$.  Writing $\widehat\chi$ as the Fourier transform of the compactly supported
function $\chi$, differentiating the shell symbols, and integrating by parts in
$\xi$ against $e^{\ii(x-y)\cdot\xi}$ yields, for every $L,A>0$,
\begin{equation}\label{eq:localized-spectral-deterministic-kernel}
 \left(\int_{\R^3}|G_{N,M}(x,\eta)|^2\dd\eta\right)^{1/2}
 \lesssim_{L,A,\chi}
 \mathfrak w_L(N,M)M^{3/2}
 \la\dist(x,\supp\chi)\ra^{-A}.
\end{equation}
The powers created by differentiating the shell symbols and the two shell
volumes are absorbed by taking the Schwartz exponent larger; the inhomogeneous
unit shell is included by changing the constant.  Combining
\eqref{eq:localized-spectral-kernel-isometry} and
\eqref{eq:localized-spectral-deterministic-kernel}, applying finite-chaos
hypercontractivity, and choosing $Aq_x>3$ proves
\eqref{eq:localized-spectral-two-shell}.  Summing in $M$ proves
\eqref{eq:localized-spectral-block}.  The identical calculation
for $Z(t)-Z(t')$ yields
\begin{equation}\label{eq:localized-spectral-increment-block}
 \|P_N\chi(Z(t)-Z(t'))\|_{L^{p_0}_\omega L_x^{q_x}}
 \lesssim |t-t'|^\theta
 \mathfrak a_N(q^{(\theta)}),
\end{equation}
where $q^{(\theta)}$ is the normalized increment density.

Set
\[
 q^{(0)}(\xi):=\sup_{t\le T}q(t,\xi),
 \qquad
 q^{(\theta)}(\xi):=
 \sup_{t\ne t'}\frac{q^\Delta(t,t';\xi)}{|t-t'|^{2\theta}}.
\]
Under \eqref{eq:path-criterion-static}--
\eqref{eq:path-criterion-increment}, weighted dyadic Schur summation in
\eqref{eq:spectral-local-shell-envelope} gives, for every $0<\eps_1<\eps_*/4$,
\[
 \mathfrak a_N(q^{(0)})\lesssim N^{-\rho+\eps_1},
 \qquad
 \mathfrak a_N(q^{(\theta)})
 \lesssim N^{-\rho+\theta+\eps_1},
\]
provided $L$ is chosen sufficiently large.  Choose
\[
 0<\alpha<\theta,\qquad \alpha p_0>1.
\]
The vector-valued fractional Sobolev embedding
$W^{\alpha,p_0}([0,T];L_x^{q_x})\hookrightarrow C_TL_x^{q_x}$, together with
\eqref{eq:localized-spectral-block}--
\eqref{eq:localized-spectral-increment-block}, gives
\begin{equation}\label{eq:spectral-block-CT-Lq}
 \|P_N(\chi Z_\Lambda)\|_{L^{p_0}(\Omega;C_TL_x^{q_x})}
 \lesssim N^{-\rho+\theta+\eps_1}.
\end{equation}
We use two spatial branches.  For the H\"older estimate we take a large
finite exponent $q_x$ and then apply Bernstein.  For the Hilbert--Besov
estimate we set $q_x=2$ in
\eqref{eq:localized-spectral-two-shell} from the outset.

\emph{H\"older branch.}
Take $q_x<\infty$ large.  Since the output is localized at frequency $N$,
Bernstein and \eqref{eq:spectral-block-CT-Lq} give
\[
 \|P_N(\chi Z_\Lambda)\|_{L^{p_0}(\Omega;C_TL_x^\infty)}
 \lesssim N^{-\rho+\theta+3/q_x+\eps_1}.
\]
Choose $\theta$, $q_x^{-1}$, and $\eps_1$ so small that
$\theta+3/q_x+\eps_1<\rho-s$.  Since $p_0\ge2$, Minkowski's inequality in
$L^{p_0}(\Omega;\ell_N^2)$ and the bound of a dyadic supremum by its
$\ell^2$ norm yield
\begin{equation}\label{eq:spectral-holder-branch}
 \left\|
  \sup_N N^s\|P_N(\chi Z_\Lambda)\|_{C_TL_x^\infty}
 \right\|_{L^{p_0}(\Omega)}
 \lesssim
 \left(\sum_N
  N^{-2(\rho-s-\theta-3/q_x-\eps_1)}\right)^{1/2}<\infty.
\end{equation}

\emph{Hilbert--Besov branch.}
Now take $q_x=2$ directly in
\eqref{eq:localized-spectral-block}--
\eqref{eq:localized-spectral-increment-block}, and do not apply a spatial
Bernstein inequality.  Choose $\theta+\eps_1<\rho-s$.  The same fractional
Sobolev and Minkowski argument gives
\begin{equation}\label{eq:spectral-besov-branch}
 \left\|
  \sup_N N^s\|P_N(\chi Z_\Lambda)\|_{C_TL_x^2}
 \right\|_{L^{p_0}(\Omega)}
 \le
 \left\|
  \left(\sum_NN^{2s}
   \|P_N(\chi Z_\Lambda)\|_{C_TL_x^2}^2\right)^{1/2}
 \right\|_{L^{p_0}(\Omega)}
 \lesssim1.
\end{equation}
Thus the Besov conclusion is obtained in its native Hilbert spatial norm and
carries no $3/q_x$ loss.  The summable high-frequency majorants in
\eqref{eq:spectral-holder-branch} and
\eqref{eq:spectral-besov-branch}, together with continuity of every fixed
Littlewood--Paley block, also place the paths in the completed spatial
topologies.  Hypercontractivity allows $p_0$ to be chosen larger than
$\max\{p,q_x\}$, and H\"older in probability gives
\eqref{eq:path-criterion-conclusion}.

For cutoff differences, apply the preceding time argument with the
combined majorant $q_{>\Lambda}^{(\theta)}$ from
\eqref{eq:path-tail-majorant-definition}.  We use the following explicit
weighted Schur estimate: for every $\gamma\in\R$, once $L>|\gamma|+1$,
\begin{equation}\label{eq:spectral-envelope-weighted-schur}
 \sum_NN^{2\gamma}\mathfrak a_N(q)^2
 \lesssim_{L,\gamma}
 \sum_MM^{2\gamma+3}q_M.
\end{equation}
Indeed, with $d_M=(M^3q_M)^{1/2}$, conjugation by the weight $N^\gamma$
reduces the map in \eqref{eq:spectral-local-shell-envelope} to the dyadic
matrix
\[
 K_{N,M}:=(N/M)^\gamma\mathfrak w_L(N,M).
\]
Both its row and column sums are bounded by convergent geometric series, so the
ordinary $\ell^2$ Schur test proves
\eqref{eq:spectral-envelope-weighted-schur}.  Specializing first to
$\gamma=s+3/q_x$ and then to $\gamma=s$ gives two distinct estimates.  For the
H\"older branch,
\begin{align}
 &\sum_NN^{2s+6/q_x}
 \mathfrak a_N(q_{>\Lambda}^{(\theta)})^2\notag\\
 &\qquad\lesssim_{L,s,q_x}
 \sum_MM^{2s+3+6/q_x}
 \sup_{|\xi|\sim M}q_{>\Lambda}^{(\theta)}(\xi).
 \label{eq:spectral-tail-schur-holder}
\end{align}
For the Hilbert--Besov branch, taking $q_x=2$ before any Bernstein step gives
instead
\begin{align}
 &\sum_NN^{2s}
 \mathfrak a_N(q_{>\Lambda}^{(\theta)})^2\notag\\
 &\qquad\lesssim_{L,s}
 \sum_MM^{2s+3}
 \sup_{|\xi|\sim M}q_{>\Lambda}^{(\theta)}(\xi).
 \label{eq:spectral-tail-schur-besov}
\end{align}
Choose $q_x$ so large that $6/q_x<\delta$.  The right-hand side of
\eqref{eq:spectral-tail-schur-holder} is then bounded by the tail in
\eqref{eq:path-criterion-tail}; the right-hand side of
\eqref{eq:spectral-tail-schur-besov} is bounded by the same tail without using
any part of the $\delta$ margin.  Combining the first estimate with Bernstein
proves Cauchy convergence in $L^p(\Omega;C_T\mathcal C^s)$, while the second,
with $q_x=2$ throughout, proves Cauchy convergence in
$L^p(\Omega;L_T^\infty B_{2,\infty}^s)$.  Together these two estimates prove convergence in the intersection in
\eqref{eq:path-criterion-conclusion}; the Hilbert--Besov component is obtained
entirely in $L_x^2$.

Each localized finite-cutoff path is spatially smooth and compactly supported.
To identify the completed target, fix one cutoff $\Lambda_0$.  Its uniform
little-Besov tail vanishes.  The Cauchy estimate just proved makes the limit
arbitrarily close to this fixed path in the full
$L_T^\infty B_{2,\infty}^s$ norm; taking the high-frequency limsup and then
letting the approximation error tend to zero gives
\eqref{eq:uniform-little-besov-tail}.  Thus the limit belongs to the completed little-Besov path space.
The same fixed-approximation argument in the full
$C_T\mathcal C^s$ norm places the limit in the little-H\"older path closure
specified after \eqref{eq:E-space}: the finite-cutoff paths are smooth and compactly supported, and convergence
holds in the defining norm.  If the right-hand sides of
\eqref{eq:spectral-tail-schur-holder} and
\eqref{eq:spectral-tail-schur-besov} decay polynomially at dyadic cutoff
$\Lambda=2^j$, choose a sufficiently high probability moment.  Markov's
inequality makes the exceptional probabilities summable, and Borel--Cantelli gives
almost-sure convergence of the complete dyadic sequence.
\end{proof}

\subsection{Cutoff convergence and completion of the proof}

Let $\mathfrak S_{a,>\Lambda}^{(\nu,\theta)}(\xi)$ denote the
integral in \eqref{eq:continuous-phase-integral} restricted to
\[
  \max\{|\eta|,|\xi-\eta|\}>c_0\Lambda,
\]
where $c_0$ is the uniform inner-plateau constant in
\eqref{eq:cutoff-supports}.  After expanding the difference from the formal
uncut multiplier $1$, every term contains at least one factor
$m_\Lambda(\zeta)-1$ and is therefore supported in this region.  Thus
$\mathfrak S_{a,>\Lambda}^{(\nu,\theta)}$ controls the corresponding
covariance density up to uniform constants.  Two arbitrary cofinal cutoff elements are compared through this common
limiting kernel.

\begin{lemma}[Tail phase integrals]\label{lem:first-picard-tail-integral}
Let
\[
  s<\frac12-\betasum-\nu,
  \qquad \nu\in\{0,1\}.
\]
Let $\theta,\eps,\delta>0$ satisfy
\[
 0<\theta<\frac12-\betasum,
 \qquad
 2\theta+\eps+\delta
 <1-2s-2\nu-2\betasum.
\]
Then
\begin{equation}\label{eq:first-picard-tail-sum}
  \lim_{\Lambda\to\infty}
  \sum_{N\in\Dyd}
  N^{2s+3+\delta}
  \sup_{|\xi|\sim N}
  \mathfrak S_{a,>\Lambda}^{(\nu,\theta)}(\xi)
  =0.
\end{equation}
Along dyadic $\Lambda$, the left-hand side is bounded by
$C\Lambda^{-\gamma}$ for some $\gamma>0$.
\end{lemma}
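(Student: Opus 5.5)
The plan is to reuse the region decomposition from the proof of Proposition~\ref{prop:continuous-phase-integral} and keep track of which regions meet the tail set $\max\{|\eta|,|\xi-\eta|\}>c_0\Lambda$. Fix $|\xi|\sim N$. We split into the unit blocks and the four regions of Lemma~\ref{lem:first-picard-region-exhaustion}. In each region we use the same sublevel volumes of Lemma~\ref{lem:continuous-phase-layers} and the same pointwise gap of Lemma~\ref{lem:first-picard-low-output-gap}, now with the input scale restricted by the tail condition. The basic observation is that at least one input scale $N_1$ or $N_2$ is $\gtrsim\Lambda$. Since the two largest of $N_0,N_1,N_2$ are comparable, this forces $N_{\max}\gtrsim\Lambda$.

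\emph{High-output case, $N\gtrsim\Lambda$.} We drop the restriction and use the unrestricted bound \eqref{eq:continuous-phase-integral-bound}, which gives $\sup_{|\xi|\sim N}\mathfrak S_{a,>\Lambda}^{(\nu,\theta)}\lesssim N^{-4+2\nu+2\betasum+2\theta+\eps}$. The weighted sum over $N\gtrsim\Lambda$ then has exponent $2s-1+2\nu+2\betasum+2\theta+\eps+\delta<0$ by hypothesis, so it is $O(\Lambda^{-\gamma_1})$.

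\emph{Low-output case, $N\ll\Lambda$.} The three balanced or high--low regions all have $N_{\max}\sim N_0$, so the tail set is empty there. The unit blocks with $N_0\sim1$ reduce to the high--high situation. Only the low-output high--high region $N_1\sim N_2\sim M\gtrsim\Lambda$ survives. The computation in that proof gives the contribution $N^{-2+2\nu}M^{-3+2\betasum+2\theta}$, and summing over $M\gtrsim\Lambda$ yields $N^{-2+2\nu}\Lambda^{-3+2\betasum+2\theta}$. For $N_0\sim1$ the rough integral $\int_{|\eta|\gtrsim\Lambda}\langle\eta\rangle^{-4+2\betasum+2\theta}\,\mathrm d\eta$ gives $\Lambda^{-1+2\betasum+2\theta}$. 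We then sum $N^{2s+3+\delta}$ against these bounds over $N\lesssim\Lambda$, where the sum is dominated by its largest term.

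The step I expect to be the main obstacle is this final low-output summation. There are two regimes.
\begin{itemize}
\item If $2s+1+2\nu+\delta>0$, the sum is $\Lambda^{2s+1+2\nu+\delta-3+2\betasum+2\theta}$ up to a logarithm. Its exponent is $(2s-1+2\nu+2\betasum+2\theta+\delta)-1<0$.
\item Otherwise the sum is bounded by $\Lambda^{-1+2\betasum+2\theta}$ (the unit-block term), which is negative because $\theta<\frac12-\betasum$.
\end{itemize}
An $\eps$ loss absorbs the logarithms. The intermediate range $N\simeq\Lambda$ is covered by either estimate.

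Adding the two cases gives a bound $C\Lambda^{-\gamma}$ with
\[
\gamma=\min\{\gamma_1,\ 1-2\betasum-2\theta,\ 2-2s-2\nu-2\betasum-2\theta-\delta\}-\eps>0.
\]
This bound proves \eqref{eq:first-picard-tail-sum}, with polynomial decay along dyadic $\Lambda$.
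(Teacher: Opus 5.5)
Your proposal is correct and follows essentially the paper's route: the balanced and high--low regions meet the tail set only when $N\gtrsim\Lambda$, where the unrestricted bound of Proposition~\ref{prop:continuous-phase-integral} is summed, and the low-output high--high region gives $N^{-2+2\nu}\Lambda^{-3+2\betasum+2\theta}$, which is summed over $N\lesssim\Lambda$ in the same two regimes the paper uses. Your separate rough bound $\Lambda^{-1+2\betasum+2\theta}$ for the output block $|\xi|<1$ (where $\mathfrak A_a=1$ and there is no phase gain) is a detail the paper's proof leaves implicit; the only blemish is cosmetic, namely that the loss subtracted in your final formula for $\gamma$ should be an arbitrarily small auxiliary one (or omitted), not the given $\eps$, since $\gamma_1-\eps$ need not be positive.
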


\begin{proof}
For each fixed output shell, the restricted positive integral tends to zero
by dominated convergence.  Proposition~\ref{prop:continuous-phase-integral}
gives the summable majorant
\[
  N^{2s+3+\delta}N^{-4+2\nu+2\betasum+2\theta+\eps}
  =N^{2s-1+2\nu+2\betasum+2\theta+\eps+\delta}.
\]
The imposed inequality on $\theta,\eps,\delta$ makes this dyadic power
strictly negative.

For the polynomial tail, in the balanced and high--low regions the tail
condition forces the dominant scale to be $\gtrsim\Lambda$, so the same
summable majorant starts at the cutoff scale.  In the low-output high--high
region, the proof of Proposition~\ref{prop:continuous-phase-integral} gives
\[
  \mathfrak S_{a,>\Lambda}^{(\nu,\theta)}(\xi)
  \lesssim
  N^{-2+2\nu}\max\{N,\Lambda\}^{-3+2\betasum+2\theta}.
\]
For $N\gtrsim\Lambda$ this is controlled by the full tail.  For
$N\lesssim\Lambda$, dyadic summation gives
\[
\begin{aligned}
 &\Lambda^{-3+2\betasum+2\theta}
  \sum_{N\lesssim\Lambda}N^{2s+1+2\nu+\delta}\\
 &\qquad\lesssim
 \Lambda^{-3+2\betasum+2\theta
 +\max\{2s+1+2\nu+\delta,0\}+}.
\end{aligned}
\]
If the expression inside the positive part is positive, the exponent reduces
to $2s-2+2\nu+2\betasum+2\theta+\delta+$ and is negative by the strict
hypothesis on $s$.  If it is nonpositive, the exponent is
$-3+2\betasum+2\theta+$, which is also negative.  This proves the polynomial
claim for the full range stated in the lemma.
\end{proof}

\begin{proof}[Proof of \cref{thm:first-picard-fullspace}]
For $\nu=0$, \cref{prop:first-picard-spectral} has the form
\[
  q_{a,0,\Lambda}(t,\xi)
  \lesssim\la\xi\ra^{-2(1/2-\betasum)-3+\eps},
\]
so \cref{lem:spectral-to-local-paths} applies with
$\rho=1/2-\betasum$.  For $\nu=1$, it applies with
$\rho=-1/2-\betasum$.  The cutoff Cauchy property follows from \cref{lem:first-picard-tail-integral}.  This proves \eqref{eq:first-picard-convergence}--\eqref{eq:first-picard-derivative-convergence} in every finite probability moment and almost surely along dyadic cutoffs.

The extra output multiplier in $V^{\mathrm{out}}_{a,\Lambda}$ differs
from one only on output frequencies $|\xi|\gtrsim\Lambda$.  Its contribution
is controlled by the same weighted tail lemma: the additional output-shell
restriction is inserted into the positive spectral integral, and the balanced,
high--low, and low-output high--high sectors used in the proof of
Lemma~\ref{lem:first-picard-tail-integral} already exhaust that integral.  Thus
both cutoff conventions have the same limit.

At finite cutoff,
\[
  V_{a,\Lambda}(t)-V_{a,\Lambda}(0)
  =\int_0^t\partial_sV_{a,\Lambda}(s)\dd s.
\]
Passing to the limit in a slightly lower local regularity identifies the
second limiting process as the time derivative of $V_a$.  For the almost-sure
statement, prescribe the reference exhaustion localizers, rational horizons
and losses, and integer moments.  Intersecting the resulting countable family
of Borel--Cantelli events gives the asserted common event.  For any additional
fixed spatial cutoff, the polynomial tail estimate and Borel--Cantelli yield a
separate probability-one event.
\end{proof}

\section{Cubic resonant products}\label{app:cubic-terms}
We construct the cubic stochastic objects required in the fixed-point equation.
For $a\in\mathfrak C$, recall the opposite channel $a^\perp$ from
\eqref{eq:channel-set}.  The two singular cubic products are
\begin{equation}\label{eq:cubic-terms}
  \Gamma_a=V_{a^\perp}\circ\Psi_a.
\end{equation}
The colored Wick expansion has a centered third-chaos component and one
first-chaos contraction.  The third chaos is controlled by the first Picard
covariance density, while the contraction is represented by an integrated
Volterra kernel.  Both branches use phase functions determined solely by the
wave and Klein--Gordon dispersions.  Their amplitude order is
\[
 \beta_{\Gamma,a}=\betasum+\beta_a.
\]

\subsection{Wick decomposition and the third chaos}

Let $\chi^{\mathrm{res}}(m,r)$ denote the smooth order-zero multiplier associated
with the fixed Bony resonant product, so that
\begin{equation}\label{eq:resonant-fourier-symbol}
  \widehat{f\circ g}(\xi)
  =c_{\mathrm F}\int_{\R^3}
  \chi^{\mathrm{res}}(\xi-r,r)
  \widehat f(\xi-r)\widehat g(r)\dd r.
\end{equation}
Its support is contained in
\begin{equation}\label{eq:resonant-support}
  \la \xi-r\ra\sim\la r\ra,
  \qquad
  \la r\ra\gtrsim\la\xi\ra,
\end{equation}
with constants depending only on the Littlewood--Paley convention.

At finite cutoff define the principal convention
\begin{equation}\label{eq:cubic-cutoff-definition}
  \Gamma_{a,\Lambda}^{0,0}
  :=V_{a^\perp,\Lambda}\circ\Psi_{a,\Lambda}.
\end{equation}
For the cutoff comparison, introduce the projected variants indexed by
$\epsilon_{\mathrm P},\epsilon_{\mathrm O}\in\{0,1\}$:
\begin{equation}\label{eq:cubic-cutoff-variants}
 \Gamma_{a,\Lambda}^{\epsilon_{\mathrm P},\epsilon_{\mathrm O}}
 :=\pi_\Lambda^{\epsilon_{\mathrm O}}
 \Bigl(V_{a^\perp,\Lambda}^{\epsilon_{\mathrm P}}\circ
       \Psi_{a,\Lambda}\Bigr),
 \qquad
 V_{b,\Lambda}^{\epsilon_{\mathrm P}}
 :=I_b\pi_\Lambda^{\epsilon_{\mathrm P}}
   (\Psi_{\W,\Lambda}\Psi_{\K,\Lambda}).
\end{equation}
Thus $(\epsilon_{\mathrm P},\epsilon_{\mathrm O})=(0,0)$ is the convention
used in the fixed-point formulation, $\epsilon_{\mathrm P}=1$ is the auxiliary
output projection in the first Picard factor, and $\epsilon_{\mathrm O}=1$
adds a final source projection.  All four finite-cutoff conventions have the
same local limit.  We suppress the superscripts whenever the argument applies
uniformly to them.

\begin{theorem}[Cubic resonant products]\label{thm:cubic-fullspace}
Fix $T<\infty$, $a\in\mathfrak C$, $\chi\in C_c^\infty(\R^3)$,
$\kappa>0$, and $2\le p<\infty$.  Let an admissible spectral family tend to
the identity along an arbitrary admissible cofinal sequence.  For every
$(\epsilon_{\mathrm P},\epsilon_{\mathrm O})\in\{0,1\}^2$ and every finite
cutoff has the colored Wick decomposition
\begin{equation}\label{eq:cubic-wick-decomposition}
  \Gamma_{a,\Lambda}^{\epsilon_{\mathrm P},\epsilon_{\mathrm O}}
  =\Gamma_{a,\Lambda}^{(3),\epsilon_{\mathrm P},\epsilon_{\mathrm O}}
   +\Gamma_{a,\Lambda}^{(1),\epsilon_{\mathrm P},\epsilon_{\mathrm O}},
\end{equation}
where the first term is a centered third homogeneous chaos and the second is
a centered first chaos of color $a^\perp$.  The limits below are independent
of the cutoff family, the cofinal sequence, and the two projection flags.  On
suppressing the flags, one has
\begin{align}
  \chi\Gamma_{a,\Lambda}^{(3)}
  &\longrightarrow \chi\Gamma_a^{(3)}
  &&\text{in }L^p\!\left(\Omega;
  C_T\cC^{-\beta_{\Gamma,a}-\kappa}
  \cap L_T^\infty B_{2,\infty}^{-\beta_{\Gamma,a}-\kappa}\right),
  \label{eq:cubic-third-convergence}\\
  \chi\Gamma_{a,\Lambda}^{(1)}
  &\longrightarrow \chi\Gamma_a^{(1)}
  &&\text{in }L^p\!\left(\Omega;
  C_T\cC^{\frac12-\beta_{\Gamma,a}-\kappa}
  \cap L_T^\infty B_{2,\infty}^{\frac12-\beta_{\Gamma,a}-\kappa}\right).
  \label{eq:cubic-first-convergence}
\end{align}
Consequently
\begin{equation}\label{eq:cubic-total-regularity}
  \Gamma_a=\Gamma_a^{(3)}+\Gamma_a^{(1)}
  \in C_T\cC_{\loc}^{-\beta_{\Gamma,a}-}
  \cap L_T^\infty B_{2,\infty,\loc}^{-\beta_{\Gamma,a}-}.
\end{equation}
Along a fixed-profile dyadic family, all four projection conventions converge
almost surely, along the complete dyadic sequence, on a common
probability-one event obtained by intersecting the event from
\cref{thm:first-picard-fullspace} with the probability-one events supplied by
the third-chaos and first-chaos cubic tail estimates below.

For every exponent quadruple in \eqref{eq:parameter-window}, the total symbol
satisfies the source bounds
\begin{equation}\label{eq:cubic-source-regularity}
  \Gamma_a\in
  L_T^1H_{\loc}^{s_2-1}
  \cap L_T^1B_{2,\infty,\loc}^{\sigma-1}.
\end{equation}
\end{theorem}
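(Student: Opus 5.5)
I would begin with the finite-cutoff Wick algebra and only then turn to analysis. At fixed $\Lambda$ the factor $V_{a^\perp,\Lambda}$ is, by \cref{lem:picard-stochastic-fubini}, an ordered colored double Wiener integral in the colors $(\W,\K)$, and $\Psi_{a,\Lambda}$ is a first chaos of color $a$. The product formula for multiple Wiener integrals, paired with the resonant symbol \eqref{eq:resonant-fourier-symbol}, has two parts. One is a centered third chaos with kernel built from $\mathcal K_{a^\perp,0}$, the resonant multiplier and the propagator of $\Psi_a$. The other is a single contraction, between the color-$a$ leg of $V_{a^\perp}$ and $\Psi_a$, which leaves a first chaos of color $a^\perp$. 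The leg of color $a^\perp$ cannot contract with $\Psi_a$ because the colors are independent. This gives \eqref{eq:cubic-wick-decomposition} for all four flag choices, since the projections $\pi_\Lambda^{\epsilon}$ are deterministic multipliers that only add cutoff factors on legs already listed in \cref{prop:cutoff-component-table}.

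For the third chaos I would bound its covariance density by the Wiener isometry. The square of the third-chaos kernel, integrated over the three input frequencies at fixed output $\xi$, is at most
\[
\int\chi^{\mathrm{res}}(\xi-r,r)^2\,q_{a^\perp,0}(t,\xi-r)\,p_a(t,r)\dd r ,
\]
up to cross terms which Cauchy--Schwarz controls by the same expression. With \eqref{eq:first-picard-spectral-static} and \eqref{eq:Psi-variance} on the resonant support $\la\xi-r\ra\sim\la r\ra\gtrsim\la\xi\ra$, this is
\[
\sum_{M\gtrsim\la\xi\ra}M^{3}M^{-4+2\betasum+\eps}M^{-2+2\beta_a}
\lesssim\la\xi\ra^{-3+2\beta_{\Gamma,a}+\eps},
\]
so $\rho=-\beta_{\Gamma,a}$ in \cref{lem:spectral-to-local-paths}. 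The increment density follows the same way from \eqref{eq:first-picard-spectral-increment} and \eqref{eq:Psi-increment-density}. Cutoff tails follow from \cref{lem:finite-product-cutoff}, \cref{lem:first-picard-tail-integral} and \cref{lem:baseline-cutoff-tails}: the restricted sums start at $M\gtrsim\Lambda$ or keep a factor of the output tail, which gives polynomial decay and therefore dyadic Borel--Cantelli convergence. The output flag $\epsilon_{\mathrm O}$ is handled exactly as the output projection in the proof of \cref{thm:first-picard-fullspace}.

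The first-chaos contraction is the main obstacle, because the claimed gain of $1/2$ needs oscillation and not just size. After contracting the color-$a$ legs at frequency $r$ (so $\eta=-r$ or $\xi-\eta=-r$, depending on the color), the kernel is the surviving Duhamel factor of color $a^\perp$ at $\xi-r$ times the integral
\[
\int\dd r\,\chi^{\mathrm{res}}\,\mathfrak q_a(r)\int\dd r_1\,\mathcal K_{a^\perp,0}(\dots;t,r_1,r_2)\,K_a(t-r_1,r).
\]
I would expand every propagator into exponentials and integrate in time. The phases that appear are three-frequency phases of the type \eqref{eq:picard-phase}, evaluated on the configuration where one input leg is exactly opposite the stochastic leg $r$. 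I would then split the frequency domain as in \cref{lem:first-picard-region-exhaustion}. In the low-output high--high region $|r|\gg|\xi|$, the pointwise gap of \cref{lem:first-picard-low-output-gap} supplies one factor $M^{-1}$. In the balanced and high--low regions I would use the sublevel bounds of \cref{lem:continuous-phase-layers}, now applied to the modulation integral in $r$ with the remaining leg square-integrated.

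The target is that the deterministic multiplier acting on the surviving first chaos has size $\la\xi\ra^{-1/2+\beta_{\Gamma,a}-\beta_{a^\perp}+\eps}$ in density, so that after the factor $\la\cdot\ra^{-2+2\beta_{a^\perp}}$ of the surviving leg one gets $\rho=1/2-\beta_{\Gamma,a}$. The point to check carefully is whether a single Volterra integration by parts, analogous to \cref{lem:dyadic-volterra}, is enough to get the $M^{-1}$ in the $r$-loop sum $\sum_{M\gtrsim\la\xi\ra}M^3M^{-1}M^{-2}M^{-2+2\beta_a}$. If it is, that sum converges to $\la\xi\ra^{-2+2\beta_a}$, and the rest is bookkeeping. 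If the loop is only log-summable, I expect the extra $\eps$ loss to absorb it.

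The last steps are routine. Time increments and cutoff tails follow as before and give \eqref{eq:cubic-first-convergence}. Adding the two branches gives \eqref{eq:cubic-total-regularity}. Finally, \eqref{eq:cubic-source-regularity} follows from $-\beta_{\Gamma,a}-\kappa>\max\{s_2-1,\sigma-1\}$, which \eqref{eq:parameter-window} guarantees, by the embeddings used in \cref{lem:operators-and-cubic}.
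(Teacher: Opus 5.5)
Your outline follows the paper's route. The main steps match:
\begin{itemize}
\item the colored product formula with the single same-color contraction, where the surviving color-$a^\perp$ leg sits at the output frequency $\xi$;
\item the third-chaos density bounded by $\int|\chi^{\mathrm{res}}|^2q_{a^\perp,0}\,p_a$, where your Cauchy--Schwarz on cross terms is the paper's observation that $\Sym_a$ is a fiberwise orthogonal projection;
\item \cref{lem:spectral-to-local-paths} with $\rho=-\beta_{\Gamma,a}$ and with $\rho=\frac12-\beta_{\Gamma,a}$;
\item the embeddings for \eqref{eq:cubic-source-regularity}.
\end{itemize}
The phase you reach for the contraction is the paper's $\Omega_a^{\bm\eps}(\xi,r)$. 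The first-Picard lemmas do apply to it, but for a reason worth stating. $\Omega_a^{\bm\eps}(\xi,r)$ is literally $\Phi_{a^\perp}^{\bm\eps'}(\xi,\eta)$ for a permuted sign vector, with the \emph{cubic} output $\xi$ in the output slot and $\eta\in\{r,\xi-r\}$ the leg carrying $\omega_{\W}$. The resonant shell then becomes $|\eta|\sim|\xi-\eta|\sim M\gtrsim\la\xi\ra$. So only the balanced and low-output regimes occur, and they are handled by part (i) of \cref{lem:continuous-phase-layers} and by \cref{lem:first-picard-low-output-gap}. This is exactly \cref{lem:cubic-one-frequency-layer}.

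The weak point is the one delicate step, the half-derivative gain of the first chaos. You leave it as a check, and its exponents are wrong as written.
\begin{itemize}
\item \emph{Target.} With surviving density $\la\xi\ra^{-2+2\beta_{a^\perp}}$, the bound $q^{(1)}\lesssim\la\xi\ra^{-4+2\beta_{\Gamma,a}}$ requires a multiplier of amplitude $\la\xi\ra^{-1+2\beta_a}$, i.e.\ density $\la\xi\ra^{-2+4\beta_a}$. It does not require the $\la\xi\ra^{-1/2+\beta_{\Gamma,a}-\beta_{a^\perp}}$ you state.
\item \emph{Loop sum.} Your $\sum_MM^3M^{-1}M^{-2}M^{-2+2\beta_a}$ has one factor $M^{-1}$ too many. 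After the contracted-time integration, both color-$a$ propagators and $\mathfrak q_a$ are already contained in $\sigma_a$.
\end{itemize}
The correct count, with $N=\la\xi\ra$, uses four factors:
\begin{itemize}
\item the secular branch $\sigma_{a,0}=\mathfrak q_a(r)\frac{s}{2\omega_a(r)^2}\cos((t-s)\omega_a(r))=O(M^{-2+2\beta_a})$;
\item the outer Duhamel factor $K_{a^\perp}(t-s,\xi-r)=O(M^{-1})$;
\item the surviving factor $\mathfrak h_{a^\perp}(\xi)K_{a^\perp}(s-u,\xi)=O(N^{-1+\beta_{a^\perp}})$;
\item the phase bound $|\int_u^{t}s\,e^{\ii s\Omega}\dd s|\lesssim\mathfrak m(\Omega)$, together with $\int_{\mathcal R_M(\xi)}\mathfrak m(\Omega)\dd r\lesssim M^{2+\eps}$.
\end{itemize}
This gives the shell bound $N^{-1+\beta_{a^\perp}}M^{-1+2\beta_a+\eps}$, hence $|\mathcal K_a|\lesssim N^{-2+\beta_{\Gamma,a}+\eps}$. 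That is exactly the needed order with no power to spare; the $M$-sum converges only because $2\beta_a<1$.

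Because the count is tight, three points matter:
\begin{itemize}
\item $\mathfrak m$ must enter to the first power inside the $r$-integral, before the Wiener isometry in $u$. Cauchy--Schwarz in $r$ costs $M^{1/2}$ and erases the entire half-derivative gain.
\item The time increments are not ``as before.'' With $h=t'-t$, you must separate the change of endpoint phases from the new upper-limit piece, and use $|\int_t^{t'}s\,e^{\ii s\Omega}\dd s|\lesssim h^\theta\mathfrak m(\Omega)^{1-\theta}$ together with the layer bound for $\mathfrak m^{1-\theta}$. This is where $2\beta_a+\theta+\eps<1$ enters (\cref{lem:cubic-kernel-increments}).
\item The remainder branches of $\sigma_a$, of size $M^{-3+2\beta_a}$, are summed absolutely.
\end{itemize}
Lastly, in the third-chaos cutoff tail the first-Picard difference density is not supported on $\la m\ra\gtrsim\Lambda$, because its inputs can exceed $\Lambda$ at low output. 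So the restricted sums do not simply start at $M\gtrsim\Lambda$. The paper treats this term by a dyadic rearrangement and the first-Picard tail criterion at an auxiliary regularity $s_V$.
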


The spectral bounds behind the theorem are
\begin{align}
  q_{a,\Lambda}^{(3)}(t,\xi)
  &\lesssim_{T,\eps}\la\xi\ra^{-3+2\beta_{\Gamma,a}+\eps},
  \label{eq:cubic-third-spectral-summary}\\
  q_{a,\Lambda}^{(1)}(t,\xi)
  &\lesssim_{T,\eps}\la\xi\ra^{-4+2\beta_{\Gamma,a}+\eps}.
  \label{eq:cubic-first-spectral-summary}
\end{align}
Thus the cubic order is channel dependent.  For the symbol indexed by $a$,
the centered third chaos is the regularity-limiting component: it belongs
locally to $\mathcal C^{-\beta_{\Gamma,a}-}$, whereas the contraction belongs
locally to $\mathcal C^{1/2-\beta_{\Gamma,a}-}$.  Uniformly in $a$, one may
replace $\beta_{\Gamma,a}$ by $\betagamma$.

The cutoff factors and the estimates used below are summarized in the
following table.  The optional factors are present only when the corresponding
projection flag equals one.
\begin{center}
\small
\begin{tabularx}{\textwidth}{@{}p{0.18\textwidth}p{0.28\textwidth}YY@{}}
\toprule
component & cutoff frequencies & principal bound & limiting regularity \\
\midrule
$\Gamma_{a,\Lambda}^{(3)}$
& $\eta,\zeta,r$; optionally $\eta+\zeta$ and $\xi$
& $q_a^{(3)}(\xi)\lesssim
  \la\xi\ra^{-3+2\beta_{\Gamma,a}+}$
& $\mathcal C^{-\beta_{\Gamma,a}-}_{\mathrm{loc}}$ \\
$\Gamma_{a,\Lambda}^{(1)}$
& $\xi$, the repeated pair $r,-r$; optionally $\xi-r$ and a second $\xi$
& $|\mathcal K_a(\xi;t,u)|\lesssim
  \la\xi\ra^{-2+\beta_{\Gamma,a}+}$
& $\mathcal C^{1/2-\beta_{\Gamma,a}-}_{\mathrm{loc}}$ \\
$\Gamma_{a,\Lambda}^{\chi,\rho}$
& the preceding stochastic legs; $\chi,\rho$ are physical multipliers
& $\Gamma_{a,\Lambda}^{\chi,\rho}
  =\chi\Gamma_{a,\Lambda}+\mathcal R_{a,\Lambda}^{\chi,\rho}$
& $\mathcal C^{-\beta_{\Gamma,a}-}$ after localization \\
\bottomrule
\end{tabularx}
\end{center}
For every cutoff difference, the finite-product identity of
Lemma~\ref{lem:finite-product-cutoff} places at least one of the listed
frequencies above the cutoff scale.  The third-chaos and first-chaos tails are
then estimated separately below.

The algebraic decomposition is made before passing to the limit.  Reorder the two
internal colors in $V_{a^\perp}$ so that the color-$a$ factor is written first.
With the non-conjugated Fourier covariance convention, the expanded symbol is
schematically
\begin{equation}\label{eq:cubic-expanded-generic}
\begin{aligned}
  \widehat\Gamma_{a,\Lambda}(\xi,t)
  =c_{\mathrm F}^2
  \int_{\eta+\zeta+r=\xi}
  &\mathfrak m_\Lambda^{[3]}(\eta,\zeta,r;\xi)
   \chi^{\mathrm{res}}(\eta+\zeta,r)\\
  &\times\int_0^t
  K_{a^\perp}(t-s,\eta+\zeta)
  \widehat\Psi_a(\eta,s)
  \widehat\Psi_{a^\perp}(\zeta,s)
  \widehat\Psi_a(r,t)\dd s
  \dd\eta\dd\zeta\dd r.
\end{aligned}
\end{equation}
Here and below,
\[
  \int_{\eta+\zeta+r=\xi}F(\eta,\zeta,r)
  \dd\eta\dd\zeta\dd r
  :=\int_{\R^3}\int_{\R^3}
  F(\eta,\zeta,\xi-\eta-\zeta)\dd\eta\dd\zeta.
\]
For the convention in \eqref{eq:cubic-cutoff-variants}, the cutoff multiplier is
\begin{equation}\label{eq:cubic-cutoff-multiplier}
 \mathfrak m_{\Lambda,\epsilon_{\mathrm P},\epsilon_{\mathrm O}}^{[3]}
 (\eta,\zeta,r;\xi)
 =m_\Lambda(\eta)m_\Lambda(\zeta)m_\Lambda(r)
  m_\Lambda(\eta+\zeta)^{\epsilon_{\mathrm P}}
  m_\Lambda(\xi)^{\epsilon_{\mathrm O}},
 \qquad \xi=\eta+\zeta+r.
\end{equation}
This formula will be used both before and after the Wick contraction.  Since
$m_\Lambda$ is real and even, contraction of the two color-$a$ legs at
$\eta+r=0$ gives
\begin{equation}\label{eq:cubic-contracted-cutoff-multiplier}
 \mathfrak m_{\Lambda,\epsilon_{\mathrm P},\epsilon_{\mathrm O}}^{[1]}(\xi,r)
 =m_\Lambda(\xi)m_\Lambda(r)^2
  m_\Lambda(\xi-r)^{\epsilon_{\mathrm P}}
  m_\Lambda(\xi)^{\epsilon_{\mathrm O}}.
\end{equation}
All factors are uniformly bounded and converge pointwise to one.  In the formulas below,
$\mathfrak m_\Lambda^{[3]}$ and $\mathfrak m_\Lambda^{[1]}$ always denote the multipliers with
the fixed pair of flags.  Formula
\eqref{eq:cubic-contracted-cutoff-multiplier} records the repeated internal
frequency $r$ explicitly.  The relevant frequencies and cutoff factors are summarized in the following
table.

\begin{center}
\small
\renewcommand{\arraystretch}{1.25}
\begin{tabularx}{\textwidth}{@{}P{0.16\textwidth}P{0.26\textwidth}Y P{0.19\textwidth}@{}}
\toprule
branch & Gaussian frequencies before contraction & active cutoff legs & random frequency after contraction \\
\midrule
third chaos
& color $a$ at $\eta$, color $a^\perp$ at $\zeta$, color $a$ at $r$
& $\eta,\zeta,r$; optionally $\eta+\zeta$ and $\xi$
& three legs with total momentum $\xi=\eta+\zeta+r$ \\
first chaos
& the two color-$a$ legs at $\eta$ and $r$ are paired
& $r$ appears twice; the remaining color-$a^\perp$ leg is at $\xi$; optionally $\xi-r$ and a final $\xi$
& color $a^\perp$ at $\zeta=\xi$ because $\eta+r=0$ \\
\bottomrule
\end{tabularx}
\end{center}

Every cutoff difference is reduced by
\Cref{lem:finite-product-cutoff} to a tail on at least one of the listed
legs.  These are all the projection factors in the finite-cutoff algebra.

The algebraic formula below is conveniently written in the non-conjugated
Fourier covariance convention \eqref{eq:fourier-gaussian-forcing}, whereas covariance
densities are computed with the conjugated random-measure convention
\eqref{eq:conjugated-isonormal-covariance}.  These are two presentations of the same real isonormal processes; the
Hermitian reality relation identifies the opposite-frequency coordinates.  The passage to momentum fibers below is
made only after complexifying the real Gaussian Hilbert spaces; the
complexification is the full complex Fourier $L^2$ space and therefore admits
the usual direct-integral decomposition.

Let $\mathfrak H_a$ and $\mathfrak H_{a^\perp}$ be the two orthogonal real
Gaussian Hilbert spaces generated by the independent noises.  If
$F\in\mathfrak H_a\otimes\mathfrak H_{a^\perp}$ and
$g\in\mathfrak H_a$, then the colored product formula is
\begin{equation}\label{eq:colored-cubic-product-formula}
  I_{(1,1)}(F)I_{(1,0)}(g)
  =I_{(2,1)}\!\left(\operatorname{Sym}_a(F\otimes g)\right)
   +I_{(0,1)}(F\otimes_1 g).
\end{equation}
The first term is the third homogeneous chaos.  In the second term the two
color-$a$ legs are contracted and the color-$a^\perp$ leg remains.  There is no
other contraction because the two color spaces are orthogonal.

For the stochastic convolutions, write
\begin{equation}\label{eq:cubic-same-color-covariance}
  \E\left[
  \widehat\Psi_a(\eta,s)\widehat\Psi_b(r,t)
  \right]
  =\delta_{ab}(2\pi)^3\delta(\eta+r)\sigma_a(\eta;s,t),
\end{equation}
where
\begin{equation}\label{eq:cubic-sigma-definition}
  \sigma_a(\eta;s,t)
  :=\mathfrak q_a(\eta)
  \int_0^sK_a(s-\tau,\eta)K_a(t-\tau,-\eta)\dd\tau,
  \qquad 0\le s\le t.
\end{equation}
Thus the contraction retains the covariance profile
$\mathfrak q_a=|\mathfrak h_a|^2$ of the repeated color.  This factor is kept
inside $\sigma_a$ throughout the Volterra analysis.
The contraction in \eqref{eq:cubic-expanded-generic} forces $\eta+r=0$ and
therefore $\zeta=\xi$.  Hence \eqref{eq:cubic-wick-decomposition} takes the form
\begin{equation}\label{eq:cubic-third-explicit}
\begin{aligned}
  \widehat\Gamma_{a,\Lambda}^{(3)}(\xi,t)
  =c_{\mathrm F}^2
  \int_{\eta+\zeta+r=\xi}
  &\mathfrak m_\Lambda^{[3]}(\eta,\zeta,r;\xi)
   \chi^{\mathrm{res}}(\eta+\zeta,r)\\
  &\times\int_0^t
  K_{a^\perp}(t-s,\eta+\zeta)
  :\widehat\Psi_a(\eta,s)
   \widehat\Psi_{a^\perp}(\zeta,s)
   \widehat\Psi_a(r,t):\dd s
  \dd\eta\dd\zeta\dd r,
\end{aligned}
\end{equation}
while the lower-chaos part is
\begin{equation}\label{eq:cubic-contraction-before-fubini}
  \widehat\Gamma_{a,\Lambda}^{(1)}(\xi,t)
  =\int_0^t M_{a,\Lambda}(\xi;t,s)
  \widehat\Psi_{a^\perp}(\xi,s)\dd s,
\end{equation}
where
\begin{equation}\label{eq:cubic-formal-multiplier}
  M_{a,\Lambda}(\xi;t,s)
  =c_{\mathrm F}\int_{\R^3}
  \mathfrak m_\Lambda^{[1]}(\xi,r)
  \chi^{\mathrm{res}}(\xi-r,r)
  K_{a^\perp}(t-s,\xi-r)
  \sigma_a(r;s,t)\dd r.
\end{equation}
The endpoint estimate is performed after inserting the remaining
color-$a^\perp$ stochastic convolution.  This yields the integrated kernel
used below.

\begin{lemma}[Finite-cutoff cubic Wick formula]
\label{lem:cubic-finite-wick-formula}
For every cutoff and every pair of projection flags,
\eqref{eq:cubic-wick-decomposition},
\eqref{eq:cubic-third-explicit}, and
\eqref{eq:cubic-contraction-before-fubini} are identities in
$L^2(\Omega;\mathcal S'(\R^3))$ at each fixed time.  The only nonzero
contraction pairs the two color-$a$ legs at frequencies $\eta$ and $r$.
It imposes
\[
 \eta+r=0,
 \qquad
 \zeta=\xi,
\]
so the remaining first-chaos variable has color $a^\perp$ and output
frequency $\xi$.  Its cutoff multiplier is given by
\eqref{eq:cubic-contracted-cutoff-multiplier}.
\end{lemma}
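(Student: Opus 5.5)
The plan is to realize everything in Wiener-chaos language at finite cutoff and then apply the colored product formula \eqref{eq:colored-cubic-product-formula}. At fixed finite $\Lambda$, all fields are smooth and frequency-compact. We first write $V_{a^\perp,\Lambda}(s)$ as an ordered colored double Wiener integral $I_{(1,1)}(F_{s})$ with kernel in $\mathfrak H_a\otimes\mathfrak H_{a^\perp}$, placing the color-$a$ leg first. This is exactly the stochastic Fubini argument of Lemma~\ref{lem:picard-stochastic-fubini}, with the three-propagator kernel \eqref{eq:three-propagator-kernel}. We likewise write $\Psi_{a,\Lambda}(t)$ as a first-chaos integral $I_{(1,0)}(g_t)$ via the synthesis representation \eqref{eq:first-chaos-synthesis}. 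Both kernels are square integrable because $m_\Lambda$ has compact support and the propagators are bounded on bounded frequency sets.

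Next, test the resonant product against $\varphi\in\mathcal S$, using the Fourier form \eqref{eq:resonant-fourier-symbol}. The pairing $\langle V_{a^\perp,\Lambda}^{\epsilon_{\mathrm P}}\circ\Psi_{a,\Lambda}(t),\varphi\rangle$ becomes a finite integral, over the compact frequency region, of products $I_{(1,1)}(F)I_{(1,0)}(g)$ with deterministic coefficients $\chi^{\mathrm{res}}$, $\widehat\varphi$ and the cutoff multiplier \eqref{eq:cubic-cutoff-multiplier}. Because the region is compact, a Bochner/Fubini interchange in $L^2(\Omega)$ is legitimate. Applying \eqref{eq:colored-cubic-product-formula} pointwise then gives a homogeneous third chaos plus a single contraction $I_{(0,1)}(F\otimes_1 g)$. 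Orthogonality of $\mathfrak H_a$ and $\mathfrak H_{a^\perp}$ excludes every pairing involving the $a^\perp$ leg, so the contraction pairs only the two color-$a$ legs.

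To evaluate that contraction, use the covariance \eqref{eq:cubic-same-color-covariance}. The factor $\delta(\eta+r)$ forces $\eta=-r$, and momentum conservation then gives $\zeta=\xi$. Evenness and reality of $m_\Lambda$ turn $m_\Lambda(\eta)m_\Lambda(r)$ into $m_\Lambda(r)^2$, $m_\Lambda(\eta+\zeta)$ into $m_\Lambda(\xi-r)$, and the leg $\zeta=\xi$ into $m_\Lambda(\xi)$. This yields \eqref{eq:cubic-contracted-cutoff-multiplier}, the multiplier \eqref{eq:cubic-formal-multiplier}, and the formula \eqref{eq:cubic-contraction-before-fubini}; the time integral in $s$ commutes with the first-chaos integral by the same Fubini argument. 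The remaining term is the Wick-ordered expression \eqref{eq:cubic-third-explicit}. Summing over test functions identifies both identities in $L^2(\Omega;\mathcal S')$ at fixed time, using that each term has finite second moment by the chaos isometries.

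The main obstacle is bookkeeping between the two Fourier conventions, namely the non-conjugated one and the Hermitian random-measure one. Specifically, we must check that the Hermitian reality relation \eqref{eq:hermitian-gaussian-hilbert-space} makes the contraction $\langle e^{\ii\eta\cdot},e^{\ii r\cdot}\rangle$ supported on $\eta+r=0$ rather than on $\eta=r$, and that this produces exactly $\sigma_a(r;s,t)$ from \eqref{eq:cubic-sigma-definition}. We would handle this by complexifying and using the covariance density \eqref{eq:sigma-def} directly.
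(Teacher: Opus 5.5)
Your proposal is correct and follows essentially the same route as the paper. Like the paper, you realize $V_{a^\perp,\Lambda}$ and $\Psi_{a,\Lambda}$ as colored Wiener integrals with square-integrable, frequency-compact kernels, apply \eqref{eq:colored-cubic-product-formula}, and use color orthogonality to exclude cross-color pairings. You then evaluate the single same-color contraction with \eqref{eq:cubic-same-color-covariance}, which forces $\eta=-r$ and $\zeta=\xi$, use evenness of $m_\Lambda$ to obtain \eqref{eq:cubic-contracted-cutoff-multiplier}, and justify the identities in $L^2(\Omega;\mathcal S')$ by the Wiener isometries and deterministic Fubini. The convention issue you flag is already settled by the non-conjugated covariance \eqref{eq:cubic-same-color-covariance}, which is supported on $\eta+r=0$, so no extra work is needed there.
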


\begin{proof}
At finite cutoff all deterministic Wiener kernels are square integrable and
have compact spatial-frequency support.  Apply the colored product formula
\eqref{eq:colored-cubic-product-formula} to the two-noise kernel defining
$V_{a^\perp,\Lambda}$ and the color-$a$ kernel defining
$\Psi_{a,\Lambda}$.  Orthogonality of the two Gaussian color spaces excludes
all cross-color contractions.  The single same-color contraction is evaluated
with \eqref{eq:cubic-same-color-covariance}; its delta distribution forces
$r=-\eta$, and the momentum constraint
$\eta+\zeta+r=\xi$ then gives $\zeta=\xi$.  Substitution in
\eqref{eq:cubic-cutoff-multiplier} yields
\eqref{eq:cubic-contracted-cutoff-multiplier}.  The multiple Wiener isometry
and deterministic Fubini theorem justify the identities in
$L^2(\Omega;\mathcal S')$.
\end{proof}

Let $p_{a,\Lambda}(t,r)$ be the covariance spectral density of
$\Psi_{a,\Lambda}(t)$.  The elementary stochastic-convolution estimates give,
for every $0<\theta<1/2$,
\begin{align}
  p_{a,\Lambda}(t,r)&\lesssim_T\la r\ra^{-2+2\beta_a},
  \label{eq:cubic-linear-density}\\
  p_{a,\Lambda}^{\Delta}(t,t';r)
  &\lesssim_{T,\theta}|t-t'|^{2\theta}
  \la r\ra^{-2+2\beta_a+2\theta}.
  \label{eq:cubic-linear-density-increment}
\end{align}

\begin{lemma}[Complexified momentum-fiber disintegration]
\label{lem:cubic-direct-integral}
Let
\[
 \mathcal G_a^{\mathbb R}
 :=\{F\in L^2([0,T]\times\R^3;\mathbb C):
 F(s,-\eta)=\overline{F(s,\eta)}\}
\]
with the real inner product from
\eqref{eq:hermitian-gaussian-hilbert-space}, and let
$\mathcal G_a^{\mathbb C}$ be its Hilbert-space complexification.  The map
from $\mathcal G_a^{\mathbb C}$ to the full complex space
$L^2([0,T]\times\R^3;\mathbb C)$ is unitary.  Consequently the changes of
variables
\[
 (\eta,\zeta)\mapsto(m=\eta+\zeta,\eta),
 \qquad
 (\eta,\zeta,r)\mapsto(\xi=\eta+\zeta+r,\eta,\zeta)
\]
induce the complex Hilbert direct-integral identifications
\begin{align}
 \mathcal G_a^{\mathbb C}\otimes_2\mathcal G_b^{\mathbb C}
 &\simeq\int_{\R^3}^{\oplus}
 L^2([0,T]^2\times\R^3_\eta)\dd m,
 \label{eq:two-noise-direct-integral}\\
 \mathcal G_a^{\mathbb C}\otimes_2\mathcal G_b^{\mathbb C}
 \otimes_2\mathcal G_a^{\mathbb C}
 &\simeq\int_{\R^3}^{\oplus}
 L^2([0,T]^3\times\R^6_{\eta,\zeta})\dd\xi.
 \label{eq:three-noise-direct-integral}
\end{align}
At finite cutoff, normalize the deterministic Wiener kernels so that the
first-Picard field and the linear field have measurable momentum fibers
$m\mapsto F_m^{\Lambda,\epsilon_{\mathrm P}}(t)$ and
$r\mapsto g_r^\Lambda(t)$.  With
\begin{equation}\label{eq:cubic-flagged-first-picard-density}
 q_{a^\perp,0,\Lambda}^{[\epsilon_{\mathrm P}]}(t,m)
 :=|m_\Lambda(m)|^{2\epsilon_{\mathrm P}}
 q_{a^\perp,0,\Lambda}(t,m),
\end{equation}
their fiber norms satisfy
\begin{equation}\label{eq:cubic-fiber-norm-identities}
 \|F_m^{\Lambda,\epsilon_{\mathrm P}}(t)\|^2
 =q_{a^\perp,0,\Lambda}^{[\epsilon_{\mathrm P}]}(t,m),
 \qquad
 \|g_r^\Lambda(t)\|^2
 =p_{a,\Lambda}(t,r)
\end{equation}
for almost every $m,r$.  Since the admissible multipliers are uniformly
bounded, the flagged density in
\eqref{eq:cubic-flagged-first-picard-density} is uniformly dominated by a
fixed multiple of the unflagged density.  Swapping the two color-$a$
slots preserves the total momentum $\xi$ and is unitary on every fiber of
\eqref{eq:three-noise-direct-integral}.  Hence
$\Sym_a=(I+\tau_{13})/2$ is a fiberwise orthogonal projection.

All these statements are made in the complexification of the same real
isonormal process.  The reality relation appears as conjugacy between the
$\xi$- and $-\xi$-fibers.
\end{lemma}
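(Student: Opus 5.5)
The plan is to reduce everything to the Fourier--Plancherel picture of the real isonormal processes. The first step is to check that complexification is the full complex $L^2$ space. The real space $\mathcal G_a^{\mathbb R}$ is the image of the real space $L^2([0,T]\times\R^3;\R)$ under the spatial Fourier transform, which is unitary up to the fixed constant $(2\pi)^{3}$. The complexification of the real $L^2$ space is canonically the complex $L^2$ space via $f+\ii g\mapsto f+\ii g$. Transporting by the Fourier transform then shows that the map $F\otimes(x+\ii y)\mapsto (x+\ii y)F$ from $\mathcal G_a^{\mathbb C}$ to $L^2([0,T]\times\R^3;\mathbb C)$ is unitary. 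Concretely, every complex $G$ splits uniquely as $G=F_1+\ii F_2$ with $F_1,F_2$ Hermitian, namely $F_1(s,\eta)=\tfrac12(G(s,\eta)+\overline{G(s,-\eta)})$ and analogously for $F_2$. The norm identity $\|G\|^2=\|F_1\|^2+\|F_2\|^2$ follows because the cross term $\Re$-pairing vanishes under the real inner product in \eqref{eq:hermitian-gaussian-hilbert-space}.

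With this unitarity, the Hilbert tensor products of complex $L^2$ spaces become $L^2$ of the product measure spaces. The two linear changes of variables $(\eta,\zeta)\mapsto(\eta+\zeta,\eta)$ and $(\eta,\zeta,r)\mapsto(\eta+\zeta+r,\eta,\zeta)$ have unit Jacobian, so they are measure preserving and induce unitaries. Fubini then identifies the resulting $L^2$ spaces with the constant-fiber direct integrals in \eqref{eq:two-noise-direct-integral}--\eqref{eq:three-noise-direct-integral}.

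Next come the fiber norms. At finite cutoff, the Wiener kernel of $\Psi_{a,\Lambda}(t)$ tested at frequency $r$ is the function $s\mapsto m_\Lambda(r)\mathfrak h_a(r)K_a(t-s,r)\one_{s\le t}$, after normalizing the Fourier constant. Its $L^2_s$ norm squared is exactly $p_{a,\Lambda}(t,r)$ by \eqref{eq:Psi-variance}. For the first Picard field, the kernel produced in Lemma~\ref{lem:picard-stochastic-fubini} becomes, after the change of variables $m=\eta+\zeta$, a fiber
\[
F_m(t)(r_1,r_2,\eta)=c\,m_\Lambda(m)^{\epsilon_{\mathrm P}}m_\Lambda(\eta)\mathfrak h_{\W}(\eta)m_\Lambda(m-\eta)\mathfrak h_{\K}(m-\eta)\mathcal K_{a^\perp,0}(m,\eta;t,r_1,r_2),
\]
with the colours ordered as in $V_{a^\perp}$. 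Its squared norm is \eqref{eq:picard-density-kernel} multiplied by $|m_\Lambda(m)|^{2\epsilon_{\mathrm P}}$, which gives \eqref{eq:cubic-fiber-norm-identities}. Measurability of $m\mapsto F_m$ follows because the kernel is jointly measurable and square integrable. Domination by the unflagged density is immediate from $\sup_\Lambda\|m_\Lambda\|_\infty<\infty$.

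Finally, the transposition $\tau_{13}$ exchanges the two colour-$a$ slots $(s_1,\eta)\leftrightarrow(s_3,r)$. It preserves $\xi=\eta+\zeta+r$, so it acts fiberwise. In the fiber coordinates $(\eta,\zeta)$ it becomes $(\eta,\zeta)\mapsto(\xi-\eta-\zeta,\zeta)$, which has unit Jacobian, so it is unitary on each fiber. It is also an involution and self-adjoint. Hence $(I+\tau_{13})/2$ is a fiberwise orthogonal projection.

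The reality statement follows because the conjugation $J$ that defines $\mathcal G^{\mathbb R}$ maps the $\xi$-fiber antiunitarily onto the $-\xi$-fiber. The main point needing care is the first step: one must verify that the complexification carries the direct-integral structure, and that no doubling arises from the Hermitian constraint. The explicit decomposition $G=F_1+\ii F_2$ is what settles this.
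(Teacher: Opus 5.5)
Your proposal is correct and follows the paper's route. You decompose $G=F_1+\ii F_2$ into $J$-fixed parts to identify the complexification unitarily with complex $L^2$; the cross term vanishes because $\langle F_1,F_2\rangle_{L^2}$ is real for Hermitian $F_1,F_2$. Unit-Jacobian changes of variables plus Fubini give the direct integrals, $\tau_{13}$ is a fiberwise unitary involution, and $J$ pairs the $\xi$- and $-\xi$-fibers.

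The only difference is how you obtain \eqref{eq:cubic-fiber-norm-identities}. You write the fibers explicitly and match their norms with \eqref{eq:Psi-variance} and \eqref{eq:picard-density-kernel}, whereas the paper simply invokes the colored Wiener isometries. Your additional Fourier-transport argument for the complexification is a harmless alternative to the same step.
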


\begin{proof}
Let $J$ be the conjugation
\[
 (JF)(s,\eta):=\overline{F(s,-\eta)}
\]
on the full complex Fourier $L^2$ space.  Its fixed-point space is
$\mathcal G_a^{\mathbb R}$.  Every complex $F$ has the unique decomposition
\begin{equation}\label{eq:hermitian-complexification-decomposition}
 F=F_1+\ii F_2,
 \qquad
 F_1:=\frac{F+JF}{2},
 \qquad
 F_2:=\frac{F-JF}{2\ii},
\end{equation}
with $F_1,F_2\in\mathcal G_a^{\mathbb R}$.  If $G,H$ are fixed by $J$, then
$\langle G,H\rangle_{L^2}$ is real.  Therefore
$\|G+\ii H\|_{L^2}^2=\|G\|_2^2+\|H\|_2^2$, proving that
\eqref{eq:hermitian-complexification-decomposition} is the unitary
identification of the complexification with full complex Fourier $L^2$.
The real multiple Wiener integrals extend complex-linearly to this
complexification, and their conjugated covariance is the usual complex
Hilbert tensor inner product.  Thus tested Fourier covariance densities may
be computed in the full complex space without changing the underlying real
noise.

The displayed affine frequency maps and their inverses preserve Lebesgue
measure.  Fubini's theorem now gives
\eqref{eq:two-noise-direct-integral}--
\eqref{eq:three-noise-direct-integral}.  At finite cutoff the deterministic
kernels are jointly measurable, compactly supported in the spatial
frequencies, and square integrable.  Their direct-integral fibers can
therefore be chosen strongly measurable.  After absorbing the fixed
Fourier/Wiener normalization into the deterministic kernels, the colored
multiple Wiener isometries give
\eqref{eq:cubic-fiber-norm-identities}.
Finally, $\tau_{13}$ exchanges $\eta$ with
$r=\xi-\eta-\zeta$ while leaving $\eta+\zeta+r=\xi$ fixed.  It is therefore a
unitary involution on each output fiber, so $(I+\tau_{13})/2$ is an orthogonal
projection there.  The relation between the $\xi$ and $-\xi$ fibers follows
from $J$ and proves the final assertion.
\end{proof}

\begin{lemma}[Third-chaos kernel reduction]\label{lem:cubic-third-kernel-reduction}
Let $q_{a,\Lambda}^{(3)}$ be the covariance spectral density of
$\Gamma_{a,\Lambda}^{(3)}$.  Uniformly in the cutoff and in
$\epsilon_{\mathrm P}\in\{0,1\}$,
\begin{equation}\label{eq:cubic-third-kernel-reduction}
  q_{a,\Lambda}^{(3)}(t,\xi)
  \lesssim
  \int_{\R^3}
  |\chi^{\mathrm{res}}(m,\xi-m)|^2
  q_{a^\perp,0,\Lambda}^{[\epsilon_{\mathrm P}]}(t,m)
  p_{a,\Lambda}(t,\xi-m)\dd m.
\end{equation}
Here the flagged density is defined in
\eqref{eq:cubic-flagged-first-picard-density}; by uniform boundedness of the
cutoffs, the right side is dominated by the same expression with the ordinary
first Picard density from
\eqref{eq:spectral-density-definition-intro}.  The same estimate holds for
time increments, with one factor replaced by its increment density.  A cutoff difference is the sum of three terms: a
first-Picard kernel difference, a remaining linear-kernel difference, and, if
$\epsilon_{\mathrm O}=1$, an output scalar difference
$(m_\Lambda(\xi)-1)$.  Each term satisfies the corresponding estimate in
\eqref{eq:cubic-third-kernel-reduction}.
\end{lemma}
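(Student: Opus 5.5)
The plan is to compute the covariance of $\Gamma_{a,\Lambda}^{(3)}$ directly from the third-chaos Wiener kernel, using the fiber disintegration of \cref{lem:cubic-direct-integral}.

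\emph{Step 1: factor the kernel.} By \eqref{eq:cubic-third-explicit}, before symmetrization the third-chaos kernel at output frequency $\xi$ and time $t$ factors through the intermediate momentum $m=\eta+\zeta$. Its $(\eta,\zeta)$-part is the Wiener kernel of $V^{\epsilon_{\mathrm P}}_{a^\perp,\Lambda}(t)$ at momentum $m$, namely $F_m^{\Lambda,\epsilon_{\mathrm P}}(t)$. Here the $s$-integral with $K_{a^\perp}(t-s,m)$ has already been absorbed by the stochastic Fubini argument of \cref{lem:picard-stochastic-fubini}. The remaining $r$-leg is $g_{\xi-m}^\Lambda(t)$, and the two factors are coupled only through the scalar $\chi^{\mathrm{res}}(m,\xi-m)$ and the cutoff scalars $m_\Lambda(r)$ and, if $\epsilon_{\mathrm O}=1$, $m_\Lambda(\xi)$. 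On the $\xi$-fiber of \eqref{eq:three-noise-direct-integral} the unsymmetrized kernel is therefore
\[
 \mathcal K_\xi=\int^{\oplus}\chi^{\mathrm{res}}(m,\xi-m)\,m_\Lambda(\xi)^{\epsilon_{\mathrm O}}\,
 F_m^{\Lambda,\epsilon_{\mathrm P}}(t)\otimes g_{\xi-m}^\Lambda(t)\,\dd m .
\]
The change of variables $(\eta,\zeta,r)\mapsto(\xi,\eta,\zeta)$ identifies this with an element of $L^2$ over $m$ (equivalently $\eta,\zeta$) valued in the product of the two fiber spaces. Its squared norm is $\int|\chi^{\mathrm{res}}|^2\|F_m\|^2\|g_{\xi-m}\|^2\,\dd m$, because distinct $m$ correspond to orthogonal fiber pieces once $\xi$ is fixed.

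\emph{Step 2: symmetrize and evaluate.} The density $q^{(3)}_{a,\Lambda}(t,\xi)$ is, up to the Wiener normalization, the squared fiber norm of $\Sym_a\mathcal K_\xi$, and more precisely $3!$ times it once the multiple-integral isometry is used. \cref{lem:cubic-direct-integral} shows that $\Sym_a$ is a fiberwise orthogonal projection, so this is bounded by $\|\mathcal K_\xi\|^2$. The fiber norm identities \eqref{eq:cubic-fiber-norm-identities} then give exactly the right-hand side of \eqref{eq:cubic-third-kernel-reduction}, using $|m_\Lambda|\lesssim1$. Domination of the flagged density by the unflagged one is immediate from \eqref{eq:cubic-flagged-first-picard-density}.

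\emph{Step 3: increments and cutoff differences.} For increments, write
\[
 F(t)\otimes g(t)-F(t')\otimes g(t')=(F(t)-F(t'))\otimes g(t)+F(t')\otimes(g(t)-g(t')),
\]
and note that $\chi^{\mathrm{res}}$ is time independent. Each term has the same fiber structure, and its norm is controlled by the increment densities $q^\Delta$ and $p^\Delta$ through the same isometries. For cutoff differences, telescope the product
\[
 F^\Lambda\otimes g^\Lambda\, m_\Lambda(\xi)^{\epsilon_{\mathrm O}}-F\otimes g
\]
into a first-Picard kernel difference, a linear-kernel difference, and, if $\epsilon_{\mathrm O}=1$, the output scalar difference $(m_\Lambda(\xi)-1)$. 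Each piece is again a tensor product, so the Step 2 bound applies with the corresponding difference density. Those difference densities are controlled by \cref{lem:finite-product-cutoff}.

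The main obstacle is Step 1: justifying that the nested time integrals (the Duhamel integral in $s$ inside $V$, and the outer time $t$ of $\Psi_a(r,t)$) are reorganized so that the kernel is genuinely an elementary tensor fiberwise. This requires that $F_m$ already carries the full $s$-integration and shares no time variable with $g$ beyond the fixed $t$. I would verify this at finite cutoff by deterministic Fubini on square-integrable kernels, as in \cref{lem:picard-stochastic-fubini}.
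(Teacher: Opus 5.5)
Your proposal is correct and follows essentially the same route as the paper: the fiberwise factorization $\chi^{\mathrm{res}}(m,\xi-m)\,F_m^{\Lambda,\epsilon_{\mathrm P}}(t)\otimes g_{\xi-m}^\Lambda(t)$ supplied by \cref{lem:cubic-direct-integral}, the contraction property of $\Sym_a$, the Wiener isometry combined with the fiber norm identities, and one-factor-at-a-time telescoping for increments and cutoff differences. The differences are cosmetic. The paper tests against a Fourier-side $\varphi$ and changes variables $\xi=m+r$ at the end, which handles the fact that fiber norms are only defined almost everywhere, whereas you work on a fixed $\xi$-fiber. Also, the isometry constant for this colored $(2,1)$ chaos is $2!\,1!$ rather than $3!$, which is irrelevant for a $\lesssim$ bound.
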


\begin{proof}
Let $\varphi\in C_c^\infty(\R^3_\xi)$ be a Fourier-side output test
function and test $\widehat\Gamma_{a,\Lambda}^{(3)}(t)$ against
$\overline\varphi$.  Under Lemma~\ref{lem:cubic-direct-integral}, the
unsymmetrized three-noise kernel of this tested random variable is the
measurable field
\[
 (m,r)\longmapsto
 \overline{\varphi(m+r)}\chi^{\mathrm{res}}(m,r)
 F_m^{\Lambda,\epsilon_{\mathrm P}}(t)\otimes g_r^\Lambda(t),
\]
up to the fixed Fourier normalization and the bounded auxiliary output scalar.
The two equal-color slots are symmetrized by the fiberwise orthogonal
projection $\Sym_a$.  In particular,
\begin{equation}\label{eq:cubic-symmetrization-contraction}
 \|\Sym_a G\|^2
 =\tfrac12\|G\|^2+\tfrac12\Re\langle G,\tau_{13}G\rangle
 \le\|G\|^2.
\end{equation}
Thus the permutation cross term is controlled by the contraction property of
$\Sym_a$.  Color orthogonality enters the Wick product formula by excluding
contractions between different noises.

The third Wiener isometry, the direct-integral norm identities, and
\eqref{eq:cubic-symmetrization-contraction} give
\begin{align*}
 \E|\langle\Gamma_{a,\Lambda}^{(3)}(t),\varphi\rangle|^2
 \lesssim\iint
 &|\varphi(m+r)|^2|\chi^{\mathrm{res}}(m,r)|^2\\
 &\times q_{a^\perp,0,\Lambda}^{[\epsilon_{\mathrm P}]}(t,m)
 p_{a,\Lambda}(t,r)\dd m\dd r.
\end{align*}
Changing variables $\xi=m+r$ identifies the bracket multiplying
$|\varphi(\xi)|^2$ with the right side of
\eqref{eq:cubic-third-kernel-reduction}.  Since this holds for every test
function, it proves the spectral-density bound.

For time increments, subtract the two tested Hilbert kernels and telescope one
factor at a time; the contraction property of $\Sym_a$ and Cauchy--Schwarz
produce the same estimate with one covariance density replaced by its
increment density.  For cutoff differences, write the tested unsymmetrized
kernel as
\[
 o_\Lambda(m+r)\,
 \overline{\varphi(m+r)}\chi^{\mathrm{res}}(m,r)
 F_m^{\Lambda,\epsilon_{\mathrm P}}\otimes g_r^\Lambda,
 \qquad
 o_\Lambda(\xi):=m_\Lambda(\xi)^{\epsilon_{\mathrm O}},
\]
and use the decomposition
\begin{align*}
 o_\Lambda F^{\Lambda,\epsilon_{\mathrm P}}\otimes g^\Lambda-F\otimes g
 ={}&(o_\Lambda-1)F^{\Lambda,\epsilon_{\mathrm P}}\otimes g^\Lambda
 +(F^{\Lambda,\epsilon_{\mathrm P}}-F)\otimes g^\Lambda
 +F\otimes(g^\Lambda-g).
\end{align*}
The first-Picard output projection is included in
$F^{\Lambda,\epsilon_{\mathrm P}}$.  Applying
$\Sym_a$, its contraction property, and Cauchy--Schwarz treats all three terms
within the Hermitian Gaussian Hilbert space.  The
finite-cutoff compact supports justify all Fubini steps; the uniform spectral
majorants then permit passage to the limit by dominated convergence.
\end{proof}

\begin{proposition}[Third-chaos spectral bounds]\label{prop:cubic-third-spectral}
For every $\eps>0$,
\begin{equation}\label{eq:cubic-third-spectral}
  \sup_{\Lambda\ge1}\sup_{t\le T}
  q_{a,\Lambda}^{(3)}(t,\xi)
  \lesssim_{T,\eps}\la\xi\ra^{-3+2\beta_{\Gamma,a}+\eps}.
\end{equation}
For every $0<\theta<\frac12-\betasum$,
\begin{equation}\label{eq:cubic-third-spectral-increment}
  q_{a,\Lambda}^{(3),\Delta}(t,t';\xi)
  \lesssim_{T,\eps,\theta}
  |t-t'|^{2\theta}\la\xi\ra^{-3+2\beta_{\Gamma,a}+2\theta+\eps}.
\end{equation}
The cutoff-difference densities tend pointwise to zero and satisfy the weighted
tail condition in \eqref{eq:path-criterion-tail} for every target exponent
$s<-\beta_{\Gamma,a}$.
\end{proposition}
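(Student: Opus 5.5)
The plan is to feed the first-Picard spectral bounds of \cref{prop:first-picard-spectral} and the linear bounds \eqref{eq:cubic-linear-density}--\eqref{eq:cubic-linear-density-increment} into the convolution reduction of \cref{lem:cubic-third-kernel-reduction}, and then carry out a dyadic convolution estimate controlled by the resonant support \eqref{eq:resonant-support}. With $\nu=0$, \eqref{eq:first-picard-spectral-static} gives $q_{a^\perp,0,\Lambda}(t,m)\lesssim\la m\ra^{-4+2\betasum+\eps}$ uniformly in $\Lambda$ and $t$, and the flagged version is dominated by the same bound. The task is therefore to show
\[
 \int_{\R^3}|\chi^{\mathrm{res}}(m,\xi-m)|^2\la m\ra^{-4+2\betasum+\eps}\la\xi-m\ra^{-2+2\beta_a}\dd m
 \lesssim\la\xi\ra^{-3+2\beta_{\Gamma,a}+\eps}.
\]
On the resonant support one has $\la m\ra\sim\la\xi-m\ra=:M$ and $M\gtrsim\la\xi\ra$. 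The dyadic shell at scale $M$ has volume $M^3$, so it contributes $M^{3-6+2\betasum+2\beta_a+\eps}=M^{-3+2\beta_{\Gamma,a}+\eps}$. Because $2\beta_{\Gamma,a}\le 6\betastar<3$, summing over $M\gtrsim\la\xi\ra$ is geometric and is dominated by its first term. This gives \eqref{eq:cubic-third-spectral}. Note that no oscillation is needed here: the half-derivative gain is already built into the Picard density.

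For the increment bound \eqref{eq:cubic-third-spectral-increment}, I would use the increment form of \cref{lem:cubic-third-kernel-reduction}. In each of the two telescoping terms, one factor is replaced by its increment density. If the Picard factor is replaced, \eqref{eq:first-picard-spectral-increment} contributes $|t-t'|^{2\theta}\la m\ra^{-4+2\betasum+2\theta+\eps}$. If the linear factor is replaced, \eqref{eq:cubic-linear-density-increment} contributes $|t-t'|^{2\theta}\la\xi-m\ra^{-2+2\beta_a+2\theta}$. In both cases the same shell computation produces $M^{-3+2\beta_{\Gamma,a}+2\theta+\eps}$. This is still summable, since $2\theta<1-2\betasum$ and hence the exponent stays below $-1$. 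The range $\theta<\tfrac12-\betasum$ is exactly what \eqref{eq:first-picard-spectral-increment} allows.

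For the cutoff differences I would use the three-term decomposition from \cref{lem:cubic-third-kernel-reduction}. In the first term, with output scalar $(m_\Lambda(\xi)-1)$, the density is supported on $|\xi|\ge c_0\Lambda$ and is otherwise bounded by the full majorant. In the second term, the first-Picard kernel difference is bounded by the restricted phase integral $\mathfrak S^{(0,\theta)}_{a^\perp,>\Lambda}$, together with an optional output-leg tail on $|m|\gtrsim\Lambda$. In the third term, the linear-kernel difference is supported on $|\xi-m|\ge c_0\Lambda$. Since $\la m\ra\sim\la\xi-m\ra$ on the resonant support, the third term and the output-leg part of the second both force $M\gtrsim\Lambda$. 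The shell sum then starts at $\max\{\la\xi\ra,\Lambda\}$, giving the majorant $\max\{\la\xi\ra,\Lambda\}^{-3+2\beta_{\Gamma,a}+2\theta+\eps}$. The first-Picard tail requires pointwise control of $\mathfrak S_{>\Lambda}$. Here I would argue as in \cref{lem:first-picard-tail-integral}: either the $m$-output is already $\gtrsim\Lambda$, or one is in the low-output high--high region. In the latter case the proof of \cref{prop:continuous-phase-integral} gives the extra factor $\max\{\la m\ra,\Lambda\}^{-3+\dots}$, which yields a gain $\Lambda^{-\gamma}$ uniformly in $m$.

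Pointwise convergence to zero then follows by dominated convergence, using the uniform majorants. The weighted tail sum in \eqref{eq:path-criterion-tail} reduces to the computation in \cref{lem:baseline-cutoff-tails}. Split at $N=\Lambda$. The part $N>\Lambda$ is a convergent tail of $\sum N^{2s+3+\delta-3+2\beta_{\Gamma,a}+2\theta+\eps}$, which converges whenever $s<-\beta_{\Gamma,a}$ and the auxiliary parameters are small. The part $N\le\Lambda$ is bounded by $\Lambda^{-3+2\beta_{\Gamma,a}+\dots}\sum_{N\le\Lambda}N^{2s+3+\delta}$, which decays polynomially. I expect the main obstacle to be the bookkeeping for the first-Picard tail term, specifically obtaining a cutoff-tail majorant for $q_{a^\perp,0}$ that is uniform and pointwise in $m$ rather than only summed. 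If that turns out to be awkward, the fallback is to keep the tail in integrated form: one would then insert the restricted integral directly into the $m$-convolution and exchange the order of summation.
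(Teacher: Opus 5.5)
Your proposal is correct. For the static bound, the increment bound, the external stochastic-convolution tail, the output-projection tail, and the split of the weighted sum at $N=\Lambda$, it coincides with the paper's proof.

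The one real divergence is the first-Picard internal tail. The paper keeps that tail in summed form. It writes $q_{V,>\Lambda}(M)$ for the shell supremum and sets $A=2s+3+\delta$. It then interchanges the nonnegative dyadic sums to reach $\sum_M M^{1+2\beta_a+A_+}q_{V,>\Lambda}(M)$. Finally it applies the stated conclusion \eqref{eq:first-picard-tail-sum} of \cref{lem:first-picard-tail-integral} at an auxiliary regularity $s_V<\frac12-\betasum$, chosen so that $1+2\beta_a+A_+<2s_V+3+\delta_V$. This is exactly your fallback, and it is where the paper's case distinction for very negative $s$ comes from.

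Your primary route instead reaches into the proof of that lemma. For $\la m\ra\ll\Lambda$, the tail restriction leaves only the low-output high--high region, which gives $\la m\ra^{-2}\Lambda^{-3+2\betasum+2\theta}$. Combined with the unrestricted bound for $\la m\ra\gtrsim\Lambda$, this yields a pointwise majorant $\Lambda^{-\gamma}\la m\ra^{-4+2\betasum+2\theta+\eps+\gamma}$ for small $\gamma>0$. Inserted into the $m$-convolution, the first-Picard tail then behaves exactly like the external tails. You get an explicit rate $\Lambda^{-\gamma}$ and no $A_+$ bookkeeping, uniformly for all $s<-\beta_{\Gamma,a}$.

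The cost is that you depend on the internal region structure of the proof of \cref{prop:continuous-phase-integral} rather than on a stated result. There is also one extra check you did not mention. On the output block $|m|<1$, the amplitude \eqref{eq:picard-amplitude-profile} equals $1$, so no phase gain is available there. The restricted integral is then only $O(\Lambda^{-1+2\betasum+2\theta})$, not of the $\Lambda^{-3+\cdots}$ type you quoted. Since $2\betasum+2\theta<1$, this still fits your majorant once $\gamma\le 1-2\betasum-2\theta$.
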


\begin{proof}
Insert \eqref{eq:first-picard-spectral-static} with $\nu=0$ and
\eqref{eq:cubic-linear-density} into
\eqref{eq:cubic-third-kernel-reduction}.  On a resonant shell
\[
  \la m\ra\sim\la\xi-m\ra\sim M,
  \qquad M\gtrsim\la\xi\ra,
\]
the contribution is bounded by
\[
  M^3\,M^{-4+2\betasum+\eps}\,M^{-2+2\beta_a}
  =M^{-3+2\beta_{\Gamma,a}+\eps}.
\]
The dyadic sum over $M\gtrsim\la\xi\ra$ proves
\eqref{eq:cubic-third-spectral}.  For increments, use
\eqref{eq:first-picard-spectral-increment} and
\eqref{eq:cubic-linear-density-increment}; both terms give the shell bound
\[
  |t-t'|^{2\theta}M^{-3+2\beta_{\Gamma,a}+2\theta+\eps}.
\]
The condition $\theta<1/2-\betasum$ gives
$-3+2\beta_{\Gamma,a}+2\theta<-2+2\beta_a<0$.
For a prescribed final loss $\eps>0$, choose
\[
 0<\eps_0<\min\{\eps,
 3-2\beta_{\Gamma,a}-2\theta\}
\]
in the first-Picard and linear increment bounds.  The dyadic sum with
$\eps_0$ converges, and replacing $\eps_0$ by the larger final exponent
$\eps$ gives \eqref{eq:cubic-third-spectral-increment}.

For cutoff differences, use the three-term telescope from the proof of
\cref{lem:cubic-third-kernel-reduction}.  The remaining external
stochastic-convolution tail starts at
$M\gtrsim\max\{\la\xi\ra,\Lambda\}$ and is dominated by the same negative shell
power.  The first-Picard kernel tail tends pointwise to zero by
\cref{lem:first-picard-tail-integral} and is dominated by
$M^{-3+2\beta_{\Gamma,a}+\eps}$.  The auxiliary final-output difference is supported on
$N:=\la\xi\ra\gtrsim\Lambda$ and is bounded directly by the full
$N^{-3+2\beta_{\Gamma,a}+\eps}$ density majorant.  Dominated convergence therefore gives
fixed-output convergence for every projection convention.

If $N\le\Lambda$, the explicit external majorant is
$\Lambda^{-3+2\beta_{\Gamma,a}+\eps}$, whereas for $N>\Lambda$ the full majorant is
$N^{-3+2\beta_{\Gamma,a}+\eps}$.  For the low-output part,
\[
 \Lambda^{-3+2\beta_{\Gamma,a}+\eps}
 \sum_{N\lesssim\Lambda}N^{2s+3+\delta}
 \lesssim
 \Lambda^{-3+2\beta_{\Gamma,a}+\eps
 +\max\{2s+3+\delta,0\}+}.
\]
Choose $\eps,\delta>0$ so that
\[
 \eps+\delta<-2(s+\beta_{\Gamma,a}).
\]
If $2s+3+\delta>0$, the displayed exponent equals
$2s+2\beta_{\Gamma,a}+\eps+\delta<0$.  If
$2s+3+\delta\le0$, it is at most
$-3+2\beta_{\Gamma,a}+\eps<0$.  The output-projection tail is contained
in the region $N\gtrsim\Lambda$ and is treated by the full density
majorant.

The first-Picard internal tail requires a small distinction when the target
regularity is very negative.  Let $q_{V,>\Lambda}(M)$ denote the shell
supremum of the first-Picard difference density.  On a resonant shell of size
$M$, convolution with the remaining linear density contributes
\[
 M^3\,M^{-2+2\beta_a}q_{V,>\Lambda}(M)
 =M^{1+2\beta_a}q_{V,>\Lambda}(M).
\]
Put $A:=2s+3+\delta$.  After interchanging the nonnegative dyadic sums,
\begin{align}
 &\sum_N N^{A}\sum_{M\gtrsim N}
 M^{1+2\beta_a}q_{V,>\Lambda}(M)\notag\\
 &\qquad\lesssim
 \sum_M M^{1+2\beta_a+A_+}q_{V,>\Lambda}(M),
 \qquad A_+:=\max\{A,0\}.
 \label{eq:cubic-third-internal-tail-rearrangement}
\end{align}
Indeed, $\sum_{N\lesssim M}N^A\lesssim M^A$ when $A>0$, while this inner
sum is uniformly bounded when $A<0$; the endpoint $A=0$ costs only a
logarithm, absorbed into an arbitrarily small auxiliary loss.  Choose
$s_V<1/2-\betasum$ so that
\begin{equation}\label{eq:cubic-third-tail-choice-sV}
 1+2\beta_a+A_+<2s_V+3+\delta_V.
\end{equation}
Such a choice is possible for every $s<-\beta_{\Gamma,a}$.  When $A>0$, this amounts,
up to auxiliary losses, to $s_V>s+1/2+\beta_a$, which is compatible with
$s_V<1/2-\betasum$ since $s<-\beta_{\Gamma,a}$.  When $A\le0$, one may take
any $s_V>\beta_a-1$ sufficiently below $1/2-\betasum$.  The right side of
\eqref{eq:cubic-third-internal-tail-rearrangement} then tends to zero by the
first-Picard tail criterion \eqref{eq:first-picard-tail-sum} at regularity
$s_V$.  This proves the cubic tail for the full range $s<-\beta_{\Gamma,a}$.

For increment tails, replace $A_+$ by $(A+2\theta)_+$ and choose $s_V$ and
$\theta>0$ so that
$1+2\beta_a+(A+2\theta)_+<2s_V+3+\delta_V$.  The strict inequality
$s<-\beta_{\Gamma,a}$ leaves the required margin.  The same first-Picard tail criterion
completes the increment estimate.
\end{proof}

By \cref{lem:spectral-to-local-paths,prop:cubic-third-spectral},
\begin{equation}\label{eq:cubic-third-path-conclusion}
  \Gamma_a^{(3)}\in
  C_T\cC_{\loc}^{-\beta_{\Gamma,a}-}
  \cap L_T^\infty B_{2,\infty,\loc}^{-\beta_{\Gamma,a}-},
\end{equation}
with convergence in every finite probability moment and almost surely along
dyadic cutoffs.

\subsection{The first-chaos contraction}

Insert the remaining stochastic convolution into
\eqref{eq:cubic-contraction-before-fubini}.  Write the random factor there as
the uncut field,
\[
  \widehat\Psi_{a^\perp}(\xi,s)
  =\mathfrak h_{a^\perp}(\xi)
  \int_0^sK_{a^\perp}(s-u,\xi)
  \mathcal W_{a^\perp}(\dd u,\dd\xi),
\]
because its cutoff factor $m_\Lambda(\xi)$ already appears once in
\eqref{eq:cubic-contracted-cutoff-multiplier}.
At finite cutoff, stochastic Fubini is an identity in the first Gaussian
Hilbert space.  Hence
\begin{equation}\label{eq:cubic-first-integrated-representation}
  \widehat\Gamma_{a,\Lambda}^{(1)}(\xi,t)
  =\int_0^t\mathcal K_{a,\Lambda}(\xi;t,u)
  \mathcal W_{a^\perp}(\dd u,\dd\xi),
\end{equation}
where
\begin{equation}\label{eq:cubic-integrated-kernel}
\begin{aligned}
  \mathcal K_{a,\Lambda}(\xi;t,u)
  :=c_{\mathrm F}\,\mathfrak h_{a^\perp}(\xi)
  \int_u^t\int_{\R^3}
  &\mathfrak m_\Lambda^{[1]}(\xi,r)
  \chi^{\mathrm{res}}(\xi-r,r)
  K_{a^\perp}(t-s,\xi-r)\\
  &\times\sigma_a(r;s,t)
  K_{a^\perp}(s-u,\xi)\dd r\dd s.
\end{aligned}
\end{equation}
The limit is taken at the level of this integrated kernel.

For $0\le s\le t$ and $\omega_a(r)>0$, direct trigonometric integration gives
\begin{equation}\label{eq:cubic-covariance-formula}
\begin{aligned}
  \sigma_a(r;s,t)
  =\mathfrak q_a(r)\Bigg[{}&\frac{s}{2\omega_a(r)^2}
  \cos\bigl((t-s)\omega_a(r)\bigr)\\
  &+\frac{\sin((t-s)\omega_a(r))}{4\omega_a(r)^3}
   -\frac{\sin((t+s)\omega_a(r))}{4\omega_a(r)^3}\Bigg].
\end{aligned}
\end{equation}
For the wave zero mode the left side is used directly.  On every high shell
$|r|\sim M\ge2$, write
\begin{equation}\label{eq:cubic-covariance-split}
  \sigma_a=\sigma_{a,0}+R_a,
  \qquad
  \sigma_{a,0}(r;s,t)
  =\mathfrak q_a(r)\frac{s}{2\omega_a(r)^2}
  \cos\bigl((t-s)\omega_a(r)\bigr).
\end{equation}
Uniformly for both channels,
\begin{equation}\label{eq:cubic-covariance-shell-bounds}
  |\sigma_{a,0}(r;s,t)|\lesssim_T M^{-2+2\beta_a},
  \qquad
  |R_a(r;s,t)|\lesssim_T M^{-3+2\beta_a}.
\end{equation}

Put
\begin{equation}\label{eq:cubic-modulation}
  \Omega_a^{\bm\eps}(\xi,r)
  :=\eps_0\omega_{a^\perp}(\xi)
    +\eps_1\omega_{a^\perp}(\xi-r)
    +\eps_2\omega_a(r),
  \qquad \bm\eps\in\{\pm1\}^3,
\end{equation}
and let
\begin{equation}\label{eq:cubic-resonant-shell}
  \mathcal R_M(\xi)
  :=\left\{r\in\R^3:
  \la r\ra\sim M,\ \la\xi-r\ra\sim M\right\}.
\end{equation}
The resonant support implies $M\gtrsim\la\xi\ra$.  The phase estimate
has only the following three regimes.
\begin{center}
\small
\begin{tabularx}{\textwidth}{@{}p{0.18\textwidth}p{0.21\textwidth}YY@{}}
\toprule
frequency regime & phase control & sublevel or volume bound & contribution to
$\int_{\mathcal R_M(\xi)}\mathfrak m(\Omega)^{1-\delta}\dd r$ \\
\midrule
$M\sim1$
& direct kernel estimate
& bounded set
& $O_T(1)$ \\
$M\sim\la\xi\ra\gg1$
& fix $\rho=|r|$ and differentiate in $\lambda=|\xi-r|$;
  $|\partial_\lambda\Omega|\gtrsim1$
& $|\{|\Omega|\le L\}|\lesssim M^2\min\{M,L\}$
& $\lesssim M^{2+\delta+}$ \\
$M\ge C_0\la\xi\ra$
& the two high phases have distinct asymptotic speeds;
  $|\Omega|\gtrsim M$
& $|\mathcal R_M(\xi)|\lesssim M^3$
& $\lesssim M^{2+\delta}$ \\
\bottomrule
\end{tabularx}
\end{center}
The estimates hold for every sign vector
$\bm\eps\in\{\pm1\}^3$.  In the low-output regime, equal signs give a sum of
the two high phases and opposite signs give their distinct-speed difference.

For later use, write the sign branches as follows.  Put $N=\la\xi\ra$.
On a high resonant shell, inserting
\eqref{eq:cubic-covariance-split} and expanding the two sine propagators and
the leading cosine gives
\begin{equation}\label{eq:cubic-leading-sign-branch}
 \mathcal K_{a,M}^{\mathrm{lead}}(\xi;t,u)
 =\sum_{\bm\eps\in\{\pm1\}^3}
 \int_{\mathcal R_M(\xi)}
 A_{a,M}^{\bm\eps}(\xi,r)
 E_{a}^{\bm\eps}(\xi,r;t,u)
 \left(\int_u^t s e^{\ii s\Omega_a^{\bm\eps}(\xi,r)}\dd s\right)\dd r.
\end{equation}
Here $|E_a^{\bm\eps}|=1$, its $t$-dependence is a product of endpoint
phases with frequencies $O(M)$, and
\begin{equation}\label{eq:cubic-leading-amplitude-bound}
 |A_{a,M}^{\bm\eps}(\xi,r)|
 \lesssim_{\parvec}N^{-1+\beta_{a^\perp}}M^{-3+2\beta_a}
 \one_{\mathcal R_M(\xi)}(r).
\end{equation}
For the inhomogeneous output block, the factor
$N^{-1+\beta_{a^\perp}}$ in \eqref{eq:cubic-leading-amplitude-bound} is interpreted
as a bounded $T$- and profile-dependent constant.  Formula \eqref{eq:cubic-leading-sign-branch} follows
directly from
\[
 K_{a^\perp}(t-s,\xi-r),\qquad K_{a^\perp}(s-u,\xi),\qquad
 \cos((t-s)\omega_a(r)),
\]
and shows explicitly that the only $s$-modulations are those in
\eqref{eq:cubic-modulation}.  For later use, put
\[
 J_{u,t}(\Omega):=\int_u^t s e^{\ii s\Omega}\dd s.
\]
For $\Omega\ne0$, integration gives
\begin{equation}\label{eq:cubic-inner-time-endpoints}
 J_{u,t}(\Omega)
 =\frac{t e^{\ii t\Omega}-u e^{\ii u\Omega}}{\ii\Omega}
  +\frac{e^{\ii u\Omega}-e^{\ii t\Omega}}{(\ii\Omega)^2},
\end{equation}
while $J_{u,t}(0)=(t^2-u^2)/2$.  Hence, uniformly for
$0\le u\le t\le T$,
\begin{equation}\label{eq:cubic-inner-time-bound}
 |J_{u,t}(\Omega)|\lesssim_T\mathfrak m(\Omega),
 \qquad \mathfrak m(\Omega):=\min\{1,|\Omega|^{-1}\}.
\end{equation}
Thus both endpoints of the inner Volterra integral are included in the
kernel estimate.  The same representation also separates changes of endpoint
phases from changes of the upper integration limit.

\begin{lemma}[Continuous one-frequency phase layer]\label{lem:cubic-one-frequency-layer}
Let $0\le\delta<1$ and $\eps>0$.  For every high dyadic shell $M\ge2$ and every
$\bm\eps\in\{\pm1\}^3$,
\begin{equation}\label{eq:cubic-layer-estimate}
  \int_{\mathcal R_M(\xi)}
  \mathfrak m\!\left(\Omega_a^{\bm\eps}(\xi,r)\right)^{1-\delta}
  \dd r
  \lesssim_{\delta,\eps}M^{2+\delta+\eps},
  \qquad M\gtrsim\la\xi\ra.
\end{equation}
The implicit constant is uniform in $a\in\mathfrak C$ and locally uniform in the parameter vector.
\end{lemma}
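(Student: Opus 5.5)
The plan is to reduce \eqref{eq:cubic-layer-estimate} to sublevel-set volume bounds via a dyadic modulation decomposition, splitting according to the three regimes in the preceding table. Since $\mathfrak m(\Omega)\le1$ and $\mathfrak m(\Omega)\lesssim|\Omega|^{-1}$, we have, pointwise on $\mathcal R_M(\xi)$,
\[
 \mathfrak m(\Omega)^{1-\delta}\lesssim\sum_{\substack{L\in\Dyd\\ 1\le L\lesssim M}}L^{-(1-\delta)}\one_{\{|\Omega|\le L\}},
\]
because $|\Omega_a^{\bm\eps}(\xi,r)|\lesssim M$ on the resonant shell (all three frequencies are $\lesssim M$, as $M\gtrsim\la\xi\ra$). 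It therefore suffices to bound the measures $|\{r\in\mathcal R_M(\xi):|\Omega|\le L\}|$ and sum in $L$.

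\emph{Comparable regime $M\le C_0\la\xi\ra$.} Here $d:=|\xi|\sim\rho:=|r|\sim\lambda:=|\xi-r|\sim M$ after discarding finitely many unit shells, which are absorbed into the constant. We use the two-center coordinates \eqref{eq:two-center-volume-wavekg} with centers $0$ and $\xi$, namely $\dd r=2\pi\rho\lambda d^{-1}\dd\rho\dd\lambda$, whose density is $\sim M$. In \eqref{eq:cubic-modulation} the term $\eps_0\omega_{a^\perp}(\xi)$ is constant, $\omega_a(r)$ depends only on $\rho$, and $\omega_{a^\perp}(\xi-r)$ depends only on $\lambda$. With $\rho$ fixed, $|\partial_\lambda\Omega|=\omega_{a^\perp}'(\lambda)$, which equals $c_{\W}$ in the wave channel and is $\gtrsim_{\parvec}1$ for $\lambda\ge1$ in the Klein--Gordon channel (compare the radial derivative bounds in \cref{prop:uniform-phase-class}). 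Thus the admissible $\lambda$-set has length $O(\min\{L,M\})$, the $\rho$-interval has length $O(M)$, and the sublevel volume is $\lesssim M^2\min\{L,M\}$, exactly as in \cref{lem:continuous-phase-layers}(i). Summing gives
\[
 \sum_{L\lesssim M}L^{-1+\delta}M^2L\lesssim M^{2}\sum_{L\lesssim M}L^{\delta}\lesssim M^{2+\delta},
\]
with an extra $\log M\lesssim_\eps M^\eps$ if $\delta=0$.

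\emph{Low-output regime $M\ge C_0\la\xi\ra$.} This is the step needing care, since it uses the speed gap. The two high phases are $\omega_{a^\perp}(\xi-r)$ and $\omega_a(r)$ with $a\ne a^\perp$, so one is wave and the other Klein--Gordon. If $\eps_1=\eps_2$, their sum is $\ge c_{\min}(\rho+\lambda)\gtrsim M$. If the signs are opposite, the argument of \cref{lem:first-picard-low-output-gap} applies: $|\lambda-\rho|\le|\xi|$, so by \eqref{eq:mass-error} the difference is at least $\speedgap M-\speedmax|\xi|-C_{\parvec}M^{-1}$. In either case, taking $C_0=C_0(\parvec)$ large absorbs the outer term $\omega_{a^\perp}(\xi)=O(\la\xi\ra)$ and gives $|\Omega|\ge c_0M$. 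Then $\mathfrak m(\Omega)^{1-\delta}\lesssim M^{-1+\delta}$ on a set of volume $\lesssim M^3$, yielding $M^{2+\delta}$. The constants depend polynomially on $d_*^{-1}$ as in \eqref{eq:low-output-parameter-ledger}, and the regime $M\sim1$ is a bounded set.

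Uniformity in $a$ holds because both orientations are covered symmetrically. Local uniformity in $\parvec$ comes from \cref{prop:uniform-phase-class} and the bound $C_{\mathrm{lo}}\lesssim d_*^{-1}$. The main obstacle is the opposite-sign low-output branch, where a speed-gap lower bound uniform in the direction of $r$ is needed, and this is provided by the reverse triangle inequality above.
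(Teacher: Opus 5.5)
Your proposal is correct and follows essentially the same argument as the paper. The low-output regime $M\ge C_0\langle\xi\rangle$ is handled by the pointwise speed-gap bound $|\Omega_a^{\bm\eps}|\gtrsim M$ (sum for equal signs, distinct-speed difference for opposite signs) on a set of volume $O(M^3)$. The balanced regime uses two-center coordinates, where $|\partial_\lambda\Omega_a^{\bm\eps}|=\omega_{a^\perp}'(\lambda)\gtrsim1$ gives the sublevel volume $M^2\min\{M,L\}$, followed by the dyadic modulation sum with a logarithmic loss at $\delta=0$.
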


\begin{proof}
We split into low-output high--high and balanced-output regions.

Suppose first that $M\ge C_0\la\xi\ra$ for a sufficiently large fixed $C_0$.
The output phase is $O(C_0^{-1}M)$, up to uniformly bounded low-frequency terms.
The two high phases have asymptotic speeds $\speedW$ and $\speedK$.  If their signs agree,
their sum has size comparable to $M$.  If their signs are opposite, then
\[
  \left|
  \omega_{a^\perp}(\xi-r)-\omega_a(r)
  \right|\gtrsim M,
\]
because $||\xi-r|-|r||\le|\xi|$, the massive correction is $O_{\parvec}(M^{-1})$, and $\speedgap>0$.
After choosing $C_0=C_0(\parvec)$ large enough and absorbing finitely many unit shells,
\begin{equation}\label{eq:cubic-low-output-gap}
  |\Omega_a^{\bm\eps}(\xi,r)|\gtrsim M
  \qquad\text{on }\mathcal R_M(\xi).
\end{equation}
Since $|\mathcal R_M(\xi)|\lesssim M^3$, this gives
$M^3M^{-(1-\delta)}=M^{2+\delta}$.

It remains to consider $M\sim\la\xi\ra$.  High shells then satisfy
$d:=|\xi|\sim M$.  Use the two-center variables
\[
  \rho=|r|,
  \qquad
  \lambda=|\xi-r|.
\]
After integrating the azimuthal angle, the volume form is
\begin{equation}\label{eq:cubic-two-center-volume}
  \dd r=2\pi\frac{\rho\lambda}{d}\dd\rho\dd\lambda.
\end{equation}
On the balanced region the density is $O(M)$.  For fixed $\rho$,
\[
  \Omega_a^{\bm\eps}
  =\eps_0\omega_{a^\perp}(d)
   +\eps_1\omega_{a^\perp}(\lambda)
   +\eps_2\omega_a(\rho),
\]
and
\[
  |\partial_\lambda\Omega_a^{\bm\eps}|
  =|\omega_{a^\perp}'(\lambda)|\gtrsim1
\]
on high balanced shells, for either choice of the dispersion $\omega_{a^\perp}$.
Consequently the $\lambda$-length of a sublevel set
$|\Omega_a^{\bm\eps}|\le L$ is $O(\min\{M,L\})$.  Multiplying by the
$\rho$-interval length $O(M)$ and by the density $O(M)$ gives
\begin{equation}\label{eq:cubic-sublevel-volume}
  \left|\left\{r\in\mathcal R_M(\xi):
  |\Omega_a^{\bm\eps}(\xi,r)|\le L\right\}\right|
  \lesssim M^2\min\{M,L\}.
\end{equation}
A dyadic modulation decomposition for $1\le L\lesssim M$ now yields
\[
  \int_{\mathcal R_M(\xi)}
  \mathfrak m(\Omega_a^{\bm\eps})^{1-\delta}\dd r
  \lesssim
  M^2+M^2\sum_{1\le L\lesssim M}L^\delta
  \lesssim_{\delta,\eps} M^{2+\delta+\eps}.
\]
For $\delta>0$ the $M^\eps$ loss may be removed; at the endpoint
$\delta=0$ it absorbs the logarithmic number of dyadic modulation layers.
This proves the lemma.
\end{proof}

The estimate uses the speed gap only in the low-output high--high region.  In the
balanced region it uses a single nondegenerate radial derivative.  This is the
one-frequency analogue of the three-propagator phase geometry in
\cref{app:first-picard}.

Let $\mathcal K_{a,M}$ denote the part of
\eqref{eq:cubic-integrated-kernel} in which
$r\in\mathcal R_M(\xi)$, with the finite cutoff removed.  Unit shells are
estimated directly.  Put $N=\la\xi\ra$.

\begin{lemma}[Single-shell contraction kernel]\label{lem:cubic-single-shell-kernel}
For every $\eps>0$,
\begin{equation}\label{eq:cubic-single-shell-kernel}
  \sup_{0\le u\le t\le T}
  |\mathcal K_{a,M}(\xi;t,u)|
  \lesssim_{T,\eps}N^{-1+\beta_{a^\perp}}M^{-1+2\beta_a+\eps},
  \qquad M\gtrsim N.
\end{equation}
\end{lemma}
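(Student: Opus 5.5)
We restrict the integrated kernel \eqref{eq:cubic-integrated-kernel} to $r\in\mathcal R_M(\xi)$ and remove the cutoff, so that $|\mathfrak m^{[1]}_\Lambda|\le1$ and only the bounded resonant symbol remains. We then insert the covariance split \eqref{eq:cubic-covariance-split}, $\sigma_a=\sigma_{a,0}+R_a$, and treat the two pieces separately. Throughout we use $|\mathfrak h_{a^\perp}(\xi)|\lesssim N^{\beta_{a^\perp}}$ and $|K_{a^\perp}(s-u,\xi)|\lesssim N^{-1}$, so the output factor contributes $N^{-1+\beta_{a^\perp}}$ in every term. The task is therefore to bound the $(s,r)$ integral by $M^{-1+2\beta_a+\eps}$.

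\emph{Remainder piece $R_a$.} Here no oscillation is needed. The shell bound \eqref{eq:cubic-covariance-shell-bounds} gives $|R_a|\lesssim M^{-3+2\beta_a}$, and on $\mathcal R_M(\xi)$ we have $|K_{a^\perp}(t-s,\xi-r)|\lesssim M^{-1}$. The $r$-volume is $O(M^3)$, and the $s$-integral is over an interval of length at most $T$. This yields
\[
N^{-1+\beta_{a^\perp}}M^{3}M^{-1}M^{-3+2\beta_a}=N^{-1+\beta_{a^\perp}}M^{-1+2\beta_a},
\]
which already has the required form.

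\emph{Leading piece $\sigma_{a,0}$.} We use the sign-branch representation \eqref{eq:cubic-leading-sign-branch}. For each $\bm\eps$, the amplitude bound \eqref{eq:cubic-leading-amplitude-bound} gives $N^{-1+\beta_{a^\perp}}M^{-3+2\beta_a}$, the endpoint phases $E_a^{\bm\eps}$ are unimodular, and the inner time integral satisfies $|J_{u,t}(\Omega)|\lesssim_T\mathfrak m(\Omega)$ by \eqref{eq:cubic-inner-time-bound}. This bound holds uniformly in $0\le u\le t\le T$ and covers both Volterra endpoints. Taking absolute values reduces the estimate to
\[
N^{-1+\beta_{a^\perp}}M^{-3+2\beta_a}\int_{\mathcal R_M(\xi)}\mathfrak m(\Omega_a^{\bm\eps}(\xi,r))\dd r.
\]
Lemma~\ref{lem:cubic-one-frequency-layer} with $\delta=0$ bounds the integral by $M^{2+\eps}$. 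The product is $N^{-1+\beta_{a^\perp}}M^{-1+2\beta_a+\eps}$. Summing the eight sign branches only changes the constant, and since the bound is uniform in $(u,t)$ it survives the supremum.

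\emph{Main obstacle.} The decisive step is the leading piece. A crude bound on it would give only $M^{2\beta_a}$, so one full power of $M$ must come from the phase. That gain is supplied by the modulation layer of Lemma~\ref{lem:cubic-one-frequency-layer}: in the balanced region via the nondegenerate radial derivative, and in the low-output region via the speed gap. The remaining care is in the inhomogeneous and low cases. For $N\sim1$ the factor $N^{-1+\beta_{a^\perp}}$ is a bounded constant, including the wave zero mode where $K_{a^\perp}(\cdot,\xi)$ is interpreted continuously. Finitely many unit shells $M<N_{\mathrm{sp}}$ in $r$ are bounded directly by the rough kernel bounds, with the constant enlarged accordingly.
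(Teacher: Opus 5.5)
Your proposal is correct and follows essentially the same route as the paper. The remainder $R_a$ is estimated absolutely to give $N^{-1+\beta_{a^\perp}}M^{-1+2\beta_a}$. The leading covariance is expanded into sign branches with amplitude $N^{-1+\beta_{a^\perp}}M^{-3+2\beta_a}$, controlled by the inner-time bound \eqref{eq:cubic-inner-time-bound}, and Lemma~\ref{lem:cubic-one-frequency-layer} with $\delta=0$ supplies the factor $M^{2+\eps}$; low and unit shells are handled directly with an enlarged constant, as in the paper.
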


\begin{proof}
For the covariance remainder in \eqref{eq:cubic-covariance-split}, estimate
absolutely.  On the shell,
\[
  |K_{a^\perp}(t-s,\xi-r)|\lesssim M^{-1},
  \qquad
  |R_a(r;s,t)|\lesssim_T M^{-3+2\beta_a},
\]
and
\[
  |\mathfrak h_{a^\perp}(\xi)K_{a^\perp}(s-u,\xi)|
  \lesssim_T N^{-1+\beta_{a^\perp}}.
\]
The $r$-volume is $O(M^3)$, so this contribution is
$O_T(N^{-1+\beta_{a^\perp}}M^{-1+2\beta_a})$.

For the leading covariance, expand the two sine propagators and the cosine in
\eqref{eq:cubic-covariance-split} into sign branches.  The $s$-dependent phase
is one of the modulations in \eqref{eq:cubic-modulation}, up to an
overall sign.  The static amplitude is
\[
  N^{-1+\beta_{a^\perp}}M^{-1}M^{-2+2\beta_a}
  =N^{-1+\beta_{a^\perp}}M^{-3+2\beta_a}.
\]
Moreover, \eqref{eq:cubic-inner-time-bound} gives
\[
  \left|\int_u^t s e^{\ii s\Omega}\dd s\right|
  \lesssim_T\mathfrak m(\Omega).
\]
Applying \cref{lem:cubic-one-frequency-layer} with $\delta=0$ gives,
for every $\eps>0$,
\[
  |\mathcal K_{a,M}^{\mathrm{lead}}(\xi;t,u)|
  \lesssim_{T,\eps}
  N^{-1+\beta_{a^\perp}}M^{-3+2\beta_a}M^{2+\eps}
  =N^{-1+\beta_{a^\perp}}M^{-1+2\beta_a+\eps}.
\]
This proves \eqref{eq:cubic-single-shell-kernel}.
\end{proof}

\begin{corollary}[Summed integrated kernel]\label{cor:cubic-summed-kernel}
For every $0<\eps<1-2\beta_a$, the dyadic sum converges and
\begin{equation}\label{eq:cubic-summed-kernel}
  \sup_{0\le u\le t\le T}
  |\mathcal K_a(\xi;t,u)|
  \lesssim_{T,\eps}\la\xi\ra^{-2+\beta_{\Gamma,a}+\eps}.
\end{equation}
The internal tail above a dyadic scale $L$ satisfies
\begin{equation}\label{eq:cubic-kernel-tail}
  \sup_{u,t}|\mathcal K_{a,>L}(\xi;t,u)|
  \lesssim_{T,\eps}
  N^{-1+\beta_{a^\perp}}\max\{N,L\}^{-1+2\beta_a+\eps}.
\end{equation}
\end{corollary}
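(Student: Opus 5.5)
The plan is to sum the single-shell bound of Lemma~\ref{lem:cubic-single-shell-kernel} over dyadic resonant shells $M\gtrsim N=\la\xi\ra$, adding the finitely many unit shells directly. The integrated kernel \eqref{eq:cubic-integrated-kernel} splits as $\mathcal K_a=\sum_M\mathcal K_{a,M}$ according to the Littlewood--Paley shell of $r$. This decomposition is legitimate because the resonant support \eqref{eq:resonant-support} forces $\la r\ra\sim\la\xi-r\ra\sim M$ with $M\gtrsim N$, so only shells $M\gtrsim N$ occur, each with finite overlap.

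\textbf{Unit shells.} For $M\sim1$ I would bound the kernel directly. Every propagator $K$ is $O_T(1)$, $\sigma_a$ is $O_T(1)$ on a bounded set, and the $r$-volume is bounded. These shells exist only when $N\sim1$, and there the claim \eqref{eq:cubic-summed-kernel} reduces to an $O_T(1)$ bound, so they are harmless.

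\textbf{High shells.} For $M\ge2$, fix $\eps$ with $0<\eps<1-2\beta_a$ and apply \eqref{eq:cubic-single-shell-kernel} with a smaller loss $\eps'<\eps$. This gives
\[
 \sum_{M\gtrsim N}N^{-1+\beta_{a^\perp}}M^{-1+2\beta_a+\eps'}
 \lesssim N^{-1+\beta_{a^\perp}}N^{-1+2\beta_a+\eps'},
\]
which is a convergent geometric series because $-1+2\beta_a+\eps'<0$. Since $\beta_{a^\perp}+2\beta_a=\betasum+\beta_a=\beta_{\Gamma,a}$, the right-hand side equals $N^{-2+\beta_{\Gamma,a}+\eps'}\le N^{-2+\beta_{\Gamma,a}+\eps}$, which is \eqref{eq:cubic-summed-kernel}. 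The supremum over $0\le u\le t\le T$ passes through the sum, since each shell bound is already uniform in $(u,t)$.

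\textbf{Tail bound.} For the internal tail $\mathcal K_{a,>L}$, the same sum runs only over $M\gtrsim\max\{N,L\}$. Summing the geometric series from $\max\{N,L\}$ instead of $N$ gives \eqref{eq:cubic-kernel-tail}.

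The only point needing care is convergence of the shell decomposition itself: that $\mathcal K_a$ is indeed the absolutely convergent sum of the $\mathcal K_{a,M}$. The absolute integrand on each shell is bounded and compactly supported, and the shell bounds are summable, so dominated convergence and Fubini justify exchanging the $r$-integral with the dyadic sum, uniformly in $(u,t)$.
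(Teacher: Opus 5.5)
Your argument is correct and is the paper's proof: sum \eqref{eq:cubic-single-shell-kernel} over dyadic $M\gtrsim N$, which is a convergent geometric series because $-1+2\beta_a+\eps<0$, use $\beta_{a^\perp}+2\beta_a=\beta_{\Gamma,a}$ to read off the exponent, and start the same sum at $M\gtrsim\max\{N,L\}$ for the tail. One caution about your last paragraph: the absolute integrand taken jointly in $(r,s)$ is \emph{not} summable over shells (it contributes $N^{-1+\beta_{a^\perp}}M^{2\beta_a}$ per shell), so exchanging the $r$-integral with the shell sum must use the majorant obtained after the oscillatory $s$-integration in the proof of Lemma~\ref{lem:cubic-single-shell-kernel}; alternatively, define the uncut $\mathcal K_a$ as the convergent dyadic series, as the paper does.
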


\begin{proof}
Sum \eqref{eq:cubic-single-shell-kernel} over dyadic
$M\gtrsim N$.  The tail estimate follows by starting the same sum at
$M\gtrsim\max\{N,L\}$.
\end{proof}

\begin{lemma}[Two-sided time increments of the integrated kernel]
\label{lem:cubic-kernel-increments}
Let $0\le t<t'\le T$, put $h=t'-t$, and let
$0<\theta<1/2$.  For every $\eps>0$ and every resonant shell
$M\gtrsim N:=\la\xi\ra$,
\begin{align}
 \sup_{0\le u\le t}
 |\mathcal K_{a,M}(\xi;t',u)-\mathcal K_{a,M}(\xi;t,u)|
 &\lesssim_{T,\eps,\theta}
 h^\theta N^{-1+\beta_{a^\perp}}M^{-1+2\beta_a+\theta+\eps},
 \label{eq:cubic-shell-kernel-increment}\\
 \sup_{t<u\le t'}|\mathcal K_{a,M}(\xi;t',u)|
 &\lesssim_{T,\eps,\theta}
 h^\theta N^{-1+\beta_{a^\perp}}M^{-1+2\beta_a+\theta+\eps}.
 \label{eq:cubic-shell-kernel-boundary}
\end{align}
If, in addition,
\begin{equation}\label{eq:cubic-increment-summability-condition}
 2\beta_a+\theta+\eps<1,
\end{equation}
then summation of the internal shells gives
\begin{align}
 \sup_{0\le u\le t}
 |\mathcal K_a(\xi;t',u)-\mathcal K_a(\xi;t,u)|
 &\lesssim_{T,\eps,\theta}
 h^\theta N^{-2+\beta_{\Gamma,a}+\theta+\eps},
 \label{eq:cubic-full-kernel-increment}\\
 \sup_{t<u\le t'}|\mathcal K_a(\xi;t',u)|
 &\lesssim_{T,\eps,\theta}
 h^\theta N^{-2+\beta_{\Gamma,a}+\theta+\eps}.
 \label{eq:cubic-full-kernel-boundary}
\end{align}
The same four estimates hold uniformly with any of the cutoff factors in
\eqref{eq:cubic-contracted-cutoff-multiplier} inserted.
\end{lemma}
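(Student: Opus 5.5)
We split $\mathcal K_{a,M}$ as in the proof of Lemma~\ref{lem:cubic-single-shell-kernel}: a covariance-remainder part, where $R_a$ is estimated absolutely, and the leading part $\mathcal K^{\mathrm{lead}}_{a,M}$ in the sign-branch form \eqref{eq:cubic-leading-sign-branch}. Every estimate is then a perturbation of the static bound $N^{-1+\beta_{a^\perp}}M^{-1+2\beta_a+\eps}$, in which one factor is replaced by a time difference costing $h^\theta M^\theta$.

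\emph{Interior increment \eqref{eq:cubic-shell-kernel-increment}.} Fix $u\le t<t'$. In the leading branch the difference $\mathcal K_{a,M}(\xi;t',u)-\mathcal K_{a,M}(\xi;t,u)$ telescopes into two pieces.

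First, there is the change of the unimodular endpoint factor $E_a^{\bm\eps}$. Its phases have frequencies $O(M)$, so
\[
|E(t')-E(t)|\lesssim\min\{1,hM\}\le h^\theta M^\theta .
\]
This factor multiplies $|J_{u,t}(\Omega)|\lesssim\mathfrak m(\Omega)$ by \eqref{eq:cubic-inner-time-bound}. Lemma~\ref{lem:cubic-one-frequency-layer} with $\delta=0$ then gives $h^\theta M^\theta\cdot N^{-1+\beta_{a^\perp}}M^{-3+2\beta_a}M^{2+\eps}$.

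Second, there is the change of the upper limit, $J_{u,t'}-J_{u,t}=\int_t^{t'}se^{\ii s\Omega}\dd s$. This is bounded both by $Th$ and, by \eqref{eq:cubic-inner-time-endpoints}, by $\lesssim\mathfrak m(\Omega)$. Interpolating,
\[
\bigl|J_{u,t'}-J_{u,t}\bigr|\lesssim h^\theta\,\mathfrak m(\Omega)^{1-\theta}.
\]
Lemma~\ref{lem:cubic-one-frequency-layer} with $\delta=\theta<1$ yields $M^{2+\theta+\eps}$, which is the same bound.

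In the remainder part the $t$-dependence sits in $K_{a^\perp}(t-s,\xi-r)$, in $R_a(r;s,t)$, and in the upper limit $s\le t$. The kernel differences satisfy $|K(\tau')-K(\tau)|\lesssim h^\theta M^{-1+\theta}$. For $R_a$, differentiating the explicit formula \eqref{eq:cubic-covariance-formula} in $t$ and interpolating gives $h^\theta M^{-3+2\beta_a+\theta}$. The extra $s$-strip has length $h\le h^\theta$. Estimating absolutely over the volume $M^3$ gives $h^\theta N^{-1+\beta_{a^\perp}}M^{-1+2\beta_a+\theta}$.

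\emph{Boundary term \eqref{eq:cubic-shell-kernel-boundary}.} Here $t<u\le t'$, so the $s$-integral runs over $[u,t']$, an interval of length $\le h$. In the leading branch, $|J_{u,t'}(\Omega)|\lesssim\min\{h,\mathfrak m(\Omega)\}\le h^\theta\mathfrak m(\Omega)^{1-\theta}$, and the same layer lemma applies. The remainder part is bounded absolutely by $h\,N^{-1+\beta_{a^\perp}}M^{-1+2\beta_a}$.

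\emph{Summation.} We sum \eqref{eq:cubic-shell-kernel-increment}--\eqref{eq:cubic-shell-kernel-boundary} over dyadic $M\gtrsim N$. Under \eqref{eq:cubic-increment-summability-condition} the exponent $-1+2\beta_a+\theta+\eps$ is negative, so the geometric sum is dominated by its $M\sim N$ term. Since $\beta_{\Gamma,a}=\beta_a+\beta_{a^\perp}+\beta_a$, this yields $N^{-2+\beta_{\Gamma,a}+\theta+\eps}$. Unit shells are treated directly with the rough kernel bounds and $h\lesssim h^\theta$. Cutoff factors from \eqref{eq:cubic-contracted-cutoff-multiplier} are bounded, independent of time, and sit inside the $r$-integral, so every absolute bound above is unchanged when they are inserted.

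\emph{Main obstacle.} The delicate point is the upper-limit increment in the balanced region. There the interpolation $\mathfrak m^{1-\theta}$ must still be integrable against the sublevel volumes $M^2\min\{M,L\}$. This is exactly why Lemma~\ref{lem:cubic-one-frequency-layer} was stated for $\delta<1$, and it produces the loss $M^{\theta+\eps}$ rather than anything worse.
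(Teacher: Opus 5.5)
Your proposal is correct and follows essentially the same route as the paper: the same split into the covariance remainder $R_a$ (estimated absolutely, using the $h^\theta M^{\theta}$ increments of the outer propagator and of $R_a$ plus the length-$h$ strip) and the leading sign branches, the same telescope $[E(t')-E(t)]J_{u,t}(\Omega)+E(t')J_{t,t'}(\Omega)$, the interpolation $|J_{t,t'}(\Omega)|\lesssim h^\theta\mathfrak m(\Omega)^{1-\theta}$, and the phase-layer lemma with $\delta=0$ and $\delta=\theta$ respectively. The shell summation under $2\beta_a+\theta+\eps<1$ and the handling of the bounded, time-independent cutoff factors also match the paper's argument.
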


\begin{proof}
Consider first the covariance remainder $R_a$ in
\eqref{eq:cubic-covariance-split}.  On the common $s$-interval, the elementary
propagator estimate
\[
 |K_{a^\perp}(\tau+h,\zeta)-K_{a^\perp}(\tau,\zeta)|
 \lesssim_{T,\theta}h^\theta\la\zeta\ra^{-1+\theta}
\]
and \eqref{eq:cubic-covariance-formula} give
\begin{equation}\label{eq:cubic-covariance-remainder-increment}
 |R_a(r;s,t')-R_a(r;s,t)|
 \lesssim_{T,\theta}h^\theta M^{-3+2\beta_a+\theta}.
\end{equation}
Indeed, each $t$-dependent sine changes by at most
$h^\theta M^\theta$, while its denominator is $M^3$.  Combining either the
increment of the outer propagator with $|R_a|\lesssim M^{-3+2\beta_a}$, or the static
outer propagator with
\eqref{eq:cubic-covariance-remainder-increment}, and then using the factor
$N^{-1+\beta_{a^\perp}}$ from
$\mathfrak h_{a^\perp}(\xi)K_{a^\perp}(s-u,\xi)$ and the shell volume $O(M^3)$, gives
$h^\theta N^{-1+\beta_{a^\perp}}M^{-1+2\beta_a+\theta}$.  On the boundary interval of length at most
$h$, the absolute estimate is $hN^{-1+\beta_{a^\perp}}M^{-1+2\beta_a}$, which is bounded by
$C_T h^\theta N^{-1+\beta_{a^\perp}}M^{-1+2\beta_a+\theta}$.

For the leading covariance use the representation
\eqref{eq:cubic-leading-sign-branch}.  If
$E(t)=E_a^{\bm\eps}(\xi,r;t,u)$ and
$J(t)=J_{u,t}(\Omega_a^{\bm\eps}(\xi,r))$, then on the common interval
$0\le u\le t<t'$ the difference is written as
\begin{equation}\label{eq:cubic-common-time-telescope}
 E(t')J(t')-E(t)J(t)
 =[E(t')-E(t)]J(t)+E(t')J_{t,t'}(\Omega_a^{\bm\eps}).
\end{equation}
The first term changes only the endpoint phase, while the second is the new
upper-time interval.  The endpoint phase satisfies
\begin{equation}\label{eq:cubic-endpoint-phase-increment}
 |E_a^{\bm\eps}(\xi,r;t',u)
  -E_a^{\bm\eps}(\xi,r;t,u)|
 \lesssim_{T,\theta}h^\theta M^\theta.
\end{equation}
Multiplying by
$|\int_u^t s e^{\ii s\Omega}\dd s|
\lesssim_T\mathfrak m(\Omega)$, the amplitude bound
\eqref{eq:cubic-leading-amplitude-bound}, and
\cref{lem:cubic-one-frequency-layer} with $\delta=0$ yields
\[
 h^\theta N^{-1+\beta_{a^\perp}}M^{-3+2\beta_a+\theta}
 \int_{\mathcal R_M(\xi)}\mathfrak m(\Omega)\dd r
 \lesssim_{T,\eps,\theta}
 h^\theta N^{-1+\beta_{a^\perp}}M^{-1+2\beta_a+\theta+\eps}.
\]
The change of the upper endpoint of the oscillatory integral is
\[
 \int_t^{t'}s e^{\ii s\Omega}\dd s.
\]
Interpolation between its length bound $O_T(h)$ and the one-integration-by-
parts bound $O_T(\mathfrak m(\Omega))$ gives
\begin{equation}\label{eq:cubic-boundary-oscillatory-interpolation}
 \left|\int_t^{t'}s e^{\ii s\Omega}\dd s\right|
 \lesssim_T h^\theta
 \mathfrak m(\Omega)^{1-\theta}.
\end{equation}
Now \cref{lem:cubic-one-frequency-layer} with $\delta=\theta$ gives
\[
 h^\theta N^{-1+\beta_{a^\perp}}M^{-3+2\beta_a}M^{2+\theta+\eps}
 =h^\theta N^{-1+\beta_{a^\perp}}M^{-1+2\beta_a+\theta+\eps}.
\]
This proves \eqref{eq:cubic-shell-kernel-increment}.  If $t<u\le t'$, the
entire leading integral is over $[u,t']$ and the same interpolation
\eqref{eq:cubic-boundary-oscillatory-interpolation} proves
\eqref{eq:cubic-shell-kernel-boundary}.  Unit shells are controlled directly
by continuity of the wave and Klein--Gordon kernels and the finite length of
the time interval.

Under \eqref{eq:cubic-increment-summability-condition}, the dyadic sum over
$M\gtrsim N$ converges and gives
\eqref{eq:cubic-full-kernel-increment} and
\eqref{eq:cubic-full-kernel-boundary}.  The cutoff factors are independent of
$t,u,s$, are uniformly bounded, and do not alter either the phase layer or the
shell support, so the same estimates hold uniformly with them inserted.
\end{proof}

\subsection{Spectral estimates and completion of the construction}

By the first Wiener isometry applied to
\eqref{eq:cubic-first-integrated-representation}, the covariance spectral
density of $\Gamma_{a,\Lambda}^{(1)}$ is
\begin{equation}\label{eq:cubic-first-density-formula}
  q_{a,\Lambda}^{(1)}(t,\xi)
  =c_{\mathrm F}\int_0^t
  |\mathcal K_{a,\Lambda}(\xi;t,u)|^2\dd u.
\end{equation}

\begin{lemma}[Cutoff tails for the integrated first-chaos kernel]
\label{lem:cubic-first-cutoff-tail}
Fix the projection flags
$(\epsilon_{\mathrm P},\epsilon_{\mathrm O})\in\{0,1\}^2$.  Let
$\mathcal K_{a,>\Lambda}$ denote any term in the difference between the
cutoff kernel in \eqref{eq:cubic-integrated-kernel} and its limiting kernel,
after the finite-product telescope of
\cref{lem:finite-product-cutoff}.  Put
\[
 N:=\langle\xi\rangle,
 \qquad
 L_\Lambda(N):=\max\{N,c\Lambda\},
\]
where $c>0$ depends only on the cutoff plateau and the resonant support.  For
every sufficiently small $\varepsilon>0$,
\begin{equation}\label{eq:cubic-first-kernel-cutoff-tail}
 \sup_{0\le u\le t\le T}
 |\mathcal K_{a,>\Lambda}(\xi;t,u)|
 \lesssim_{T,\varepsilon}
 N^{-1+\beta_{a^\perp}}
 L_\Lambda(N)^{-1+2\beta_a+\varepsilon}.
\end{equation}
Let $0<\theta<1/2$ and assume
$2\beta_a+\theta+\varepsilon<1$.  Then
\begin{align}
 \sup_{0\le u\le t}
 |\mathcal K_{a,>\Lambda}(\xi;t',u)
  -\mathcal K_{a,>\Lambda}(\xi;t,u)|
 &\lesssim_{T,\varepsilon,\theta}
 |t'-t|^\theta
 N^{-1+\beta_{a^\perp}}
 L_\Lambda(N)^{-1+2\beta_a+\theta+\varepsilon},
 \label{eq:cubic-first-kernel-increment-cutoff-tail}\\
 \sup_{t<u\le t'}
 |\mathcal K_{a,>\Lambda}(\xi;t',u)|
 &\lesssim_{T,\varepsilon,\theta}
 |t'-t|^\theta
 N^{-1+\beta_{a^\perp}}
 L_\Lambda(N)^{-1+2\beta_a+\theta+\varepsilon}.
 \label{eq:cubic-first-kernel-boundary-cutoff-tail}
\end{align}
Consequently the static and increment covariance-density differences admit
majorants
\begin{align}
 q_{a,>\Lambda}^{(1)}(t,\xi)
 &\lesssim_{T,\varepsilon}
 N^{-2+2\beta_{a^\perp}}
 L_\Lambda(N)^{-2+4\beta_a+\varepsilon},
 \label{eq:cubic-first-density-cutoff-tail}\\
 \frac{q_{a,>\Lambda}^{(1),\Delta}(t,t';\xi)}
 {|t-t'|^{2\theta}}
 &\lesssim_{T,\varepsilon,\theta}
 N^{-2+2\beta_{a^\perp}}
 L_\Lambda(N)^{-2+4\beta_a+2\theta+\varepsilon}.
 \label{eq:cubic-first-increment-density-cutoff-tail}
\end{align}
The bounds are uniform over the four projection conventions.
\end{lemma}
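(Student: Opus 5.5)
The plan is to reduce every cutoff-difference term to a restriction of the already proved shellwise estimates to the high-frequency tail region, and then sum over dyadic internal shells starting at $L_\Lambda(N)$. Expand the cutoff multiplier \eqref{eq:cubic-contracted-cutoff-multiplier} against the limiting multiplier $1$ via \cref{lem:finite-product-cutoff}. Each telescope term contains exactly one factor $m_\Lambda(\zeta)-1$ at one of the legs $\zeta\in\{\xi,r,-r,\xi-r\}$ (with multiplicity), and all remaining factors are uniformly bounded. Such a factor is independent of $s,t,u$ and vanishes unless $|\zeta|\ge c_0\Lambda$.

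If the active leg is $\xi$ (the stochastic output or the optional final projection), then $N\gtrsim\Lambda$, so $L_\Lambda(N)\sim N$. In this case \eqref{eq:cubic-first-kernel-cutoff-tail} is just the full summed bound of \cref{cor:cubic-summed-kernel} with the cutoff factor inserted. Note that $N^{-1+\beta_{a^\perp}}N^{-1+2\beta_a+\varepsilon}=N^{-2+\beta_{\Gamma,a}+\varepsilon}$ only after using $\beta_{\Gamma,a}=\beta_{a^\perp}+2\beta_a$, which holds here because $\betasum+\beta_a=\beta_{a^\perp}+2\beta_a$.

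If the active leg is $r$, $-r$ or $\xi-r$, then on the resonant shell $\mathcal R_M(\xi)$ we have $|r|\sim|\xi-r|\sim M$, which forces $M\gtrsim\Lambda$. Together with $M\gtrsim N$ this gives $M\gtrsim L_\Lambda(N)$. \cref{lem:cubic-single-shell-kernel} holds uniformly with the bounded factor inserted: the phase layer of \cref{lem:cubic-one-frequency-layer} and the amplitude bound \eqref{eq:cubic-leading-amplitude-bound} only use the modulus of the multiplier and its support. Summing $N^{-1+\beta_{a^\perp}}M^{-1+2\beta_a+\varepsilon}$ over $M\gtrsim L_\Lambda(N)$ then gives \eqref{eq:cubic-first-kernel-cutoff-tail}, exactly as in the tail bound \eqref{eq:cubic-kernel-tail}. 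The convergence of the geometric series needs $2\beta_a+\varepsilon<1$.

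The increment bounds \eqref{eq:cubic-first-kernel-increment-cutoff-tail}--\eqref{eq:cubic-first-kernel-boundary-cutoff-tail} follow the same way from the shellwise estimates \eqref{eq:cubic-shell-kernel-increment}--\eqref{eq:cubic-shell-kernel-boundary} of \cref{lem:cubic-kernel-increments}. That lemma explicitly allows the cutoff factors to be inserted, since they are time-independent. Summing over $M\gtrsim L_\Lambda(N)$ uses the hypothesis $2\beta_a+\theta+\varepsilon<1$. The unit output or internal blocks are finitely many and contribute only when $\Lambda$ is bounded, so they are absorbed into the constant. The bound is uniform over the four flag choices, because the flags only add bounded factors on legs already in the list.

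For the densities, apply the first Wiener isometry \eqref{eq:cubic-first-density-formula} to the finite telescope sum of difference kernels and use Cauchy--Schwarz over the finitely many terms. Integrating $u\in[0,t]$ costs $O_T(1)$, which gives \eqref{eq:cubic-first-density-cutoff-tail}. For the increment density, split the $u$-integral into the common interval $[0,t]$, where the kernel increment bound applies, and the boundary strip $(t,t']$, where the boundary bound applies. Squaring each piece gives \eqref{eq:cubic-first-increment-density-cutoff-tail}, with $\varepsilon$ relabeled.

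The main obstacle is to make sure the shellwise phase-layer bounds survive with the cutoff factor inserted. They do, because \cref{lem:cubic-one-frequency-layer} controls $|\mathfrak m(\Omega)|$ through sublevel volumes, with no integration by parts in $r$; hence no derivative of $m_\Lambda$ ever enters, and bounded measurable multipliers are admissible.
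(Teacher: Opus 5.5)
Your proposal is correct and follows essentially the same route as the paper. Both arguments use the finite-product telescope, place active internal legs on shells $M\gtrsim L_\Lambda(N)$, and handle active output legs through the full kernel bounds with $L_\Lambda(N)\simeq N$. Both then sum the shellwise static, common-time, and boundary estimates, and finish with the first Wiener isometry and a common-interval/boundary-strip split for the densities. Your explicit check that the phase-layer and amplitude bounds use only the modulus and support of the inserted multiplier is a useful detail that the paper states only briefly.
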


\begin{proof}
For a cutoff factor carried by $r$, $-r$, or $\xi-r$, the resonant support
forces the internal shell to satisfy $M\gtrsim\Lambda$.  Summing
\eqref{eq:cubic-single-shell-kernel} from
$M\gtrsim L_\Lambda(N)$ proves
\eqref{eq:cubic-first-kernel-cutoff-tail}.  Summing
\eqref{eq:cubic-shell-kernel-increment} and
\eqref{eq:cubic-shell-kernel-boundary} from the same lower limit proves
\eqref{eq:cubic-first-kernel-increment-cutoff-tail}--
\eqref{eq:cubic-first-kernel-boundary-cutoff-tail}.  If the active high
factor is an output-frequency factor, then $N\gtrsim\Lambda$; the same
formulas follow from the full kernel bounds because
$L_\Lambda(N)\simeq N$.  The finite-product reduction treats repeated legs
and all projection flags uniformly.  Since the telescope contains only a
fixed number of terms, the square of the full kernel difference is bounded by
a fixed multiple of the sum of their squares.  Thus the estimates just proved
also hold for the complete cutoff difference.

Apply the first Wiener isometry.  If $t'<t$, the increment density is
\begin{align*}
 q_{a,>\Lambda}^{(1),\Delta}(t,t';\xi)
 ={}&c_{\mathrm F}\int_0^{t'}
 |\mathcal K_{a,>\Lambda}(\xi;t,u)
  -\mathcal K_{a,>\Lambda}(\xi;t',u)|^2\dd u\\
 &+c_{\mathrm F}\int_{t'}^t
 |\mathcal K_{a,>\Lambda}(\xi;t,u)|^2\dd u.
\end{align*}
Use \eqref{eq:cubic-first-kernel-increment-cutoff-tail} in the first integral
and \eqref{eq:cubic-first-kernel-boundary-cutoff-tail} in the second.  The
static density is obtained from the same isometry without subtraction.
Integration in $u$ contributes only a factor depending on $T$ and gives
\eqref{eq:cubic-first-density-cutoff-tail}--
\eqref{eq:cubic-first-increment-density-cutoff-tail}.  The case $t<t'$ is
identical after interchanging the two times.
\end{proof}

\begin{corollary}[Normalized first-chaos kernel majorant]
\label{cor:cubic-first-master-majorant}
Let $N=\langle\xi\rangle$, $L\ge1$, $0\le\theta<1/2$, and choose
$\varepsilon>0$ so that
$2\beta_a+\theta+\varepsilon<1$.  Define
\begin{equation}\label{eq:cubic-master-majorant}
 \mathfrak M_{a,\theta,\varepsilon}(N,L)
 :=N^{-1+\beta_{a^\perp}}
   \max\{N,L\}^{-1+2\beta_a+\theta+\varepsilon}.
\end{equation}
Then the following estimates hold uniformly over all four projection flags:
\begin{align}
 \sup_{u,t}|\mathcal K_{a,>L}(\xi;t,u)|
 &\lesssim \mathfrak M_{a,0,\varepsilon}(N,L),
 \label{eq:cubic-master-static}\\
 \sup_{0\le u\le t<t'}
 \frac{|\mathcal K_{a,>L}(\xi;t',u)
       -\mathcal K_{a,>L}(\xi;t,u)|}{|t'-t|^\theta}
 &\lesssim \mathfrak M_{a,\theta,\varepsilon}(N,L),
 \label{eq:cubic-master-common}\\
 \sup_{t<u\le t'}
 \frac{|\mathcal K_{a,>L}(\xi;t',u)|}{|t'-t|^\theta}
 &\lesssim \mathfrak M_{a,\theta,\varepsilon}(N,L).
 \label{eq:cubic-master-boundary}
\end{align}
Here $\mathcal K_{a,>L}$ denotes the sum of internal resonant shells
$M\gtrsim\max\{N,L\}$.  Any cutoff-difference term is bounded by the same
right-hand sides with $L=c\Lambda$; if the active cutoff leg is an output
frequency, it forces $N\gtrsim\Lambda$ and the same formula follows with
$\max\{N,L\}\simeq N$.  Consequently its static and normalized increment
covariance densities are bounded by
\begin{equation}\label{eq:cubic-master-density}
 \mathfrak M_{a,0,\varepsilon}(N,L)^2,
 \qquad
 \mathfrak M_{a,\theta,\varepsilon}(N,L)^2,
\end{equation}
respectively.
\end{corollary}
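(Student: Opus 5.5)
The plan is to derive all three kernel estimates from the single-shell bounds already proved, by summing over the internal resonant shells $M\gtrsim\max\{N,L\}$, and then to pass to covariance densities through the first Wiener isometry, as in the proof of Lemma~\ref{lem:cubic-first-cutoff-tail}.

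\emph{Static bound.} For \eqref{eq:cubic-master-static}, sum \eqref{eq:cubic-single-shell-kernel} over dyadic $M\gtrsim\max\{N,L\}$. Since $2\beta_a+\varepsilon<1$, the exponent of $M$ is negative. The geometric sum is therefore dominated by its first term, which gives $N^{-1+\beta_{a^\perp}}\max\{N,L\}^{-1+2\beta_a+\varepsilon}=\mathfrak M_{a,0,\varepsilon}(N,L)$.

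\emph{Increment bounds.} For \eqref{eq:cubic-master-common} and \eqref{eq:cubic-master-boundary}, sum the shell estimates \eqref{eq:cubic-shell-kernel-increment} and \eqref{eq:cubic-shell-kernel-boundary} from the same lower limit. The hypothesis $2\beta_a+\theta+\varepsilon<1$ is exactly the summability condition \eqref{eq:cubic-increment-summability-condition}. After dividing by $|t'-t|^\theta$, the sum again equals its first term, $\mathfrak M_{a,\theta,\varepsilon}(N,L)$. Unit shells occur only when $\max\{N,L\}\sim1$, and there the direct continuity bounds suffice.

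\emph{Projection flags and cutoff differences.} Lemma~\ref{lem:cubic-kernel-increments} states that the four shell estimates hold with any of the bounded, time-independent factors in \eqref{eq:cubic-contracted-cutoff-multiplier} inserted. The bounds are therefore uniform over $(\epsilon_{\mathrm P},\epsilon_{\mathrm O})$. For a cutoff difference, apply the telescope of Lemma~\ref{lem:finite-product-cutoff} and consider each term separately.
\begin{itemize}
\item If the active leg is $r$, $-r$ or $\xi-r$, the resonant support \eqref{eq:resonant-support} forces $M\gtrsim\Lambda$. The internal sum then starts at $\max\{N,c\Lambda\}$, which is the case $L=c\Lambda$ above.
\item If the active leg is an output frequency $\xi$, then $N\ge c_0\Lambda$. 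The full kernel bounds then give the same right-hand side, because $\max\{N,c\Lambda\}\simeq N$.
\end{itemize}
Since the telescope has a fixed number of terms, this yields the claimed bound for the whole difference.

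\emph{Densities.} For \eqref{eq:cubic-master-density}, apply \eqref{eq:cubic-first-density-formula}. The static density is $c_{\mathrm F}\int_0^t|\mathcal K|^2\,du\lesssim_T\mathfrak M_{a,0,\varepsilon}^2$. For the increment, split the $u$-integral into the common interval $[0,t]$ and the boundary interval $(t,t']$. Use \eqref{eq:cubic-master-common} on the first and \eqref{eq:cubic-master-boundary} on the second; the integration contributes only a factor depending on $T$.

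The main point to watch is the bookkeeping of which frequency carries the active cutoff leg. In particular, one must check that output-leg terms genuinely satisfy $N\gtrsim\Lambda$, so that no internal-shell restriction is needed and the bound is not lost when $N\ll\Lambda$. This follows directly from the plateau condition \eqref{eq:cutoff-supports}.
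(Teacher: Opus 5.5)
Your proposal is correct and follows the paper's route: the static and increment bounds come from summing the single-shell estimates from $M\gtrsim\max\{N,L\}$ (the paper simply cites \eqref{eq:cubic-kernel-tail} and the summed shell increments); cutoff-difference terms are handled by the finite-product telescope with the same active-leg bookkeeping as Lemma~\ref{lem:cubic-first-cutoff-tail}; and the densities follow from the first Wiener isometry with the common/boundary split. The only case you leave implicit is the endpoint $\theta=0$, which the corollary allows but Lemma~\ref{lem:cubic-kernel-increments} excludes; there \eqref{eq:cubic-master-common}--\eqref{eq:cubic-master-boundary} follow directly from the static bound by the triangle inequality.
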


\begin{proof}
The static estimate is \eqref{eq:cubic-kernel-tail}; the common-time and
boundary estimates are obtained by summing
\eqref{eq:cubic-shell-kernel-increment} and
\eqref{eq:cubic-shell-kernel-boundary}.  The cutoff statement is
\Cref{lem:cubic-first-cutoff-tail}, after applying the finite-product
reduction to the active legs in the preceding cutoff table.  The first Wiener
isometry gives \eqref{eq:cubic-master-density}.  If $N$ or an internal
frequency lies in the inhomogeneous unit block, all multipliers are supported
in a fixed compact set, the propagator kernels are continuous at zero, and
the same bounds hold after increasing the constant; moreover, that block lies in the cutoff plateau for all sufficiently large $\Lambda$.
\end{proof}

\begin{proposition}[First-chaos spectral bounds]\label{prop:cubic-first-spectral}
For every $\eps>0$,
\begin{equation}\label{eq:cubic-first-spectral}
  \sup_{\Lambda\ge1}\sup_{t\le T}
  q_{a,\Lambda}^{(1)}(t,\xi)
  \lesssim_{T,\eps}\la\xi\ra^{-4+2\beta_{\Gamma,a}+\eps}.
\end{equation}
For every $0<\theta<1/2$,
\begin{equation}\label{eq:cubic-first-spectral-increment}
  q_{a,\Lambda}^{(1),\Delta}(t,t';\xi)
  \lesssim_{T,\eps,\theta}
  |t-t'|^{2\theta}
  \la\xi\ra^{-4+2\beta_{\Gamma,a}+2\theta+\eps}.
\end{equation}
For every $s<1/2-\beta_{\Gamma,a}$, one may choose
$0<\theta<1/2$ and $\delta>0$ such that the cutoff-difference densities
satisfy the tail criterion in \eqref{eq:path-criterion-tail}, uniformly over
the four projection conventions.
\end{proposition}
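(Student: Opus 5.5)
The static bound follows from the first Wiener isometry \eqref{eq:cubic-first-density-formula}. For every $\Lambda$ the cutoff kernel $\mathcal K_{a,\Lambda}$ has the shell decomposition of \eqref{eq:cubic-integrated-kernel}, with uniformly bounded cutoff factors inserted. Hence Corollary~\ref{cor:cubic-summed-kernel}, with $\varepsilon$ below $1-2\beta_a$, gives $\sup_{u,t}|\mathcal K_{a,\Lambda}(\xi;t,u)|\lesssim\la\xi\ra^{-2+\beta_{\Gamma,a}+\varepsilon/2}$ uniformly in $\Lambda$ and in the projection flags. Squaring and integrating over $u\in[0,t]$ contributes only a factor $T$, which proves \eqref{eq:cubic-first-spectral}. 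The inhomogeneous unit block is covered by the continuity remark in Corollary~\ref{cor:cubic-first-master-majorant}.

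For the increment I would take $t'<t$ and split the isometry exactly as in the proof of Lemma~\ref{lem:cubic-first-cutoff-tail}. This gives a common integral $\int_0^{t'}|\mathcal K(\xi;t,u)-\mathcal K(\xi;t',u)|^2\dd u$ and a boundary integral $\int_{t'}^t|\mathcal K(\xi;t,u)|^2\dd u$. The common part is bounded by \eqref{eq:cubic-full-kernel-increment} and the boundary part by \eqref{eq:cubic-full-kernel-boundary}. Both give $|t-t'|^{2\theta}N^{-4+2\beta_{\Gamma,a}+2\theta+2\varepsilon'}$, provided \eqref{eq:cubic-increment-summability-condition} holds, i.e.\ $2\beta_a+\theta+\varepsilon'<1$. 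Since $\beta_a<1/8$ and $\theta<1/2$, this holds for small $\varepsilon'$. For $\theta$ closer to $1/2$ it still holds, because $2\beta_a<1/4$. Taking $2\varepsilon'\le\varepsilon$ proves \eqref{eq:cubic-first-spectral-increment}, and uniformity over cutoffs comes from the last sentence of Lemma~\ref{lem:cubic-kernel-increments}.

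For the tail criterion, fix $s<1/2-\beta_{\Gamma,a}$. Corollary~\ref{cor:cubic-first-master-majorant} with $L=c\Lambda$ bounds the combined tail majorant $q^{(\theta)}_{>\Lambda}(\xi)$ of \eqref{eq:path-tail-majorant-definition} by $N^{-2+2\beta_{a^\perp}}\max\{N,\Lambda\}^{-2+4\beta_a+2\theta+2\varepsilon}$. The bound is uniform over the four flags. I then split the sum $\sum_N N^{2s+3+\delta}\sup_{|\xi|\sim N}q^{(\theta)}_{>\Lambda}$ at $N=\Lambda$.

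\begin{itemize}
\item For $N>\Lambda$ the summand is $N^{2s-1+2\beta_{\Gamma,a}+2\theta+2\varepsilon+\delta}$. The exponent is negative once $2\theta+2\varepsilon+\delta<1-2s-2\beta_{\Gamma,a}$, which is possible by the strict inequality on $s$. This tail then decays like $\Lambda^{-\gamma}$.
\item For $N\le\Lambda$ the contribution is $\Lambda^{-2+4\beta_a+2\theta+2\varepsilon}\sum_{N\le\Lambda}N^{2s+1+2\beta_{a^\perp}+\delta}$. If the inner exponent is positive, the total exponent is $2s-1+2\beta_{\Gamma,a}+2\theta+2\varepsilon+\delta<0$, the same as above. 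Otherwise it is $-2+4\beta_a+2\theta+2\varepsilon+$, which is negative since $4\beta_a<1/2$; a logarithm at the endpoint is absorbed by the loss.
\end{itemize}

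This yields polynomial decay of the tail and hence \eqref{eq:path-criterion-tail}. Pointwise convergence of the differences follows from the plateau, because each fixed shell eventually stabilises.

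The main obstacle I expect is the bookkeeping of the low-output regime $N\le\Lambda$ when $s$ is close to its endpoint. The constraints $\theta>0$ (needed for Lemma~\ref{lem:spectral-to-local-paths}), \eqref{eq:cubic-increment-summability-condition}, and the margin $1-2s-2\beta_{\Gamma,a}$ must be met simultaneously. I would fix $\theta,\varepsilon,\delta$ each below one third of that margin and below $(1-2\beta_a)/3$.
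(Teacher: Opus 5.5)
Your proposal is correct and follows the paper's proof essentially step for step. The three parts match: the Wiener isometry with the summed kernel bound for the static estimate, the common/boundary split of the increment via \eqref{eq:cubic-full-kernel-increment}--\eqref{eq:cubic-full-kernel-boundary}, and the cutoff-tail majorant with the dyadic sum split at $N=\Lambda$. The only slip is in your closing bookkeeping: taking $\theta,\varepsilon,\delta$ each below one third of the margin $1-2s-2\beta_{\Gamma,a}$ gives only $2\theta+2\varepsilon+\delta<\tfrac53(1-2s-2\beta_{\Gamma,a})$, so take them below one fifth of the margin instead.
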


\begin{proof}
For the static estimate, choose
$0<\varepsilon_0<\min\{\varepsilon/2,1-2\beta_a\}$ in
\eqref{eq:cubic-summed-kernel}.  The Wiener isometry gives
\[
 q_{a,\Lambda}^{(1)}(t,\xi)
 \lesssim_T N^{-4+2\beta_{\Gamma,a}+2\varepsilon_0}
 \le N^{-4+2\beta_{\Gamma,a}+\varepsilon},
\]
which proves \eqref{eq:cubic-first-spectral}.  For the increment estimate,
choose
\[
 0<\varepsilon_0<\min\{\varepsilon/2,1-2\beta_a-\theta\}.
\]
Assume $t'<t$.  Splitting the first Wiener integral into the common interval
$[0,t']$ and the boundary interval $(t',t]$, and using
\eqref{eq:cubic-full-kernel-increment}--
\eqref{eq:cubic-full-kernel-boundary}, yields
\begin{align*}
 &\int_0^{t'}|\mathcal K_a(\xi;t,u)
  -\mathcal K_a(\xi;t',u)|^2\dd u\\
 &\qquad\lesssim_T |t-t'|^{2\theta}
 N^{-4+2\beta_{\Gamma,a}+2\theta+2\varepsilon_0},\\
 &\int_{t'}^t|\mathcal K_a(\xi;t,u)|^2\dd u
 \lesssim_T |t-t'|N^{-4+2\beta_{\Gamma,a}+2\varepsilon_0}\\
 &\qquad\lesssim_T |t-t'|^{2\theta}
 N^{-4+2\beta_{\Gamma,a}+2\theta+2\varepsilon_0}.
\end{align*}
This proves \eqref{eq:cubic-first-spectral-increment}.

It remains to verify the cutoff tail required by
\eqref{eq:path-criterion-tail}.  Let
$s<1/2-\beta_{\Gamma,a}$.  Choose
$\theta,\varepsilon,\delta>0$ such that
\begin{equation}\label{eq:cubic-first-tail-parameter-choice}
 2\theta+\varepsilon+\delta
 <1-2s-2\beta_{\Gamma,a},
 \qquad
 2\beta_a+\theta+\varepsilon<1.
\end{equation}
By \cref{lem:cubic-first-cutoff-tail}, the combined static and normalized
increment majorant is bounded by
\begin{equation}\label{eq:cubic-first-combined-tail-majorant}
 q_{a,>\Lambda}^{(1),(\theta)}(\xi)
 \lesssim
 N^{-2+2\beta_{a^\perp}}
 \max\{N,\Lambda\}^{-2+4\beta_a+2\theta+\varepsilon},
\end{equation}
after adjusting the plateau constant.  The output-frequency flags
satisfy the same formula because they force $N\gtrsim\Lambda$.

For $N>\Lambda$, \eqref{eq:cubic-first-combined-tail-majorant} becomes
\[
 q_{a,>\Lambda}^{(1),(\theta)}(\xi)
 \lesssim N^{-4+2\beta_{\Gamma,a}+2\theta+\varepsilon},
\]
and therefore
\[
 \sum_{N>\Lambda}N^{2s+3+\delta}
 \sup_{|\xi|\sim N}q_{a,>\Lambda}^{(1),(\theta)}(\xi)
 \lesssim
 \sum_{N>\Lambda}
 N^{2s-1+2\beta_{\Gamma,a}+2\theta+\varepsilon+\delta},
\]
which decays by \eqref{eq:cubic-first-tail-parameter-choice}.
For $N\le\Lambda$,
\begin{align}
 &\sum_{N\le\Lambda}N^{2s+3+\delta}
  \sup_{|\xi|\sim N}q_{a,>\Lambda}^{(1),(\theta)}(\xi)\notag\\
 &\qquad\lesssim
 \Lambda^{-2+4\beta_a+2\theta+\varepsilon}
 \sum_{N\le\Lambda}N^{2s+1+2\beta_{a^\perp}+\delta}\notag\\
 &\qquad\lesssim
 \Lambda^{-2+4\beta_a+2\theta+\varepsilon
 +\max\{2s+1+2\beta_{a^\perp}+\delta,0\}+}.
 \label{eq:cubic-first-low-output-tail-sum}
\end{align}
If the expression inside the positive part is positive, the exponent in
\eqref{eq:cubic-first-low-output-tail-sum} is
\[
 2s-1+2\beta_{\Gamma,a}+2\theta+\varepsilon+\delta+,
\]
which is negative by \eqref{eq:cubic-first-tail-parameter-choice}.  If it is
nonpositive, the exponent is
$-2+4\beta_a+2\theta+\varepsilon+$, which is negative because
$\beta_a<1/8$, $\theta<1/2$, and the auxiliary loss is chosen small.  Thus
\eqref{eq:path-criterion-tail} holds with polynomial decay.

The cutoff multipliers in
\eqref{eq:cubic-contracted-cutoff-multiplier} form a finite product with the
repeated $r$-leg listed twice.  Hence
\cref{lem:finite-product-cutoff} gives the same tail for all four projection
conventions and compares two non-nested admissible cutoffs through the sum of
their tails.  This completes the proof.
\end{proof}

Applying \cref{lem:spectral-to-local-paths} with
$\rho=1/2-\beta_{\Gamma,a}$ gives
\begin{equation}\label{eq:cubic-first-path-conclusion}
  \Gamma_a^{(1)}\in
  C_T\cC_{\loc}^{\frac12-\beta_{\Gamma,a}-}
  \cap L_T^\infty B_{2,\infty,\loc}^{\frac12-\beta_{\Gamma,a}-},
\end{equation}
with cutoff-independent convergence in every finite probability moment.  The
polynomial weighted tail in the proof of
\cref{prop:cubic-first-spectral}, followed by Markov and Borel--Cantelli as in
\cref{lem:spectral-to-local-paths}, gives almost-sure convergence along the
fixed-profile dyadic sequence.

\begin{proof}[Proof of \cref{thm:cubic-fullspace}]
The finite decomposition, the identity of the surviving contraction,
and the remaining random frequency are given by
\cref{lem:cubic-finite-wick-formula}.  The centered third-chaos convergence and
regularity are \cref{prop:cubic-third-spectral} and
\eqref{eq:cubic-third-path-conclusion}.  The first-chaos contraction is first
defined at finite cutoff by \eqref{eq:cubic-first-integrated-representation};
\cref{prop:cubic-first-spectral} and
\eqref{eq:cubic-first-path-conclusion} give its limiting construction.  The cutoff tail estimates apply to every family satisfying
Definition~\ref{def:admissible-cutoff}.  In particular, inserting one of the
auxiliary output projections in \eqref{eq:cubic-cutoff-variants} does not
change the limit.

It remains to verify the source topology.  Fix an exponent quadruple in
\eqref{eq:parameter-window}.  Choose $\kappa>0$ and $\delta>0$ so small that
\[
  \beta_{\Gamma,a}+\kappa+\delta<1-s_2,
  \qquad
  \beta_{\Gamma,a}+\kappa<1-\sigma.
\]
Both inequalities follow from the strict parameter window.  Locally,
\[
  B_{2,\infty}^{-\beta_{\Gamma,a}-\kappa}
  \hookrightarrow H^{-\beta_{\Gamma,a}-\kappa-\delta}
  \hookrightarrow H^{s_2-1},
  \qquad
  B_{2,\infty}^{-\beta_{\Gamma,a}-\kappa}
  \hookrightarrow B_{2,\infty}^{\sigma-1}.
\]
Since $\Gamma_a\in
L_T^\infty B_{2,\infty,\loc}^{-\beta_{\Gamma,a}-\kappa}$, integration over
$[0,T]$ proves \eqref{eq:cubic-source-regularity}.  This completes the proof.
\end{proof}

\end{document}